\newtheorem{definition}{Definition}[chapter]
\newtheorem{proposition}[definition]{Proposition}
\newtheorem{theorem}[definition]{Theorem}
\newtheorem{corollary}[definition]{Corollary}
\newtheorem{lemma}[definition]{Lemma}
\newtheorem{example}[definition]{Example}
\newtheorem{remark}[definition]{Remark}
\title{Port-Hamiltonsche Systeme}
\author{Alexander Kilian}
\date{}
\DeclareMathOperator{\rank}{rank}
\DeclareMathOperator{\trace}{tr}
\DeclareMathOperator{\gra}{graph}
\DeclareMathOperator{\ran}{ran}
\DeclareMathOperator{\realpart}{Re}
\DeclareMathOperator{\operatornorm}{op}
\DeclareMathOperator{\extern}{ext}
\DeclareRobustCommand{\qed}{%
  \ifmmode % if math mode, assume display: omit penalty etc.
  \else \leavevmode\unskip\penalty9999 \hbox{}\nobreak\hfill
  \fi
  \quad\hbox{\qedsymbol}}
\newcommand{\openbox}{\leavevmode
  \hbox to.77778em{%
  \hfil\vrule
  \vbox to.675em{\hrule width.6em\vfil\hrule}%
  \vrule\hfil}}
\newcommand{\qedsymbol}{$\blacksquare$} %% $\blacksquare$ für weißes Quadrat funktioniert nicht %%
\newenvironment{proof}[1][\proofname]{\par
  \normalfont
  \topsep6\p@\@plus6\p@ \trivlist
  \item[\hskip\labelsep\itshape
    #1.]\ignorespaces
}{%
  \qed\endtrivlist
}
\newcommand{\proofname}{Proof}
\newcommand*{\QEDA}{\hfill\ensuremath{\Diamond}} 
\newcommand*{\QEDB}{\hfill\ensuremath{\blacksquare}}
\begin{document}
	\title{
		\begin{center}
			\includegraphics[width=9cm]{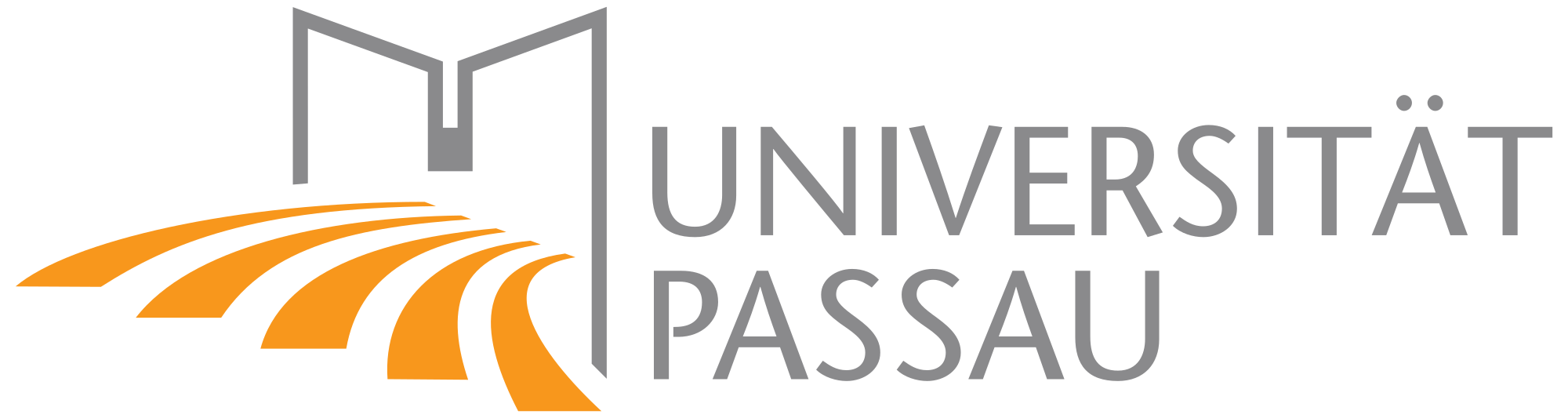} \\
			\vspace{1.5cm} {\large Master's Thesis in Computational Mathematics}
		\end{center}
		\rule{\textwidth}{.5mm}
		{\bfseries Infinite-Dimensional Port-Hamiltonian Systems with a Moving Interface}\\%[1ex]
		 %{\large\bfseries subtitle in the making} \\
		\rule{\textwidth}{.5mm}
	}
	
	\author{Alexander Kilian \\
		{\normalsize matriculation number: 77856}\\[1ex]
		{\normalsize supervised by:}\\
		\normalsize Prof. Dr. Fabian Wirth, Dr. Andrii Mironchenko \\
		\normalsize Chair of Dynamical Systems  \\
		\normalsize University of Passau \\[1ex]
	%{\normalsize and} \\
	\normalsize Prof. Dr. Bernhard Maschke \\
 \normalsize Laboratoire d’Automatique et de Génie des Procédés \\
\normalsize Claude Bernard University Lyon 1}
	
	\date{December 20, 2021}
	
	\maketitle
	\pagenumbering{gobble}% Remove page numbers (and reset to 1)
	\clearpage
	\thispagestyle{empty}
	\setlength{\parindent}{0pt}
	
	\section*{Eidesstattliche Erklärung}
	
	Hiermit erkläre ich, \textbf{Alexander Kilian} (Matrikelnummer~77856), gegenüber der Fakultät für Informatik und Mathematik der Universität Passau, dass ich die Masterarbeit mit dem Thema \textit{Infinite-Dimensional Port-Hamiltonian Systems with a Moving Interface} selbstständig angefertigt und keine anderen als die angegebenen Quellen und Hilfsmittel benutzt habe. Alle wörtlich und sinngemäß übernommenen Ausführungen wurden als solche gekennzeichnet. Weiterhin erkläre ich, dass ich diese Arbeit in gleicher oder ähnlicher Form nicht bereits einer anderen Prüfungsbehörde vorgelegt habe.
	
\vspace{2.5 cm}
	
	Passau, den 20.12.2021 \\
\noindent \begin{flushleft}
	\begin{tabular}{l}
		\includegraphics[height=0.7cm]{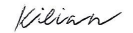} \tabularnewline
%		\cline{2-2} 
		Alexander Kilian 
	\end{tabular}
	\par\end{flushleft}
	\clearpage

\chapter*{Summary}	
This thesis deals with the formulation and analysis of two systems of conservation laws defined on two complementary intervals and coupled by some moving interface as a single infinite-dimensional port-Hamiltonian system. This approach may be regarded as an extension of the well-established mathematical framework of boundary port-Hamiltonian systems, i.e., infinite-dimensional Hamiltonian systems interacting with its physical environment. 
\vspace{0.5 cm}\\
A vast variety of multi-physics systems can be modeled as a Hamiltonian system, a special type of ordinary and partial differential equations. Such systems have a Hamiltonian, which describes the internally stored energy of the system. By allowing such systems to be open, one usually refers to this class of systems as (boundary) port-Hamiltonian systems. For the analysis of infinite-dimensional port-Hamiltonian systems, one combines the energy-based approach by means of Hamiltonians with the poweful framework of strongly continuous semigroups applied in infinite-dimensional systems theory. This approach has led to fundamental well-posedness and stability results in the field of infinite-dimensional port-Hamiltonian systems. Furthermore, the power flow at the spatial boundary has been utilized to control port-Hamiltonian systems.
\vspace{0.5 cm}\\
The great success of this system description motivates to extend the port-Hamiltonian formulation to systems of conservation laws that are coupled by some moving interface. In order to keep track of the power flow at the position of the interface, one has to introduce interface port variables. In this thesis, we provide a well-posed port-Hamiltonian model of two distinct systems of conservation laws defined on two complementary, 1-dimensional spatial domains that are coupled by a fixed interface, and illustrate this result by reference to two transmission lines coupled through a resistor. Furthermore, we discuss under which boundary and interface conditions this system is exponentially stable. Since the formulation in case of a moving interface is quite troubled, we present some results that may be regarded as a first step towards a well-posed model, and debate the main issues that have to be tackled in the future.
\clearpage

\chapter*{Zusammenfassung}
Ziel der Masterarbeit ist es, zwei auf komplementären Intervallen definierte Systeme von Energieerhaltungssätzen, die durch eine mobile Schnittstelle gekoppelt werden, mathematisch als ein unendlich-dimensionales port-Hamiltonsches System zu modellieren und zu analysieren. Dieses Modell kann als eine Verallgemeinerung der port-Hamiltonschen Formulierung linearer unendlich-dimensionaler Systeme, die mit ihrer Umgebung interagieren, verstanden werden. 
\vspace{0.5 cm}\\
Zahlreiche physikalische Prozesse können mathematisch mit Hilfe sogenannter Hamiltonscher Systeme modelliert werden. Diese besitzen eine Hamilton-Funktion, die in vielen Anwendungen die Systemenergie beschreibt. Erlaubt man einen Energieaustausch mit der Umgebung des Systems, so spricht man von port-Hamiltonschen Systemen, die sowohl durch gewöhnliche als auch partielle Differentialgleichungen beschrieben werden können. Bei der Untersuchung unendlich-dimensionaler port-Hamiltonscher Systeme verbindet man den abstrakten, funktionalanalytischen Ansatz mittels der Halbgruppentheorie mit dem auf Hamilton-Funktionen basierten, physikalisch motivierten Ansatz. Mit dieser Methodik ließen sich elementare Wohlgestelltheits- und Stabilitätsresultate auf dem Gebiet unendlich-dimensionaler port-Hamiltonscher Systeme beweisen. Darüber hinaus eignet sich das port-Hamiltonsche Modell aufgrund des Energieaustauschs mit der Umgebung bestens zur energiebasierten Regelung solcher Systeme.
\vspace{0.5 cm}\\
Aufgrund des beachtlichen Erfolgs dieser Systembeschreibung soll in dieser Arbeit der Versuch einer port-Hamiltonschen Formulierung zweier Systeme von Energieerhaltungssätzen, die durch eine mobile Schnittstelle gekoppelt sind, unternommen werden. Dazu werden Schnittstellen-Port-Variablen eingeführt, um den Energiefluss an der sich bewegenden Schnitt- stelle zu modellieren. Wir werden ein wohlgestelltes System des eben beschriebenen Szenarios im Falle einer fixen Schnittstellenposition präsentieren und dieses Resultat anhand zweier Übertragungsleitungen mit unterschiedlichen physikalischen Eigenschaften, die durch einen elektrischen Widerstand gekoppelt werden, illustrieren. Ferner werden wir charakterisieren, unter welchen Rand- und Schnittstellenbedingungen ein solches System exponentiell stabil ist. Die port-Hamiltonsche Formulierung im Falle einer mobilen Schnittstellen ist problembeladen. Wir werden daher Probleme und offene Fragen, die es in Zukunft noch zu klären gilt, ausführlich diskutieren und für den Wohlgestelltheits-Nachweis essentielle Eigenschaften des modellierten Systems präsentieren. 
\clearpage

	\pagenumbering{roman}	
	\tableofcontents

\listoffigures
\listoftables

	\section*{Notation}
	\def\arraystretch{1.5}
	\begin{tabular}{l l}
		\textbf{Symbol} & \textbf{Description} \\
		$\mathbb{N}$, $\mathbb{R}$, $\mathbb{C}$ & the set of natural, real, and complex numbers, respectively  \\
		$\mathbb{R}_{>0}$ & the set of positive real numbers \\
		$\mathbb{R}_+$ & the set of non-negative real numbers  \\
		$\mathbb{F}^{m \times n}$ & the vector space of $m \times n$-matrices  with entries in $\mathbb{F}$ \\
		$\mathbb{F}^n$ & the vector space of $n$-dimensional column vectors with entries in $\mathbb{F}$ \\
		$\realpart(z)$ & real part of $z \in \mathbb{C}$ \\
		$\| \cdot \|_2$ & Euclidean norm on $\mathbb{R}^n$ \\
		$\| \cdot \|_{\operatornorm \colon D(A) \to X}$ & operator norm of an operator $A \colon D(A) \subset X \to X$ \\
		$\| \cdot \|_{\mathcal{L}(X)}$ & operator norm on $\mathcal{L}(X)$ \\
		$X'$ & dual space of a vector space $X$  \\
		$\langle \cdot, \cdot\rangle_{X' \times X}$ & natural pairing between a vector space $X$ and its dual space $X'$ \\
		$\langle \cdot \mid \cdot \rangle$ & power pairing defined on a Hilbert space  \\
		$\ll \cdot , \cdot \gg$ & plus pairing associated with a power pairing  \\
		$M^{\top}$, $v^{\top}$ & transpose of a matrix $M \in \mathbb{F}^{m \times n}$/a vector $v \in \mathbb{F}^n$  \\
		$M > N$ $(M \geq N)$ & positive (semi-)definiteness of $M - N \in \mathbb{F}^{n \times n}$ \\
		$A^{\ast}$ & (formal) Hilbert space adjoint of an operator $A$  \\
		$S^{\bot}$ & orthogonal complement of the space $S$  \\
		$S^{\perp \!\!\! \perp}$ & orthogonal complement of $S$ with respect to the plus pairing   \\
		$L^1([a,b], \mathbb{F}^n)$ & vector space of $\mathbb{F}^n$-valued integrable functions on $[a,b]$ \\
		$L^2([a,b], \mathbb{F}^n)$ & vector space of $\mathbb{F}^n$-valued square integrable functions on $[a,b]$   \\
		$\| \cdot \|_{L^2} = \| \cdot \|_{L^2(a,b)}$ & standard $L^2$-norm on $L^2([a,b], \mathbb{F}^n)$ \\
		$L_{loc}^1([0, \infty), X)$ & vector space of $X$-valued locally integrable functions on $[0, \infty)$  \\
		$H^N([a,b], \mathbb{F}^n)$ & Sobolev space of order $N \in \mathbb{N}$ on $[a,b]$ \\
		$H_{loc}^1([0, \infty), X)$ & vector space of $X$-valued locally  $H^1$-functions on $[0, \infty)$  \\
			$\mathcal{C}([a,b], X)$ & vector space of continuous functions $f \colon [a,b] \to X$ with finite norm \\
		& $\|f\|_{\infty} := \sup\limits_{x \in [a,b]} \|f(x)\|_X$ \\
		\end{tabular}
\newpage
	\def\arraystretch{1.5}
\begin{tabular}{l l}
	$\mathcal{C}^k([a,b], X)$ & vector space of $k$-times continuously differentiable functions $f \colon [a,b] \to X$ \\
	$\mathcal{C}^{\infty}(\mathbb{R}^n, \mathbb{R})$ & vector space of infinitely differentiable functions \\
	$\mathcal{D}(\Omega)$ & vector space of infinitely differentiable functions with compact \\
	& support on $\Omega$ (the set of test functions on $\Omega$)  \\
	$\mathcal{D}'(\Omega)$ & vector space of distributions on $\Omega$ \\
	$\langle u, \varphi \rangle_{\mathcal{D}'(\Omega)}$ & action of $u \in \mathcal{D}'(\Omega)$ on a test function $\varphi \in \mathcal{D}(\Omega)$ \\
	$\mathcal{L}(X,Y)$ & space of linear bounded operators from $X$ to $Y$  \\
	$\mathcal{L}(X)$ & space of linear bounded operators on $X$ \\
	$I_X$ ($I$) & identity of the vector space $X$ \\
	$A_{|\Omega}$ & restriction of $A$ to the set $\Omega \subset D(A)$ \\
	$\overline{D(A)}$ & closure of $D(A) \subset X$ with respect to $\|\cdot\|_X$ \\
	$\frac{dx}{dz}$ & derivative of $x$ with respect to $z$ \\
	$\dot{x} = \frac{dx}{dt}$ & derivative of $x$ with respect to time $t$ \\
	$\ddot{x} = \frac{d^2 x}{dt^2}$ & second derivative of $x$ with respect to time $t$  \\
	$\frac{\partial w}{\partial x_i} = \partial_{x_i} w $ & partial derivative of $w(x_1,\ldots,x_n)$ with respect to $x_i$, $i = 1,\ldots,n$ \\
	$\nabla H(x)$ & gradient column vector of $H \colon \mathbb{R}^n \to \mathbb{R}$ \\
	$\frac{\delta}{\delta x}H = \delta_x H$ & variational derivative of a functional $H \colon L^2([a,b],\mathbb{R}^n) \to \mathbb{R}$ \\
	%	$\ran(A)$ & range of a mapping $A$  \\ %\colon D(A) \subset X \to X$ \\
	%	$\ker(A)$  & kernel of $A$ \\
	%	$\rank(A)$ & ... \\
	%	$\gra(A)$ & ... \\
	%	$\dim(A)$ & ... \\
		$T_xX$ & tangent space of $X$ at $x \in X$ \\
		$T_x'X$ & cotangent space of $X$ at $x \in X$ \\
		$A^J$ & set of mappings $J \to A$, with $J$ an index set \\
		$\rho(A)$, $\sigma(A)$ & resolvent set and spectrum of an operator $A \colon D(A) \subset X \to X$ \\
		$f(x_0^+) = \lim\limits_{x \searrow x_0}f(x)$ & right-hand limit of $f$ at $x_0$ \\
		$f(x_0^-) = \lim\limits_{x \nearrow x_0}f(x)$ & left-hand limit of $f$ at $x_0$ \\
		$\mathcal{O}(\epsilon)$ & big O notation with respect to $\epsilon$ \\
		$A \hookrightarrow B$ & embedding of $A \subset B$ into $B$
		
	\end{tabular}
	\clearpage
		\pagenumbering{arabic}% Arabic page numbers (and reset to 1)
\chapter{Introduction}
\section{Background}

The port-Hamiltonian systems theory \cite{ModCompPhys, JacobZwart, SchaftJeltsema} is a systematic framework for modeling systems comprising various physical domains, such as mechanical, electric, hydraulic, and thermal domains. This approach is realized by classifying every (ideal) system component subject to its main physical characteristics, and by taking energy to be medium of communication between the physical domains in multi-physics systems. The eponymous concepts used for this modeling framework are the system's Hamiltonian (that is, its total energy) stemming from mechanics \cite{Arnold}, and the port-based modeling approach \cite{Paynter}, which is based on the power-conserving interconnection of various system components by means of ports. They consist of physical quantities whose product yields power (e.g., current and voltage in an electrical network, or force and velocity in a mechanical system). That way, the power flow between the system's components can be expressed. The resulting network structure can be geometrically described by a power-conserving interconnection structure, called the Dirac structure, which has proved to be highly valuable in the field of port-based modeling and analysis \cite{Courant, SchaftMaschke95}. 
\vspace{0.5 cm}\\
There are three main features of port-Hamiltonian systems worth highlighting. Firstly, the (power-conserving) interconnection of port-Hamiltonian systems is again a port-Hamiltonian system. This is a crucial property and emphasizes the network structure of this framework. Secondly, this framework allows the inclusion of energy-dissipating elements, which is why port-Hamiltonian models are broadly applicable. Lastly, the port-Hamiltonian framework is greatly suitable for control purposes. It allows to model the interaction of the system with its environment by means of external ports which are, in particular, accessible for controller action.
\vspace{0.5 cm}\\
Based on the theory of infinite-dimensional Hamiltonian systems in \cite{Olver}, the port-Hamiltonian formulation has been extended to infinite-dimensional systems \cite{SchaftMaschke02}. The interconnection structure is generalized to a Stokes-Dirac structure, where the power flow through the boundary of the spatial domain, that is, the interaction with the system's environment, is encompassed. Just as in the finite-dimensional case, this is essential for the interconnection of several Hamiltonian systems, and it is a good starting point for the purpose of control, hence the name boundary port-Hamiltonian systems. For the analysis of infinite-dimensional port-Hamiltonian systems one usually considers abstract Cauchy problems of the form
\begin{align*}
	\dot{x}(t) &= A_{\mathcal{Q}}x(t), \hspace{0.5 cm} t > 0, \\
	x(0) &= x_0 \in X,
\end{align*}
with $A_{\mathcal{Q}} \colon D(A_{\mathcal{Q}}) \subset X \to X$ a linear (usually unbounded) operator on some Hilbert space $X$ that is associated with the port-Hamiltonian system. This port-Hamiltonian operator stores information concerning the interdomain coupling of the modeled multi-physics system as well as the mechanical and physical properties of the system. More precisely, these operators are usually given by $A_{\mathcal{Q}} = \mathcal{J}(\mathcal{Q}\cdot)$, with $\mathcal{J}$ a formally skew-symmetric matrix differential operator modeling the interdomain coupling, and $\mathcal{Q}$ a coercive and bounded multiplication operator representing the physical properties. To prove well-posedness and stability results, one applies the well-documented theory of strongly continuous semigroups \cite{CurtainZwart, EngelNagel, Pazy83}. The most important class of semigroups in the framework of infinite-dimensional port-Hamiltonian systems are contraction semigroups, which are closely related to energy dissipation. Typically, one incorporates dissipation phenomena into the domain of the operator $A_{\mathcal{Q}}$ by means of the port variables, and proves that this operator is the infinitesimal generator of a contraction semigroup. For control purposes, it seems natural to let the controls and observations act at the boundary of the spatial domain, leading to the notion of (port-Hamiltonian) boundary control systems \cite{Fattorini}, \cite[Chapter 10]{Tucsnak-Weiss}. These systems have been extensively studied in the PhD theses \cite{Augner16, Villegas}.
\vspace{0.5 cm}\\ 
Due to the great success of this system description, quite recently it was investigated whether the port-Hamiltonian formulation of infinite-dimensional systems can be extended to first-order systems that are coupled by some moving interface \cite{Diagne}. There are several instances where a system's spatial domain might be subdivided into areas where the intrinsic physical properties or even the aggregate phase differ. For example, one may consider two distinct gases separated by a moving piston, or liquid-solid phase transitions. The mathematical model of the latter is known as the Stefan problem \cite{Gupta}. A port-Hamiltonian formulation of the two-phase Stefan problem has already been tested in \cite{Vincent}. In contrast to boundary port-Hamiltonian systems, this formulation yields an abstract port-Hamiltonian control system with the input being the velocity of the interface, and offers a new field of research in terms of interface control. 
\vspace{0.5 cm}\\
In this thesis, we review the port-Hamiltonian formulation of two systems of conservation laws defined on two complementary intervals that are coupled by a moving interface, as elaborated in \cite{Diagne}, and further examine this class of systems. Our final objective is to collect some useful results concerning the underlying port-Hamiltonian operators by means of the theory of strongly continuous semigroups that may help to prove well-posedness and stability results with respect to the position of the moving interface. The main contributions made in this thesis are the following:
\begin{enumerate}[label = $\bullet$]
	\item We specify the Stokes-Dirac structure of the model proposed in \cite{Diagne} with respect to a formally skew-symmetric matrix differential operator in case of a fixed interface, see Proposition \ref{Interface Paper - Proposition Dirac Structure DI}.
	\item If we allow for dissipation at the boundary and at the interface position, then the port-Hamiltonian operator associated with two systems of conservation laws that are coupled by a fixed interface is the infinitesimal generator of a contraction semigroup on the respective energy space. In particular, the underlying model is well-defined, see Theorem \ref{Interface - Theorem A Generates a Contraction Semigroup}.
	\item Under some more restrictive assumption concerning the boundary condition, the generated semigroup is exponentially stable, see Theorem \ref{Interface - Theorem 2 Exponential Stability}.
	\item We consider a simplified model which is closely related to the one formulated in \cite{Diagne}, and present criteria for the stability of the associated family of port-Hamiltonian operators that encompasses the position of the moving interface, see Subsection \ref{Subsection Stability of the Family of Infinitesimal Generators}. 
\end{enumerate}

\section{Outline of this Thesis}
 The remaining chapters are organized as follows: 
\vspace{0.5 cm}\\
\textbf{Chapter \ref{Chapter Finite-dimensional Port-Hamiltonian Systems}} aims to present the basic concepts and notions of port-based modeling. We deal with the port-Hamiltonian formulation of finite-dimensional multi-physics systems, where we restrict ourselves to linear systems. To that end, we introduce the fundamental concept of a power-conserving interconnection structure, called a Dirac structure, and briefly discuss its constituting port variables. This allows us to define the port-Hamiltonian dynamics by means of the underlying Dirac structure. 
\vspace{0.5 cm}\\
\textbf{Chapter \ref{Chapter Some Background}} covers some well-known notions and results from the literature that are used in the upcoming chapters. We will mainly recall the basics of infinite-dimensional systems theory and, with that said, we will recall the concept of strongly continuous semigroups and some important results that characterize their infinitesimal generators. Furthermore, we will discuss formally adjoint operators of matrix differential operators.
\vspace{0.5 cm}\\
In \textbf{Chapter \ref{Chapter Infinite-dimensional Port-Hamiltonian Systems}} we generalize the port-Hamiltonian approach of Chapter \ref{Chapter Finite-dimensional Port-Hamiltonian Systems} to infinite-dimen- sional systems, where we restrict ourselves to idealized systems with a 1-dimensional spatial domain. We discuss the main differences to finite-dimensional port-Hamiltonian systems, and show how the power flow at the boundary of the spatial domain is incorporated into the definition of the Dirac structure, leading to the notion of a boundary port-Hamiltonian system. This chapter sets a basis for the discussion in the following chapter.
\vspace{0.5 cm}\\
\textbf{Chapter \ref{Chapter Boundary Port-Hamiltonian Systems with a Moving Interface}} is the centerpiece of this thesis and studies the port-Hamiltonian formulation of a system of two conservation laws with a moving interface. The main goal is to provide a well-posed model for a system of two conservation laws on two complementary intervals separated by an interface, where we take the interface's velocity to be the input variable of the system. It turns out that this formulation poses delicate problems. Therefore, we vastly simplify the original problem, and work out some important properties of the simplified system that may help to further investigate the initial one.
\vspace{0.5 cm}\\
In \textbf{Chapter \ref{Chapter Conclusion}}  we summarize the covered topics and reflect the main contributions made in this work. Furthermore, we draw our conclusions from the discussion in Chapter \ref{Chapter Boundary Port-Hamiltonian Systems with a Moving Interface} and give some comments on open problems and potential future work concerning this rather new class of systems.

\chapter{Finite-Dimensional Port-Hamiltonian Systems}
\label{Chapter Finite-dimensional Port-Hamiltonian Systems}

In this chapter, we want to study basic concepts of port-based network modeling and provide the definition of finite-dimensional port-Hamiltonian systems and their underlying Dirac structures. Section \ref{Section A Motivating Example} is devoted to the description of the main characteristics of port-based network modeling, which originates in the work of Paynter \cite{Paynter}. To this end, we consider the model of a simple mass-spring system. This motivates to introduce the port-Hamiltonian formulation of multi-physics systems. The dynamics of port-Hamiltonian systems are geometrically specified by so-called Dirac structures. In Section \ref{Section Dirac Structures} we will introduce this fundamental concept and give some examples of Dirac structures. In the Sections \ref{Section Energy-storing Elements}, \ref{Section Energy-dissipating Elements}, and \ref{Section External Ports} we briefly discuss energy-storing, energy-dissipating, and external ports, respectively. In the finite-dimensional case, these are the constitutive ports of a Dirac structure.
After that, we are finally able to geometrically define the dynamics of port-Hamiltonian systems, which we are going to do in Section \ref{Section Port-Hamiltonian Dynmacis}. As mentioned in the beginning, the port-Hamiltonian framework is particularly convenient for control purposes. Therefore, we will provide a state-space representation of a huge class of port-Hamiltonian systems that is suitable to be taken as the starting point for investigating control theoretical aspects. We will mainly focus on linear port-Hamiltonian systems. % and briefly discuss port-Hamiltonian differential-algebraic systems  afterwards.
\vspace{0.5 cm}\\
The whole chapter relies heavily on the Chapters 2, 3, and 4 in \cite{SchaftJeltsema}, and on \cite[Section 2.2]{ModCompPhys}.

\section{A Motivating Example}
\label{Section A Motivating Example}
In order to motivate the framework of port-based modeling, we start with a simple example.
\begin{example}[Mass-Spring System; Example 2.1 in \cite{SchaftJeltsema}]
	\label{Example Mass-Spring System}
 Consider a point mass with mass $m \in \mathbb{R}_{> 0}$, moving in one direction and without friction under the influence of a
	spring force corresponding to a linear spring with spring constant $k \in \mathbb{R}_{>0}$. The usual way of modeling this mass-spring system is by means of the second-order differential equation
	\begin{align}
		m \ddot{z}(t) = - k(z(t) - z_0),
		\label{Example Mass-Spring System - ODE}
	\end{align}
where $z(t) \in \mathbb{R}$ denotes the position of the mass at time instant $t \geq 0$, and $z_0 \in \mathbb{R}_{>0}$ is the length of the spring at rest. In order to present the basic principles of port-based network modeling, we consider this problem from a different perspective. The mass-spring system is now regarded as the interconnection of two subsystems storing energy, namely the spring system storing potential energy and the mass system storing kinetic energy, see Figure~\ref{Figure Mass-Spring-System}. \\
\begin{figure}[h]
	\centering
	\includegraphics[width = 8cm]{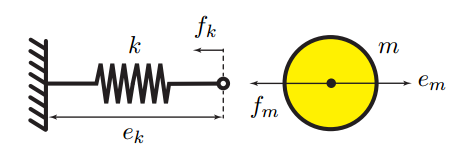}
	\caption{Mass-Spring System Regarded as the Interconnection of two Subsystems \cite[Figure~2.1]{SchaftJeltsema}}
	\label{Figure Mass-Spring-System}
	\end{figure}

For the spring system the potential energy is expressed in terms of the elongation $q = z - z_0 $ of the spring. Assuming a linear
spring, according to Hooke's law the potential energy is given as $E_{\text{pot}}(q) = \frac{1}{2} kq^2$. This leads to the spring system equations
\begin{align*}
	\text{spring system} \colon \begin{cases}
		\dot{q} = -f_k, \\
		e_k = \frac{d}{dq} E_{\text{pot}}(q),
	\end{cases}
\end{align*}
where $-f_k$ denotes the velocity of the endpoint of the spring where
it is attached to the mass, and $e_k = kq$ describes the spring force at this endpoint. \\
For the mass system we obtain similar equations. Denoting  the momentum of the mass by $p \in \mathbb{R}$, the kinetic
energy is given as $E_{\text{kin}}(p) = \frac{1}{2m} p^2$, which leads to the following mass system equations:
\begin{align*}
	\text{mass system} \colon \begin{cases}
		\dot{p} = -f_m, \\
		e_m = \frac{d}{dp} E_{\text{kin}}(p).
	\end{cases}
\end{align*}
Here, $-f_m$ denotes the force exerted on the mass, and $e_m = \frac{p}{m}$ is the velocity of the mass. \\
These separately regarded subsystems are now interconnected through Newton's third law in the following way:
\begin{align*}
		-f_k &= e_m, \\
		f_m &= e_k.
\end{align*}
The system's stored energy is given by the Hamiltonian $H \colon \mathbb{R}^2 \to \mathbb{R}$ defined by
\begin{align}
	H(q,p) = E_{\text{pot}}(q) + E_{\text{kin}}(p) = \frac{1}{2}kq^2 + \frac{1}{2m} p^2.
	\label{Example Mass-Spring System - Hamiltonian}
\end{align}
The interconnected system can be written in terms of the Hamiltonian as follows:
\begin{align}
	\begin{bmatrix}
			\dot{q} \\
			\dot{p}
	\end{bmatrix} = \begin{bmatrix}
	0 & 1 \\
	-1 & 0 
\end{bmatrix} \begin{bmatrix}
\frac{\partial H}{\partial q} (q,p) \\
\frac{\partial H}{\partial p}(q,p) 
\end{bmatrix}.
\label{Example Mass-Spring System - pH System}
\end{align}
These are the so-called Hamiltonian equations for the mass-spring system. Note that \eqref{Example Mass-Spring System - pH System} is equivalent to the second-order model \eqref{Example Mass-Spring System - ODE} by the correspondence $q = z - z_0$: we have
\begin{align*}
	m \ddot{z} = m \ddot{q} = m \dot{\left(\frac{\partial H}{\partial p} (q,p)\right)} = m \dot{\frac{p}{m}} = - \frac{\partial H}{\partial q}(q,p) = - kq = -k(z - z_0). 
\end{align*}
\QEDA
\end{example}

\begin{remark}
	The reason for the minus sign in front of $f_k$ in Example \ref{Example Mass-Spring System} is that we want the product $f_ke_k$ between the spring's velocity and force to be the incoming power with respect to the interconnection. The same applies for the product $f_m e_m$ between the exerted force on the mass and its velocity. This sign convention will be adopted throughout this chapter.
\end{remark}

Despite its simplicity, Example \ref{Example Mass-Spring System} already displays some of the
main characteristics of port-based network modeling: we regard the system as the coupling of the energy-storing spring system with the energy-storing mass system through a power-conserving interconnection where the power is routed from the mass system
to the spring system and vice versa. This approach can be applied to general multi-physics systems, i.e., systems comprising different physical domains.
\vspace{0.5 cm}\\
More generally, in port-based modeling a physical system is considered as the interconnection of three different types of ideal components:
\begin{enumerate}
	\item[(i)] \textit{Energy-storing elements:} typical examples of energy-storing elements are ideal inductors (by means of the magnetic field), capacitors (by means of the electric field), masses, and springs.
	\item[(ii)] \textit{Energy-dissipating} (or \textit{resistive}) elements: examples of elements corresponding to energy-dissipation (due to friction, resistance, etc.) are resistors and dampers.
	\item[(iii)]  \textit{Energy-routing elements:} elements such as gyrators and transformers only redirect the power flow in the overall system \cite[Part XV]{Paynter}.
\end{enumerate}

\begin{figure}[h]
	\centering
	\includegraphics[width = 10cm]{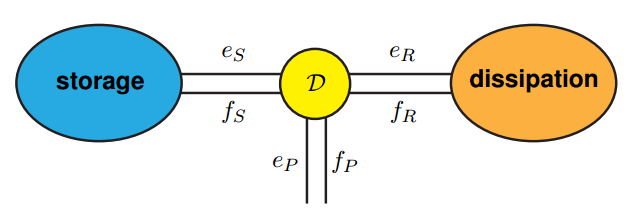}
	\caption{Port-Hamiltonian System \cite[Figure 2.2]{SchaftJeltsema}}
	\label{Figure Port-Hamiltonian System}
	\end{figure}
For the port-Hamiltonian formulation the energy-storing elements will be grouped into a single object denoted by $\mathcal{S}$ ('storage'), and correspondingly the energy-dissipating elements
are grouped into a single object denoted by $\mathcal{R}$ ('resistive'). Moreover, the
interconnection of all energy-routing elements can be considered as one energy-routing structure denoted by $\mathcal{D}$, which we will call the system's underlying \emph{Dirac structure}.
\vspace{0.5 cm}\\
The principle of port-Hamiltonian systems modeling is schematically illustrated in Figure~\ref{Figure Port-Hamiltonian System}: the energy-storing elements $\mathcal{S}$ and the energy-dissipating elements $\mathcal{R}$ are linked to a central interconnection (energy-routing) structure $\mathcal{D}$. This linking is realized via pairs $(f,e)$ of vectors of flow and effort variables of equal dimension. We call a pair $(f,e)$ of vectors of flow and effort variables a port, and the total set of variables $f,e$ is called the set of port variables. The physical meaning of efforts and flows in various physical domains is listed in Table \ref{Table Flows And Efforts}.
\begin{table}
\centering
\small
{\rowcolors{2}{white!100}{black!10!white!90} 
	\begin{tabular}{c c c}
		\hline
		Physical Domain & Flow $f \in \mathcal{F}$  & Effort $e \in \mathcal{E}$ \\ [0.5ex]
		\hline \hline
		electric & current $I$ & voltage $V$ \\
		magnetic & voltage $V$ & current $I$ \\
		potential translation & velocity $v$ & force $F$ \\
		kinetic translation & force $F$ & velocity $v$ \\
		potential rotation & angular velocity $\omega$ & torque $T$ \\
		kinetic rotation & torque $T$ & angular velocity $\omega$ \\
		potential hydraulic & volume flow $\phi$ & pressure $p$ \\
		kinetic hydraulic & pressure $p$ & volume flow $\phi$ \\
		chemical &  chemical potential $\mu$ & molar flow $f_N$ \\
		thermal  & temperature $T$ & entropy flow $f_S$ \\ [0.5ex]
		\hline
	\end{tabular}
}
\caption{Flows and Efforts Corresponding to their Physical Domains \cite[Table 1.2]{ModCompPhys}}
\label{Table Flows And Efforts}
\end{table}
Figure \ref{Figure Port-Hamiltonian System} shows three ports: the port $(f_S, e_S)$ linking to energy-storage, the port $(f_R,e_R)$ corresponding to energy-dissipation, and the
external port $(f_P,e_P)$, by which the system interacts with its environment.
\vspace{0.5 cm}\\
In the following sections, we will provide more details about the fundamental notion of a Dirac structure on finite-dimensional spaces. To this end, we discuss the constituting ports of Dirac structures in the finite-dimensional case. We will treat successively energy-storing elements, energy-dissipating (resistive) elements, and
external ports. After a short comment on state choice of different physical domains, we finally get to the basic geometric definition of a port-Hamiltonian system. Subsequently, we will present some simple yet important examples of such systems.

\section{Dirac Structures}
\label{Section Dirac Structures}
The key property of a Dirac structure is power conservation, meaning that it links the port variables in such a way that the total power entering or leaving a Dirac structure always amounts to zero. For the definition of a Dirac structure, we need to define the spaces of flows and efforts as well as the power flow defined on these spaces. We denote by $\mathcal{F}$ some finite-dimensional linear \emph{space of flows}. Its elements will be denoted by $f \in \mathcal{F}$, and are called \emph{flow} (or \emph{energy}) \emph{variables}. The \emph{space of effort} (or \emph{co-energy}) \emph{variables} is given by the dual linear space $\mathcal{E} = \mathcal{F}'$, and its elements are denoted by $e \in \mathcal{E}$. The total space of flow and effort variables is $\mathcal{B} = \mathcal{F} \times \mathcal{E}$, and is called the \emph{space of port variables} or the \emph{bond space}. The power on the total space of port variables is defined by 
\begin{align}
	P \colon \mathcal{B} \to \mathbb{R}, \hspace{0.5 cm} (f,e) \mapsto \langle e \mid f \rangle,
	\label{Function - Power}
\end{align}
where $\langle \cdot \mid \cdot \rangle \colon \mathcal{E} \times \mathcal{F} \to \mathbb{R}$ is the so-called \emph{power pairing}. In the finite-dimensional case the power pairing $\langle \cdot \mid \cdot \rangle$ equals the natural pairing $\langle \cdot , \cdot \rangle_{\mathcal{F}' \times \mathcal{F}}$ on the respective vector space, see Section \ref{Section Operators}. Usually, one may take $\mathcal{F} = \mathbb{R}^k$ and thus $\mathcal{E} = \mathbb{R}^k$, and the power pairing equals the standard inner product:
\begin{align*}
	\langle e \mid f \rangle =  \langle e, f \rangle_2, \hspace{0.5 cm} (f,e) \in \mathcal{B}.
\end{align*}
With the data provided in Table~\ref{Table Flows And Efforts} one may check that the (inner) product of flow and effort variables indeed equals power. However, we note that there are interesting cases where it is beneficial not to take $\mathcal{F} = \mathbb{R}^k$. For instance, in three-dimensional mechanical systems one usually works with Lie algebras of different matrix groups (see \cite[Section 1.4]{Olver}). For further details we refer to \cite[Chapter 3]{SchaftJeltsema}. We will see in the Chapters \ref{Chapter Infinite-dimensional Port-Hamiltonian Systems} and \ref{Chapter Boundary Port-Hamiltonian Systems with a Moving Interface} that in the infinite-dimensional case, the power pairing does, in general, not coincide with the natural pairing.
\vspace{0.5 cm}\\
Now we are able to define the notion of a Dirac structure in the finite-dimensional case. 
\begin{definition}[Dirac Structure]
	\label{Definition Dirac Structure - Finite-dimensional Case}
	Consider a finite-dimensional linear space of flows $\mathcal{F}$ and its dual space of efforts $\mathcal{E} = \mathcal{F}'$. Let $P$ be the power mapping on the bond space $\mathcal{B} = \mathcal{F} \times \mathcal{E}$ given by \eqref{Function - Power}. A linear subspace $\mathcal{D} \subset \mathcal
	{B}$ is called a (constant) Dirac structure if 
	\begin{enumerate}[label = (\roman*)]
		\item \label{DiracStructureProp1} $P_{|\mathcal{D}} \equiv 0$, i.e., $P(f,e) = 0$ for all $(f,e) \in \mathcal{D}$, and
		\item \label{DiracStructureProp2} $\dim (\mathcal{D}) = \dim (\mathcal{F})$.  
	\end{enumerate}
\end{definition}
Property \ref{DiracStructureProp1} in Definition \ref{Definition Dirac Structure - Finite-dimensional Case} corresponds to power conservation, and states the previously mentioned
fact that the total power flow always equals zero. It can be shown that the maximal dimension of any subspace $\mathcal{D} \subset \mathcal{B}$ satisfying property \ref{DiracStructureProp1} is equal to $\dim (\mathcal{F})$. This claim follows immediately from an equivalent definition of a Dirac structure, which we will state next. Another advantage of this equivalent definition of a Dirac structure is that it can be generalized to the case of an infinite-dimensional vector space $\mathcal{F}$, leading to the definition of an infinite-dimensional Dirac structure, as we will see in Chapter~\ref{Chapter Infinite-dimensional Port-Hamiltonian Systems}.
\vspace{0.5 cm}\\
In order to give this equivalent characterization of a Dirac structure, we define a canonical bilinear form $\ll \cdot, \cdot \gg$ on the space $\mathcal{B}$ which is closely related to the power pairing:
\begin{align}
	\begin{split}
	\ll \cdot, \cdot \gg \colon \mathcal{B} \times \mathcal{B} &\to \mathbb{R}, \\
	((f^1,e^1),(f^2,e^2)) &\mapsto \hspace{0.1 cm}\ll (f^1,e^1) , (f^2,e^2) \gg \hspace{0.1 cm} := \langle e^1 \mid f^2 \rangle + \langle e^2 \mid f^1 \rangle.
	\end{split}
	\label{Plus Pairing}
\end{align}
This bilinear form is \emph{non-degenerate}, that is, if $\ll (f^1,e^1) , (f^2,e^2) \gg = 0$ for all $(f^2,e^2) \in \mathcal{B}$, then necessarily $(f^1,e^1) = 0$. However, it is not positive definite. We call this bilinear form a \emph{plus pairing}.
\vspace{0.5 cm}\\
We will now show that the following equivalent definition of a Dirac structure indeed satisfies the properties stated in Definition \ref{Definition Dirac Structure - Finite-dimensional Case}.
\begin{proposition}[Proposition 2.1 in \cite{SchaftJeltsema}]
	\label{Proposition Characterization Dirac Structure}
		Consider a finite-dimensional linear space of flows $\mathcal{F}$ and its dual space of efforts $\mathcal{E} = \mathcal{F}'$. Let $P$ be the power mapping on the bond space $\mathcal{B} = \mathcal{F} \times \mathcal{E}$ given by \eqref{Function - Power}. A linear subspace $\mathcal{D} \subset \mathcal{B}$ is a Dirac structure if and only if 
		\begin{align}
			\mathcal{D} = \mathcal{D}^{\perp \!\!\! \perp},
			\label{Proposition Characterization Dirac Structure - Equivalent Property}
		\end{align}  where $\mathcal{D}^{\perp \!\!\! \perp}$ denotes the orthogonal complement of $ \mathcal{D}$ with respect to the plus pairing $\ll \cdot, \cdot \gg$.
\end{proposition}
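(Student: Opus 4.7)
The plan is to reduce both implications to a single observation: by bilinearity of the plus pairing,
\begin{align*}
\ll (f,e),(f,e) \gg \; = \; 2\langle e \mid f \rangle \; = \; 2P(f,e),
\end{align*}
so power conservation on a subspace is exactly the condition that the plus pairing vanishes on the diagonal of that subspace. Combined with the fact that, since $\ll \cdot,\cdot\gg$ is non-degenerate on $\mathcal{B}$ (a statement made in the text right after \eqref{Plus Pairing}), one has the standard dimension formula
\begin{align*}
\dim(\mathcal{D}) + \dim(\mathcal{D}^{\perp \!\!\! \perp}) \; = \; \dim(\mathcal{B}) \; = \; 2\dim(\mathcal{F}),
\end{align*}
for any linear subspace $\mathcal{D} \subset \mathcal{B}$. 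With these two ingredients the proof essentially writes itself.

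For the forward direction, I would assume that $\mathcal{D}$ satisfies properties \ref{DiracStructureProp1} and \ref{DiracStructureProp2} from Definition \ref{Definition Dirac Structure - Finite-dimensional Case}. Given $(f^1,e^1), (f^2,e^2) \in \mathcal{D}$, linearity of $\mathcal{D}$ implies that the sum lies in $\mathcal{D}$ as well, so by \ref{DiracStructureProp1} I may expand
\begin{align*}
0 \;=\; 2P(f^1+f^2, e^1+e^2) \;=\; \ll (f^1+f^2,e^1+e^2),(f^1+f^2,e^1+e^2) \gg,
\end{align*}
and, after subtracting the two diagonal contributions (which vanish for the same reason), conclude that $\ll (f^1,e^1),(f^2,e^2) \gg\, = 0$. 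This gives the inclusion $\mathcal{D} \subset \mathcal{D}^{\perp \!\!\! \perp}$. Equality then follows from the dimension formula above together with \ref{DiracStructureProp2}, which force $\dim(\mathcal{D}^{\perp \!\!\! \perp}) = \dim(\mathcal{F}) = \dim(\mathcal{D})$.

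For the converse, I would assume $\mathcal{D} = \mathcal{D}^{\perp \!\!\! \perp}$. Property \ref{DiracStructureProp1} is immediate from the key identity: any $(f,e) \in \mathcal{D}$ belongs also to $\mathcal{D}^{\perp \!\!\! \perp}$, so pairing it with itself gives $2P(f,e) = \ll (f,e),(f,e)\gg = 0$. For property \ref{DiracStructureProp2}, the dimension formula together with the equality $\mathcal{D} = \mathcal{D}^{\perp \!\!\! \perp}$ yields $2\dim(\mathcal{D}) = 2\dim(\mathcal{F})$, i.e., $\dim(\mathcal{D}) = \dim(\mathcal{F})$.

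The only step that is not completely formal is the dimension formula $\dim(\mathcal{D}) + \dim(\mathcal{D}^{\perp \!\!\! \perp}) = \dim(\mathcal{B})$, and this is where I expect the main subtlety to sit. It is standard linear algebra once one recognises that the plus pairing induces, via $z \mapsto \ll z, \cdot \gg$, a linear map $\mathcal{B} \to \mathcal{B}'$ that is injective by non-degeneracy, hence an isomorphism in finite dimensions; the orthogonal complement $\mathcal{D}^{\perp \!\!\! \perp}$ is then the preimage of the annihilator of $\mathcal{D}$, whose dimension is $\dim(\mathcal{B}) - \dim(\mathcal{D})$. I would either invoke this briefly or, alternatively, present the plus pairing in matrix form on $\mathbb{R}^k \times \mathbb{R}^k$, where it is represented by the symmetric block matrix $\bigl[\begin{smallmatrix} 0 & I \\ I & 0 \end{smallmatrix}\bigr]$ of full rank $2k$, and read off the dimension count from there.
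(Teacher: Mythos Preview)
Your proposal is correct and follows essentially the same route as the paper: both directions hinge on the polarization identity $\ll (f,e),(f,e)\gg = 2P(f,e)$ together with the dimension formula $\dim(\mathcal{D}^{\perp \!\!\! \perp}) = 2\dim(\mathcal{F}) - \dim(\mathcal{D})$ coming from non-degeneracy. Your added justification of the dimension formula via the induced isomorphism $\mathcal{B}\to\mathcal{B}'$ (or the block matrix $\Sigma$) is a nice touch; the paper simply invokes non-degeneracy at that step.
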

\begin{proof}
	''$\Rightarrow$'': Let $\mathcal{D}$ be a Dirac structure in the sense of Definition \ref{Definition Dirac Structure - Finite-dimensional Case}. Let $(f^1,e^1)$, $(f^2,e^2) \in \mathcal{D}$ be arbitrary port variables. By linearity, we have $(f^1 + f^2, e^1 + e^2) \in \mathcal{D}$. By virtue of property \ref{DiracStructureProp1} in Definition \ref{Definition Dirac Structure - Finite-dimensional Case}, we conclude that
	\begin{align*}
		0 &= \langle e^1 + e^2 \mid f^1 + f^2 \rangle \\
		&= \langle e^1 \mid f^1 \rangle + \langle e^1 \mid f^2 \rangle + \langle e^2 \mid f^1 \rangle + \langle e^2 \mid f^2  \rangle \\
		&= \langle e^1 \mid f^2 \rangle + \langle e^2 \mid f^1 \rangle \\
		&= \hspace{0.2 cm} \ll (e^1,f^1), (e^2,f^2) \gg,
	\end{align*}
whence $\mathcal{D} \subset \mathcal{D}^{\perp \!\!\! \perp}$. Moreover, as the plus pairing $\ll \cdot, \cdot \gg$ is non-degenerate, we have
\begin{align*}
	\dim (\mathcal{D}^{\perp \!\!\! \perp}) = \dim (\mathcal{F} \times \mathcal{E}) - \dim (\mathcal{D}) = 2 \dim (\mathcal{F}) - \dim (\mathcal{D}),
\end{align*}
and due to property \ref{DiracStructureProp2} in Definition \ref{Definition Dirac Structure - Finite-dimensional Case}, it follows that $ \dim(\mathcal{D}) = \dim(\mathcal{D}^{\perp \!\!\! \perp})$ and thus $\mathcal{D} = \mathcal{D}^{\perp \!\!\! \perp}$. 
\vspace{0.5 cm}\\
''$\Leftarrow$'': Let $\mathcal{D}$ satisfy (\ref{Proposition Characterization Dirac Structure - Equivalent Property}). Then for every $(f,e) \in \mathcal{D}$ we have in particular that
\begin{align*}
	0 =	\ll (f,e), (f,e) \gg = 2\langle e \mid f \rangle = 2 P(f,e).
\end{align*}
Thus, property \ref{DiracStructureProp1} is satisfied. Again, as the plus pairing $\ll \cdot, \cdot \gg$ is non-degenerate, we obtain
\begin{align}
	\dim (\mathcal{D}^{\perp \!\!\! \perp}) = 2 \dim (\mathcal{F}) - \dim( \mathcal{D}).
	\label{Proof Proposition Characterization Dirac Structure - Dimension Equation}
\end{align}
Equation (\ref{Proposition Characterization Dirac Structure - Equivalent Property}) now implies that $\dim(\mathcal{D}) = \dim (\mathcal{F})$, and hence property \ref{DiracStructureProp2} of a Dirac structure is satisfied as well. 
\end{proof}
In order to prove the alternative characterization, we first note that we have shown that property \ref{DiracStructureProp1} implies $\mathcal{D} \subset \mathcal{D}^{\perp \!\!\! \perp}$. Together with equation (\ref{Proof Proposition Characterization Dirac Structure - Dimension Equation}) this yields that any subspace $\mathcal{D}$ satisfying property \ref{DiracStructureProp1} has the property that $\dim (\mathcal{D}) \leq \dim (\mathcal{F})$. In conclusion, a Dirac structure indeed is the linear subspace of maximal dimension satisfying property \ref{DiracStructureProp1} of Definition \ref{Definition Dirac Structure - Finite-dimensional Case}.
\vspace{0.5 cm}\\
 There are several examples of constant Dirac structures. We will present only a few of them and refer to \cite[Section 2.2]{SchaftJeltsema} for more.
\begin{example}[Dirac Structures; cf. Section 2.2 in \cite{SchaftJeltsema}]
	\label{Example Dirac Structures}
	Let $\mathcal{F} = \mathbb{R}^n$ and $\mathcal{E} = \mathcal{F}' = \mathbb{R}^n$.  Consider the following examples of Dirac structures:
	\begin{enumerate}
		\item[1.] Let $J \colon \mathcal{E} \to \mathcal{F},$ $e \mapsto f = Je$, with $J \in \mathbb{R}^{n \times n}$ a skew-symmetric matrix, i.e., $J = - J^{\top}$. Then one can readily show that
		\begin{align*}
		\mathcal{D}_J :=	\gra (-J) := \left\{ (f,e) \in \mathcal{F} \times \mathcal{E} \mid -f = Je \right\}
		\end{align*}
	is a Dirac structure. According to Proposition \ref{Proposition Characterization Dirac Structure}, we need to show that $\mathcal{D}_J = \mathcal{D}_J^{\perp \!\!\! \perp}$ with respect to $\ll \cdot , \cdot \gg$ defined in  \eqref{Plus Pairing}. Due to skew-symmetry of $J$, for $(f^1,e^1),(f^2,e^2) \in \mathcal{D}_J$ we have
	\begin{align*}
		\ll (f^1,e^1), (f^2,e^2) \gg &= \langle e^1, f^2 \rangle_2 + \langle e^2 , f^1 \rangle_2 \\
		&= \langle e^2, - Je^1 \rangle_2 + \langle e^1, - Je^2 \rangle_2 \\
		&= \langle e^1, Je^2 \rangle_2 - \langle e^1, J e^2 \rangle_2 \\
		&=0.
	\end{align*}  
We conclude that $\mathcal{D}_J \subset \mathcal{D}_J^{\perp \!\!\! \perp}$. Conversely, let $(f,e) \in \mathcal{D}_J^{\perp \!\!\! \perp}$, that is, for all $(f^1, e^1) \in \mathcal{D}_J$, 
\begin{align*}
	0 &= \ll (f^1, e^1), (f,e) \gg  \\
	&=  \langle e^1, f \rangle_2 + \langle e, f^1 \rangle_2 \\
	&=   \langle e^1, f \rangle_2 + \langle e, -Je^1 \rangle_2 \\
	&= \langle e^1,f \rangle_2 + \langle e^1, Je \rangle_2.
\end{align*} 
Consequently, for this equation to hold for all $(f^1, e^1) \in \mathcal{D}_J$, we need $f = -Je$, whence $(f,e) \in \mathcal{D}_J$ and therefore $\mathcal{D}_J^{\perp \!\!\! \perp} \subset \mathcal{D}_J$. Altogether, $\mathcal{D}_J = \mathcal{D}_J^{\perp \!\!\! \perp}$, and $\mathcal{D}_J$ is a Dirac structure. 
\vspace{0.5 cm}\\
As we will see, graphs of skew-symmetric linear maps in the finite-dimensional case, and (formally) skew-symmetric matrix differential operators in the infinite-dimensional case play a substantial role in the port-Hamiltonian framework.
\item[2.] Let $\mathcal{K} \subset \mathcal{F}$ be any subspace. Define
\begin{align*} 
	\mathcal{K}^{\bot} := \left\{ e \in \mathcal{E} \mid \langle e \mid f \rangle = 0 \hspace{0.1 cm} \text{for all } f \in \mathcal{K} \right\}.
\end{align*}
Then $\mathcal{D} = \mathcal{K} \times \mathcal{K}^{\bot}$ is a so-called separable Dirac structure (cf. \cite[Definition 2.2]{SchaftJeltsema}), i.e., for all $(f^{1},e^{1}),(f^{2},e^{2}) \in \mathcal{D}$ we have
\begin{align*}
	\langle e^{1} \mid f^{2} \rangle = 0.
\end{align*}
One can show that any separable Dirac structure is of the form $\mathcal{K} \times \mathcal{K}^{\bot}$ for an appropriately chosen subspace $\mathcal{K} \subset \mathcal{F}$, see \cite[Proposition 2.2]{SchaftJeltsema}. A typical example for a separable Dirac structure is given by transformers. A transformer is an element linking two scalar bonds with flow and effort variables $(f^{1}, e^{1}), (f^{2}, e^{2}) \in \mathbb{R}^2$ by
\begin{align*}
	f^{2} &= \alpha f^1, \\
	e^1 &= - \alpha e^2,
\end{align*}
with $\alpha \in \mathbb{R} \setminus \{0\}$ the transformer ratio. More precisely, let $\mathcal{F} = \mathcal{E} =  \mathbb{R}^2$ and set $\mathcal{K}_{\alpha} = \left\{ (f, \alpha f) \in \mathcal{F} \mid f \in \mathbb{R}\right\}$. It is clear that $\mathcal{K}_{\alpha}^{\bot} = \left\{ (- \alpha e,e) \in \mathcal{E} \mid e \in \mathbb{R} \right\}$, and that $\dim(\mathcal{K}_{\alpha} \times \mathcal{K}_{\alpha}^{\bot}) = \dim(\mathcal{F}) = 2$, which is why $\mathcal{D}_{\alpha} = \mathcal{K}_{\alpha} \times \mathcal{K}_{\alpha}^{\bot}$ is indeed a separable Dirac structure. This can be easily generalized to the multi-port case.
\end{enumerate}
	\QEDA
\end{example}
As illustrated in Figure \ref{Figure Port-Hamiltonian System}, the Dirac structure describes how energy is exchanged among various components in a physical system. In the following, we want to specify the port variables that constitute a (finite-dimensional) Dirac structure. We will give an overview concerning port variables related to energy-storing and energy-dissipating system elements as well as  external ports.

\section{Energy-Storing Elements}
\label{Section Energy-storing Elements}
The energy-storing multi-port element $\mathcal{S}$ unifies all energy-storing elements of the system. The port variables of the Dirac structure associated with $\mathcal{S} = \mathcal{F}_S \times \mathcal{E}_S$ will be denoted by $(f_S,e_S) \in \mathcal{S}$. The total energy storage of the system is defined by a Hamiltonian function $H \colon X \to \mathbb{R}$ on the state space $X$. We restrict ourselves to the case where the state space is given by  $X = \mathbb{R}^n$. In general, the state space is a smooth (finite-dimensional) manifold, see \cite[Chapter 3]{SchaftJeltsema}, \cite{SchaftMaschke20}. In Chapter \ref{Chapter Infinite-dimensional Port-Hamiltonian Systems} we will discuss the extension to the infinite-dimensional case.
\vspace{0.5 cm}\\
Before providing more details about energy-storing multi-port elements, we want to clarify briefly how the state variables have to be chosen in various physical domains, as they are closely related to the flows and efforts. In the common classification of domains, many domains are characterized by two types of states, namely the \emph{generalized displacement} $q \in X$ and the \emph{generalized momentum} $p \in X'$. Depending on the physical domain, one obtains the generalized displacement $q$ by time-integration of the flow variable $f \in \mathcal{F}$, whereas the generalized momentum $p$ refers to time-integration of the effort variable $e \in \mathcal{E}$.  Table~\ref{Table Flows Efforts States} is an extension of Table~\ref{Table Flows And Efforts} and gives a summary of the generalized states in the respective physical subdomain. For more background information concerning this topic, we recommend the introductory chapter in \cite{ModCompPhys}.
\vspace{0.5 cm}
\begin{table}
	\renewcommand{\arraystretch}{2}
	\tiny
	\centering
	{\rowcolors{3}{black!10!white!90}{white!100}
		\begin{tabular}{ c c c c c} 
			\hline
			Physical Domain & Flow  & Effort  &  Generalized Displacement  & Generalized Momentum  \\ 
			& $f \in \mathcal{F}$ & $e \in \mathcal{E}$ & $q = \int f \, dt$ & $p = \int e  \, dt$ \\ [0.5ex]
			\hline \hline
			electric & current $I$ & voltage $V$ & charge $Q$  & magnetic flux linkage $\lambda$  \\
			magnetic & voltage $V$ & current $I$ & magnetic flux linkage $\lambda$ & charge $Q$ \\ 
			potential translation & velocity $v$ & force $F$  & displacement $x$ & momentum $p$ \\
			kinetic translation & force $F$ & velocity $v$ & momentum $p$ & displacement $x$ \\
			potential rotation & angular velocity  $\omega$ & torque $T$ & angular displacement $\theta$ & angular momentum $b$ \\
			kinetic rotation & torque $T$ & angular velocity $\omega$ & angular momentum $b$ & angular displacement $\theta$ \\
			potential hydraulic & volume flow $\phi$ & pressure $p$ & volume $V$ & momentum of a flow tube $\Gamma$ \\
			kinetic hydraulic & pressure $p$ & volume flow $\phi$ & momentum of a flow tube $\Gamma$ & volume $V$ \\ 
			chemical  & chemical potential $\mu$ & molar flow $f_N$ & number of moles $N$ & $-$\\
			thermal & temperature $T$ & entropy flow $f_S$ & entropy $S$ & $-$  \\ [0.5ex]
			\hline
		\end{tabular}
	}
	\caption{Flows, Efforts, and Generalized State Variables in Different Physical Domains \\ \cite[Table 1.1]{ModCompPhys}}
	\label{Table Flows Efforts States}
\end{table}

The vector of flow variables is given by the rate $\dot{x}$ of the state $x \in X$. Thus, for any (time-dependent) state $x \in X$ the flow vector $\dot{x}$ will be an element of the linear space $\mathcal{F}_S= T_xX$, the \emph{tangent space} of $X$ at $x \in X$. In the given case $X = \mathbb{R}^n$, the tangent space $T_xX$ may be identified with $X$.
\vspace{0.5 cm}\\
Furthermore, the vector of effort variables of the energy-storing multi-port element is given by the gradient vector $\nabla H(x) \in \mathcal{E}_S = T_x'X$, the dual space of the tangent space $T_xX$, which again may be identified with $X' = \mathbb{R}^n$. Note that we define $\nabla H(x)$, $x \in X$, to be a column vector instead of a row vector. This convention will be applied throughout this work.
\vspace{0.5 cm}\\
Now, in case of a time-dependent state $x(t) \in X$, $t \geq 0$, we obtain the following power-balance for the energy-storing multi-port element:
\begin{align}
	\frac{d}{dt}H(x(t)) = \nabla H^{\top}(x(t)) \dot{x}(t).
	\label{Power Balance Energy-Storing Element}
\end{align}
In the present case, the bond space is given by $\mathcal{B} = \mathcal{S} = \mathcal{F}_S \times \mathcal{E}_S$, and the power-pairing $\langle \cdot \mid \cdot \rangle \colon \mathcal{E}_S \times \mathcal{F}_S \to \mathbb{R}$ is given by $\langle \cdot , \cdot \rangle_2$. The interconnection of the energy-storing elements to the storage port $(f_S, e_S) \in X \times X'$ of some Dirac structure $\mathcal{D} \subset X \times X'$ is done by setting
\begin{align}
	f_S = - \dot{x} \hspace{0.3 cm} \text{and} \hspace{0.3 cm} e_S = \nabla H(x).
	\label{Choice of Port Variables fS and eS}
\end{align}
Note that the minus sign in (\ref{Choice of Port Variables fS and eS}) has been placed in order to have a consistent power flow convention: $\nabla H^{\top}(x) \dot{x}$ is the power flowing into the energy-storing elements, whereas $e_S^{\top}f_S$ is the power flowing into the Dirac structure. Hence, with \eqref{Function - Power} and \eqref{Choice of Port Variables fS and eS} the power-balance (\ref{Power Balance Energy-Storing Element}) can also be written as
\begin{align}
	\frac{d}{dt}H(x(t)) = - e_S^{\top}(t)f_S(t) = - \langle e_S(t) \mid f_S(t) \rangle = 0, \hspace{0.5 cm} t \geq 0,
	\label{Power-Balance Energy-Storing Element with fS and eS}
\end{align}
provided that $(f_S(t), e_S(t)) \in \mathcal{D}$ for $t \geq 0$. Equation \eqref{Power-Balance Energy-Storing Element with fS and eS} then expresses power conservation of the system.
\section{Energy-Dissipating Elements}
\label{Section Energy-dissipating Elements}
The second multi-port element $\mathcal{R}$ corresponds to internal energy dissipation, and its port variables are
denoted by $(f_R,e_R)$. These port variables are terminated on a static energy-dissipating (resistive) relation $\mathcal{R}$. Let $\mathcal{F}_R = \mathbb{R}^m$ and $\mathcal{E}_R = \mathcal{F}_R' = \mathbb{R}^m$ be the dual space of $\mathcal{F}_R$. In general, a resistive relation $\mathcal{R} \subset \mathcal{F}_R \times \mathcal{E}_R$ is a set of the form
\begin{align}
	\mathcal{R} = \left\{ (f_R,e_R) \in \mathcal{F}_R \times \mathcal{E}_R \mid R(f_R,e_R) = 0, \hspace{0.1 cm} \langle e_R, f_R \rangle_2 \leq 0 \right\}.
	\label{Resistive Relation}
\end{align}
We will call the subset $\mathcal{R}$ an \emph{energy-dissipating relation} or a \emph{resistive structure}. If we restrict ourselves to linear resistive structures, the mapping $R \colon \mathcal{F}_R \times \mathcal{E}_R \to \mathbb{R}$ will be of the form
\begin{align*}
	R(f_R,e_R) = R_ff_R + R_ee_R,
\end{align*}
where $R_f,R_e \in \mathbb{R}^{m \times m}$. In order to guarantee that the second constitutive relation in (\ref{Resistive Relation}) is satisfied whenever the first one is, we need to impose the following conditions on those matrices:
\begin{enumerate}
	\item[(i)] $R_fR_e^{\top} = R_eR_f^{\top} \geq 0$ and
	\item[(ii)] $\rank \left( \begin{bmatrix}
		R_f & R_e
 	\end{bmatrix} \right)= \dim (\mathcal{F}_R) = m$. 
\end{enumerate}
If these conditions are satisfied, then we can equivalently rewrite the first constitutive condition in (\ref{Resistive Relation}) as the requirement that there exists some vector $\lambda \in \mathbb{R}^m$ such that
\begin{align*}
	f_R = R_e^{\top} \lambda \hspace{0.3 cm} \text{and} \hspace{0.3 cm} e_R = -R_f^{\top}\lambda.
\end{align*}
It is easy to check now that under these conditions we have $\langle e_R , f_R \rangle_2 \leq 0$, and hence the second constitutive condition in (\ref{Resistive Relation}) can be omitted. Altogether, a linear resistive structure is of the form
\begin{align}
	\mathcal{R} = \left\{ (f_R,e_R) \in \mathcal{F}_R \times \mathcal{E}_R \mid R_f f_R + R_ee_R = 0 \right\}
	\label{Resistive Relation - Linear}
\end{align}
with suitably defined matrices $R_f, R_e \in \mathbb{R}^{m \times m}$. For a system with energy-storing and resistive elements, the bond space is given by $\mathcal{B} = \mathcal{F} \times \mathcal{E} =  (X \times \mathcal{F}_R) \times (X' \times \mathcal{E}_R)$, and the power pairing again equals the natural inner product. By definition, the Dirac structure $\mathcal{D} \subset \mathcal{B}$ of such a system satisfies the power-balance
\begin{align}
	\left\langle \begin{pmatrix}
		e_S \\
		e_R 
	\end{pmatrix} \, \Bigg| \,  \begin{pmatrix}
	f_S \\
	f_R
\end{pmatrix} \right\rangle = e_S^{\top}f_S + e_R^{\top}f_R = 0, \hspace{0.5 cm} \left( \begin{pmatrix}
f_S \\
f_R 
\end{pmatrix} , \begin{pmatrix}
e_S \\
e_R
\end{pmatrix} \right) \in \mathcal{D}.
	\label{Power-Balance with Dissipation}
\end{align}
This leads, by substitution of (\ref{Power-Balance Energy-Storing Element with fS and eS}) and (\ref{Power-Balance with Dissipation}), to
\begin{align}
	\frac{d}{dt} H(x(t)) = -e_S^{\top}(t) f_S(t) = e_R^{\top}(t) f_R(t) \leq 0, \hspace{0.5 cm} t\geq0.
	\label{Power-Balance Energy-Storing and Energy-Dissipating Elements}
\end{align}
This inequality expresses the fact that the internally stored energy (the Hamiltonian) monotonically decreases in the presence of resistive elements.
\vspace{0.5 cm}\\
An important special case of energy-dissipating relations occurs when
the resistive relation can be expressed as
\begin{align*}
	f_R = - F(e_R),
\end{align*}
where $F \colon \mathbb{R}^m \to \mathbb{R}^{m}$ satisfies $e_R^{\top} F(e_R) \geq 0$ for all $e_R \in \mathbb{R}^m$. For linear resistive elements, we have $F(e_R) = Re_R$ with $R \in \mathbb{R}^{m \times m}$ being symmetric and positive semi-definite. A typical example of a linear resistive structure is provided by a linear damper in a spring-mass-damper system, where $\mathcal{R}_d$ is given by
\begin{align}
	\label{Resistive Relation Finite-Dimensional Case}
	\mathcal{R}_d = \{ (f_R,e_R) \in \mathbb{R}^2 \mid f_R = -de_R \},
\end{align}
and where $d \in \mathbb{R}_+$ is the damping constant.

\section{External Ports}
\label{Section External Ports}
The \emph{external port} $(f_P,e_P) \in \mathcal{F}_P \times \mathcal{E}_P$, where $\mathcal{F}_P = \mathbb{R}^k$ and $\mathcal{E}_P= \mathcal{F}_P'$, models the interaction of the system with its environment. This comprises different situations. For instance, there are port variables which are accessible for controller action (see \cite[Chapter 5]{ModCompPhys}, \cite[Chapter~7]{SchaftL2}), or there is an external port describing the interaction with the physical environment. Another important type of external port variables corresponds to source connections, as it is often the case in electrical circuits. For instance, in an electrical circuit with voltage source the system's input is the voltage of the source, and the current through the source is set to be the resulting output variable. If one wishes to distinguish between these types of external ports, which in some cases may even be inevitable, one can divide the external port into a control port and an interaction port (cf. \cite[Subsection 2.2.1]{ModCompPhys}). However, we will not distinguish between them.
\vspace{0.5 cm}\\
Now, taking the external port into account, for a Dirac structure
\begin{align*}
	\mathcal{D} \subset X \times \mathcal{F}_R \times \mathcal{F}_P \times X' \times \mathcal{E}_R \times \mathcal{E}_P
\end{align*}
the power-balance (\ref{Power-Balance with Dissipation}) extends to
\begin{align*}
	 \left\langle \begin{pmatrix}
	 	e_S \\
	 	e_R \\
	 	e_P
	 \end{pmatrix} \, \Bigg| \, \begin{pmatrix}
	 f_S \\
	 f_R \\
	 f_P
 \end{pmatrix} \right\rangle = e_S^{\top}f_S + e_R^{\top}f_R + e_P^{\top}f_P  = 0, \hspace{0.5 cm} \left(  \begin{pmatrix}
 f_S \\
 f_R \\
 f_P
\end{pmatrix} , \begin{pmatrix}
e_S \\
e_R \\
e_P
\end{pmatrix} \right) \in \mathcal{D},
	%\label{Power-Balance with Dissipation and External Ports}
\end{align*}
and (\ref{Power-Balance Energy-Storing and Energy-Dissipating Elements}) accordingly extends to
\begin{align*}
	\frac{d}{dt}H(x(t)) = - e_S^{\top}(t) f_S(t) = e_R^{\top}(t)f_R(t) + e_P^{\top}(t)f_P(t) \leq e_P^{\top}(t)f_P(t), \hspace{0.5 cm} t \geq 0.
	%\label{Power-Balance with Energy-Storing, Energy-Dissipating and Energy-Routing Elements}
\end{align*}
 This inequality states that the increase of the internally stored energy is always less
than or equal to the externally supplied power.

\section{Port-Hamiltonian Dynamics}
\label{Section Port-Hamiltonian Dynmacis}
Now that we have discussed the constitutive ports of a Dirac structure and the corresponding power-balance, we want to present the underlying geometrical structure of port-Hamiltonian systems. We will begin with the definition of linear port-Hamiltonian systems by means of Dirac structures, and give a more familiar representation of a big class of such systems. At the end of this section, we will give some final remarks regarding non-linear port-Hamiltonian systems and port-Hamiltonian differential algebraic systems.
\vspace{0.5 cm}\\
%\subsection{Linear Port-Hamiltonian Systems}
%\label{Subsection Linear Port-Hamiltonian Systems}
We begin with the definition of linear port-Hamiltonian systems with respect to an underlying Dirac structure.

\begin{definition}[Linear Port-Hamiltonian System]
	\label{Definition Linear Port-Hamiltonian System}
Consider the state space $X = \mathbb{R}^n$ and a Hamiltoninan $H \colon X \to \mathbb{R}$ defining energy storage. Furthermore, let $\mathcal{F}_R = \mathcal{E}_R = \mathbb{R}^m$ and $\mathcal{F}_P = \mathcal{E}_P = \mathbb{R}^k$. A linear port-Hamiltonian system on $X$ is defined as a triple $(\mathcal{D}, H, \mathcal{R})$, consisting of a Dirac structure
\begin{align*}
	\mathcal{D} \subset \mathcal{F} \times \mathcal{E} = (X \times \mathcal{F}_R \times \mathcal{F}_P) \times (X'  \times \mathcal{E}_R \times \mathcal{E}_P)
\end{align*}
defined with respect to the power pairing $\langle \cdot \mid \cdot \rangle = \langle \cdot , \cdot \rangle_2 \colon \mathcal{E} \times \mathcal{F} \to \mathbb{R}$, with energy-storing ports $(f_S,e_S) \in X \times X'$, resistive ports $(f_R, e_R) \in \mathcal{F}_R \times\mathcal{E}_R$, external ports $(f_P, e_P) \in \mathcal{F}_p \times \mathcal{E}_P$, the Hamiltonian $H$, and a linear resistive structure
\begin{align*}
	\mathcal{R} \subset \mathcal{F}_R \times \mathcal{E}_R
\end{align*}
of the form \eqref{Resistive Relation - Linear}. Its dynamics are geometrically specified by the requirement that for all $t \geq 0$  we have
\begin{align*}
	\left( \begin{pmatrix}
		-\dot{x}(t) \\
		f_R (t) \\
		f_P (t)
	\end{pmatrix}, \begin{pmatrix}
	\nabla H(x(t)) \\
	e_R(t) \\
	e_P (t) 
\end{pmatrix} \right)  \in \mathcal{D} \hspace{0.3 cm} \text{and} \hspace{0.3 cm} (f_R(t), e_R(t)) \in \mathcal{R}.
\end{align*}
\end{definition}
At the beginning of Chapter \ref{Chapter Finite-dimensional Port-Hamiltonian Systems}, we have already seen how a mass-spring system is modeled as a port-Hamiltonian system. We revisit this example to show that this is indeed a linear port-Hamiltonian system in the sense of Definition \ref{Definition Linear Port-Hamiltonian System}.
\begin{example}[Mass-spring System Revisited]
	\label{Example Mass-spring System Revisited}
Consider the mass-spring system from Example \ref{Example Mass-Spring System}. The dynamics of this system are defined by equation \eqref{Example Mass-Spring System - pH System}. Defining the state space $X = \mathbb{R}^2$ with state vector $x = \begin{bmatrix}
	q \\
	p
\end{bmatrix}$, and taking $\mathcal{F}_S = \mathcal{E}_S = X$, one immediately sees that the underlying Dirac structure of this system is given by the graph of the skew-symmetric matrix $-J \in \mathbb{R}^{2 \times 2}$ with
\begin{align*}
J =  \begin{bmatrix}
	0 & 1 \\
	-1 & 0
\end{bmatrix}, \text{ i.e.,} \hspace{0.3 cm} 	\mathcal{D}_{J} = \{ (f_S,e_S) \in \mathcal{F}_S \times \mathcal{E}_S \mid - f_S = Je_S \}.
\end{align*}
Here we have $- f_S =  \begin{bmatrix}
	\dot{q} \\
	\dot{p} 
\end{bmatrix}$ and $e_S = \begin{bmatrix}
\frac{\partial H}{\partial q} \\
\frac{\partial H}{\partial p}
\end{bmatrix}$, where $H \colon X \to \mathbb{R}$ is the Hamiltonian given by \eqref{Example Mass-Spring System - Hamiltonian}. Note that the matrix $J$ is the one given on the right-hand side in \eqref{Example Mass-Spring System - pH System}. As there are neither external ports nor resistive elements, the dynamics of this linear port-Hamiltonian system is geometrically specified by
\begin{align*}
 (f_S(t), e_S(t)) =	(-\dot{x}(t), \nabla H(x(t))) \in \mathcal{D}, \hspace{0.5 cm}  t \geq 0.
\end{align*}
Moreover, by skew-symmetry of $J$, the power-balance as expected yields
\begin{align*}
	\frac{d}{dt} H(q(t),p(t)) = \left\langle \begin{pmatrix}
		kq(t) \\
		\frac{p(t)}{m}
	\end{pmatrix} \Bigg| \begin{pmatrix}
	\dot{q}(t) \\
	\dot{p}(t)
\end{pmatrix} \right\rangle = - \langle e_S(t) \mid f_S(t) \rangle = 0, \hspace{0.5 cm} t \geq 0.
\end{align*}
\QEDA
\end{example}
A state-space representation, as established for the mass-spring system in Example \ref{Example Mass-Spring System}, is quite common. More general, an important class of linear port-Hamiltonian systems is obtained by considering Dirac structures which are the graph of a skew-symmetric map of the form
\begin{align*}
	\begin{bmatrix}
		e_S \\
		e_R \\
		e_P \\
	\end{bmatrix} \mapsto \begin{bmatrix}
		f_S \\
		f_R \\
		f_P 
	\end{bmatrix} := \begin{bmatrix}
		-J & -G_R & -G \\
		G_R^{\top} & 0 & 0 \\
		G^{\top} & 0 & 0 
	\end{bmatrix} \begin{bmatrix}
		e_S \\
		e_R \\
		e_P \\
	\end{bmatrix} , %\hspace{0.5 cm} x \in X,
\end{align*}
with $J = - J^{\top} \in \mathbb{R}^{n \times n}$, a linear dissipation relation $f_R = - \bar{R}e_R$ for some positive-semidefinite matrix $\bar{R} \in \mathbb{R}^{m \times m}$, and $G_R \in \mathbb{R}^{n \times m}$, $G \in \mathbb{R}^{n \times k}$. This yields so-called \emph{input-state-output port-Hamiltonian systems} of the form
\begin{align}
	\begin{split}
	\dot{x} &= \left[ J - R \right] \nabla H(x) + G u, \\
	y&= G^{\top} \nabla H(x), 
	\end{split}
	\label{Input-State-Output port-Hamiltonian System}
\end{align}
with $R := G_R \bar{R} G_R^{\top} \in \mathbb{R}^{n \times n}$, and with $u = e_P$ the input vector and $y = f_P$ the output vector. This is a convenient representation for the investigation of control related problems for port-Hamiltonian systems, see \cite[Chapter~5]{ModCompPhys}, \cite[Chapter~7]{SchaftL2}, \cite[Chapter~15]{SchaftJeltsema}. 
\vspace{0.5 cm}\\
The following example aims to emphasize port-based network modeling of multi-physics systems. We also want to point to Example 2.5 in \cite{SchaftJeltsema}, where a port-Hamiltonian formulation for the dynamics of a DC motor is derived.
\begin{example}[Levitated Ball System; Example 2.4 in \cite{SchaftJeltsema}]
	\label{Example - Levitated Ball System}
	\begin{figure}[h]
		\centering
		\includegraphics[width = 6 cm]{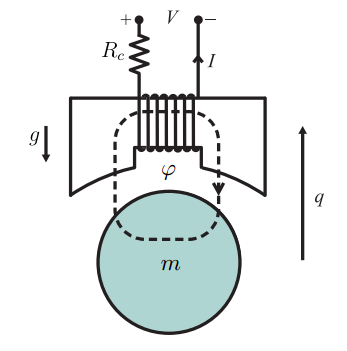}
		\caption{Magnetically Levitated Ball of Example \ref{Example - Levitated Ball System} \cite[Figure 2.4]{SchaftJeltsema}}
		\label{Figure Levitated Ball System}
		\end{figure}
	Consider the dynamics of an iron ball that is levitated by the magnetic field of a controlled inductor, as schematically depicted in Figure \ref{Figure Levitated Ball System}. The Hamiltonian $H \colon \mathbb{R}^3 \to \mathbb{R}$ of this system - with $q \in \mathbb{R}$ the height of the ball, $m >0$ the ball's mass, $p \in \mathbb{R}$ the vertical momentum, $ \varphi \in \mathbb{R}$ the magnetic flux-linkage of the inductor, $R_c >0$ the coil's resistance, $g > 0$ the gravitational constant,  $L(\cdot)$ the inductance depending on the height $q$, $V \in \mathbb{R}$ the voltage across the inductor, and $I \in \mathbb{R}$ the current through the inductor - is defined as
	\begin{align*}
		H(q,p,\varphi) = E_{\text{pot}}(q) + E_{\text{kin}}(p) + E_{\text{mag}}(q,\varphi) = mgq + \frac{p^2}{2m} + \frac{\varphi^2}{2L(q)}.
	\end{align*}
The system dynamics are given by
	\begin{align}
		\begin{split}
		\begin{bmatrix}
			\dot{q} \\
			\dot{p} \\
			\dot{\varphi} 
		\end{bmatrix} &= \begin{bmatrix}
		0 & 1 & 0 \\
		-1 & 0 & 0 \\
		0 & 0 & - R_c 
	\end{bmatrix} \begin{bmatrix}
	\frac{\partial H}{\partial q} \\
	\frac{\partial H}{\partial p} \\
	\frac{\partial H }{\partial \varphi}
\end{bmatrix} + \begin{bmatrix}
0 \\
0 \\
1
\end{bmatrix} V, \\
I &= \frac{\partial H}{\partial \varphi}.
\end{split}
\label{Example Levitated Ball System - pH System}
	\end{align}
Here, $e_P = V$ denotes the input and $f_P = I$ denotes the system's output. Furthermore, the matrices $J$, $R \in \mathbb{R}^{3 \times 3}$, and $G \in \mathbb{R}^{3}$ from \eqref{Input-State-Output port-Hamiltonian System} are given by
\begin{align*}
	J = \begin{bmatrix}
		0 & 1 & 0 \\
		-1 & 0 & 0 \\
		0 & 0 & 0 
	\end{bmatrix}, \hspace{0.3 cm} R = \begin{bmatrix}
	0 & 0 & 0 \\
	0 & 0 & 0 \\
	0 & 0 & R_c
\end{bmatrix}, \hspace{0.3 cm}  \text{and} \hspace{0.3 cm} G = \begin{bmatrix}
0 \\ 
0 \\
1
\end{bmatrix}.
\end{align*}
While the system description \eqref{Example Levitated Ball System - pH System} may at first glance indicate that the mechanical and the magnetic part of the given system are decoupled, they are actually coupled, namely via the Hamiltonian $H$: the magnetic energy $E_{\text{mag}}$ depends both on the flux $\varphi$ and the mechanical variable $q$. Consequently, the evolution of the (mechanical) momentum variable $p$ depends on the magnetic variable $\varphi$, and the evolution of the magnetic variable $\varphi$ depends on the mechanical variable $q$.
\QEDA
\end{example}
Needless to say, there exist nonlinear port-Hamiltonian systems as well. The major difference to linear port-Hamiltonian systems is that  the state space $X$ becomes a smooth manifold, and the Dirac structure is not constant anymore, but is modulated by the state variables. Moreover, the spaces $\mathcal{F}_S$ and $\mathcal{E}_S$ become the tangent space and the cotangent space, respectively, as mentioned in Section \ref{Section Energy-storing Elements}. Furthermore, the input-state-output description \eqref{Input-State-Output port-Hamiltonian System} of nonlinear port-Hamiltonian systems becomes
\begin{align}
	\begin{split}
	\dot{x} &= [J(x) - R(x)]\nabla H(x) + G(x) u, \\
	y &= G^{\top}(x)\nabla H(x). 
\end{split}
\label{Nonlinear PH-System - Finite Dimensional}
\end{align}

For details and some examples we refer to  \cite[Chapter 6]{SchaftL2}, \cite[Chapter 3]{SchaftJeltsema}, and the references specified therein. 
\vspace{0.5 cm}\\
When modeling dynamical systems, the appearance of algebraic equations next to differential equations is ubiquitous. For instance, in network modeling of physical systems, the interconnections between different dynamical components of the overall system almost unavoidably lead to algebraic constraints. Such systems are called \emph{linear port-Hamiltonian descriptor systems} (\emph{port-Hamiltonian differential-algebraic systems}), or short \emph{pHDAEs} \cite{Beattie}. The geometric definition of port-Hamiltonian systems is quite handy, as it can even characterize the dynamics of pHDAEs by choosing the underlying Dirac structure suitably. This is the reason why in Definition \ref{Definition Linear Port-Hamiltonian System} one usually refers to the triple $(\mathcal{D}, H, \mathcal{R})$ as a (linear) port-Hamiltonian differential-algebraic system.
\vspace{0.5 cm}\\
In this chapter, we have presented the basic concepts and main characteristics of the port-Hamiltonian framework on finite-dimensional spaces. We have learned that the centerpiece of this approach is the power-conserving interconnection structure $\mathcal{D}$, called the Dirac structure. We have presented the basic definition of a Dirac structure and discussed its constituting ports in the finite-dimensional case, namely ports associated with energy-storing and energy-dissipating elements as well as external ports. Lastly, we have geometrically specified the dynamics of linear port-Hamiltonian systems by means of their underlying Dirac structures and have given a state-space representation for a huge class of port-Hamiltonian systems. %Lastly, we have briefly discussed port-Hamiltonian differential-algebraic systems, and showed by reference to a simple example that the geometric description of port-Hamiltonian systems is even suitable for this class of systems. 
\vspace{0.5 cm}\\
Now that we have gained a basic understanding of port-based modeling of physical systems, we want to study infinite-dimensional port-Hamiltonian systems in the upcoming chapters, to finally deal with the main topic of this thesis, namely the port-Hamiltonian formulation of a system of two conservation laws with a moving interface. In order to do so, we need to provide some background information concerning semigroup theory, infinite-dimensional systems theory, the concept of admissibility, and evolution equations. We will treat these subjects in the next chapter. 

\chapter{Some Background on Functional Analysis and Systems Theory}
\label{Chapter Some Background}
In preparation for the main part of this thesis, we recall some well-known concepts and results on selected topics. Most of the time, we will give no proofs, but provide references regarding the respective topics. Furthermore, we will omit various topics that are usually covered in the literature, and only focus on aspects that are useful for the discussion in the following chapters.
\vspace{0.5 cm}\\
This chapter is organized as follows. In Section \ref{Section Operators} we recall the definition of the Hilbert space adjoint of a densely defined operator $A \colon D(A) \subset X \to X$. Moreover, we introduce the notion of the formally adjoint operator of a matrix differential operator, and discuss the difference between the Hilbert space adjoint and the formal adjoint. As we will deal with infinite-dimensional (control) systems, we need a tool for the analysis of such systems. In Section~\ref{Section Strongly Continuous Semigroups} we are going to study strongly continuous semigroups and characterize their infinitesimal generators. In Section \ref{Section The Abstract Cauchy Problem} we cover abstract (i.e. Hilbert space-valued) linear differential equations and give sufficient and necessary conditions for well-posedness of such systems. We will see that the framework of strongly continuous semigroups is a powerful tool for the analysis of so-called abstract Cauchy problems. Afterwards, we are going to shortly discuss infinite-dimensional control systems in Section \ref{Section Linear Control Systems}. For a proper treatment of such systems, we need to introduce the extrapolation space $X_{-1}$ and the concept of admissible control operators. We will give criteria for when the mild solution of an abstract control system is a continuous $X$-valued function. Section \ref{Section Evolution Equations} is the last section of this chapter and deals with linear, time-variant evolution problems. We will study (two-parameter) evolution systems and define the notion of stability for a family of infinitesimal generators of $C_0$-semigroups. We assume that the reader is acquainted with Riemann integrals of vector-valued functions \cite[Section 3.1]{Lorenzi}, the Lebesgue theory of vector-valued measurable functions, and Bochner intergrals \cite[Section 1.1]{Arendt}.
\vspace{0.5 cm}\\
Unless stated otherwise, throughout this chapter we assume that $X$ is a real or complex separable Hilbert space with inner product $\langle  \cdot, \cdot \rangle_{X}$.

\section{Adjoint and Formally Adjoint Operators}
\label{Section Operators}
In this section, we want to define and highlight the difference of the Hilbert space adjoint of a matrix differential operator to its formal adjoint, and provide a simple example to illustrate this difference. For a detailed, more general discussion on formally adjoint operators we refer to \cite[Chapter 13]{Aubin}.
\vspace{0.5 cm}\\
For the first part of this section, we follow \cite[Section 2.9]{Tucsnak-Weiss}.
\begin{definition}
	Let $V$ and $Z$ be Hilbert spaces. An operator $J \in \mathcal{L}(V,Z)$ is called an isomorphism, or a unitary operator, from $V$ to $Z$, if we have
\begin{align*}
	\langle J v, Jw \rangle_Z = \langle v , w \rangle_V, \hspace{0.5 cm} v,w \in V.
\end{align*}
\end{definition}
One may verify that $J \in \mathcal{L}(V,Z)$ is unitary if and only if $\|Jv\|_Z = \|v\|_V$ holds for all $v \in V$, and $\ran(J) = Z$.
\vspace{0.5 cm}\\
For any Hilbert space $V$, we denote  by $ \langle z, v \rangle_{V' \times V}$ the functional $z \in V'$ applied to $v \in V$, so that $ \langle \cdot , \cdot \rangle_{V' \times V}$ is linear in the first component and antilinear in the second component. This pairing is called the \emph{natural (duality) pairing} on $V$. The norm on $V'$ is defined as
\begin{align*}
	\| z \|_{V'} = \sup_{\stackrel{ v \in V}{\|v\|_V = 1}} \vert \langle z, v \rangle_{V' \times V} \vert, \hspace{0.5 cm} z \in V'. 
\end{align*}
There is an operator $J_R \colon V \to V'$ satisfying
\begin{align*}
	\langle J_R v, w \rangle_{V' \times V} = \langle v, w \rangle_V, \hspace{0.5 cm} v, w \in V. 
\end{align*}
According to the Riesz representation theorem, $J_R$ is an isomorphism. We say that we identify the Hilbert space $V$ with its dual $V'$, if we do not distinguish between $v$ and $J_Rv$ for all $v \in V$. 
\vspace{0.5 cm}\\
In the following, we identify $X$ with its dual $X'$. Consider a linear and densely defined operator $A \colon D(A) \subset X \to X$. We wish to associate a \emph{Hilbert space adjoint} $A^{\ast}$ to $A$. Its domain is given by
\begin{align*}
	D(A^{\ast}) := \left\{ y \in X \mid \langle A \cdot , y \rangle_X \colon D(A) \to \mathbb{R}, \, x \mapsto \langle Ax, y \rangle_X \text{ is continuous} \right\}.
\end{align*}
If $y \in D(A^{\ast})$, then the Hahn-Banach theorem extends the functional
\begin{align*}
	x \mapsto \langle A x, y \rangle_X
\end{align*}
to a continuous linear functional on $X$. The Riesz representation theorem yields that there exists a unique element $z_y \in X$ such that this functional has the form
\begin{align*}
	\langle \cdot , z_y \rangle_X. 
\end{align*}
For $y \in D(A^{\ast})$ we define $A^{\ast}y = z_y$.  Hence, it holds that
\begin{align*}
	\langle A x, y \rangle_X = \langle x , A^{\ast}y \rangle_X, \hspace{0.5 cm} x \in D(A), \, y \in D(A^{\ast}). 
\end{align*}
In other words, the domain of the Hilbert space adjoint $A^{\ast}$ of an operator $A$ is given by
\begin{align}
	\label{Adjoint Operator Domain wrt Frechet Riesz}
	D(A^{\ast}) = \left\{ y \in X \mid \exists! \, z_y  \in X \colon \forall x \in D(A) \colon \langle Ax, y \rangle_X = \langle x , z_y \rangle_X \right\}.
\end{align}
The following definitions can be found in Section 3.2 and Section 3.7 in \cite{Tucsnak-Weiss}, and in \cite[Appendix~A.3.2]{CurtainZwart}.
\begin{definition}
	\label{Definition Symmetric and Adjoint Operators}
	Let $A \colon D(A) \subset X \to X$ and $B \colon D(B) \subset X \to X$ be linear and densely defined.
	\begin{enumerate}[label = (\roman*)]
		\item $A$ is called symmetric, if 
		\begin{align*}
			\langle Ax, y \rangle_X = \langle x, Ay \rangle_X, \hspace{0.5 cm} x,y \in D(A).
		\end{align*}
		\item $A$ is called skew-symmetric, if 
		\begin{align*}
			\langle Ax, y \rangle_X = - \langle x, Ay \rangle_X, \hspace{0.5 cm} x,y \in D(A).
		\end{align*} 
	\item If for all $x \in D(A)$, $y \in D(B)$ it holds that
	\begin{align*}
		\langle Ax, y \rangle_X = \langle x, By \rangle_X,
		\end{align*}
	then $B \subset A^{\ast}$, that is, $D(B) \subset D(A^{\ast})$ and $By = A^{\ast}y$ for all $y \in D(B)$.
		\item $A$ is called self-adjoint if $A^{\ast} = A$, that is, $D(A) = D(A^{\ast})$ and $Ax = A^{\ast}x$ for all $x \in D(A)$.
		\item $A$ is called skew-adjoint if $A^{\ast} = -A$.
		\item $A$ is called coercive if it is self-adjoint and there exists some $m > 0$ such that $A \geq mI_X$, i.e.,
		\begin{align*}
			\langle Ax, x \rangle_X \geq  m \|x\|_X^2, \hspace{0.5 cm} x \in D(A).
		\end{align*}
	\end{enumerate}
\end{definition}
For the rest of this section, we restrict ourselves to matrix differential operators defined on a Sobolev space. For this type of operators, we want to introduce the notion of \emph{formally adjoint operators}. Roughly speaking, the only difference to the Hilbert space adjoint is that the constitutive relations of its domain are relaxed in a way such that 
\begin{align*}
	\langle A x, y \rangle_X = \langle x, A^{\ast} y \rangle_X \hspace{0.2 cm} + \text{boundary terms}, \hspace{0.5 cm} x \in D(A), \, y \in D(A^{\ast}).
\end{align*}
The discussion in Section \ref{Section Boundary Port-Hamiltonian Systems Associated with Skew-symmetric Operators} will clarify the necessity of allowing boundary terms concerning the relation between an operator and its formal adjoint.
\vspace{0.5 cm}\\
Now, let $X = L^2([a,b], \mathbb{R}^n)$ be endowed with the inner product
\begin{align*}
	\langle x , y \rangle_X = \langle x , y \rangle_{L^2} = \int_{a}^{b} y^{\top}(z) x(z) \, dz, \hspace{0.5 cm} x,y \in X.
\end{align*}
Let us define the Sobolev spaces
 \begin{align*}
	H^1([a,b], \mathbb{R}^n) &= \left\{ x \in X \, \big| \,  x \text{ is absolutely continuous}, \, \frac{dx}{dz} \in X \right\}, \\
	H^N([a,b], \mathbb{R}^n) &= \left\{ x \in X \, \big| \,  x \text{ is absolutely continuous}, \, \frac{dx}{dz} \in H^{N-1}([a,b], \mathbb{R}^n) \right\}, \hspace{0.5 cm} N \geq 2,
\end{align*}
as well as
\begin{align*}
	H_0^1([a,b], \mathbb{R}^n) &= \left\{ x \in H^1([a,b], \mathbb{R}^n) \mid x(a) = x(b) = 0 \right\}, \\
	H_0^N([a,b], \mathbb{R}^n) &= \left\{ x \in H^N([a,b],\mathbb{R}^n) \cap H_0^{N-1}([a,b], \mathbb{R}^n) \, \big| \, \frac{dx}{dz} \in  H_0^{N-1}([a,b], \mathbb{R}^n)  \right\}, \hspace{0.5 cm} N \geq 2.
\end{align*} 
	Furthermore, define the \emph{boundary trace operator} $\trace \colon  H^N \left( [a,b], \mathbb{R}^n \right) \to \mathbb{R}^{2nN}$ by
\begin{align}
	\trace(x) := \begin{bmatrix}
		x(b) \\
		\vdots \\
		\frac{d^{N-1}x}{d z^{N-1}}(b) \\
		x(a) \\
		\vdots \\
		\frac{d^{N-1}x}{d z^{N-1}}(a)
	\end{bmatrix}, \hspace{0.5 cm} x \in H^N \left( [a,b], \mathbb{R}^n \right).
	\label{Trace Operator - Chapter 3}
\end{align}
The boundary trace operator $\trace$ is linear, bounded, and surjective, see \cite[Section 7.8]{Aubin}. 
It allows us to equivalently write for $N \in \mathbb{N}$,
\begin{align*}
	H_0^N([a,b], \mathbb{R}^n) = \left\{ x \in H^N([a,b], \mathbb{R}^n) \mid \trace(x) = 0 \right\}.
\end{align*}
Now, consider the matrix differential operator $A \colon D(A) \subset X  \to X$ given by
\begin{align}
	\begin{split}
	D(A) &= \left\{ x \in X \mid x \in H^N([a,b], \mathbb{R}^n), \, W \trace(x) = 0 \right\}, \\
	Ax &= \sum_{i = 0}^{N} P_i \frac{d^ix}{dz^i}, \hspace{0.5 cm} x \in D(A),
	\end{split}
\label{Matrix Differential Operator A}
\end{align}
with $P_i \in \mathbb{R}^{n \times n}$, $i = 0, \ldots,N$, and $W \in \mathbb{R}^{n \times 2n}$. In particular, we have $H_0^N([a,b], \mathbb{R}^n) \subset D(A)$.

\begin{definition}
	For the domain of the formal adjoint $A_0^{\ast} \colon D(A_0^{\ast}) \subset X \to X$ of the matrix differential operator $A$ defined in \eqref{Matrix Differential Operator A} it holds that
	\begin{align*}
	D(A_0^{\ast}) = \left\{ y \in X \mid \langle A \cdot, y \rangle_X \colon H_0^N([a,b], \mathbb{R}^n) \to \mathbb{R}, \hspace{0.1 cm}  x \mapsto \langle A x, y \rangle_X \text{ is continuous} \hspace{0.1 cm} \right\}.
\end{align*}
\end{definition} 
In other words, the formal adjoint of the differential operator $A$ defined in \eqref{Matrix Differential Operator A} is simply given by the adjoint of its restriction $A_{|H_0^N([a,b],\mathbb{R}^n)}$. 
\vspace{0.5 cm}\\
Next, we present the formal pendant of some of the notions introduced in Definition \ref{Definition Symmetric and Adjoint Operators}.
\begin{definition}
	\label{Definition Formal Operator Properties}
	Let $A \colon D(A) \subset X \to X$ be the matrix differential operator defined in \eqref{Matrix Differential Operator A}.  
	\begin{enumerate}
		\item[(i)] $A$ is called formally symmetric, if
		\begin{align*}
			\langle A x, y \rangle_X = \langle x, Ay \rangle_X, \hspace{0.5 cm} x,y \in H_0^N([a,b], \mathbb{R}^n).
		\end{align*}
		\item[(ii)] $A$ is called formally skew-symmetric, if
		\begin{align*}
			\langle Ax, y \rangle_X = - \langle x, Ay \rangle_X, \hspace{0.5 cm} x,y \in H_0^N([a,b], \mathbb{R}^n).
		\end{align*}
	\end{enumerate}
\end{definition}
Let us clarify the difference by reference to some simple examples.
\begin{example}
	Consider the following examples.
	\begin{enumerate}
		\item[(i)][cf. Example 13.4 in \cite{Rudin}] Let $X = L^2([a,b], \mathbb{R})$. Consider the operator  $A \colon D(A) \subset X \to X$ given by
		\begin{align*}
			D(A) &= H^1([a,b], \mathbb{R}),  \\
			Ax &= \frac{d}{dz}x, \hspace{0.5 cm} x \in D(A).
		\end{align*}
	We claim that the Hilbert space adjoint and the formal adjoint of the operator $A$ are given by $A_1$ and $A_2$, respectively, with
	\begin{align*}
D(A_1) &= H_0^1([a,b], \mathbb{R}), \\
D(A_2) &= D(A), \\
A_k x &= -  \frac{d}{dz}x, \hspace{0.5 cm} x \in D(A_k), \, k= 1,2. 
\end{align*}
Note that for all $x \in D(A)$, $y \in D(A_1)$ we have
\begin{align*}
	\langle Ax, y \rangle_X = \int_{a}^{b} \frac{d}{dz}x(z) y(z) \, dz = \big[ x(z) y(z) \big]_{a}^{b} - \int_{a}^{b} x(z) \frac{d}{dz}y(z) \, dz = \langle x, A_1 y \rangle_X. 
\end{align*}
Thus, $A_1 \subset A^{\ast}$. In particular, by virtue of the Cauchy-Schwarz inequality, for $y \in D(A_1)$ we have
\begin{align*}
	\sup_{\substack{ x \in D(A) \\
			 \|x\|_X =1 
						 }} \vert \langle Ax, y \rangle_X \vert \leq \sup_{\substack{ x \in D(A) \\
						 \|x\|_X =1  }} \int_{a}^{b} \vert x(z) \vert \left\vert \frac{d}{dz} y(z) \right\vert \, dz \leq 	\sup_{\substack{  x \in D(A) \\
						 \|x\|_X =1  }} \|x \|_X \left\|\frac{d}{dz} y \right\|_X \leq C_y.
	\end{align*}
	 Conversely, suppose that $y \in D(A^{\ast}) \subset X$. Set $\psi = A^{\ast}y$ and define $\Psi = \int_{a}^{\cdot} \psi(r) \, dr \in H^1([a,b], \mathbb{R})$. Then for all $x \in D(A)$ it holds that
\begin{align*}
	\langle A x, y \rangle_X = \langle x, A^{\ast} y \rangle_X = \langle x, \psi \rangle_X	= \langle x, \frac{d}{dz} \Psi \rangle_X = x(b) \Psi(b) - \langle Ax, \Psi \rangle_X. 
\end{align*}
Since this equation holds in particular for all non-zero constant functions, which are obviously contained in $D(A)$, we conclude that $\Psi(b) = 0$. Hence,
\begin{align*}
	\langle A x, y + \Psi \rangle_X = 0
\end{align*}
for all $x \in D(A)$. As a consequence,
\begin{align*}
	y + \Psi \in (\ran (A))^{\bot} = X^{\bot} = \left\{ 0 \right\},
\end{align*}
whence $y = - \Psi$. This implies that $y \in H^1([a,b], \mathbb{R})$, $\trace(y) = 0$, and 
\begin{align*}
	y = - \int_{a}^{\cdot} \psi (r) \, dr = -  \int_{a}^{\cdot} A^{\ast}y(r) \, dr. 
\end{align*} 
By taking the derivative on both sides, we conclude that $A^{\ast}y = - \frac{d}{dz} y$ for all $y \in H_0^1([a,b], \mathbb{R})$. Altogether, the Hilbert space adjoint is given by $A^{\ast} = A_1$. 
\vspace{0.5 cm}\\
To prove that the formal adjoint of $A$ is given by $A_2$, we proceed similarly. Note that for all $x \in H_0^1([a,b], \mathbb{R})$, $y \in D(A_2)$ we have
\begin{align*}
	\langle Ax, y \rangle_X = \langle x, A_2 \rangle_X.
\end{align*}
Conversely, we again suppose that $y \in X$ lies in the domain of the formal adjoint, and define $\psi = A^{\ast}y$ as well as $\Psi = \int_{a}^{\cdot} \psi(r) \, dr$. Since  $x(b) = 0$ for all $x \in H_0^1([a,b], \mathbb{R})$, we find that $y + \Psi \in (\ran (A_{|H_0^1([a,b],\mathbb{R})}))^{\bot}$. Note that $\ran(A_{|H_0^1([a,b],\mathbb{R})})$ consists of all functions $y \in X$ satisfying
\begin{align*}
	\int_{a}^{b} y(z) \, dz = 0. 
\end{align*}
Denoting by $Y$ the one-dimensional subspace of $X$ that contains the constant functions on $[a,b]$, we conclude that
\begin{align*}
	\ran (A_{|H_0^1([a,b],\mathbb{R})}) = Y^{\bot}.
\end{align*}
Hence, $y + \Psi \in Y$. As a result, $y$ is absolutely continuous with $\frac{d}{dz} y \in X$, and therefore lies in $H^1([a,b], \mathbb{R})$. Summing up, the formal adjoint is given by $A_2$. We see that we obtain the domain of the formal adjoint by simply dropping the boundary conditions from the domain of the Hilbert space adjoint.  
\item[(ii)] Let $X = L^2([a,b], \mathbb{R}^2)$. Consider the operator $A \colon D(A) \subset X \to X$ given by
\begin{align*}
	D(A) &=  \left\{ x \in X \mid x \in H^1([a,b], \mathbb{R}^2), \, x(a) = 0 \right\}, \\
	Ax &= \begin{bmatrix}
		0 & -1 \\
		-1 & 0
		\end{bmatrix} \frac{d}{dz} x, \hspace{0.5 cm} x \in D(A). 
\end{align*}
The boundary condition is therefore given by
\begin{align*}
	\begin{bmatrix}
		0 & 0 & 1 & 0 \\
		0 & 0 & 0 & 1
	\end{bmatrix} \trace(x) = 0, \hspace{0.5 cm} x \in D(A). 
\end{align*}
For all $x,y \in D(A)$ it holds that
\begin{align*}
	\langle A x, y \rangle_X &= - \int_{a}^{b} y_1 (z) \frac{d}{dz} x_2(z)   + y_2(z) \frac{d}{dz} x_1(z) \, dz \\
	&= - \big[ x_1(z) y_2(z) + y_1(z) x_2(z) \big]_{a}^{b} + \int_{a}^{b} \frac{d}{dz}y_1(z) x_2(z) + \frac{d}{dz}y_2(z) x_1(z) \, dz \\
	&= - x_1(b) y_2(b) - y_1(b) x_2(b) - \langle x, Ay \rangle_X. 
\end{align*}
Thus, $A$ is not skew-symmetric, but formally skew-symmetric. 
\end{enumerate}
\QEDA
\end{example}
After clarifying the difference between Hilbert space adjoints and formally adjoint operators of matrix differential operators, we are going to introduce the powerful concept of strongly continuous semigroups.

\section{Strongly Continuous Semigroups}
\label{Section Strongly Continuous Semigroups}
The theory of (one-parameter) semigroups of linear operators has a broad variety of applications, and has proven to be a successful tool for the analysis of linear infinite-dimensional (control) systems. For a matrix operator $A \in \mathbb{R}^{n \times n}$, we know that the family $( e^{At})_{t \geq 0}$ of matrix exponentials describes the evolution of the state subject to the linear system
\begin{align*}
	\dot{x}(t) = Ax(t).
\end{align*}
In particular, the mapping $t \mapsto e^{At}$ is differentiable and satisfies
\begin{align*}
	\frac{d}{dt} e^{At} = Ae^{At}.
\end{align*}
The concept of strongly continuous semigroups is commonly used to generalize the above scenario in order to study (linear) systems with values in an arbitrary Hilbert space.
\vspace{0.5 cm}\\
In this section, we give a concise overview concerning strongly continuous semigroups, and collect some facts about the infinitesimal generators of such semigroups as well as some important criteria regarding the generation of strongly continuous semigroups. The material covered in this section can be found in \cite{Arendt}, \cite{CurtainZwart}, \cite{EngelNagel}, \cite{Pazy83}, or \cite{Tucsnak-Weiss}, to name a few.
\subsection{Strongly Continuous Semigroups and their Generators}
We start by introducing the necessary terminology.
\begin{definition}[Semigroup]
	A family $(T(t))_{t \geq 0} \in \mathcal{L}(X)^{[0, \infty)}$ is called a semigroup of bounded linear operators on $X$, or short semigroup, if it satisfies
\begin{enumerate}
	\item[(i)] $T(0) = I_X$, and
	\item[(ii)] the semigroup property:
	\begin{align*}
		T(t+s) = T(t)T(s), \hspace{0.5 cm} t,s \geq 0. 
	\end{align*}
\end{enumerate}
\end{definition}

\begin{definition}
	A semigroup $(T(t))_{t \geq 0}$ on $X$ is called 
	\begin{enumerate}
		\item[(i)] strongly continuous (or a $C_0$-semigroup) if for all $x \in X$, 
		\begin{align*}
			\lim_{t \searrow 0} \|T(t)x - x \|_X = 0, 
		\end{align*}
	\item[(ii)] uniformly continuous if it holds that
	\begin{align*}
		\lim_{t \searrow 0} \|T(t) - I_X \|_{\mathcal{L}(X)} = 0. 
	\end{align*}
	\end{enumerate}
\end{definition}

The strong continuity of $(T(t))_{t \geq 0}$ is equivalent to the mapping $t \mapsto T(t)x$ being continuous for all $x \in X$. Analogously, the mapping $t \mapsto T(t) \in \mathcal{L}(X)$ is continuous provided that $(T(t))_{t \geq 0}$ is uniformly continuous. 
\vspace{0.5 cm}\\
In the following, we restrict ourselves to strongly continuous semigroups, since the presented results hold in particular for uniformly continuous semigroups. An overview of the main differences to strongly continuous semigroups is given in \cite[Section 1.1]{Pazy83}.
\vspace{0.5 cm}\\
An essential property of $C_0$-semigroups is the following. 
\begin{theorem}[Theorem 1.2.2 in \cite{Pazy83}]
	\label{Theorem C0-Semigroup Estimate}
	Let $(T(t))_{t \geq 0}$ be a $C_0$-semigroup on $X$. Then there exist constants $M \geq 1$ and $\omega \in \mathbb{R}$ such that
	\begin{align*}
		\|T(t)\|_{\mathcal{L}(X)} \leq Me^{\omega t}, \hspace{0.5 cm} t \geq 0. 
	\end{align*}
We say that $(T(t))_{t \geq 0}$ is of type $C_0(M, \omega)$.
\end{theorem}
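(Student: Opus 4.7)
The plan is to first establish that $\|T(t)\|_{\mathcal{L}(X)}$ is uniformly bounded on some small interval $[0,\eta]$, and then use the semigroup property to extrapolate an exponential bound to all of $[0,\infty)$.

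First I would show local boundedness: there exist $\eta>0$ and $M_0\geq 1$ such that $\|T(t)\|_{\mathcal{L}(X)}\leq M_0$ for all $t\in[0,\eta]$. I would argue by contradiction, assuming a sequence $t_n\searrow 0$ with $\|T(t_n)\|_{\mathcal{L}(X)}\to\infty$. By the uniform boundedness principle (Banach--Steinhaus), this would produce some $x\in X$ with $\|T(t_n)x\|_X$ unbounded, which directly contradicts the strong continuity $\lim_{t\searrow 0}T(t)x=x=T(0)x$. Note also that $M_0\geq\|T(0)\|_{\mathcal{L}(X)}=\|I_X\|_{\mathcal{L}(X)}=1$.

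Next I would extend the bound globally. For arbitrary $t\geq 0$, write $t=n\eta+r$ with $n\in\mathbb{N}_0$ and $r\in[0,\eta)$. The semigroup property yields $T(t)=T(r)\,T(\eta)^n$, so
\begin{equation*}
\|T(t)\|_{\mathcal{L}(X)}\leq \|T(r)\|_{\mathcal{L}(X)}\,\|T(\eta)\|_{\mathcal{L}(X)}^{\,n}\leq M_0\cdot M_0^{\,n}=M_0^{\,n+1}.
\end{equation*}
Since $n\leq t/\eta$, setting $\omega:=\eta^{-1}\log M_0\in\mathbb{R}$ and $M:=M_0$ gives $M_0^{n+1}\leq M_0\cdot e^{\omega t}=Me^{\omega t}$, which is the desired estimate. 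If one prefers $\omega\geq 0$, one simply replaces $M_0$ by $\max(M_0,1)=M_0$ in the definition of $\omega$; the case $M_0=1$ corresponds to $\omega=0$.

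The main obstacle is the first step: proving local boundedness purely from the pointwise convergence $T(t)x\to x$. The leap from pointwise behavior to a uniform operator norm bound is precisely what the Banach--Steinhaus theorem supplies, and identifying this as the right tool is the crux of the argument. Once local boundedness is established, the remainder is a routine computation exploiting the semigroup law.
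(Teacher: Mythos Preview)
Your argument is correct and is essentially the standard proof (uniform boundedness principle for local boundedness near $0$, then the semigroup decomposition $t=n\eta+r$ to extrapolate). The paper itself does not supply a proof of this theorem; it is stated with a citation to Pazy and used as a black box, so there is nothing to compare against beyond noting that your approach matches the classical one in the cited reference.
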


\begin{theorem}[cf. Proposition 2.1.2 in \cite{Tucsnak-Weiss}]
	\label{Theorem T(t)x is continuous}
	For a strongly continuous semigroup $(T(t))_{t \geq 0}$ on $X$, the mapping 
	\begin{align*}
	[0, \infty) \times X \to X, \hspace{0.3 cm} (t,x) \mapsto T(t)x
	\end{align*}
is continuous. 
\end{theorem}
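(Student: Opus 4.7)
The goal is to prove joint continuity of $(t,x) \mapsto T(t)x$ at an arbitrary point $(t_0, x_0) \in [0, \infty) \times X$. My plan is to reduce joint continuity to two already-available ingredients: the exponential bound $\|T(t)\|_{\mathcal{L}(X)} \leq M e^{\omega t}$ from Theorem \ref{Theorem C0-Semigroup Estimate}, and the strong right-continuity at $0$, which is the defining property of a $C_0$-semigroup.

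The first step is the standard triangle-inequality split
\begin{align*}
\|T(t)x - T(t_0)x_0\|_X \leq \|T(t)(x - x_0)\|_X + \|T(t)x_0 - T(t_0)x_0\|_X.
\end{align*}
For the first term, Theorem \ref{Theorem C0-Semigroup Estimate} gives $\|T(t)\|_{\mathcal{L}(X)} \leq M e^{\omega t}$, which is uniformly bounded on any compact neighborhood $[t_0 - \delta, t_0 + \delta] \cap [0, \infty)$ of $t_0$ by a constant $C = C(t_0, \delta)$, so this term is dominated by $C \|x - x_0\|_X$ and vanishes as $x \to x_0$.

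The second term requires showing that $t \mapsto T(t)x_0$ is continuous at $t_0$. Here I would separately handle right- and left-continuity using the semigroup property $T(t+s) = T(t)T(s)$. For $t \geq t_0$, write $T(t)x_0 - T(t_0)x_0 = T(t_0)\bigl(T(t - t_0)x_0 - x_0\bigr)$, so
\begin{align*}
\|T(t)x_0 - T(t_0)x_0\|_X \leq \|T(t_0)\|_{\mathcal{L}(X)} \|T(t - t_0)x_0 - x_0\|_X,
\end{align*}
and the right-hand side tends to zero as $t \searrow t_0$ by the $C_0$-property. For $0 \leq t \leq t_0$, write $T(t)x_0 - T(t_0)x_0 = T(t)\bigl(x_0 - T(t_0 - t)x_0\bigr)$ and estimate
\begin{align*}
\|T(t)x_0 - T(t_0)x_0\|_X \leq C \|x_0 - T(t_0 - t)x_0\|_X,
\end{align*}
using the uniform bound $C$ on $[t_0 - \delta, t_0]$; again the right-hand side vanishes as $t \nearrow t_0$ by strong continuity at $0$.

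Combining both estimates, for $(t,x)$ sufficiently close to $(t_0, x_0)$ the quantity $\|T(t)x - T(t_0)x_0\|_X$ can be made arbitrarily small, giving joint continuity. The main (minor) obstacle is the asymmetry in the definition of a $C_0$-semigroup: strong continuity is only assumed at $t = 0^+$. This is precisely why the left-continuity case requires factoring out $T(t)$ rather than $T(t_0)$, and why the uniform local bound from Theorem \ref{Theorem C0-Semigroup Estimate} is needed to absorb $\|T(t)\|_{\mathcal{L}(X)}$ as $t$ varies.
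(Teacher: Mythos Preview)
Your proof is correct and follows essentially the same approach as the paper. The paper works with sequences and packages your two cases (right- and left-continuity) into the single estimate $\|T(t_n)x_0 - T(t_0)x_0\|_X \leq M e^{\omega \min\{t_n,t_0\}}\|T(|t_n-t_0|)x_0 - x_0\|_X$, but the underlying factorizations and the use of Theorem~\ref{Theorem C0-Semigroup Estimate} are identical.
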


\begin{proof}
	Let $(t_n)_{n \in \mathbb{N}} \in \mathbb{R}_+^{\mathbb{N}}$ and $(x_n)_{n \in \mathbb{N}} \in X^{\mathbb{N}}$ be sequences converging to $t_0 \in \mathbb{R}_+$ and $x_0 \in X$, respectively. Theorem \ref{Theorem C0-Semigroup Estimate} yields 
	\begin{align*}
		\|T(t_n)x_n - T(t_0) x_0 \|_X &\leq \|T(t_n)(x_n - x_0) \|_X + \|T(t_n) x_0 - T(t_0)x_0 \|_X \\
		&\leq Me^{\omega t_n} \|x_n - x_0\|_X + Me^{\omega \min \{ t_n, t_0\}}\|T(\vert t_n - t_0 \vert) x_0  - x_0 \|_X. 
	\end{align*}
Since the sequence $\left( e^{\omega t_n} \right)_{n \in \mathbb{N}}$ is bounded, and since $x_n \to x_0$ by assumption, the first term converges to $0$. The second term converges to $0$ as well, because by the strong continuity of $(T(t))_{ t \geq 0}$ it holds that
\begin{align*}
	\lim_{n \to \infty}T(\vert t_n - t_0 \vert) x_0 = x_0.
\end{align*}
This proves the claim. 
\end{proof}

Theorem \ref{Theorem C0-Semigroup Estimate} gives rise to the notion of the exponential growth bound for $C_0$-semigroups.

\begin{definition}[Growth Bound]
	\label{Definition Growth Bound}
	Let $(T(t))_{t \geq 0}$ be a $C_0$-semigroup on $X$. Its (exponential) growth bound is defined as
	\begin{align*}
		\omega (T(\cdot)) := \inf \left\{ \omega \in \mathbb{R} \mid \exists M_{\omega} \geq 1\colon \forall t \geq 0\colon \, \|T(t)\|_{\mathcal{L}(X)} \leq M_{\omega} e^{\omega t}  \right\}.
	\end{align*}
\end{definition}
In this context, we want to introduce the notion of exponential stability for strongly continuous semigroups, which is one of the most important stability concepts in this framework. For an extensive discussion on stability in general and some well-known characterizations of exponential stability we refer to \cite[Chapter~4]{CurtainZwart}.
\begin{definition}[Exponential Stability]
	\label{Definition Exponential Stability}
	A $C_0$-semigroup $(T(t))_{t \geq 0}$ on $X$ is called exponentially stable if there exist constants $M\geq 1$ and $\alpha > 0$ such that $(T(t))_{t \geq 0}$ is of type $C_0(M, - \alpha)$. 
\end{definition}

Next, we want to discuss what it means for a linear operator $A \colon D(A) \subset X \to X$ to generate a strongly continuous semigroup, and collect some properties of such an operator as well as some relations between a $C_0$-semigroup and its infinitesimal generator.

\begin{definition}[Infinitesimal Generator]
	\label{Definition Infinitesimal Generator}
	Let $(T(t))_{t \geq 0}$ be a $C_0$-semigroup on $X$. The infinitesimal generator $A \colon D(A) \subset X \to X$ of $(T(t))_{t \geq 0}$ is defined by
	\begin{align*}
		D(A) &= \left\{ x \in X \mid \lim_{t \searrow 0} \frac{T(t) - I}{t}x \, \text{exists} \right\}, \\
		Ax &= \lim_{t \searrow 0} \frac{T(t) - I}{t}x, \hspace{0.5 cm} x \in D(A). 
	\end{align*}
\end{definition}

\begin{theorem}[Theorem 1.2.4 in \cite{Pazy83}]
	\label{Theorem Properties of C0-Semigroups}
	Let  $A \colon D(A) \subset X \to X$ be the infinitesimal generator of a $C_0$-semigroup $(T(t))_{t \geq 0}$ on $X$. Then the following assertions are true:
	
	\begin{enumerate}[label = (\roman*)]
		\item \label{C0SemigroupProp1} For all $x \in X$ and $t \geq 0$ it holds that
		\begin{align*}
			\lim_{h \searrow 0} \frac{1}{h} \int_{t}^{t + h} T(\tau)x \, d\tau = T(t)x. 
		\end{align*} 
	\item \label{C0SemigroupProp2} We have $\int_{0}^{t} T(\tau) x \, d\tau \in D(A)$ for all $x \in X$, $ t \geq 0$, with
	\begin{align*}
		A \left( \int_{0}^{t} T(\tau) x \, d\tau \right) = T(t)x - x.
	\end{align*}
\item \label{C0SemigroupProp3} It holds that $T(t) D(A) \subset D(A)$ for all $t \geq 0$. Moreover, for all $x \in D(A)$ we have
\begin{align*}
	\frac{d}{dt}T(t)x  = AT(t)x = T(t)Ax. 
\end{align*}
\item \label{C0SemigroupProp4} For all $x \in D(A)$ and for all $0 \leq \tau \leq t$ we have
\begin{align*}
	T(t)x - T(\tau)x = \int_{\tau}^{t} T(r) Ax \, dr = \int_{\tau}^{t} AT(r) x \, dr. 
\end{align*}
	\end{enumerate}
\end{theorem}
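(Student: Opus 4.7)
The plan is to prove the four properties in the given order, since each relies on the preceding ones. Throughout, the fundamental tool is the strong continuity of $\tau \mapsto T(\tau)x$ granted by Theorem \ref{Theorem T(t)x is continuous}, which ensures Bochner integrability of such maps on compact intervals and allows me to treat the integrals as ordinary Riemann integrals of continuous $X$-valued functions.

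For \ref{C0SemigroupProp1} I would estimate
\begin{align*}
	\left\| \frac{1}{h} \int_{t}^{t+h} T(\tau)x \, d\tau - T(t)x \right\|_X \leq \frac{1}{h} \int_{t}^{t+h} \|T(\tau)x - T(t)x\|_X \, d\tau \leq \sup_{\tau \in [t, t+h]} \|T(\tau)x - T(t)x\|_X,
\end{align*}
and the right-hand side tends to $0$ as $h \searrow 0$ by continuity of $\tau \mapsto T(\tau)x$. For \ref{C0SemigroupProp2}, given $x \in X$, I apply the difference quotient for $A$ to the integral and use the semigroup property to split:
\begin{align*}
	\frac{T(h) - I}{h} \int_{0}^{t} T(\tau)x \, d\tau = \frac{1}{h} \int_{0}^{t} T(\tau+h)x - T(\tau)x \, d\tau = \frac{1}{h}\int_{t}^{t+h} T(\tau)x \, d\tau - \frac{1}{h} \int_{0}^{h} T(\tau)x \, d\tau,
\end{align*}
after a substitution of variables in the first integral. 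Letting $h \searrow 0$ and invoking \ref{C0SemigroupProp1} (with $t$ and with $t = 0$) shows that the limit exists and equals $T(t)x - x$, which simultaneously establishes $\int_{0}^{t} T(\tau)x \, d\tau \in D(A)$ and the claimed formula.

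For \ref{C0SemigroupProp3}, if $x \in D(A)$, the boundedness of $T(t) \in \mathcal{L}(X)$ allows me to exchange $T(t)$ with the limit defining $Ax$:
\begin{align*}
	\lim_{h \searrow 0} \frac{T(h) - I}{h} T(t)x = T(t) \lim_{h \searrow 0} \frac{T(h) - I}{h}x = T(t)Ax,
\end{align*}
using also the semigroup property $T(h)T(t) = T(t)T(h)$. This gives $T(t)x \in D(A)$ and $AT(t)x = T(t)Ax$, and it yields the right derivative $\frac{d^+}{dt} T(t)x = AT(t)x$. For the left derivative at $t>0$, I would write $\frac{T(t)x - T(t-h)x}{h} - T(t)Ax$ and split it as $T(t-h)\bigl[\frac{T(h)x - x}{h} - Ax\bigr] + [T(t-h)Ax - T(t)Ax]$; both terms tend to $0$ using Theorem \ref{Theorem C0-Semigroup Estimate} (uniform boundedness of $\|T(t-h)\|$ on a neighborhood), the definition of $A$, and strong continuity. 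Finally, \ref{C0SemigroupProp4} follows from \ref{C0SemigroupProp3} by integrating the now-continuous $X$-valued function $r \mapsto \frac{d}{dr}T(r)x = T(r)Ax = AT(r)x$ over $[\tau, t]$; the fundamental theorem of calculus for Bochner integrals applies since the integrand is continuous.

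The step I expect to demand the most care is the two-sided differentiability in \ref{C0SemigroupProp3}: the right derivative is essentially immediate, but handling the left derivative cleanly requires the uniform bound $\|T(s)\|_{\mathcal{L}(X)} \leq M e^{\omega s}$ on a compact interval together with a careful $\epsilon$-argument to combine the two error terms. Everything else is bookkeeping built on strong continuity and the semigroup identity.
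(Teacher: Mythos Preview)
Your proposal is correct and follows essentially the same route as the paper's proof: the same supremum estimate for \ref{C0SemigroupProp1}, the same telescoping of the integral in \ref{C0SemigroupProp2}, the same commutation argument and $\pm T(t-h)Ax$ splitting for the two-sided derivative in \ref{C0SemigroupProp3}, and integration for \ref{C0SemigroupProp4}. Your anticipation that the left derivative in \ref{C0SemigroupProp3} is the only place requiring care matches the paper exactly.
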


\begin{proof}
	Ad \ref{C0SemigroupProp1}: By continuity of the mapping $t \mapsto T(t)x$ we have for all $ x\in X$, $t \geq 0$,  
	\begin{align*}
		\hspace{0.5 cm} \lim_{h \searrow 0} \left\| \frac{1}{h} \int_{t}^{t+h} T(\tau) x \, d\tau - T(t)x \right\|_X	&= \lim_{h \searrow 0} \left\| \frac{1}{h} \int_{t}^{t+h} T(\tau) x - T(t) x \, d\tau \right\|_X \\
		&\leq \lim_{h \searrow 0} \frac{1}{h} h \sup_{\tau \in [t, t+ h]} \| T(\tau)x - T(t)x \|_X \\
	&= 0. 
	\end{align*}
	Ad \ref{C0SemigroupProp2}: Let $h > 0$. As $T(t) \in \mathcal{L}(X)$, it is closed for all $t \geq 0$. Thus, we have for all $x \in X$,
	\begin{align*}
		\frac{1}{h}(T(h) - I) \int_{0}^{t} T(\tau) x \, d\tau &= \frac{1}{h} \int_{0}^{t} T(\tau + h) x \, d\tau - \frac{1}{h} T(\tau) x \, d\tau \\
		&= \frac{1}{h} \int_{t}^{t +h} T(\tau) x  \, d\tau - \frac{1}{h} \int_{0}^{h} T(\tau)x \, d\tau. 
	\end{align*}
By applying property \ref{C0SemigroupProp1}, this expression converges to $T(t)x - x$ as $h \searrow 0$. This shows \ref{C0SemigroupProp2}. 
	\vspace{0.5 cm}\\
	Ad \ref{C0SemigroupProp3}: Let $x \in D(A)$ and $h > 0$. Then
	\begin{align*}
		\frac{1}{h} (T(h) - I)T(t)x  = T(t) \left( \frac{T(h) - I}{h} \right)x \to T(t) A x
	\end{align*}
as $h \searrow 0$. Thus, by definition, $T(t)x \in D(A)$ with $AT(t)x = T(t)Ax$ for all $t \geq 0$.  Moreover, we have for the right-hand derivative that
\begin{align*}
	\frac{d^+}{dt}T(t)x = AT(t)x = T(t)Ax. 
\end{align*}
Now, let $t,h> 0$ such that $t - h \geq 0$. Then
\begin{align*}
	&\hspace{0.48 cm} \left\| \frac{1}{h} (T(t)x - T(t-h)x) - T(t)Ax \right\|_X \\
	&=  \left\| \frac{1}{h} (T(t)x - T(t-h)x) + T(t-h)Ax - T(t-h)Ax - T(t)Ax \right\|_X \\
	&\leq \|T(t-h)\|_{\mathcal{L}(X)}  \left\| \frac{1}{h} (T(h)x - x) - Ax \right\|_X + \|T(t-h)Ax  - T(t)Ax \|_X
\end{align*}
By strong continuity and the fact that $A$ generates $(T(t))_{t \geq 0}$, taking the limit $h\searrow 0$ yields the same relation for the left-hand derivative.
	\vspace{0.5 cm}\\
	Ad \ref{C0SemigroupProp4}: This follows from property \ref{C0SemigroupProp3} by integration over $[\tau,t]$.  
\end{proof}
From these properties one infers:
\begin{corollary}[Corollary 1.2.5 in \cite{Pazy83}]
	\label{Corollary - Generators Are Closed and Densely Defined}
	If $A \colon D(A) \subset X \to X$ is the infinitesimal generator of a strongly continuous semigroup $(T(t))_{t \geq 0}$ on $X$, then it is densely defined and closed. 
\end{corollary}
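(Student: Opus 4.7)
The plan is to prove density and closedness separately, in each case leveraging the four properties of $C_0$-semigroups already established in Theorem \ref{Theorem Properties of C0-Semigroups}.

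For density, I would fix an arbitrary $x \in X$ and construct a sequence in $D(A)$ converging to $x$. The natural candidate is $x_t := \tfrac{1}{t} \int_0^t T(\tau) x \, d\tau$ for $t > 0$. Property \ref{C0SemigroupProp2} guarantees that $\int_0^t T(\tau) x \, d\tau \in D(A)$, and scaling by $\tfrac{1}{t}$ keeps it there. Property \ref{C0SemigroupProp1} applied at $t = 0$ (noting $T(0) = I$) yields $x_t \to x$ as $t \searrow 0$, which shows $x \in \overline{D(A)}$ and hence $\overline{D(A)} = X$.

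For closedness, let $(x_n)_{n \in \mathbb{N}} \subset D(A)$ with $x_n \to x$ and $A x_n \to y$ in $X$. The goal is to show $x \in D(A)$ with $Ax = y$. The key is to start from the integral identity
\begin{align*}
T(t)x_n - x_n = \int_0^t T(r) A x_n \, dr,
\end{align*}
which is available by property \ref{C0SemigroupProp4}, and pass to the limit $n \to \infty$. The left-hand side converges to $T(t)x - x$ by boundedness of $T(t)$. For the right-hand side, I would use that $\|T(r)\|_{\mathcal{L}(X)} \leq M e^{\omega r}$ is bounded uniformly on $[0,t]$ (Theorem \ref{Theorem C0-Semigroup Estimate}), so $\|T(r)(Ax_n - y)\|_X \leq M e^{\omega t} \|Ax_n - y\|_X \to 0$ uniformly in $r \in [0,t]$, giving $\int_0^t T(r) A x_n \, dr \to \int_0^t T(r) y \, dr$. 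Hence
\begin{align*}
T(t)x - x = \int_0^t T(r) y \, dr.
\end{align*}
Dividing by $t > 0$ and invoking property \ref{C0SemigroupProp1} once more (at $t = 0$ applied to $y$) yields $\tfrac{1}{t}(T(t)x - x) \to y$ as $t \searrow 0$, which by Definition \ref{Definition Infinitesimal Generator} means precisely $x \in D(A)$ and $Ax = y$.

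Neither step presents a serious obstacle once the four properties of Theorem \ref{Theorem Properties of C0-Semigroups} are in hand; the only mildly delicate point is the interchange of limit and integral in the closedness argument, which is why I would spell out the uniform bound on $\|T(r)\|_{\mathcal{L}(X)}$ on the compact interval $[0,t]$ explicitly rather than relying on a vague appeal to dominated convergence.
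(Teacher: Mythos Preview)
Your proposal is correct and is precisely the standard argument the paper has in mind: the paper does not give an explicit proof but merely writes ``From these properties one infers'' before stating the corollary, and your two steps are exactly how one infers density and closedness from properties \ref{C0SemigroupProp1}, \ref{C0SemigroupProp2}, and \ref{C0SemigroupProp4} of Theorem~\ref{Theorem Properties of C0-Semigroups}.
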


Lastly, we note that the generator of a strongly continuous semigroup is uniquely determined.

\begin{theorem}[Theorem 1.2.6 in \cite{Pazy83}]
Let $(T(t))_{t \geq 0}$ and $(S(t))_{t \geq 0}$ be $C_0$-semigroups on $X$ with infinitesimal generators $A$ and $B$, respectively. If $A = B$, then
\begin{align*}
	T(t) = S(t), \hspace{0.5 cm } t \geq 0. 
\end{align*}
\end{theorem}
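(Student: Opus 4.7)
The plan is to exploit the density of $D(A) = D(B)$ in $X$ together with a clever differentiation trick that turns the difference $T(t) - S(t)$ into the integral of a vanishing derivative. First I would fix $x \in D(A) = D(B)$ and some $t > 0$, and consider the auxiliary mapping
\begin{align*}
\varphi \colon [0,t] \to X, \qquad \varphi(s) := T(t-s) S(s) x.
\end{align*}
Since property \ref{C0SemigroupProp3} of Theorem \ref{Theorem Properties of C0-Semigroups} ensures that $(S(t))_{t \geq 0}$ leaves $D(B)$ invariant, and because $D(B) = D(A)$ by assumption, $S(s) x$ always lies in the common domain, so both $T$ and $S$ can be differentiated at the appropriate arguments.

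The next step is to verify that $\varphi$ is differentiable on $(0,t)$ with $\varphi'(s) = 0$. I would carry out the usual product rule computation by splitting the difference quotient
\begin{align*}
\frac{\varphi(s+h) - \varphi(s)}{h} = T(t-s-h)\,\frac{S(s+h)x - S(s)x}{h} + \frac{T(t-s-h) - T(t-s)}{h}\, S(s)x.
\end{align*}
Using the semigroup property $S(s+h) = S(h)S(s)$ and invoking property \ref{C0SemigroupProp3} of Theorem \ref{Theorem Properties of C0-Semigroups} applied to $(S(t))_{t \geq 0}$, together with the joint continuity statement from Theorem \ref{Theorem T(t)x is continuous}, the first term tends to $T(t-s) B S(s) x = T(t-s) A S(s) x$ as $h \to 0$. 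The analogous argument for $(T(t))_{t \geq 0}$ shows that the second term tends to $-A T(t-s) S(s) x$, which, again by property \ref{C0SemigroupProp3}, equals $-T(t-s) A S(s) x$. Hence both contributions cancel and $\varphi'(s) = 0$.

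Consequently $\varphi$ is constant on $[0,t]$ (this constancy on a Hilbert space follows from the mean value theorem applied coordinatewise against functionals, or directly by integrating $\varphi'$), and evaluating at the endpoints gives
\begin{align*}
T(t) x = \varphi(0) = \varphi(t) = S(t) x \qquad \text{for every } x \in D(A).
\end{align*}
To finish, I would extend this identity to all $x \in X$: by Corollary \ref{Corollary - Generators Are Closed and Densely Defined} the domain $D(A)$ is dense in $X$, and by Theorem \ref{Theorem C0-Semigroup Estimate} both $T(t)$ and $S(t)$ are bounded, so a routine approximation argument delivers $T(t)x = S(t)x$ for every $x \in X$ and every $t \geq 0$.

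The main obstacle I anticipate is the cancellation step, where I must simultaneously pass to the limit in a product of an unbounded quantity (a difference quotient) with a convergent one. Carefully using the invariance of the common domain under both semigroups, the joint continuity provided by Theorem \ref{Theorem T(t)x is continuous}, and the commutation of the generator with its semigroup stated in Theorem \ref{Theorem Properties of C0-Semigroups}\ref{C0SemigroupProp3} resolves this issue; everything else is essentially a density and boundedness argument.
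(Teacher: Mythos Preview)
The paper does not actually supply a proof for this theorem; it is merely stated with a citation to Pazy. Your argument is correct and is precisely the standard proof (indeed, the one Pazy gives): define $\varphi(s) = T(t-s)S(s)x$, show $\varphi' \equiv 0$ using $A = B$ and the commutation from Theorem~\ref{Theorem Properties of C0-Semigroups}\ref{C0SemigroupProp3}, then conclude by density. The very same technique, with the auxiliary function $\psi(\tau) = T(t-\tau)y(\tau)$ and the same careful splitting of the difference quotient, appears almost verbatim in the paper's proof of Theorem~\ref{Theorem A Generator yields Unique Solution}, so your approach is fully consistent with the paper's methods.
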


\subsection{Spectrum and Resolvent of Infinitesimal Generators}
In the following, we want to characterize infinitesimal generators of $C_0$-semigroups. To this end, we introduce the notion of the resolvent operator, and collect some results relevant for the proof of the aforementioned characterization. 
\begin{definition}[Resolvent]
	\label{Definition Resolvent}
	Let $A \colon D(A) \subset X \to X$ be a linear operator. The resolvent set $\rho(A)$ is the set of elements $\lambda \in \mathbb{C}$ which satisfy the following:
	\begin{enumerate}
		\item[(i)] $\overline{(\lambda I - A)(D(A))} = X$. 
		\item[(ii)] $(\lambda I - A)^{-1}$ exists and is bounded in
		\begin{align*}
			D((\lambda I - A)^{-1}) := (\lambda I - A)(D(A))
		\end{align*} 
	with respect to the norm on $X$ induced by $(\lambda I - A)(D(A))$. 
	\end{enumerate} 
If $\rho(A) \neq \emptyset$, for $\lambda \in \rho(A)$ we call
\begin{align*} 
	R(\lambda, A) := (\lambda I - A)^{-1} \in \mathcal{L}(X)
\end{align*}
the resolvent operator of $A$ in $\lambda$. The spectrum of $A$ is
\begin{align*}
	\sigma(A) := \mathbb{C} \setminus \rho(A).
\end{align*}
\end{definition}

\begin{definition}
		Let $A \colon D(A) \subset X \to X$ be a linear operator. The spectrum of $A$ is the disjoint union of the following sets:
		\begin{enumerate}[label = (\roman*)]
			\item The set
			\begin{align*}
				\sigma_p(A) := \left\{ \lambda \in \mathbb{C} \mid (\lambda I - A) \text{ is not injective} \right\}
			\end{align*}
			is called the point spectrum of $A$. Its elements are called eigenvalues, and an element $x_{\lambda} \in D(A)$ with $Ax_{\lambda} = \lambda x_{\lambda}$ is called an eigenfunction with eigenvalue $\lambda \in \sigma_p(A)$.
			\item The set
			\begin{equation*}
				\begin{aligned}
					\sigma_c(A) := & \left\{ \lambda \in \mathbb{C} \mid \overline{(\lambda I - A) (D(A))} =X, \right.                               \\
					& \qquad \hspace{0.7cm} \left. (\lambda I - A) \text{ is injective}, \, (\lambda  I - A)^{-1} \text{ is unbounded} \right\}
				\end{aligned}
			\end{equation*}
			is called the continuous spectrum of $A$.
			\item The set
			\begin{align*}
				\sigma_r(A) := \left\{ \lambda \in \mathbb
				C \mid \overline{(\lambda I - A) (D(A))} \neq X, \, (\lambda I - A) \text{ is injective} \right\}
			\end{align*}
			is called the residual spectrum of $A$.
			\end{enumerate}
\end{definition}
%If $A$ is a closed operator, then one can show that the resolvent set is given by
%\begin{align*}
%	\rho(A) = \left\{ \lambda \in \mathbb{C} \mid (\lambda I - A)^{-1} \in \mathcal{L}(X) \right\}.
%\end{align*}

\begin{proposition}[Proposition 2.8.4 in \cite{Tucsnak-Weiss}]
	\label{Proposition 2.8.4 in Tucsnak}
	Let $A \colon D(A) \subset X \to X$ be linear and densely defined, and let $\lambda \in \rho(A)$. Then $\overline{\lambda} \in \rho(A^{\ast})$ and
	\begin{align*}
		\left[ (\lambda I - A)^{-1} \right]^{\ast} = (\overline{\lambda}I - A^{\ast})^{-1}. 
	\end{align*}
\end{proposition}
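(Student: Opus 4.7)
The plan is to reduce the statement to a routine manipulation with adjoints by splitting it into two self-contained steps. Set $T := \lambda I - A$ with $D(T) = D(A)$, which is densely defined by assumption; by hypothesis $T^{-1} \in \mathcal{L}(X)$, so its Hilbert space adjoint $(T^{-1})^{\ast} \in \mathcal{L}(X)$ exists and is bounded as well.

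First, I would identify $T^{\ast}$ with $\overline{\lambda} I - A^{\ast}$ as operators, including domains. For $x \in D(A)$ and $y \in D(A^{\ast})$ the computation
\[ \langle Tx, y\rangle_X = \lambda \langle x, y\rangle_X - \langle Ax, y\rangle_X = \langle x, \overline{\lambda} y - A^{\ast} y\rangle_X \]
shows $D(A^{\ast}) \subset D(T^{\ast})$ with $T^{\ast} y = \overline{\lambda} y - A^{\ast} y$. Conversely, if $y \in D(T^{\ast})$, continuity of $x \mapsto \langle Tx, y\rangle_X$ on $D(A)$ together with the obvious continuity of $x \mapsto \lambda \langle x, y\rangle_X$ forces continuity of $x \mapsto \langle Ax, y\rangle_X$, so $y \in D(A^{\ast})$. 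Hence $T^{\ast} = \overline{\lambda} I - A^{\ast}$ with matching domains.

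Second, I would show that $T^{\ast}$ is bijective with bounded inverse $(T^{-1})^{\ast}$. For any $u \in X$ and $x \in D(T)$,
\[ \langle x, u \rangle_X = \langle T^{-1} T x, u \rangle_X = \langle Tx, (T^{-1})^{\ast} u\rangle_X, \]
so the functional $x \mapsto \langle Tx, (T^{-1})^{\ast} u\rangle_X$ is bounded on $D(T)$, yielding $(T^{-1})^{\ast} u \in D(T^{\ast})$ and $T^{\ast}(T^{-1})^{\ast} u = u$. Symmetrically, for $y \in D(T^{\ast})$ and $z \in X$,
\[ \langle z, y \rangle_X = \langle TT^{-1} z, y\rangle_X = \langle T^{-1} z, T^{\ast} y\rangle_X = \langle z, (T^{-1})^{\ast} T^{\ast} y\rangle_X, \]
so $(T^{-1})^{\ast} T^{\ast} y = y$. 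Combining the two identities yields $(T^{\ast})^{-1} = (T^{-1})^{\ast} \in \mathcal{L}(X)$, and substituting the first step's identification gives $\overline{\lambda} \in \rho(A^{\ast})$ together with $(\overline{\lambda} I - A^{\ast})^{-1} = ((\lambda I - A)^{-1})^{\ast}$, as claimed.

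The only subtlety is the careful bookkeeping of domains when passing between $T^{\ast}$ and $\overline{\lambda} I - A^{\ast}$; there is no real analytic obstacle, and the argument does not require closedness of $A$ (which in fact follows a posteriori from $\rho(A) \neq \emptyset$). The main step one must resist shortcutting is Step~2, where the boundedness of $T^{-1}$ (and hence of $(T^{-1})^{\ast}$) is what makes the adjoint identity go through without any extra closure assumption.
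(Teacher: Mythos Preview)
The paper does not supply its own proof of this proposition; it merely states the result and attributes it to \cite{Tucsnak-Weiss}. Your argument is correct and is essentially the standard one: identifying $(\lambda I - A)^{\ast}$ with $\overline{\lambda} I - A^{\ast}$ on matching domains, and then verifying the two-sided inverse identity $T^{\ast}(T^{-1})^{\ast} = I$ and $(T^{-1})^{\ast} T^{\ast} = I_{D(T^{\ast})}$ directly from the defining relation of the adjoint. There is nothing to compare against here, and no gap in your reasoning.
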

An important feature of infinitesimal generators is that the corresponding resolvent operator can be written as the \emph{Laplace transform} of the respective $C_0$-semigroup.

\begin{theorem}[Proposition 2.3.1 in \cite{Tucsnak-Weiss}]
	\label{Theorem Laplace Transform}
	Let $(T(t))_{t \geq 0}$ be a $C_0$-semigroup on $X$. Let $A \colon D(A) \subset X \to X$ be its infinitesimal generator. Then the following assertions are true:
	\begin{enumerate}
		\item[(i)] $\mathbb{C}_{\omega(T(\cdot))} = \left\{ z \in \mathbb{C} \mid \realpart(z)  > \omega(T(\cdot)) \right\} \subset \rho(A)$.
		\item[(ii)] For all $x\in X$, $\lambda \in \mathbb{C}_{\omega(T(\cdot))}$ it holds that
		\begin{align*}
			(\lambda I - A)^{-1} x = \int_{0}^{\infty} e^{-\lambda t}T(t)x \, dt. 
		\end{align*}
	\end{enumerate}
\end{theorem}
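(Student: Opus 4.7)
The plan is to construct a candidate resolvent operator via the Laplace transform, show it is bounded, and verify it is a genuine two-sided inverse of $\lambda I - A$. Fix $\lambda \in \mathbb{C}$ with $\realpart(\lambda) > \omega(T(\cdot))$. By the definition of the growth bound (Definition \ref{Definition Growth Bound}) we can pick some $\omega_0 \in (\omega(T(\cdot)), \realpart(\lambda))$ and a constant $M \geq 1$ with $\|T(t)\|_{\mathcal{L}(X)} \leq M e^{\omega_0 t}$ for all $t \geq 0$. The integrand $t \mapsto e^{-\lambda t} T(t) x$ is continuous (by Theorem \ref{Theorem T(t)x is continuous}) and bounded in norm by $M \|x\|_X e^{-(\realpart(\lambda) - \omega_0)t}$, so the Bochner integral
\begin{align*}
R(\lambda) x := \int_0^{\infty} e^{-\lambda t} T(t) x \, dt
\end{align*}
exists for every $x \in X$, and $R(\lambda) \in \mathcal{L}(X)$ with $\|R(\lambda)\|_{\mathcal{L}(X)} \leq M / (\realpart(\lambda) - \omega_0)$.

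Next I would show that $R(\lambda) x \in D(A)$ with $A R(\lambda) x = \lambda R(\lambda) x - x$ for every $x \in X$. For $h > 0$, I compute
\begin{align*}
\frac{T(h) - I}{h} R(\lambda) x = \frac{e^{\lambda h} - 1}{h} \int_{h}^{\infty} e^{-\lambda t} T(t) x \, dt - \frac{1}{h} \int_{0}^{h} e^{-\lambda t} T(t) x \, dt,
\end{align*}
which follows by substitution $s = t + h$ in the first resulting integral and use of the semigroup property. Letting $h \searrow 0^+$, the first term converges to $\lambda R(\lambda) x$ (using dominated convergence for the integral and differentiation of $e^{\lambda h}$), and the second term converges to $T(0) x = x$ by Theorem \ref{Theorem Properties of C0-Semigroups}\ref{C0SemigroupProp1}. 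By Definition \ref{Definition Infinitesimal Generator} this shows that $R(\lambda) x \in D(A)$ and $(\lambda I - A) R(\lambda) x = x$.

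For the other direction, let $x \in D(A)$. By Theorem \ref{Theorem Properties of C0-Semigroups}\ref{C0SemigroupProp3}, $T(t) A x = A T(t) x$, so
\begin{align*}
R(\lambda) A x = \int_0^{\infty} e^{-\lambda t} A T(t) x \, dt = A \int_0^{\infty} e^{-\lambda t} T(t) x \, dt = A R(\lambda) x,
\end{align*}
where the second equality uses closedness of $A$ (Corollary \ref{Corollary - Generators Are Closed and Densely Defined}) together with the fact that both $t \mapsto e^{-\lambda t} T(t) x$ and $t \mapsto e^{-\lambda t} A T(t) x$ are Bochner integrable on $[0, \infty)$. Combining with the previous step gives $R(\lambda)(\lambda I - A) x = \lambda R(\lambda) x - A R(\lambda) x = x$. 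Hence $\lambda I - A$ is bijective with bounded inverse $R(\lambda)$, proving both $\lambda \in \rho(A)$ and the Laplace-transform formula.

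The main technical obstacle I anticipate is the rigorous justification that $A$ commutes with the Bochner integral in the last display; this has to be reduced to the closed-graph property of $A$ by approximating the integral by Riemann sums (which lie in $D(A)$) and passing to the limit. The rest of the argument is a careful but routine manipulation of the semigroup identity and the dominated convergence theorem.
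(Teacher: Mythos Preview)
Your proof is correct and follows essentially the same approach as the paper's: define the Laplace transform as a candidate resolvent, compute the difference quotient $\frac{T(h)-I}{h}R(\lambda)x$ via the substitution $s=t+h$ to show $R(\lambda)x\in D(A)$ with $(\lambda I-A)R(\lambda)x=x$, and then use closedness of $A$ to commute $A$ with the integral for $x\in D(A)$. The only cosmetic difference is that you group the difference quotient into two terms (with the first integral over $[h,\infty)$) whereas the paper uses three terms that combine into an integral over $[0,\infty)$; both rearrangements yield the same limit.
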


\begin{proof}
	If $\alpha = \realpart(\lambda) > \omega(T(\cdot))$, then there are some constants $M \geq1$ and $\epsilon > 0$ such that
	\begin{align*}
		\|T(t) \|_{\mathcal{L}(X)} \leq M e^{(\alpha - \epsilon)t}, \qquad t \geq 0.
	\end{align*}
	Thus, the improper Riemann integral exists, since the integrand converges exponentially to $0$ as $t \to \infty$. Now, define
	\begin{align*}
		Z(\lambda)x := \int_{0}^{\infty} e^{-\lambda t} T(t)x \, dt, \qquad x \in X, \, \lambda \in \mathbb{C}_{\omega(T(\cdot))}.
	\end{align*}
	Then we have for $\lambda \in \mathbb{C}_{\omega(T(\cdot))}$ that
	\begin{align*}
		\|Z(\lambda) x \|_X \leq \int_{0}^{\infty} e^{- (\realpart(\lambda))t} M e^{(\realpart(\lambda)- \epsilon)t} \|x\|_X \, dt = \frac{M}{\epsilon} \|x\|_X.
	\end{align*}
	Thus, $Z(\lambda) \in \mathcal{L}(X)$. Furthermore, by substituting $\eta = t+h$ and by Theorem~\ref{Theorem Properties of C0-Semigroups}~\ref{C0SemigroupProp1}, we have
	\begin{align*}
		\frac{1}{h} \left(T(h) - I \right) Z(\lambda) x & = \frac{1}{h} \int_{0}^{\infty} e^{-\lambda t} \left( T(t+h) - T(t) \right) x \, dt                                                                         \\
		& = \frac{e^{\lambda h}}{h} \int_{0}^{\infty} e^{-\lambda \eta} T(\eta)x \, d\eta - \frac{e^{\lambda h}}{h} \int_{0}^{h} e^{-\lambda \eta} T(\eta) x \, d\eta \\
		& \qquad - \frac{1}{h} \int_{0}^{\infty} e^{-\lambda \eta} T(\eta) x \, d\eta                                                                                 \\
		& = \frac{e^{\lambda h}-1}{h} \int_{0}^{\infty} e^{-\lambda t} T(t) x \, dt - \frac{e^{\lambda h}}{h} \int_{0}^{h} e^{-\lambda t} T(t)x \, dt                 \\
		& \to \lambda Z(\lambda) x - x
	\end{align*}
	as $h \to 0$. We conclude that $ Z(\lambda)x \in D(A)$ with
	\begin{align*}
		AZ(\lambda) x = \lambda Z(\lambda) x - x.
	\end{align*}
	Hence, $(\lambda I - A) Z(\lambda) x = x$ for all $x \in X$. As $A$ is closed, for all $x \in D(A)$ the following holds:
	\begin{align*}
		Z(\lambda) A x & = \int_{0}^{\infty} e^{-\lambda t} T(t)Ax \, dt                  \\
		& = \int_{0}^{\infty} e^{-\lambda t} A T(t)x \, dt                 \\
		& = A \left( \int_{0}^{\infty} e^{-\lambda t} T(t)x \, dt  \right) \\
		& = AZ(\lambda) x.
	\end{align*}
	This yields
	\begin{align*}
		Z(\lambda) (\lambda I -A) x = (\lambda I - A)Z(\lambda) x = x.
	\end{align*}
	In conclusion, we have that $Z(\lambda) = (\lambda I -A)^{-1}$, and so $\lambda \in \rho(A)$.
\end{proof}

In the following, we want to present some characteristics of infinitesimal generators of strongly continuous semigroups. To this end, we need to introduce the \emph{Yosida approximation} of a closed and densely defined operator $A \colon D(A) \subset X \to X$. Assume that there exist constants $M > 0$ and $\omega > 0$ such that $(\omega, \infty) \subset \rho(A)$ and such that for all $n \in \mathbb{N}$, 
\begin{align*}
	\|R^n(\lambda,A) \|_{\mathcal{L}(X)} \leq \frac{M}{(\lambda - \omega)^n}, \hspace{0.5 cm} \lambda > \omega.
\end{align*}
If so, it has been shown that for every $\lambda \in \rho(A)$, the Yosida approximation defined as
\begin{align*}
	A_{\lambda} := \lambda^2 R(\lambda,A) - \lambda I = \lambda AR(\lambda,A) \in \mathcal{L}(X)
\end{align*}
has some useful properties which are exploited in order to prove the necessity of the following characterization of infinitesimal generators of $C_0$-semigroups. The sufficiency flollows from Corollary \ref{Corollary - Generators Are Closed and Densely Defined} and Theorem \ref{Theorem Laplace Transform}.
\begin{theorem}[Hille-Yosida, cf. Theorem II.3.8 in \cite{EngelNagel}]
	\label{Theorem Hille-Yosida - C0 Semigroup}
	A linear operator $A \colon D(A) \subset X \to X$ is the infinitesimal generator of a $C_0$-semigroup on $X$ of type $C_0(M, \omega)$ if and only if the following holds:
	\begin{enumerate}
		\item[(i)] $A$ is closed and densely defined.
		\item[(ii)] $(\omega, \infty) \subset \rho(A)$.
		\item[(iii)] For all $\lambda > \omega$ and for all $n \in \mathbb{N}$ one has
		\begin{align*}
			\|R^n(\lambda, A) \|_{\mathcal{L}(X)} \leq \frac{M}{(\lambda - \omega)^n}.
		\end{align*}
	\end{enumerate} 
\end{theorem}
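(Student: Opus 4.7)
The plan is to prove both implications separately, using the Laplace transform representation for necessity and the Yosida approximation for sufficiency.

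\textbf{Necessity.} Assume $A$ generates a $C_0$-semigroup $(T(t))_{t\geq 0}$ of type $C_0(M,\omega)$. Property (i) is Corollary \ref{Corollary - Generators Are Closed and Densely Defined}. For (ii) and the first-order bound in (iii), Theorem \ref{Theorem Laplace Transform} yields $(\omega, \infty)\subset \rho(A)$ and
\begin{align*}
R(\lambda, A) x = \int_{0}^{\infty} e^{-\lambda t} T(t) x \, dt, \qquad x \in X,\ \lambda > \omega.
\end{align*}
For higher powers, I would differentiate this identity $n-1$ times in $\lambda$ (the exponential decay $\|T(t)\| \leq Me^{\omega t}$ and $\realpart\lambda > \omega$ justifying differentiation under the integral) to obtain
\begin{align*}
R^n(\lambda, A) x = \frac{1}{(n-1)!}\int_{0}^{\infty} t^{n-1} e^{-\lambda t} T(t) x \, dt,
\end{align*}
and then estimate
\begin{align*}
\|R^n(\lambda,A)\|_{\mathcal{L}(X)} \leq \frac{M}{(n-1)!}\int_{0}^{\infty} t^{n-1} e^{-(\lambda-\omega)t}\, dt = \frac{M}{(\lambda-\omega)^n}.
\end{align*}

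\textbf{Sufficiency.} Assume (i)--(iii). For each $\lambda > \omega$ define the Yosida approximation $A_{\lambda} := \lambda^2 R(\lambda, A) - \lambda I = \lambda A R(\lambda, A) \in \mathcal{L}(X)$ and the uniformly continuous semigroup $T_\lambda(t) := e^{tA_\lambda}$. I would proceed in the following substeps. First, use (iii) with $n=1$ and the identity $\lambda R(\lambda,A)x - x = R(\lambda,A) Ax$ on $D(A)$ to show that $\lambda R(\lambda,A) x \to x$ as $\lambda \to \infty$ for all $x \in X$ (the estimate gives uniform boundedness on the operator side, and convergence is first verified on the dense subspace $D(A)$). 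Consequently, $A_\lambda x = \lambda R(\lambda,A)\, Ax \to Ax$ for $x \in D(A)$. Second, rewrite $T_\lambda(t) = e^{-\lambda t}e^{\lambda^2 t R(\lambda,A)}$ and use (iii) term-by-term in the exponential series to derive the uniform bound $\|T_\lambda(t)\|_{\mathcal{L}(X)} \leq M \exp\!\bigl(\tfrac{\omega\lambda}{\lambda-\omega}t\bigr)$, which tends to $Me^{\omega t}$ as $\lambda \to \infty$ and is bounded uniformly for $\lambda$ large. Third, for $x \in D(A)$ and $\lambda, \mu > \omega$ exploit the fact that $T_\lambda(s)$ and $T_\mu(t-s)$ commute (since $A_\lambda, A_\mu, R(\lambda,A), R(\mu,A)$ all commute) to write
\begin{align*}
T_\lambda(t)x - T_\mu(t) x = \int_{0}^{t} T_\mu(t-s) T_\lambda(s)(A_\lambda x - A_\mu x)\, ds,
\end{align*}
and combine the uniform bound with the convergence $A_\lambda x \to Ax$ to show that $(T_\lambda(t)x)_{\lambda}$ is Cauchy, uniformly on compact time intervals. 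Fourth, define $T(t)x := \lim_{\lambda \to \infty} T_\lambda(t) x$ on $D(A)$ and extend to $X$ via the uniform bound and the density of $D(A)$. Passing to the limit yields the semigroup property, strong continuity, and the estimate $\|T(t)\|_{\mathcal{L}(X)} \leq Me^{\omega t}$. Fifth, identify the generator: for $x \in D(A)$, passing to the limit in $T_\lambda(t)x - x = \int_0^t T_\lambda(s) A_\lambda x\, ds$ gives $T(t)x - x = \int_0^t T(s) Ax\, ds$, so the generator $B$ of $(T(t))_{t\geq 0}$ extends $A$; since $(\omega,\infty) \subset \rho(A) \cap \rho(B)$ by (ii) and the necessity direction, comparing resolvents on $X$ forces $A = B$.

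\textbf{Main obstacle.} I expect the trickiest ingredient to be the uniform operator bound on $T_\lambda(t)$ that survives the limit $\lambda \to \infty$; everything downstream (the Cauchy estimate for $T_\lambda(t)x - T_\mu(t)x$, the extension of $T(t)$ from $D(A)$ to all of $X$, and the growth bound $Me^{\omega t}$ of the limit semigroup) relies on squeezing the exponent $\omega\lambda/(\lambda-\omega)$ out of the power bound in (iii) without losing the constant $M$, which requires genuinely using the bound for \emph{all} $n$ rather than just $n=1$.
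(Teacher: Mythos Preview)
Your proposal is correct and follows exactly the standard route the paper points to: the paper does not give a detailed proof but merely indicates that one direction follows from Corollary~\ref{Corollary - Generators Are Closed and Densely Defined} together with the Laplace-transform representation (Theorem~\ref{Theorem Laplace Transform}), while the other direction is obtained via the Yosida approximation $A_\lambda = \lambda^2 R(\lambda,A) - \lambda I$, which is precisely what you carry out. (Note that the paper's paragraph preceding the theorem has the words ``necessity'' and ``sufficiency'' inadvertently swapped; your labeling is the correct one.)
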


A similar result can be proved for an important class of strongly continuous semigroups, called contraction semigroups. They are especially interesting in the port-Hamiltonian framework of infinite-dimensional systems. 

\begin{definition}[Contraction Semigroup]
	\label{Definition Contraction Semigroup}
	Let $(T(t))_{t \geq0}$ be a $C_0$-semigroup on $X$.
	\begin{enumerate}[label = (\roman*)]
	\item It is called a contraction semigroup, if it is of type $C_0(1,0)$, that is,
	\begin{align*}
		\|T(t)\|_{\mathcal{L}(X)} \leq 1, \hspace{0.5 cm} t \geq 0.
	\end{align*}
\item It is called a unitary semigroup, if for all $x \in X$ it holds that
\begin{align*}
	\|T(t)x \|_X = \|x\|_X, \hspace{0.5 cm} t \geq 0.
	\end{align*}
\end{enumerate}
\end{definition}
\begin{theorem}[Hille-Yosida, Theorem 1.3.1 in \cite{Pazy83}]
	\label{Theorem Hille-Yosida - Contraction Semigroup}
	A linear operator $A \colon D(A) \subset X \to X$ is the infinitesimal generator of a contraction semigroup on $X$ if and only if the following holds:
	\begin{enumerate}
		\item[(i)] $A$ is closed and $\overline{D(A)} = X$. 
		\item[(ii)] $(0, \infty) \subset \rho(A)$, and for every $\lambda > 0$, 
		\begin{align*}
			\|R(\lambda,A) \|_{\mathcal{L}(X)} \leq \frac{1}{\lambda}.
		\end{align*}
\end{enumerate}
\end{theorem}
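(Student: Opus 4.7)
For the necessity direction, the plan is to reduce to results already established in this section. The corollary stating that the infinitesimal generator of any $C_0$-semigroup on $X$ is closed and densely defined immediately gives condition \textit{(i)}. The resolvent bound in \textit{(ii)} follows from the Laplace representation of the resolvent proved earlier: a contraction semigroup satisfies $\omega(T(\cdot)) \leq 0$, so $(0, \infty) \subset \mathbb{C}_{\omega(T(\cdot))} \subset \rho(A)$, and for all $\lambda > 0$, $x \in X$,
\begin{align*}
\|R(\lambda, A) x\|_X \leq \int_0^\infty e^{-\lambda t} \|T(t)x\|_X\, dt \leq \|x\|_X \int_0^\infty e^{-\lambda t}\, dt = \frac{\|x\|_X}{\lambda}.
\end{align*}

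For sufficiency, the shortest route is to observe that $\|R(\lambda, A)\|_{\mathcal{L}(X)} \leq 1/\lambda$ implies, by submultiplicativity of the operator norm, $\|R^n(\lambda, A)\|_{\mathcal{L}(X)} \leq 1/\lambda^n$ for every $n \in \mathbb{N}$. This places us exactly in the hypotheses of the general Hille--Yosida theorem stated just above with $M = 1$ and $\omega = 0$, so $A$ generates a $C_0$-semigroup of type $C_0(1,0)$, which is by definition a contraction semigroup.

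Alternatively, and more in the spirit of the Yosida machinery sketched just before the theorem, one can construct the semigroup directly. The plan has four steps: first, show that $\lim_{\lambda \to \infty} \lambda R(\lambda, A) x = x$ for $x \in D(A)$, and hence for all $x \in X$ by density; second, deduce $\lim_{\lambda \to \infty} A_\lambda x = Ax$ for every $x \in D(A)$, where $A_\lambda = \lambda A R(\lambda, A) = \lambda^2 R(\lambda, A) - \lambda I$; third, define the uniformly continuous semigroup $T_\lambda(t) := e^{t A_\lambda}$ and verify contractivity through
\begin{align*}
\|T_\lambda(t)\|_{\mathcal{L}(X)} \leq e^{-\lambda t}\,\|e^{t \lambda^2 R(\lambda, A)}\|_{\mathcal{L}(X)} \leq e^{-\lambda t} e^{t\lambda} = 1;
\end{align*}
fourth, pass to the limit to define $T(t)x := \lim_{\lambda \to \infty} T_\lambda(t)x$.

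The main obstacle is to verify that $(T_\lambda(t)x)_{\lambda}$ is Cauchy in $X$ uniformly on compact time intervals, and that the resulting family $(T(t))_{t \geq 0}$ indeed admits $A$ as its infinitesimal generator. The key idea is that $A_\lambda$ and $A_\mu$ commute, and both commute with $T_\lambda$ and $T_\mu$, so that the fundamental theorem of calculus yields
\begin{align*}
\|T_\lambda(t) x - T_\mu(t) x\|_X \leq \int_0^t \|T_\mu(t-s) T_\lambda(s) (A_\lambda - A_\mu) x\|_X\, ds \leq t\, \|A_\lambda x - A_\mu x\|_X
\end{align*}
for $x \in D(A)$. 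Combined with $A_\lambda x \to Ax$ for $x \in D(A)$, this furnishes the Cauchy property; density of $D(A)$ and the uniform contractivity of every $T_\lambda$ then extend the convergence to all of $X$. Identification of the generator finally follows by differentiating the integral relation between $T_\lambda$ and $A_\lambda$ at $t = 0$ and using closedness of $A$ to pass the limit through.
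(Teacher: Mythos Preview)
Your proposal is correct. The paper does not give its own proof of this theorem---it is stated with a reference to Pazy---so there is no detailed argument to compare against; your necessity direction uses exactly the two results the paper flags for this purpose (Corollary~\ref{Corollary - Generators Are Closed and Densely Defined} and Theorem~\ref{Theorem Laplace Transform}), and your sufficiency direction, whether by reduction to Theorem~\ref{Theorem Hille-Yosida - C0 Semigroup} or via the Yosida approximation, matches the approach the paper sketches just before the general Hille--Yosida statement.
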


There is another important characterization for when a linear operator generates a contraction semigroup. To this end, we need to clarify what it means that a linear operator is dissipative. 

\begin{definition}[Dissipative Operator]
	\label{Definition Dissipative Operator}
	A linear operator $A \colon D(A) \subset X \to X$ is called dissipative if for all $x \in D(A)$, 
	\begin{align*}
		\realpart \langle Ax, x \rangle_X \leq 0.
	\end{align*}
\end{definition}
The dissipativity of a linear operator can be characterized in the following way. 

\begin{lemma}[Proposition 6.1.5 in \cite{JacobZwart}]
	\label{Lemma Characterization Dissipativity}
A linear operator $A \colon D(A) \subset X \to X$ is dissipative if and only if for all $x \in D(A)$, $\lambda  >0$ it holds that
\begin{align*}
	\| (\lambda I - A)x \|_X \geq \lambda \| x \|_X. 
\end{align*}
	\end{lemma}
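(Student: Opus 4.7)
The plan is to establish both implications by expanding $\|(\lambda I - A)x\|_X^2$ via the inner product and exploiting the fact that the cross terms reduce to $2\lambda \realpart \langle Ax, x\rangle_X$. No deep functional-analytic machinery is needed; the crux is the algebraic identity together with a Cauchy--Schwarz argument (or, alternatively, a limit $\lambda \to \infty$).

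For the forward direction, I would assume $A$ is dissipative and aim to show $\|(\lambda I - A)x\|_X \geq \lambda \|x\|_X$ for $x \in D(A)$, $\lambda > 0$. The case $x = 0$ is trivial, so suppose $x \neq 0$. The plan is to combine Cauchy--Schwarz with the definition of dissipativity as follows:
\begin{align*}
\|(\lambda I - A)x\|_X \, \|x\|_X &\geq \vert \langle (\lambda I - A)x, x\rangle_X \vert \\
&\geq \realpart \langle (\lambda I - A)x, x\rangle_X \\
&= \lambda \|x\|_X^2 - \realpart \langle Ax, x\rangle_X \\
&\geq \lambda \|x\|_X^2.
\end{align*}
Dividing by $\|x\|_X > 0$ yields the claim.

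For the reverse direction, I would assume the resolvent estimate and derive the sign condition on $\realpart \langle Ax, x\rangle_X$. Here I would expand
\begin{align*}
\|(\lambda I - A)x\|_X^2 = \lambda^2 \|x\|_X^2 - 2\lambda \realpart \langle Ax, x\rangle_X + \|Ax\|_X^2,
\end{align*}
and square the hypothesized inequality to obtain $\lambda^2 \|x\|_X^2 - 2\lambda \realpart \langle Ax, x\rangle_X + \|Ax\|_X^2 \geq \lambda^2 \|x\|_X^2$. This simplifies to
\begin{align*}
\realpart \langle Ax, x\rangle_X \leq \frac{\|Ax\|_X^2}{2\lambda}.
\end{align*}
Since this holds for every $\lambda > 0$ and the right-hand side tends to $0$ as $\lambda \to \infty$, we conclude $\realpart \langle Ax, x\rangle_X \leq 0$, i.e., $A$ is dissipative.

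I do not anticipate any real obstacle: both directions reduce to inner-product computations, and the limit $\lambda \to \infty$ trick in the reverse direction is the only mildly non-obvious ingredient. The only care needed is handling the case $x = 0$ separately and ensuring the chain of inequalities in the forward direction is correctly ordered so that dividing by $\|x\|_X$ is justified.
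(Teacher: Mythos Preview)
Your proof is correct. The paper does not actually supply a proof of this lemma; it simply records the statement with a citation to \cite{JacobZwart}, so there is nothing to compare against beyond noting that your argument is the standard one found in that reference.
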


\begin{lemma}[Lemma 6.1.6 in \cite{JacobZwart}]
	\label{Lemma Auxiliary Lemma Lumer Phillips}
	Let $A \colon D(A) \subset X \to X$ be a closed, dissipative operator. Then $\ran (\lambda I - A)$ is closed for all $\lambda > 0$. 
\end{lemma}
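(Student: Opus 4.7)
The plan is to take a convergent sequence in $\ran(\lambda I - A)$ and exhibit its limit as a value of $\lambda I - A$ on a point of $D(A)$, using closedness of $A$ together with the quantitative lower bound from Lemma \ref{Lemma Characterization Dissipativity}.

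More concretely, fix $\lambda > 0$ and let $(y_n)_{n \in \mathbb{N}} \subset \ran(\lambda I - A)$ with $y_n \to y$ in $X$. Choose $x_n \in D(A)$ such that $y_n = (\lambda I - A) x_n$. The first step is to show that $(x_n)_{n \in \mathbb{N}}$ is Cauchy in $X$: by linearity, Lemma \ref{Lemma Characterization Dissipativity} applied to $x_n - x_m \in D(A)$ yields
\begin{align*}
\|x_n - x_m\|_X \leq \tfrac{1}{\lambda} \|(\lambda I - A)(x_n - x_m)\|_X = \tfrac{1}{\lambda} \|y_n - y_m\|_X,
\end{align*}
and the right-hand side tends to $0$ because $(y_n)$ is Cauchy. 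Hence $x_n \to x$ for some $x \in X$.

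Next, since $A x_n = \lambda x_n - y_n$, the sequence $(A x_n)$ converges to $\lambda x - y$. Closedness of $A$ then forces $x \in D(A)$ and $A x = \lambda x - y$, so that $y = (\lambda I - A) x \in \ran(\lambda I - A)$. This shows $\ran(\lambda I - A)$ is closed.

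There is no real obstacle: the only nontrivial ingredient is the equivalence provided by Lemma \ref{Lemma Characterization Dissipativity}, which converts dissipativity into a bounded-below condition on $\lambda I - A$; everything else is a standard closed-graph argument. The slight subtlety worth flagging is that one must apply the lower bound to the \emph{differences} $x_n - x_m$ rather than to the $x_n$ themselves, which is precisely what linearity of $\lambda I - A$ permits.
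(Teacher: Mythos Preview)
Your proof is correct and is the standard argument. The paper itself does not supply a proof of this lemma; it simply states it with a reference to \cite{JacobZwart}, so there is nothing to compare against beyond noting that your use of Lemma~\ref{Lemma Characterization Dissipativity} for the lower bound, followed by a closed-graph argument, is exactly the expected route.
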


Applying Theorem \ref{Theorem Hille-Yosida - Contraction Semigroup} as well as Lemma \ref{Lemma Characterization Dissipativity} and Lemma \ref{Lemma Auxiliary Lemma Lumer Phillips}, one can prove that generators of contraction semigroups can be characterized as follows. 

\begin{theorem}[Lumer-Phillips, Theorem 6.1.7 in \cite{JacobZwart}]
	\label{Theorem Lumer-Phillips}
	A linear operator $A \colon D(A) \subset X \to X$ is the infinitesimal generator of a contraction semigroup on $X$ if and only if the following holds:
	\begin{enumerate}
		\item[(i)] $A$ is dissipative.
		\item[(ii)] $\ran(I-A) = X$.
	\end{enumerate}
\end{theorem}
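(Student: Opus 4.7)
The plan is to treat the two directions separately, with the forward (necessity) direction being a short consequence of the Hille-Yosida Theorem~\ref{Theorem Hille-Yosida - Contraction Semigroup} and the definition of the generator, and the reverse (sufficiency) direction being the substantial part.

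For the forward direction, assume $A$ generates a contraction semigroup $(T(t))_{t \geq 0}$. Dissipativity will follow by computing, for $x \in D(A)$,
\[
\realpart \langle A x, x \rangle_X = \realpart \lim_{t \searrow 0} \frac{\langle T(t)x - x, x \rangle_X}{t}
= \lim_{t \searrow 0} \frac{\realpart \langle T(t) x, x \rangle_X - \|x\|_X^2}{t},
\]
and bounding the numerator by $\|T(t)x\|_X \|x\|_X - \|x\|_X^2 \leq 0$ via Cauchy-Schwarz and the contraction property. For $\ran(I-A) = X$, the Hille-Yosida Theorem~\ref{Theorem Hille-Yosida - Contraction Semigroup} applied to the contraction semigroup gives $(0,\infty) \subset \rho(A)$, and in particular $1 \in \rho(A)$, which forces $\ran(I-A) = X$.

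For the reverse direction, assume (i) and (ii). First I would use Lemma~\ref{Lemma Characterization Dissipativity} with $\lambda = 1$ to conclude that $I - A$ is injective, and, combined with the hypothesis $\ran(I-A) = X$, obtain a well-defined inverse $(I - A)^{-1} \colon X \to D(A)$ with $\|(I-A)^{-1}\|_{\mathcal{L}(X)} \leq 1$. Since $(I-A)^{-1}$ is bounded and hence closed, $I - A$ and therefore $A$ are closed. Next comes what I expect to be the main obstacle: showing $\overline{D(A)} = X$, because dissipativity alone does not obviously yield density. Here I would exploit the Hilbert space structure: suppose $y \in \overline{D(A)}^{\bot}$ and pick $x_0 \in D(A)$ with $(I-A)x_0 = y$, which exists by (ii). Then $\langle y, x_0 \rangle_X = 0$ since $x_0 \in \overline{D(A)}$, while on the other hand
\[
\realpart \langle y, x_0 \rangle_X = \|x_0\|_X^2 - \realpart \langle A x_0, x_0 \rangle_X \geq \|x_0\|_X^2,
\]
forcing $x_0 = 0$ and hence $y = 0$; so $D(A)$ is dense.

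It remains to verify the resolvent hypotheses of the Hille-Yosida Theorem~\ref{Theorem Hille-Yosida - Contraction Semigroup}, namely $(0,\infty) \subset \rho(A)$ together with $\|R(\lambda, A)\|_{\mathcal{L}(X)} \leq 1/\lambda$. I would proceed by a standard connectedness argument on the set $\Lambda := \{\lambda > 0 \mid \lambda \in \rho(A)\}$: it is non-empty since $1 \in \Lambda$ by the step above, and it is open in $(0, \infty)$ because $\rho(A)$ is open. For closedness in $(0, \infty)$ I would invoke Lemma~\ref{Lemma Auxiliary Lemma Lumer Phillips} (closedness of $\ran(\lambda I - A)$ for closed dissipative $A$) together with Lemma~\ref{Lemma Characterization Dissipativity}: if $\lambda_n \in \Lambda$ with $\lambda_n \to \lambda > 0$, then $\ran(\lambda I - A)$ is closed, $\lambda I - A$ is injective by dissipativity, and the bound $\|(\lambda_n I - A)^{-1}\|_{\mathcal{L}(X)} \leq 1/\lambda_n$ transfers to $\lambda$ by a perturbation argument, yielding $\lambda \in \rho(A)$. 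Thus $\Lambda = (0, \infty)$. The norm bound $\|R(\lambda, A)\|_{\mathcal{L}(X)} \leq 1/\lambda$ is then exactly Lemma~\ref{Lemma Characterization Dissipativity} rewritten, and in particular the Hille-Yosida bound is satisfied with $n=1$ and $M = 1$; the $n$-th power estimate follows by iterating, so all hypotheses of Theorem~\ref{Theorem Hille-Yosida - Contraction Semigroup} hold and $A$ generates a contraction semigroup.
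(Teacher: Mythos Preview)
The paper does not actually prove this theorem; it only states it with a reference to \cite{JacobZwart} and the remark that it follows by ``Applying Theorem~\ref{Theorem Hille-Yosida - Contraction Semigroup} as well as Lemma~\ref{Lemma Characterization Dissipativity} and Lemma~\ref{Lemma Auxiliary Lemma Lumer Phillips}.'' Your proposal is correct and uses precisely these three ingredients in the way one would expect, so it is fully in line with the paper's indicated (but unwritten) approach.

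One minor comment on presentation: in the closedness-in-$(0,\infty)$ step of your connectedness argument, the appeal to Lemma~\ref{Lemma Auxiliary Lemma Lumer Phillips} is not strictly needed once you have the resolvent bound $\|R(\lambda_n,A)\|_{\mathcal{L}(X)} \leq 1/\lambda_n$; the standard Neumann-series perturbation already gives $\lambda \in \rho(A)$ whenever $|\lambda - \lambda_n| < \lambda_n$, which is eventually satisfied since $\lambda_n \to \lambda > 0$. The range-closedness lemma becomes genuinely useful if you instead argue directly that $\ran(\lambda I - A)$ is closed and dense for every $\lambda > 0$, but as written your perturbation route is self-contained without it.
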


Now that we have examined some properties of strongly continuous semigroups and their infinitesimal generators, we can finally deal with solving Hilbert space-valued initial value problems. It turns out that $C_0$-semigroups are a vital tool for the analysis of infinite-dimensional systems. 

\section{The Abstract Cauchy Problem}
\label{Section The Abstract Cauchy Problem}
In this section, we introduce the abstract Cauchy problem, i.e., a linear inital value problem with values in a Hilbert space. We will clarify the notion of a solution of such an abstract differential equation, and characterize well-posedness of this class of systems. Again, this is standard material and is covered in \cite[Chapter 5]{CurtainZwart}, \cite[Section II.6]{EngelNagel}, \cite[Chatper 4]{Pazy83}.
\subsection{Linear Homogeneous Initial Value Problems}
\label{Subsection Linear Homogeneous Initial Value Problems}
We begin with the definition of an abstract Cauchy problem.
\begin{definition}[Abstract Cauchy Problem]
	The linear initial value problem 
	\begin{align}
		\begin{split}
		\dot{x}(t) &= Ax(t), \hspace{0.5 cm} t > 0, \\
		x(0) &= x_0 \in X,
		\end{split}
	\label{General Abstract Cauchy Problem}
	\end{align}
with $A \colon D(A) \subset X \to X$ a linear operator, $x_0$ the initial value and $x \colon [0, \infty) \to X$ a time-dependent, Hilbert space-valued function is called an abstract Cauchy problem. The function $x$ is called a (classical) solution, if the following holds:
\begin{enumerate}
	\item[(i)] $x \in \mathcal{C}([0, \infty), X)$.
	\item[(ii)] $x_{| (0,\infty)} \in \mathcal{C}^1((0, \infty) , X)$. 
	\item[(iii)] $x(t) \in D(A)$ for all $ t > 0$.
	\item[(iv)] $x$ satisfies \eqref{General Abstract Cauchy Problem}. 
\end{enumerate}
\end{definition}
We need to agree on a concept of well-posedness for the abstract Cauchy problem \eqref{General Abstract Cauchy Problem}. Just as in the finite-dimensional case, we require for its solutions existence, uniqueness, and continuous dependence on the initial data. This guides us to the following definition.  
\begin{definition}[Well-posedness]
	\label{Definition Well-posedness of an Abstract Cauchy Problem}
	The abstract Cauchy problem \eqref{General Abstract Cauchy Problem} is called well-posed, if the following assertions are true:
	\begin{enumerate}
		\item[(i)] The operator $A$ is densely defined.
		\item[(ii)] For all $x_0 \in D(A)$, there exists a unique classical solution $x(\cdot;x_0) \colon [0, \infty) \to X$.
		\item[(iii)] The classical solution depends continuously on the initial value, that is, for all $\epsilon > 0$, $\tau > 0$, and $x_0 \in D(A)$ there exists some $\delta > 0$ such that for all $x_1 \in D(A)$ the following holds:
		\begin{align*}
			\| x_0 - x_1 \|_X \leq \delta \hspace{0.3 cm} \Rightarrow \hspace{0.3 cm} \sup_{t \in [0,\tau]} \| x(t; x_0) - x(t;x_1) \|_X \leq \epsilon. 
		\end{align*}
	\end{enumerate}
\end{definition}

There are weaker notions of well-posedness as well, but in this thesis, we work with the concept suggested in Definition \ref{Definition Well-posedness of an Abstract Cauchy Problem}. 
\vspace{0.5 cm}\\
The following theorem shows that if the operator $A$ associated with \eqref{General Abstract Cauchy Problem} is the infinitesimal generator of a $C_0$-semigroup, then one can easily construct a classical solution. 
\begin{theorem}[Proposition II.6.2 in \cite{EngelNagel}]
	\label{Theorem A Generator yields Unique Solution}
	Let $A \colon D(A) \subset X \to X$ be the infinitesimal generator of a $C_0$-semigroup $(T(t))_{t \geq 0}$ on $X$. Then, for every initial value $x_0 \in D(A)$, the function 
	\begin{align}
		x = x(\cdot; x_0) \colon [0, \infty) \to X, \hspace{0.3 cm} t \mapsto T(t)x_0,
		\label{Theorem Unique Classical Solutiono of ACP - Unique Solution}
	\end{align}
	is the unique classical solution of \eqref{General Abstract Cauchy Problem}, with $x$ depending continuously on the initial data. 
\end{theorem}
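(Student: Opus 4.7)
The plan is to verify the three properties required for $x(t) = T(t)x_0$ to be the unique, well-posed classical solution, leveraging exactly the results of Theorem~\ref{Theorem Properties of C0-Semigroups} and Theorem~\ref{Theorem C0-Semigroup Estimate}.

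First, I would check that the candidate $x(t) = T(t)x_0$ is a classical solution. The continuity of $t \mapsto T(t)x_0$ on $[0,\infty)$ follows directly from Theorem~\ref{Theorem T(t)x is continuous}, and $x(0) = T(0)x_0 = x_0$. Since $x_0 \in D(A)$, property \ref{C0SemigroupProp3} of Theorem~\ref{Theorem Properties of C0-Semigroups} guarantees that $T(t)x_0 \in D(A)$ for every $t \geq 0$ and that the map $t \mapsto T(t)x_0$ is differentiable with derivative
\begin{align*}
    \frac{d}{dt}T(t)x_0 = AT(t)x_0 = T(t)Ax_0.
\end{align*}
This derivative is in particular continuous on $(0,\infty)$ since $t \mapsto T(t)Ax_0$ is continuous, so $x \in \mathcal{C}^1((0,\infty),X)$. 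Consequently $\dot{x}(t) = Ax(t)$ for all $t > 0$, so $x$ satisfies \eqref{General Abstract Cauchy Problem}.

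Next I would tackle uniqueness, which I expect to be the most delicate step and a classical semigroup-theoretic trick. Suppose $y \colon [0,\infty) \to X$ is any classical solution for the same initial value $x_0 \in D(A)$. Fix $t > 0$ and consider the auxiliary function
\begin{align*}
    v \colon [0,t] \to X, \qquad v(s) := T(t-s)y(s).
\end{align*}
Using that $y(s) \in D(A)$ for $s > 0$, that $\dot y(s) = Ay(s)$, and the product-rule-type identity $\frac{d}{ds}T(t-s)z = -AT(t-s)z$ for $z \in D(A)$ (which follows from property \ref{C0SemigroupProp3}), I would compute
\begin{align*}
    \frac{d}{ds}v(s) = -AT(t-s)y(s) + T(t-s)Ay(s) = 0,
\end{align*}
using the commutation $AT(t-s)y(s) = T(t-s)Ay(s)$ from the same property. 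Hence $v$ is constant on $(0,t)$, and by continuity of $v$ at the endpoints $s=0$ and $s=t$ one obtains $T(t)x_0 = v(0) = v(t) = y(t)$. A small technical point I would need to handle carefully is the continuity/right-differentiability of $v$ at $s = 0$ and $s = t$, which is where the assumptions $x \in \mathcal{C}([0,\infty),X)$ and the continuity of the semigroup are used.

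Finally, continuous dependence on initial data is immediate from Theorem~\ref{Theorem C0-Semigroup Estimate}: writing $x(t;x_0) - x(t;x_1) = T(t)(x_0 - x_1)$, one has
\begin{align*}
    \sup_{t \in [0,\tau]} \| x(t;x_0) - x(t;x_1) \|_X \leq M e^{\omega^+ \tau}\|x_0 - x_1\|_X,
\end{align*}
where $\omega^+ = \max\{\omega,0\}$. Given $\epsilon > 0$ and $\tau > 0$, choosing $\delta := \epsilon / (M e^{\omega^+ \tau})$ yields the required implication. The main obstacle is the uniqueness step: once the auxiliary function $v(s) = T(t-s)y(s)$ is introduced and its derivative is shown to vanish, the rest of the proof is essentially bookkeeping.
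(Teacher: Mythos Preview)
Your proposal is correct and follows essentially the same approach as the paper: existence via Theorem~\ref{Theorem Properties of C0-Semigroups}~\ref{C0SemigroupProp3}, uniqueness via the auxiliary function $v(s)=T(t-s)y(s)$ whose derivative vanishes, and continuous dependence via the exponential bound from Theorem~\ref{Theorem C0-Semigroup Estimate}. The paper carries out the differentiation of $v$ more explicitly through difference quotients and then integrates $\frac{d}{ds}v(s)=0$ over $[0,t]$ (rather than invoking constancy plus endpoint continuity), but this is purely a presentational difference.
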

\begin{proof}
	The proof that $x$ defined in \eqref{Theorem Unique Classical Solutiono of ACP - Unique Solution} is a classical solution follows immediately from property (iii) of Theorem \ref{Theorem Properties of C0-Semigroups}. Next, we want to show that $x$ is the only solution of \eqref{General Abstract Cauchy Problem}.
	\vspace{0.5 cm}\\
	Assume that $y \colon [0, \infty) \to X$ solves \eqref{General Abstract Cauchy Problem} as well. For $t > 0$, define the mapping
	\begin{align*}
		\psi \colon \tau \mapsto T(t - \tau)y(\tau), \hspace{0.5 cm} \tau \in [0,t]. 
	\end{align*}
By assumption, $y$ is continuously differentiable and $y(\tau) \in D(A)$ for all $\tau \in (0,t)$. Thus, for all $\tau, h > 0$ such that $\tau + h < t$ it holds that
\begin{align*}
	&\hspace{0.5 cm} \frac{\psi(\tau + h) - \psi(\tau)}{h} \\
	 &= \frac{T(t - \tau - h) y(\tau + h) - T(t - \tau) y(\tau)}{h} \\
	&=  \frac{T(t - \tau - h) y(\tau + h) - T(t - \tau - h)y(\tau) + T(t - \tau - h) y(\tau) - T(t - \tau) y(\tau)}{h} \\
	&= T(t - \tau - h) \frac{y(\tau +h) - y(\tau)}{h} + \frac{T(t - \tau - h )y(\tau) - T(t - \tau)y(\tau)}{h}.
\end{align*} 
Again, since $y(\tau) \in D(A)$ for all $\tau \in (0,t)$, the second term converges to $- AT(t-\tau)y(s)$ as $h \searrow 0$. For the first term, we may find the limit by writing
\begin{align*}
	 T(t - \tau - h) \frac{y(\tau +h) - y(\tau)}{h} &= T(t - \tau) \frac{y(\tau + h) - y(\tau)}{h} \\
	 &\hspace{0.5 cm} + \left[ T(t - \tau - h) - T(t - \tau) \right] \left( \frac{y(\tau + h) - y(\tau)}{h} - \frac{d}{d\tau}y(\tau)   \right) \\
	 &\hspace{0.5 cm} + \left[ T(t - \tau - h) - T(t - \tau) \right]  \frac{d}{d\tau}y(\tau).
\end{align*} 
Now, since $(T(t))_{t \geq 0}$ is strongly continuous, $T(t - \tau - h) - T(t - \tau)$ is uniformly bounded on compact (time) intervals. Consequently, the second term tends to zero as $h \searrow 0$. By virtue of the strong continuity, the same holds for the last expression. Hence, this sum converges to $T(t - \tau) \frac{d}{d \tau}y(\tau)$ as $h \searrow 0$. Altogether, it holds that
\begin{align*}
	\frac{d}{d \tau}  \psi(\tau) &= T(t - \tau) \frac{d}{d\tau} y(\tau) - A T(t - \tau) y(\tau) \\
	&= T(t - \tau) Ay(\tau) - AT(t - \tau) y(\tau) \\
	&=0. 
\end{align*}
The last equation follows from Theorem \ref{Theorem Properties of C0-Semigroups} \ref{C0SemigroupProp3}. Integration yields that
\begin{align*}
	0 = \int_{0}^{t} \frac{d}{d\tau} (T(t- \tau)y(\tau)) \, d\tau = y(t) - T(t)x_0,
\end{align*}
whence $y(t) = T(t)x_0 = x(t)$ for all $t \geq 0$. This proves the uniqueness of the (classical) solution of \eqref{General Abstract Cauchy Problem}. 
\vspace{0.5 cm}\\
Lastly, we need to show the continuous dependence of the solution on the initial data. As $(T(t))_{t \geq 0}$ is strongly continuous, it is of type $C_0(M, \omega)$ for some $M \geq 1$, $\omega \in \mathbb{R}$ (see Theorem~\ref{Theorem C0-Semigroup Estimate}). Thus, we have for $\tau > 0$ and $x_0,x_1 \in D(A)$, 
\begin{align*}
	\sup_{t \in [0,\tau]} \|x(t; x_0) - x(t; x_1) \|_X = \sup_{t \in [0,\tau]} \|T(t) (x_0 - x_1) \|_X \leq Me^{\vert \omega\vert \tau} \| x_0 - x_1 \|_X. 
\end{align*}
This shows that the solution is continuously depending on the initial value and, hence, completes the proof. 
\end{proof}
Theorem \ref{Theorem A Generator yields Unique Solution} shows in particular that it is sufficient for well-posedness of the abstract Cauchy problem \eqref{General Abstract Cauchy Problem} that the associated operator $A$ is the infinitesimal generator of a $C_0$-semigroup. In fact, it is necessary as well. However, we will not prove the necessity and refer to \cite[Theorem II.6.7]{EngelNagel}. 

\begin{theorem}[Corollary II.6.9 in \cite{EngelNagel}]
	\label{Theorem Characterization of Well-Posedness}
	Let $A \colon D(A) \subset X \to X$ be a linear and closed operator. Then the initial value problem \eqref{General Abstract Cauchy Problem} is well-posed in the sense of Definition \ref{Definition Well-posedness of an Abstract Cauchy Problem} if and only if $A$ is the infinitesimal generator of a strongly continuous semigroup $(T(t))_{t \geq 0}$ on $X$. If so, then for any $x_0 \in D(A)$, the unique classical solution is given by \eqref{Theorem Unique Classical Solutiono of ACP - Unique Solution}.
\end{theorem}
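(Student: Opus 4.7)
The sufficiency direction follows essentially from Theorem \ref{Theorem A Generator yields Unique Solution}: if $A$ generates a $C_0$-semigroup $(T(t))_{t \geq 0}$, then $x(t; x_0) := T(t) x_0$ provides the unique classical solution with continuous dependence on the initial value $x_0 \in D(A)$, so all conditions of Definition \ref{Definition Well-posedness of an Abstract Cauchy Problem} are satisfied. The substance of the proof lies in the necessity direction, which I plan to handle by constructing a candidate semigroup out of the solution map and verifying its properties.

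My plan is to set $T(t) x_0 := x(t; x_0)$ for $x_0 \in D(A)$ and $t \geq 0$. Uniqueness of solutions combined with linearity of the equation shows that $T(t)$ is linear on $D(A)$, since $\alpha x(\cdot; x_0) + \beta x(\cdot; y_0)$ is itself a classical solution starting at $\alpha x_0 + \beta y_0$. Applying continuous dependence at the point $x_0 = 0$ and exploiting linearity then yields, for each $\tau > 0$, a constant $M_\tau \geq 1$ with $\sup_{t \in [0,\tau]} \|T(t) x_0\|_X \leq M_\tau \|x_0\|_X$ on $D(A)$. Since $D(A)$ is dense in $X$, each $T(t)$ extends uniquely to a bounded operator on $X$ retaining the same bound.

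Next I would verify the semigroup and strong continuity properties. For the semigroup law, both $t \mapsto x(t+s; x_0)$ and $t \mapsto T(t) T(s) x_0$ are classical solutions starting from $T(s) x_0 \in D(A)$, so uniqueness gives $T(t+s) = T(t) T(s)$ on $D(A)$, and hence on $X$ by density and boundedness. Strong continuity at $t = 0$ holds on $D(A)$ by continuity of the classical solutions, and the uniform bound from the previous step together with density of $D(A)$ propagates it to all of $X$. This establishes that $(T(t))_{t \geq 0}$ is a $C_0$-semigroup on $X$.

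It remains to identify the generator $B$ of $(T(t))_{t \geq 0}$ with $A$, and this is the main obstacle. The inclusion $A \subset B$ is immediate from $\left.\frac{d^+}{dt}\right|_{t=0} T(t) x_0 = A x_0$ for $x_0 \in D(A)$. For the converse inclusion, I plan to pick $\lambda > \omega(T(\cdot))$, so that $\lambda \in \rho(B)$ by Theorem \ref{Theorem Laplace Transform}, and show that $\lambda I - A : D(A) \to X$ is surjective. For $y \in D(A)$, the representation $(\lambda I - B)^{-1} y = \int_0^\infty e^{-\lambda t} T(t) y \, dt$ has integrand in $D(A)$ with $A T(t) y = \dot{T}(t) y$ exponentially bounded, so closedness of $A$ lets one pull $A$ inside the integral and conclude $(\lambda I - B)^{-1} y \in D(A)$. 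A density argument, again relying on the closedness of $A$, upgrades this to arbitrary $y \in X$. Once surjectivity is established, injectivity of $\lambda I - B$ forces $D(B) \subset D(A)$ and hence $A = B$. The closedness hypothesis is essential here: without it, the generator of the constructed semigroup could be a proper extension of $A$ (typically its closure), and the equivalence would break down.
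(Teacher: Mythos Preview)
Your proposal goes substantially beyond what the paper does: the paper proves only the sufficiency direction via Theorem~\ref{Theorem A Generator yields Unique Solution} and explicitly declines to prove necessity, referring instead to \cite[Theorem~II.6.7]{EngelNagel}. So there is nothing in the paper to compare your necessity argument against; you are filling in what the paper leaves to the literature.

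Your outline of the necessity direction is the standard one and is essentially correct, but one step is stated too casually relative to the paper's definitions. You claim that the inclusion $A \subset B$ is ``immediate from $\left.\tfrac{d^+}{dt}\right|_{t=0} T(t)x_0 = Ax_0$''. However, the paper's Definition of a classical solution only requires $x \in \mathcal{C}^1((0,\infty),X)$, not differentiability at $t=0$, so the existence of this right derivative is precisely what must be shown. Without it, your subsequent claim that $AT(t)y = T(t)Ay$ is exponentially bounded (which relies on $A \subset B$ to identify $AT(t)y$ with $T(t)By$) is also unjustified.

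The fix is to bypass the direct verification of $A \subset B$ and work with the resolvent from the start. For $y \in D(A)$ and $t>0$ one has $T(t)y \in D(A)$ and $AT(t)y = \dot{x}(t;y)$; the semigroup law then gives $T(t)y \in D(B)$ with $BT(t)y = AT(t)y$. Integration by parts shows that $\int_\epsilon^N e^{-\lambda t}AT(t)y\,dt$ converges as $\epsilon \to 0^+$, $N \to \infty$, even without a bound near $t=0$, and closedness of $A$ yields $(\lambda I - B)^{-1}y \in D(A)$ with $A(\lambda I - B)^{-1}y = B(\lambda I - B)^{-1}y$. Your density-plus-closedness argument then gives $D(B) \subset D(A)$ and $B \subset A$; the reverse inclusion follows because $\lambda I - A$ is also injective (an eigenvector would give an exponentially growing classical solution, contradicting the bound on $T(t)$). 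This route avoids ever asserting differentiability of the classical solution at $t=0$.
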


Theorem \ref{Theorem Characterization of Well-Posedness} emphasizes the importance of the concept of strongly continuous semigroups, since they are well-suited for the study of abstract Cauchy problems. The starting point for the analysis of abstract Cauchy problems is therefore to check whether the associated operator $A$ generates a strongly continuous semigroup. In this context, let us quickly come back to dissipative operators, and consider the following example.
\begin{example}
	\label{Example Contraction Semigroup}
 Consider the abstract initial value problem \eqref{General Abstract Cauchy Problem} associated with an operator $A \colon D(A) \subset X \to X$ generating a contraction semigroup $(T(t))_{t \geq 0}$, and with initial value $x_0 \in D(A)$. Then, by Theorem \ref{Theorem Characterization of Well-Posedness}, the system has the unique solution $x = T(\cdot)x_0$. Moreover, according to Theorem \ref{Theorem Lumer-Phillips}, $A$ is dissipative. This yields for all $t > 0$, 
\begin{align*}
	\frac{d}{dt} \|x(t)\|_X^2 &= \langle \dot{x}(t), x(t)  \rangle_X + \langle x(t), \dot{x}(t) \rangle_X \\
	&= \langle Ax(t), x(t) \rangle_X + \langle x(t), Ax(t) \rangle_X \\
	&= 2 \realpart \langle Ax(t), x(t) \rangle_X \\
	&\leq 0. 
\end{align*}
Integration on both sides over $[0, \tau]$ yields $\| x(\tau) \|_X^2 \leq \| x(0)\|_X^2$ for all $\tau \geq 0$. In many physical problems, the value $\|x\|_X^2$, $x \in X$, represents the energy level of the state $x$. Thus, for solutions of abstract Cauchy problems associated with a dissipative operator, the energy of the corresponding solution is dissipating. As we will see, this property is essential in the port-Hamiltonian framework of infinite-dimensional systems. 
\QEDA
\end{example}

\subsection{Linear Inhomogeneous Initial Value Problems}

As a preparation for the analysis of abstract control systems, in this section, we shortly discuss inhomogeneous  initial value problems of the following form. 
\begin{definition}[Inhomogeneous Abstract Cauchy Problem]
	The initial value problem 
	\begin{align}
		\begin{split}
			\dot{x}(t) &= Ax(t) + f(t), \hspace{0.5 cm} t > 0, \\
			x(0) &= x_0 \in X,
		\end{split}
		\label{General Abstract Cauchy Problem - Inhomogeneous}
	\end{align}
	with $A \colon D(A) \subset X \to X$ a linear operator, $x_0$ the initial value, and $f \colon [0, \infty) \to X$, $x \colon [0, \infty) \to X$ time-dependent, $X$-valued functions is called an inhomogeneous abstract Cauchy problem. Let $\tau > 0$. The function $x$ is called a (classical) solution on $[0, \tau)$, if the following holds:
	\begin{enumerate}
		\item[(i)] $x_{|[0,\tau]} \in \mathcal{C}([0, \tau], X)$.
		\item[(ii)] $x_{| (0,\tau)} \in \mathcal{C}^1((0, \tau) , X)$. 
		\item[(iii)] $x(t) \in D(A)$ for all $ t \in [0,\tau]$.
		\item[(iv)] $x$ satisfies \eqref{General Abstract Cauchy Problem - Inhomogeneous} on $(0,\tau)$. 
	\end{enumerate}
\end{definition} 

From the discussion in Subsection \ref{Subsection Linear Homogeneous Initial Value Problems} it makes sense to assume that the operator $A$ associated with \eqref{General Abstract Cauchy Problem - Inhomogeneous} generates a $C_0$-semigroup, so that the equation with $f \equiv 0$ has a uniquely determined classical solution. 
\vspace{0.5 cm}\\
We want to show first that if \eqref{General Abstract Cauchy Problem - Inhomogeneous} has a solution, then it has a particular form. More precisely, it is given by the \emph{variation of constants} formula. 
\begin{lemma}[cf. Corollary 4.2.2 in \cite{Pazy83}]
	\label{Lemma Mild Solution}
	Let $A \colon D(A) \subset X \to X$ be the infinitesimal generator of a $C_0$-semigroup $(T(t))_{t \geq 0}$ on $X$. Let $f \in \mathcal{C}([0, \tau], X)$, $x_0 \in D(A)$, and assume that $x \in \mathcal{C}^1([0,\tau], X)$ is a classical solution of \eqref{General Abstract Cauchy Problem - Inhomogeneous} on $[0, \tau)$. Then $x$ is of the form
	\begin{align}
		\label{Mild Solution}
		x(t) = T(t)x_0 + \int_{0}^{t} T(t-s) f(s) \, ds,  \hspace{0.5 cm} t \in [0,\tau].
	\end{align}
\end{lemma}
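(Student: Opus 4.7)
The plan is to imitate the uniqueness argument from the proof of Theorem~\ref{Theorem A Generator yields Unique Solution}, applied to the inhomogeneous setting. The key idea is to consider, for a fixed $t \in (0,\tau]$, the auxiliary function
\begin{align*}
\psi \colon [0,t] \to X, \qquad s \mapsto T(t-s)x(s),
\end{align*}
and show that $\psi$ is continuously differentiable on $(0,t)$ with a derivative that is easy to integrate. Then $\psi(t) - \psi(0) = x(t) - T(t)x_0$ on the one hand, and on the other hand this equals $\int_0^t \psi'(s)\,ds$, which will turn out to be exactly $\int_0^t T(t-s)f(s)\,ds$.

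First I would compute the derivative of $\psi$. Writing the difference quotient
\begin{align*}
\frac{\psi(s+h) - \psi(s)}{h} = T(t-s-h)\,\frac{x(s+h) - x(s)}{h} + \frac{T(t-s-h) - T(t-s)}{h}\,x(s),
\end{align*}
and using that $x$ is a classical solution (so $x(s) \in D(A)$ and $\dot{x}(s) = Ax(s) + f(s)$), together with property \ref{C0SemigroupProp3} of Theorem~\ref{Theorem Properties of C0-Semigroups} (which gives $\tfrac{d}{d\tau}T(\tau)x(s) = AT(\tau)x(s) = T(\tau)Ax(s)$ for $x(s) \in D(A)$), the same splitting-into-three-terms trick as in the proof of Theorem~\ref{Theorem A Generator yields Unique Solution} shows that
\begin{align*}
\psi'(s) = T(t-s)\dot{x}(s) - T(t-s)Ax(s) = T(t-s)\bigl(Ax(s) + f(s)\bigr) - T(t-s)Ax(s) = T(t-s)f(s).
\end{align*}

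Next, continuity of $s \mapsto T(t-s)f(s)$ on $[0,t]$ follows from the continuity of $f$ by assumption and the joint continuity of $(r,y) \mapsto T(r)y$ proved in Theorem~\ref{Theorem T(t)x is continuous}. Hence $\psi$ is continuously differentiable on $(0,t)$ with a continuous extension of $\psi'$ to $[0,t]$, and by the fundamental theorem of calculus for Bochner/Riemann integrals of $X$-valued functions,
\begin{align*}
x(t) - T(t)x_0 \;=\; \psi(t) - \psi(0) \;=\; \int_0^t \psi'(s)\,ds \;=\; \int_0^t T(t-s)f(s)\,ds,
\end{align*}
which yields \eqref{Mild Solution} for all $t \in (0,\tau]$; the case $t = 0$ is trivial.

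The only delicate point is justifying that the difference quotient for $\psi$ actually converges — more precisely, dealing with the first summand where $T(t-s-h)$ is applied to a difference quotient of $x$. This is handled exactly as in Theorem~\ref{Theorem A Generator yields Unique Solution}: add and subtract $T(t-s)\dot{x}(s)$, and use that strong continuity provides local uniform boundedness of the family $\{T(t-s-h) - T(t-s) : h\}$ on bounded sets, so the resulting error term vanishes as $h \to 0$. No new machinery beyond what was already established in Section~\ref{Section Strongly Continuous Semigroups} is required; the hypothesis $x_0 \in D(A)$ enters only through the assumed existence of the classical solution $x$.
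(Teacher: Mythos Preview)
Your proof is correct and follows essentially the same approach as the paper: both define the auxiliary function $\psi(s) = T(t-s)x(s)$, compute its derivative via the same difference-quotient splitting used in the proof of Theorem~\ref{Theorem A Generator yields Unique Solution}, obtain $\psi'(s) = T(t-s)f(s)$, and integrate. The paper's write-up is slightly more terse (it refers back to the earlier proof rather than re-justifying the limit), but the argument is identical.
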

\begin{proof}
	For $ 0 < t < \tau$, define the mapping
	\begin{align*}
		\psi \colon s \mapsto T(t - s)x(s), \hspace{0.5 cm} s \in [0, t]. 
	\end{align*} 
With similar arguments as in the proof of Theorem \ref{Theorem A Generator yields Unique Solution} one can show that $\psi \in \mathcal{C}^1((0,t), X)$, and its derivative is given by
\begin{align*}
	\frac{d}{ds} \psi(s) = T(T-s)\dot{x}(s) - AT(t-s)x(s), \hspace{0.5 cm} s \in (0,t).
\end{align*}
Using Theorem \ref{Theorem Properties of C0-Semigroups} \ref{C0SemigroupProp3} and the fact that $x$ is a classical solution, we find
\begin{align}
	\label{xyz}
	\frac{d}{ds}\psi(s) = T(t-s)(Ax(s) + f(s)) - T(t-s)Ax(s) = T(t-s)f(s). 
\end{align}
Due to the continuity of $f$, the mapping $s \mapsto T(t-s)f(s)$ is integrable. Thus, by integration of \eqref{xyz} and by definition of $\psi$, we obtain
\begin{align*}
x(t) - T(t)x(0) =\psi(t) - \psi(0) = \int_{0}^{t} T(t-s)f(s) ds.
\end{align*}
In conclusion, $x$ is of the form \eqref{Mild Solution}. This proves the claim.
\end{proof}
The map \eqref{Mild Solution} can be defined for any $x_0 \in X$ and any $f \in L^1([0, \tau], X)$. In this case, it is an element of $\mathcal{C}([0,\tau], X)$ and is called the \emph{mild solution} of \eqref{General Abstract Cauchy Problem - Inhomogeneous}. It is clear that, in general, the mild solution \eqref{Mild Solution} is not a classical solution of \eqref{General Abstract Cauchy Problem - Inhomogeneous}. Furthermore, the continuity of $f$ we required in Lemma \ref{Lemma Mild Solution} is not sufficient to guarantee that there is a classical solution, even if $x_0 \in D(A)$. To see this, let $(T(t))_{t \geq 0}$ be a $C_0$-semigroup with generator $A$ and the property that there exist some $\hat{t} > 0$ and $\hat{x} \in X$ such that $T(\hat{t}) \hat{x} \notin D(A)$. Now, consider the initial value problem
\begin{align*}
	\dot{x}(t) &= Ax(t) + T(t)\hat{x}, \hspace{0.5 cm} t> 0, \\
	x(0) &= 0. 
\end{align*}
One may show that the mild solution is given by $t \mapsto tT(t)\hat{x}$, which is not differentiable on $(0, \infty)$, since $\hat{t}T(\hat{t})\hat{x} \notin D(A)$. Therefore, by Lemma \ref{Lemma Mild Solution}, for this system there does not exist a classical solution at all. 
\vspace{0.5 cm}\\
In the following, we want to present criteria that guarantee the existence of classical solutions of \eqref{General Abstract Cauchy Problem - Inhomogeneous} for any initial value $x_0 \in D(A)$. 
\begin{theorem}[Theorem 4.2.4 in \cite{Pazy83}]
	\label{Theorem Criteria Existence of IACP}
	Let $A \colon D(A) \subset X \to X$ be the infinitesimal generator of a $C_0$-semigroup $(T(t))_{t \geq 0}$ on $X$. Let $f \in L^1([0,\tau], X) \cap \mathcal{C}((0,\tau], X)$ and define
	\begin{align*}
		V(t) = \int_{0}^{t} T(t-s)f(s) \, ds, \hspace{0.5 cm} t \in [0, \tau]. 
	\end{align*}
Then \eqref{General Abstract Cauchy Problem - Inhomogeneous} has a classical solution on $[0,\tau)$ for all $x_0 \in D(A)$ if one of the following properties hold:
\begin{enumerate}[label = (\roman*)]
	\item \label{IHCP1} $V_{|(0,\tau)} \in \mathcal{C}^1((0,\tau), X)$.
	\item \label{IHCP2} For all $t \in (0,\tau)$ it holds that $V(t) \in D(A)$ and $AV(\cdot)$ is continuous on $(0,\tau)$.
\end{enumerate}
Conversely, if \eqref{General Abstract Cauchy Problem - Inhomogeneous} has a solution on $[0,\tau)$ for some $x_0 \in D(A)$, then it satisfies both \ref{IHCP1} and \ref{IHCP2}. 
\end{theorem}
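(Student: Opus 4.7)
The plan is to reduce everything to a regularity statement for $V$ via Lemma \ref{Lemma Mild Solution}, and then exploit a single key identity. By the variation-of-constants formula, any classical solution of \eqref{General Abstract Cauchy Problem - Inhomogeneous} must be of the form $x(t) = T(t)x_0 + V(t)$. Since $x_0 \in D(A)$, Theorem \ref{Theorem Properties of C0-Semigroups} gives $T(\cdot)x_0 \in \mathcal{C}^1((0,\infty), X)$ with $\frac{d}{dt}T(t)x_0 = AT(t)x_0$ and $T(t)x_0 \in D(A)$ for all $t \geq 0$, so the regularity of $x$ transfers directly to $V$. This yields the converse implication at once: a classical solution forces $V \in \mathcal{C}^1((0,\tau), X)$, $V(t) \in D(A)$, and $AV(t) = \dot x(t) - AT(t)x_0 - f(t)$ continuous on $(0,\tau)$.

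For the forward direction, the cornerstone is the identity
\[
\frac{T(h) - I}{h}\, V(t) \;=\; \frac{V(t+h) - V(t)}{h} \;-\; \frac{1}{h} \int_{t}^{t+h} T(t+h-s)\, f(s)\, ds, \qquad h > 0,
\]
obtained by splitting the integral defining $V(t+h)$ at $t$ and applying the semigroup law $T(h)T(t-s) = T(t+h-s)$ to $V(t)$. By continuity of $f$ at $t \in (0,\tau)$ together with the joint continuity of $(t,x)\mapsto T(t)x$ (Theorem \ref{Theorem T(t)x is continuous}), the integral term tends to $f(t)$ as $h \searrow 0$. Hence existence of either one-sided limit on the right-hand side is equivalent to existence of the limit on the left. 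Under \ref{IHCP1}, the right-hand side converges because $V$ is differentiable, which forces $V(t) \in D(A)$ with $AV(t) = \dot V(t) - f(t)$. Under \ref{IHCP2}, the left-hand side converges to $AV(t)$ by the definition of the generator, so the right derivative $D^+V(t)$ exists and equals $AV(t) + f(t)$; since this expression is continuous on $(0,\tau)$, a standard fact (a continuous $X$-valued function whose right derivative exists and is continuous on an interval is $\mathcal{C}^1$ there) upgrades $V$ to $\mathcal{C}^1((0,\tau), X)$. In either case we obtain both \ref{IHCP1} and \ref{IHCP2}, together with the ODE $\dot V(t) = AV(t) + f(t)$ on $(0,\tau)$.

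Combining these ingredients, $x(t) := T(t)x_0 + V(t)$ lies in $D(A)$ for each $t \in (0,\tau)$, belongs to $\mathcal{C}([0,\tau],X) \cap \mathcal{C}^1((0,\tau), X)$, satisfies $x(0) = x_0$, and
\[
\dot x(t) \;=\; AT(t)x_0 + \dot V(t) \;=\; A\bigl(T(t)x_0 + V(t)\bigr) + f(t) \;=\; Ax(t) + f(t),
\]
so it is a classical solution. The main technical hurdle I anticipate is the upgrade from right-differentiability to $\mathcal{C}^1$ in case \ref{IHCP2}: it relies on a Banach-space-valued mean-value-type argument, which is typically handled by pairing with functionals from $X'$, invoking the scalar version, and reconstructing the conclusion by Hahn--Banach.
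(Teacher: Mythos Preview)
The paper does not actually prove this theorem; it is stated with attribution to \cite[Theorem~4.2.4]{Pazy83} and immediately followed by two corollaries, also without proof. There is therefore no in-paper proof to compare against.

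That said, your argument is correct and coincides with the standard proof in Pazy's textbook. The key identity you write down is precisely the one used there, and your handling of the two cases is the canonical one: in case \ref{IHCP1} the right-hand side converges, forcing $V(t)\in D(A)$; in case \ref{IHCP2} the left-hand side converges, yielding a continuous right derivative of $V$, which is then upgraded to genuine $\mathcal{C}^1$ regularity. The technical hurdle you flag---passing from a continuous right derivative to $\mathcal{C}^1$---is real but standard, and your proposed route via pairing with functionals and Hahn--Banach is one valid way to do it; alternatively one can argue directly via the fundamental theorem of calculus for Bochner integrals applied to $t\mapsto V(t)-\int_0^t (AV(s)+f(s))\,ds$. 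One small point: for the converse direction, you should explicitly invoke that $V(t) = x(t) - T(t)x_0 \in D(A)$ because both summands lie in $D(A)$ (the first by definition of classical solution, the second by Theorem~\ref{Theorem Properties of C0-Semigroups}~\ref{C0SemigroupProp3}), and then $AV(t) = Ax(t) - AT(t)x_0 = \dot x(t) - f(t) - AT(t)x_0$, which is continuous. You have this, just make the domain membership explicit.
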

From this theorem we may deduce the following two corollaries, which we will not prove. 
\begin{corollary}[Corollary 4.2.5 in \cite{Pazy83}]
	Let $A \colon D(A) \subset X \to X$ be the infinitesimal generator of a $C_0$-semigroup $(T(t))_{t \geq 0}$ on $X$. If $f \in \mathcal{C}^1([0, \tau], X)$, then for every $x_0 \in D(A)$ the initial value problem \eqref{General Abstract Cauchy Problem - Inhomogeneous} has a unique classical solution on $[0, \tau)$. 
\end{corollary}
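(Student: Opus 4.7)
The plan is to derive this corollary as a direct application of Theorem \ref{Theorem Criteria Existence of IACP}. Given $f \in \mathcal{C}^1([0,\tau],X)$ and $x_0 \in D(A)$, it suffices to verify one of the two conditions on the convolution term
\begin{align*}
V(t) = \int_0^t T(t-s) f(s) \, ds, \qquad t \in [0,\tau].
\end{align*}
I would aim for condition \ref{IHCP1}, namely that $V$ is continuously differentiable on $(0,\tau)$.

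The key manipulation is the change of variables $u = t-s$, which yields the equivalent representation
\begin{align*}
V(t) = \int_0^t T(u) f(t-u) \, du.
\end{align*}
In this form the dependence on $t$ appears both in the upper limit of integration and, smoothly, in the integrand $u \mapsto T(u) f(t-u)$. Since $f \in \mathcal{C}^1([0,\tau], X)$ and since $(t,x) \mapsto T(t)x$ is jointly continuous (Theorem \ref{Theorem T(t)x is continuous}), the integrand $T(u) f'(t-u)$ is continuous in $(t,u)$ and bounded uniformly on compact subsets of $[0,\tau] \times [0,\tau]$ via Theorem \ref{Theorem C0-Semigroup Estimate}. This allows differentiation under the integral sign, and combined with the Leibniz boundary term from the upper limit we obtain
\begin{align*}
V'(t) = T(t) f(0) + \int_0^t T(u) f'(t-u) \, du, \qquad t \in (0,\tau).
\end{align*}
Another application of the joint continuity of $(t,x) \mapsto T(t)x$ together with the uniform continuity of $f'$ on $[0,\tau]$ shows that $V'$ is continuous on $(0,\tau)$, which verifies \ref{IHCP1}. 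Theorem \ref{Theorem Criteria Existence of IACP} then produces a classical solution of \eqref{General Abstract Cauchy Problem - Inhomogeneous} on $[0,\tau)$ for every $x_0 \in D(A)$.

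For uniqueness, if $x_1$ and $x_2$ were two classical solutions on $[0,\tau)$, then $y := x_1 - x_2$ would be a classical solution of the homogeneous Cauchy problem \eqref{General Abstract Cauchy Problem} with initial value $y(0) = 0$. By Theorem \ref{Theorem A Generator yields Unique Solution}, $y \equiv 0$, so $x_1 = x_2$. The only real obstacle in the whole argument is justifying the differentiation under the integral sign and establishing continuity of $V'$; both hinge on the joint continuity asserted in Theorem \ref{Theorem T(t)x is continuous} together with local uniform bounds on $\|T(\cdot)\|_{\mathcal{L}(X)}$, so no further machinery is needed beyond what the preceding sections have supplied.
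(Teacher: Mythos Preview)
The paper does not actually prove this corollary; immediately before stating it the authors write ``From this theorem we may deduce the following two corollaries, which we will not prove.'' So there is no paper proof to compare against. Your argument is the standard one (essentially what appears in Pazy's book, to which the paper defers): rewrite $V(t)=\int_0^t T(u)f(t-u)\,du$, differentiate under the integral to obtain $V'(t)=T(t)f(0)+\int_0^t T(u)f'(t-u)\,du$, verify continuity, and invoke Theorem~\ref{Theorem Criteria Existence of IACP}~\ref{IHCP1}; uniqueness via the homogeneous problem is also correct. The justification you give for differentiation under the integral sign is adequate, so the proposal is sound.
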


\begin{corollary}[Corollary 4.2.6 in \cite{Pazy83}]
		Let $A \colon D(A) \subset X \to X$ be the infinitesimal generator of a $C_0$-semigroup $(T(t))_{t \geq 0}$ on $X$. Let $f \in L^1([0,\tau], X) \cap \mathcal{C}((0,\tau), X)$. If $f(s) \in D(A)$ for all $s \in (0, \tau)$ and $Af(\cdot) \in L^1([0,\tau], X)$, then for every $x_0 \in D(A)$ the initial value problem \eqref{General Abstract Cauchy Problem - Inhomogeneous} has a unique classical solution on $[0, \tau)$. 
\end{corollary}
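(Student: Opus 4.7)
The plan is to reduce the statement to condition \ref{IHCP2} of Theorem \ref{Theorem Criteria Existence of IACP} by showing that under the hypotheses, the convolution term $V(t) = \int_{0}^{t} T(t-s) f(s) \, ds$ lies in $D(A)$ for every $t \in (0,\tau)$, with $AV(\cdot)$ continuous on $(0,\tau)$. Once this is established, Theorem \ref{Theorem Criteria Existence of IACP} directly provides a classical solution for each $x_0 \in D(A)$, and uniqueness follows by the usual subtraction argument: the difference of two classical solutions would solve the homogeneous abstract Cauchy problem with zero initial value, which by Theorem \ref{Theorem A Generator yields Unique Solution} is identically zero.

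First, I would exploit the invariance property $T(r) D(A) \subset D(A)$ together with $AT(r)x = T(r)Ax$ for $x \in D(A)$ (Theorem \ref{Theorem Properties of C0-Semigroups}~\ref{C0SemigroupProp3}) to observe that, since $f(s) \in D(A)$ for a.e.\ $s$, the integrand $s \mapsto T(t-s)f(s)$ takes values in $D(A)$ and satisfies
\begin{align*}
 A\bigl(T(t-s)f(s)\bigr) = T(t-s) A f(s), \qquad s \in (0,t).
\end{align*}
Because $f \in L^1([0,\tau],X)$ and $Af(\cdot) \in L^1([0,\tau],X)$, and because $\|T(\cdot)\|_{\mathcal{L}(X)}$ is uniformly bounded on $[0,\tau]$ (Theorem \ref{Theorem C0-Semigroup Estimate}), both $s \mapsto T(t-s)f(s)$ and $s \mapsto T(t-s)Af(s)$ are Bochner integrable on $[0,t]$.

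The crux is then the closedness of $A$ (Corollary \ref{Corollary - Generators Are Closed and Densely Defined}): approximating the Bochner integrals by Riemann-type sums of elements of $D(A)$, the partial sums converge to $V(t)$ in $X$ while their images under $A$ converge to $\int_{0}^{t} T(t-s) A f(s) \, ds$. Closedness of $A$ then forces $V(t) \in D(A)$ with
\begin{align*}
 AV(t) = \int_{0}^{t} T(t-s) A f(s) \, ds, \qquad t \in (0,\tau).
\end{align*}

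Finally, continuity of $AV$ on $(0,\tau)$ follows from a standard convolution estimate: for $0 < t_1 < t_2 < \tau$,
\begin{align*}
 AV(t_2) - AV(t_1) = \int_{t_1}^{t_2} T(t_2 - s) A f(s) \, ds + \int_{0}^{t_1} \bigl[ T(t_2 - s) - T(t_1 - s) \bigr] A f(s) \, ds.
\end{align*}
The first term vanishes as $t_2 \to t_1$ by absolute continuity of the Bochner integral applied to $Af \in L^1$, and the second term vanishes by strong continuity of $(T(t))_{t \geq 0}$ together with dominated convergence, using the uniform bound on $\|T(\cdot)\|_{\mathcal{L}(X)}$. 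Hence condition \ref{IHCP2} of Theorem \ref{Theorem Criteria Existence of IACP} is verified, which completes the proof. The only subtle point is the closedness interchange in the displayed formula for $AV(t)$; everything else is routine once this step is in place.
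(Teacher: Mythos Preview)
The paper does not give its own proof of this corollary; it explicitly states that the two corollaries following Theorem \ref{Theorem Criteria Existence of IACP} ``we will not prove'' and simply cites Pazy. Your argument is correct and is essentially the standard one from Pazy: verify condition \ref{IHCP2} of Theorem \ref{Theorem Criteria Existence of IACP} by using the closedness of $A$ (Corollary \ref{Corollary - Generators Are Closed and Densely Defined}) to pass $A$ through the Bochner integral, obtaining $AV(t) = \int_0^t T(t-s)Af(s)\,ds$, and then reading off continuity of $AV$ from $Af \in L^1$ and strong continuity of the semigroup. One cosmetic slip: you wrote ``$f(s)\in D(A)$ for a.e.\ $s$'', but the hypothesis gives this for all $s\in(0,\tau)$; this does not affect the argument.
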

Next, we want to have a quick discussion on linear control systems. 

\section{Linear Control Systems}
\label{Section Linear Control Systems}

The inhomogeneous system \eqref{General Abstract Cauchy Problem - Inhomogeneous} can be written as an abstract control system, if we replace the function $f(\cdot)$ by $Bu(\cdot) \colon [0, \tau] \to X$, where $u \colon [0, \infty) \to U$ is some input with the input space $U$ being a separable Hilbert space, and $B \in \mathcal{L}(U,X)$ is the input operator. However, the assumption that the input operator is a bounded linear operator from $U$ to $X$ is quite restrictive. In many applications, e.g., boundary and point control problems, the input operators are unbounded. 
\vspace{0.5 cm}\\
For a proper treatment of systems with an unbounded input operator, we make use of the concept of admissible control operators. To this end, we lift the state space $X$ to a larger space $X_{-1}$, called the extrapolation space, that has the property that the $C_0$-semigroup $(T(t))_{t \geq 0}$ generated by $A \colon D(A) \subset X \to X$ extends uniquely to a $C_0$-semigroup $(T_{-1}(t))_{t \geq 0}$ on $X_{-1}$, and its generator $A_{-1}$ is the extension of $A$. We then want to find those $X_{-1}$-valued input operators for which the lifted control system admits uniquely determined, $X$-valued solutions. Hence, admissibility is a well-suited concept to prove well-posedness of abstract control systems.
\vspace{0.5 cm}\\
We proceed as follows. In Subsection \ref{Subsection The Spaces X1 and X-1} we introduce the necessary terminology as well as the concept of duality with respect to a pivot space, where we follow the exposition from Section 2.9 and Section 2.10 in \cite{Tucsnak-Weiss}. Then, in Subsection \ref{Subsection Abstract Control Systems and Admissibility} we give a concise overview about $X_{-1}$-valued control systems and the significance of admissible control operators, which is based on Section 4.1 and Section 4.2 in \cite{Tucsnak-Weiss}.

\subsection{The Spaces $\mathbf{X}_1$ and $\mathbf{X}_{-1}$}
\label{Subsection The Spaces X1 and X-1}
As a preparation for abstract control systems and the definition of a solution of such systems, we are going to introduce the interpolation space and the extrapolation space for a linear operator $A \colon D(A) \subset X \to X$. Besides that, we need to explain what it means that two Hilbert spaces are dual with respect to a pivot space.
\vspace{0.5 cm}\\
Let $V$ and $H$ be Hilbert spaces with $V \subset H$. We say that the \emph{embedding} $V \hookrightarrow H$ is \emph{continuous} provided that there exists some $m \geq 0$ such that for all $v \in V$ it holds that
\begin{align*}
	\|v\|_H \leq m \| v\|_V. 
\end{align*}
In other words, the identity operator from $V$ to $H$ is an element of $\mathcal{L}(V,H)$. 

\begin{theorem}[Proposition 2.9.2 in \cite{Tucsnak-Weiss}]
	\label{Theorem Pivot Space}
	Let $V$ and $H$ be Hilbert spaces with $V \subset H$. Assume that the embedding $V \hookrightarrow H$ is continuous and densely defined. Define the function $\| \cdot \|_{\ast} \colon H \to \mathbb{R}$ by
	\begin{align}
		\label{Norm ast}
		\|z \|_{\ast} = \sup_{\stackrel{v \in V}{\|v\|_V = 1}} \vert \langle z , v \rangle_H \vert, \hspace{0.5 cm} z \in H. 
	\end{align}
Then $\| \cdot \|_{\ast}$ is a norm on $H$. Let $V^{\ast}$ denote the completion of $H$ with respect to $\| \cdot \|_{\ast}$. Define the operator $J \colon V^{\ast} \to V'$ as follows: for any $z \in V^{\ast}$, 
\begin{align}
	\label{Theorem Pivot Space - Limit}
	\langle Jz, v \rangle_{V' \times V} = \lim_{n \to \infty} \langle z_n, v \rangle_H, \hspace{0.5 cm} v \in V, 
\end{align}
with $(z_n)_{n \in \mathbb{N}} \in H^{\mathbb{N}}$ a sequence converging to $z$ in $V^{\ast}$. Then $J$ is an isomorphism from $V^{\ast}$ to $V'$. 
\end{theorem}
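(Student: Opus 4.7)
The plan is to verify the three ingredients in sequence: first that $\|\cdot\|_{\ast}$ really is a norm on $H$, then that the defining limit in \eqref{Theorem Pivot Space - Limit} is unambiguous and produces an isometry $J$, and finally the surjectivity of $J$.

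For the norm properties I would exploit the continuity of the embedding, which provides some $m > 0$ with $\|v\|_H \leq m\|v\|_V$, so Cauchy--Schwarz yields $|\langle z, v\rangle_H| \leq m\|z\|_H \|v\|_V$ and $\|z\|_{\ast}$ is finite for every $z \in H$. Positive homogeneity and the triangle inequality are routine from properties of the supremum; definiteness uses the hypothesis that $V$ is dense in $H$, since $\langle z, v\rangle_H = 0$ for all $v \in V$ then forces $z = 0$ by continuity of the inner product.

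For the construction of $J$, the key is the elementary bound $|\langle z_n - z_m, v\rangle_H| \leq \|z_n - z_m\|_{\ast}\|v\|_V$ valid for $z_n, z_m \in H$ and $v \in V$. Applied to a sequence $(z_n) \subset H$ converging to $z$ in $V^{\ast}$, it shows that $(\langle z_n, v\rangle_H)_n$ is Cauchy in the scalar field (so the limit in \eqref{Theorem Pivot Space - Limit} exists) and that the value is independent of the approximating sequence. The map $v \mapsto \langle Jz, v\rangle_{V' \times V}$ is then manifestly linear and satisfies $|\langle Jz, v\rangle_{V' \times V}| \leq \|z\|_{V^{\ast}} \|v\|_V$ by passing to the limit, so $Jz \in V'$ with $\|Jz\|_{V'} \leq \|z\|_{V^{\ast}}$. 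On the dense subspace $H \subset V^{\ast}$ the definition of $\|\cdot\|_{\ast}$ gives directly $\|Jz\|_{V'} = \|z\|_{\ast}$, and this identity extends to all of $V^{\ast}$ by continuity, so $J$ is an isometry (in particular injective).

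The main obstacle is surjectivity, which I would handle by contradiction. Since $J$ is an isometry defined on the complete space $V^{\ast}$, the image $J(V^{\ast})$ is a closed linear subspace of $V'$. If it were proper, the Hahn--Banach theorem would produce some $\Phi \in V'' \setminus \{0\}$ annihilating $J(V^{\ast})$. The Riesz isomorphism identifies $V'' \cong V$, so $\Phi$ corresponds to some $v_0 \in V \setminus \{0\}$ with $\langle Jz, v_0\rangle_{V' \times V} = 0$ for every $z \in V^{\ast}$. Restricting to $z \in H$ yields $\langle z, v_0\rangle_H = 0$ for all $z \in H$, and the choice $z = v_0 \in V \subset H$ gives $\|v_0\|_H^2 = 0$, contradicting $v_0 \neq 0$. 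Hence $J(V^{\ast}) = V'$, and combined with the isometry property, $J$ is the claimed isomorphism from $V^{\ast}$ onto $V'$.
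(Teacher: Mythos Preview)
Your proof is correct and follows essentially the same route as the paper's sketch for the parts the paper actually carries out: the isometry argument via the bound $|\langle z,v\rangle_H|\le \|z\|_\ast\|v\|_V$, the identity $\|Jz\|_{V'}=\|z\|_\ast$ on $H$, and its extension by density are identical. The paper omits the surjectivity step entirely (deferring to the cited source), so your Hahn--Banach/reflexivity argument is a genuine addition rather than a deviation; it is a clean and standard way to close the gap.
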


\begin{proof}[Sketch of the proof of Theorem \ref{Theorem Pivot Space}]
	We will skip the proofs that $\| \cdot \|_{\ast}$ is a norm, that the limit \eqref{Theorem Pivot Space - Limit} exists and is independent of the choice of the sequence $(z_n)_{n \in \mathbb{N}}$, as long as $z_n \to z$ in $V^{\ast}$. We will show that $\langle Jz, v \rangle_{V' \times V}$ depends continuously on $v \in V$, and thus $Jz \in  V'$. From the definition of $\| \cdot \|_{\ast}$ we deduce
	\begin{align*}
	\vert \langle z, v \rangle_H \vert \leq \|z\|_{\ast} \| v \|_V , \hspace{0.5 cm} z \in H, \, v \in V. 
	\end{align*}
Hence, for any $z \in V^{\ast}$ and any $v \in V$, 
\begin{align*}
	\vert \langle Jz, v \rangle_{V' \times V} \vert = \lim_{n \to \infty} \vert \langle z_n, v \rangle_H \vert \leq \lim_{n \to \infty} \|z_n\|_{\ast} \|v \|_V = \|z\|_{\ast} \| v \|_V. 
\end{align*}
As a consequence, $Jz \in V'$. In particular, $\| Jz \|_{V'} \leq \| z \|_{\ast}$ for all $z \in V^{\ast}$, whence $J \in \mathcal{L}(V^{\ast}, V')$. From the definition of $J$ we infer that
\begin{align*}
\langle	Jz, v \rangle_{V' \times V} = \langle z , v \rangle_H, \hspace{0.5 cm} z \in H, \, v \in V,  
\end{align*}
and so $\| J z\|_{V'} = \|z \|_{\ast}$ for all $z \in H$. Since $H$ is dense in $V^{\ast}$ and $J$ is continuous, we conclude that $\| J z\|_{V'} = \|z \|_{\ast}$ even holds for all $z \in V^{\ast}$. The verification that $J$ is onto can be looked up in the proof of Proposition 2.9.2 in \cite{Tucsnak-Weiss}. Altogether, $J$ is an isomorphism from $V^{\ast}$ to $V'$. 
\end{proof}
If $V$, $H$, and $V^{\ast}$ are as in Theorem \ref{Theorem Pivot Space}, then we say that we identify $V^{\ast}$ with $V'$ if we do not distinguish between $z$ and $Jz$ for all $z \in V^{\ast}$. Thus, we have
\begin{align*}
	V \subset H \subset V',
\end{align*}
with continuous and densely defined embeddings. We call $V'$ the \emph{dual of} $V$ \emph{with respect to the pivot space} $H$. Moreover, the norm $\| \cdot \|_{\ast}$ defined in \eqref{Norm ast} is called the \emph{dual norm of} $\| \cdot \|_V$ \emph{with respect to the pivot space} $H$. 
\vspace{0.5 cm}\\
Note that the space $V$ is uniquely determined by $V'$, as it consists of all $v \in H$ for which the mapping $z \mapsto \langle z, v \rangle_H$ has a continuous extension to $V'$. We therefore call $V$ the \emph{dual of} $V'$ \emph{with respect to the pivot space} $H$.  
\vspace{0.5 cm}\\
Now that we have clarified duality with respect to a pivot space, we can finally turn to the definition of the interpolation space and the extrapolation space of a linear operator. 

\begin{definition}[Interpolation Space $\mathbf{X_1}$]
	\label{Definition Interpolation Space}
	Let $A \colon D(A) \subset X \to X$ be linear and densely defined with $\rho(A) \neq \emptyset$. For a fixed  $\lambda \in \rho(A)$, define the interpolation space $X_1$ = $(D(A), \|\cdot\|_1)$, where 
	\begin{align*}
		\|x\|_1 := \| (\lambda I - A)x \|_X, \hspace{0.5 cm} x \in D(A). 
	\end{align*}
	\end{definition}
	According to \cite[Proposition 2.10.1]{Tucsnak-Weiss} the interpolation space $X_1$ is a Hilbert space which is continuously embedded in $X$. Furthermore, for every $\lambda \in \rho(A)$, $\| \cdot \|_1$ is equivalent to the \emph{graph norm} $\| \cdot \|_{D(A)}$ of $A$, and if $L \in \mathcal{L}(X)$ is such that $L D(A) \subset D(A)$, then $L \in \mathcal{L}(X_1)$. 
	\vspace{0.5 cm}\\
	For the adjoint operator $A^{\ast} \colon D(A^{\ast}) \subset X \to X$ of $A$ one can analogously define the Hilbert space $X_1^d = (D(A^{\ast}), \|\cdot\|_1^d)$ with
	\begin{align*}
		\|x\|_1^d := \| (\overline{\lambda} I - A^{\ast})x \|_X, \hspace{0.5 cm} x \in D(A^{\ast}), 
	\end{align*}
	where $\overline{\lambda} \in \rho(A^{\ast})$ ($\Leftrightarrow \lambda \in \rho(A)$, see Proposition \ref{Proposition 2.8.4 in Tucsnak}).
	\begin{definition}[Extrapolation Space $\mathbf{X_{-1}}$]
		\label{Definition Extrapolation Space}
		Let $A \colon D(A) \subset X \to X$ be linear and densely defined with $\rho(A) \neq \emptyset$. For a fixed $\lambda \in \rho(A)$, define the extrapolation space $X_{-1}$ as the completion of $X$ with respect to the norm
	\begin{align*}
		\|x\|_{-1} := \|(\lambda I - A)^{-1}x\|_X ,\hspace{0.5 cm} x \in X. 
	\end{align*}
\end{definition}
	According to \cite[Proposition 2.10.2]{Tucsnak-Weiss} the extrapolation space $X_{-1}$ is again a Hilbert space, and the norms for different $\lambda \in \rho(A)$ are equivalent. In particular, $X_{-1}$ is independent of the choice of $\lambda$. Moreover, $X_{-1}$ is the dual of $ X_1^d$ with respect to the pivot space $X$. To see the last assertion, define $\|\cdot \|_{-1}$ and $\| \cdot \|_1^d$ with respect to the same $\lambda \in \rho(A)$, and apply Proposition \ref{Proposition 2.8.4 in Tucsnak}. Then, for every $z \in X$, 
	\begin{align*}
\|z \|_{-1} &= \|(\lambda I - A)^{-1}z \|_X \\
			&= \sup_{\stackrel{x \in X}{\|x\|_X = 1}} \vert \langle (\lambda I -A)^{-1}z, x \rangle_X \vert \\
			&= \sup_{\stackrel{x \in X}{\|x\|_X = 1}} \vert \langle z, (\overline{\lambda}I - A^{\ast})^{-1} x \rangle_X \vert \\
			&= \sup_{\stackrel{y \in X_1^d}{\| y \|_1^d = 1}} \vert \langle z , y \rangle_X \vert. 
	\end{align*} 
Next, we present an important result for the upcoming section. Assume that $A$ generates a $C_0$-semigroup. Then this semigroup has a unique extension to $\mathcal{L}(X_{-1})$. 
Subsequently, we show an example where we specify the extrapolation space $X_{-1}$ for an infinitesimal generator. 

\begin{proposition}[Proposition 2.10.3 in \cite{Tucsnak-Weiss}]
	Let $A \colon D(A) \subset X \to X$ be linear and densely defined with $\rho(A) \neq \emptyset$. For $\lambda \in \rho(A)$, define $X_1$ and $X_{-1}$ as in Definition \ref{Definition Interpolation Space} and \ref{Definition Extrapolation Space}, respectively. Then $A \in \mathcal{L}(X_1, X)$ and $A$ has a unique extension $A_{-1} \in \mathcal{L}(X, X_{-1})$. Moreover, 
	\begin{align*}
		(\lambda I - A)^{-1} \in \mathcal{L}(X, X_1) \hspace{0.3 cm} \text{and} \hspace{0.3 cm} (\lambda I - A_{-1})^{-1} \in \mathcal{L}(X_{-1}, X). 
	\end{align*}
In particular, $\lambda \in \rho(A_{-1})$, and these operators are unitary. 
\end{proposition}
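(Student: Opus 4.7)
The plan is to work through the four claims in turn, relying throughout on the fact that $(\lambda I - A)^{-1} \in \mathcal{L}(X)$ (since $\lambda \in \rho(A)$) and that the two defining norms are
$\|x\|_1 = \|(\lambda I - A)x\|_X$ for $x \in X_1 = D(A)$ and $\|x\|_{-1} = \|(\lambda I - A)^{-1}x\|_X$ for $x \in X$, with $X_{-1}$ the completion of $X$ under $\|\cdot\|_{-1}$.

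First I would verify $A \in \mathcal{L}(X_1, X)$ by the simple estimate
\begin{align*}
\|Ax\|_X \leq |\lambda|\,\|x\|_X + \|(\lambda I - A)x\|_X \leq \bigl(|\lambda|\,\|(\lambda I - A)^{-1}\|_{\mathcal{L}(X)} + 1\bigr)\|x\|_1,
\end{align*}
valid for all $x \in D(A)$, where I used $\|x\|_X = \|(\lambda I - A)^{-1}(\lambda I - A)x\|_X \leq \|(\lambda I - A)^{-1}\|_{\mathcal{L}(X)}\|x\|_1$. The statement $(\lambda I - A)^{-1} \in \mathcal{L}(X, X_1)$ with unitarity is immediate, because for any $y \in X$ one has $\|(\lambda I - A)^{-1}y\|_1 = \|(\lambda I - A)(\lambda I - A)^{-1}y\|_X = \|y\|_X$, and the map is clearly surjective onto $D(A) = X_1$.

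Next I would construct $A_{-1}$. The key estimate is
\begin{align*}
\|Ax\|_{-1} = \|(\lambda I - A)^{-1}Ax\|_X = \|\lambda(\lambda I - A)^{-1}x - x\|_X \leq \bigl(|\lambda|\,\|(\lambda I - A)^{-1}\|_{\mathcal{L}(X)} + 1\bigr)\|x\|_X,
\end{align*}
valid for $x \in D(A)$. Because $D(A)$ is dense in $X$ and $X_{-1}$ is complete, the operator $A \colon D(A) \subset X \to X_{-1}$ extends uniquely to an element $A_{-1} \in \mathcal{L}(X, X_{-1})$; uniqueness of any such bounded extension follows from density of $D(A)$ in $X$.

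Finally, for $(\lambda I - A_{-1})^{-1} \in \mathcal{L}(X_{-1}, X)$ and unitarity I would use the density of $X$ in $X_{-1}$. Viewed as a map $(X, \|\cdot\|_{-1}) \to (X, \|\cdot\|_X)$, the operator $(\lambda I - A)^{-1}$ is an isometry onto $D(A)$ (which is dense in $X$), so it extends uniquely to a unitary $R \colon X_{-1} \to X$. It remains to identify $R$ with $(\lambda I - A_{-1})^{-1}$: for $x \in D(A)$ one has $A_{-1}x = Ax$ and hence $R(\lambda I - A_{-1})x = (\lambda I - A)^{-1}(\lambda I - A)x = x$, which extends to all of $X$ by density and continuity of both sides; conversely, for $y \in X \subset X_{-1}$ one gets $(\lambda I - A_{-1})Ry = (\lambda I - A_{-1})(\lambda I - A)^{-1}y = (\lambda I - A)(\lambda I - A)^{-1}y = y$, which extends to all of $X_{-1}$ by density. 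Thus $\lambda I - A_{-1}$ is bijective from $X$ to $X_{-1}$ with bounded unitary inverse $R$, so $\lambda \in \rho(A_{-1})$.

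I expect the main obstacle to be the last step: one must be careful that the extension procedure delivers an actual two-sided inverse of $\lambda I - A_{-1}$, which hinges on interleaving two density arguments (density of $D(A)$ in $X$ and density of $X$ in $X_{-1}$) and on the fact that $A_{-1}$ genuinely agrees with $A$ on $D(A)$. Once the compositions are checked on the dense subspace where everything is classically defined, continuity of all operators involved in the appropriate norms transfers the identity to the full spaces.
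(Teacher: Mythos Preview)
The paper does not supply its own proof of this proposition; it is stated with a citation to Tucsnak--Weiss and left at that. Your argument is correct and is essentially the standard one: boundedness of $A \colon X_1 \to X$ via the triangle inequality, unitarity of $(\lambda I - A)^{-1} \colon X \to X_1$ directly from the definition of $\|\cdot\|_1$, construction of $A_{-1}$ by the resolvent identity $(\lambda I - A)^{-1}Ax = \lambda(\lambda I - A)^{-1}x - x$ and extension by density, and identification of $(\lambda I - A_{-1})^{-1}$ with the unique isometric extension of $(\lambda I - A)^{-1}$ to $X_{-1}$ by checking both compositions on dense subspaces. The two density arguments you flag (of $D(A)$ in $X$ and of $X$ in $X_{-1}$) are handled correctly.
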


\begin{proposition}[Proposition 2.10.4 in \cite{Tucsnak-Weiss}]
		Let $A \colon D(A) \subset X \to X$ be the generator of a strongly continuous semigroup $(T(t))_{t \geq 0}$ on $X$. For $\lambda \in \rho(A)$, define $X_1$ and $X_{-1}$ as in Definition \ref{Definition Interpolation Space} and \ref{Definition Extrapolation Space}, respectively. For $t \geq 0$, the restriction of $T(t)$ to $X_1$ (considered as an operator in $\mathcal{L}(X_1)$) is the image of  $T(t) \in \mathcal{L}(X)$ through the unitary operator $	(\lambda I - A)^{-1} \in \mathcal{L}(X, X_1)$. Therefore, these operators form a strongly continuous semigroup on $X_1$ whose generator is $A_{|D(A^2)}$. The operator $T_{-1}(t) \in \mathcal{L}(X_{-1})$ is the image of $T(t) \in \mathcal{L}(X)$ through the unitary operator $(\lambda I - A_{-1})^{-1} \in \mathcal{L}(X_{-1}, X)$. Therefore, these extended operators form a strongly continuous semigroup $(T_{-1}(t))_{t  \geq 0}$ on $X_{-1}$, whose generator is $A_{-1} \colon D(A_{-1}) \subset X_{-1} \to X_{-1}$ with $D(A_{-1}) = X$. 
\end{proposition}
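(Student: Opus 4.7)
The plan is to exploit the commutation of $T(t)$ with the resolvent $(\lambda I - A)^{-1}$, which follows from Theorem \ref{Theorem Properties of C0-Semigroups}\ref{C0SemigroupProp3}: for $x \in D(A)$ one has $T(t) A x = A T(t) x$, and substituting $x = (\lambda I - A)^{-1} y$ yields $T(t)(\lambda I - A)^{-1} y = (\lambda I - A)^{-1} T(t) y$ for every $y \in X$. This single identity is the engine of the whole proof, because it says that the conjugation by the unitary $(\lambda I - A)^{-1}$ carries $T(t)$ to itself on the overlap $X_1$, and it allows the formulas $T_{-1}(t) = (\lambda I - A_{-1}) T(t)(\lambda I - A_{-1})^{-1}$ (on $X_{-1}$) and $T(t)|_{X_1} = (\lambda I - A)^{-1} T(t) (\lambda I - A)$ (on $X_1$) to be meaningful and consistent.

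First I would deal with $X_1$. Using $\|T(t)x\|_1 = \|(\lambda I - A)T(t)x\|_X = \|T(t)(\lambda I - A)x\|_X$, I would conclude that $T(t)$ restricts to a bounded operator on $X_1$ with $\|T(t)|_{X_1}\|_{\mathcal{L}(X_1)} \leq \|T(t)\|_{\mathcal{L}(X)}$, that the semigroup law is inherited, and that strong continuity on $X_1$ follows from strong continuity on $X$ applied to $(\lambda I - A)x \in X$. For the generator $B$ of the restricted semigroup, $x \in D(B)$ means $(T(t)x - x)/t$ converges in $X_1$, i.e.\ $((\lambda I - A)T(t)x - (\lambda I - A)x)/t = (T(t)(\lambda I - A)x - (\lambda I - A)x)/t$ converges in $X$. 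By the very definition of $A$ this happens precisely when $(\lambda I - A)x \in D(A)$, that is $x \in D(A^2)$, and then $Bx = Ax$.

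For $X_{-1}$, I would define $T_{-1}(t) \in \mathcal{L}(X_{-1})$ as the conjugate $(\lambda I - A_{-1})T(t)(\lambda I - A_{-1})^{-1}$ and check that it extends $T(t)$: on $X$ the factor $(\lambda I - A_{-1})^{-1}$ coincides with $(\lambda I - A)^{-1}$, and the commutation identity collapses the composition to $T(t)$. The semigroup property and strong continuity then transfer through the unitary $(\lambda I - A_{-1})^{-1}$ exactly as in the $X_1$ step. The generator identification is the delicate piece. For any $x \in X$, writing $z := (\lambda I - A)^{-1}x \in D(A)$ and using $Az = \lambda z - x$, one computes
\begin{align*}
\left\|\tfrac{T(t)x - x}{t} - A_{-1}x\right\|_{-1} = \left\|\tfrac{T(t)z - z}{t} - Az\right\|_X \xrightarrow{t \searrow 0} 0,
\end{align*}
so $X \subseteq D(B_{-1})$ and $B_{-1}|_X = A_{-1}$. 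Conversely, if $y \in D(B_{-1})$, set $z := (\lambda I - A_{-1})^{-1}y \in X$; applying the bounded inverse $(\lambda I - A_{-1})^{-1}$ to $(T_{-1}(t)y - y)/t$ gives $(T(t)z - z)/t$, which must then converge in $X$, forcing $z \in D(A)$, hence $y = (\lambda I - A_{-1})z = (\lambda I - A)z \in X$.

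The main obstacle will be this last converse direction: cleanly showing $D(B_{-1}) \subseteq X$ rather than just $D(B_{-1}) \supseteq X$, since one must push the convergence in $X_{-1}$ back into $X$ through the isomorphism and recognize that the limit lives in $\operatorname{ran}(\lambda I - A) = X$. Once that is in hand, everything else is a matter of transporting the standard semigroup structure across the unitary operators.
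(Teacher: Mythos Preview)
The paper does not prove this proposition; it is quoted verbatim from \cite{Tucsnak-Weiss} and left without proof. Your argument is correct and is essentially the standard one: transport the semigroup through the unitary resolvent and use the commutation $T(t)(\lambda I-A)^{-1}=(\lambda I-A)^{-1}T(t)$ to identify the restricted and extended semigroups, then read off the generators via the definition of the infinitesimal generator. The converse inclusion $D(B_{-1})\subseteq X$ that you flagged as the ``main obstacle'' is in fact handled cleanly by your sketch, since $(\lambda I-A_{-1})^{-1}\colon X_{-1}\to X$ is an isomorphism and intertwines $T_{-1}(t)$ with $T(t)$ by construction; there is no hidden difficulty there.
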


\begin{example}[Example 2.10.7 in \cite{Tucsnak-Weiss}]
	Consider the left-shift semigroup $(T(t))_{t \geq 0}$ on $X = L^2([0, \infty), \mathbb{C})$ defined by
	\begin{align*}
		T(t)x(\cdot) = x(\cdot + t), \hspace{0.5 cm} t\geq 0, \, x \in X.
	\end{align*}
	Its generator $A \colon D(A) \subset X \to X$ is given by
	\begin{align*}
		D(A) &= H^1((0,\infty), \mathbb{C}), \\
		Ax &= \frac{d}{dz}x, \hspace{0.5 cm} x \in D(A).
	\end{align*}
We want to determine the extrapolation space $X_{-1}$ of this semigroup. To this end, we define the Hilbert space adjoint $A^{\ast} \colon D(A^{\ast}) \subset X \to X$ of $A$, which is given by
	\begin{align*}
		D(A^{\ast}) &= H_0^1((0, \infty), \mathbb{C}), \\
		A^{\ast} x &= - \frac{d}{dz}x , \hspace{0.5 cm} x  \in D(A^{\ast}).
	\end{align*}
From the preceding discussion, we know that the dual of $X_1^d = (D(A^{\ast}), \|\cdot\|_1^d)$ with respect to the pivot space $X$ is given by $X_{-1}$. According to Definition 13.4.7 in \cite{Tucsnak-Weiss}, the dual space of  $H_0^1((0, \infty), \mathbb{C})$ is therefore given by the Sobolev space $H^{-1}((0, \infty), \mathbb{C})$, i.e., 
	\begin{align*}
		X_{-1} = (H_0^1((0, \infty), \mathbb{C}))' = H^{-1}((0, \infty), \mathbb{C}).
	\end{align*}
Next, we want to specify the norm on $X_{-1}$. Choosing $\lambda = 1 \in \rho(A)$, Theorem \ref{Theorem Laplace Transform} yields
	\begin{align*}
		(\lambda I - A)^{-1} x(z) =  \int_{0}^{\infty} e^{-t} x(z + t) \, dt, \hspace{0.5 cm} x \in X, \, z \in (0, \infty). 
	\end{align*}
	Thus, the norm $\|\cdot\|_{-1}$ can be presented as
	\begin{align*}
		\|x\|_{-1} &= \left\| \int_{0}^{\infty} e^{-t} x(\cdot + t) \, dt \right\|_X, \hspace{0.5 cm} x \in X.
	\end{align*}
\QEDA
\end{example}
Having introduced the necessary concepts, we can have a quick discussion on $X_{-1}$-valued control systems in the following segment.
\subsection{Abstract Control Systems and Admissibility}
\label{Subsection Abstract Control Systems and Admissibility}
In this section, we collect some facts about existence and uniqueness of solutions of abstract control systems with values in the extrapolation space $X_{-1}$ of the form
\begin{align*}
	\dot{x}(t) &= A_{-1}x(t) + Bu(t), \hspace{0.5 cm} t >0,  \\
	x(0) &= x_0 \in X_{-1}. 
\end{align*}
In the following, we will denote the extension $A_{-1}$ of $A \colon D(A) \subset X \to X$ and the extension $T_{-1}(t)$ of $T(t)$, $t \geq 0$, by $A$ and $T(t)$ again, respectively. We will not give any proofs and refer to \cite[Chapter 4]{Tucsnak-Weiss}. 
\vspace{0.5 cm}\\
First, we want to clarify what we want to understand under a solution in $X_{-1}$. 

\begin{definition}[Solution in $\mathbf{X_{-1}}$]
	\label{Definition Solution in X-1}
	Consider the differential equation
	\begin{align}
		\label{Abstract System in X-1}
		\dot{x}(t) = Ax(t) + Bu(t), \hspace{0.5 cm} t > 0, 
	\end{align}
with $A \colon D(A) \subset X_{-1} \to X_{-1}$, $u \in L_{\text{loc}}^2([0, \infty), U)$, $B \in \mathcal{L}(U, X_{-1})$, and with $U$ a separable Hilbert space. A solution of \eqref{Abstract System in X-1} is a function
\begin{align*}
	x \in L_{loc}^1([0,\infty), X) \cap \mathcal{C}([0, \infty), X_{-1})
\end{align*}
which satisfies the following equations in $X_{-1}$: 
\begin{align*}
	x(t) - x(0) = \int_{0}^{t} Ax(s)  + Bu(s) \, ds, \hspace{0.5 cm } t \geq 0. 
\end{align*}
\end{definition}
Just as it is the case for the inhomogeneous Cauchy problem, if \eqref{Abstract System in X-1} has a solution, then it has a particular form. 

\begin{proposition}[Proposition 4.1.4 in \cite{Tucsnak-Weiss}]
	With the notation in Definition \ref{Definition Solution in X-1}, suppose that $x$ is a solution of \eqref{Abstract System in X-1} in $X_{-1}$. Then $x$ is given by 
	\begin{align*}
		x(t) = T(t)x_0  + \int_{0}^{t} T(t-s) Bu(s) \, ds, \hspace{0.5 cm} t \geq 0, 
	\end{align*}
and is called the mild solution of \eqref{Abstract System in X-1}. In particular, for every $x_0 \in X$ there exists at most one solution in $X_{-1}$ of \eqref{Abstract System in X-1} satisfying the initial condition $x(0) = x_0$. 
\end{proposition}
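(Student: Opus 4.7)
The plan is to mimic the variation-of-constants argument already used in Lemma \ref{Lemma Mild Solution} for the inhomogeneous abstract Cauchy problem, but now carried out entirely in the extrapolation space $X_{-1}$, since $x$ only enjoys the weaker regularity $x \in L^1_{loc}([0,\infty), X) \cap \mathcal{C}([0,\infty), X_{-1})$ and is in general not differentiable in $X$.

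Fix $t > 0$ and introduce the auxiliary function
\begin{align*}
\psi(s) := T(t-s)x(s), \hspace{0.5 cm} s \in [0,t],
\end{align*}
where $T(\cdot)$ now denotes the extended $C_0$-semigroup on $X_{-1}$. By Theorem \ref{Theorem T(t)x is continuous} applied to this extended semigroup together with the $X_{-1}$-continuity of $x$, the map $\psi$ is continuous with values in $X_{-1}$, with $\psi(0) = T(t)x_0$ and $\psi(t) = x(t)$. The claim reduces to showing
\begin{align*}
\psi(t) - \psi(0) = \int_{0}^{t} T(t-s) Bu(s)\, ds \hspace{0.5 cm} \text{in } X_{-1}.
\end{align*}

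To establish this identity, I would combine two ingredients. First, by Definition \ref{Definition Solution in X-1} the solution satisfies the integrated equation $x(s) - x_0 = \int_{0}^{s} Ax(\sigma) + Bu(\sigma)\, d\sigma$ in $X_{-1}$. Second, applying the integrated form of Theorem \ref{Theorem Properties of C0-Semigroups}\ref{C0SemigroupProp4} to the extended semigroup one has $T(r) - I = \int_{0}^{r} T(\tau) A \, d\tau$ as operators acting on elements of $X$ (taking values in $X_{-1}$). Plugging these into the difference $T(t-s)x(s) - T(t-s')x(s')$ for $0 \leq s' \leq s \leq t$, and using a Fubini interchange which is justified by $x \in L^1_{loc}([0,\infty), X)$ together with the boundedness of $A \colon X \to X_{-1}$ and the strong continuity of $T(\cdot)$ on $X_{-1}$, the two $A$-contributions cancel telescopically and one is left precisely with $\int_{s'}^{s} T(t-\sigma)Bu(\sigma)\, d\sigma$. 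Taking $s' = 0$, $s = t$ yields the asserted formula.

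Uniqueness is then immediate from linearity: if $x_1, x_2$ are two $X_{-1}$-solutions sharing the same initial value $x_0 \in X$, their difference $w = x_1 - x_2$ is a solution with $Bu \equiv 0$ and $w(0) = 0$, and the formula just established forces $w(t) = T(t) \cdot 0 = 0$ for every $t \geq 0$. The main technical obstacle is the bookkeeping in the second paragraph: because $x$ cannot be differentiated pointwise in $X$, every manipulation of $\psi$ must be kept in integrated form with values in $X_{-1}$, and the Fubini exchange has to be justified against the asymmetry that $x(\sigma) \in X$ while $Ax(\sigma)$ only lies in $X_{-1}$. Once this is done correctly the cancellation of the $A$-terms is a routine consequence of $AT(\tau) = T(\tau)A$ on $X$.
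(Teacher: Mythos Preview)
The paper does not prove this proposition itself: at the start of Subsection \ref{Subsection Abstract Control Systems and Admissibility} it explicitly states ``We will not give any proofs and refer to \cite[Chapter 4]{Tucsnak-Weiss},'' and this proposition is simply quoted from that reference. So there is no in-paper proof to compare against.

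That said, your sketch is essentially the standard argument one finds in the cited reference. Defining $\psi(s) = T(t-s)x(s)$ in $X_{-1}$, using the integrated form of the solution from Definition \ref{Definition Solution in X-1} together with the semigroup identity from Theorem \ref{Theorem Properties of C0-Semigroups}\ref{C0SemigroupProp4} on the extended semigroup, and obtaining the telescopic cancellation of the $A$-terms via Fubini is exactly the right idea, and your identification of the main technical point --- that everything must stay in integrated form in $X_{-1}$ because $x$ is not pointwise differentiable in $X$ --- is correct. The uniqueness argument by linearity is fine. If you were to write this out fully, the only place requiring care is making the Fubini step precise (the integrand $(\tau,\sigma) \mapsto T(\tau)Ax(\sigma)$ is $X_{-1}$-valued and one needs the appropriate integrability), but the outline is sound.
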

Next, we give a criterion for the initial value $x_0$ and the function $f$ that guarantees the existence of a unique solution satisfying the initial condition $x(0) = x_0$. 

\begin{theorem}[Theorem 4.1.6 in \cite{Tucsnak-Weiss}]
	If $x_0 \in X$ and $Bu(\cdot) \in H_{loc}^1([0,\infty), X_{-1})$, then \eqref{Abstract System in X-1} has a unique solution in $X_{-1}$, denoted by $x$, that satisfies $x(0) = x_0$. Moreover, this solution is such that
	\begin{align*}
		x \in \mathcal{C}([0, \infty), X) \cap \mathcal{C}^1([0, \infty), X_{-1}), 
	\end{align*}
and for all $t \geq 0$ it satisfies \eqref{Abstract System in X-1} in a classical sense. 
\end{theorem}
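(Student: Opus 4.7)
Uniqueness is immediate from Proposition 4.1.4 above, so the task reduces to exhibiting a function $x$ with the claimed regularity that satisfies the equation. The mild solution formula forces the candidate
\begin{equation*}
x(t) = T(t)x_0 + V(t), \qquad V(t) := \int_0^t T(t-s) Bu(s)\, ds,
\end{equation*}
and two things must be verified: (a) $x \in \mathcal{C}([0,\infty), X)$, and (b) $x \in \mathcal{C}^1([0,\infty), X_{-1})$ with $\dot x(t) = Ax(t) + Bu(t)$ pointwise. The term $T(\cdot)x_0$ is automatically in $\mathcal{C}([0,\infty), X)$ because $x_0 \in X$ and the restriction of $(T(t))_{t \geq 0}$ to $X$ is a $C_0$-semigroup, so the crux of (a) is the regularity of the convolution $V$.

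For (a) I would carry out an integration by parts in $X_{-1}$. Since $\rho(A) \neq \emptyset$, replacing $A$ by $A - \lambda I$ for some $\lambda \in \rho(A)$ (which merely modulates $Bu$ by $e^{-\lambda t}$ and preserves its membership in $H_{loc}^1([0,\infty), X_{-1})$) lets us assume $0 \in \rho(A)$. Set $\phi(s) := A^{-1}Bu(s)$; since $A^{-1} \in \mathcal{L}(X_{-1}, X)$, we have $\phi \in H_{loc}^1([0,\infty), X)$ with $\phi'(s) = A^{-1}(Bu)'(s)$. Differentiating $s \mapsto -T(t-s)\phi(s)$ in $X_{-1}$, using $A T(t-s) = T(t-s)A$ and $A\phi(s) = Bu(s)$, and integrating from $0$ to $t$ yields the identity
\begin{equation*}
V(t) = T(t)\phi(0) - \phi(t) + \int_0^t T(t-s)\phi'(s)\, ds.
\end{equation*}
Each summand lies in $X$ and depends continuously on $t$ with values in $X$: the first because $\phi(0) \in X$; the second by the Sobolev embedding $H_{loc}^1([0,\infty), X) \hookrightarrow \mathcal{C}([0,\infty), X)$; the third because $\phi' \in L^2_{loc}([0,\infty), X) \subset L^1_{loc}([0,\infty), X)$ and $T(\cdot)$ is strongly continuous on $X$. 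Hence $V \in \mathcal{C}([0,\infty), X)$ and therefore $x \in \mathcal{C}([0,\infty), X)$.

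For (b) I would exploit the semigroup property: for $h > 0$,
\begin{equation*}
\frac{x(t+h) - x(t)}{h} = \frac{T(h) - I}{h}\, x(t) + \frac{1}{h}\int_t^{t+h} T(t+h-s)Bu(s)\, ds.
\end{equation*}
Since $x(t) \in X = D(A_{-1})$ by (a), the first term converges to $Ax(t)$ in $X_{-1}$ as $h \searrow 0$, and the second converges to $Bu(t)$ in $X_{-1}$ by the continuity of $s \mapsto Bu(s)$. Thus the right derivative of $x$ at $t$ exists, equals $Ax(t) + Bu(t)$, and is continuous in $X_{-1}$; the standard fact that a continuous right derivative implies genuine $\mathcal{C}^1$-regularity then upgrades this to $x \in \mathcal{C}^1([0,\infty), X_{-1})$ satisfying \eqref{Abstract System in X-1} classically.

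The delicate step is the integration by parts: differentiation of $T(t-s)y$ for $y \in X = D(A_{-1})$ only produces a derivative in $X_{-1}$, so all intermediate equalities must be interpreted in $X_{-1}$, and one must justify the Bochner-space product rule for an $H^1$ factor against a strongly continuous family as well as the commutation $A^{-1}T(t-s) = T(t-s)A^{-1}$ on $X_{-1}$. Once the decomposition of $V(t)$ is in hand, the remaining estimates are routine.
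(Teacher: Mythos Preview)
The paper does not prove this theorem: in Section~3.5.2 it explicitly states ``We will not give any proofs and refer to \cite[Chapter 4]{Tucsnak-Weiss}'', so there is no in-paper argument to compare against. Your proof is correct and is essentially the standard argument from the cited reference: the integration-by-parts identity $V(t) = T(t)\phi(0) - \phi(t) + \int_0^t T(t-s)\phi'(s)\,ds$ with $\phi = A^{-1}Bu$ is exactly how one upgrades $V$ from $\mathcal{C}([0,\infty),X_{-1})$ to $\mathcal{C}([0,\infty),X)$, and the difference-quotient computation for the classical derivative in $X_{-1}$ is routine once $x(t)\in X = D(A_{-1})$ is known.
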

Now, let us consider the following abstract control system:
\begin{align}
\begin{split}
	\dot{x}(t) &= Ax(t) + Bu(t), \hspace{0.5 cm} t >0, \\
	x(0) &= x_0 \in X.
\end{split}
\label{Abstract Control System}
\end{align}
Motivated by the study of systems of the form \eqref{Abstract Control System}, the concept of admissibility was introduced. Generally speaking, one wants to specify all those control operators $B$ for which all mild solutions $x$ of \eqref{Abstract Control System} are continuous $X$-valued functions. Operators $B$ with this property are called admissible.  

\begin{definition}[Admissible Control Operator]
	\label{Definition Admissible Control Operator}
	Let  $(T(t))_{t \geq 0}$ be a $C_0$-semigroup on $X_{-1}$. Let $\tau \geq 0$. Define the mapping $\Phi_{\tau} \in \mathcal{L}(L^2([0, \infty), U), X_{-1})$ by
	\begin{align}
		\Phi_{\tau} u := \int_{0}^{\tau} T(\tau- \eta) Bu(\eta) \, d\eta, \hspace{0.5 cm} u \in L^2([0, \infty), U).
		\label{Phi tau u}
	\end{align}
	An operator $B \in \mathcal{L}(U, X_{-1})$ is called an admissible control operator for $(T(t))_{t \geq 0}$ on $X_{-1}$ if for some $\tau > 0$ it holds that
	\begin{align*}
		\ran (\Phi_{\tau}) \subset X.
	\end{align*}
\end{definition}
In other words: if $B$ is admissible, then in \eqref{Phi tau u} the integrand is $X_{-1}$-valued, but the integral itself is an element of $X$. 

\begin{proposition}[Proposition 4.2.2 in \cite{Tucsnak-Weiss}]
	 Suppose that $B \in \mathcal{L}(U, X_{-1})$ is admissible for the $C_0$-semigroup $(T(t))_{t \geq 0}$ on $X_{-1}$. Then for every $t \geq 0$ it holds that
	 \begin{align*}
	 	\Phi_t \in \mathcal{L}(L^2([0, \infty), U), X). 
	 \end{align*}
\end{proposition}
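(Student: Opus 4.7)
The plan is as follows. The hypothesis provides a single $\tau>0$ with $\ran(\Phi_\tau)\subset X$, and the goal is to show $\Phi_t\in\mathcal{L}(L^2([0,\infty),U),X)$ for every $t\geq 0$. I would proceed in three steps: first upgrade the range condition at $\tau$ to a boundedness statement via the closed graph theorem, then transfer boundedness from $\tau$ to any $t\in[0,\tau]$ by a time shift, and finally propagate to all $t>\tau$ by a cocycle identity for $\Phi$.

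\textbf{Step one.} The map $\Phi_\tau$ is already bounded as $L^2([0,\infty),U)\to X_{-1}$, since $B\in\mathcal{L}(U,X_{-1})$ and $\|T(\tau-\eta)\|_{\mathcal{L}(X_{-1})}$ is bounded on $[0,\tau]$ by Theorem \ref{Theorem C0-Semigroup Estimate}; a Cauchy--Schwarz estimate of the Bochner integral yields $\|\Phi_\tau u\|_{X_{-1}}\leq C_\tau\|u\|_{L^2}$. Now view $\Phi_\tau$ as a linear map into $X$ (legitimate by admissibility) and check that its graph is closed: if $u_n\to u$ in $L^2$ and $\Phi_\tau u_n\to y$ in $X$, then by the continuous embedding $X\hookrightarrow X_{-1}$ also $\Phi_\tau u_n\to y$ in $X_{-1}$, while the already-established $X_{-1}$-boundedness gives $\Phi_\tau u_n\to\Phi_\tau u$ in $X_{-1}$. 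Thus $y=\Phi_\tau u$, and the closed graph theorem yields $\Phi_\tau\in\mathcal{L}(L^2([0,\infty),U),X)$.

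\textbf{Step two.} Given $t\in(0,\tau)$ and $u\in L^2([0,\infty),U)$, define $v\in L^2([0,\infty),U)$ by $v(s)=u(s-(\tau-t))$ for $s\geq\tau-t$ and $v(s)=0$ for $0\leq s<\tau-t$, so that $\|v\|_{L^2}\leq\|u\|_{L^2}$. The substitution $s=\eta-(\tau-t)$ in the defining integral of $\Phi_\tau v$ reproduces $\Phi_t u$, hence $\Phi_t u=\Phi_\tau v\in X$ with $\|\Phi_t u\|_X\leq\|\Phi_\tau\|_{\mathcal{L}(L^2,X)}\|u\|_{L^2}$.

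\textbf{Step three.} For $t>\tau$, split the integral defining $\Phi_t u$ at $t-\tau$. Using $T(t-\eta)=T(\tau)T(t-\tau-\eta)$ on $[0,t-\tau]$ and the shift $s=\eta-(t-\tau)$ on $[t-\tau,t]$ yields the cocycle identity
\[
\Phi_t u=T(\tau)\,\Phi_{t-\tau}u+\Phi_\tau\bigl(u(\cdot+(t-\tau))\bigr),
\]
where $T(\tau)$ now denotes the restricted $C_0$-semigroup on $X$. Induction on the smallest integer $k$ with $t\leq k\tau$, combined with the boundedness of $T(\tau)$ on $X$, the inductive hypothesis for $\Phi_{t-\tau}$, and step one applied to the shifted input, then yields $\Phi_t\in\mathcal{L}(L^2([0,\infty),U),X)$ for every $t\geq 0$. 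The main (and essentially only) technical point I expect is step one: it is the closed graph theorem that secretly does the work, and the continuity of the embedding $X\hookrightarrow X_{-1}$ is what allows the identification of the two limits of $\Phi_\tau u_n$ in $X_{-1}$; the remaining steps amount to bookkeeping with the cocycle identity.
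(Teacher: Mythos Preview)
The paper does not supply its own proof of this proposition; it explicitly states ``We will not give any proofs and refer to \cite[Chapter 4]{Tucsnak-Weiss}'' and merely records the result with a citation. Your argument is correct and is essentially the standard proof found in the cited reference: the closed graph theorem for the fixed admissibility time $\tau$, a time-shift to cover $t\in(0,\tau)$, and the composition (cocycle) identity together with induction for $t>\tau$. There is nothing to compare against in the paper itself, and your proof would serve as a faithful reconstruction of the omitted argument.
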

The operators $\Phi_t$, $t \geq 0$, are called the \emph{input maps corresponding to the system} \eqref{Abstract Control System}, and have the following property.
\begin{proposition}[Proposition 4.2.4 in \cite{Tucsnak-Weiss}]
	 Suppose that $B \in \mathcal{L}(U, X_{-1})$ is admissible for the $C_0$-semigroup $(T(t))_{t \geq 0}$ on $X_{-1}$. Then the mapping
	 \begin{align*}
	 	[0, \infty) \times L^2([0, \infty) ,U), \hspace{0.3 cm} (t,u) \mapsto \Phi_t u 
	 \end{align*}
 is continuous. 
\end{proposition}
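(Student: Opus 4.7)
The plan is to reduce the joint continuity to a factorization of $\Phi_t$ through a single admissible operator acting at a fixed time, combined with the strong continuity of right-translation on $L^2$. The central observation is that for any $T > 0$ and $t \in [0, T]$ one has the identity
\begin{align*}
    \Phi_t u = \Phi_T(R_{T-t} u), \qquad u \in L^2([0, \infty), U),
\end{align*}
where $R_s$ denotes the right-translation $(R_s u)(\eta) = u(\eta - s)$ for $\eta \geq s$ and $0$ otherwise. This identity follows from the substitution $\eta' = \eta - (T-t)$ in the defining integral \eqref{Phi tau u}, since $R_{T-t} u$ vanishes on $[0, T-t]$.

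First, I would establish two facts about $R_s$. For each $s \geq 0$, $R_s$ is an isometry on $L^2([0,\infty), U)$ (by change of variables). Moreover, for every fixed $v \in L^2([0,\infty), U)$, the map $s \mapsto R_s v$ is continuous from $[0,\infty)$ to $L^2([0,\infty), U)$; this is the standard continuity of translation in $L^2$, proved by approximating $v$ by compactly supported continuous functions and using density together with absolute continuity of the Lebesgue integral. A triangle inequality of the form
\begin{align*}
    \|R_s u - R_{s_0} u_0\|_{L^2} \leq \|R_s(u - u_0)\|_{L^2} + \|R_s u_0 - R_{s_0} u_0\|_{L^2} = \|u - u_0\|_{L^2} + \|R_s u_0 - R_{s_0} u_0\|_{L^2}
\end{align*}
then yields joint continuity of $(s, u) \mapsto R_s u$ from $[0, \infty) \times L^2([0, \infty), U)$ to $L^2([0, \infty), U)$.

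To conclude, fix $(t_0, u_0) \in [0,\infty) \times L^2([0,\infty), U)$ and choose any $T > t_0$. For $(t,u)$ sufficiently close to $(t_0, u_0)$ with $t \leq T$, the factorization gives
\begin{align*}
    \Phi_t u - \Phi_{t_0} u_0 = \Phi_T\bigl(R_{T-t} u - R_{T-t_0} u_0\bigr).
\end{align*}
Since $B$ is admissible, Proposition~4.2.2 in \cite{Tucsnak-Weiss} guarantees $\Phi_T \in \mathcal{L}(L^2([0,\infty),U), X)$, so
\begin{align*}
    \|\Phi_t u - \Phi_{t_0} u_0\|_X \leq \|\Phi_T\|_{\mathcal{L}(L^2, X)} \, \|R_{T-t} u - R_{T-t_0} u_0\|_{L^2} \longrightarrow 0
\end{align*}
as $(t, u) \to (t_0, u_0)$, by the joint continuity of the right-translation from the previous paragraph.

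The main conceptual step is spotting the factorization $\Phi_t = \Phi_T \circ R_{T-t}$, which converts the joint continuity problem (with $t$ appearing simultaneously as the upper limit of integration and inside the semigroup argument) into mere continuity of right-translation composed with a single bounded operator $\Phi_T$. The rest is routine: the crucial quantitative input is the boundedness $\Phi_T \in \mathcal{L}(L^2, X)$, which is exactly what admissibility buys, so no further delicate estimates (such as a direct splitting using the semigroup identity $\Phi_{t+h} u = T(h) \Phi_t u + \Phi_h(\mathbf{S}_t u)$) are needed.
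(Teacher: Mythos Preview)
The paper does not supply its own proof of this proposition; it is quoted from \cite{Tucsnak-Weiss} without argument. There is therefore nothing in the paper to compare against, and your task reduces to whether the proposal stands on its own.

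Your argument is correct. The factorization $\Phi_t u = \Phi_T(R_{T-t}u)$ is verified by the change of variables you indicate, and once it is in hand the joint continuity follows immediately from the boundedness of $\Phi_T$ (Proposition~4.2.2) together with the joint continuity of $(s,u)\mapsto R_s u$, which you establish cleanly via the isometry property of $R_s$ and the standard continuity of translation in $L^2$. This is in fact essentially the argument given in \cite{Tucsnak-Weiss}, so your approach is both correct and the intended one.
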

The main result of this section states under which criteria the mild solutions of the abstract control system \eqref{Abstract Control System} are continuous $X$-valued functions, and is given below.

\begin{theorem}[Proposition 4.2.5 in \cite{Tucsnak-Weiss}]
	\label{Proposition Well-Posedenss of Abstract Control Systems}
	 Suppose that $B \in \mathcal{L}(U, X_{-1})$ is admissible for the $C_0$-semigroup $(T(t))_{t \geq 0}$ on $X_{-1}$ generated by $A \colon D(A) \subset X_{-1} \to X_{-1}$. Then for every $x_0 \in X$ and every $u \in L_{loc}^2([0, \infty), U)$, the initial value problem \eqref{Abstract Control System} has a unique solution in $X_{-1}$. This solution is given by
	 \begin{align*}
	 	x(t) = T(t)x_0 + \Phi_t u, \hspace{0.5 cm} t \geq 0, 
	 \end{align*}
 and satisfies
 \begin{align*}
 	x \in \mathcal{C}([0, \infty), X) \cap H_{loc}^1((0, \infty), X_{-1}). 
 \end{align*}
\end{theorem}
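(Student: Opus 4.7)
The plan is to verify that the candidate
\[
  x(t) := T(t) x_0 + \Phi_t u, \qquad t \geq 0,
\]
satisfies each of the claimed regularities; uniqueness is then automatic from the preceding Proposition 4.1.4, which states that any solution in $X_{-1}$ with initial value in $X$ must already be given by this variation-of-constants formula.

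First I would split the $X$-continuity claim into its two summands. Since $x_0 \in X$ and the restriction of $(T(t))_{t \geq 0}$ to $X$ is again a $C_0$-semigroup (by the quoted Proposition 2.10.4), Theorem \ref{Theorem T(t)x is continuous} yields $t \mapsto T(t) x_0 \in \mathcal{C}([0,\infty), X)$. Admissibility of $B$ together with the preceding Proposition 4.2.4 shows that $\Phi_t u \in X$ for every $t \geq 0$ and that the map $t \mapsto \Phi_t u$ is continuous from $[0, \infty)$ into $X$. Adding the two summands gives $x \in \mathcal{C}([0, \infty), X)$.

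Next, I would verify that $x$ satisfies the integral identity of Definition \ref{Definition Solution in X-1} in $X_{-1}$. For the homogeneous part, Theorem \ref{Theorem Properties of C0-Semigroups} \ref{C0SemigroupProp2}, applied to the extended semigroup $(T(t))_{t\geq 0}$ on $X_{-1}$ (whose generator is $A$ with domain $D(A) = X$), yields
\[
  T(t) x_0 - x_0 \;=\; \int_0^t A\,T(s) x_0 \, ds \qquad \text{in } X_{-1}.
\]
For the inhomogeneous part, a Fubini-type interchange, together with $A \in \mathcal{L}(X, X_{-1})$ (which allows one to pull $A$ under the integral when the integral is taken in $X_{-1}$), produces
\[
  \Phi_t u \;=\; \int_0^t \bigl[ A \Phi_s u + B u(s) \bigr] \, ds \qquad \text{in } X_{-1}.
\]
Summing these identities and using $x(0) = x_0$ gives $x(t) - x(0) = \int_0^t \bigl[ A x(s) + B u(s) \bigr] \, ds$ in $X_{-1}$, so $x$ is a solution in the sense of Definition \ref{Definition Solution in X-1}; uniqueness follows from the cited Proposition 4.1.4. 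The $H^1_{loc}$-regularity in $X_{-1}$ is then read off from the same identity: since $x \in \mathcal{C}([0, \infty), X)$ and $A \in \mathcal{L}(X, X_{-1})$, the map $A x(\cdot)$ lies in $\mathcal{C}([0,\infty), X_{-1}) \subset L^2_{loc}([0,\infty), X_{-1})$, and $B \in \mathcal{L}(U, X_{-1})$ together with $u \in L^2_{loc}([0, \infty), U)$ gives $B u(\cdot) \in L^2_{loc}([0, \infty), X_{-1})$. Hence the distributional derivative $\dot x = A x + B u$ lies in $L^2_{loc}([0, \infty), X_{-1})$, so $x \in H^1_{loc}((0, \infty), X_{-1})$.

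The main technical nuisance I anticipate is justifying the Fubini-type interchange
\[
  \int_0^t A \Phi_s u \, ds \;=\; \int_0^t \!\int_0^s A\,T(s-\eta) B u(\eta) \, d\eta \, ds \;=\; \int_0^t \!\int_\eta^t A\,T(s-\eta) B u(\eta) \, ds \, d\eta
\]
in the $X_{-1}$-norm; once one commits to working in $X_{-1}$ and uses the boundedness $A \in \mathcal{L}(X, X_{-1})$ to move $A$ past the integral sign, together with the strong measurability of $s \mapsto T(s-\eta) B u(\eta)$ in $X_{-1}$ and Theorem \ref{Theorem Properties of C0-Semigroups} \ref{C0SemigroupProp2} applied on $X_{-1}$, the remaining steps reduce to bookkeeping that is already organized in the preceding propositions.
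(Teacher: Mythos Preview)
The paper does not actually prove this statement: it is quoted as Proposition~4.2.5 from \cite{Tucsnak-Weiss} and used as a black box, with no proof given in the thesis. So there is no ``paper's own proof'' to compare your proposal against.

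That said, your proposal is a correct and standard argument, essentially the one carried out in \cite{Tucsnak-Weiss}. One small imprecision: for the homogeneous part you cite Theorem~\ref{Theorem Properties of C0-Semigroups}~\ref{C0SemigroupProp2}, but the identity $T(t)x_0 - x_0 = \int_0^t A\,T(s)x_0\,ds$ is really property~\ref{C0SemigroupProp4} applied on $X_{-1}$ (where $x_0 \in X = D(A_{-1})$); property~\ref{C0SemigroupProp2} gives $A$ applied to the integral rather than the integral of $A T(s)x_0$, though of course the two agree via closedness. For the Fubini step you flag as the ``main technical nuisance,'' note that $T(s-\eta)Bu(\eta)$ need not lie in $X$ pointwise, so one cannot literally write $A\,T(s-\eta)Bu(\eta)$ inside the double integral; the clean way is to first apply property~\ref{C0SemigroupProp2} on $X_{-1}$ to get $A\int_\eta^t T(s-\eta)Bu(\eta)\,ds = T(t-\eta)Bu(\eta) - Bu(\eta)$, and then integrate in $\eta$ using the boundedness of $A \colon X \to X_{-1}$ together with $\Phi_s u \in X$. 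With that adjustment your argument goes through.
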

This theorem shows that admissibility is a valuable tool to prove well-posedness of the abstract control system \eqref{Abstract Control System}. In \cite[Section 4.2]{Tucsnak-Weiss} one can find an example where an admissible operator has been determined for the unilateral right-shift semigroup. %on $L^2([0, \infty), \mathbb{C})$. 
\vspace{0.5 cm}\\
The following and last section of this chapter deals with the generalization of the abstract Cauchy problem discussed in Section \ref{Section The Abstract Cauchy Problem}, where the operator associated with the system changes over time as well.  
%\begin{example}{Chapter 2 \cite{Tucsnak-Weiss} (TODO)}
%	Take $X = L^2([0, \infty), \mathbb{C})$. Let $(T(t))_{t \geq0}$ be the unilateral right-shift semigroup
%	\begin{align*}
%		T(t)x(z) = \begin{cases} 
%			0, & z \in [0,t], \\
%			x(z-t), & z > t,
%		\end{cases} \hspace{0.5 cm } t \geq 0.
%	\end{align*}
%	Generator: $A \colon D(A) = H_0^1((0, \infty), \mathbb{C}) \to X$, $x \mapsto - \frac{d}{dz}x$.\\
%	Thus, we have $D(A^{\ast}) = H^1((0, \infty), \mathbb{C})$, and $X_{-1}$ is the dual of $D(A^{\ast})$ with respect to the pivot space $X$. 
%	\vspace{0.5 cm}\\
%	Take $U = \mathbb{C}$, so we can identify $\mathcal{L}(U, X_{-1})$ with $X_{-1}$. For every $a \geq 0$ we define $\delta_a$ as an element of $X_{-1}$ by
%	\begin{align*}
%		\langle \varphi, \delta_a \rangle_{X_1^d, X_{-1}} = \varphi(a), \hspace{0.5 cm} \varphi \in X_1^d = H^1((0, \infty), \mathbb{C}).
%	\end{align*}
%	Clearly, $T(t) \delta_a = \delta_{a + t}$, $t \geq 0$. We take the control operator $B = \delta_0$. Then one can check that
%	\begin{align*}
%		\Phi_t u (z) = \begin{cases}
%			u(t - z), & z \in [0,t], \\
%			0, & z > t.
%		\end{cases}
%	\end{align*}
%	It is clear that ran$(\Phi_t) \subset X$, so that $B \in \mathcal{L}(U, X_{-1}) = X_{-1}$ is admissible. 
%\end{example}

\section{Evolution Equations}
\label{Section Evolution Equations}
In this section, we want to give a concise overview concerning evolution problems. Roughly speaking, evolution problems are abstract, time-variant Cauchy problems, that is, the operator associated with the Cauchy problem is changing over time. We will introduce the necessary terminology and study the concept of stability for this class of systems, as this is a key property that has to be satisfied for proving well-posedness. This section relies substantially on \cite[Chapter 5]{Pazy83}.
\vspace{0.5 cm}\\
As in the autonomous case, let us begin with the definition of an evolution problem.
\begin{definition}[Evolution Problem]
	\label{Definition Classical Solution Evolution Problem}
Let $0 \leq t \leq \tau$, and let $A(t) \colon D(A(t)) \subset X \to X$ be a linear operator. The time-variant initial value problem
\begin{align}
	\begin{split}
	\dot{x}(t) &= A(t) x(t), \hspace{0.5 cm} 0 \leq s < t \leq \tau, \\
	x(s) &= x_0 \in X
\end{split}
\label{Evolution Problem}
\end{align}
is called an evolution problem. A function $x \colon [s,\tau] \to X$ is called a classical solution of the evolution problem \eqref{Evolution Problem} if the following holds:
	\begin{enumerate}
		\item[(i)] $x \in \mathcal{C}([s,\tau], X)$. 
		\item[(ii)] $x_{|(s,\tau)} \in \mathcal{C}^1((s,\tau), X)$.
		\item[(iii)] $x(t) \in D(A(t))$ for all $s < t \leq \tau$.
		\item[(iv)] $x$ satisfies \eqref{Evolution Problem}. 
	\end{enumerate}
\end{definition}
The proof of the following theorem is based on using Picard's iteration method and applying the Banach contraction principle. 
\begin{theorem}[Theorem 5.1.1 in \cite{Pazy83}]
	\label{Theorem 5.1.1 in Pazy}
	\label{Theorem Classical Solution Homogeneous Evolution Problem}
	For $0 \leq t \leq \tau$, let $A(t) \in \mathcal{L}(X)$. If the mapping 
	\begin{align*}
		[0,\tau] \to \mathcal{L}(X), \hspace{0.3 cm} t \mapsto A(t)
	\end{align*}
is continuous, then for every $x_0 \in X$ the initial value problem \eqref{Evolution Problem} has a unique classical solution. 
\end{theorem}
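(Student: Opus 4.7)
The plan is to reduce the evolution problem \eqref{Evolution Problem} to a Volterra-type fixed point equation on $\mathcal{C}([s,\tau],X)$ and apply Picard iteration, exploiting that $t \mapsto A(t)$ is continuous on the compact interval $[0,\tau]$ and hence uniformly bounded: set $M := \sup_{t \in [0,\tau]} \|A(t)\|_{\mathcal{L}(X)} < \infty$. Integrating \eqref{Evolution Problem} formally between $s$ and $t$, the problem becomes
\begin{align*}
	x(t) = x_0 + \int_s^t A(r) x(r) \, dr, \qquad t \in [s,\tau].
\end{align*}
First I would define $F \colon \mathcal{C}([s,\tau],X) \to \mathcal{C}([s,\tau],X)$ by $(Fx)(t) := x_0 + \int_s^t A(r) x(r) \, dr$. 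Continuity of $t \mapsto A(t)$ in $\mathcal{L}(X)$ together with continuity of $x$ guarantees that the integrand is a continuous $X$-valued function, so the Bochner/Riemann integral exists and $Fx \in \mathcal{C}([s,\tau],X)$.

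Next, I would set up the Picard sequence $x_0(t) \equiv x_0$, $x_{n+1} := Fx_n$, and prove by induction the standard estimate
\begin{align*}
	\|x_{n+1}(t) - x_n(t)\|_X \leq \frac{M^{n+1} (t-s)^{n+1}}{(n+1)!}\|x_0\|_X, \qquad t \in [s,\tau].
\end{align*}
Summing the telescoping series and comparing with the exponential series shows that $(x_n)$ is Cauchy in $\mathcal{C}([s,\tau],X)$ endowed with the sup norm, hence converges uniformly to some $x \in \mathcal{C}([s,\tau],X)$. Since $F$ is continuous in the sup norm (bounded by $M(\tau-s)$), passing to the limit in $x_{n+1} = Fx_n$ yields $x = Fx$, i.e., $x$ solves the integral equation. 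Alternatively, one can observe that on $\mathcal{C}([s,\tau],X)$ equipped with the equivalent weighted norm $\|y\|_\lambda := \sup_{t \in [s,\tau]} e^{-\lambda(t-s)}\|y(t)\|_X$ with $\lambda > M$, the map $F$ is a contraction, so the Banach fixed point theorem applies directly and gives both existence and uniqueness in one stroke.

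Then I would upgrade $x$ from a solution of the integral equation to a classical solution in the sense of Definition \ref{Definition Classical Solution Evolution Problem}. Since $r \mapsto A(r)x(r)$ is continuous on $[s,\tau]$, the fundamental theorem of calculus for Banach-space valued Riemann integrals shows that $t \mapsto \int_s^t A(r)x(r)\,dr$ is of class $\mathcal{C}^1$ on $(s,\tau)$ with derivative $A(t)x(t)$; because $A(t) \in \mathcal{L}(X)$ we trivially have $x(t) \in X = D(A(t))$, and the initial condition $x(s) = x_0$ follows from the fixed point equation. Uniqueness, if not obtained from the weighted-norm contraction, follows from a standard Gronwall argument: if $x, y$ both solve \eqref{Evolution Problem}, then $\|x(t) - y(t)\|_X \leq M \int_s^t \|x(r) - y(r)\|_X \, dr$, forcing $x \equiv y$ on $[s,\tau]$.

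The main obstacle, and the reason the $\mathcal{L}(X)$-assumption is essential here, is the combination of two technical points: first, the continuity of $t \mapsto A(t)$ in operator norm (rather than merely strong continuity) is what makes the integrand $A(\cdot)x(\cdot)$ continuous and bounded so that the Picard estimates close; second, the operators $A(t)$ being everywhere defined allows us to bypass the delicate domain compatibility issues (common domain, stability of the family, Kato-type conditions) that arise when extending this result to unbounded generators of $C_0$-semigroups. Once these two points are exploited, the proof is essentially the classical Picard-Lindelöf argument transported to a Banach-space-valued setting.
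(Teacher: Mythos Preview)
Your proposal is correct and follows essentially the same Picard--Lindel\"of strategy as the paper: reduce to the integral equation, exploit the bound $\sup_{t\in[0,\tau]}\|A(t)\|_{\mathcal{L}(X)}<\infty$, and obtain the fixed point via the factorial estimates. The only cosmetic difference is that the paper proves the inductive bound $\|S_{x_0}^n u - S_{x_0}^n v\|_\infty \le \frac{\alpha^n(\tau-s)^n}{n!}\|u-v\|_\infty$ and then applies the Banach contraction principle to the iterate $S_{x_0}^N$ for $N$ large, whereas you either show the Picard sequence is Cauchy directly or make $F$ itself a contraction via a weighted norm; these are well-known equivalent routes to the same conclusion.
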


\begin{proof}
	We make use of Picard's iteration theorem. Since the mapping $ t \mapsto A(t)$ is continuous, we may define
	\begin{align*}
		\alpha = \max_{t \in [0, \tau]} \|A(t)\|_{\mathcal{L}(X)}. 
	\end{align*}
Furthermore, for $x_0 \in X$ define the mapping
\begin{align*}
S_{x_0} \colon \mathcal{C}([s,\tau], X) &\to \mathcal{C}([s,\tau], X), \\
	u &\mapsto x_0 + \int_{s}^{\cdot} A(\eta) u(\eta) \, d\eta,  
\end{align*}
where $\mathcal{C}([s,\tau], X)$ is endowed with the norm 
\begin{align*}
	\|u\|_{\infty} = \max_{t \in [s,\tau]} \|u(t) \|_X, \hspace{0.5 cm} u \in \mathcal{C}([s,\tau], X), 
\end{align*}
making $(\mathcal{C}([s,\tau], X),  \|\cdot\|_{\infty})$ a Banach space. 
For all $u,v \in \mathcal{C}([s,\tau], X)$ and for all $s \leq t \leq \tau$ the following estimate holds:
\begin{align*}
	\| S_{x_0}u(t) - S_{x_0} v(t) \|_X &= \left\| x_0 + \int_{s}^{t} A(\eta) u(\eta) \, d\eta - x_0 - \int_{s}^{t} A(\eta) v(\eta) \, d\eta \right\|_X \\
	&= \left\| \int_{s}^{t} A(\eta) \left[ u(\eta) - v(\eta) \right] \, d\eta \right\|_X \\
	&\leq \int_{s}^{t} \|A(\eta)\|_{\mathcal{L}(X)} \| u(\eta) - v(\eta) \|_X \, d\eta \\
	&\leq \alpha (t-s) \|u - v\|_{\infty}. 
\end{align*}
Furthermore, we have
\begin{align*}
	&\hspace{0.5 cm} \| S_{x_0}^2u(t) - S_{x_0}^2v(t) \|_X \\
	&= \left\| S_{x_0} \left( x_0 + \int_{s}^{\cdot} A(\eta_1) u(\eta_1) \, d\eta_1 \right)(t) - S_{x_0} \left( x_0 + \int_{s}^{\cdot} A(\eta_1) v(\eta_1) \, d\eta_1 \right) (t)  \right\|_X \\
	&= \left\| \int_{s}^{t} A(\eta_2) \left(  x_0 + \int_{s}^{\eta_2} A(\eta_1) u(\eta_1) \, d\eta_1 \right) \, d\eta_2 - \int_{s}^{t} A(\eta_2) \left( x_0 + \int_{s}^{\eta_2} A(\eta_1) v(\eta_1) \, d\eta_1 \right) \, d\eta_2 \right\|_X \\
	&= \left\| \int_{s}^{t} A(\eta_2) \left( \int_{s}^{\eta_2} A(\eta_1) \left[ u(\eta_1) - v(\eta_1) \right] d\eta_1  \right) \, d\eta_2 \right\|_X \\
	&\leq \alpha \int_{s}^{t} \int_{s}^{\eta_2} \| A(\eta_1) \left[ u(\eta_1) - v(\eta_1) \right] \|_X \,  d\eta_1 \, d\eta_2 \\
	&\leq \alpha^2 \|u - v\|_{\infty}  \int_{s}^{t} \int_{s}^{\eta_2} 1 \, d\eta_1 \, d\eta_2 \\
	&= \frac{\alpha^2 (t-s)^2}{2} \| u - v\|_{\infty}.
\end{align*}
By induction it follows that 
\begin{align*}
	\|S_{x_0}^n u(t) - S_{x_0}^n v(t) \|_X \leq \frac{\alpha^n (\tau - s)^n}{n!} \|u - v\|_{\infty} 
\end{align*}
for all $n \in \mathbb{N}$, $u,v \in \mathcal{C}([s,\tau], X)$ and $s \leq t \leq \tau$. Consequently, we have
\begin{align*}
	\|S_{x_0}^n u - S_{x_0}^n v \|_{\infty} \leq \frac{\alpha^n (\tau - s)^n}{n!} \|u - v\|_{\infty}.
\end{align*}
Note that for $N \in \mathbb{N}$ large enough we get $\frac{\alpha^N (\tau - s)^N}{N!} < 1$, and so $S_{x_0}^N$ is a contraction mapping. By the Banach contraction principle, $S_{x_0}^{N}$ has a unique fixed point $x = x(\cdot;x_0) \in \mathcal{C}([s,\tau], X)$. One can readily see that
\begin{align*}
	S_{x_0}^N(S_{x_0}(x)) = S_{x_0}(S_{x_0}^N(x))  =S_{x_0}(x).
\end{align*}
Thus, $S_{x_0}(x)$ is a fixed point of $S_{x_0}^N$. As $x$ is the unique fixed point of $S_{x_0}^N$, we infer that $S_{x_0}(x) = x$ and, hence, $x$ is a fixed point of $S_{x_0}$ as well. Finally, the uniqueness of the fixed point of $S_{x_0}^N$ together with the fact that every fixed point of $S_{x_0}$ is a fixed point of $S_{x_0}^N$ yield that $x$ is the unique fixed point of $S_{x_0}$ as well, and therefore satisfies
\begin{align}
	\label{Theorem 5.1.1 in Pazy - Solution}
	x(t) = x_0 + \int_{s}^{t} A(\eta) x(\eta) \, d\eta, \hspace{0.5 cm} t \in [s,\tau]. 
\end{align}  
Now, since $x$ is continuous, the right-hand side of \eqref{Theorem 5.1.1 in Pazy - Solution} is differentiable. Thus, $x$ is differentiable with derivative
\begin{align*}
	\frac{d}{dt}x(t) = \frac{d}{dt}  \left( x_0 + \int_{s}^{t} A(\eta) x(\eta) \, d\eta \right) = A(t)x(t), \hspace{0.5 cm} t \in [s,\tau].  
\end{align*}
Furthermore, we have $x(s) = x_0$, and so $x$ is a solution of the evolution problem \eqref{Evolution Problem}. Note that every solution of \eqref{Evolution Problem} is in particular a solution of \eqref{Theorem 5.1.1 in Pazy - Solution}. Due to the uniqueness of the fixed point $x$ of the mapping $S_{x_0}$, we conclude that the solution of \eqref{Evolution Problem} is unique. 
\end{proof}
Recall that in Theorem \ref{Theorem 5.1.1 in Pazy} we required the operators $A(t)$, $0 \leq t \leq \tau$, to be bounded linear operators on $X$. However, this result gives us some intuition regarding the behavior of the solutions of the evolution problem \eqref{Evolution Problem}. Using the representation \eqref{Theorem 5.1.1 in Pazy - Solution} of such a solution, we may define a solution operator $U(t,s) \colon X \to X$, $0\leq s \leq t \leq \tau$, of the initial value problem \eqref{Evolution Problem} as follows:
\begin{align}
	\label{Pazy - Two-Parameter Family of Operators}
	U(t,s) x_0 = x(t;x_0) = x_0 + \int_{s}^{t} A(\eta) x(\eta) \, d\eta, \hspace{0.5 cm} 0  \leq s \leq t \leq \tau, \, x_0 \in X.
\end{align}
The family $(U(t,s))_{0 \leq s \leq t \leq \tau}$ is a two-parameter family of operators on $X$.
%\begin{remark}
%	Assume that $A(t) = A \in \mathcal{L}(X)$ is independent of $t$. Then, due to uniqueness of the solutions of \eqref{Evolution Problem}, one can readily see that $U(t,s) = U(t-s)$, that is, the two-parameter family of operators defined in \eqref{Pazy - Two-Parameter Family of Operators} becomes a one-parameter family $(U(t))_{t \geq 0}$. This is exactly the $C_0$-semigroup generated by $A$. 
%\end{remark}
The main properties of the family of solution operators are given in the next theorem. 
\begin{theorem}[Theorem 5.1.2 in \cite{Pazy83}]
	\label{Theorem 5.1.2 in Pazy}
	For $0 \leq t \leq \tau$, let $A(t) \in \mathcal{L}(X)$, and let the mapping $t \mapsto A(t)$ be continuous on $[0,\tau]$. Then for every $0 \leq s \leq t \leq \tau$, the operator $U(t,s)$ defined in \eqref{Pazy - Two-Parameter Family of Operators} is an element of $\mathcal{L}(X)$ and it satisfies the following:
	\begin{enumerate}
		\item[(i)] $\|U(t,s)\|_{\mathcal{L}(X)} \leq \exp \left( \int_{s}^{t} \|A(\eta) \|_{\mathcal{L}(X)} \, d\eta \right)$.
		\item[(ii)] $U(t,t) = I_X$ and for $0 \leq s \leq r \leq t \leq \tau$ we have
		\begin{align*}
			U(t,s) = U(t,r) U(r,s).
		\end{align*}
	\item[(iii)] The mapping $(t,s) \mapsto U(t,s)$ is continuous for $0 \leq s \leq t \leq \tau$. 
	\item[(iv)] For $0 \leq s \leq t \leq \tau$ we have
	\begin{align*}
		\frac{\partial}{\partial t}U(t,s) = A(t) U(t,s). 
	\end{align*}
\item[(v)] For  $0 \leq s \leq t \leq \tau$ we have
\begin{align*}
	\frac{\partial}{\partial s} U(t,s) = - U(t,s) A(s). 
\end{align*}
	\end{enumerate}
\end{theorem}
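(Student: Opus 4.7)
The plan is to establish the five properties in order, exploiting the integral representation \eqref{Pazy - Two-Parameter Family of Operators} and the uniqueness statement of Theorem \ref{Theorem 5.1.1 in Pazy} as the main workhorses. Throughout, I write $x(\cdot;s,x_0) := U(\cdot,s)x_0$ for the unique classical solution of \eqref{Evolution Problem} on $[s,\tau]$.

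For \textbf{(i)}, I would take norms in \eqref{Pazy - Two-Parameter Family of Operators} to get $\|x(t;s,x_0)\|_X \leq \|x_0\|_X + \int_s^t \|A(\eta)\|_{\mathcal{L}(X)} \|x(\eta;s,x_0)\|_X\, d\eta$, and apply Grönwall's inequality to obtain the bound; taking the supremum over unit-norm $x_0$ yields the operator norm estimate, which in particular shows $U(t,s)\in \mathcal{L}(X)$ (linearity in $x_0$ is immediate from the uniqueness of the fixed point in the proof of Theorem \ref{Theorem 5.1.1 in Pazy}). For \textbf{(ii)}, the relation $U(t,t)=I_X$ is read off directly from \eqref{Pazy - Two-Parameter Family of Operators}; for the cocycle identity, I fix $0 \leq s \leq r \leq t \leq \tau$ and $x_0 \in X$, set $y(t) := U(t,s)x_0$ and $z(t) := U(t,r)U(r,s)x_0$, and observe that both $y_{|[r,\tau]}$ and $z$ are classical solutions of \eqref{Evolution Problem} on $[r,\tau]$ with initial value $U(r,s)x_0$ at time $r$; uniqueness from Theorem \ref{Theorem 5.1.1 in Pazy} then forces $y\equiv z$ on $[r,\tau]$.

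For \textbf{(iii)}, I would split $U(t,s)-U(t',s')$ through the intermediate point $U(t',s)$ and bound each piece via \eqref{Pazy - Two-Parameter Family of Operators}: $\|U(t,s)x_0 - U(t',s)x_0\|_X \leq \int_{t'}^t \|A(\eta)\|_{\mathcal{L}(X)}\|U(\eta,s)x_0\|_X\,d\eta$ goes to zero uniformly in $s$ by (i) and continuity of $A(\cdot)$, while $U(t',s)-U(t',s')$ can be handled by writing $U(t',s') = U(t',s)U(s,s')$ (when $s \geq s'$) via (ii) and noting $U(s,s') \to I_X$ as $s' \nearrow s$ again by (i) applied to $U(s,s')-I_X = \int_{s'}^s A(\eta)U(\eta,s')\,d\eta$. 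Property \textbf{(iv)} is immediate: differentiating \eqref{Pazy - Two-Parameter Family of Operators} with respect to $t$ and using the continuity of $\eta \mapsto A(\eta)U(\eta,s)$ (which follows from (iii) and continuity of $A(\cdot)$) yields $\partial_t U(t,s) = A(t)U(t,s)$.

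The main obstacle, as I see it, is \textbf{(v)}, since the $s$-variable does not appear directly under the integral sign in \eqref{Pazy - Two-Parameter Family of Operators}. My plan is to use the cocycle identity from (ii) in the form $U(t,s) = U(t,s+h)U(s+h,s)$ for $h > 0$ small, which gives
\begin{equation*}
  \frac{U(t,s+h)-U(t,s)}{h} \;=\; U(t,s+h)\,\frac{I_X - U(s+h,s)}{h}.
\end{equation*}
As $h \searrow 0$, the factor $U(t,s+h)$ converges to $U(t,s)$ in $\mathcal{L}(X)$ by (iii), while $\frac{U(s+h,s)-I_X}{h} = \frac{1}{h}\int_s^{s+h} A(\eta)U(\eta,s)\,d\eta$ converges to $A(s)U(s,s)=A(s)$ by continuity of the integrand at $\eta=s$; the product therefore converges to $-U(t,s)A(s)$, giving the right-sided $s$-derivative. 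The left-sided derivative is obtained symmetrically from $U(t,s-h)=U(t,s)U(s,s-h)$, and joint continuity in (iii) together with the continuity of $A(\cdot)$ ensures that $\partial_s U(t,s)=-U(t,s)A(s)$ holds as a genuine (two-sided) derivative. The delicate point here is justifying convergence of the operator-valued difference quotient in the $\mathcal{L}(X)$ norm rather than only in the strong sense, which is why the uniform bound in (i) combined with the continuity established in (iii) is essential.
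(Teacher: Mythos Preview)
The paper does not actually prove this theorem; it merely states it with the citation to Pazy and then moves on to motivate the notion of an evolution system. So there is no ``paper's own proof'' to compare against.

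That said, your proposal is correct and is essentially the standard argument one finds in Pazy. The Gr\"onwall estimate for (i), the uniqueness argument for the cocycle identity in (ii), the triangle-inequality splitting for (iii), direct differentiation of the integral equation for (iv), and the cocycle-based difference quotient for (v) are all sound. Your remark that (iii) delivers \emph{norm} continuity (not merely strong continuity) is correct here because $A(\cdot)$ is norm-continuous and the bounds in (i) are uniform in $x_0$; this is exactly what makes the product argument in (v) converge in $\mathcal{L}(X)$. One small point: in (iii) you only spell out the case $s \geq s'$; the case $s < s'$ is handled symmetrically via $U(t',s) = U(t',s')U(s',s)$, and you should say so explicitly.
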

These properties resemble the ones of (uniformly continuous) one-parameter semigroups studied in Section \ref{Section Strongly Continuous Semigroups}. Indeed, in the time-variant case the two-parameter family of operators $(U(t,s))_{0 \leq s \leq t\leq \tau}$ replaces the one-paramter semigroup $(U(t))_{t \geq 0}$ of the autonomous case, which is the $C_0$-semigroup generated by the (autonomous) operator $A$. This motivates to introduce the notion of evolution systems, which play a substantial role in the framework of evolution problems.
\begin{definition}[Evolution System]
	\label{Definition Evolution System}
	A two-parameter family of operators $U(t,s) \in  \mathcal{L}(X)$, $0 \leq s \leq t \leq \tau$, is called an evolution system if the following holds:
	\begin{enumerate}[label = (\roman*)]
		\item \label{EvolutionSystemProp1} $U(s,s) = I_X$ and $U(t,s) = U(t,r) U(r,s)$ for $0 \leq s \leq r \leq t \leq \tau$. 
		\item \label{EvolutionSystemProp2} The mapping $(t,s) \mapsto U(t,s)$ is strongly continuous for $0 \leq s \leq t \leq \tau$. 
		\end{enumerate}
\end{definition}
Just as for one-paramter semigroups, the strong continuity of $(U(t,s))_{0 \leq s \leq t \leq \tau}$ is equivalent to the mapping $(t,s) \mapsto U(t,s)x$ being continuous for all $x \in X$. 
\vspace{0.5 cm}\\
In the following, we want to present one important criterion on a given family of (usually unbouded) operators $(A(t))_{t \in [0,\tau]}$ on $X$ that guarantees the existence of a unique classical solution of the evolution problem \eqref{Evolution Problem} for a dense subset of initial values in $X$. The existence of a unique solution allows us to specify an evolution system associated with the family $(A(t))_{t \in [0,\tau]}$. Note that the uniqueness of the solution yields property \ref{EvolutionSystemProp1}, whereas the continuity of the solution at the initial data yields property \ref{EvolutionSystemProp2} of the corresponding evolution system. The aforementioned criterion is called stability, and is defined as follows.
\begin{definition}[Stable Family of Generators]
	\label{Definition Stable Family of Generators}
	A family $(A(t))_{t \in [0, \tau]}$ of infinitesimal generators of $C_0$-semigroups on $X$ is called stable if there exist constants $M \geq 1$ and $\omega \in \mathbb{R}$, called stability constants, such that the following holds:
	\begin{enumerate}
		\item[(i)] For $0 \leq t \leq \tau$ we have $(\omega, \infty) \subset \rho(A(t))$.
		\item[(ii)] For every finite sequence $0 \leq t_1 \leq t_2 \leq \ldots \leq t_k \leq \tau$, $k \in \mathbb{N}$, it holds that
		\begin{align}
			 \label{Definition Stable Family of Generators - Condition 2}
			\left\| \prod_{j = 1}^{k} R(\lambda, A(t_j)) \right\|_{\mathcal{L}(X)} \leq \frac{M}{(\lambda - \omega)^k}, \hspace{0.5 cm} \lambda > \omega.
			\end{align}
	\end{enumerate}
\end{definition}

\begin{remark}
	\label{Remark Stable Familiy of Generators}
	We give some short remarks concerning Definition \ref{Definition Stable Family of Generators}.
	\begin{enumerate}[label = (\roman*)]
		\item In general, the operators $R(\lambda, A(t_j))$ do not commute. Hence, the order of the terms given in \eqref{Definition Stable Family of Generators - Condition 2} is important.
		\item The stability of $(A(t))_{t \in [0,\tau]}$ is preserved when the norm $\|\cdot\|_X$ is replaced by an equivalent one. However, the stability constants depend on the particular norm $X$ is endowed with. 
		\item If for $ 0 \leq t \leq \tau$, $A(t)$ generates a $C_0$-semigroup of type $C_0(1, \omega)$ (see Theorem \ref{Theorem C0-Semigroup Estimate}), then one can readily see that the family $(A(t))_{t \in [0,\tau]}$ is stable with stability constants $M=1$ and $\omega \in \mathbb{R}$. In particular, every family $(A(t))_{t \in [0,\tau]}$ of generators of contraction semigroups is stable.
	\end{enumerate}
\end{remark}
In general, it is not easy to determine whether a given family $(A(t))_{t \in [0, \tau]}$ of infinitesimal generators is stable. In the next theorem we give a characterization regarding the stability of such a family. For the proof, one exploits Theorem \ref{Theorem Laplace Transform}. 
\begin{theorem}[Theorem 5.2.2 in \cite{Pazy83}]	
	\label{Theorem 5.2.2 in Pazy}
	For $0 \leq t \leq \tau$, let $A(t)$ be the generator of the $C_0$-semigroup $(S_t(s))_{s \geq 0}$ on $X$. Then the family $(A(t))_{t \in [0, \tau]}$ is stable if and only if there exist constants $M\geq 1$ and $\omega \in \mathbb{R}$ such that $(\omega, \infty) \subset \rho(A(t))$ for all $t \in [0, \tau]$, and either one of the following conditions is satisfied:
	\begin{enumerate}
		\item[(i)] For every finite sequence $0 \leq t_1 \leq t_2 \leq \ldots \leq t_k \leq \tau$, $k \in \mathbb{N}$, and for $s_j \geq 0$, $j = 1,\ldots,k$, 
		\begin{align*}
			\left\| \prod_{j = 1}^{k} S_{t_j}(s_j) \right\|_{\mathcal{L}(X)} \leq M \exp \left( \omega \sum_{j=1}^{k} s_j \right). 
		\end{align*} 
	\item[(ii)] For every finite sequence $0 \leq t_1 \leq t_2 \leq \ldots \leq t_k \leq \tau$, $k \in \mathbb{N}$, and for $\lambda_j > \omega$, $j = 1,\ldots,k$,
	\begin{align*}
		\left\| \prod_{j = 1}^{k} R(\lambda_j, A(t_j)) \right\|_{\mathcal{L}(X)} \leq M \prod_{j=1}^{k}  \frac{1}{\lambda_j - \omega}.
	\end{align*}
	\end{enumerate}
\end{theorem}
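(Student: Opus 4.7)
The plan is to prove the circle of implications Stability $\Rightarrow$ (i) $\Rightarrow$ (ii) $\Rightarrow$ Stability, which yields both equivalences at once.

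Two of the three links are routine. For (ii) $\Rightarrow$ Stability, one sets $\lambda_1 = \cdots = \lambda_k = \lambda > \omega$ in condition (ii); the right-hand side collapses to $M/(\lambda-\omega)^k$, which is exactly the inequality required in Definition \ref{Definition Stable Family of Generators}. For (i) $\Rightarrow$ (ii), I would invoke Theorem \ref{Theorem Laplace Transform} to write each factor as a Laplace transform,
\[
R(\lambda_j, A(t_j)) = \int_0^\infty e^{-\lambda_j s_j}\, S_{t_j}(s_j)\, ds_j,
\]
express the whole product as an iterated Bochner integral, estimate under the integral using the bound from (i), and compute
\[
\left\|\prod_{j=1}^k R(\lambda_j, A(t_j))\right\|_{\mathcal{L}(X)} \leq M \int_0^\infty \!\!\cdots\!\int_0^\infty e^{-\sum_j (\lambda_j - \omega) s_j}\, ds_1 \cdots ds_k = M \prod_{j=1}^k \frac{1}{\lambda_j - \omega}.
\]

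The substantive work lies in Stability $\Rightarrow$ (i), which must reconstruct a semigroup bound from a resolvent bound in a multi-generator setting. My approach is the Yosida approximation: for $n > \omega$ set
\[
A_n(t) := n A(t) R(n, A(t)) = n^2 R(n, A(t)) - n I_X,
\]
so that $A_n(t) \in \mathcal{L}(X)$ generates the uniformly continuous semigroup
\[
e^{s A_n(t)} = e^{-ns} \sum_{\ell = 0}^\infty \frac{(n^2 s)^\ell}{\ell!} R(n, A(t))^\ell.
\]
Expanding the product $\prod_{j=1}^k e^{s_j A_n(t_j)}$ into a multi-index series of products of resolvents at the \emph{common} argument $\lambda = n$, applying the stability bound of Definition \ref{Definition Stable Family of Generators} to each such product (grouping repeated $t_j$'s, which preserves the non-decreasing order of indices), and summing the resulting exponential series gives
\[
\left\|\prod_{j=1}^k e^{s_j A_n(t_j)}\right\|_{\mathcal{L}(X)} \leq M \exp\!\left(\frac{n \omega}{n - \omega} \sum_{j=1}^k s_j \right).
\]
Since $\tfrac{n\omega}{n-\omega} \to \omega$ and, by the classical Yosida approximation theorem, $e^{s A_n(t)} \to S_t(s)$ strongly as $n \to \infty$ uniformly on compact $s$-intervals, passing to the limit produces condition (i).

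The principal obstacle is the final passage to the limit: the factors $e^{s_j A_n(t_j)}$ neither commute nor act at a common parameter, so strong convergence of each factor does not immediately yield strong convergence of the product. I would handle this by a telescoping argument, using the uniform-in-$n$ operator bound just derived to control each factor while replacing the approximations one at a time, obtaining strong convergence of the finite product to $\prod_{j=1}^k S_{t_j}(s_j)$. Lower semicontinuity of the operator norm under strong convergence then transfers the bound to the limit, finishing Stability $\Rightarrow$ (i) and closing the circle.
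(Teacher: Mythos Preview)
Your proof is correct and follows the standard route taken in Pazy's book. The paper itself does not actually prove this theorem: it merely states it with the one-line hint ``For the proof, one exploits Theorem \ref{Theorem Laplace Transform}'' and refers the reader to \cite{Pazy83}. Your cycle Stability $\Rightarrow$ (i) $\Rightarrow$ (ii) $\Rightarrow$ Stability, with the Yosida approximation for the first implication and the Laplace representation for the second, is precisely Pazy's argument, and your handling of the non-commuting limit via telescoping with the uniform-in-$n$ bound is the right way to close it.
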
 
We want to emphasize once again that the stability of a family $(A(t))_{t \in [0,\tau]}$ is not the only condition that has to be satisfied in order to guarantee the existence of a unique solution of the evolution problem \eqref{Evolution Problem} associated with $(A(t))_{t \in [0,\tau]}$. For instance, there has to exist a densely and continuously embedded subspace $Y$ of $X$ such that for all $t \in [0,\tau]$, the space $Y$ is an invariant subspace of the semigroup $(S_t(s))_{s \geq 0}$ generated by $A(t)$, and the family $(\tilde{A}(t))_{t \in [0, \tau]}$ given by $\tilde{A}(t) = A(t)_{|D(A(t)) \cap Y}$, $t \in [0, \tau]$, has to be stable on $Y$. Together with another condition related to $Y$, one may show that there exists a unique $Y$-valued solution of \eqref{Evolution Problem}. This holds equally for inhomogeneous evolution problems of the form
\begin{align*}
		\dot{x}(t) &= A(t) x(t) + f(t), \hspace{0.5 cm} 0 \leq s < t \leq \tau, \\
	x(s) &= x_0 \in X,
\end{align*} 
provided that $f \in \mathcal{C}([s,\tau], X)$. However, we will not provide any further details concerning this topic. Instead, we refer to \cite[Chapter 5]{Pazy83} for more information. 
\vspace{0.5 cm}\\
In this chapter, we have defined formally adjoint operators of differential operators defined on Sobolev spaces, and collected some facts about operator semigroups and their infinitesimal generators. Furthermore, we have examined the abstract Cauchy problem associated with operators generating a $C_0$-semigroup. After a quick discussion on abstract control systems and the concept of admissible control operators, we have studied the extension of abstract Cauchy problems to evolution problems, where the associated operator is replaced by a time-dependent family of infinitesimal generators, and presented one important criterion for well-posedness of such systems. We have now set the foundation for a comprehensive analysis of the systems considered in the following chapters, namely infinite-dimensional port-Hamiltonian systems in Chapter \ref{Chapter Infinite-dimensional Port-Hamiltonian Systems}, and said systems of order 1 which are coupled by a moving interface in Chapter \ref{Chapter Boundary Port-Hamiltonian Systems with a Moving Interface}.

\chapter{Infinite-Dimensional Port-Hamiltonian Systems}
\label{Chapter Infinite-dimensional Port-Hamiltonian Systems}

In this chapter, we are going to investigate infinite-dimensional port-Hamiltonian systems. Throughout the chapter, we will restrict ourselves to systems with a 1-dimensional spatial domain $[a,b] \subset \mathbb{R}$. We start again with some motivational examples in Section \ref{Section Motivating Examples}, specifically the model of a lossless transmission line in Subsection \ref{Subsection The Lossless Transmission Line} and the model of a vibrating string in Subsection \ref{Subsection The Vibrating String}. We will see that the evaluation of the co-energy variables at the boundary of the spatial domain play a fundamental role. Hence, in contrast to the finite-dimensional case, we need to introduce further port variables in order to derive a port-Hamiltonian formulation of infinite-dimensional systems, which will be called boundary port-Hamiltonian systems. The examples can be found in \cite[Chapter~4]{ModCompPhys}, \cite{LeGorrec}, \cite[Chapter~14]{SchaftJeltsema}, or in the introductory chapter of \cite{Villegas}. Subsequently, in Section \ref{Section Boundary Port-Hamiltonian Systems Associated with Skew-symmetric Operators} we will generalize the obtained systems considered in Section \ref{Section Motivating Examples} and study boundary port-Hamiltonian systems which are associated with formally skew-symmetric differential operators. To that end, we will define suitable matrices and operators in order to give a proper representation of the boundary port variables constituting the infinite-dimensional Dirac structure. Following this, we are able to define the bond space and a suitable power pairing to finally define the underlying Dirac structure associated with such systems, and to specify the dynamics with respect to these so-called Stokes-Dirac structures. This section heavily relies on \cite{LeGorrec} and the Sections 2.1 and 2.2 in \cite{Villegas}. At the end of this chapter, we will shortly discuss the analysis of boundary port-Hamiltonian systems.
\vspace{0.5 cm}\\
We want to stress that we will not introduce an abstract mathematical object or a particular solution concept for the systems considered in this chapter. As a consequence, the notion of a system should be understood as a formal concept rather than a general definition.
\vspace{0.5 cm}\\
Unless stated otherwise, we consider the Hilbert space $X = L^2([a,b], \mathbb{R}^n)$ equipped with the inner product
\begin{align}
	\label{L^2 inner product}
	\langle f, g \rangle_{L^2} = \int_{a}^{b} g^{\top}(z) f(z) \, dz, \hspace{0.5 cm} f,g \in X.
\end{align}

\section{Motivating Examples}
\label{Section Motivating Examples}
We want to look for a common structure in the models describing the dynamics of various systems. We seek to rewrite the system models as an abstract Cauchy problem of the form
\begin{align*}
	\dot{x}(t) &= Ax(t), \hspace{0.5 cm} t > 0, \\	
	x(0) &= x_0 \in X, 
\end{align*}
where $x \colon [0, \infty) \to X$ is the state variable, and $A \colon D(A) \subset X \to X$ is a linear operator, as we have discussed in Section \ref{Section The Abstract Cauchy Problem} in more detail. For the analysis of such systems we use the framework of operator semigroups. In this section, we want to present two examples from which we will infer a general structure for a class of partial differential equations.
\vspace{0.5 cm}\\
Beforehand, we want to clarify the notion of the variational derivative for a special class of functionals based on \cite[Section 4.1]{Olver}, and give a quick example.
\begin{definition}[Variational Derivative]
Let $H \colon X \to \mathbb{R}$ be a functional defined by
\begin{align}
	\label{General Functional a Variational Derivative is Applied to}
	H(x) = \int_{a}^{b} \mathcal{H}(x(z)) \, dz, \hspace{0.5 cm} x \in X,
\end{align} 
with $\mathcal{H} \in \mathcal{C}^\infty({\mathbb{R}^n, \mathbb{R}})$. The variational derivative $\frac{\delta H}{\delta x} \colon X \to X$ of $H$ given by
\begin{align*}
	\frac{\delta H}{\delta x}(x) = \left( \frac{\delta H}{\delta x_1}(x), \ldots, \frac{\delta H}{\delta x_n}(x) \right), \hspace{0.5 cm} x \in X,
\end{align*}
is an operator uniquely determined by the requirement that it satisfies
\begin{align*}
	H(x + \epsilon \eta) = H(x) + \epsilon \int_{a}^{b} \left( \frac{\delta H}{\delta x}(x) \right)^{\top}(z) \eta (z) \, dz + \mathcal{O}(\epsilon^2)
\end{align*}
for any $x \in X,$ and for any variation $\epsilon \eta$, where $\epsilon \in \mathbb{R}$, $\eta \in X$, and $\mathcal{O}(\epsilon^2)$ is the big O notation with respect to $\epsilon^2$. 
\end{definition}

\begin{example}[cf. Section 1.6 in \cite{Villegas}]
	\label{Example Variational Derivative}
Consider the functional
\begin{align}
	H(x) = \frac{1}{2} \int_{a}^{b} x^{\top}(z) \left( \mathcal{Q} x \right)(z) \, dz, \hspace{0.5 cm} x \in X,
	\label{Subsection Lossless Transmission Line - Standard Hamiltonian Form}
\end{align}
where $\mathcal{Q} \in \mathcal{L}(X)$ is a coercive operator on $X$. Then we have for all $x,\eta \in X$, $\epsilon \in \mathbb{R}$, 
\begin{align*}
	H(x + \epsilon \eta) &= \frac{1}{2} \int_{a}^{b} (x(z) + \epsilon \eta(z))^{\top}  \left( \mathcal{Q} (x + \epsilon \eta \right)(z) \, dz \\
	&= \frac{1}{2} \int_{a}^{b} x^{\top}(z) \left( \mathcal{Q} x \right) (z) + \epsilon \left( x^{\top}(z) \left( \mathcal{Q} \eta \right)(z) + \eta^{\top}(z) \left( \mathcal{Q}x \right)(z) \right) + \epsilon^2 \eta^{\top}(z) \left( \mathcal{Q} \eta \right)(z) \, dz \\
	&= H(x) + \epsilon \int_{a}^{b} \left(\mathcal{Q} x \right)^{\top}(z) \eta(z) \, dz + \mathcal{O}(\epsilon^2).
\end{align*}
From this we deduce that $\frac{\delta H}{\delta x}(x)= \mathcal{Q}x$.
\QEDA
\end{example}
For functionals of the form \eqref{General Functional a Variational Derivative is Applied to} one can show that the variational derivative is induced by the gradient of the integrand $\mathcal{H}$ in the following way:
\begin{align*}
	\frac{\delta}{\delta x}H(x)(z) = \nabla \mathcal{H}(x(z)), \hspace{0.5 cm} x \in X, \, z \in [a,b]. 
\end{align*}

\subsection{The Lossless Transmission Line}
\label{Subsection The Lossless Transmission Line}
	\begin{figure}[h!]
	\centering
	\includegraphics[width = 8cm]{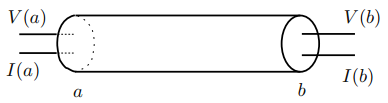}
	\caption{Transmission Line \cite[Figure 7.2]{JacobZwart}}
	\label{Figure Transmission Line}
\end{figure}
Consider an ideal transmission line on the spatial interval $[a,b] \subset \mathbb{R}$, as depicted in Figure~\ref{Figure Transmission Line}. Telegrapher's equations constitute the following system:
\begin{align}
	\begin{split}
	\frac{\partial Q}{\partial t}(z,t) &= - \frac{\partial I}{\partial z} (z,t) = - \frac{\partial }{\partial z} \frac{\varphi(z,t)}{L(z)}, \\
	\frac{\partial \varphi}{\partial t}(z,t) &= -  \frac{\partial V}{\partial z}(z,t) = - \frac{\partial}{\partial z} \frac{Q(z,t)}{C(z)}.
	\end{split}
\label{Subsection Lossless Transmission Line - PDE}
\end{align}
Here, $Q(z,t) \in \mathbb{R}$ and $\varphi(z,t) \in \mathbb{R}$ denote the charge and the magnetic flux-linkage at position $z \in [a,b]$ and at time $t \geq 0$, respectively, and $C(z) >0$ and $L(z) >0$ are the distributed capacity and the distributed inductance of the transmission line at $z \in [a,b]$, repsectively. Furthermore, the current and the voltage are given by $I = \frac{\varphi}{L}$ and $V = \frac{Q}{C}$, respectively. The electro-magnetic energy density $\mathcal{H} \in \mathcal{C}^{\infty}(\mathbb{R}^2, \mathbb{R})$ is given by
\begin{align*}
	\mathcal{H}(Q,\varphi) = \frac{1}{2} \left(\frac{Q^2}{C} + \frac{\varphi^2}{L} \right), \hspace{0.5 cm} (Q, \varphi) \in \mathbb{R}^2.
	%\label{Subsection Lossless Transmission Line - Density H}
\end{align*}
 The system's Hamiltonian $H \colon X \to \mathbb{R}$, describing the internally stored energy, is the integral of the electro-magnetic energy density $\mathcal{H}$: letting $x = (Q, \varphi) \in  X$ we have
\begin{align}
	H(Q,\varphi) = \frac{1}{2} \int_{a}^{b} \frac{Q(z)^2}{C(z)} + \frac{\varphi(z)^2}{L(z)} \,  dz  = \frac{1}{2} \int_{a}^{b} \begin{bmatrix}
		Q(z) & \varphi(z)
	\end{bmatrix} \begin{bmatrix}
			\frac{1}{C(z)} & 0 \\
			0 & \frac{1}{L(z)} 
	\end{bmatrix} \begin{bmatrix}
		Q(z) \\
		\varphi (z) 
	\end{bmatrix} \, dz.
\label{Subsection Lossless Transmission Line - Hamiltonian H}
\end{align}
Assuming that both the capacity $C$ and the inductance $L$ are bounded on $[a,b]$, we may define the coercive multiplication operator $\mathcal{Q} \in \mathcal{L}(X)$ as follows:
\begin{align*}
	(\mathcal{Q}x)(z) = \mathcal{Q}(z)x(z) =  \begin{bmatrix}
		\frac{1}{C(z)} & 0 \\
		0 & \frac{1}{L(z)} 
	\end{bmatrix} \begin{bmatrix}
	Q(z) \\
	\varphi(z)
\end{bmatrix}, \hspace{0.5 cm} x \in X, \, z \in [a,b]. 
\end{align*} 
Thus, the Hamiltonian $H$ from \eqref{Subsection Lossless Transmission Line - Hamiltonian H} can be written as $H(x) = \frac{1}{2} \langle x , \mathcal{Q} x \rangle_{L^2}$ for $x \in X$.
\vspace{0.5 cm}\\
As we have seeen in the finite-dimensional case (cf. Section \ref{Section Energy-storing Elements}), the co-energy variables correspond to the gradient of the system's Hamiltonian function. In the infinite-dimensional case, this generalizes to the variational derivative of the Hamiltonian functional.
As shown in Example \ref{Example Variational Derivative}, if a functional is of the form \eqref{Subsection Lossless Transmission Line - Standard Hamiltonian Form}, then the variational derivative is given by
\begin{align*}
	\frac{\delta H}{\delta x} (x) = \mathcal{Q}x = \begin{bmatrix}
		\frac{Q}{C} \\
		\frac{\varphi}{L}
	\end{bmatrix}, \hspace{0.5 cm} x =(Q, \varphi) \in X.
\end{align*}
Altogether, we may equivalently rewrite the system (\ref{Subsection Lossless Transmission Line - PDE}) as
\begin{align}
	\frac{\partial }{\partial t} \begin{bmatrix}
		Q \\
		\varphi 
	\end{bmatrix} = \begin{bmatrix}
	0 & - \frac{\partial }{\partial z} \\
	- \frac{\partial }{\partial z} & 0 
\end{bmatrix} \begin{bmatrix}
\frac{\delta H}{\delta Q}(Q, \varphi) \\
\frac{\delta H}{\delta \varphi}(Q, \varphi)
\end{bmatrix}.
\label{Subsection Lossless Transmission Line - pH System}
\end{align}
This system is called an \emph{infinite-dimensional Hamiltonian system} defined with respect to the matrix differential operator  $\mathcal{J} \colon D(\mathcal{J}) \subset X \to X$ given by
\begin{align*}
	D(\mathcal{J})  &= H^1([a,b], \mathbb{R}^2), \\
	\mathcal{J}x &= \begin{bmatrix}
		0 & - \frac{d}{dz} \\
		- \frac{d}{dz} & 0 
	\end{bmatrix}x,  \hspace{0.5 cm} x \in D(\mathcal{J}),
\end{align*}
and with respect to the Hamiltonian $H$ defined in \eqref{Subsection Lossless Transmission Line - Hamiltonian H}. Necessarily, the matrix differential operator $\mathcal{J}$ has to satisfy two properties, namely formal skew-symmetry and the Jacobi identity, as explained in \cite[Chapter 7]{Olver}. We will only verify the formal skew-symmetry in the sense of Definition \ref{Definition Formal Operator Properties}. To this end, let $e =(e_1,e_2)$, $\tilde{e} = (\tilde{e}_1,\tilde{e}_2) \in H_0^1([a,b], \mathbb{R}^2)$ be some vectors vanishing at the boundary. Then integration by parts yields
\begin{align}
	\begin{split}
	&\hspace{0.5 cm}  \langle \mathcal{J} e, \tilde{e}\rangle_{L^2} + \langle e, \mathcal{J} \tilde{e} \rangle_{L^2} \\
	&= - \int_{a}^{b}  \tilde{e}_1(z)  \frac{d e_2}{dz}(z) + \tilde{e}_2(z) \frac{d e_1}{dz}(z) + e_1(z) \frac{d \tilde{e}_2}{dz}(z) + e_2(z) \frac{d \tilde{e}_1}{dz} (z)  \, dz \\
	&= - \big[ e_1(z)\tilde{e}_2(z) + e_2(z) \tilde{e}_1(z) \big]_{a}^{b} \\
	&= 0.
	\end{split}
	\label{Subsection Lossless Transmission Line - Skew-Symmetry Test}
\end{align}
Since $\mathcal{J}$ is not modulated by the state variable $x$, Corollary 7.5 in \cite{Olver} yields that $\mathcal{J}$ is a Hamiltonian operator (in the sense of \cite[Definition 7.1]{Olver}).
\vspace{0.5 cm}\\
Analogously to the finite-dimensional case, the Hamiltonian structure provides energy conservation: assume that $x \colon [0, \infty) \to X$ is a classical solution of \eqref{Subsection Lossless Transmission Line - pH System} which is vanishing at the boundary. Then by symmetry of the operator $\mathcal{Q}$ and by formal skew-symmetry of $\mathcal{J}$ we obtain 
\begin{align}
	\begin{split}
	\frac{d}{dt} H(x(\cdot,t)) &= \frac{1}{2} \frac{d}{dt} \langle x(\cdot,t), \mathcal{Q} x(\cdot,t) \rangle_{L^2} \\
	  &= \frac{1}{2}  \left\langle \frac{\partial x}{\partial t}(\cdot,t) ,  \mathcal{Q}x (\cdot,t) \right\rangle_{L^2}  + \frac{1}{2} \left\langle \mathcal{Q}  x (\cdot,t), \frac{\partial x}{\partial t}(\cdot,t)  \right\rangle_{L^2} \\
	&=  \frac{1}{2} \langle \mathcal{J} \mathcal{Q} x (\cdot,t), \mathcal{Q} x (\cdot,t) \rangle_{L^2}  + \frac{1}{2} \langle \mathcal{Q} x (\cdot,t), \mathcal{J} \mathcal{Q} x(\cdot,t) \rangle_{L^2}  \\
	&= 0.
\end{split}
\label{Subsection Lossless Transmission Line - Energy Conservation}
\end{align}
We see that the Hamiltonian formulation imposes that there must not occur any energy exchange at the boundaries of the transmission line with the rest of the circuit, and therefore the total energy is conserved. However, this assumption is quite restrictive, and for the purpose of control and for the interconnection of two or more Hamiltonian systems it is essential to take the interaction with
its environment into account. This motivates to introduce a new type of port variables, called \emph{boundary port variables}, and thus to extend the (port-)Hamiltonian framework to boundary port-Hamiltonian systems in the infinite-dimensional case.
\vspace{0.5 cm}\\
Now, if the state variables do not vanish at the boundary of the spatial domain, then from (\ref{Subsection Lossless Transmission Line - Skew-Symmetry Test}) we find that some boundary terms appear. Indeed, the energy is not conserved in general, but satisfies the following energy-balance equation, which again can be derived from (\ref{Subsection Lossless Transmission Line - Skew-Symmetry Test}) and (\ref{Subsection Lossless Transmission Line - Energy Conservation}):
\begin{align}
	\begin{split}
	\frac{d}{dt} H(x(\cdot,t)) &= \frac{1}{2} \langle \mathcal{J} \mathcal{Q} x (\cdot,t), \mathcal{Q} x (\cdot,t) \rangle_{L^2}  + \frac{1}{2} \langle \mathcal{Q} x (\cdot,t), \mathcal{J} \mathcal{Q} x(\cdot,t) \rangle_{L^2}  \\
	 &=- \frac{1}{2}  \left[ 2 \frac{Q(z,t)}{C(z)} \frac{\varphi (z,t)}{L(z)} \right]_{a}^{b} \\
	&=  - \big[ V(z,t) I (z,t) \big]_{a}^{b}.
	\end{split}
\label{Subsection Lossless Transmission Line - Energy Balance Equation}
\end{align}
Note that the product of voltage and current equals power. Hence, equation (\ref{Subsection Lossless Transmission Line - Energy Balance Equation}) states that the change of internally stored energy solely occurs via the spatial boundary. This suggests to introduce the evaluation not of the state variables, but of the co-energy variables $ V = \frac{Q}{C}$ and $I = \frac{\varphi}{L}$ at the spatial boundary as new port variables in order to involve the power flow through the boundary of the system (\ref{Subsection Lossless Transmission Line - pH System}). We will postpone the definition of the boundary port variables to Section \ref{Section Boundary Port-Hamiltonian Systems Associated with Skew-symmetric Operators}.
\begin{remark}[Energy Space] 
	\label{Remark Energy Space}
	As $H(x) = \frac{1}{2} \langle x, \mathcal{Q}x \rangle_{L^2}$, $x \in X$, and as $\mathcal{Q} \in \mathcal{L}(X)$ is a bounded and coercive operator, one usually endows the state space $X$ with the inner product $\langle \cdot, \cdot \rangle_{\mathcal{Q}}$ defined by
	\begin{align}
		\langle x, y \rangle_{\mathcal{Q}} := \frac{1}{2} \langle x, \mathcal{Q} y \rangle_{L^2}, \hspace{0.5 cm} x, y \in X.
		\label{Subsection The Lossless Transmission Line - Inner Product WRT Q}
	\end{align}
	Note that for all $z \in [a,b]$ we have $mI \leq \mathcal{Q}(z) \leq MI$ for some $0 <m \leq M$. Thus, the standard $L^2$-norm is equivalent to the norm defined in \eqref{Subsection The Lossless Transmission Line - Inner Product WRT Q}, as we have for all $f \in L^2([a,b], \mathbb{R})$,
	\begin{align*}
		\frac{m}{2} \|f\|_{L^2}^2 \leq \|f\|_{\mathcal{Q}}^2 \leq \frac{M}{2} \|f\|_{L^2}^2.
	\end{align*}
	The main reason for doing so is that in this way, the squared norm of the state $x \in X$ equals the Hamiltonian, i.e., the internally stored energy, of the system:
	\begin{align}
		\|x(\cdot,t) \|_{\mathcal{Q}}^2 = \frac{1}{2}\langle x (\cdot,t), \mathcal{Q}x(\cdot,t) \rangle_{L^2} = H(x(\cdot,t)), \hspace{0.5 cm} t \geq 0.
		\label{Subsection The Lossless Transmission Line - Representation Hamiltonian WRT Q norm}
	\end{align}
	For this choice of the inner product, $X$ is known as the energy space. We have already mentioned the usefulness of the relation \eqref{Subsection The Lossless Transmission Line - Representation Hamiltonian WRT Q norm} in Example \ref{Example Contraction Semigroup}. 
\end{remark}

  %The boundary port variables are defined as follows:
%\begin{align*}
%	\begin{bmatrix}
%		f_{\partial} \\
%		e_{\partial}
%	\end{bmatrix} = \begin{bmatrix}
%	\frac{\delta H}{\delta \varphi} \\
%	\frac{\delta H}{\delta Q}
%\end{bmatrix}\Bigg|_{a,b} = \begin{bmatrix}
%0 & I \\
%I & 0
%\end{bmatrix}  \begin{bmatrix}
%\frac{\delta H}{\delta Q} \\
%\frac{\delta H}{\delta \varphi}
%\end{bmatrix}\Bigg|_{a,b}, 
%\end{align*} 

\subsection{The Vibrating String}
\label{Subsection The Vibrating String}
	\begin{figure}[h]
		\centering
		\includegraphics[width = 8 cm]{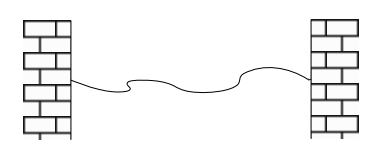}
		\caption{Vibrating String}
		\label{Figure Vibrating String}
	\end{figure}
Consider a vibrating string of the length $L = b -a >0$, $a,b \in \mathbb{R}$, held stationary at both ends and free to vibrate transversely. The scenario is illustrated in Figure \ref{Figure Vibrating String}. The vibration of the string can be modeled by
	\begin{align}
		\frac{\partial w^2}{\partial t^2}(z,t) = \frac{1}{(\rho A)(z)} \frac{\partial}{\partial z} \left( T(z)  \frac{\partial w}{\partial z}(z,t) \right), 
		\label{Example Vibrating String - Vibrating String Equation}
	\end{align}
where $w(z,t) \in \mathbb{R}$ denotes the vertical displacement of the string at position $z \in [a,b]$, and at time $t \geq 0$, and $T(z) >0$ and $(\rho A) (z) > 0$, $z \in [a,b]$ denote the Young's modulus of the string and the linear mass density, respectively, which are assumed to be bounded. We take the momentum $p  = (\rho A) \frac{\partial w}{\partial t}$  and the strain $\epsilon = \frac{\partial w}{\partial z} $ to constitute the state variable $x=(p,\epsilon) \in X$. The Hamiltonian $H \colon X \to \mathbb{R}$ of this system is then given by the sum of the kinetic and the elastic energies in the following way: 
\begin{align}
	H(p,\epsilon) = \frac{1}{2} \int_{a}^{b}  \frac{1}{(\rho A)(z)} p(z)^2 + T(z) \epsilon(z)^2 \, dz = \frac{1}{2} \int_{a}^{b} \begin{bmatrix}
		p(z) & \epsilon(z)
	\end{bmatrix} \begin{bmatrix}
			\frac{1}{(\rho A)(z)} & 0 \\
			0 & T(z)
\end{bmatrix} \begin{bmatrix}
p(z) \\
\epsilon(z)
\end{bmatrix} \, dz.
	\label{Subsection The Vibrating String - Energy}
\end{align}
Again, we may define the coercive multiplication operator $\mathcal{Q} \in \mathcal{L}(X)$ given by
\begin{align*}
	(\mathcal{Q}x)(z) = \mathcal{Q}(z) x(z) = \begin{bmatrix}
		\frac{1}{(\rho A)(z)} & 0 \\
		0 & T(z)
	\end{bmatrix} \begin{bmatrix}
	p(z) \\
	\epsilon(z)
\end{bmatrix}, \hspace{0.5 cm} x \in X, \, z \in [a,b].
\end{align*}
Additionally, as the Hamiltonian is once again of the form (\ref{Subsection Lossless Transmission Line - Standard Hamiltonian Form}), the variational derivative of the Hamiltonian $H$ equals the coercive matrix operator $\mathcal{Q}$:
\begin{align*}
	\frac{\delta H}{\delta x}(x) = \begin{bmatrix}
		\frac{\delta H}{\delta p}(p,\epsilon) \\
		\frac{\delta H}{\delta \epsilon}(p,\epsilon)
	\end{bmatrix} = \begin{bmatrix} 
	\frac{p}{\rho A} \\
T\epsilon
\end{bmatrix} = \begin{bmatrix}
\frac{\partial w}{\partial t} \\
T \frac{\partial w}{\partial z}
\end{bmatrix}, \hspace{0.5 cm} x \in X. 
\end{align*}
Altogether, we may equivalently rewrite equation (\ref{Example Vibrating String - Vibrating String Equation}) as an infinite-dimensional, first-order Hamiltonian system of the form
\begin{align}
	\frac{\partial}{\partial t} \begin{bmatrix}
			p \\
			\epsilon
	\end{bmatrix} = \begin{bmatrix}
			0 & \frac{\partial}{\partial z} \\
			\frac{\partial }{\partial z} & 0 
		\end{bmatrix} \begin{bmatrix}
			\frac{\delta H}{\delta p}(p,\epsilon)  \\
			\frac{\delta H}{\delta \epsilon}(p,\epsilon)
	\end{bmatrix} 
	\label{Subsection The Vibrating String - Vibrating String Equation in pH form}
\end{align}
with respect to the (Hamiltonian) matrix differential operator $\mathcal{J} \colon D(\mathcal{J}) \subset X \to X$ given by
\begin{align*}
	D(\mathcal{J}) &= H^1([a,b], \mathbb{R}^2), \\
	\mathcal{J}x &=  \begin{bmatrix}
		0 & \frac{d}{dz} \\
		\frac{d}{dz} & 0 
	\end{bmatrix}x,  \hspace{0.5 cm} x\in D(\mathcal{J}),
\end{align*}
and with respect to the Hamiltonian $H$ given in \eqref{Subsection The Vibrating String - Energy}. The proof that $\mathcal{J}$ is indeed a Hamiltonian operator (see again \cite[Definition 7.1]{Olver}) works exactly as in the previous example. For a classical solution $x \colon [0, \infty) \to X$ of \eqref{Subsection The Vibrating String - Vibrating String Equation in pH form}, the energy-balance equation of the system \eqref{Subsection The Vibrating String - Vibrating String Equation in pH form} is given by
\begin{align}
	\frac{d}{dt} H(x(\cdot,t)) = \left[ \frac{p(z,t)}{(\rho A)(z)} T(z)  \epsilon(z,t) \right]_{a}^{b},
	\label{Subsection The Vibrating String - Balance Equation}
\end{align}
where the product of the co-energy variables, namely the velocity $\frac{\delta H}{\delta p}(p,\epsilon) = \frac{p}{\rho A}$ and the stress $\frac{\delta H}{\delta \epsilon}(p,\epsilon) = T \epsilon$, again equals power. In our particular scenario, the ends of the string are both fixed, that is, the velocity at the boundary is zero. In particular, $p(a,t) = p(b,t) = 0$, $t \geq 0$, which is why in this case, the energy is conserved. However, if at least one end of the string may move freely or some force may be applied to it, the power flow through the boundary has to be encompassed, as depicted in \eqref{Subsection The Vibrating String - Balance Equation}.
\begin{remark}
	\label{Remark Skew-Symmetry J and Operator Q}
We note that the model \eqref{Subsection The Vibrating String - Vibrating String Equation in pH form} of this system has the same structure as the model \eqref{Subsection Lossless Transmission Line - pH System} of the lossless transmission line: the multiplication operator $ \mathcal{Q} \in \mathcal{L}(X)$ is coercive and represents the mechanical and physical properties of the system. Furthermore, the Hamiltonian operators can be written as $\mathcal{J} = P_1 \frac{d}{dz}$, where the matrix $P_1 \in \mathbb{R}^{2 \times 2}$ is symmetric (and invertible). Given the vibrating string example, the formal skew-symmetry of the differential operator $\mathcal{J}$ implies that the kinetic energy is transformed into elastic energy and vice versa. Hence, the internally stored energy is conserved in the absence of any power flow through the boundary. The formally skew-symmetric differential operators can be regarded as the infinite-dimensional counterpart of the skew-symmetric matrices in the finite-dimensional case.
\end{remark}
Just as in the finite-dimensional case there may also occur (internal) energy dissipation in infinite-dimensional systems. We refer to \cite[Subsection 4.1.3]{ModCompPhys} and \cite[Section 1.4]{Villegas} and the references mentioned there. In the sequel, we will generalize the systems considered in Section \ref{Section Motivating Examples} and only study idealized systems where any dissipation phenomena are neglected.

\section{Boundary Port-Hamiltonian Systems Associated with Skew-symmetric Operators}
\label{Section Boundary Port-Hamiltonian Systems Associated with Skew-symmetric Operators}
We infer from the preceding examples that it is possible to describe a variety of PDEs with the following structure:
\begin{align}
	\begin{split}
		\frac{\partial x}{\partial t} (z,t) &= \mathcal{J} (\mathcal{Q}(z)x(z,t)), \hspace{0.5 cm} t > 0, \\
		x(z,0) &= x_0(z)
	\end{split}
	\label{General Lossless Port-Hamiltonian System - Infinite-Dimensional Case}
\end{align}
on the state space $X = L^2 ([a,b], \mathbb{R}^n)$, where $ \frac{\delta}{\delta x} H =\mathcal{Q} \in \mathcal{L}(X)$ is the variational derivative of the Hamiltonian $H \colon X \to \mathbb{R}$ given by
\begin{align}
	H (x) = \int_{a}^{b} \mathcal{H}(x(z)) \, dz =  \frac{1}{2} \int_{a}^{b} x^{\top}(z) (\mathcal{Q} x)(z) \, dz,
	\label{General Lossless Port-Hamiltonian System - Hamiltonian H}
\end{align} 
where $\mathcal{H} \in \mathcal{C}^{\infty}(\mathbb{R}^n, \mathbb{R})$ is the Hamiltonian density. Furthermore, the (multiplication) operator $\mathcal{Q}$ is assumed to be coercive.
Let the differential operator $\mathcal{J} \colon D(\mathcal{J}) \subset X \to X$ be given by
\begin{align}
	\begin{split}
	D(\mathcal{J}) &= H^N \left( [a,b], \mathbb{R}^n \right), \\
	\mathcal{J} x &= \sum_{i=0}^{N} P_i \frac{d^i x}{d z^i}, \hspace{0.5 cm} x \in D(\mathcal{J}),
	\end{split}
	\label{Skew-Symmetric Differential Operator J - N-Dimensional}
\end{align}
 with $P_i \in \mathbb{R}^{n \times n}$, $i = 0,\ldots,N$, where $P_N$ is supposed to be invertible. Furthermore, we assume that
\begin{align}
	P_i = (-1)^{i+1} P_i^{\top}, \hspace{0.5 cm} i =0,\ldots,N,
	\label{Skew-Symmetric Differential Operator J - N-Dimensional - Corresponding Matrices}
\end{align} 
that is, the constituting matrices of $\mathcal{J}$ are skew-symmetric and symmetric in an alternating fashion. We will see that condition (\ref{Skew-Symmetric Differential Operator J - N-Dimensional - Corresponding Matrices}) implies that the differential operator $\mathcal{J}$ is formally skew-symmetric. Recall from Remark \ref{Remark Skew-Symmetry J and Operator Q} that the formal skew-symmetry corresponds to an energy-conserving exchange of energy between different physical domains. Finally, following Remark \ref{Remark Energy Space}, we endow the state space $X$ with the inner product $\langle \cdot, \cdot \rangle_{\mathcal{Q}}$ defined in (\ref{Subsection The Lossless Transmission Line - Inner Product WRT Q}), so that the Hamiltonian $H$ can be rewritten as in (\ref{Subsection The Lossless Transmission Line - Representation Hamiltonian WRT Q norm}).
\vspace{0.5 cm}\\
In this section, we want to define the Dirac structure which is associated with systems of the form \eqref{General Lossless Port-Hamiltonian System - Infinite-Dimensional Case}. As the examples in Section \ref{Section Motivating Examples} suggested, we need to introduce boundary port variables to encompass the power flow through the boundary as well. In Subsection~\ref{Subsection Boundary Port Variables} we will define this new type of port variables by means of suitably chosen matrices and operators, and give a representation of the evaluation of the effort variables at the spatial boundary with respect to the boundary port variables. Subsection \ref{Subsection Dirac Structure and Boundary Port-Hamiltonian Systems} deals with the right choice of the bond space $\mathcal{B} = \mathcal{F} \times \mathcal{E}$ and a suitable power pairing $\langle \cdot \mid \cdot \rangle \colon \mathcal{E} \times \mathcal{F} \to \mathbb{R}$ constituting the bilinear form (the plus pairing) $\ll \cdot, \cdot \gg \colon \mathcal{B} \times \mathcal{B} \to \mathbb{R}$ in order to define the underlying Stokes-Dirac structure associated with infinite-dimensional linear systems of the form \eqref{General Lossless Port-Hamiltonian System - Infinite-Dimensional Case}, leading to the notion of boundary port-Hamiltonian systems.

\subsection{Boundary Port Variables}
\label{Subsection Boundary Port Variables}
Let us, to begin with, show that the differential operator $\mathcal{J}$ defined by \eqref{Skew-Symmetric Differential Operator J - N-Dimensional}-\eqref{Skew-Symmetric Differential Operator J - N-Dimensional - Corresponding Matrices} is indeed formally skew-symmetric. 
\begin{theorem}[Theorem 2.1 in \cite{Villegas}]
	\label{Theorem Stokes-Like Theorem}
	Let $\mathcal{J}$ be the differential operator defined in \eqref{Skew-Symmetric Differential Operator J - N-Dimensional}-\eqref{Skew-Symmetric Differential Operator J - N-Dimensional - Corresponding Matrices}. Then for any two functions $e_1,e_2 \in D(\mathcal{J})$, we have
	\begin{align}
	\langle \mathcal{J} e_1 , e_2 \rangle_{L^2} + \langle e_1 , \mathcal{J} e_2 \rangle_{L^2} = \left[ \begin{bmatrix}
			e_1^{\top}(z), \ldots, \frac{d^{N-1} e_1^{\top}}{d z^{N-1}} (z)
		\end{bmatrix} \mathcal{P} \begin{bmatrix}
		e_2(z) \\
		\vdots \\
		\frac{d^{N-1} e_2}{d z^{N-1}}(z)
	\end{bmatrix} \right]_{a}^{b},
\label{Theorem Stokes-Like Theorem - Equation}
	\end{align}
with the symmetric matrix $\mathcal{P} \in \mathbb{R}^{nN \times nN}$ given by
\begin{align}
	\mathcal{P} = \begin{bmatrix}
		P_1 & P_2 & P_3 & \cdots & P_{N-1} & P_N  \\
		-P_2 & -P_3 & - P_4 & \cdots & - P_{N} & 0 \\
		P_3 & P_4 & \ddots & \ddots & 0 & 0 \\
		\vdots &  \ddots & \ddots & \ddots &  & \vdots \\
		(-1)^{N-1}P_N & 0 & \cdots & \cdots & &  0 
	\end{bmatrix}.
\label{Matrix P - Distributed-Parameter}
\end{align} 
In particular, $\mathcal{J}$ is formally skew-symmetric.
\end{theorem}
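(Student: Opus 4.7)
The plan is to establish the identity by expanding the inner products additively in $i$, applying integration by parts $i$ times to each summand $\langle P_i \tfrac{d^i e_1}{dz^i}, e_2 \rangle_{L^2}$, and exploiting the alternating parity relation $P_i^\top = (-1)^{i+1}P_i$ to make the remaining interior integrals cancel against $\langle e_1, P_i \tfrac{d^i e_2}{dz^i} \rangle_{L^2}$, leaving only boundary data. Concretely, I would first observe that the $i=0$ term contributes nothing since $P_0^\top + P_0 = 0$, and then focus on $i \ge 1$.

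For fixed $i \ge 1$, iterated integration by parts yields
\begin{align*}
\int_a^b \left(P_i \tfrac{d^i e_1}{dz^i}\right)^{\!\top} e_2 \, dz
= \sum_{k=0}^{i-1} (-1)^k \left[ \tfrac{d^{i-1-k}e_1^\top}{dz^{i-1-k}} P_i^\top \tfrac{d^k e_2}{dz^k} \right]_a^b
+ (-1)^i \int_a^b e_1^\top P_i^\top \tfrac{d^i e_2}{dz^i} \, dz.
\end{align*}
Adding $\langle e_1, P_i \tfrac{d^i e_2}{dz^i} \rangle_{L^2}$ to both sides, the two interior integrals combine into $\int_a^b e_1^\top \bigl((-1)^i P_i^\top + P_i\bigr) \tfrac{d^i e_2}{dz^i} \, dz$, which vanishes identically by the hypothesis \eqref{Skew-Symmetric Differential Operator J - N-Dimensional - Corresponding Matrices}. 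Using this same hypothesis once more to replace $P_i^\top$ by $(-1)^{i+1} P_i$ in the boundary sum and reindexing $j = i-1-k$, one obtains
\begin{align*}
\langle P_i \tfrac{d^i e_1}{dz^i}, e_2 \rangle_{L^2} + \langle e_1, P_i \tfrac{d^i e_2}{dz^i} \rangle_{L^2}
= \sum_{j=0}^{i-1} (-1)^j \left[ \tfrac{d^j e_1^\top}{dz^j} P_i \tfrac{d^{i-1-j} e_2}{dz^{i-1-j}} \right]_a^b.
\end{align*}

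The final step is to sum over $i = 1, \dots, N$ and recognize the block structure of $\mathcal{P}$. Setting $i = j+k+1$, the total boundary contribution becomes $\sum_{j+k\le N-1} (-1)^j \bigl[\tfrac{d^j e_1^\top}{dz^j} P_{j+k+1} \tfrac{d^k e_2}{dz^k}\bigr]_a^b$, which is exactly the quadratic form on the right-hand side of \eqref{Theorem Stokes-Like Theorem - Equation} with the matrix $\mathcal{P}$ defined in \eqref{Matrix P - Distributed-Parameter}, since the block at row $j$, column $k$ (both indexed from $0$) is $(-1)^j P_{j+k+1}$ when $j+k+1 \le N$ and $0$ otherwise. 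The symmetry of $\mathcal{P}$ follows a posteriori from the symmetry of the left-hand side in $(e_1,e_2)$, or can be checked directly from \eqref{Skew-Symmetric Differential Operator J - N-Dimensional - Corresponding Matrices}: the $(j,k)$-block equals $(-1)^j P_{j+k+1}$ and the $(k,j)$-block equals $(-1)^k P_{j+k+1}^\top = (-1)^k (-1)^{j+k+2}P_{j+k+1} = (-1)^j P_{j+k+1}$.

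Formal skew-symmetry is then immediate: if $e_1, e_2 \in H_0^N([a,b],\mathbb{R}^n)$, the boundary trace $\trace$ from \eqref{Trace Operator - Chapter 3} vanishes, the entire right-hand side of \eqref{Theorem Stokes-Like Theorem - Equation} disappears, and $\langle \mathcal{J} e_1, e_2\rangle_{L^2} = -\langle e_1, \mathcal{J} e_2\rangle_{L^2}$. The main bookkeeping obstacle is the double index manipulation in the boundary sum and the sign tracking through the two uses of the parity relation; I expect a careful substitution $j = i-1-k$ followed by a reindexing $i = j+k+1$ to handle this cleanly, and a small table of $(i,j,k)$ values for $N=2$ or $N=3$ to serve as a sanity check against the explicit $\mathcal{P}$ in \eqref{Matrix P - Distributed-Parameter}.
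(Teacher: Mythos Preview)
Your proposal is correct and follows exactly the approach the paper indicates: the paper itself does not spell out the computation but simply states that the claim ``basically follows from iteratively applying integration by parts'' and refers to \cite[Theorem~3.1]{Memorandum}. You have carried out precisely that integration-by-parts argument in full, with correct sign tracking through the parity relation \eqref{Skew-Symmetric Differential Operator J - N-Dimensional - Corresponding Matrices} and a clean reindexing to identify the block structure of $\mathcal{P}$.
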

\begin{proof}
	The proof is rather tedious. The claim basically follows from iteratively applying integration by parts. We refer to \cite[Theorem 3.1]{Memorandum}.
\end{proof}
In order to define a power-conserving interconnection structure in the infinite-dimensional case, we need to incorporate the evaluation of the effort variables at the spatial boundary into the definition of the respective power pairing, which we will specify later on. To this end, we will write the right-hand side of \eqref{Theorem Stokes-Like Theorem - Equation} subject to the boundary port variables. Note that these port variables will depend on the constituting matrices of $\mathcal{J}$, which are captured in the matrix $\mathcal{P}$ defined in \eqref{Matrix P - Distributed-Parameter}. Since we suppose that the matrix $P_N$ is invertible, it is easy to check that $\mathcal{P}$ is invertible as well. 
\vspace{0.5 cm}\\
In order to define the boundary port variables, we need to introduce further matrices.   
First, let the matrix $\mathcal{P}_{\extern} \in \mathbb{R}^{2nN \times 2nN}$ associated with the differential operator $\mathcal{J}$ be defined as
	\begin{align*}
		\mathcal{P}_{\extern} = \begin{bmatrix}
			\mathcal{P} & 0 \\
			0 & -\mathcal{P}
			\end{bmatrix},
	\end{align*}
where $\mathcal{P}$ is the symmetric and invertible matrix from (\ref{Matrix P - Distributed-Parameter}). Next, we factorize $\mathcal{P}_{\extern}$ in such a way that it allows us to define the port variables, and simultaneously guarantees that the bilinear form - which has yet to be defined - becomes independent of the (matrix) coefficients of the operator $\mathcal{J}$, as we will see shortly.
\begin{lemma}[Lemma 2.4 in \cite{Villegas}]
	\label{Lemma Auxillary Lemma Port-Boundary Variables}
	The matrix $R_{\extern} \in \mathbb{R}^{2nN \times 2nN}$ defined as
	\begin{align}
		R_{\extern} = \frac{1}{\sqrt{2}} \begin{bmatrix}
			\mathcal{P} & -\mathcal{P} \\
			I & I 
		\end{bmatrix}
	\label{Matrix Rext}
	\end{align}
is invertible and satisfies
\begin{align*}
	\mathcal{P}_{\extern} = R_{\extern}^{\top} \Sigma R_{\extern},
	%\label{Lemma Auxillary Lemma Port-Boundary Variables - Equation Qext and Rext}
\end{align*}
where
\begin{align}
	\label{Matrix Sigma}
	\Sigma = \begin{bmatrix}
		0 & I \\
		I & 0
	\end{bmatrix} \in \mathbb{R}^{2nN \times 2nN}.
	\end{align}
Furthermore, all possible matrices $R \in \mathbb{R}^{2nN \times 2nN}$ satisfying $\mathcal{P}_{\extern} = R^{\top} \Sigma R$ are of the form
\begin{align*}
	R = U R_{\extern},
\end{align*}
with $U \in \mathbb{R}^{2nN \times 2nN}$ satisfying $U^{\top} \Sigma U = \Sigma$. 
\end{lemma}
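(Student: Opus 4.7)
The plan is to prove the lemma by direct linear-algebraic computation, relying on the symmetry of $\mathcal{P}$ (immediate from \eqref{Skew-Symmetric Differential Operator J - N-Dimensional - Corresponding Matrices}) and its invertibility, which has already been noted in the paragraph preceding the lemma. For the invertibility of $R_{\extern}$, since the lower-right block is $I$, the Schur-complement determinant formula yields
$$\det R_{\extern} = \left(\tfrac{1}{\sqrt{2}}\right)^{2nN} \det(I)\,\det\bigl(\mathcal{P}-(-\mathcal{P})I^{-1}I\bigr) = 2^{-nN}\det(2\mathcal{P}) = \det \mathcal{P} \neq 0.$$
One could equivalently exhibit the inverse $R_{\extern}^{-1} = \tfrac{1}{\sqrt{2}}\begin{bmatrix} \mathcal{P}^{-1} & I \\ -\mathcal{P}^{-1} & I \end{bmatrix}$ and verify the identity by block multiplication.

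Next I would verify the factorization $\mathcal{P}_{\extern} = R_{\extern}^{\top}\Sigma R_{\extern}$ by brute force. Using $\mathcal{P}^{\top} = \mathcal{P}$ one obtains $R_{\extern}^{\top} = \tfrac{1}{\sqrt{2}}\begin{bmatrix} \mathcal{P} & I \\ -\mathcal{P} & I \end{bmatrix}$ and $\Sigma R_{\extern} = \tfrac{1}{\sqrt{2}}\begin{bmatrix} I & I \\ \mathcal{P} & -\mathcal{P} \end{bmatrix}$, and the $2\times 2$ block product evaluates to $\tfrac{1}{2}\begin{bmatrix} 2\mathcal{P} & 0 \\ 0 & -2\mathcal{P} \end{bmatrix} = \mathcal{P}_{\extern}$, as desired.

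For the characterization of all such factorizations, suppose $R \in \mathbb{R}^{2nN\times 2nN}$ satisfies $\mathcal{P}_{\extern} = R^{\top}\Sigma R$. Since $\det \mathcal{P}_{\extern} = (-1)^{nN}(\det\mathcal{P})^2 \neq 0$ and $\det \Sigma = \pm 1$, taking determinants forces $R$ to be invertible. Setting $U := R R_{\extern}^{-1}$ gives $R = U R_{\extern}$ by construction; substituting into $R^{\top}\Sigma R = R_{\extern}^{\top}\Sigma R_{\extern}$ and cancelling the invertible factors $R_{\extern}^{\top}$ on the left and $R_{\extern}$ on the right yields $U^{\top}\Sigma U = \Sigma$. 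The converse implication is immediate by the same cancellation. I do not anticipate any serious obstacle: the lemma is a purely finite-dimensional linear-algebra statement with all hypotheses already in place, and the only point requiring a moment's care is keeping track of the signs in $\mathcal{P}_{\extern}$, which is precisely what forces the $(\mathcal{P},-\mathcal{P})$ column pattern in the definition \eqref{Matrix Rext} of $R_{\extern}$.
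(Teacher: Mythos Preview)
Your proof is correct and complete. The paper does not supply its own proof of this lemma but merely refers to \cite[Lemma 3.4]{LeGorrec}; your direct block-matrix computation (Schur complement for invertibility, brute-force verification of the factorization, and the determinant-plus-cancellation argument for the characterization of all $R$) is precisely the standard argument one finds there, so there is nothing to add.
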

The proof of Lemma \ref{Lemma Auxillary Lemma Port-Boundary Variables} is fairly easy. For details we refer to the proof of \cite[Lemma 3.4]{LeGorrec}.
\vspace{0.5 cm}\\
The essential part of defining the Dirac structure associated with the operator $\mathcal{J}$ is to introduce the boundary port variables, which we are finally able to do.

\begin{definition}
\label{Definition Boundary Port Variables in The Linear Case}
The boundary port variables associated with the differential operator $\mathcal{J}$ given by \eqref{Skew-Symmetric Differential Operator J - N-Dimensional}-\eqref{Skew-Symmetric Differential Operator J - N-Dimensional - Corresponding Matrices} are the vectors $f_{\partial}, e_{\partial}  \in \mathbb{R}^{nN}$ defined by 
\begin{align}
	\label{Boundary Flow and Effort Linear Case} 
	\begin{bmatrix}
		f_{\partial,e} \\ 
		e_{\partial,e} 
	\end{bmatrix} := R_{\extern} \trace (e), \hspace{0.5 cm} e \in D(\mathcal{J}),
\end{align}
where $\trace \colon H^N([a,b], \mathbb{R}^n) \to \mathbb{R}^{2nN}$ is the boundary trace operator defined in \eqref{Trace Operator - Chapter 3}.
\end{definition}
We usually write $\begin{bmatrix}
	f_{\partial} \\
	e_{\partial}
\end{bmatrix}$ instead of $\begin{bmatrix}
	f_{\partial, e} \\
	e_{\partial, e}
\end{bmatrix}$ whenever the dependence of the boundary port variables on the variable $e \in D(\mathcal{J})$ is obvious. We call $f_{\partial} \in \mathbb{R}^{nN}$ and $e_{\partial} \in \mathbb{R}^{nN}$ the \emph{boundary flow} and the \emph{boundary effort}, respectively. 
\vspace{0.5 cm}\\
Using Definition \ref{Definition Boundary Port Variables in The Linear Case} and Lemma \ref{Lemma Auxillary Lemma Port-Boundary Variables}, we can rewrite the right-hand side of \eqref{Theorem Stokes-Like Theorem - Equation} in terms of the boundary port variables in the following way: for $e_1,e_2 \in H^N \left( [a,b], \mathbb{R}^n \right)$ we have
\begin{align*}
\begin{bmatrix}
		f_{\partial,e_1}^{\top} & e_{\partial, e_1}^{\top}
	\end{bmatrix} \Sigma \begin{bmatrix}
		f_{\partial, e_2} \\
		e_{\partial, e_2}
	\end{bmatrix} &= \trace(e_1)^{\top} R_{\extern}^{\top} \Sigma R_{\extern} \trace(e_2) \\
&= \trace(e_1)^{\top} \mathcal{P}_{\extern} \trace(e_2) \\
&=  \trace(e_1)^{\top} \begin{bmatrix}
\mathcal{P} & 0 \\
0 & -\mathcal{P}
\end{bmatrix} \trace(e_2) \\
&= \left[ \begin{bmatrix}
	e_1^{\top}(z) & \cdots & \frac{d^{N-1} e_1^{\top}}{d z^{N-1}} (z)
\end{bmatrix} \mathcal{P} \begin{bmatrix}
	e_2(z) \\
	\vdots \\
	\frac{d^{N-1} e_2}{d z^{N-1}}(z)
\end{bmatrix} \right]_{a}^{b}.
\end{align*}  
Thus, for all $e_1, e_2 \in D(\mathcal{J})$, equation (\ref{Theorem Stokes-Like Theorem - Equation}) becomes
\begin{align}
	\begin{split}
		\langle \mathcal{J}e_1 , e_2 \rangle_{L^2} + \langle e_1, \mathcal{J} e_2 \rangle_{L^2} &=  \begin{bmatrix}
			f_{\partial,e_1}^{\top} & e_{\partial, e_1}^{\top}
		\end{bmatrix} \Sigma \begin{bmatrix}
		f_{\partial, e_2} \\
		e_{\partial, e_2}
	\end{bmatrix} \\
&= f_{\partial, e_1}^{\top} e_{\partial, e_2} + e_{\partial, e_1}^{\top} f_{\partial, e_2}. 
\end{split}
\label{Theorem Stokes-Like Theorem - Equation Representation 2}
\end{align}

Now, we are able to define the Dirac structure on a suitable bond space corresponding to the formally skew-symmetric operator $\mathcal{J}$.

\subsection{Dirac Structure and Boundary Port-Hamiltonian Systems}
\label{Subsection Dirac Structure and Boundary Port-Hamiltonian Systems}
We have already discussed that in the finite-dimensional case the space of flows $\mathcal{F}$ and the space of efforts $\mathcal{E}$ correspond to a finite-dimensional vector space and its dual space, with the power pairing $\langle \cdot \mid \cdot \rangle \colon \mathcal{E} \times \mathcal{F} \to \mathbb{R}$ of an element of the flow space and of an element of the effort space being equal to physical power (see Section \ref{Section Dirac Structures}). In the present case the flow space $\mathcal{F}$ is now an infinite-dimensional Hilbert space, and the effort space $\mathcal{E}$ is again to be defined as the dual space of the space of flows,  $\mathcal{E} = \mathcal{F}'$. Throughout, we will identify $\mathcal{F}$ with its dual $\mathcal{E}$. 
\vspace{0.5 cm}\\
In order to define a Dirac structure, we need to endow the bond space $\mathcal{B} = \mathcal{F} \times \mathcal{E}$ not with the canonical inner product $\langle \cdot , \cdot \rangle_{\mathcal{B}}$, but with the plus pairing $\ll \cdot, \cdot \gg$ defined in \eqref{Plus Pairing}, i.e., 
\begin{align}
	\ll (f^1,e^1), (f^2, e^2) \gg \hspace{0.1 cm}= \langle e^1 \mid f^2 \rangle + \langle e^2 \mid f^1 \rangle, \hspace{0.5 cm} ((f^1, e^1), (f^2, e^2)) \in \mathcal{B} \times \mathcal{B}.
	\label{Plus Pairing - Distributed-Parameter}
\end{align}
This symmetric bilinear form depends on the choice of the power pairing $\langle \cdot \mid \cdot \rangle \colon \mathcal{E} \times \mathcal{F} \to \mathbb{R}$, which we will, together with the flow and effort spaces, specify shortly. 
%Analogously to the finite-dimensional case, the pairing $\langle \cdot \mid \cdot \rangle $ equals the duality product $\langle \cdot \mid \cdot  \rangle_{\mathcal{F}' \times \mathcal{F}}$, or is at least closely related to it. 
\vspace{0.5 cm}\\
We may now define the notion of a Dirac structure on an infinite-dimensional Hilbert space.
\begin{definition}
	\label{Definition Dirac Structure - Infinite-dimensional Case}
	Let the flow space $\mathcal{F}$ be an infinite-dimensional Hilbert space, and let the effort space $\mathcal{E} = \mathcal{F}'$ be its dual space. A linear subspace $\mathcal{D} \subset \mathcal{B}$ of the bond space $\mathcal{B} =  \mathcal{F} \times \mathcal{E}$ is called a Dirac structure if 
	\begin{align*}
		\mathcal{D} = \mathcal{D}^{\perp \!\!\! \perp}, 
	\end{align*}
	where $\mathcal{D}^{\perp \!\!\! \perp}$ denotes the orthogonal complement of $\mathcal{D}$ with respect to the plus pairing $\ll \cdot, \cdot \gg$ defind in \eqref{Plus Pairing - Distributed-Parameter}.
\end{definition}
Note that this is precisely the characterization of a Dirac structure in the finite-dimensional case (cf. Proposition \ref{Proposition Characterization Dirac Structure}), and that this implies that the Dirac structure $\mathcal{D}$ captures the power-conserving interconnection structure of a system. For a general definition of an infinite-dimensional port-Hamiltonian system with total energy \eqref{General Lossless Port-Hamiltonian System - Hamiltonian H}, see \cite[Definition~1.13]{Villegas}. 
\vspace{0.5 cm}\\
In Subsection \ref{Subsection Boundary Port Variables} we have defined boundary port variables which we need to incorporate into the definition of the power pairing and thus the plus pairing. By Definition \ref{Definition Dirac Structure - Infinite-dimensional Case}, this is necessary for defining the Dirac structure associated with $\mathcal{J}$ defined by \eqref{Skew-Symmetric Differential Operator J - N-Dimensional}-\eqref{Skew-Symmetric Differential Operator J - N-Dimensional - Corresponding Matrices}. First, we need to select the flow space and the effort space appropriately. Because the bilinear form contains both state space elements and their evaluation at the boundary, we set
\begin{align}
	\mathcal{F} = \mathcal{E} = L^2 \left( [a,b], \mathbb{R}^n \right) \times \mathbb{R}^{nN},
	\label{Flow And Effort Space - Linear Infinite-dimensional Systems}
\end{align}
endowed with the canonical inner product:
\begin{align*}
	\langle (f^1,f_{\partial}^1), (f^2,f_{\partial}^2) \rangle_{\mathcal{F}} = \langle f^1, f^2 \rangle_{L^2} + \langle f_{\partial}^1, f_{\partial}^2 \rangle_2, \hspace{0.5 cm} (f^1, f_{\partial}^1), (f^2,f_{\partial}^2) \in \mathcal{F}.
\end{align*}
In contrast to the finite-dimensional case, the power pairing does not necessarily coincide with the canonical duality pairing/inner product, but is slightly modified. Here, we choose the power pairing
\begin{align}
	\left\langle \begin{pmatrix}
		e \\
		e_{\partial}
	\end{pmatrix} \Bigg| \begin{pmatrix}
	f \\
	f_{\partial}
\end{pmatrix} \right\rangle = \langle e , f \rangle_{L^2} - \langle e_{\partial}, f_{\partial} \rangle_2, \hspace{0.5 cm} \left( \begin{pmatrix}
e \\
e_{\partial}
\end{pmatrix}, \begin{pmatrix}
f \\
f_{\partial}
\end{pmatrix} \right) \in \mathcal{E} \times \mathcal{F}. 
\label{Power Pairing - Infinite-Dimensional Case}
\end{align}
The reason behind that modification will be clear soon. Altogether, we endow the bond space $\mathcal{B} = \mathcal{F} \times \mathcal{E}$ with the following plus pairing:
\begin{align}
	\ll (f^1, f_{\partial}^1, e^1, e_{\partial}^1), (f^2, f_{\partial}^2, e^2, e_{\partial}^2) \gg \hspace{0.1 cm} := \langle e^2, f^1 \rangle_{L^2} - \langle e_{\partial}^2, f_{\partial}^1 \rangle_2  + \langle e^1, f^2 \rangle_{L^2}  - \langle e_{\partial}^1, f_{\partial}^2 \rangle_2, 
	\label{Plus Pairing - Infinite-Dimensional Case}
\end{align}
where $(f^i, f_{\partial}^i, e^i, e_{\partial}^i) \in \mathcal{B}$, $i=1,2$.
\vspace{0.5 cm}\\
Now we have everything required to define the Dirac structure associated with the formally skew-symmetric operator $\mathcal{J}$ given by (\ref{Skew-Symmetric Differential Operator J - N-Dimensional})-(\ref{Skew-Symmetric Differential Operator J - N-Dimensional - Corresponding Matrices}).
\begin{theorem}[Theorem 2.7 in \cite{Villegas}]
	\label{Theorem Dirac Structure Associated With Skew-Symmetric Operator J}
	Consider the formally skew-symmetric operator
	$\mathcal{J}$ given by  \eqref{Skew-Symmetric Differential Operator J - N-Dimensional}-\eqref{Skew-Symmetric Differential Operator J - N-Dimensional - Corresponding Matrices}. Let $\mathcal{B} = \mathcal{F} \times \mathcal{E}$ be the bond space with respect to the flow space $\mathcal{F}$ and the effort space $\mathcal{E}$ defined in \eqref{Flow And Effort Space - Linear Infinite-dimensional Systems}. Then the subspace $\mathcal{D}_{\mathcal{J}} \subset \mathcal{B}$ given by
	\begin{align}
		\mathcal{D}_{\mathcal{J}} := \left\{ \left( \begin{pmatrix}
			f \\
			f_{\partial} \end{pmatrix},
		\begin{pmatrix}
			e \\
			e_{\partial}
		\end{pmatrix} \right) \in \mathcal{B} \hspace{0.1 cm} \Bigg| \hspace{0.1 cm} e \in H^N \left( [a,b], \mathbb{R}^n \right), \hspace{0.1 cm} f = \mathcal{J}e , \hspace{0.1 cm} \begin{bmatrix}
		f_{\partial, e} \\
		e_{\partial, e}
	\end{bmatrix} = R_{\extern} \trace (e) \right\}
\label{Stokes-Dirac structure - Linear PH Systems}
	\end{align}
is a Dirac structure with respect to the plus pairing
$\ll\cdot,\cdot\gg \colon \mathcal{B} \times \mathcal{B} \to \mathbb{R}$ defined in \eqref{Plus Pairing - Infinite-Dimensional Case}, and is called the Stokes-Dirac structure.
\end{theorem}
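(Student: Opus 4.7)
The proof plan is to invoke the characterization of a Dirac structure via self-orthogonality with respect to the plus pairing, i.e., show that $\mathcal{D}_{\mathcal{J}} = \mathcal{D}_{\mathcal{J}}^{\perp \!\!\! \perp}$. The two inclusions will be handled separately, with the inclusion $\mathcal{D}_{\mathcal{J}} \subset \mathcal{D}_{\mathcal{J}}^{\perp \!\!\! \perp}$ being an essentially mechanical consequence of Theorem~\ref{Theorem Stokes-Like Theorem}, while the reverse inclusion $\mathcal{D}_{\mathcal{J}}^{\perp \!\!\! \perp} \subset \mathcal{D}_{\mathcal{J}}$ will require a regularity argument and a separation into interior and boundary tests.

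For the first inclusion, I would pick two arbitrary elements $((f^i, f_\partial^i), (e^i, e_\partial^i)) \in \mathcal{D}_{\mathcal{J}}$, $i=1,2$, and substitute $f^i = \mathcal{J}e^i$ and $\begin{bmatrix} f_\partial^i \\ e_\partial^i \end{bmatrix} = R_{\extern}\trace(e^i)$ directly into the plus pairing \eqref{Plus Pairing - Infinite-Dimensional Case}. Applying equation \eqref{Theorem Stokes-Like Theorem - Equation Representation 2} from Theorem~\ref{Theorem Stokes-Like Theorem} immediately rewrites the $L^2$-part as $f_{\partial,e^1}^{\top} e_{\partial,e^2} + e_{\partial,e^1}^{\top} f_{\partial,e^2}$, which cancels exactly with the boundary contribution $- \langle e_\partial^2, f_\partial^1 \rangle_2 - \langle e_\partial^1, f_\partial^2 \rangle_2$. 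This is precisely the reason for the minus sign in the power pairing \eqref{Power Pairing - Infinite-Dimensional Case} and explains the usefulness of the factorization $\mathcal{P}_{\extern} = R_{\extern}^{\top}\Sigma R_{\extern}$ from Lemma~\ref{Lemma Auxillary Lemma Port-Boundary Variables}.

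For the reverse inclusion I would fix $((f, f_\partial), (e, e_\partial)) \in \mathcal{D}_{\mathcal{J}}^{\perp \!\!\! \perp}$ and proceed in two stages. First I would test the plus pairing against arbitrary $\tilde{e} \in H_0^N([a,b], \mathbb{R}^n)$ (so that the associated boundary port variables vanish), which yields the identity $\langle e, \mathcal{J}\tilde{e}\rangle_{L^2} + \langle f, \tilde{e}\rangle_{L^2} = 0$. By the formal skew-symmetry of $\mathcal{J}$, this shows that $\mathcal{J}e = f$ holds in the sense of distributions; invertibility of $P_N$ then permits a bootstrap argument (solving for $d^N e / dz^N$ and iteratively raising the Sobolev regularity) that upgrades $e$ to an element of $H^N([a,b], \mathbb{R}^n) = D(\mathcal{J})$ with $\mathcal{J}e = f$ in the strong sense. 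Second, I would plug this regularity back into the orthogonality relation and now test against general $\tilde{e} \in D(\mathcal{J})$; applying \eqref{Theorem Stokes-Like Theorem - Equation Representation 2} once more reduces the equation to a purely algebraic condition of the form $\tilde{f}_\partial^{\top}\bigl(e_\partial^{\mathrm{ref}} - e_\partial\bigr) + \tilde{e}_\partial^{\top}\bigl(f_\partial^{\mathrm{ref}} - f_\partial\bigr) = 0$, where $(f_\partial^{\mathrm{ref}}, e_\partial^{\mathrm{ref}}) := R_{\extern}\trace(e)$. Since $R_{\extern}$ is invertible and $\trace$ is surjective onto $\mathbb{R}^{2nN}$, the pair $(\tilde{f}_\partial, \tilde{e}_\partial)$ ranges over all of $\mathbb{R}^{2nN}$, forcing $f_\partial = f_\partial^{\mathrm{ref}}$ and $e_\partial = e_\partial^{\mathrm{ref}}$, which places the tested element in $\mathcal{D}_{\mathcal{J}}$.

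The main obstacle I anticipate is the interior regularity step: turning the distributional identity $\mathcal{J}e = f$ with $e \in L^2$ and $f \in L^2$ into $e \in H^N$. The system is first-order in character at the top (order $N$) and is not elliptic, so one cannot invoke standard elliptic regularity; however, the explicit block structure together with invertibility of $P_N$ should allow a direct induction, and this is where I expect most of the technical care to go. Once this point is handled, the surjectivity of $\trace$ and the invertibility of $R_{\extern}$ make the boundary identification immediate.
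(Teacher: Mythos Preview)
Your proposal is correct and follows exactly the standard two-inclusion strategy that the paper invokes (it refers to Theorem~3.6 in \cite{LeGorrec} rather than giving its own proof) and that the paper itself carries out in full detail for the closely related interface case in Proposition~\ref{Interface Paper - Proposition Dirac Structure DI}. In particular, your identification of the regularity bootstrap via invertibility of $P_N$ as the only nontrivial step is accurate, and your plan for it---iteratively gaining one Sobolev order at a time from the distributional identity $P_N\,d^N e/dz^N = f - \sum_{i<N} P_i\,d^i e/dz^i$---is precisely how this is handled in the referenced source.
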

For the proof one has to verify that  $\mathcal{D}_{\mathcal{J}} = \mathcal{D}_{\mathcal{J}}^{\perp \!\!\! \perp}$ holds with respect to the plus pairing $\ll \cdot, \cdot \gg$, according to Definition \ref{Definition Dirac Structure - Infinite-dimensional Case}. We refer to the proof of Theorem 3.6 in \cite{LeGorrec}.
\vspace{0.5 cm}\\
The Stokes-Dirac structure $\mathcal{D}_{\mathcal{J}}$ is the underlying Dirac structure of the class of systems of the form \eqref{General Lossless Port-Hamiltonian System - Infinite-Dimensional Case}. Consequently, and analogously to the finite-dimensional case, the dynamics of these systems can be geometrically specified with respect to $\mathcal{D}_{\mathcal{J}}$, which we are going to do next.
\vspace{0.5 cm}\\
Beforehand, we explain the reason behind the specific choice of the power pairing \eqref{Power Pairing - Infinite-Dimensional Case}: assume that
$(f^i, f_{\partial}^i , e^i, e_{\partial}^i) \in \mathcal{D}_{\mathcal{J}}$ for $i = 1, 2$. As $\mathcal{D} = \mathcal{D}^{\perp \!\!\! \perp}$, we get
\begin{align*}
	0 &= \hspace{0.1 cm} \ll (f^1, f_{\partial}^1, e^1, e_{\partial}^1), (f^2, f_{\partial}^2, e^2, e_{\partial}^2) \gg \\
	&= \hspace{0.1 cm}	\ll (\mathcal{J}e^1, f_{\partial}^1, e^1, e_{\partial}^1), (\mathcal{J}e^2, f_{\partial}^2, e^2, e_{\partial}^2) \gg \\
	&=  \langle \mathcal{J}e^1, e^2 \rangle_{L^2} + \langle e^1, \mathcal{J}e^2 \rangle_{L^2} - \langle  e_{\partial}^2, f_{\partial}^1 \rangle_2  - \langle e_{\partial}^1, f_{\partial}^2 \rangle_2 \\
	&=  \langle \mathcal{J}e^1, e^2 \rangle_{L^2} + \langle e^1, \mathcal{J}e^2 \rangle_{L^2} - \begin{bmatrix}
		f_{\partial,e^1}^{\top} & e_{\partial, e^1}^{\top}
	\end{bmatrix} \Sigma \begin{bmatrix}
		f_{\partial, e^2} \\
		e_{\partial, e^2}
	\end{bmatrix},
\end{align*}
with $\Sigma$ defined in \eqref{Matrix Sigma}.
This is exactly equation \eqref{Theorem Stokes-Like Theorem - Equation Representation 2}, which, in particular, has been exploited in the proof of Theorem \ref{Theorem Dirac Structure Associated With Skew-Symmetric Operator J}. Now, comparing the system description \eqref{General Lossless Port-Hamiltonian System - Infinite-Dimensional Case} and the constitutive relations \eqref{Stokes-Dirac structure - Linear PH Systems} of $\mathcal{D}_{\mathcal{J}}$, we infer:
\begin{enumerate}
	\item[(i)] Define the temporal change of the state variables as the flow variables, i.e., $f = \frac{\partial x}{\partial t}$. 
	\item[(ii)] Define the variational derivative of $H$ as the effort variables, $e = \frac{\delta H}{\delta x}(x) = \mathcal{Q}x$. 
	\item[(iii)] Encompass the two external boundary port variables $f_{\partial,e}, e_{\partial,e}$ defined with respect to the effort variables $e$, see Definition \ref{Definition Boundary Port Variables in The Linear Case}.
\end{enumerate}
Thus, we are able to specify the dynamics of the system \eqref{General Lossless Port-Hamiltonian System - Infinite-Dimensional Case} with respect to the Stokes-Dirac structure $\mathcal{D}_{\mathcal{J}}$ in \eqref{Stokes-Dirac structure - Linear PH Systems}. By doing so, the power flow through the boundary of the spatial domain is encompassed, which is why we call such systems \emph{boundary port-Hamiltonian systems}. 
\begin{corollary}
	Consider the Hamiltonian system \eqref{General Lossless Port-Hamiltonian System - Infinite-Dimensional Case} with Hamiltonian \eqref{General Lossless Port-Hamiltonian System - Hamiltonian H}, which is augmented with the boundary port variables given in Definition \ref{Definition Boundary Port Variables in The Linear Case}. The dynamics of the resulting boundary port-Hamiltonian system are geometrically specified by the requirement that for all $t >0$ we have
	\begin{align*}
		\left( \begin{pmatrix}
			\frac{\partial}{\partial t}x(\cdot,t) \\
			f_{\partial}(t) 
		\end{pmatrix}, \begin{pmatrix}
		\mathcal{Q}x(\cdot,t) \\
		e_{\partial}(t)
	\end{pmatrix} \right) \in \mathcal{D}_{\mathcal{J}},
	\end{align*}
with $f_{\partial}(t) = f_{\partial, \mathcal{Q}x(\cdot,t)}$, $e_{\partial}(t) = e_{\partial, \mathcal{Q}x(\cdot,t)}$, and $\mathcal{D}_{\mathcal{J}}$ the Stokes-Dirac structure \eqref{Stokes-Dirac structure - Linear PH Systems} defined with respect to the plus pairing \eqref{Plus Pairing - Infinite-Dimensional Case}. 
\end{corollary}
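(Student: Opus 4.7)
The statement is essentially a consistency check: it asks us to verify that the abstract PDE in \eqref{General Lossless Port-Hamiltonian System - Infinite-Dimensional Case}, augmented by the port-variable assignment of Definition \ref{Definition Boundary Port Variables in The Linear Case}, realizes the three defining relations of the Stokes-Dirac structure $\mathcal{D}_{\mathcal{J}}$ given in \eqref{Stokes-Dirac structure - Linear PH Systems}. The plan is therefore to unfold both sides of the claimed membership and match them line by line, with the only real work being to keep track of regularity so that all quantities are well defined.

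First I would fix $t>0$ and let $x(\cdot,t)$ be a classical solution of \eqref{General Lossless Port-Hamiltonian System - Infinite-Dimensional Case}. I would then set
\begin{align*}
 f(t) := \tfrac{\partial}{\partial t} x(\cdot,t), \qquad e(t) := \mathcal{Q}\, x(\cdot,t),
\end{align*}
and note that since $x(\cdot,t)$ is a classical solution, $\mathcal{Q} x(\cdot,t)$ must in particular lie in the domain of $\mathcal{J}$, namely $H^{N}([a,b],\mathbb{R}^{n})$, for the right-hand side of the evolution equation to make pointwise sense. Having secured $e(t)\in H^{N}([a,b],\mathbb{R}^{n})$, the PDE $\partial_{t} x = \mathcal{J}(\mathcal{Q}\,x)$ is precisely the assertion $f(t) = \mathcal{J}\, e(t)$, which is the second of the three conditions defining $\mathcal{D}_{\mathcal{J}}$.

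Next I would invoke Definition \ref{Definition Boundary Port Variables in The Linear Case} and set
\begin{align*}
 \begin{bmatrix} f_{\partial}(t) \\ e_{\partial}(t) \end{bmatrix}
 := \begin{bmatrix} f_{\partial,\, \mathcal{Q} x(\cdot,t)} \\ e_{\partial,\, \mathcal{Q} x(\cdot,t)} \end{bmatrix}
 = R_{\extern}\, \trace\!\bigl(\mathcal{Q}\, x(\cdot,t)\bigr)
 = R_{\extern}\, \trace\!\bigl(e(t)\bigr),
\end{align*}
which is precisely the third defining relation of $\mathcal{D}_{\mathcal{J}}$. Since $R_{\extern}\in\mathbb{R}^{2nN\times 2nN}$ and $\trace$ maps $H^{N}([a,b],\mathbb{R}^{n})$ into $\mathbb{R}^{2nN}$, the vector $(f_{\partial}(t),e_{\partial}(t))^{\top}\in\mathbb{R}^{nN}\times\mathbb{R}^{nN}$ is well defined. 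Combining the three items, the tuple $\bigl(f(t),f_{\partial}(t),e(t),e_{\partial}(t)\bigr)$ satisfies all requirements appearing in \eqref{Stokes-Dirac structure - Linear PH Systems}, so it belongs to $\mathcal{D}_{\mathcal{J}}$ for every $t>0$.

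The only step that is not pure bookkeeping is the regularity issue: one must know that $\mathcal{Q}\, x(\cdot,t)$ lies in $H^{N}([a,b],\mathbb{R}^{n})$ so that its trace exists. I would handle this by absorbing it into the standing hypothesis that $x$ is classical (so that $\mathcal{J}(\mathcal{Q} x(\cdot,t))$ is a pointwise $L^{2}$-valued expression), together with the assumption that the multiplication operator $\mathcal{Q}$ preserves $H^{N}$ (which is automatic for smooth and bounded coefficients as in the motivating examples, and in any event is part of the data defining the boundary port-Hamiltonian system). Since the reverse inclusion, i.e.\ that every triple satisfying the three relations of $\mathcal{D}_{\mathcal{J}}$ corresponds to a solution of \eqref{General Lossless Port-Hamiltonian System - Infinite-Dimensional Case} via the identifications $f=\partial_{t} x$, $e=\mathcal{Q}x$, is literally the same equivalence read in the opposite direction, no additional work is needed to conclude that the geometric specification is equivalent to the evolution equation together with the port-variable definition.
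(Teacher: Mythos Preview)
Your proposal is correct and matches the paper's approach: the paper does not give a formal proof of this corollary but instead, in the discussion immediately preceding it, simply compares the system description \eqref{General Lossless Port-Hamiltonian System - Infinite-Dimensional Case} with the constitutive relations of $\mathcal{D}_{\mathcal{J}}$ and reads off the identifications $f=\partial_t x$, $e=\mathcal{Q}x$, and $(f_\partial,e_\partial)=R_{\extern}\trace(e)$, exactly as you do. Your added remarks on the regularity of $\mathcal{Q}x(\cdot,t)$ are a welcome clarification that the paper leaves implicit in the phrase ``classical solution.''
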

Recall that the energy space $X$ is endowed with the inner product $\langle \cdot , \cdot \rangle_{\mathcal{Q}}$ defined in \eqref{Subsection The Lossless Transmission Line - Inner Product WRT Q}. Thus, for a classical solution $x \colon [0, \infty) \to X$ of the system \eqref{General Lossless Port-Hamiltonian System - Infinite-Dimensional Case} with the Hamiltonian $H \colon X \to \mathbb{R}$ given in \eqref{General Lossless Port-Hamiltonian System - Hamiltonian H} we obtain the following balance equation with respect to the boundary port variables: applying \eqref{Theorem Stokes-Like Theorem - Equation Representation 2}, we have for all $t \geq 0$,
\begin{align}
	\begin{split}
	\frac{d}{dt} \|x(\cdot,t) \|_{\mathcal{Q}}^2 = \frac{d}{dt} H(x(\cdot,t)) &= \frac{1}{2} \frac{d}{dt} \langle x(\cdot,t), \mathcal{Q}x(\cdot,t) \rangle_{L^2} \\
	&= \frac{1}{2} \langle \mathcal{J}\mathcal{Q}x(\cdot,t), \mathcal{Q}x(\cdot,t) \rangle_{L^2} + \frac{1}{2} \langle \mathcal{Q}x(\cdot,t), \mathcal{J}\mathcal{Q}x(\cdot,t) \rangle_{L^2} \\
	&= \frac{1}{2} \begin{bmatrix} 
		f_{\partial}^{\top}(t) & e_{\partial}^{\top}(t)
	\end{bmatrix}
\Sigma \begin{bmatrix}
	f_{\partial}(t) \\
	e_{\partial}(t)
\end{bmatrix} \\
&= \langle e_{\partial}(t), f_{\partial}(t) \rangle_2. 
\end{split}
\label{Power Balance for General Linear PH-Systems - Infinite Dimensions}
\end{align}
This equation expresses the fact that the change of internal power solely happens at the boundary of the spatial domain, and therefore generalizes the balance equations \eqref{Subsection Lossless Transmission Line - Energy Balance Equation} and \eqref{Subsection The Vibrating String - Balance Equation} we have derived in the introductory section of this chapter. 
\begin{remark}
	In Section \ref{Section Energy-storing Elements} we pointed out that, in order to distinguish between the power flowing into the energy-storing elements and the power flowing into the Dirac structure, we need to put a minus sign in front of the rate of change of the state $x$, see \eqref{Choice of Port Variables fS and eS}. In the infinite-dimensional case, the power flowing into the energy-storing elements would correspond to the inner product $\langle \frac{\delta H}{\delta x}(x),  \frac{\partial x}{\partial t} \rangle_{L^2}$, and the power flow into the Dirac structure would correspond to the inner product $\langle e_S, f_S \rangle_{L^2}$, leading to the relations $f = f_S = - \frac{\partial x}{\partial t}$ and $e = e_S =\frac{\delta H}{\delta x}(x)$. Necessarily, some minor changes have to be made concerning the Dirac structure and the plus pairing. To be more accurate, the plus pairing in \eqref{Plus Pairing - Infinite-Dimensional Case} would need to be changed to 
	\begin{align*}
		\ll (f^1, f_{\partial}^1, e^1, e_{\partial}^1), (f^2, f_{\partial}^2, e^2, e_{\partial}^2) \gg \hspace{0.1 cm} :=  \langle f^1, e^2 \rangle_{L^2} + \langle f_{\partial}^1, e_{\partial}^2 \rangle_2  + \langle e^1, f^2 \rangle_{L^2}  + \langle e_{\partial}^1, f_{\partial}^2 \rangle_2, 
	\end{align*}
that is, the plus pairing is induced by the canonical power pairing $\langle \cdot \mid \cdot \rangle$ on the Hilbert space $\mathcal{F} = \mathcal{E}$, and not by the one defined in \eqref{Power Pairing - Infinite-Dimensional Case}. Moreover, the Stokes-Dirac structure given in \eqref{Stokes-Dirac structure - Linear PH Systems} would be of the form
	\begin{align*}
	\mathcal{D}_{-\mathcal{J}} = \left\{ \left( \begin{pmatrix}
		f \\
		f_{\partial} \end{pmatrix},
	\begin{pmatrix}
		e \\
		e_{\partial}
	\end{pmatrix} \right) \in \mathcal{B} \hspace{0.1 cm} \Bigg| \hspace{0.1 cm} e \in H^N \left( [a,b], \mathbb{R}^n \right), \hspace{0.1 cm} - f = \mathcal{J}e , \hspace{0.1 cm} \begin{bmatrix}
		f_{\partial, e} \\
		e_{\partial, e}
	\end{bmatrix} = R_{\extern} \trace (e) \right\}.
\end{align*}
With these simple adjustments one could continue the aforementioned power flow convention from Chapter \ref{Chapter Finite-dimensional Port-Hamiltonian Systems}. The proof that $\mathcal{D}_{- \mathcal{J}}$ is indeed a Dirac structure with respect to the modified plus pairing works analogously. However, as both sign conventions are common in the literature (cf.  \cite{LeGorrec, MaschkeLagrange} and \cite{SchaftMaschke02, SchaftMaschke20}), we will stick with the sign convention applied throughout this chapter for the rest of the thesis.  
\end{remark}
This is a good starting point for the analysis of the system \eqref{General Lossless Port-Hamiltonian System - Infinite-Dimensional Case} with Hamiltonian \eqref{General Lossless Port-Hamiltonian System - Hamiltonian H}. To this end, one defines the (maximal) \emph{port-Hamiltonian operator} $A_{\mathcal{Q}} \colon D(A_{\mathcal{Q}}) \subset X \to X$ given by
\begin{align*}
	D(A_{\mathcal{Q}}) &= \left\{ x \in X \mid \mathcal{Q}x \in D(\mathcal{J}) \right\},  \\
	A_{\mathcal{Q}}x &= \mathcal{J}(\mathcal{Q}x), \hspace{0.5 cm} x \in D(A_{\mathcal{Q}}), 
\end{align*}
where $\mathcal{J}$ is the formally skew-symmetric operator defined in \eqref{Skew-Symmetric Differential Operator J - N-Dimensional}-\eqref{Skew-Symmetric Differential Operator J - N-Dimensional - Corresponding Matrices}, and studies the abstract Cauchy problem
\begin{align}
	\begin{split}
		\dot{x}(t) &= A_{\mathcal{Q}}x(t), \hspace{0.5 cm} t >0, \\
		x(0) &= x_0 \in X,
	\end{split}
	\label{Evolution Equation wrt pH-Operator AQ}
\end{align}
as discussed in the beginning of Chapter \ref{Chapter Infinite-dimensional Port-Hamiltonian Systems}. As for every partial differential equation, it is clear that one has to impose boundary conditions in order to guarantee well-posedness of the system \eqref{Evolution Equation wrt pH-Operator AQ}. Indeed, in \cite[Section 4]{LeGorrec} it has been shown that formulating the boundary conditions with respect to the boundary port variables, one can characterize those boundary conditions for which the associated port-Hamiltonian operator generates a contraction semigroup. 
\vspace{0.5 cm}\\
What we may deduce from equation \eqref{Power Balance for General Linear PH-Systems - Infinite Dimensions} is that the change of energy, and therefore the change of the state, is equal to the power flow at the boundary. Hence, it seems natural to let some input (or at least a part of it) act on the boundary of the spatial domain, and to define boundary control systems associated with boundary port-Hamiltonian systems, see \cite[Chapter 11]{JacobZwart}. Some control related problems of such systems have been successfully studied in the PhD theses \cite{Augner16} and \cite{Villegas}. 
\vspace{0.5 cm}\\
In this chapter, we have worked out the main differences of port-based modeling of  finite-dimensional systems to infinite-dimensional systems on a 1-dimensional spatial domain. We give a short overview of the major changes:
\begin{enumerate}
	\item[(i)] The gradient of the Hamiltonian function is replaced by the variational derivative $\frac{\delta H}{\delta x}$ of the Hamiltonian functional (the gradient $\nabla \mathcal{H}$ of the Hamiltonian density). 
	\item[(ii)] The skew-symmetric matrix $J$ is replaced by the formally skew-symmetric differential operator $\mathcal{J}$. 
	\item[(iii)] One has to incorporate the power flow at the boundary of the spatial domain (by means of the boundary port variables $f_{\partial}$ and $e_{\partial}$) into the definition of the (Stokes-) Dirac structure. 
	\item[(iv)] The power pairing $\langle \cdot \mid \cdot \rangle \colon \mathcal{E} \times \mathcal{F} \to \mathbb{R}$ does, in general, not coincide with the canonical inner product on $\mathcal{F} = \mathcal{E}$, but is slightly modified. 
\end{enumerate}
Moreover, in Theorem \ref{Theorem Dirac Structure Associated With Skew-Symmetric Operator J} we have defined the underlying power-conserving interconnection structure, namely the Stokes-Dirac structure $\mathcal{D}_{\mathcal{J}}$, of the system \eqref{General Lossless Port-Hamiltonian System - Infinite-Dimensional Case}. 
\vspace{0.5 cm}\\
Now that we have gained a basic understanding concerning port-based modeling of infinite-dimensional systems, we can finally move on to the main part of this thesis. We want to define two first order Hamiltonian systems that are coupled through a moving interface as a single boundary port-Hamiltonian system.

At this stage it is significant to mention two points:

\chapter{Boundary Port-Hamiltonian Systems with a Moving Interface}
\label{Chapter Boundary Port-Hamiltonian Systems with a Moving Interface}
The main chapter of this thesis is devoted to the formulation of two systems of two conservation laws defined on two complementary intervals that are coupled by some moving interface as a single boundary port-Hamiltonian system on the composed domain. Given the great success of the study of boundary port-Hamiltonian systems from Chapter \ref{Chapter Infinite-dimensional Port-Hamiltonian Systems}, in \cite{Diagne} one wanted to investigate whether the port-Hamiltonian formulation can be extended to two systems that are coupled by a moving interface. As we have already mentioned in the introductory chapter of this thesis, this approach has been motivated by the fact that there are several cases where the system is heterogeneous in the considered spatial domain, and thus may be subdivided in several phases. Due to the distinct physical properties on the respective subdomains, there might arise some discontinuities of some variables at the interface, e.g., some pressure discontinuities of two gases separated by a moving piston, or different heat fluxes at the interface of a liquid-solid transition. These discontinuities are modeled by a set of interface relations, and result in a change of the position of the interface. According to \cite{Diagne}, the port-Hamiltonian formulation of this scenario yields an abstract control system, where the input is defined as the velocity of the moving interface position. 
\vspace{0.5 cm}\\
\emph{Our utmost objective is to rewrite the systems defined in \cite{Diagne} as evolution problems and to analyze them by means of the framework of strongly continuous semigroups}, see Chapter \ref{Chapter Some Background}. It should be mentioned beforehand that we will not be able to cover all problems regarding this fairly new system description, as this is beyond the scope of this thesis. Furthermore, the concept of a system used in this chapter remains purely formal. However, we seek to provide some useful results that may help to prove well-posedness of this class of systems, which one may call boundary port-Hamiltonian interface control systems. This chapter is organized as follows. In Section \ref{Section Port-Hamiltonian System of two Conservation Laws} we recall the port-Hamiltonian formulation of a system of two conservation laws with an arbitrary Hamiltonian functional. Section \ref{Section Two Port-Hamiltonian Systems Coupled by an Interface} deals with the port-Hamiltonian formulation of two systems that are coupled by some interface at a fixed position. We will introduce interface port variables associated with the predefined interface relations, and so-called color functions in order to express the state variables on the total spatial domain. Furthermore, we will define the interconnected system as a port-Hamiltonian system associated with a formally skew-symmetric operator, and present the underyling Dirac structure of this class of systems. Section \ref{Section Port-Hamiltonian Systems Coupled through a Moving Interface} aims to repeat the preceding procedure in case of a moving interface. We will see that this approach yields a system that has the form of an abstract control system, with the input being the velocity of the displacement of the interface position. However, there are several difficulties concerning the proper formulation and analysis of this class of systems. This is why we tackle some of these problems in the upfollowing sections. In Section \ref{Section A Simplified System} we will greatly simplify the systems obtained in Section \ref{Section Two Port-Hamiltonian Systems Coupled by an Interface}, and show that, under certain boundary and interface conditions, the associated port-Hamiltonian operator generates a contraction semigroup, see Theorem~\ref{Interface - Theorem A Generates a Contraction Semigroup}. Afterwards, we present criteria for when this operator even generates an exponentially stable (contraction) semigroup. Furthermore, we specify its Hilbert space adjoint and the associated resolvent operator. Lastly, in Subsection \ref{Subsection Stability of the Family of Infinitesimal Generators} we will come back to the case where the interface is moving over time. Using the simplified system discussed throughout Section~\ref{Section A Simplified System}, we define a family of port-Hamiltonian operators that encompasses the position of the moving interface, and prove the stability of this family of operators.
\vspace{0.5 cm}\\
We assume that some basic concepts about weak derivatives and distributions are known to the reader. For a brief overview, we refer to the first two sections in \cite[Chapter 13]{Tucsnak-Weiss}. For a comprehensive exposition we suggest \cite{Vladimirov}. We will denote the space of test functions by  $\mathcal{D}(a,b) := \mathcal{C}_{c}^{\infty}((a,b),\mathbb{R})$, and the vector space of distributions by $\mathcal{D}'(a,b)$. Throughout, we will identify a function $f \in L^2([a,b], \mathbb{R})$ with its regular distribution $\Lambda_{f} \colon \mathcal{D}(a,b) \to \mathbb{R}$. In particular, we will sometimes regard the space $L^2([a,b], \mathbb{R})$ as a subspace of $\mathcal{D}'(a,b)$. The action of a regular distribution $\Lambda_{f}$ on a test function $\varphi \in \mathcal{D}(a,b)$ is denoted by 
\begin{align*}
	\langle \Lambda_f , \varphi \rangle_{\mathcal{D}'(a,b)} := \langle f, \varphi \rangle_{L^2} = \int_{a}^{b} f(z) \varphi(z) \, dz. 
\end{align*}

\section{Port-Hamiltonian System of two Conservation Laws}
\label{Section Port-Hamiltonian System of two Conservation Laws}

In this section, we briefly recall the port-Hamiltonian formulation of a system of two conservation laws describing an energy-conserving exchange of energy between different physical domains. We have already come across this type of systems in Section \ref{Section Motivating Examples}, namely the first-order systems of the lossless transmission line and the vibrating string. However, in this section, we will slightly generalize these systems by considering arbitrary Hamiltonian functionals.
\vspace{0.5 cm}\\
Let $X = L^2([a,b], \mathbb{R}^2)$ be endowed with the standard $L^2$-inner product $\langle \cdot, \cdot \rangle_{L^2}$ defined in \eqref{L^2 inner product}. As mentioned before, let $H \colon X \to \mathbb{R}$ be some arbitrary Hamiltonian functional of the form
\begin{align}
	H(x) = \int_{a}^{b} \mathcal{H}(x(z)) \, dz, \hspace{0.5 cm} x \in X,
	\label{Interface Paper - Hamiltonian H}
\end{align}
with Hamiltonian density $\mathcal{H} \in \mathcal{C}^{\infty}(\mathbb{R}^2, \mathbb{R})$ and variational derivative $\frac{\delta }{\delta x}H \colon X \to X$, as described in the beginning of Section \ref{Section Motivating Examples}. Now, consider a system of two conservation laws
\begin{align}
	\frac{\partial}{\partial t} x(z,t)  + \frac{\partial}{\partial z} \mathcal{N}(x(\cdot,t))(z) = 0, \hspace{0.5 cm} z \in [a,b], \, t >0,
	\label{Interface Paper - Standard PDE - Conservation Law}
\end{align}
on the state space $X$, with the effort variables defined as
\begin{align*}
	\mathcal{N}(x) = \begin{bmatrix}
		\mathcal{N}_1(x) \\
		\mathcal{N}_2(x)
	\end{bmatrix} = \begin{bmatrix}
		0 & 1 \\
		1 & 0 
	\end{bmatrix} \begin{bmatrix}
	\delta_{x_1} H(x) \\
	\delta_{x_2} H(x)
\end{bmatrix}, \hspace{0.5 cm} x \in X.
\end{align*}
Then the system of conservation laws \eqref{Interface Paper - Standard PDE - Conservation Law} may be equivalently rewritten as the Hamiltonian system
\begin{align}
\frac{\partial}{\partial t} x(z,t) = \mathcal{J} \frac{\delta}{\delta x} H(x(\cdot,t))(z)
	\label{Interface Paper - Standard PDE}
\end{align}
generated by the Hamiltonian \eqref{Interface Paper - Hamiltonian H} and defined with respect to the differential operator 
$\mathcal{J} \colon D(\mathcal{J}) \subset X \to X$ given by
\begin{align*}
	\begin{split}
	D(\mathcal{J}) &= H^1([a,b], \mathbb{R}^2), \\
	\mathcal{J}x &= \begin{bmatrix}
		0 & - \frac{d}{dz} \\
		-\frac{d}{dz} & 0 
	\end{bmatrix}x, \hspace{0.5 cm} x \in D(\mathcal{J}).
\end{split}
%\label{Interface Paper - Operator J}
\end{align*}
Since the matrix
\begin{align}
P_1 = \begin{bmatrix}
	0 & - 1 \\
	-1 & 0
\end{bmatrix} \in \mathbb{R}^{2 \times 2}
\label{Interface Paper - Matrix P1}
\end{align} 
is symmetric, recall that, by Theorem \ref{Theorem Stokes-Like Theorem}, the matrix differential operator $\mathcal{J} = P_1 \frac{d}{dz}$ is formally skew-symmetric. %Furthermore, $\mathcal{J}$ satisfies the Jacobi identities and hence is a Hamiltonian operator, defining a Poisson bracket on the functionals of the state variables \cite{Olver} (TODO)
\vspace{0.5 cm}\\
Next, we need to augment the Hamiltonian system \eqref{Interface Paper - Standard PDE} with boundary port variables. In contrast to \cite{Diagne}, we will stick with the definition from Section \ref{Section Boundary Port-Hamiltonian Systems Associated with Skew-symmetric Operators}. This will have an impact on the definition of the Dirac structure and the corresponding plus pairing $\langle \cdot \mid \cdot \rangle$, and hence slightly deviates from the execution in \cite{Diagne}. Now, since $N=1$, the matrix $\mathcal{P}$ defined in \eqref{Matrix P - Distributed-Parameter} reduces to $\mathcal{P} = P_1$, and the invertible matrix $R_{\extern}$ defined in \eqref{Matrix Rext} is therefore given by
\begin{align}
	\label{Interface Paper - Matrix Rext}
	R_{\extern} = \frac{1}{\sqrt{2}} \begin{bmatrix}
		P_1 & - P_1 \\
		I & I 
	\end{bmatrix} \in \mathbb{R}^{4 \times 4}. 
\end{align}
According to \eqref{Boundary Flow and Effort Linear Case}, the boundary flow variable $f_{\partial} = f_{\partial, e} \in \mathbb{R}^2$ and boundary effort variable $e_{\partial} = e_{\partial, e} \in \mathbb{R}^2$ are for all effort variables $ e  \in D(\mathcal{J})$ given by
\begin{align*}
	%\label{Interface Paper - Boundary Port Variables - First Order}
	\begin{bmatrix}
		f_{\partial} \\
		e_{\partial}
	\end{bmatrix} = R_{\extern} \trace(e) = \begin{bmatrix}  \frac{1}{\sqrt{2}} (P_1e(b) - P_1 e(a)) \\
		\frac{1}{\sqrt{2}} ( e(b) + e(a))
	\end{bmatrix}. 
\end{align*}
The only difference is that the geometric specification of the dynamics of the system \eqref{Interface Paper - Standard PDE} with respect to the Stokes-Dirac structure $\mathcal{D}_{\mathcal{J}}$ defined in \eqref{Stokes-Dirac structure - Linear PH Systems} becomes
	\begin{align*}
		\left( \begin{pmatrix}
			\partial_t x_1 (\cdot,t) \\
			\partial_t x_2 (\cdot,t) \\
			f_{\partial}(t) 
		\end{pmatrix}, \begin{pmatrix}
		\delta_{x_1} H(x(\cdot,t)) \\
		\delta_{x_2} H(x(\cdot,t)) \\
		e_{\partial}(t)
	\end{pmatrix} \right) \in \mathcal{D}_{\mathcal{J}}, \hspace{0.5 cm} t >0,
	\end{align*}
%with $\mathcal{D}_{\mathcal{J}}$ being the Stokes-Dirac structure defined in \eqref{Interface Paper - Dirac Structure D}. 
%\end{corollary} 
where $f_{\partial}(t) = f_{\partial, \delta_xH(x(\cdot,t))}$ and $e_{\partial}(t) = e_{\partial, \delta_x H(x(\cdot,t))}$. 
\vspace{0.5 cm}\\
Due to the constitutive relation of the Dirac structure $\mathcal{D}_{\mathcal{J}}$, the rate of energy change along a solution $x \colon [0, \infty) \to X$ of \eqref{Interface Paper - Standard PDE} is still given for all $t \geq 0$ by the following balance equation:
\begin{align*}
	&\hspace{0.5 cm} \frac{d}{dt} H(x(\cdot,t)) \\
	&= \frac{d}{dt} \int_{a}^{b} \mathcal{H}(x(z,t)) \, dz  \\
	&= \int_{a}^{b} \frac{\partial}{\partial t} \mathcal{H}(x(z,t)) \, dz \\
	&= \int_{a}^{b} \delta_{x_1} H(x(z,t))  \frac{\partial}{\partial t} x_1(z,t) + \delta_{x_2} H(x(z,t)) \frac{\partial}{\partial t} x_2 (z,t) \, dz \\
	&= \int_{a}^{b}  - \delta_{x_1} H(x(z,t)) \frac{\partial}{\partial z} \big( \delta{x_2} H(x(z,t)) \big) - \delta_{x_2} H(x(z,t)) \frac{\partial}{\partial z} \big( \delta_{x_1}H(x(z,t)) \big) \, dz \\
	&= \int_{a}^{b} - \frac{\partial}{\partial z} \left[ \delta_{x_1 }H(x(z,t)) \delta_{x_2}H(x(z,t)) \right] \, dz \\
	&= - \big[ \delta_{x_1}H(x(z,t)) \delta_{x_2}H(x(z,t)) \big]_{a}^{b} \\
	&= \frac{1}{2} \big[ \delta_x H(x(z,t))^{\top} P_1 \delta_x H(x(z,t)) \big]_{a}^{b} \\
	&= \langle e_{\partial}(t) , f_{\partial}(t) \rangle_2,
\end{align*}
where $\delta_x H(x(z,t)) = \delta_x H(x(\cdot,t))(z)$. So, just as in Subsection \ref{Subsection Dirac Structure and Boundary Port-Hamiltonian Systems}, this equality states that the system's change of energy equals the power supplied to the system through the boundary.  
\vspace{0.5 cm}\\
Now that we have clarified the port-Hamiltonian formulation of systems of two conservation laws defined by an arbitrary Hamiltonian functional $H$ of the form \eqref{Interface Paper - Hamiltonian H}, we may deal with our main concern next.

\section{Two Port-Hamiltonian Systems Coupled by an Interface}
\label{Section Two Port-Hamiltonian Systems Coupled by an Interface}

In the next two sections we examine two boundary port-Hamiltonian systems defined on two complementary intervals which are coupled by a moving interface. We seek to derive a formulation of the interconnected system as a boundary port-Hamiltonian system. It is well-known that the (power-conserving) interconnection of port-Hamiltonian systems again defines a port-Hamiltonian system. This interconnection is realized by the composition of the underyling Dirac structures through a number of mutually shared port variables. The internally stored energy of the interconnected system, described by the Hamiltonian on the product space, is simply the sum of the Hamiltonians of the constituting subsystems. We refer to \cite[Chapter 6]{SchaftJeltsema} for the finite-dimensional case as well as to \cite[Chapter 7]{Villegas} and the references specified therein for the infinite-dimensional case. In \cite[Section 2.2]{Diagne} it was depicted how port-Hamiltonian systems are coupled through their (fixed) boundaries.
\vspace{0.5 cm}\\
However, as we consider moving interfaces, we will have to deal with time-varying spatial domains, and thus we have to take a different approach. In order to keep track of the interface position, we are going to take the characteristic functions of the time-varying spatial domains of each subsystem to be part of the state variables. In preparation for the time-varying case studied in Section \ref{Section Port-Hamiltonian Systems Coupled through a Moving Interface}, this section is devoted to the aforementioned formulation as a boundary port-Hamiltonian system in case of a fixed interface position, and is organized as follows: In Subsection \ref{Subsection Prolongation with Color Functions} we introduce the characteristic functions of the complementary subdomains of the interconnected subsystems, called color-functions, as part of the state variables. Furthermore, we define the Hamiltonian and its variational derivative on the extended state space, and we specify interface port variables with respect to some predefined interface relations. In Subsection \ref{Subsection Balance Equations of the State Variables} we state the balance equations of the constituting state variables. The main difficulty relies on the integration of the interface relations into the definition of these balance equations. Finally, in Subsection \ref{Subsection Port-Hamiltonian Formulation and Dirac Structure} we are going to write the system of balance equations as a (generalized) Hamiltonian system with respect to some formally skew-symmetric operator on the extended state space. Moreover, by taking the interface and boundary port variables into account, we may define the underlying Dirac structure and state a balance equation of the total system with respect to the boundary and the interface port variables.

\subsection{Color Functions and Interface Port Variables}
\label{Subsection Prolongation with Color Functions}
Our starting situation is as follows: let $H^- \colon L^2((a,0), \mathbb{R}^2) \to \mathbb{R}$ and
$H^+ \colon L^2((0,b), \mathbb{R}^2) \to \mathbb{R}$ be two Hamiltonian functionals given by
\begin{align}
	H^-(x^-) &= \int_{a}^{0} \mathcal{H}^-(x^-(z)) \, dz, \hspace{0.5 cm} x^- \in L^2((a,0), \mathbb{R}^2), \hspace{0.3 cm} \text{and} \label{Interface Paper - Hamiltonian H-} \\
	H^+(x^+) &= \int_{0}^{b} \mathcal{H}^+(x^+(z)) \, dz, \hspace{0.5  cm} x^+ \in L^2((0,b), \mathbb{R}^2), \label{Interface Paper - Hamiltonian H+}
\end{align}
with the distinct energy densities $\mathcal{H}^{\pm} \in \mathcal{C}^{\infty}(\mathbb{R}^2, \mathbb{R})$, respectively, indicating that the physical properties of the system differ on the respective domain. We assume that we may, in a way, extend these Hamiltonians to Hamiltonian functionals $H^{\pm} \colon X \to \mathbb{R}$. Now, consider the two systems of conservation laws
\begin{align}
	\begin{split}
		\frac{\partial}{\partial t} x^- + \frac{\partial}{\partial z} \mathcal{N}^-(x^-) &= 0, \hspace{0.5 cm}  z \in [a,0), \hspace{0.1 cm} t >0,  \\
		\frac{\partial}{\partial t} x^+ + \frac{\partial}{\partial z} \mathcal{N}^+(x^+) &= 0, \hspace{0.5 cm} z \in (0,b], \hspace{0.1 cm} t > 0,
	\end{split} 
	\label{Interface Paper - Two Systems of Conservation Laws on Respective Domain} 
\end{align}
defined on the respective interval $[a,0)$ or $(0,b]$, where
\begin{align}
	\mathcal{N}^-(x^-) = \begin{bmatrix}
		0 & 1 \\
		1 & 0
	\end{bmatrix} \frac{\delta}{\delta x^-}H^-(x^-) \hspace{0.3 cm} \text{and} \hspace{0.3 cm} \mathcal{N}^+(x^+) = \begin{bmatrix}
	0 & 1 \\
	1 & 0 
\end{bmatrix} \frac{\delta}{\delta x^+}H^+(x^+)
\label{Interface Paper - Relation Flux Variable and Variational Derivative}
\end{align} are the \emph{flux variables} corresponding to the Hamiltonians $H^-$ and $H^+$ of the respective system defined in \eqref{Interface Paper - Hamiltonian H-} and \eqref{Interface Paper - Hamiltonian H+}, respectively. Our objective is to interconnect these two systems to a single port-Hamiltonian system on the composed spatial domain $[a,b]$, where the position of the interface is assumed to be at the fixed position $z=0$. In order to do so, we need to introduce suitable state variables defined on the composed domain, and we need to define some \emph{interface relations} at $z =0$. These interface relations simultaneously constitute the \emph{interface port variables} $(e_I, f_I) \in \mathbb{R}^2$ which we will need do define the underyling Dirac structure of the coupled system. These are given by
\begin{align}
	f_I &= \mathcal{N}_1^+(x^+)(0^+) = \mathcal{N}_1^-(x^-)(0^-), \label{Interface Paper - Interface Variable fI} \\
	-e_I &= \mathcal{N}_2^+(x^+)(0^+) - \mathcal{N}_2^-(x^-)(0^-). \label{Interface Paper - Interface Variable eI}
\end{align}
Equation \eqref{Interface Paper - Interface Variable fI} is a continuity equation of the flux variable $\mathcal{N}_1$, called the privileged variable, and \eqref{Interface Paper - Interface Variable eI} is a balance equation of the flux variable $\mathcal{N}_2$ with external (source) term $e_I$.
\vspace{0.5 cm}\\
In order to keep track of the interface position, we use the characteristic functions of the domains of the two systems given in \eqref{Interface Paper - Two Systems of Conservation Laws on Respective Domain}, called \emph{color functions}:
\begin{align}
	c_0(z,t) = \begin{cases} 
1, & z \in [a,0), \\
0, & z \in [0,b],
\end{cases} \hspace{0.3 cm} \text{and} \hspace{0.3 cm} \overline{c}_0(z,t) =  \begin{cases} 
0, & z \in [a,0], \\
1, & z \in (0,b].
\end{cases}
\label{Interface Paper - Characteristic Functions c0 and c0overline}
\end{align}
This allows us to write the state variables of the coupled system as the sum of prolongations of the variables of each subsystem to the composed domain $[a,b]$:
\begin{align}
	\begin{split}
	x(z,t) &= x^-(z,t) + x^+(z,t) \\
	&= c_0(z,t)x(z,t) + \overline{c}_0(z,t)x(z,t).
\end{split}
\label{Interface Paper - State Variable x Split}
\end{align}
Next, we want to define the Hamiltonian and, hence,  the flux variables of the total system. To this end, we introduce the extended state space $\tilde{X} =L^2([a,b], \mathbb{R}^4)$. The extended state variable is denoted by
\begin{align}
	\tilde{x} = (x, c, \overline{c}) \in \tilde{X},
	\label{Interface Paper - Extended State Variable xtilde}
\end{align}
where we want to stress that $\overline{c}$ is fully independent of $c$. While in the fixed interface scenario, it might be unnecessary to consider the extended state space $\tilde{X}$ (the color functions $c_0$ and $\overline{c}_0$ are time-independent), this is essential in case of a moving interface discussed in Section~\ref{Section Port-Hamiltonian Systems Coupled through a Moving Interface}. So, in preparation for the moving interface scenario, we will (for now) formulate the coupled system on the extended state space $\tilde{X}$.
\vspace{0.5 cm}\\ 
We start with the energy density $\mathcal{H} \in \mathcal{C}^{\infty}(\mathbb{R}^4, \mathbb{R})$, which is given by
\begin{align}
	\mathcal{H}(x,c, \overline{c}) = c \mathcal{H}^-(x) + \overline{c} \mathcal{H}^+(x), \hspace{0.5 cm} \tilde{x} \in \mathbb{R}^4.
	\label{Interface Paper - Hamiltonian Density Split}
\end{align}
Next, define $H \colon \tilde{X} \to \mathbb{R}$ as
\begin{align}
	H(x,c, \overline{c}) = \int_{a}^{b} \mathcal{H}(x(z), c(z), \overline{c}(z)) \, dz, \hspace{0.5 cm} \tilde{x} \in \tilde{X}.
	\label{Interface Paper - Hamiltonian H of Coupled System}
\end{align}
Since $H^{\pm} \colon X \to \mathbb{R}$ are functionals, one can readily see that $H$ is a functional as well. The variational derivative $\delta_{\tilde{x}} H \colon \tilde{X} \to \tilde{X}$ is again induced by the gradient of the energy density $\mathcal{H}$ defined in \eqref{Interface Paper - Hamiltonian Density Split}, i.e., 
\begin{align}
	\label{Interface Paper - Extended Variational Derivatives}
	\delta_{\tilde{x}} H(\tilde{x}) = \begin{bmatrix}
		\delta_{x} H(x, c, \overline{c}) \\
		\delta_{c} H(x, c, \overline{c}) \\
		\delta_{\overline{c}} H(x, c, \overline{c})
	\end{bmatrix} = \begin{bmatrix}
		c \delta_{x} H^-(x) +  \overline{c} \delta_{x} H^+(x) \\
		\mathcal{H}^-(x) \\
		\mathcal{H}^+(x)
	\end{bmatrix}, \hspace{0.5 cm } \tilde{x} \in \tilde{X}.
\end{align}
If we  now choose $c$ and $\overline{c}$ as the color functions defined in \eqref{Interface Paper - Characteristic Functions c0 and c0overline}, then we obtain for all $(x, c_0, \overline{c}_0) \in \tilde{X}$,
\begin{align*}
	H(x, c_0, \overline{c}_0) &= \int_{a}^{b} c_0(z) \mathcal{H}^-(x(z)) + \overline{c}_0(z) \mathcal{H}^+(x(z)) \, dz \\
	&= \int_{a}^{0} \mathcal{H}^-(x(z)) \, dz + \int_{0}^{b} \mathcal{H}^+(x(z)) \, dz \\
	&= H^-(x^-) + H^+(x^+). 
\end{align*}
So, $H(\cdot ,c_0, \overline{c}_0)$ describes the internally stored energy, which is given by the sum of the Hamiltonians of the subsystems defined in \eqref{Interface Paper - Two Systems of Conservation Laws on Respective Domain}, and therefore is the Hamiltonian of the coupled system.
\vspace{0.5 cm}\\
The vector of flux variables of the coupled system becomes
\begin{align}
	\mathcal{N}(x,c_0, \overline{c}_0) = c_0 \mathcal{N}^-(x) + \overline{c}_0 \mathcal{N}^+(x), \hspace{0.5 cm} x \in X, 
	\label{Interface Paper - Flux Variable N Split}
\end{align}
which obviously satisfies 
\begin{align*}
	c_0 \mathcal{N}(x,c_0, \overline{c}_0) =  c_0 \mathcal{N}^-(x) \hspace{0.3 cm} \text{and} \hspace{0.3 cm} 	\overline{c}_0 \mathcal{N}(x,c_0, \overline{c}_0)  =  \overline{c}_0 \mathcal{N}^+(x).
\end{align*} 
For the sake of brevity, for $i = 1,2,$ we will sometimes write $\mathcal{N}_i^{\pm}(z,t)$ instead of $\mathcal{N}_i^{\pm}(x)(z,t)$, and $\mathcal{N}_i(z,t)$ instead of
\begin{align*}
	 \mathcal{N}_i(x,c_0, \overline{c}_0)(z,t) = \mathcal{N}_i(x(\cdot,t), c_0(\cdot,t), \overline{c}_0(\cdot,t))(z). 
\end{align*} 
Note that the first row of \eqref{Interface Paper - Extended Variational Derivatives} corresponds to the definition of the flux variable (\ref{Interface Paper - Flux Variable N Split}) in the following way:
\begin{align}
	\begin{split}
	\begin{bmatrix}
		0 & 1 \\
		1 & 0 
	\end{bmatrix} \delta_x H(x, c_0, \overline{c}_0) &= \begin{bmatrix}
	0 & 1 \\
	1 & 0 
\end{bmatrix}  c_0 \delta_x H^-(x) + \overline{c}_0  \delta_x H^+ (x)  \\
 &= c_0 \mathcal{N}^-(x) + \overline{c}_0 \mathcal{N}^+(x)  \\
 &= \mathcal{N}(x, c_0, \overline{c}_0). 
\end{split}
\label{Interface Paper - Relation Flux Variable and Variational Derivative - Composed Domain}
\end{align}
In order to define the coupled system as a Hamiltonian-like system on the extended state space $\tilde{X}$, we need to define the balance equation of each variable constituting the extended state variable $\tilde{x} = (x, c_0, \overline{c}_0)$, and to include the interface relations \eqref{Interface Paper - Interface Variable fI}-\eqref{Interface Paper - Interface Variable eI}. This is what we are going to do next.

\subsection{Balance Equations of the State Variables}
\label{Subsection Balance Equations of the State Variables}

The coupling of the two systems of conservation laws \eqref{Interface Paper - Two Systems of Conservation Laws on Respective Domain} at the (interface) position $z=0$ with the interface relations \eqref{Interface Paper - Interface Variable fI}-\eqref{Interface Paper - Interface Variable eI} led us to the introduction of the prolongated flux variables \eqref{Interface Paper - Flux Variable N Split} defined on the composed domain $[a,b]$. Consequently, the flux variable $\mathcal{N}_1(x, c_0, \overline{c}_0)$ satisfies a continuity equation at the interface, whereas the flux variable $\mathcal{N}_2(x, c_0, \overline{c}_0)$ satisfies a balance equation at the interface. In the following, we want to state a conservation law for each of the variables $x_1$, $x_2$, $c_0$, and $\overline{c}_0$ on the whole domain $[a,b]$ having regard to the interface relations.
\paragraph*{Conservation laws of $\mathbf{c_0}$ and $\mathbf{\overline{c}_0}$.} First of all, let us state the balance equations of the color functions $c_0$ and $\overline{c}_0$ defined in \eqref{Interface Paper - Characteristic Functions c0 and c0overline} which are part of the extended state variable $\tilde{x} \in \tilde{X} = L^2([a,b], \mathbb{R}^4)$ defined in \eqref{Interface Paper - Extended State Variable xtilde} as well. It is clear that, for all $t \geq 0$, they satisfy the trivial conservation laws
\begin{align}
	\partial_t c_0 = 0 \hspace{0.3 cm} \text{and} \hspace{0.3 cm} \partial_t \overline{c}_0 = 0,
	\label{Interface Paper - Conservation Laws of Color Functions with Fixed Interface Position}
\end{align}
respectively.
\paragraph*{Conservation law of $\mathbf{x_1}$.} Next, consider the conservation law of the state variable $x_1$ on $[a,b]$. Assuming that the individual conservation laws \eqref{Interface Paper - Two Systems of Conservation Laws on Respective Domain} are satisfied, with the aid of the flux variable \eqref{Interface Paper - Flux Variable N Split}, it may be formally written as
\begin{align}
	\begin{split}
	\partial_t x_1 &= - \partial_z \mathcal{N}_1(x, c_0, \overline{c}_0) \\
	&= - \partial_z ( c_0 \mathcal{N}_1^-(x) + \overline{c}_0 \mathcal{N}_1^+(x)  ) \\
	&= - \partial_z \left( c_0 \mathcal{N}_1(x, c_0, \overline{c}_0) + \overline{c}_0 \mathcal{N}_1(x, c_0, \overline{c}_0)  \right) \\
	&= \underbrace{- \left[ \partial_z ( c_0 \cdot) + \partial_z (\overline{c}_0 \cdot) \right]}_{=: \mathbf{d}_0} \mathcal{N}_1(x, c_0, \overline{c}_0).
\end{split}
\label{Interface Paper - Conservation Law of x1 - Nonmoving Interface}
\end{align}
To begin with, for all $t >0$, equation (\ref{Interface Paper - Conservation Law of x1 - Nonmoving Interface}) holds in $\mathcal{D}'(a,b)$: let $\varphi \in \mathcal{D}(a,b)$. Then, due to \eqref{Interface Paper - Two Systems of Conservation Laws on Respective Domain} and the continuity condition \eqref{Interface Paper - Interface Variable fI}, the following holds for all $t > 0$:
	\begin{align*}
		\allowdisplaybreaks
	% &\hspace{0.5 cm} \langle \Lambda_{\mathbf{d}_0 \mathcal{N}_1(x,c_0, \overline{c}_0)(\cdot,t)}, \varphi \rangle_{\mathcal{D}'(a,b)} \\
	&\hspace{0.5 cm} \int_{a}^{b} \left[ \mathbf{d}_0  \mathcal{N}_1(x,c_0, \overline{c}_0) (z,t) \right] \varphi(z) \, dz \\
	&= \int_{a}^{b} - \left[ \partial_z (c_0(z,t) \mathcal{N}_1(z,t)) + \partial_z (\overline{c}_0 (z,t) \mathcal{N}_1(z,t)) \right] \varphi(z) \,  dz  \\
	&= \int_{a}^{b} - \left[ \partial_z (c_0(z,t) \mathcal{N}_1 (z,t)) \right] \varphi(z) \, dz + \int_{a}^{b} - \left[  \partial_z (\overline{c}_0 (z,t) \mathcal{N}_1(z,t)) \right] \varphi(z) \, dz  \\
\text{(part. int.)} \hspace{0.3 cm} &= \int_{a}^{b} c_0(z,t) \mathcal{N}_1(z,t) \frac{d}{dz} \varphi(z) \, dz + \int_{a}^{b} \overline{c}_0(z,t) \mathcal{N}_1(z,t) \frac{d}{dz} \varphi(z) \, dz \\
	&= \int_{a}^{0} \mathcal{N}_1^-(z,t) \frac{d}{dz} \varphi(z) \, dz + \int_{0}^{b}  \mathcal{N}_1^+(z,t) \frac{d}{dz} \varphi(z) \, dz \\
\text{(part. int.)} \hspace{0.3 cm}	&=  \mathcal{N}_1^-(0^-,t) \varphi(0) + \int_{a}^{0}  - \partial_z \mathcal{N}_1^-(z,t)  \varphi(z) \,  dz \\
	&\hspace{0.4 cm} - \mathcal{N}_1^+(0^+,t) \varphi(0)   + \int_{0}^{b} - \partial_z \mathcal{N}_1^+(z,t)  \varphi(z) \, dz \\
\text{\eqref{Interface Paper - Two Systems of Conservation Laws on Respective Domain}} \hspace{0.3 cm}	&= \mathcal{N}_1^-(0^-,t) \varphi(0) + \int_{a}^{0}  \partial_t x_1^-(z,t)  \varphi(z) \,  dz \color{white}\text{abcdefghijklmnopqrstuvw.} \\
	&\hspace{0.4 cm} - \mathcal{N}_1^+(0^+,t) \varphi(0)   + \int_{0}^{b} \partial_t x_1^+(z,t)  \varphi(z) \, dz \\
\text{\eqref{Interface Paper - Interface Variable fI}} \hspace{0.3 cm}	&= \int_{a}^{b} \partial_t x_1(z,t) \varphi(z) \, dz. %\\
% &= \langle \Lambda_{\partial_t x_1(\cdot,t)}, \varphi \rangle_{\mathcal{D}'(a,b)}.
\end{align*}

Note that for all $t >0$, $\varphi \in \mathcal{D}(a,b)$, one computes
\begin{align*}
- \int_{a}^{b} \frac{\partial}{\partial z} c_0(z,t) \mathcal{N}_1(z,t) \varphi(z) \, dz =  \int_{a}^{0} \frac{\partial}{\partial z} (\mathcal{N}_1^-(z,t) \varphi(z)) \, dz = \mathcal{N}_1^-(0^-,t) \varphi(0), 
\end{align*}
and
\begin{align*}
	- \int_{a}^{b} \frac{\partial}{\partial z} \overline{c}_0(z,t) \mathcal{N}_1(z,t) \varphi(z) \, dz = \int_{0}^{b} \frac{\partial}{\partial z} (\mathcal{N}_1^+(z,t) \varphi(z)) \, dz =  - \mathcal{N}_1^+(0^+,t) \varphi(0). 
\end{align*}
This implies that, due to the continuity condition \eqref{Interface Paper - Interface Variable fI}, the evaluation of the flux variables of the respective subdomain at the interface cancel each other out, and equation \eqref{Interface Paper - Conservation Law of x1 - Nonmoving Interface} becomes
\begin{align}
	\begin{split}
	\partial_t x_1(\cdot,t) &= \mathbf{d}_0 \mathcal{N}_1(x, c_0, \overline{c}_0)(\cdot,t)  \\
	&= - c_0(\cdot,t) \partial_z \mathcal{N}_1^-(x)(\cdot,t) - \partial_z c_0(\cdot,t)\mathcal{N}_1(x, c_0, \overline{c}_0)(\cdot,t) \\
	&\hspace{0.475 cm} -  \overline{c}_0(\cdot,t) \partial_z \mathcal{N}_1^+(x)(\cdot,t) - \partial_z \overline{c}_0(\cdot,t) \mathcal{N}_1(x, c_0, \overline{c}_0)(\cdot,t) \\
	&= - c_0(\cdot,t) \partial_z \mathcal{N}_1^-(x)(\cdot,t) - \overline{c}_0(\cdot,t) \partial_z \mathcal{N}_1^+(x)(\cdot,t).
\end{split}
	\label{Interface Paper - Conservation Law of x1 - Nonmoving Interface - Version 2 in L2}
\end{align}
As a consequence, equation \eqref{Interface Paper - Conservation Law of x1 - Nonmoving Interface - Version 2 in L2} holds, in fact,  in $L^2([a,b], \mathbb{R})$. This motivates to define the operator  $\mathbf{d}_0 \colon D(\mathbf{d}_0) \subset L^2([a,b], \mathbb{R}) \to L^2([a,b], \mathbb{R})$ in the following way:
\begin{align}
	 D(\mathbf{d}_0) &= \left\{ x \in L^2([a,b], \mathbb{R}) \mid x_{|(a,0)} \in H^1((a,0), \mathbb{R}), \hspace{0.1 cm} x_{|(0,b)} \in H^1((0,b), \mathbb{R}), \hspace{0.1 cm} x \in \mathcal{C}([a,b], \mathbb{R}) \right\}  \nonumber \\
	 &\hspace{0.12 cm} = H^1([a,b], \mathbb{R}), \label{Interface Paper - Operator d0} \\
	\mathbf{d}_0 x &= - \left[ \frac{d}{dz} (c_0 x) + \frac{d}{dz} (\overline{c}_0 x) \right], \hspace{0.5 cm} x \in D(\mathbf{d}_0). \nonumber
\end{align}
For a function $x = c_0 x^- + \overline{c}_0 x^+ \in D(\mathbf{d}_0)$ we have in particular that
\begin{align*}
\mathbf{d}_0 x = - c_0 \frac{d}{dz} x^- - \overline{c}_0 \frac{d}{dz} x^+. 
\end{align*} 
This way, the continuity equation \eqref{Interface Paper - Interface Variable fI} of the flux variable $\mathcal{N}_1(x,c_0, \overline{c}_0)$ is incorporated into the domain of the operator $\mathbf{d}_0$. This allows us to define classical solutions of the conservation law of the state variable $x_1$ in the following way.
\begin{definition}
	Let $x = (x_1,x_2) \colon [0, \infty) \to X$. Let $Y = L^2([a,b],\mathbb{R})$. We call $x_1 \colon [0,\infty) \to Y$ a classical solution of the conservation law
	\begin{align}
		\begin{split}
			\partial_t x_1(t) &= \mathbf{d}_0 \mathcal{N}_1 (x, c_0, \overline{c}_0)(\cdot,t),  \hspace{0.5 cm} t > 0, \\
			x_1(0) &= x_{0,1} \in Y, 
		\end{split}
	\label{Interface Paper - Definition Conservation Law of x1 - Nonmoving Interface}
	\end{align}
	where $\mathbf{d}_0$ is the operator given by  \eqref{Interface Paper - Operator d0}, and with the flux variable
	\begin{align*}
		\mathcal{N}_1(x, c_0, \overline{c}_0)  = c_0 \mathcal{N}_1^-(x) + \overline{c}_0 \mathcal{N}_1^+(x)
	\end{align*}
	of the systems of conservation laws \eqref{Interface Paper - Two Systems of Conservation Laws on Respective Domain} coupled at $z = 0$, if:
	\begin{enumerate}
		\item[(i)] $x_1 \in \mathcal{C}([0,\infty), Y)$. 
		\item[(ii)] $x_{1_{ |(0,\infty) }}  \in \mathcal{C}^1((0, \infty), Y)$.
		\item[(iii)] $\mathcal{N}_1(x, c_0, \overline{c}_0)(\cdot,t) \in D(\mathbf{d}_0)$ for all $t > 0$.
		\item[(iv)] $x_1$ satisfies \eqref{Interface Paper - Definition Conservation Law of x1 - Nonmoving Interface}.
	\end{enumerate}
\end{definition}

Note that the operator $\mathbf{d}_0$ acts as the differential operator $-\frac{d}{dz}$ on each subdomain  $[a',b'] \subset [a,b]$ not containing the interface, i.e., with either $a \leq a' < b' < 0$, or with $0 < a' < b' \leq b$. Indeed, according to \eqref{Interface Paper - Conservation Law of x1 - Nonmoving Interface - Version 2 in L2}, for all $t > 0$ it holds that
\begin{align*}
	\frac{d}{dt} \int_{a'}^{b'} x_1(z,t) \, dz = \int_{a'}^{b'} \partial_t x_1^- (z,t) \, dz = \int_{a'}^{b'} - \partial_z \mathcal{N}_1^-(x) (z,t) \, dz = \mathcal{N}_1^-(a',t) - \mathcal{N}_1^- (b',t)
\end{align*}
or
\begin{align*}
	\frac{d}{dt} \int_{a'}^{b'} x_1(z,t) \, dz = \int_{a'}^{b'} \partial_t x_1^+ (z,t) \, dz = \int_{a'}^{b'} - \partial_z \mathcal{N}_1^+(x)(z,t) \, dz =  \mathcal{N}_1^+(a',t) - \mathcal{N}_1^+ (b',t),
\end{align*}
respectively. Now, if we consider an interval $[a',b']$ with $a \leq a' < 0 < b' \leq b$, then the continuity of the flux variable $\mathcal{N}_1(x, c_0 , \overline{c}_0)$ at the interface implies that
\begin{align*}
%	\begin{split}
	\frac{d}{dt} \int_{a'}^{b'} x_1(z,t) \, dz	&= \int_{a'}^{b'} \mathbf{d}_0 \mathcal{N}_1(x, c_0, \overline{c}_0)(z,t) \, dz \, \\
	&= \int_{a'}^{0}  - \partial_z  \mathcal{N}_1^-(x)(z,t)  \, dz + \int_{0}^{b'} - \partial_z \mathcal{N}_1^+(x)(z,t)  \, dz \\
	&=  \mathcal{N}_1^-(a',t) \underbrace{- \mathcal{N}_1^-(0^-,t) + \mathcal{N}_1^+ (0^+,t)}_{\stackrel{(\ref{Interface Paper - Interface Variable fI})}{=0}} - \mathcal{N}_1^+(b',t) \\
	&=   \mathcal{N}_1^-(a',t) - \mathcal{N}_1^+(b',t) \\
	&=  \mathcal{N}_1(a',t) - \mathcal{N}_1(b',t).
%\end{split}
%\label{Interface Paper - Conservation Law of x1 - Nonmoving Interface on Arbitrary Interval}
\end{align*}
\paragraph*{Conservation law of $\mathbf{x_2}$.} Lastly, let us consider the conservation law of the state variable $x_2$. Recall that at the interface, the associated flux variable $\mathcal{N}_2(x, c_0, \overline{c}_0) = c_0 \mathcal{N}_2^-(x) + \overline{c}_0 \mathcal{N}_2^+(x)$ is supposed to satisfy the balance equation \eqref{Interface Paper - Interface Variable eI}, where $e_I \in \mathbb{R}$ is defined as the difference of the flux variables $\mathcal{N}_2^+(x)$ and $\mathcal{N}_2^-(x)$ localized at the interface. In order to formulate this conservation law, we have to calculate the formal adjoint $\mathbf{d}_0^{\ast} \colon D(\mathbf{d}_0^{\ast}) \subset L^2([a,b], \mathbb{R}) \to L^2([a,b], \mathbb{R})$ of the operator $\mathbf{d}_0$ defined in \eqref{Interface Paper - Operator d0}. This will allow us to define a formally skew-symmetric differential operator, as we will see over the course of this chapter.
\vspace{0.5 cm}\\
Recall from Section \ref{Section Operators} that the domain of the adjoint operator is given by
\begin{align*}
	\left\{ y \in X \mid \langle \mathbf{d}_0 \cdot, y \rangle_{L^2} \colon D(\mathbf{d}_0) \to \mathbb{R}, \hspace{0.1 cm}  x \mapsto \langle \mathbf{d}_0 x, y \rangle_{L^2} \text{ is continuous} \right\}.
\end{align*}
However, we need to find the formally adjoint operator, as the system is considered to be open. Since, in contrast to the differential operators considered in Section \ref{Section Boundary Port-Hamiltonian Systems Associated with Skew-symmetric Operators}, this encompasses both the power flow at the boundary and the interface, we seek an operator $\mathbf{d}_0^{\ast}$ such that
\begin{align*}
	\langle \mathbf{d}_0 x, y \rangle_{L^2} = \langle x, \mathbf{d}_0^{\ast} y \rangle_{L^2} \hspace{0.2 cm} + \text{boundary and interface terms}, \hspace{0.3 cm} x \in D(\mathbf{d}_0), \, y \in D(\mathbf{d}_0^{\ast}).
\end{align*}
Therefore, the general domain of the formal adjoint $\mathbf{d}_0^{\ast}$ is given by
\begin{align}
	\label{Interface Paper - Operator d_0ast General Domain}
	D(\mathbf{d}_0^{\ast}) = \left\{ y \in X \mid \langle \mathbf{d}_0 \cdot, y \rangle_{L^2} \colon \mathbf{H}_0^1(a,b) \to \mathbb{R}, \hspace{0.1 cm}  x \mapsto \langle \mathbf{d}_0 x, y \rangle_{L^2} \text{ is continuous} \right\},
\end{align}
where 
\begin{align}
	\label{Interface Paper - Subspace H(a,b)}
	\mathbf{H}_0^1(a,b) := \left\{ f \in H_0^1([a,b],\mathbb{R}) \mid f(0) = 0 \right\} \subset D(\mathbf{d}_0).
\end{align}
Note that the set $\mathbf{H}_0^1(a,b)$ is dense in $L^2([a,b], \mathbb{R})$. Hence, the formal adjoint of $\mathbf{d}_0$, as we wish to define it, actually exists. 
\vspace{0.5 cm}\\
%Now, a formal computation yields
%\begin{align*}
%	\int_{a}^{b} e_1(z) \mathbf{d}_0 e_2(z) \, dz &=  - \int_{a}^{b}  e_1(z)  \left[ \frac{d}{dz} ( c_0 e_2)(z) + \frac{d}{dz} (\overline{c}_0 e_2)(z)  \right] \, dz \\
%	&= - \big[ (c_0 + \overline{c}_0)(z) e_1(z) e_2(z)  \big]_{a}^{b} + \int_{a}^{b} (c_0(z) e_2(z) + \overline{c}_0(z) e_2(z) ) \frac{d}{dz} e_1(z) \, dz \\
%	&= - \big[ (c_0 + \overline{c}_0 )(z) e_1(z) e_2(z)  \big]_{a}^{b} + \int_{a}^{b} e_2(z) \left[ \frac{d}{dz} (c_0(z) \cdot ) + \frac{d}{dz} (\overline{c}_0(z) \cdot)  \right] e_1(z) \, dz  \\
%	&\hspace{0.45 cm} - \int_{a}^{b}  e_2(z)  \left[ \frac{d}{dz} c_0(z) + \frac{d}{dz} \overline{c}_0(z)  \right] e_1(z) \, dz. 
%\end{align*}
We first want to specify the domain $D(\mathbf{d}_0^{\ast})$ given in \eqref{Interface Paper - Operator d_0ast General Domain}. To that end, let  $x \in \mathbf{H}_0^1(a,b)$ and let $y \in L^2([a,b], \mathbb{R})$. Then we get
\begin{align*}
	\langle \mathbf{d}_0 x, y \rangle_{L^2} &= \int_{a}^{b} (\mathbf{d}_0 x(z)) y(z) \, dz \\
	&= \int_{a}^{b} - \left[ \frac{d}{dz} (c_0 x)(z) + \frac{d}{dz} (\overline{c}_0x )(z) \right] y(z) \, dz \\
	&= - \int_{a}^{0} \frac{d}{dz} (c_0 x)(z) y(z) \, dz  - \int_{0}^{b} \frac{d}{dz} (\overline{c}_0 x)(z) y (z) \, dz.
\end{align*}
If we additionally assume that $y_{|(a,0)} \in H^1((a,0),\mathbb{R})$ and $y_{|(0,b)} \in H^1((0,b), \mathbb{R})$, then integration by parts yields
\begin{align}
	\begin{split} 
	\langle \mathbf{d}_0 x, y \rangle_{L^2}	&= -x(0^-) y(0^-) + x(a) y(a) + x(0^+) y(0^+) - x(b) y(b) \\
	&\hspace{0.5 cm} + \int_{a}^{0} (x c_0 )(z) \frac{d}{dz} y(z) \, dz + \int_{0}^{b} (x \overline{c}_0)(z) \frac{d}{dz} y(z) \, dz \\
	&= -  \underbrace{\big[ (c_0 + \overline{c}_0)(z) x(z)  y(z) \big]_{a}^{b}}_{=0} + \underbrace{x(0)}_{=0} \left[ y(0^+) - y(0^-) \right] \\
	&\hspace{0.5 cm} + \int_{a}^{0} x(z) \frac{d}{dz} y(z) \, dz + \int_{0}^{b} x(z) \frac{d}{dz} y(z) \, dz. 
	\end{split}
\label{Interface Paper - Operator d_0ast Preparation}
\end{align}
	By virtue of Hölder's inequality, we have for all $x \in \mathbf{H}_0^1(a,b)$ and for all such $y \in L^2([a,b], \mathbb{R})$,
\begin{align*}
	\vert \langle \mathbf{d}_0 x, y \rangle_{L^2} \vert & \leq \int_{0}^{a} \left\vert x(z) \frac{d}{dz} y(z) \right\vert  \, dz + \int_{0}^{b} \left\vert x(z) \frac{d}{dz} y(z) \right\vert \, dz \\
	&\leq \|x\|_{L^2(a,0)} \left\| \frac{d}{dz}  y\right \|_{L^2(a,0)} + \|x \|_{L^2(0,b)} \left\| \frac{d}{dz}  y \right\|_{L^2(0,b)} \\
	&\leq \|x \|_{L^2(a,b)} \left( \left\| \frac{d}{dz} y \right\|_{L^2(a,0)} + \left\| \frac{d}{dz} y \right\|_{L^2(0,b)} \right) \\
	&\leq C_y \|x\|_{L^2(a,b)}, 
\end{align*}
whence
\begin{align*}
	\| \langle \mathbf{d}_0 \cdot , y \rangle_{L^2} \|_{\text{op} \colon \mathbf{H}_0^1(a,b) \to \mathbb{R}} = \sup_{\substack{ x \in \mathbf{H}_0^1(a,b) \\
	\|x\|_{L^2} =1}} \vert \langle \mathbf{d}_0 x, y \rangle_{L^2}  \vert  \leq C_y.
\end{align*}
Consequently, each element $y \in L^2([a,b], \mathbb{R})$ that is an $H^1$-function on the respective subdomains is an element of the domain $D(\mathbf{d}_0^{\ast})$. From the preceding calculations one can readily see that we cannot relax this requirement. As a result, the (maximal) domain of the operator $\mathbf{d}_0^{\ast}$ can be specified as follows:
\begin{align*}
			D(\mathbf{d}_0^{\ast}) =  \left\{ y \in L^2([a,b], \mathbb{R}) \mid y_{|(a,0)} \in H^1((a,0),\mathbb{R}), \hspace{0.1cm} y_{|(0,b)} \in H^1((0,b), \mathbb{R}) \hspace{0.1 cm} \right\}.	
\end{align*}
It is left to determine the elements $\mathbf{d}_0^{\ast}y \in L^2([a,b], \mathbb{R})$ for $y \in D(\mathbf{d}_0^{\ast})$. We have already calculated in \eqref{Interface Paper - Operator d_0ast Preparation} that for all $x \in D(\mathbf{d}_0)$ and for all $y \in D(\mathbf{d}_0^{\ast})$ we get
\begin{align*}
	\langle \mathbf{d}_0 x , y \rangle_{L^2} 	&= -  \big[ (c_0 + \overline{c}_0)(z) x(z)  y(z) \big]_{a}^{b} + x(0) \left[ y(0^+) - y(0^-) \right] \\
	&\hspace{0.5 cm} + \int_{a}^{0} x (z) \frac{d}{dz} y(z) \, dz + \int_{0}^{b} x(z) \frac{d}{dz} y(z) \, dz \\
	&\stackrel{!}{=} -  \big[ (c_0 + \overline{c}_0)(z) x(z)  y(z) \big]_{a}^{b} + x(0) \left[ y(0^+) - y(0^-) \right]  + \langle x, \mathbf{d}_0^{\ast} y \rangle_{L^2}.
\end{align*}
Thus, our objective is to rewrite the second line as a single $L^2$-inner product on the composed domain $[a,b]$ with respect to $x$ and $\mathbf{d}_0^{\ast}y$. We claim that $\mathbf{d}_0^{\ast}y \in L^2([a,b], \mathbb{R}) $ is given as
\begin{align*}
	\mathbf{d}_0^{\ast}y =  \left[ \frac{d}{dz} (c_0  y) + \frac{d}{dz} (\overline{c}_0  y)  \right] -  \left[ \frac{d}{dz} c_0 + \frac{d}{dz}\overline{c}_0  \right] y , \hspace{0.5 cm} y \in D(\mathbf{d}_0^{\ast}). 
\end{align*}
Although at first glance, the co-domain of this operator seems to be $\mathcal{D}'(a, b)$, we will see that the distributional terms of this expression, that is, the Dirac masses arising from taking the derivatives of the characteristic functions $c_0$ and $\overline{c}_0$, cancel each other out, so that the remaining terms lie again in $L^2([a, b], \mathbb{R}) \subset \mathcal{D}'(a,b)$.
\vspace{0.5 cm}\\
Initially, let  $\varphi \in \mathcal{D}(a,b)$, which is a dense subset of  $D(\mathbf{d}_0) = H^1([a,b], \mathbb{R})$. Then for all $y \in D(\mathbf{d}_0^{\ast})$ it holds that (omitting the dependence on the variable $z$)
\begin{align*}
	&\hspace{0.5 cm}  \int_{a}^{b} \left(  \left[ \frac{d}{dz} (c_0y) + \frac{d}{dz} (\overline{c}_0 y) \right] - \left[ \frac{d}{dz} c_0 + \frac{d}{dz} \overline{c}_0 \right] y \right) \varphi \, dz \\
	&= \int_{a}^{b} \frac{d}{dz} (c_0 y) \varphi \, dz + \int_{a}^{b} \frac{d}{dz} (\overline{c}_0 y) \varphi \, dz - \int_{a}^{b} \frac{d}{dz} c_0 \varphi y \, dz - \int_{a}^{b} \frac{d}{dz} \overline{c}_0 \varphi  y \, dz \\
	(\text{part. int.}) \hspace{0.3 cm}	&= \big[ (c_0 + \overline{c}_0) \varphi y \big]_{a}^{b} - \int_{a}^{0} c_0 y \frac{d}{dz} \varphi \, dz - \int_{0}^{b} \overline{c}_0 y \frac{d}{dz}  \varphi \, dz \\ 
	&\hspace{0.45 cm} - \big[ (c_0 + \overline{c}_0) \varphi y \big]_{a}^{b} + \int_{a}^{0} \frac{d}{dz} (\varphi y) \, dz + \int_{0}^{b} \frac{d}{dz} (\varphi y) \, dz \\
	&= - \int_{a}^{0} y \frac{d}{dz} \varphi \, dz - \int_{0}^{b} y \frac{d}{dz} \varphi \, dz \\
	&\hspace{0.5 cm} + \int_{a}^{0} \varphi \frac{d}{dz} y + y \frac{d}{dz} \varphi \, dz + \int_{0}^{b} \varphi \frac{d}{dz} y + y \frac{d}{dz} \varphi \, dz \\  
	%(\text{part. int.}) \hspace{0.3 cm}	&=  - \big[ (c_0 + \overline{c}_0) x y \big]_{a}^{b} + \int_{a}^{0}  c_0 x \frac{d}{dz} y \, dz + \int_{0}^{b} \overline{c}_0 x \frac{d}{dz} y \, dz + x(0) \left[ y(0^+) - y(0^-) \right]  \\
%	&\hspace{0.45 cm} + \big[ (c_0 + \overline{c}_0) x y \big]_{a}^{b} - x(0) \left[ y(0^+) - y(0^-) \right] \\ 
	&=  \int_{a}^{0} \varphi \frac{d}{dz} y \, dz + \int_{0}^{b} \varphi \frac{d}{dz}  y \, dz. %\\
%\text{(part. int.)} \hspace{0.3 cm}	&= \big[ (c_0  + \overline{c}_0) \varphi y \big]_{a}^{b} - \varphi(0) \left[ y(0^+) - y(0^-) \right] \\
% &\hspace{0.5 cm}  - \int_{a}^{0} \frac{d}{dz} (c_0 \varphi) y \, dz - \int_{0}^{b} \frac{d}{dz} (\overline{c}_0 \varphi) y \, dz \\
% &=  \big[ (c_0  + \overline{c}_0) \varphi y \big]_{a}^{b} - \varphi(0) \left[ y(0^+) - y(0^-) \right] + \langle \mathbf{d}_0 \varphi, y \rangle_{L^2}.
\end{align*}

Next, we want to show that for all $x \in D(\mathbf{d}_0)$, $y \in D(\mathbf{d}_0^{\ast})$ the inner product $\langle x, \mathbf{d}_0^{\ast}y \rangle_{L^2}$ exists in a classical sense by proving that the operation $x \mapsto \langle x , \mathbf{d}_0^{\ast} y \rangle_{L^2}$ is closed. Let $y \in D(\mathbf{d}_0^{\ast})$, pick any $x \in D(\mathbf{d}_0)$, and let $(\varphi_n)_{n \in \mathbb{N}} \in \mathcal{D}(a,b)^{\mathbb{N}}$ be a sequence with $\lim\limits_{n \to \infty} \varphi_n = x$ in $L^2$. Since the embedding $L^2([a,b], \mathbb{R}) \hookrightarrow L^1([a,b], \mathbb{R})$ is continuous, this convergence holds in $L^1$ as well. Following \cite[Chapter 6]{Klenke}, we may apply the dominated convergence theorem. This yields
\begin{align*}
	&\hspace{0.5 cm}  \lim_{n \to \infty}  \int_{a}^{b} \left(  \left[ \frac{d}{dz} (c_0y) + \frac{d}{dz} (\overline{c}_0 y) \right] - \left[ \frac{d}{dz} c_0 + \frac{d}{dz} \overline{c}_0 \right] y \right) \varphi_n \, dz \\ 
	&= \lim_{n \to \infty} \int_{a}^{0} \varphi_n \frac{d}{dz} y \, dz + \lim_{n \to \infty} \int_{0}^{b} \varphi_n \frac{d}{dz} y \, dz \\
	&= \int_{a}^{0} x \frac{d}{dz} y \, dz + \int_{0}^{b} x \frac{d}{dz} y \, dz.
\end{align*}
Thus, for all $x \in D(\mathbf{d}_0)$, $y \in D(\mathbf{d}_0^{\ast})$ the inner product $\langle x , \mathbf{d}_0^{\ast}y \rangle_{L^2}$ exists with
\begin{align*}
	\langle x , \mathbf{d}_0^{\ast}y \rangle_{L^2} = \int_{a}^{0} x(z) \frac{d}{dz} y(z) \, dz + \int_{0}^{b} x(z) \frac{d}{dz} y(z) \, dz.
\end{align*} 
Altogether, the formal adjoint $\mathbf{d}_0^{\ast} \colon D(\mathbf{d}_0^{\ast}) \subset L^2([a,b], \mathbb{R}) \to L^2([a,b], \mathbb{R})$ is given by
\begin{align}
	\begin{split}
		D(\mathbf{d}_0^{\ast}) &= \left\{ y \in L^2([a,b], \mathbb{R}) \mid y_{|(a,0)} \in H^1((a,0),\mathbb{R}), \hspace{0.1cm} y_{|(0,b)} \in H^1((0,b), \mathbb{R}) \hspace{0.1 cm} \right\}, \\
	\mathbf{d}_0^{\ast}y &=  \left[ \frac{d}{dz} (c_0  y) + \frac{d}{dz} (\overline{c}_0  y)  \right] -  \left[ \frac{d}{dz} c_0 + \frac{d}{dz}\overline{c}_0  \right] y  \\
	&\hspace{0.12 cm}= \left( - \mathbf{d}_0  - \left[ \frac{d}{dz} c_0 + \frac{d}{dz} \overline{c}_0  \right] \right)y, \hspace{0.5 cm} y \in D(\mathbf{d}_0^{\ast}).
	\end{split}
	\label{Interface Paper - Operator d_0ast} 
\end{align}
In particular, for a function $y = c_0 y^- + \overline{c}_0 y^+ \in D(\mathbf{d}_0^{\ast})$ we have
\begin{align*}
	\mathbf{d}_0^{\ast} y =  c_0 \frac{d}{dz} y^- + \overline{c}_0 \frac{d}{dz} y^+. 
\end{align*}
Moreover, we obtain the following relation between the operators $\mathbf{d}_0$ and $\mathbf{d}_0^{\ast}$:
\begin{align}
	\label{Interface Paper - Relation d_0 and d_0ast}
	\langle \mathbf{d}_0 x , y \rangle_{L^2} =  - \big[ x(z) y(z) \big]_{a}^{b} + x(0) \left[ y(0^+) - y(0^-) \right] + \langle x , \mathbf{d}_0^{\ast} y \rangle_{L^2}, \hspace{0.5 cm} x \in D(\mathbf{d}_0), \,  y \in D(\mathbf{d}_0^{\ast}).
\end{align} 
We are finally able to state the conservation law of the state variable $x_2$ with respect to the operator $\mathbf{d}_0^{\ast}$. Assume that for all $t > 0$, the flux variable $\mathcal{N}_2(x,c_0, \overline{c}_0)(\cdot,t)$ on the composed domain is an element of $D(\mathbf{d}_0^{\ast})$. From the preceding discussion and due to the individual conservation laws \eqref{Interface Paper - Two Systems of Conservation Laws on Respective Domain}, we deduce that for all $\varphi \in \mathcal{D}(a,b)$ we have
\begin{align*}
	%\langle \Lambda_{-\mathbf{d}_0^{\ast} \mathcal{N}_2(x,c_0, \overline{c}_0)(\cdot,t)}, \varphi \rangle_{\mathcal{D}'(a,b)} \\
	 &\hspace{0.5 cm} \int_{a}^{b} \left[ - \mathbf{d}_0^{\ast} \mathcal{N}_2(x,c_0, \overline{c}_0)(z,t) \right] \varphi(z) \, dz \\
	&= \int_{a}^{0} - \partial_z \mathcal{N}_2^- (z,t) \varphi(z) \, dz + \int_{0}^{b} - \partial_z \mathcal{N}_2^+(z,t)  \varphi(z) \, dz \\
	&= \int_{a}^{0} \partial_t x_2^-(z,t) \varphi(z) \, dz + \int_{0}^{b} \partial_t x_2^+(z,t)  \varphi(z) \, dz \\
	&= \int_{a}^{b} \partial_t x_2(z,t) \varphi(z) \, dz. %\\
	% &= \langle \Lambda_{ \partial_t x_2(\cdot,t)}, \varphi \rangle_{\mathcal{D}'(a,b)}.
\end{align*} 
In conclusion, the conservation law of the state variable $x_2$ is  given by
\begin{align}
	\partial_t x_2 = - \mathbf{d}_0^{\ast} \mathcal{N}_2(x,c_0, \overline{c}_0),
	\label{Interface Paper - Conservation Law of x2 - Nonmoving Interface} 
\end{align}
and it holds in a classical sense.
\begin{remark}
	\label{Interface Paper - Remark Balance Equation}
We need to point out that the balance equation \eqref{Interface Paper - Interface Variable eI} the flux variable $\mathcal{N}_2$ is supposed to satisfy is incorporated neither into the domain of the operator $\mathbf{d}_0^{\ast}$ nor into the conservation law \eqref{Interface Paper - Conservation Law of x2 - Nonmoving Interface} itself. So far, we have only allowed a jump discontinuity of $\mathcal{N}_2$ at the interface. One possible way is to rewrite the conservation law as
\begin{align*}
	\partial_t x_2 = \mathbf{d}_0 \mathcal{N}_2(x,c_0,\overline{c}_0) - e_I \delta_0,
\end{align*}
with $\delta_0 \in \mathcal{D}'(a,b)$ the Dirac mass at $z = 0$, and $e_I \colon [0, \infty) \to \mathbb{R}$. By doing so, $e_I$ may be regarded as an external source term, and the equation holds (in a distributional sense) if and only if $\mathcal{N}_2$ satisfies the balance equation \eqref{Interface Paper - Interface Variable eI}. However, since this will only complicate the modeling in the further course and since we want to define a formally skew-symmetric matrix differential operator later on, we will stick with the conservation law \eqref{Interface Paper - Conservation Law of x2 - Nonmoving Interface} with respect to the formally adjoint operator $\mathbf{d}_0^{\ast}$ of $\mathbf{d}_0$, and take the interface port variable $e_I$ to be implicitly defined by means of the jump discontinuity of $\mathcal{N}_2(x,c_0, \overline{c}_0)$ at the interface. As we will see, this does not affect the dynamics of the resulting system over the one proposed in \cite{Diagne}.
\end{remark}
Let us now define classical solutions of the conservation law of $x_2$ with respect to $\mathcal{N}_2(x,c_0, \overline{c}_0)$.

\begin{definition}
	 Let $x = (x_1,x_2) \colon [0, \infty) \to X$. Let $Y = L^2([a,b], \mathbb{R})$. We call $x_2 \colon [0,\infty) \to Y$ a classical solution of the conservation law
	\begin{align}
		\begin{split}
			\partial_t x_2(t) &= - \mathbf{d}_0^{\ast} \mathcal{N}_2 (x, c_0, \overline{c}_0)(\cdot,t),  \hspace{0.5 cm} t > 0, \\
			x_2(0) &= x_{0,2} \in Y, 
		\end{split}
		\label{Interface Paper - Definition Conservation Law of x2 - Nonmoving Interface}
	\end{align}
	where $\mathbf{d}_0^{\ast}$ is the operator given in \eqref{Interface Paper - Operator d_0ast}, and with the flux variable
	\begin{align*}
		\mathcal{N}_2(x, c_0, \overline{c}_0)  = c_0 \mathcal{N}_2^-(x) + \overline{c}_0 \mathcal{N}_2^+(x)
	\end{align*}
	of the systems of conservation laws \eqref{Interface Paper - Two Systems of Conservation Laws on Respective Domain} coupled at $z=0$, if:
	\begin{enumerate}
		\item[(i)] $x_2 \in \mathcal{C}([0,\infty), Y)$. 
		\item[(ii)] $x_{2_{ |(0,\infty) }}  \in \mathcal{C}^1((0, \infty), Y)$.
		\item[(iii)] $\mathcal{N}_2(x, c_0, \overline{c}_0)(\cdot,t) \in D(\mathbf{d}_0^{\ast})$ for all $t > 0$.
		\item[(iv)] $x_2$ satisfies \eqref{Interface Paper - Definition Conservation Law of x2 - Nonmoving Interface}.
	\end{enumerate}
\end{definition} 
Analogously to the operator $\mathbf{d}_0$, its formal adjoint $\mathbf{d}_0^{\ast}$ acts as the differential operator $ \frac{d}{dz}$ on the respective subdomains. Assuming that the flux variable $\mathcal{N}_2$ satisfies the balance equation \eqref{Interface Paper - Interface Variable eI}, on an arbitrary interval $[a',b'] \subset [a,b]$ containing the interface we get the following for all $t>0$:
\begin{align*}
%	\begin{split}
	\frac{d}{dt} \int_{a'}^{b'} x_2(z,t) \, dz &= \int_{a'}^{b'} \partial_t x_2(z,t) \, dz \\
	&= \int_{a'}^{b'} -\mathbf{d}_0^{\ast} \mathcal{N}_2(x,c_0, \overline{c}_0) (z,t) \, dz \\
	&= \int_{a'}^{0} - \partial_z \mathcal{N}_2^-(x) (z,t) \, dz + \int_{0}^{b'} - \partial_z \mathcal{N}_2^+(x)(z,t) \, dz \\
	&= \mathcal{N}_2^-(a',t)  - \mathcal{N}_2^+(b',t) - \mathcal{N}_2^-(0^-,t) + \mathcal{N}_2^+(0^+,t) \\
	&= \mathcal{N}_2(a',t) - \mathcal{N}_2(b',t) - e_I(t).
%	\end{split}
\end{align*}
Finally, we can formulate the interconnection of the conservation laws  \eqref{Interface Paper - Two Systems of Conservation Laws on Respective Domain} as a port-Hamiltonian system defined on the extended state space $\tilde{X}$, which we are going to do next.

\subsection{Port-Hamiltonian Formulation and Dirac Structure}
\label{Subsection Port-Hamiltonian Formulation and Dirac Structure}
In the previous section, we have formulated the dynamics of the state variables $x_1$ and $x_2$ on the composed domain $[a,b]$ through the conservation laws   \eqref{Interface Paper - Definition Conservation Law of x1 - Nonmoving Interface} and \eqref{Interface Paper - Definition Conservation Law of x2 - Nonmoving Interface} using the operator $\mathbf{d}_0$ and its formal adjoint $\mathbf{d}_0^{\ast}$ given by \eqref{Interface Paper - Operator d0} and \eqref{Interface Paper - Operator d_0ast}, respectively. Furthermore, in \eqref{Interface Paper - Conservation Laws of Color Functions with Fixed Interface Position} we stated the conservation laws for the characteristic functions $c_0$ and $\overline{c}_0$ defined in \eqref{Interface Paper - Characteristic Functions c0 and c0overline} of the two subdomains separated by the interface, which are constituting the extended state variable as well. Now, we want to formulate a Hamiltonian system off of these equation on the extended state space $\tilde{X}$, and we want to specify the underlying Dirac structure of the fixed interface scenario.
\begin{remark}
	Recall that both the operator $\mathbf{d}_0$ and its formal adjoint $\mathbf{d}_0^{\ast}$ depend on the color functions $c_0$ and $\overline{c}_0$. Strictly speaking, the operators $\mathbf{d}_0$ and $\mathbf{d}_0^{\ast}$ are therefore modulated by the state $\tilde{x} = (x, c_0, \overline{c}_0) \in \tilde{X}$. However, as the color functions remain constant over time, we will neglect this dependence of the aforementioned operators. 
\end{remark}
With the aid of \eqref{Interface Paper - Relation Flux Variable and Variational Derivative - Composed Domain}, we may define the agumented system 
\begin{align*}
	\frac{\partial}{\partial t} \begin{bmatrix}
		x_1 \\
		x_2 \\
		c_0 \\
		\overline{c}_0
	\end{bmatrix} = \begin{bmatrix}
	0 & \mathbf{d}_0 & 0 & 0 \\
	- \mathbf{d}_0^{\ast} & 0 & 0 & 0 \\
	0 & 0 & 0 & 0 \\
	0 & 0 & 0 & 0 
\end{bmatrix} \begin{bmatrix}
c_0 \delta_{x_1} H^-(x) + \overline{c}_0 \delta_{x_1}  H^+(x) \\
c_0 \delta_{x_2} H^-(x) + \overline{c}_0 \delta_{x_2}  H^+(x) \\
\mathcal{H}^-(x) \\
\mathcal{H}^+(x)
\end{bmatrix},
\end{align*}
or short, 
\begin{align}
	\label{Interface Paper - Augmented System wrt Ja}
	\frac{\partial}{\partial t} \tilde{x} =  \tilde{\mathcal{J}}_0 \delta_{\tilde{x}} H(\tilde{x}),
\end{align}
with the Hamiltonian's variational derivative $\delta_{\tilde{x}} H$ given by \eqref{Interface Paper - Extended Variational Derivatives}, and with the operator $\tilde{\mathcal{J}}_0 \colon D(\tilde{\mathcal{J}}_0) \subset \tilde{X} \to \tilde{X}$ given by
\begin{align}
	\begin{split}
	D(\tilde{\mathcal{J}}_0)	&=  D(\mathbf{d}_0^{\ast}) \times D(\mathbf{d}_0) \times L^2([a,b], \mathbb{R}^2), \\
	\tilde{\mathcal{J}}_0 \tilde{x} &= \begin{bmatrix}
		0 & \mathbf{d}_0 & 0 & 0 \\
		- \mathbf{d}_0^{\ast} & 0 & 0 & 0 \\
		0 & 0 & 0 & 0 \\
		0 & 0 & 0 & 0 
	\end{bmatrix} \tilde{x}, \hspace{0.5 cm} \tilde{x}  \in \tilde{X}.
\end{split}
\label{Interface Paper - Operator Ja}
\end{align}
By virtue  of the relation \eqref{Interface Paper - Relation d_0 and d_0ast} between the operators $\mathbf{d}_0$ and $\mathbf{d}_0^{\ast}$, we find for all $\tilde{e}^1, \tilde{e}^2 \in D(\tilde{\mathcal{J}}_0)$ that
\begin{align}
	\begin{split}
	 &\hspace{0.5 cm} \langle \tilde{\mathcal{J}}_0 \tilde{e}^1, \tilde{e}^2 \rangle_{L^2} + \langle \tilde{e}^1, \tilde{\mathcal{J}}_0 \tilde{e^2} \rangle_{L^2}   \\
	 &= \int_{a}^{b} (e^2)^{\top}(z) \begin{bmatrix}
	 	0 & \mathbf{d}_0 \\
	 	-\mathbf{d}_0^{\ast} & 0 
	 \end{bmatrix} e^1(z) + \left( \begin{bmatrix}
	 0 & \mathbf{d}_0 \\
	 -\mathbf{d}_0^{\ast} & 0 
 \end{bmatrix} e^2(z)  \right)^{\top} e^1(z) \, dz \\
&= \int_{a}^{b} e_1^2(z) \mathbf{d}_0 e_2^1(z) - e_2^2(z) \mathbf{d}_0^{\ast} e_1^1(z) + e_1^1(z) \mathbf{d}_0 e_2^2(z) - e_2^1(z) \mathbf{d}_0^{\ast} e_1^2(z) \, dz \\ 
	 &= - \big[ e_1^1(z)e_2^2(z) \big]_{a}^{b} - \big[  e_2^1(z) e_1^2(z) \big]_{a}^{b} \\
	 &\hspace{0.5 cm}+ e_2^1(0) \left[ e_1^2(0^+) - e_1^2(0^-) \right] + e_2^2(0) \left[ e_1^1(0^+) - e_1^1(0^-) \right] \\
	 &= \big[(e^1)^{\top}(z) P_1 e^2(z) \big]_{a}^{b} + e_2^1(0) \left[ e_1^2(0^+) - e_1^2(0^-) \right] + e_2^2(0) \left[ e_1^1(0^+) - e_1^1(0^-) \right],
	\end{split}
\label{Interface Paper - Skew-Symmetry of Ja}
\end{align}
with $P_1$ defined in \eqref{Interface Paper - Matrix P1}. Hence, the operator $\tilde{\mathcal{J}}_0$ is formally skew-symmetric. We will not check whether $\tilde{\mathcal{J}}_0$ satifies the Jacobi identity (see again \cite[Chapter 7]{Olver}), so we refer to the system \eqref{Interface Paper - Augmented System wrt Ja} as a \emph{generalized Hamiltonian system}.
\vspace{0.5 cm}\\
In order to take the energy exchange at the boundary and the power inflow (or outflow) at the interface into account, the generalized Hamiltonian system
\eqref{Interface Paper - Augmented System wrt Ja} will now be extended to a boundary port-Hamiltonian system. For this purpose, we need to define a Stokes-Dirac structure  similar to the one given in \eqref{Stokes-Dirac structure - Linear PH Systems}.
\vspace{0.5 cm}\\
To find the underlying Dirac structure of the system \eqref{Interface Paper - Augmented System wrt Ja}, we need to define a suitable power pairing $ \langle \cdot \mid \cdot \rangle \colon \mathcal{E} \times \mathcal{F} \to \mathbb{R}$ constituting the plus pairing $\ll \cdot , \cdot \gg \colon \mathcal{B} \times \mathcal{B} \to \mathbb{R}$ on the bond space $\mathcal{B} = \mathcal{F} \times \mathcal{E}$. We will basically proceed as in Section \ref{Section Boundary Port-Hamiltonian Systems Associated with Skew-symmetric Operators}. 
\vspace{0.5 cm}\\
To begin with, consider the operator $\mathcal{J}_0 \colon D(\mathcal{J}_0) \subset X \to X$ given by
\begin{align*}
	%\begin{split}
		D(\mathcal{J}_0) &= \left\{ x \in X \mid x_1 \in D(\mathbf{d}_0^{\ast}), \hspace{0.1 cm} x_2 \in D(\mathbf{d}_0)\right\}, \\
		\mathcal{J}_0 x &= \begin{bmatrix}
			0 & \mathbf{d}_0  \\
			- \mathbf{d}_0^{\ast} & 0 
		\end{bmatrix} \begin{bmatrix}
			x_1 \\
			x_2
		\end{bmatrix}, \hspace{0.5 cm} x \in D(\mathcal{J}_0).
	%\end{split}
	%\label{Interface Paper - Operator J0}.
\end{align*}
Note that $\mathcal{J}_0$ is the operator we obtain if we reduce the operator $\tilde{\mathcal{J}}_0$ from \eqref{Interface Paper - Operator Ja} to $X$, i.e., if we ignore the color functions we included in the definition of the extended state space $\tilde{X}$. This operator will be useful for finding the power pairing. Note that from \eqref{Interface Paper - Skew-Symmetry of Ja} it immediately follows that $\mathcal{J}_0$ is formally skew-symmetric as well, and that on any interval $(c,d) \subset[a,b]$ not containing the interface position $z = 0$, the operator $\mathcal{J}_0$ simply acts as the matrix differential operator $P_1 \frac{d}{dz}$. We will come across this operator in Section \ref{Section A Simplified System} again. 
\vspace{0.5 cm}\\
For systems associated with a formally skew-symmetric differential operator, we have already defined the boundary port variables in Definition \ref{Definition Boundary Port Variables in The Linear Case}. Since the interface position resides in the interior of the spatial domain $[a,b]$, the trace operator defined in \eqref{Trace Operator - Chapter 3} can be extended to
\begin{align*}
	\trace_0 \colon D(\mathcal{J}_0) \to \mathbb{R}^4, \hspace{0.3 cm}  e \mapsto \begin{bmatrix}
		e(b) \\
		e(a)
	\end{bmatrix}.
\end{align*}
Following \eqref{Boundary Flow and Effort Linear Case}, the boundary flow variable $f_{\partial} = f_{\partial, e} \in \mathbb{R}^2$ and the boundary effort variable $e_{\partial} = e_{\partial, e} \in \mathbb{R}^2$ can, for all co-energy variables $ e  \in D(\mathcal{J}_0)$, be defined as
\begin{align*}
	%\label{Interface Paper - Boundary Flow and Effort}
	\begin{bmatrix}
		f_{\partial} \\
		e_{\partial}
	\end{bmatrix} =  R_{\extern} \trace_0(e) = \begin{bmatrix}  \frac{1}{\sqrt{2}} (P_1e(b) - P_1 e(a)) \\
		\frac{1}{\sqrt{2}} ( e(b) + e(a))
	\end{bmatrix}, 
\end{align*} 
with $R_{\extern}$ defined in \eqref{Interface Paper - Matrix Rext}. Furthermore, we have introduced the interface relations in \eqref{Interface Paper - Interface Variable fI} and \eqref{Interface Paper - Interface Variable eI} with respect to the flux variables. In general, the interface flow $f_I = f_{I,e} \in \mathbb{R}$ and the interface effort $e_I = e_{I,e} \in \mathbb{R}$ are for all effort variables $ e \in D(\mathcal{J}_0)$ given as follows:
\begin{align*}
	f_I &= e_2 (0^+) = e_2(0^-), \\ %\label{Interface - Simplified Continuity Equation} \\
	-e_I &= e_1(0^+) - e_1 (0^-). %\label{Interface - Simplified Balance Equation}
\end{align*}
Now, let us choose the power pairing in order to define the Dirac structure associated with the system \eqref{Interface Paper - Augmented System wrt Ja} that is augmented with the boundary and interface port variables. Define the space of flows $\mathcal{F}$ and the space of efforts $\mathcal{E}$  as
\begin{align}
	\mathcal{F} = \mathcal{E} = L^2([a,b], \mathbb{R}^4) \times \mathbb{R} \times \mathbb{R}^2,
	\label{Interface Paper - Flow and Effort Space}
\end{align}
endowed with the canonical inner product $\langle \cdot, \cdot  \rangle_{\mathcal{F}}$. For the right choice of the power pairing $\langle \cdot \mid \cdot \rangle \colon \mathcal{E} \times \mathcal{F} \to \mathbb{R}$ we make use of equation \eqref{Interface Paper - Skew-Symmetry of Ja}. Note that for all $ e^1, e^2 \in D(\mathcal{J}_0)$ we may write the last line of \eqref{Interface Paper - Skew-Symmetry of Ja} with respect to the boundary and interface port variables as follows: 
\begin{align*}
	&\hspace{0.5 cm} \big[ (e^1)^{\top}(z) P_1 e^2(z) \big]_{a}^{b} + e_2^1(0) \left[ e_1^2(0^+) - e_1^2(0^-) \right]  + e_2^2(0) \left[ e_1^1(0^+) - e_1^1(0^-) \right] \\
	&= \langle e_{\partial, e^2} , f_{\partial, e^1} \rangle_2 + \langle e_{\partial, e^1} , f_{\partial, e^2} \rangle_2 - f_{I,e^1} e_{I, e^2} - f_{I, e^2} e_{I, e^1}. 
\end{align*}
The representaion of the evaluation at the boundary with respect to the boundary flows and efforts can be looked up in Subsection \ref{Subsection Boundary Port Variables} (see page \pageref{Theorem Stokes-Like Theorem - Equation Representation 2}). Consequently, we have for all $\tilde{e}^1, \tilde{e}^2 \in D(\tilde{\mathcal{J}}_0)$, 
\begin{align}
	\langle \tilde{\mathcal{J}}_0 \tilde{e}^1, \tilde{e}^2 \rangle_{L^2} + \langle \tilde{e}^1, \tilde{\mathcal{J}}_0 \tilde{e}^2 \rangle_{L^2}  = \langle e_{\partial, e^2} , f_{\partial, e^1} \rangle_2 + \langle e_{\partial, e^1} , f_{\partial, e^2} \rangle_2 - f_{I,e^1} e_{I, e^2} - f_{I, e^2} e_{I, e^1}.
	\label{Interface Paper - Skew-Symmetry of Ja - Representation 2}
\end{align}
Thus, we shall define
\begin{align}
	\left\langle \begin{pmatrix}
		\tilde{e} \\
		e_I \\
		e_{\partial} 
	\end{pmatrix} \Bigg| \begin{pmatrix}
		\tilde{f} \\
		f_I \\
		f_{\partial}
	\end{pmatrix} \right\rangle := \langle \tilde{e}, \tilde{f} \rangle_{L^2} +e_I f_I - \langle e_{\partial}, f_{\partial} \rangle_2, \hspace{0.5 cm} \left( \begin{pmatrix}
		\tilde{e} \\
		e_I \\
		e_{\partial} 
	\end{pmatrix} , \begin{pmatrix}
		\tilde{f} \\
		f_I \\
		f_{\partial}
	\end{pmatrix} \right) \in \mathcal{E} \times \mathcal{F}, 
	\label{Interface Paper - Power Pairing}
\end{align}
and the plus pairing $\ll \cdot , \cdot \gg \colon \mathcal{B} \times \mathcal{B} \to \mathbb{R}$ on the bond space $\mathcal{B} = \mathcal{F} \times \mathcal{E}$ induced by the power pairing \eqref{Interface Paper - Power Pairing} is accordingly defined as in \eqref{Plus Pairing - Distributed-Parameter}. Finally, we can define the Dirac structure associated with the formally skew-symmetric operator $\tilde{\mathcal{J}}_0$ given by \eqref{Interface Paper - Operator Ja}. The idea is based on the proofs of Proposition 4 in \cite{Diagne} and of Theorem 3.6 in \cite{LeGorrec}.
\begin{proposition}
	\label{Interface Paper - Proposition Dirac Structure DI}
 Let $\mathcal{B} = \mathcal{F} \times \mathcal{E}$ be the bond space with respect to the flow space $\mathcal{F}$ and the effort space $\mathcal{E}$ defined in \eqref{Interface Paper - Flow and Effort Space}. The subspace $\mathcal{D}_{\tilde{\mathcal{J}}_0} \subset \mathcal{B}$ defined by 
	\begin{align}
		\mathcal{D}_{\tilde{\mathcal{J}}_0} &:= 
			\left\{ \left( \begin{pmatrix}
				\tilde{f} \\
				f_I \\
				f_{\partial}
			\end{pmatrix} , \begin{pmatrix}
				\tilde{e} \\
				e_I \\
				e_{\partial}
			\end{pmatrix} \right) \in \mathcal{F} \times \mathcal{E} \hspace{0.1 cm} \Bigg| \hspace{0.1 cm}  \tilde{e} \in D(\tilde{\mathcal{J}}_0), \hspace{0.1 cm}
			\tilde{f}  = \tilde{\mathcal{J}}_0 \tilde{e}, \hspace{0.1 cm} f_I = e_2(0), \right. \label{Interface Paper - Dirac Structure DI} \\ 
			&\hspace{5.7 cm} \left. e_I = e_1(0^-) - e_1(0^+), \hspace{0.1 cm} \begin{bmatrix}
				f_{\partial } \\
				e_{\partial}
			\end{bmatrix} = R_{\extern} \trace_0 (e_1,e_2) \hspace{0.1 cm} \right\}, \nonumber
	\end{align}
with the flow and effort variables 
\begin{align*}
	\tilde{f} = \begin{bmatrix}
		f_1 \\
		f_2 \\
		f_{c} \\
		f_{\overline{c}}
	\end{bmatrix} \hspace{0.3 cm} \text{and} \hspace{0.3 cm} \tilde{e} = \begin{bmatrix}
	e_1 \\
	e_2 \\
	e_{c} \\
	e_{\overline{c}}
\end{bmatrix},
\end{align*}
 respectively, associated with a system of two conservation laws defined on the
 spatial domain $[a,b] \subset \mathbb{R}$ with an interface at the point $z = 0$ which imposes the continuity of the effort variable $e_2$ and allows for the discontinuity of the effort variable $e_1$, and associated with the extended state \eqref{Interface Paper - Extended State Variable xtilde}, the differential operator $\tilde{\mathcal{J}}_0$ defined in \eqref{Interface Paper - Operator Ja}, the operator $\mathbf{d}_0$ and its formal adjoint $\mathbf{d}_0^{\ast}$ defined in \eqref{Interface Paper - Operator d0} and \eqref{Interface Paper - Operator d_0ast}, respectively, is a Dirac structure with respect to the plus pairing $\ll \cdot, \cdot \gg \colon \mathcal{B} \times \mathcal{B} \to \mathbb{R}$ induced by the power pairing $\langle \cdot \mid \cdot \rangle \colon \mathcal{E} \times \mathcal{F} \to \mathbb{R}$ given by \eqref{Interface Paper - Power Pairing}.
\end{proposition}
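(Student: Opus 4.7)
The plan is to establish the equivalence $\mathcal{D}_{\tilde{\mathcal{J}}_0}=\mathcal{D}_{\tilde{\mathcal{J}}_0}^{\perp\!\!\!\perp}$ with respect to the plus pairing induced by \eqref{Interface Paper - Power Pairing}, which is precisely the definition of a Dirac structure in the infinite-dimensional setting (Definition \ref{Definition Dirac Structure - Infinite-dimensional Case}). The proof will proceed in the standard two-inclusion style that is used for Theorem \ref{Theorem Dirac Structure Associated With Skew-Symmetric Operator J} and for the corresponding results in \cite{LeGorrec, Diagne}. The key technical tool is the integration-by-parts identity \eqref{Interface Paper - Skew-Symmetry of Ja - Representation 2}, which already encodes the balance of boundary and interface contributions that we need to match against the power pairing.

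First I would establish $\mathcal{D}_{\tilde{\mathcal{J}}_0}\subset\mathcal{D}_{\tilde{\mathcal{J}}_0}^{\perp\!\!\!\perp}$. Given two elements $((\tilde{f}^i,f_I^i,f_\partial^i),(\tilde{e}^i,e_I^i,e_\partial^i))\in\mathcal{D}_{\tilde{\mathcal{J}}_0}$ for $i=1,2$, I substitute $\tilde{f}^i=\tilde{\mathcal{J}}_0\tilde{e}^i$ together with the explicit expressions for the interface and boundary port variables into the plus pairing
\begin{align*}
\ll\cdot,\cdot\gg\; =\langle\tilde{e}^1,\tilde{f}^2\rangle_{L^2}+e_I^1 f_I^2-\langle e_\partial^1,f_\partial^2\rangle_2 + \langle\tilde{e}^2,\tilde{f}^1\rangle_{L^2}+e_I^2 f_I^1-\langle e_\partial^2,f_\partial^1\rangle_2.
\end{align*}
Applying \eqref{Interface Paper - Skew-Symmetry of Ja - Representation 2} rewrites the sum of the two $L^2$-terms as $\langle e_\partial^1,f_\partial^2\rangle_2+\langle e_\partial^2,f_\partial^1\rangle_2 - f_I^1 e_I^2-f_I^2 e_I^1$, which exactly cancels the remaining four terms, giving the desired vanishing.

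The reverse inclusion $\mathcal{D}_{\tilde{\mathcal{J}}_0}^{\perp\!\!\!\perp}\subset\mathcal{D}_{\tilde{\mathcal{J}}_0}$ is the substantive part and will be done by a sequence of test-function arguments. For any candidate $((\tilde{f},f_I,f_\partial),(\tilde{e},e_I,e_\partial))$ in the orthogonal complement, I would successively specialize the dual pair $((\tilde{f}^1,\ldots),(\tilde{e}^1,\ldots))$ in $\mathcal{D}_{\tilde{\mathcal{J}}_0}$ as follows. First, choose $\tilde{e}^1=(0,0,e_c^1,0)$ with arbitrary $e_c^1\in L^2$; since $\tilde{\mathcal{J}}_0$ annihilates the color components, all port variables associated with $\tilde{e}^1$ vanish, and the orthogonality reduces to $\langle e_c^1,f_c\rangle_{L^2}=0$, forcing $f_c=0$, and likewise $f_{\overline{c}}=0$. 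Second, take $\tilde{e}^1=(e_1^1,e_2^1,0,0)$ with $e_1^1, e_2^1\in\mathcal{D}(a,0)\cup\mathcal{D}(0,b)$ vanishing in a neighbourhood of the interface and of the boundary: this kills all boundary and interface port contributions, and after integrating by parts separately on $(a,0)$ and $(0,b)$ one obtains the PDE constraint $\tilde{f}=\tilde{\mathcal{J}}_0\tilde{e}$ in the distributional sense on each subinterval, along with the regularity $e_1\in H^1((a,0))\oplus H^1((0,b))$ and $e_2\in H^1((a,0))\oplus H^1((0,b))$. Third, enlarge the test class by allowing $e_2^1\in D(\mathbf{d}_0)$ with non-trivial values at $z=0^\pm$ (but still vanishing at $a,b$) to force the continuity $e_2(0^-)=e_2(0^+)=:f_I$ via the jump term in \eqref{Interface Paper - Relation d_0 and d_0ast}; testing with $e_1^1\in D(\mathbf{d}_0^*)$ of arbitrary jump then identifies $e_I=e_1(0^-)-e_1(0^+)$. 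Finally, allowing $\tilde{e}^1$ to attain arbitrary boundary traces at $a$ and $b$ and matching the residual terms using the factorization $\mathcal{P}_{\operatorname{ext}}=R_{\operatorname{ext}}^\top\Sigma R_{\operatorname{ext}}$ from Lemma \ref{Lemma Auxillary Lemma Port-Boundary Variables} yields $\begin{bmatrix}f_\partial\\ e_\partial\end{bmatrix}=R_{\operatorname{ext}}\trace_0(e_1,e_2)$.

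The main obstacle will be the reverse inclusion, in particular the interplay between the interface conditions and the fact that $e_1\in D(\mathbf{d}_0^*)$ is allowed to jump at $z=0$ while $e_2\in D(\mathbf{d}_0)$ must be continuous there. One must choose sufficiently rich families of test efforts so that, after the integration-by-parts identity \eqref{Interface Paper - Relation d_0 and d_0ast} produces the evaluations $e_i^1(0^\pm)$, these evaluations are independent enough to separately identify the continuous interface flow $f_I$ and the (possibly jumping) interface effort $e_I$, without spurious additional constraints at the boundary. Once the test-function catalogue is set up carefully so that each of the constraint classes (bulk PDE, boundary ports, interface ports, color-function components) can be isolated, the individual identifications are straightforward; the structural reason the argument closes is again the identity \eqref{Interface Paper - Skew-Symmetry of Ja - Representation 2}, which guarantees that the bulk and trace contributions tally exactly with the terms appearing in the power pairing \eqref{Interface Paper - Power Pairing}.
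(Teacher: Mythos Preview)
Your proposal is correct and follows essentially the same two-inclusion strategy as the paper's proof, relying on the identity \eqref{Interface Paper - Skew-Symmetry of Ja - Representation 2} for the easy inclusion and a catalogue of test efforts for the reverse one. Two small remarks: in your third step the roles are swapped --- it is testing with $e_1^1\in D(\mathbf{d}_0^\ast)$ carrying an arbitrary jump at $0$ (and $e_2^1=0$) that forces the continuity $e_2(0^-)=e_2(0^+)$ together with $f_I=e_2(0)$, while testing with $e_2^1\in D(\mathbf{d}_0)$ having $e_2^1(0)\neq 0$ (and $e_1^1=0$) identifies $e_I=e_1(0^-)-e_1(0^+)$; and your plan to first extract the piecewise $H^1$ regularity of $e_1,e_2$ from compactly supported tests is in fact more careful than the paper, which simply assumes $\epsilon_1\in D(\mathbf{d}_0^\ast)$ and $\epsilon_2\in D(\mathbf{d}_0)$ at that stage.
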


\begin{proof}
	In the following, we denote by
	\begin{align*}
		F = \begin{pmatrix}
			\tilde{f} \\
			f_I \\
			f_{\partial}
		\end{pmatrix} \in \mathcal{F} \hspace{0.3 cm} \text{and} \hspace{0.3 cm} E = \begin{pmatrix}
			\tilde{e} \\
			e_I \\
			e_{\partial}
		\end{pmatrix} \in \mathcal{E}
	\end{align*}
	the vector of flows and the vector of efforts, respectively, for all $(F,E) \in \mathcal{D}_{\tilde{\mathcal{J}}_0}$.
	To prove the claim, we have to verify that the space $\mathcal{D}_{\tilde{\mathcal{J}}_0}$ coincides with its orthogonal complement $\mathcal{D}_{\tilde{\mathcal{J}}_0}^{\perp \!\!\! \perp}$ with respect to the plus pairing $\ll \cdot, \cdot \gg$, see Definition \ref{Definition Dirac Structure - Infinite-dimensional Case}.
	\vspace{0.5 cm}\\
We first show that $\mathcal{D}_{\tilde{\mathcal{J}}_0} \subset \mathcal{D}_{\tilde{\mathcal{J}}_0}^{\perp \!\!\! \perp}$. In order to do so, we have to show that for every $(F^1,E^1)$, $(F^2,E^2) \in \mathcal{D}_{\tilde{\mathcal{J}}_0}$ we have
	\begin{align*}
	\ll (F^1,E^1), (F^2,E^2) \gg = \langle E^1 \mid F^2 \rangle + \langle E^2 \mid F^1 \rangle \stackrel{!}{=} 0
	\end{align*}
	with respect to the pairing $\langle \cdot \mid \cdot \rangle $ defined in \eqref{Interface Paper - Power Pairing}. Using the constitutive relations of $\mathcal{D}_{\tilde{\mathcal{J}}_0}$ and \eqref{Interface Paper - Skew-Symmetry of Ja - Representation 2}, we compute 
	\begin{align*}
		&\hspace{0.5 cm} \ll (F^1,E^1), (F^2,E^2) \gg  \\
		&= \langle \tilde{e}^2, \tilde{f}^1 \rangle_{L^2}  + \langle \tilde{e}^1, \tilde{f}^2 \rangle_{L^2} + e_I^1 f_I^2 + e_I^2 f_I^1 - \langle e_{\partial}^1 , f_{\partial}^2 \rangle_2 - \langle e_{\partial}^2 , f_{\partial}^1 \rangle_2 \\
		&= \langle \tilde{e}^2, \tilde{\mathcal{J}}_0 \tilde{e}^1 \rangle_{L^2}  + \langle \tilde{e}^1, \tilde{\mathcal{J}}_0 \tilde{e}^2 \rangle_{L^2} + e_I^1 f_I^2 + e_I^2 f_I^1 - \langle e_{\partial}^1 , f_{\partial}^2 \rangle_2 - \langle e_{\partial}^2 , f_{\partial}^1 \rangle_2 \\
		&= \langle e_{\partial}^2, f_{\partial}^1 \rangle_2 + \langle e_{\partial}^1 , f_{\partial}^2 \rangle_2 - f_I^1 e_I^2 - f_I^2 e_I^1  + e_I^1 f_I^2 + e_I^2 f_I^1 - \langle e_{\partial}^1 , f_{\partial}^2 \rangle_2 - \langle e_{\partial}^2 , f_{\partial}^1 \rangle_2 \\ 
		&= 0.
%		&= \int_{a}^{b} e_1^1 (\mathbf{d}_0 e_2^2) + e_2^1   (-\mathbf{d}_0^{\ast} e_1^2) \, dz  + \int_{a}^{b} e_1^2 (\mathbf{d}_0 e_2^1) + e_2^2 ( -\mathbf{d}_0^{\ast} e_1^1) \, dz \\
%		& \hspace{0.5 cm} +  e_I^1 f_I^2 +  e_I^2 f_I^1 + \big[ e_1^1 e_2^2 \big]_{a}^{b} + \big[ e_1^2 e_2^1 \big]_{a}^{b}.
	\end{align*}
 As $(F^1,E^1), (F^2, E^2) \in \mathcal{D}_{\tilde{\mathcal{J}}_0}$ can be chosen arbitrarily, we conlcude that $\mathcal{D}_{\tilde{\mathcal{J}}_0} \subset \mathcal{D}_{\tilde{\mathcal{J}}_0}^{\perp \!\!\! \perp}$.
	\vspace{0.5 cm}\\
	Next, we want to show the inclusion $\mathcal{D}_{\tilde{\mathcal{J}}_0}^{\perp \!\!\! \perp} \subset \mathcal{D}_{\tilde{\mathcal{J}}_0}$. To this end, let 
	\begin{align*}
	(F^2, E^2) = \left( \begin{pmatrix}
	\tilde{\phi} \\
	\phi_I \\
	\phi_{\partial}
	\end{pmatrix}, \begin{pmatrix}
	\tilde{\epsilon} \\
	\epsilon_I \\
	\epsilon_{\partial}
\end{pmatrix} \right) \in \mathcal{B} 
\end{align*}
such that for all $(F^1,E^1) \in \mathcal{D}_{\tilde{\mathcal{J}}_0}$ it holds that
	\begin{align*}
		\ll (F^1, E^1), (F^2, E^2) \gg = 0. 
	\end{align*}
	We need to verify that $(F^2, E^2) \in \mathcal{D}_{\tilde{\mathcal{J}}_0}$.  For simplicity, we will omit the dependence on the variable $z$. For any $(F^1, E^1) \in \mathcal{D}_{\tilde{\mathcal{J}}_0}$ we have
	\begin{align}
		\begin{split}
			0 &= \int_{a}^{b} (\tilde{e}^1)^{\top} \tilde{\phi} + \epsilon_1 (\mathbf{d}_0 e_2^1) + \epsilon_2 (- \mathbf{d}_0^{\ast} e_1^1) \, dz  + e_I^1 \phi_I + \epsilon_I f_I^1 \\
			&\hspace{0.5 cm} - \langle e_{\partial}^1, \phi_{\partial} \rangle_{2} - \langle \epsilon_{\partial}, f_{\partial}^1 \rangle_{2}. 
		\end{split}
		\label{Interface Paper - Proposition 4 - Variable Determination}
	\end{align} 
	Note that the choice of the variable $\tilde{e}^1 = (e_1^1,e_2^1, e_{c}^1, e_{\overline{c}}^1)$ determines all remaining variables in $(F^1, E^1) \in \mathcal{D}_{\tilde{\mathcal{J}}_0}$. Since \eqref{Interface Paper - Proposition 4 - Variable Determination} has to hold for all $(F^1, E^1) \in \mathcal{D}_{\tilde{\mathcal{J}}_0}$, we will consider different instances for the previously mentioned variable in order to verify that $(F^2, E^2) \in \mathcal{D}_{\tilde{\mathcal{J}}_0}$. 
	\begin{enumerate}
		\item[(i)] Let $e_1^1 = 0$, $e_2^1 = 0$, and $e_{\overline{c}}^1 = 0$. \\
		From \eqref{Interface Paper - Dirac Structure DI} and \eqref{Interface Paper - Proposition 4 - Variable Determination} we derive
		\begin{align*}
			0 = \int_{a}^{b} (\tilde{e}^1)^{\top} \tilde{\phi} \, dz = \int_{a}^{b} e_{c}^1 \phi_{c} \, dz. 
		\end{align*} 
		As $e_{c}^1$ may be chosen arbitrarily, we conclude that $\phi_{c} = 0$. If we change the roles of $e_{c}^1$ and $e_{\overline{c}}^1$, then we analogously obtain $\phi_{\overline{c}} = 0$. 
		\item[(ii)] Let $e_1^1(a) = e_1^1(b) = 0$, $e_1^1(0^+) - e_1^1(0^-) = 0$, and $e_2^1 = 0$. \\
		Then we deduce from \eqref{Interface Paper - Proposition 4 - Variable Determination} that
		\begin{align*}
			0 = \int_{a}^{b} e_1^1 \phi_1 + \epsilon_2(- \mathbf{d}_0^{\ast} e_1^1) \,  dz. 
		\end{align*}
		Assume that $\epsilon_2 \in D(\mathbf{d}_0)$. Then, by virtue of \eqref{Interface Paper - Relation d_0 and d_0ast}, it follows that
		\begin{align*}
			0 &= \int_{a}^{b} e_1^1 \phi_1 + e_1^1 (- \mathbf{d}_0 \epsilon_2) \, dz - \big[ e_1^1 \epsilon_2 \big]_{a}^{b} + \epsilon_2(0) \left[ e_1^1(0^+) - e_1^1(0^-) \right] \\ 
			&= \int_{a}^{b} e_1^1 (- \mathbf{d}_0 \epsilon_2 + \phi_1) \, dz,
		\end{align*}
		whence $\phi_1 = \mathbf{d}_0 \epsilon_2$. 
		\item[(iii)] Let $e_1^1 = 0$ and $e_2^1(a) = e_2^1(0) = e_2^1(b) = 0$. \\
		If we assume that $\epsilon_1 \in D(\mathbf{d}_0^{\ast})$ and if we apply \eqref{Interface Paper - Relation d_0 and d_0ast} once again, we get
		\begin{align*}
			0 &= \int_{a}^{b} e_2^1 \phi_2 + \epsilon_1 (\mathbf{d}_0 e_2^1) \, dz \\
			&= \int_{a}^{b} e_2^1 \phi_2 + e_2^1( \mathbf{d}_0^{\ast} \epsilon_1) \, dz - \big[e_2^1 \epsilon_1 \big]_{a}^{b} + e_2^1(0) \left[ \epsilon_1(0^+) - \epsilon_1(0^-) \right] \\
			&= \int_{a}^{b} e_2^1 \phi_2 + e_2^1( \mathbf{d}_0^{\ast} \epsilon_1) \, dz,
		\end{align*} 
		whence $\phi_2 = - \mathbf{d}_0^{\ast} \epsilon_1$. 
	\end{enumerate}
	From the preceding instances, we conclude that $\tilde{\epsilon} \in D(\tilde{\mathcal{J}}_0)$ and
	\begin{align*}
		\tilde{\phi} = \tilde{\mathcal{J}}_0 \tilde{\epsilon}.
	\end{align*}
	It is left to show the last three constitutive relations in (\ref{Interface Paper - Dirac Structure DI}) concerning the boundary and the interface port variables. 
	\begin{enumerate}
			\item[(iv)] Let $e_1^1(a) =  e_1^1(b) =0$ and $e_2^1 = 0$. \\
		From \eqref{Interface Paper - Relation d_0 and d_0ast} and \eqref{Interface Paper - Proposition 4 - Variable Determination} we infer
		\begin{align*}
			0 &= \int_{a}^{b} e_1^1(\mathbf{d}_0 \epsilon_2) + \epsilon_2(-\mathbf{d}_0^{\ast} e_1^1) \, dz + e_I^1 \phi_I \\
			&= \epsilon_2(0) \left[ e_1^1(0^+) - e_1^1(0^-) \right] + e_I^1 \phi_I \\
			&= - e_I^1 \epsilon_2(0)  + e_I^1 \phi_I.
		\end{align*}
		Hence, $\phi_I = \epsilon_2(0)$. 
		\item[(v)] Let $e_1^1 = 0$ and $e_2^1(a) = e_2^1(b) = 0$. \\
		With the same argument we find
		\begin{align*}
			0 &= \int_{a}^{b} e_2^1(-\mathbf{d}_0^{\ast} \epsilon_1) + \epsilon_1(\mathbf{d}_0 e_2^1) \, dz +\epsilon_I f_I^1 \\
			&= e_2^1(0) \left[ \epsilon_1(0^+) - \epsilon_1(0^-) \right] + \epsilon_I f_I^1 \\
			&= f_I^1 \left[ \epsilon_1(0^+) - \epsilon_1(0^-) \right] + \epsilon_I f_I^1.
			\end{align*}
		Consequently, $\epsilon_I = \epsilon_1(0^-) - \epsilon_1(0^+)$.
		\item[(vi)] Let $e_1^1(0^+) - e_1^1(0^-) = 0$. \\
		Following the discussion in Subsection \ref{Subsection Boundary Port Variables} and using the relation \eqref{Interface Paper - Skew-Symmetry of Ja}, equation \eqref{Interface Paper - Proposition 4 - Variable Determination} can be written as
		\begin{align*}
			0 	&= \langle \tilde{\epsilon}, \tilde{\mathcal{J}}_0 \tilde{e}^1 \rangle_{L^2}  + \langle \tilde{e}^1, \tilde{\mathcal{J}}_0 \tilde{\epsilon} \rangle_{L^2} + \epsilon_I f_I^1 - \langle e_{\partial}^1 , \phi_{\partial} \rangle_2 - \langle \epsilon_{\partial} , f_{\partial}^1 \rangle_2 \\
			&= \big[ (e^1)^{\top} P_1 \epsilon \big]_{a}^{b} - f_I^1 \epsilon_I  + \epsilon_I f_I^1 - \langle e_{\partial}^1 , \phi_{\partial} \rangle_2 - \langle \epsilon_{\partial} , f_{\partial}^1 \rangle_2 \\
			&= \trace_0(e^1)^{\top} \mathcal{\begin{bmatrix}
				P_1 & 0 \\
				0 & - P_1
			\end{bmatrix} } \trace_0(\epsilon)  - \langle e_{\partial}^1 , \phi_{\partial} \rangle_2 - \langle \epsilon_{\partial} , f_{\partial}^1 \rangle_2 \\
		&=  \trace_0(e^1)^{\top} \mathcal{P}_{\extern} \trace_0(\epsilon)  - \langle e_{\partial}^1 , \phi_{\partial} \rangle_2 - \langle \epsilon_{\partial} , f_{\partial}^1 \rangle_2 \\
		&=  \trace_0(e^1)^{\top} R_{\extern}^{\top} \Sigma R_{\extern} \trace_0(\epsilon)  - \langle e_{\partial}^1 , \phi_{\partial} \rangle_2 - \langle \epsilon_{\partial} , f_{\partial}^1 \rangle_2 \\
		&= \begin{bmatrix}
			(f_{\partial}^1)^{\top } & (e_{\partial}^1)^{\top}
		\end{bmatrix} \Sigma R_{\extern} \trace_0(\epsilon) - (e_{\partial}^1)^{\top} \phi_{\partial} - \epsilon_{\partial}^{\top} f_{\partial}^1 \\
&=  \begin{bmatrix}
	(e_{\partial}^1)^{\top } & (f_{\partial}^1)^{\top} \end{bmatrix} \left[ R_{\extern} \trace_0(\epsilon) - \begin{bmatrix}
	\phi_{\partial} \\
	\epsilon_{\partial}
\end{bmatrix} \right].
		\end{align*}
Since a proper choice for the variable $\tilde{e}$ yields arbitary values for the vectors $e_{\partial}^1$ and $f_{\partial}^1$, we conclude that
\begin{align*}
	\begin{bmatrix}
		\phi_{\partial} \\
		\epsilon_{\partial}
	\end{bmatrix} =  R_{\extern} \trace_0(\epsilon).
\end{align*}
	\end{enumerate}
	Thus, we have $(F^2, E^2) \in \mathcal{D}_{\tilde{\mathcal{J}}_0}$, and so the inclusion $\mathcal{D}_{\tilde{\mathcal{J}}_0}^{\perp \!\!\! \perp} \subset \mathcal{D}_{\tilde{\mathcal{J}}_0}$ holds as well. Summing up, we have $\mathcal{D}_{\tilde{\mathcal{J}}_0} = \mathcal{D}_{\tilde{\mathcal{J}}_0}^{\perp \!\!\! \perp}$. This completes the proof. 	
\end{proof}

Comparing the constitutive relations of the Dirac structure $\mathcal{D}_{\tilde{\mathcal{J}}_0}$ defined in \eqref{Interface Paper - Dirac Structure DI} with the generalized Hamiltonian system \eqref{Interface Paper - Augmented System wrt Ja} together with the interface relations \eqref{Interface Paper - Interface Variable fI}-\eqref{Interface Paper - Interface Variable eI} imposed on the flux variable $\mathcal{N}(x, c_0, \overline{c}_0)$, one may easily see that this system possesses a boundary port-Hamiltonian structure. Note that it is crucial to pick states of the form
\begin{align*}
	\tilde{x} = (x, c_0, \overline{c}_0) \in \tilde{X}, 
\end{align*}
with $c_0$ and $\overline{c}_0$ the color functions defined in \eqref{Interface Paper - Characteristic Functions c0 and c0overline}, as the conservation laws derived in Subsection \ref{Subsection Balance Equations of the State Variables}, which are constituting the generalized Hamiltonian system \eqref{Interface Paper - Augmented System wrt Ja}, have been determined with this particular choice. Denoting $f_I(t) = f_{I, \delta_{\tilde{x}}H(\tilde{x}(\cdot,t))}$ and $e_I(t) = e_{I, \delta_{\tilde{x}}H(\tilde{x}(\cdot,t))}$ for $t \geq 0$, the following holds. 
\begin{corollary}
	\label{Interface Paper - Corollary PH System with Nonmoving Interface}
	The generalized Hamiltonian system \eqref{Interface Paper - Augmented System wrt Ja} of four conservation laws may be defined as a generalized boundary port-Hamiltonian system, and is geometrically specified by the requirement that for all $t > 0$ we have
	\begin{align*}
		\left( \begin{pmatrix}
			\partial_t \tilde{x}(\cdot,t) \\
			f_I(t) \\
			f_{\partial}(t)
		\end{pmatrix}, \begin{pmatrix}
		\delta_{\tilde{x}} H(\tilde{x}(\cdot,t)) \\
		e_I(t) \\
		e_{\partial} (t)
	\end{pmatrix} \right) \in \mathcal{D}_{\tilde{\mathcal{J}}_0},
	\end{align*}
with $\mathcal{D}_{\tilde{\mathcal{J}}_0}$ the Dirac structure defined in \eqref{Interface Paper - Dirac Structure DI}, the extended state vector $\tilde{x} = (x, c_0, \overline{c}_0)$, where $c_0$ and $\overline{c}_0$ are precisely the color functions defined in \eqref{Interface Paper - Characteristic Functions c0 and c0overline}, the variational derivate \eqref{Interface Paper - Extended Variational Derivatives} of the Hamiltonian $H\colon \tilde{X} \to \mathbb{R}$ defined in \eqref{Interface Paper - Hamiltonian H of Coupled System}, the pair of port variables $(f_I,e_I) \in \mathbb{R}^2$ associated with the interface, and the pair of port variables $(f_{\partial}, e_{\partial}) \in \mathbb{R}^4$ associated with the boundary of the spatial domain $[a,b]$. 
\end{corollary}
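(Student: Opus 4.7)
The plan is to verify, one by one, that each constitutive relation in the definition \eqref{Interface Paper - Dirac Structure DI} of $\mathcal{D}_{\tilde{\mathcal{J}}_0}$ is met by the tuple $(\partial_t \tilde{x}(\cdot,t),f_I(t),f_\partial(t))$ and $(\delta_{\tilde{x}} H(\tilde{x}(\cdot,t)),e_I(t),e_\partial(t))$, where $\tilde{x}=(x,c_0,\overline{c}_0)$ evolves according to \eqref{Interface Paper - Augmented System wrt Ja} together with the interface relations \eqref{Interface Paper - Interface Variable fI}--\eqref{Interface Paper - Interface Variable eI}. Since $\mathcal{D}_{\tilde{\mathcal{J}}_0}$ has already been shown to be a Dirac structure in Proposition~\ref{Interface Paper - Proposition Dirac Structure DI}, the corollary is purely a matter of matching the dynamics to those algebraic/differential conditions.

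I would first set $\tilde{e}(t):=\delta_{\tilde{x}}H(\tilde{x}(\cdot,t))$ and check the regularity condition $\tilde{e}(t)\in D(\tilde{\mathcal{J}}_0)$. By \eqref{Interface Paper - Extended Variational Derivatives} this means verifying $e_1(t)=c_0\,\delta_{x_1}H^-(x)+\overline{c}_0\,\delta_{x_1}H^+(x)\in D(\mathbf{d}_0^\ast)$ and $e_2(t)=c_0\,\delta_{x_2}H^-(x)+\overline{c}_0\,\delta_{x_2}H^+(x)\in D(\mathbf{d}_0)$. The first is automatic from the split form (each piece is $H^1$ on its subdomain) and the fact that $D(\mathbf{d}_0^\ast)$ permits a jump at $z=0$; the second requires continuity at the interface, which is exactly the continuity condition \eqref{Interface Paper - Interface Variable fI} imposed on the privileged flux $\mathcal{N}_1$, since under \eqref{Interface Paper - Relation Flux Variable and Variational Derivative - Composed Domain} one has $\mathcal{N}_1=e_2$. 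Once this regularity is in place, the equality $\partial_t\tilde{x}=\tilde{\mathcal{J}}_0\tilde{e}$ is precisely the definition of the generalized Hamiltonian system~\eqref{Interface Paper - Augmented System wrt Ja}.

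Next, I would translate the two interface relations into the remaining constitutive conditions of $\mathcal{D}_{\tilde{\mathcal{J}}_0}$. Using $\mathcal{N}=P_1\,\delta_x H$ combined with the interchange done in \eqref{Interface Paper - Relation Flux Variable and Variational Derivative}, so that $\mathcal{N}_1=e_2$ and $\mathcal{N}_2=e_1$, the continuity condition \eqref{Interface Paper - Interface Variable fI} reads $e_2(0^+)=e_2(0^-)=:e_2(0)$ and thereby fixes $f_I(t)=e_2(0)$, while the balance equation \eqref{Interface Paper - Interface Variable eI} becomes $-e_I=e_1(0^+)-e_1(0^-)$, i.e.\ $e_I(t)=e_1(0^-)-e_1(0^+)$. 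Both expressions are exactly the interface-port conditions appearing in \eqref{Interface Paper - Dirac Structure DI}. The boundary-port condition $\begin{bmatrix}f_\partial\\ e_\partial\end{bmatrix}=R_{\extern}\trace_0(e_1,e_2)$ is then nothing more than the defining equation \eqref{Boundary Flow and Effort Linear Case} of the boundary port variables specialised to $N=1$ and $n=2$, and thus holds by construction.

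The main subtlety I anticipate is the issue flagged in Remark~\ref{Interface Paper - Remark Balance Equation}: the balance \eqref{Interface Paper - Interface Variable eI} of $\mathcal{N}_2$ at the interface is not enforced inside the domain $D(\mathbf{d}_0^\ast)$, which only allows jumps of $e_1$ but does not prescribe them. I would address this by reading \eqref{Interface Paper - Interface Variable eI} as the implicit definition of the source $e_I(t)=e_1(0^-)-e_1(0^+)$, exactly as indicated in that remark; then the conservation law \eqref{Interface Paper - Conservation Law of x2 - Nonmoving Interface} holds in $L^2([a,b],\mathbb{R})$ on each subdomain and no extra condition has to be verified. Once this point is dispatched, all six defining conditions of $\mathcal{D}_{\tilde{\mathcal{J}}_0}$ are satisfied, which is precisely the geometric specification claimed in the corollary.
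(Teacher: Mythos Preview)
Your proposal is correct and follows exactly the route the paper takes: the corollary is stated there without a separate proof, as the immediate consequence of matching the constitutive relations of $\mathcal{D}_{\tilde{\mathcal{J}}_0}$ against the system \eqref{Interface Paper - Augmented System wrt Ja} together with the interface relations \eqref{Interface Paper - Interface Variable fI}--\eqref{Interface Paper - Interface Variable eI}, and you have simply spelled that comparison out in detail. (One small slip: the flux satisfies $\mathcal{N}=-P_1\,\delta_x H$ rather than $P_1\,\delta_x H$, since $P_1$ in \eqref{Interface Paper - Matrix P1} carries minus signs; your identification $\mathcal{N}_1=e_2$, $\mathcal{N}_2=e_1$ is nonetheless correct.)
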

As a consequence of the boundary port-Hamiltonian structure, we can state a power balance equation along solutions $\tilde{x} \colon [0, \infty) \to \tilde{X}$ of \eqref{Interface Paper - Augmented System wrt Ja} of the form 
\begin{align*}
	\tilde{x}(t) = \begin{bmatrix}
		x_1(\cdot, t) \\
		x_2(\cdot, t) \\
		c_0(\cdot,t) \\
		\overline{c}_0(\cdot,t)
	\end{bmatrix}, \hspace{0.5 cm} t \geq 0,
\end{align*}
by means of the boundary and interface port variables: using \eqref{Interface Paper - Skew-Symmetry of Ja - Representation 2}, for all $t > 0$ it holds that
\begin{align*}
	%\begin{split}
	&\hspace{0.5 cm} \frac{d}{dt} H(\tilde{x}(\cdot,t)) \\
	&= \frac{d}{dt} \int_{a}^{b} c_0(z,t) \mathcal{H}^-(x(z,t)) + \overline{c}_0(z,t) \mathcal{H}^+(x(z,t)) \, dz \\
	&= \int_{a}^{b} (\partial_t c_0 (z,t)) \mathcal{H}^-(x(z,t)) + c_0(z,t) \partial_t \mathcal{H}^-(x(z,t)) \, dz \\
	&\hspace{0.5 cm} + \int_{a}^{b} (\partial_t \overline{c}_0(z,t)) \mathcal{H}^+(x(z,t)) + \overline{c}_0(z,t) \partial_t \mathcal{H}^+(x(z,t)) \, dz \\
	&= \int_{a}^{b} c_0(z,t) \left[ \delta_{x_1}H^-(x(z,t)) \partial_t x_1(z,t) + \delta_{x_2}H^-(x(z,t)) \partial_t x_2(z,t) \right] \, dz  \\
	&\hspace{0.5 cm} + \int_{a}^{b} \overline{c}_0(z,t) \left[ \delta_{x_1}H^+(x(z,t)) \partial_t x_1(z,t) + \delta_{x_2}H^+(x(z,t)) \partial_t x_2(z,t) \right] \, dz \\
	&= \int_{a}^{b} c_0(z,t) \left[ \delta_{x_1}H^-(x(z,t)) (\mathbf{d}_0 \delta_{x_2}H^-(x(z,t))) + \delta_{x_2}H^-(x(z,t)) \left( - \mathbf{d}_0^{\ast} \delta_{x_1}H^-(x(z,t)) \right) \right] \, dz \\
	&\hspace{0.5 cm } + \int_{a}^{b} \overline{c}_0(z,t) \left[ \delta_{x_1}H^+(x(z,t)) (\mathbf{d}_0 \delta_{x_2}H^+(x(z,t))) + \delta_{x_2}H^+(x(z,t)) \left( - \mathbf{d}_0^{\ast} \delta_{x_1}H^+(x(z,t)) \right) \right] \, dz \\
	&= \frac{1}{2} \left(  \langle \tilde{\mathcal{J}}_0 \delta_{\tilde{x}} H(\tilde{x}(\cdot,t)), \delta_{\tilde{x}} H(\tilde{x}(\cdot,t)) \rangle_{L^2} + \langle \delta_{\tilde{x}} H(\tilde{x}(\cdot,t)) , \tilde{\mathcal{J}}_0 \delta_{\tilde{x}} H(\tilde{x}(\cdot,t)) \rangle_{L^2} \right) \\
	&= \langle e_{\partial}(t), f_{\partial}(t) \rangle_2 - e_I(t) f_I(t). %\\
%	& = \int_{a}^{0} - \delta_{x_1}H^-(x(z,t))\left(  \frac{\partial}{\partial z} \delta_{x_2} H^-(x(z,t)) \right) - \delta_{x_2}H^-(x(z,t))  \left( \frac{\partial}{\partial z} \delta_{x_1}H^-(x(z,t)) \right) \, dz   \\
%	&\hspace{0.5 cm} +\int_{0}^{b} - \delta_{x_1}H^+(x(z,t)) \left( \frac{\partial}{\partial z} \delta_{x_2} H^+(x(z,t))\right) - \delta_{x_2}H^+(x(z,t)) \left( \frac{\partial}{\partial z} \delta_{x_1}H^+(x(z,t)) \right) \, dz   \\
%	&= - \big[ \delta_{x_1}H^-(x(z,t)) \delta_{x_2}H^-(x(z,t)) \big]_{a}^{0} - \big[ \delta_{x_1}H^+(x(z,t)) \delta_{x_2}H^+(x(z,t)) \big]_{0}^{b} \\
%	&= - \big[ \delta_{x_1}H(x(z,t)) \delta_{x_2}H(x(z,t)) \big]_{a}^{b} + \delta_{x_2}H(x(0,t)) \left[ \delta_{x_1}H(x(0^+,t)) - \delta_{x_1}H(x(0^-,t)) \right] \\
%	&=  \frac{1}{2} \big[ \delta_x H(x(z,t))^{\top} P_1 \delta_x H(x(z,t)) \big]_{a}^{b} - e_I(t) f_I(t) \\
%	& = \langle e_{\partial}(t), f_{\partial}(t) \rangle_{\mathbb{R}^2} - e_I(t) f_I(t).
	%\end{split}
	%\label{Interface Paper - Power Balance Equation WRT Dirac Structure DI}
\end{align*}
This equation expresses the fact that the change of energy is given by the power flow through the boundary minus the power flow at the interface position. An interesting question is whether this class of systems even admits a solution, and whether it is well-posed. We will postpone the answer to questions of this kind to Section \ref{Section A Simplified System}, as in this section, we only wanted to lay the foundation for the upcoming segment.
\vspace{0.5 cm}\\
In this section, we have formulated two systems of conservation laws that are coupled by a fixed interface as a boundary port-Hamiltonian system with some power flow at the interface. In the following, we repeat this procedure, but we assume that the interface position is not fixed anymore, but moving over time.

\section{Port-Hamiltonian Systems Coupled by a Moving Interface} 
\label{Section Port-Hamiltonian Systems Coupled through a Moving Interface}
In this section, we want to consider two systems of conservation laws that are coupled by a moving interface. We denote by $l\colon [0, \infty) \to (a,b)$ the time-varying position of the interface and its velocity by $\dot{l}\colon [0, \infty) \to \mathbb{R}$. In particular, we assume that $l$ is continuously differentiable. We proceed as in the last section. We begin by formulating the conservation laws in Subsection \ref{Subsection Balance Equations of the State Variables - Moving Interface} and, hence, highlight the difference to the fixed interface case. Afterwards, we briefly describe the port-Hamiltonian formulation in case of a moving system in Subsection \ref{Subsection Port-Hamiltonian Formulation - Moving Interface}. This yields an abstract control system, where the input is the velocity $\dot{l}$ of the displacement of the interface. We will see that this system description poses delicate problems. Thus, we will discuss the main issues of this class of systems, and seek a simplified model closely related to the original problem that we are going to analyze in the further course.   

\subsection{Balance Equations of the State Variables}
\label{Subsection Balance Equations of the State Variables - Moving Interface}
Since the position of the interface $l$ is potentially moving, the systems of conservation laws \eqref{Interface Paper - Two Systems of Conservation Laws on Respective Domain} are now given by
\begin{align}
	\begin{split}
		\frac{\partial}{\partial t} x^- + \frac{\partial}{\partial z} \mathcal{N}_{l(t)}^-(x^-) &= 0, \hspace{0.5 cm}  z \in [a,l(t)), \hspace{0.1 cm} t >0,  \\
		\frac{\partial}{\partial t} x^+ + \frac{\partial}{\partial z} \mathcal{N}_{l(t)}^+(x^+) &= 0, \hspace{0.5 cm} z \in (l(t),b], \hspace{0.1 cm} t > 0.
	\end{split} 
	\label{Interface Paper - Two Systems of Conservation Laws on Respective Domain - Moving Interface} 
\end{align}
This implies that the Hamiltonians of the respective system depend on the time as well. For $t \geq 0$, the Hamiltonian functionals $H_{l(t)}^- \colon L^2((a,l(t)), \mathbb{R}^2) \to \mathbb{R}$ and $H_{l(t)}^+ \colon L^2((l(t),b),\mathbb{R}^2) \to \mathbb{R}$ are given as follows:
\begin{align*}
	H_{l(t)}^-(x^-) &= \int_{a}^{l(t)} \mathcal{H}^-(x^-(z)) \, dz, \hspace{0.5 cm} x^- \in  L^2((a,l(t)), \mathbb{R}^2), \\
	H_{l(t)}^+(x^+) &= \int_{l(t)}^{b} \mathcal{H}^+(x^+(z)) \, dz, \hspace{0.5 cm} x^+ \in L^2((l(t),b),\mathbb{R}^2).
\end{align*}
The flux variables $\mathcal{N}_{l(t)}^-(x^-)$ and $\mathcal{N}_{l(t)}^+(x^+)$ appearing in \eqref{Interface Paper - Two Systems of Conservation Laws on Respective Domain - Moving Interface} still satisfy the relation \eqref{Interface Paper - Relation Flux Variable and Variational Derivative}.
As the interface is moving, the two color functions now depend on the time-varying position of the interface. They are given by
\begin{align}
	c_{l}(z,t) = \begin{cases}
		1, & z \in [a,l(t)), \\
		0, & z \in [l(t),b],
	\end{cases} \hspace{0.3 cm} \text{and} \hspace{0.3 cm} \overline{c}_{l} (z,t) = \begin{cases} 
	0, & z \in [a,l(t)], \\
	1, & z \in (l(t),b],
	\end{cases}
\label{Interface Paper - Color Functions Depending on Moving Interface}
\end{align}
respectively. The prolongated state variables, the Hamiltonian functional, and the prolongated flux variables may be defined similarly to the definitions \eqref{Interface Paper - State Variable x Split}, \eqref{Interface Paper - Hamiltonian H of Coupled System}, and \eqref{Interface Paper - Flux Variable N Split}, respectively: for every $t\geq 0$, we have
\begin{align*}
	x(z,t) &= x^-(z,t) + x^+(z,t) \\
	&= c_l(z,t) x(z,t) + \overline{c}_l(z,t) x(z,t).
\end{align*}
The energy density is still given by
\begin{align*}
	\mathcal{H}(\tilde{x}) = \mathcal{H}(x,c, \overline{c}) = c \mathcal{H}^-(x) + \overline{c} \mathcal{H}^+(x), \hspace{0.5 cm} \tilde{x} \in \mathbb{R}^4. 
	%\label{Interface Paper - Hamiltonian Density Split - Moving Interface}
\end{align*}
Consequently, if we choose $c = c_l$ and $\overline{c} = \overline{c}_l$ as in \eqref{Interface Paper - Color Functions Depending on Moving Interface}, then the Hamiltonian functional is for all $\tilde{x} = (x, c_l, \overline{c}_l) \in \tilde{X}$ given by
 \begin{align*}
  H(\tilde{x}) &= \int_{a}^{b} \mathcal{H}(x(z),c_l(z), \overline{c}_l(z)) \, dz \\
  &= \int_{a}^{b}  c_l(z) \mathcal{H}^-(x(z)) + \overline{c}_l(z) \mathcal{H}^+(x(z)) \, dz \\
  &= \int_{a}^{l} \mathcal{H}^-(x(z)) \, dz + \int_{l}^{b} \mathcal{H}^+(x(z)) \, dz \\
  &= H_l^-(x^-) + H_l^+(x^+).
\end{align*}
Thus, for all $t \geq 0$, the mapping $x \mapsto H(x, c_l(\cdot,t), \overline{c}_l(\cdot,t))$ describes the internally stored energy of the coupled system at time $t$. For any $(x,c_l, \overline{c}_l) \in \tilde{X}$, the variational derivative is given by
\begin{align}
	\label{Interface Paper - Variational Derivative - Moving Interface}
	\delta_{\tilde{x}} H(\tilde{x}) = \begin{bmatrix}
		\delta_{x} H(x, c_l, \overline{c}_l) \\
		\delta_{c_l} H(x, c_l, \overline{c}_l) \\
		\delta_{\overline{c}_l} H(x, c_l, \overline{c}_l)
	\end{bmatrix} = \begin{bmatrix}
		c_l \delta_{x} H_l^-(x) +  \overline{c}_l \delta_{x} H_l^+(x) \\
		\mathcal{H}^-(x) \\
		\mathcal{H}^+(x)
	\end{bmatrix},
\end{align}
and the flux variables are again defined through
\begin{align}
	\begin{split}
	\begin{bmatrix}
		0 & 1 \\
		1 & 0 
	\end{bmatrix} \delta_{x} H(x, c_l, \overline{c}_l) &= \begin{bmatrix}
		0 & 1 \\
		1 & 0 
	\end{bmatrix}  c_l \delta_x H_l^-(x) + \overline{c}_l  \delta_x H_l^+ (x)  \\
	&= c_l \mathcal{N}_l^-(x) + \overline{c}_l \mathcal{N}_l^+(x)  \\
	&= \mathcal{N}(x, c_l, \overline{c}_l).
\end{split}
\label{Interface Paper - Relation Flux Variable and Variational Derivative - Composed Domain - Moving Interface}
\end{align}
Note that by prolongation of the flux variables $\mathcal{N}_1$ and $\mathcal{N}_2$ defined on the composed domain $[a,b]$, they do not explicitly depend on the position $l$ of the moving interface, but implicitly through the color functions \eqref{Interface Paper - Color Functions Depending on Moving Interface}.
\vspace{0.5 cm}\\ 
Due to the change of the position, the interface relations and, with that said, the interface port variables become time-dependent as well. For $t \geq 0$, these are given by
\begin{align}
	f_I(t) &= \mathcal{N}_{l(t), 1}^+(x^+)(l(t)^+) = \mathcal{N}_{l(t), 1}^-(x^-)(l(t)^-), \label{Interface Paper - Interface Flow fI - Moving Interface} \\
	-e_I(t) &= \mathcal{N}_{l(t), 2}^+(x^+)(l(t)^+) - \mathcal{N}_{l(t), 2}^-(x^-)(l(t)^-), \label{Interface Paper - Interface Effort e_I - Moving Interface} 
\end{align}
where $l(t) \in (a,b)$.
\vspace{0.5 cm}\\
Now, we may begin with stating the balance equations for the variables constituting the extended state variable $\tilde{x} = (x,c_l, \overline{c}_l) \in \tilde{X}$ in case of a moving interface. To this end, let $\Omega := (a,b) \times (0, \infty)$, and let $\mathcal{D}(\Omega)$ denote the set of test functions on $\Omega$. Throughout this section, we will show that the balance equations hold in $\mathcal{D}'(\Omega)$.

\paragraph*{Conservation laws of $\mathbf{c_l}$ and $\mathbf{\overline{c}_l}$.} In contrast to the fixed interface case, where the respective color functions $c_0$ and $\overline{c}_0$ satisfied trivial conservations laws, the color functions \eqref{Interface Paper - Color Functions Depending on Moving Interface} are the solutions of the transport equations depending on the velocity $\dot{l}$ of the interface, which are given by
\begin{align}
	\partial_t c_l(z,t) = - \dot{l}(t) \partial_z c_l(z,t) \hspace{0.3 cm} \text{and} \hspace{0.3 cm} \partial_t \overline{c}_l(z,t) = - \dot{l}(t) \partial_z \overline{c}_l(z,t)
	\label{Interface Paper - Balance Equation of Color Function with Moving Interface}
\end{align}
with the initial conditions
\begin{align*}
	c_l(z,0) = c_{l_0} (z) \hspace{0.3 cm} \text{and} \hspace{0.3 cm} \overline{c}_l(z,0) = \overline{c}_{l_0}(z),
	%\label{Interface Paper - Color Functions Depending on Moving Interface - Initial Conditions}
\end{align*}
respectively, where $l_0 = l(0) \in (a,b)$. \\
\begin{figure}[h!]
	\centering
	%\subfloat[\centering label 1]
	{{\includegraphics[width=7.7764cm]{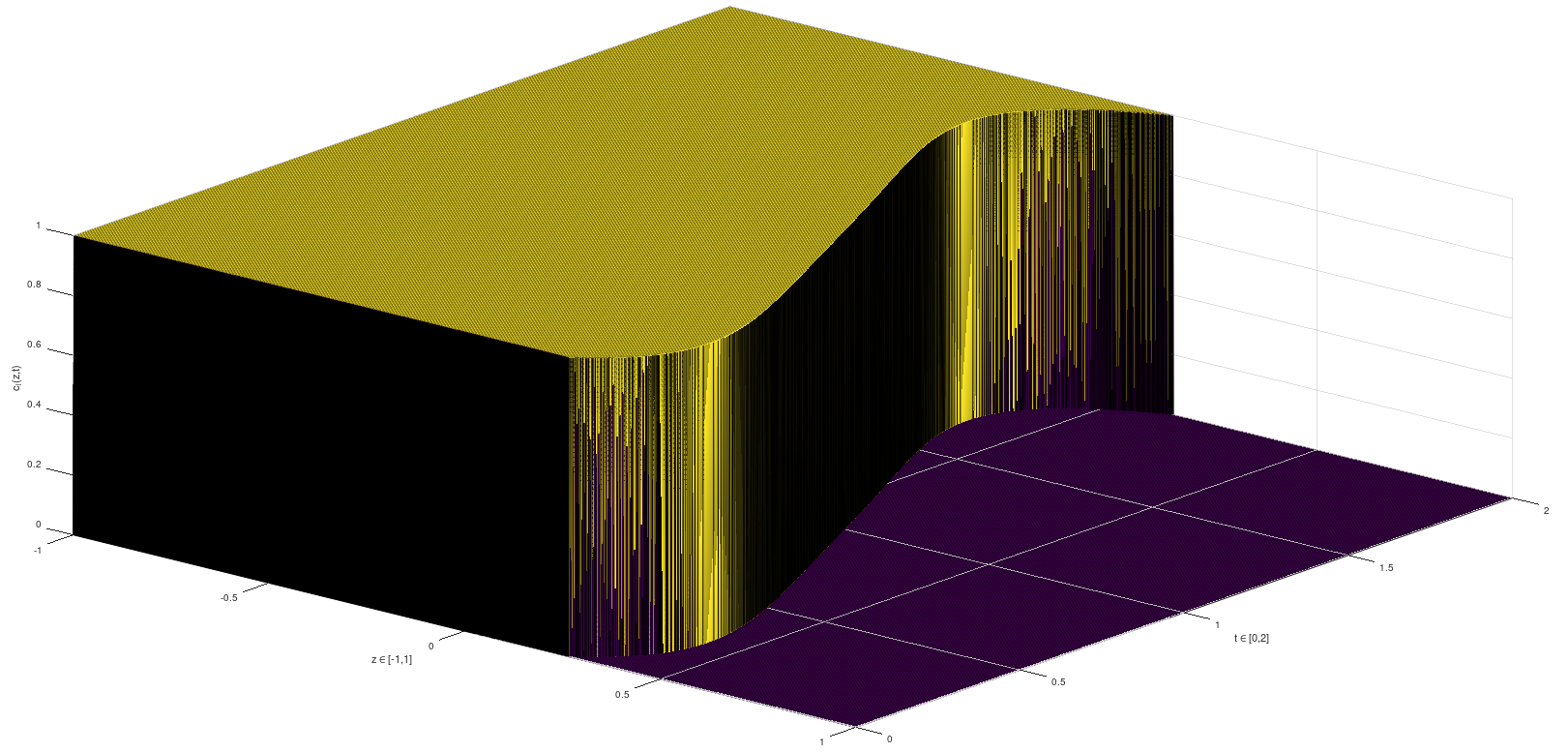} }}%
	\qquad
	%\subfloat[\centering label 2]
	{{\includegraphics[width=7.7764cm]{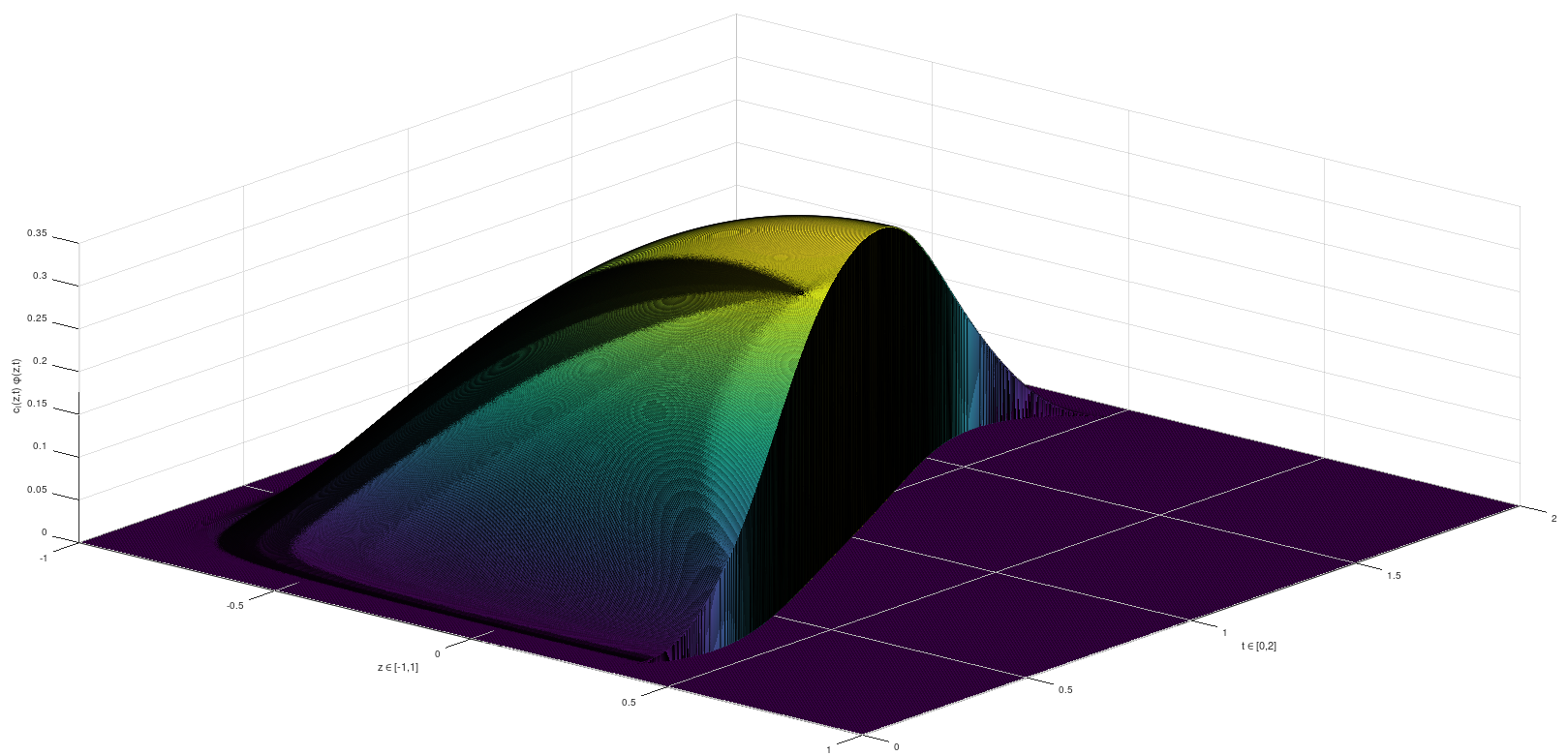} }}%
	\caption{Schematic Plots of the Color Function $c_l$ and of $c_l$ Tested Against some Test Function $\varphi \in \mathcal{D}(\Omega)$, respectively}
	\label{Figure Char and Bump}
\end{figure}

The transport equations given in \eqref{Interface Paper - Balance Equation of Color Function with Moving Interface} hold in a distributional sense. We only show this for the first equation, as the proof for the second equation works analogously. A plot of the color function $c_l$ defined with respect to some $\mathcal{C}^1$-function $l$ and of $c_l$ tested against some test function $\varphi \in \mathcal{D}(\Omega)$ is depicted in Figure \ref{Figure Char and Bump}.
\vspace{0.5 cm}\\
Let $\varphi \in \mathcal{D}(\Omega)$. Then we have
\begin{align*}
	& \hspace{0.5 cm} \int_{0}^{\infty} \int_{a}^{b} - \dot{l}(t) \partial_z c_l(z,t) \varphi(z,t) \, dz \, dt \\
	\text{(part. int.)} \hspace{0.3 cm}	&= \int_{0}^{\infty} - \dot{l}(t) \left( \big[c_l (z,t) \varphi(z,t) \big]_{z=a}^{b} - \int_{a}^{b} c_l(z,t) \partial_z \varphi(z,t) \,  dz \right) \, dt \\
	&= \int_{0}^{\infty} \dot{l}(t) \int_{a}^{l(t)} \partial_z \varphi(z,t) \, dz \, dt\\
	&= \int_{0}^{\infty} \dot{l}(t) \varphi(l(t),t) \, dt. 
\end{align*}
%\begin{figure}[h]
%\centering
%\includegraphics[width = 12 cm]{Char_Function.png}
%\caption{Color Function $c_l$ (TODO)}	
%\end{figure}
Furthermore, by applying Fubini's theorem and Leibniz's rule we obtain
\begin{align*}
	&\hspace{0.5 cm} \int_{0}^{\infty} \int_{a}^{b} \partial_t c_l(z,t) \varphi(z,t) \, dz \, dt \\
	&= \int_{a}^{b} \big[ c_l(z,t) \varphi(z,t)  \big]_{t = 0}^{\infty} - \int_{0}^{\infty} c_l(z,t) \partial_t \varphi(z,t) \, dt \, dz \\
	&= \int_{0}^{\infty} - \int_{a}^{l(t)} \partial_t \varphi(z,t) \, dz \, dt \\
	&= \int_{0}^{\infty} - \frac{d}{dt} \int_{a}^{l(t)} \varphi(z,t) \, dz + \varphi(l(t),t) \dot{l}(t) \, dt \\
	&= \int_{0}^{\infty} \dot{l}(t) \varphi(l(t),t)  \, dt  \underbrace{- \int_{0}^{\infty} \frac{d}{dt} \int_{a}^{l(t)} \varphi(z,t) \, dz  \, dt}_{=0} \\
	&= \int_{0}^{\infty} \dot{l}(t) \varphi(l(t),t)  \, dt.
\end{align*}
%\begin{align*}
%	\frac{d}{dt} \int_{a}^{b} c_l(z,t) \varphi(z) \, dz &= \frac{d}{dt} \left( \int_{a}^{l(t)}  c_l(z,t) \varphi(z) \, dz + \int_{l(t)}^{b} c_l(z,t) \varphi(z) \, dz \right) \\
%	&= \int_{a}^{l(t)} \frac{d}{dt} \varphi(z) \, dz + \dot{l}(t)  \varphi(l(t)) \\
%	&= \dot{l}(t) \varphi(l(t))
%\end{align*}
%for all $t > 0$. In particular, for a fixed $t >0$ the sequence $(u_k^t)_{k \in \mathbb{N}} \in \mathcal{D}'(a,b)^{\mathbb{N}}$ of distributions defined by
%\begin{align*}
%	u_k^t = \Lambda_{k (c_l(\cdot,t+\frac{1}{k}) - c_l(\cdot,t))}, \hspace{0.5 cm} k \in \mathbb{N},
%\end{align*}
%converges to a distribution $u^t \in \mathcal{D}'(a,b)$ satisfying
%\begin{align*}
%	\langle u^t , \varphi \rangle_{\mathcal{D}'(a,b)} = \lim_{k \to \infty} \langle u_k^t, \varphi \rangle_{\mathcal{D}'(a,b)} = 	\frac{d}{dt} \int_{a}^{b} c_l(z,t) \varphi(z) \, dz = \dot{l}(t) \varphi(l(t))
%\end{align*}
%for all $\varphi \in \mathcal{D}(a,b)$ (cf. \cite[page 12]{Vladimirov}). 
%As a consequence, for all $t >0$ we define $\partial_t c_l(\cdot,t) \in \mathcal{D}'(a,b)$ to be the symbol for a weighted Dirac mass at $z = l(t)$ as follows: for all $t >0$ and $\varphi \in \mathcal{D}(a,b)$ we have
%\begin{align}
%	\label{Interface Paper - partial t color function}
%	\langle \partial_t c_l(\cdot,t) , \varphi \rangle_{\mathcal{D}'(a,b)} = \int_{a}^{b} \partial_t c_l(z,t) \varphi(z) \, dz =  \dot{l}(t) \varphi(l(t)).
%\end{align}
Thus, the conservation laws given in \eqref{Interface Paper - Balance Equation of Color Function with Moving Interface} hold in $\mathcal{D}'(\Omega)$.
\vspace{0.5 cm}\\
For all $t >0$ and on any interval $[a',b'] \subset [a,b]$ not containing the interface position $l(t)$, it holds that
\begin{align*}
	\frac{d}{dt} \int_{a'}^{b'} c_l(z,t) \, dz = 	\frac{d}{dt} \int_{a'}^{b'} \overline{c}_l(z,t) \, dz = 0. 
\end{align*}
If $a \leq a' < l(t) < b' \leq b$, then applying Leibniz's rule once again yields
\begin{align*}
		\frac{d}{dt} \int_{a'}^{b'} c_l(z,t) \, dz = 	\frac{d}{dt} \left( \int_{a'}^{l(t)} 1 \, dz + \int_{l(t)}^{b'} 0 \, dz \right) = \dot{l}(t)
\end{align*}
and, analogously, 
\begin{align*}
	\frac{d}{dt} \int_{a'}^{b'} \overline{c}_l(z,t) \, dz = \frac{d}{dt} \left( \int_{a'}^{l(t)} 0 \, dz + \int_{l(t)}^{b'} 1 \, dz \right) = - \dot{l}(t).
\end{align*}
Next, we want to find the conservation laws for the state variables $x_1$ and $x_2$.
\paragraph*{Conservation law of $\mathbf{x_1}$.}
As in the fixed interface scenario, the flux variable $\mathcal{N}_1(x, c_l, \overline{c}_l)$ is supposed to satisfy the continuity condition \eqref{Interface Paper - Interface Flow fI - Moving Interface}. For $l \in (a,b)$, define the operator $\mathbf{d}_l \colon D(\mathbf{d}_l) \subset L^2([a,b], \mathbb{R}) \to L^2([a,b], \mathbb{R})$ by
\begin{align}
	D(\mathbf{d}_l) &= \left\{ x \in L^2([a,b], \mathbb{R}) \mid x_{|(a,l)} \in H^1((a,l), \mathbb{R}), \hspace{0.1 cm} x_{|(l,b)} \in H^1((l,b),\mathbb{R}), \hspace{0.1 cm} x \in \mathcal{C}([a,b], \mathbb{R}) \right\} \nonumber \\
	&= H^1([a,b], \mathbb{R}), \label{Interface Paper - Operator dl} \\
	\mathbf{d}_lx &= - \left[ \frac{d}{dz} (c_l x) + \frac{d}{dz} (\overline{c}_l x) \right], \hspace{0.5 cm} x \in D(\mathbf{d}_l). \nonumber
\end{align}
As the color functions \eqref{Interface Paper - Color Functions Depending on Moving Interface} are changing over time, for $t >0$ the operator 
\begin{align*}
	\mathbf{d}(t) = \mathbf{d}(\tilde{x}(t)) \colon D(\mathbf{d}(t)) \subset L^2([a,b], \mathbb{R}) \to L^2([a,b], \mathbb{R})
\end{align*} 
is modulated by the state $\tilde{x}(t) = (x(\cdot,t), c_l(\cdot,t), \overline{c}_l(\cdot,t)) \in \tilde{X}$, and is given as follows:
\begin{align}
	\begin{split}
	D(\mathbf{d}(t)) &= D(\mathbf{d}_l) = H^1([a,b], \mathbb{R}), \\
	\mathbf{d}(t) x &= - \left[ \partial_z (c_l(\cdot,t) x) + \partial_z (\overline{c}_l(\cdot,t) x) \right], \hspace{0.5 cm} x \in D(\mathbf{d}(t)).
\end{split}
\label{Interface Paper - Operator d(t)}
\end{align}
Let us now define the conservation law of the state variable $x_1$ on $[a,b]$ in case of a moving interface. For $t >0$, assume that $\mathcal{N}_1(x,c_l, \overline{c}_l)(\cdot,t) \in D(\mathbf{d}(t))$, and let $\varphi \in \mathcal{D}(\Omega)$. Using the individual conservation laws \eqref{Interface Paper - Two Systems of Conservation Laws on Respective Domain - Moving Interface} and the continuity condition \eqref{Interface Paper - Interface Flow fI - Moving Interface}, which is once again included in the domain of $\mathbf{d}(t)$, one computes
\begin{align}
	\begin{split}
	&\hspace{0.5 cm} \int_{0}^{\infty} \int_{a}^{b} [\mathbf{d}(t) \mathcal{N}_1(x, c_l, \overline{c}_l) (z,t)] \varphi (z,t) \, dz \, dt  \\
	&= \int_{0}^{\infty} \mathcal{N}_1^-(l(t)^-,t) \varphi(l(t),t) - \mathcal{N}_1^+(l(t)^+,t) \varphi(l(t),t) \\
	&\hspace{0.5 cm} - \int_{a}^{l(t)} \partial_z \mathcal{N}_1^-(z,t)  \varphi (z,t) \, dz - \int_{l(t)}^{b} \partial_z \mathcal{N}_1^+(z,t) \varphi(z,t) \, dz \, dt \\
	&= \int_{0}^{\infty} \int_{a}^{l(t)} \partial_t x_1^-(z,t)  \varphi (z,t) \, dz +\int_{l(t)}^{b} \partial_t x_1^+(z,t) \varphi(z,t) \, dz \, dt.
\end{split}
\label{Interface Paper - Hilfe1}
\end{align}
Furthermore, we have
\begin{align*}
	&\hspace{0.5 cm} \int_{0}^{\infty} \int_{a}^{b} \partial_t x_1(z,t) \varphi(z,t) \, dz \, dt \\
	&=  \int_{0}^{\infty} \int_{a}^{b} \partial_t \left[ c_l(z,t) x_1(z,t) + \overline{c}_l(z,t) x_1(z,t) \right] \varphi(z,t) \, dz \, dt \\
\text{(Fubini)} \hspace{0.3 cm}	&= -  \int_{0}^{\infty} \int_{a}^{l(t)} x_1(z,t) \partial_t \varphi(z,t) \, dz \, dt \\
	&\hspace{0.5 cm} -  \int_{0}^{\infty} \int_{l(t)}^{b} x_1(z,t) \partial_t \varphi(z,t) \, dz \, dt \\
	&= -  \int_{0}^{\infty} \int_{a}^{l(t)}  \partial_t (x_1\varphi)(z,t) - \partial_t x_1(z,t) \varphi(z,t) \, dz \, dt \\
	&\hspace{0.5 cm} -  \int_{0}^{\infty} \int_{l(t)}^{b}  \partial_t (x_1\varphi)(z,t) - \partial_t x_1(z,t) \varphi(z,t) \, dz \, dt \\
	 \text{(Leibniz)} \hspace{0.3 cm} &=  \int_{0}^{\infty} \int_{a}^{l(t)} \partial_t x_1^-(z,t) \varphi(z,t) \, dz  + \dot{l}(t) x_1^-(l(t)^-,t) \varphi(l(t),t) \, dt \\
	 &\hspace{0.5 cm} + \int_{0}^{\infty} \int_{l(t)}^{b} \partial_t x_1^+(z,t)  \varphi(z,t) \, dz - \dot{l}(t) x_1^+(l(t)^+,t) \varphi(l(t),t) \, dt,
\end{align*}
where in the last step we additionally used the fact that
\begin{align*}
	\int_{0}^{\infty} \frac{d}{dt} \int_{a}^{l(t)} x_1^-(z,t) \varphi(z,t) \, dz \, dt = 
	\int_{0}^{\infty} \frac{d}{dt} \int_{l(t)}^{b} x_1^+(z,t) \varphi(z,t) \, dz \, dt  = 0. 
\end{align*}
Together with \eqref{Interface Paper - Hilfe1}, we conclude that for all $\varphi \in \mathcal{D}(\Omega)$ it holds that
\begin{align*}
&\hspace{0.5 cm} \int_{0}^{\infty} \int_{a}^{b} \partial_t x_1(z,t) \varphi(z,t) \, dz \, dt \\
 &= \int_{0}^{\infty}  \int_{a}^{b} \mathbf{d}(t) \mathcal{N}_1(z,t) \varphi (z,t) \, dz - \dot{l}(t) \left[ x_1^+(l(t)^+,t) - x_1^-(l(t)^-,t) \right] \varphi(l(t),t) \, dt.
\end{align*}
Thus, the equation
\begin{align}
	\partial_t x_1 = \mathbf{d}(t) \mathcal{N}_1(x, c_l, \overline{c}_l) - \dot{l}  \left[ c_l x_1 \partial_z c_l + \overline{c}_l x_1 \partial_z \overline{c}_l \right]
	\label{Interface Paper - Conservation Law x1 - Moving Interface}
\end{align}
holds in $\mathcal{D}'(\Omega)$. We want to stress that, in \eqref{Interface Paper - Conservation Law x1 - Moving Interface}, the equation
\begin{align*}
	c_l \partial_t x_1^- + \overline{c}_l \partial_t x_1^+ = \mathbf{d}(t) \mathcal{N}_1(x, c_l, \overline{c}_l) 
\end{align*}
holds in $L^2([a,b], \mathbb{R})$. Since the color functions $c_l$ and $\overline{c}_l$ do not satisfy the trivial conservation laws anymore, but the transport equation \eqref{Interface Paper - Balance Equation of Color Function with Moving Interface}, equation \eqref{Interface Paper - Conservation Law x1 - Moving Interface} comprises an equality solely holding in $\mathcal{D}'(\Omega)$ as well, namely
\begin{align*}
	x_1 \partial_t c_l + x_1 \partial_t \overline{c}_l =  - \dot{l}  \left[ c_l x_1 \partial_z c_l + \overline{c}_l x_1 \partial_z \overline{c}_l \right].
\end{align*}
Once again, on any interval $[a',b'] \subset [a,b]$ not containing the interface position $l(t)$, $t >0$, the operator $\mathbf{d}(t)$ simply acts as the differential operator $- \frac{d}{dz}$. However, due to the moving interface, on intervals $[a',b']$ with $a \leq a' < l(t) < b' \leq b$ there appears an additional term depending on the velocity $\dot{l}$ of the interface:
\begin{align*}
%	\begin{split}
	&\hspace{0.5 cm} \frac{d}{dt} \int_{a'}^{b'} x_1(z,t) \, dz  \\
	&= \frac{d}{dt} \left( \int_{a'}^{l(t)} x_1^-(z,t) \, dz + \int_{l(t)}^{b'} x_1^+(z,t) \, dz \right) \\
	\text{(Leibniz)} \hspace{0.3 cm}	&= \int_{a'}^{l(t)} \partial_t x_1^-(z,t) \, dz + \int_{l(t)}^{b'} \partial_t x_1^+(z,t) \, dz \\
	&\hspace{0.5 cm} + \dot{l}(t) x_1^-(l(t)^-,t) - \dot{l}(t) x_1^+(l(t)^+,t) \\
	&= \int_{a'}^{b'} \mathbf{d}(t) \mathcal{N}_1(x,c_l, \overline{c}_l)(z,t) \, dz - \dot{l}(t) \left[ x_1^+(l(t)^+,t) - x_1^-(l(t)^-,t) \right] \\
	&= \mathcal{N}_1^-(a',t) - \mathcal{N}_1^+(b',t) - \dot{l}(t)  \left[ x_1^+(l(t)^+,t) - x_1^-(l(t)^-,t) \right].
%	\end{split}
%\label{Interface Paper - Balance Equation of x1 - Moving Interface Position}
\end{align*}
\paragraph*{Conservation law of $\mathbf{x_2}$.} Lastly, we want to formulate the conservation law of the state variable $x_2$ in case of a moving interface, where the corresponding flux variable $\mathcal{N}_2(x,c_l, \overline{c}_l)$ is supposed to satisfy the balance equation \eqref{Interface Paper - Interface Effort e_I - Moving Interface} at all times. Beforehand, we once again need to determine the formally adjoint operator of $\mathbf{d}(t)$, since the conservation law of $x_2$ has to be formulated with respect to $\mathbf{d}^{\ast}(t)$, $t>0$. Analogously to the operator $\mathbf{d}_0^{\ast}$ defined in \eqref{Interface Paper - Operator d_0ast}, for $l \in (a,b)$ one defines the formal adjoint $\mathbf{d}_l^{\ast} \colon D(\mathbf{d}_0^{\ast}) \subset X \to X$ of $\mathbf{d}_l$ through
\begin{align}
	\begin{split}
	D(\mathbf{d}_l^{\ast}) &= \left\{ y \in X \mid y_{|(a,l)}  \in H^1((a,l), \mathbb{R}), \hspace{0.1 cm} y_{|(l,b)}  \in H^1((l,b), \mathbb{R}) \right\}, \\
	\mathbf{d}_l^{\ast} y &= \left[ \frac{d}{dz} (c_l y) + \frac{d}{dz} (\overline{c}_l y) \right] - \left[ \frac{d}{dz} c_l + \frac{d}{dz} \overline{c}_l \right] y \\
	&= \left( - \mathbf{d}_l - \left[ \frac{d}{dz} c_l + \frac{d}{dz} \overline{c}_l \right] \right) y. 
	\end{split}
\label{Interface Paper - Operator dlast}
\end{align}
The relation between $\mathbf{d}_l$ and its formal adjoint $\mathbf{d}_l^{\ast}$ defined in \eqref{Interface Paper - Operator dl} and \eqref{Interface Paper - Operator dlast}, respectively, is given as follows: for $x \in D(\mathbf{d}_l)$, $y \in D(\mathbf{d}_l^{\ast})$ we have
\begin{align*}
	\langle \mathbf{d}_l x, y \rangle_{L^2} = - \big[ x(z) y(z) \big]_{a}^{b} + x(l) \left[ y(l^+) - y(l^-) \right] + \langle x, \mathbf{d}_l^{\ast} y \rangle_{L^2}.
\end{align*}

Hence, the operator 
\begin{align*}
\mathbf{d}^{\ast}(t) = \mathbf{d}^{\ast}(\tilde{x}(t)) \colon D(\mathbf{d}^{\ast}(t)) \subset L^2([a,b], \mathbb{R}) \to L^2([a,b], \mathbb{R}), \hspace{0.5 cm} t > 0,
\end{align*} 
is modulated by the color functions $c_l$, $\overline{c}_l$, and thus by the extended state $\tilde{x} = (x,c_l, \overline{c}_l) \in \tilde{X}$. Unlike $\mathbf{d}(t)$ defined in \eqref{Interface Paper - Operator d(t)}, the domain of $\mathbf{d}^{\ast}(t)$ potentially changes over time. However, as the interface position $l$ is assumed to be continuously differentiable, the time-dependence of the domain is somewhat regular. For $t > 0$, the operator $\mathbf{d}^{\ast}(t)$ is defined by
\begin{align}
	\begin{split}
	D(\mathbf{d}^{\ast}(t)) &= \left\{ y \in X \mid y_{|(a,l(t))}  \in H^1((a,l(t)), \mathbb{R}), \hspace{0.1 cm} y_{|(l(t),b)}  \in H^1((l(t),b), \mathbb{R}) \right\}, \\
	\mathbf{d}^{\ast}(t) y &= \left[ \partial_z (c_l(\cdot,t) y) + \partial_z (\overline{c}_l(\cdot,t) y) \right] - \left[ \partial_z c_l(\cdot,t) + \partial_z \overline{c}_l(\cdot,t) \right] y \\
	&= \left( - \mathbf{d}(t) - \left[ \partial_z c_l(\cdot,t) + \partial_z \overline{c}_l(\cdot,t) \right] \right) y.
\end{split}
\label{Interface Paper - Operator d(t)ast}
\end{align}
For $t >0$, $x \in D(\mathbf{d}(t))$, and $y \in D(\mathbf{d}^{\ast}(t))$, the relation between the operators $\mathbf{d}(t)$ and $\mathbf{d}^{\ast}(t)$ is given as follows:
\begin{align*}
	\langle \mathbf{d}(t) x, y \rangle_{L^2}= - \big[ x(z) y(z) \big]_{a}^{b} + x(l(t)) \left[ y(l(t)^+) - y(l(t)^-) \right] + \langle x, \mathbf{d}^{\ast}(t) y \rangle_{L^2}.
\end{align*}

Now we are able to derive the conservation law of $x_2$ with respect to the operator $\mathbf{d}^{\ast}(t)$, $t>0$. Assume that $\mathcal{N}_2(x, c_l, \overline{c}_l)(\cdot,t) \in D(\mathbf{d}^{\ast}(t))$ for all $t > 0$. By virtue of the individual conservation laws \eqref{Interface Paper - Two Systems of Conservation Laws on Respective Domain - Moving Interface} and with the same arguments as for the conservation law of the state variable $x_1$, we have for all $\varphi \in \mathcal{D}(\Omega)$,
\begin{align*}
 &\hspace{0.5 cm} \int_{0}^{\infty} \int_{a}^{b} \partial_t x_2(z,t) \varphi(z,t) \, dz \, dt \\
  &= \int_{0}^{\infty} \int_{a}^{l(t)} \partial_t x_2^-(z,t) \varphi(z,t) \, dz  + \dot{l}(t) x_2^-(l(t)^-,t) \varphi(l(t),t) \, dt \\
	&\hspace{0.5 cm} + \int_{0}^{\infty} \int_{l(t)}^{b} \partial_t x_2^+(z,t) \varphi(z,t) \, dz - \dot{l}(t) x_2^+(l(t)^+,t) \varphi(l(t),t)  \, dt.
\end{align*}
Furthermore, we observe that
\begin{align*}
	&\hspace{0.5 cm} \int_{0}^{\infty} \int_{a}^{b} [- \mathbf{d}^{\ast}(t) \mathcal{N}_2(x,c_l, \overline{c}_l) (z,t)] \varphi(z,t) \, dz \, dt \\
	&= \int_{0}^{\infty} \int_{a}^{l(t)} - \partial_z \mathcal{N}_2^-(z,t) \varphi (z,t) \, dz + \int_{l(t)}^{b} - \partial_z \mathcal{N}_2^+(z,t) \varphi (z,t) \, dz \, dt \\
	&= \int_{0}^{\infty} \int_{a}^{l(t)} \partial_t x_2^-(z,t) \varphi(z,t) \, dz + \int_{l(t)}^{b}  \partial_t x_2^+(z,t) \varphi(z,t) \, dz  \, dt. 
\end{align*}
In conclusion, the equation
\begin{align*}
	&\hspace{0.5 cm} \int_{0}^{\infty} \int_{a}^{b} \partial_t x_2(z,t) \varphi (z,t) \, dz \, dt \\
	 &=  \int_{0}^{\infty} \int_{a}^{b} - \mathbf{d}^{\ast}(t) \mathcal{N}_2(z,t) \varphi(z,t) \, dz - \dot{l}(t) \left[ x_2^+(l(t)^+,t) - x_2^-(l(t)^-,t) \right] \varphi(l(t),t) \, dt
\end{align*}
holds for all $\varphi \in \mathcal{D}(\Omega)$. Thus, the conservation law is given by
\begin{align}
	\partial_t x_2 = - \mathbf{d}^{\ast}(t) \mathcal{N}_2(x, c_l, \overline{c}_l) - \dot{l} \left[ c_l x_2 \partial_z c_l + \overline{c}_l x_2 \partial_z \overline{c}_l \right],
	\label{Interface Paper - Conservation Law x2 - Moving Interface}
\end{align}
and it holds in $\mathcal{D}'(\Omega)$. As for the conservation law of the state variable $x_1$, we want to emphasize that the equation 
\begin{align*}
	c_l \partial_t x_2^- + \overline{c}_l \partial_t x_2^+ = - \mathbf{d}^{\ast}(t) \mathcal{N}_2(x, c_l, \overline{c}_l) 
\end{align*}
holds in a classical sense, whereas
\begin{align*}
	x_2 \partial_t c_l + x_2 \partial_t \overline{c_l } = - \dot{l} \left[ c_l x_2 \partial_z c_l + \overline{c}_l x_2 \partial_z \overline{c}_l \right]
\end{align*}
merely holds in $\mathcal{D}'(\Omega)$. Recall from Remark \ref{Interface Paper - Remark Balance Equation} that the conservation law \eqref{Interface Paper - Conservation Law x2 - Moving Interface} does not encompass the balance equation \eqref{Interface Paper - Interface Effort e_I - Moving Interface}. As proposed, for simplification of the model we do not consider $e_I(t)$ as an external source term, but define it to quantify the jump discontinuity at the moving interface position $l(t)$, $t \geq 0$.  
\vspace{0.5 cm}\\
As usual, on subdomains $[a',b'] \subset [a,b]$ not containing $l(t)$, $t>0$, the operator $\mathbf{d}^{\ast}(t)$ acts as the differential operator $-\frac{d}{dz}$. 
However, due to the moving interface and the jump discontinuity of the flux variable $\mathcal{N}_2$ at the interface position, on $[a',b']$ with $a \leq a' < l(t) < b' \leq b$ it holds that \\
\begin{align*}
	&\hspace{0.5 cm} \frac{d}{dt} \int_{a'}^{b'} x_2 (z,t) \, dz \\
	&= \frac{d}{dt} \left( \int_{a'}^{l(t)} x_2^-(z,t) \, dz + \int_{l(t)}^{b'} x_2^+(z,t) \, dz \right)\\
	&= \int_{a'}^{l(t)} \partial_t x_2^-(z,t) \, dz + \int_{l(t)}^{b'} \partial_t x_2^+(z,t) \, dz \\
	&\hspace{0.5 cm} + \dot{l}(t) x_2^-(l(t)^-,t) - \dot{l}(t) x_2^+(l(t)^+,t) \\
	&= \int_{a'}^{b'} - \mathbf{d}^{\ast}(t) \mathcal{N}_2(x, c_l, \overline{c}_l) \, dz - \dot{l}(t) \left[ x_2^+(l(t)^+,t) - x_2^-(l(t)^-,t) \right] \\
	&= \mathcal{N}_2^-(a',t) - \mathcal{N}_2^+(b',t) - \mathcal{N}_2^-(l(t)^-,t) + \mathcal{N}_2^+(l(t)^+,t) - \dot{l}(t) \left[ x_2^+(l(t)^+,t) - x_2^-(l(t)^-,t) \right] \\ 
	&= \mathcal{N}_2^-(a',t) - N_2^+(b',t) - e_I(t) - \dot{l}(t) \left[ x_2^+(l(t)^+,t) - x_2^-(l(t)^-,t) \right]. 
\end{align*}
After formulating the conservation laws of the variables constituting the extended state $\tilde{x} = (x,c_l, \overline{c}_l) \in \tilde{X}$, we can eventually discuss the port-Hamiltonian formulation of the systems of conservation laws \eqref{Interface Paper - Two Systems of Conservation Laws on Respective Domain - Moving Interface} that are coupled through a moving interface. 

\subsection{Port-Hamiltonian Formulation}
\label{Subsection Port-Hamiltonian Formulation - Moving Interface}

In the previous section, we have derived four conservation laws of the constituting state variables with respect to a moving interface, stated in \eqref{Interface Paper - Balance Equation of Color Function with Moving Interface}, \eqref{Interface Paper - Conservation Law x1 - Moving Interface}, and \eqref{Interface Paper - Conservation Law x2 - Moving Interface}. All of these equations hold in $\mathcal{D}'(\Omega)$. We now want to formulate a (generalized) Hamiltonian system which comprises these conservation laws. As opposed to the fixed interface scenario, this yields (formally) an abstract control system. For a complete port-Hamiltonian formulation, we would need to consider the conjugated output as well (cf. \eqref{Nonlinear PH-System - Finite Dimensional} in the finite-dimensional case). In \cite{Diagne}, the adjoint output map has been outlined, but in this thesis, we will not address this issue.
\vspace{0.5 cm}\\ 
Omitting the dependence on the spatial variable $z$, and with the aid of \eqref{Interface Paper - Relation Flux Variable and Variational Derivative - Composed Domain - Moving Interface} we formally obtain the system
\begin{align*}
	\frac{\partial}{\partial t} \begin{bmatrix}
		x_1(t) \\
		x_2(t)\\
		c_l(t) \\
		\overline{c}_l(t)
	\end{bmatrix} = \begin{bmatrix}
			0 & \mathbf{d}(t) & 0 & 0 \\
			-\mathbf{d}^{\ast}(t) & 0 & 0 & 0 \\
			0 & 0 & 0 & 0 \\
			0 & 0 & 0 & 0 
	\end{bmatrix} \begin{bmatrix}
		\delta_{x_1}H(x, c_l, \overline{c}_l)(t) \\
		\delta_{x_2}H(x, c_l, \overline{c}_l)(t) \\
		\delta_{c_l}H(x, c_l, \overline{c}_l)(t) \\
		\delta_{\overline{c}_l} H(x, c_l, \overline{c}_l)(t) 
	\end{bmatrix} - \begin{bmatrix}
			(c_l x_1)(t) & (\overline{c}_l x_1)(t) \\
			(c_l x_2)(t) & (\overline{c}_l x_2)(t) \\
			1 & 0 \\
			0 & 1 
		\end{bmatrix} \partial_z \begin{bmatrix}
			c_l(t) \\
			\overline{c}_l(t)
	\end{bmatrix} \dot{l}(t),
\end{align*}
or short,
\begin{align}
	\frac{\partial}{\partial t}\tilde{x}(t) = \tilde{\mathcal
	{J}}(\tilde{x}(t)) \delta_{\tilde{x}}H(\tilde{x}(t)) + \tilde{B}(\tilde{x}(t)) \dot{l}(t). 
	\label{Interface Paper - Augmented System wrt Jl}
\end{align}

For $\tilde{x}(t) = (x_1(t),x_2(t), c_l(t), \overline{c}_l(t)) \in \tilde{X}$, the operator $\tilde{\mathcal{J}}(\tilde{x}(t)) \colon D(\tilde{\mathcal{J}}(\tilde{x}(t))) \subset \tilde{X} \to \tilde{X}$ given by
\begin{align*}
D(\tilde{\mathcal{J}}(\tilde{x}(t))) &= D(\mathbf{d}^{\ast}(t)) \times D(\mathbf{d}(t)) \times L^2([a,b], \mathbb{R}^2), \\
\tilde{\mathcal{J}}(\tilde{x}(t)) \tilde{y} &= \begin{bmatrix}
	0 & \mathbf{d}(t) & 0 & 0 \\
	-\mathbf{d}^{\ast}(t) & 0 & 0 & 0 \\
	0 & 0 & 0 & 0 \\
	0 & 0 & 0 & 0 
\end{bmatrix} \tilde{y}, \hspace{0.5 cm} \tilde{y} \in D(\tilde{\mathcal{J}}(\tilde{x}(t))), 
\end{align*}
is modulated by the extended state $\tilde{x}(t) \in \tilde{X}$, since the operator $\mathbf{d}(t)$ and its formal adjoint $\mathbf{d}^{\ast}(t)$ defined in \eqref{Interface Paper - Operator d(t)} and \eqref{Interface Paper - Operator d(t)ast}, respectively, depend on the time-varying color functions $c_l$ and $\overline{c}_l$ defined in \eqref{Interface Paper - Color Functions Depending on Moving Interface}. The variational derivative $\delta_{\tilde{x}} H \colon \tilde{X} \to \tilde{X}$ is defined as in \eqref{Interface Paper - Variational Derivative - Moving Interface}. 
\vspace{0.5 cm}\\
If we take the velocity $\dot{l}$ of the interface position to be the input of the system \eqref{Interface Paper - Augmented System wrt Jl}, then one may define the mapping $B(\tilde{x}(t))$ as the input map with respect to the input $\dot{l}$, i.e., 
\begin{align}
	B(\tilde{x}(t)) \colon \dot{l} \mapsto - \begin{bmatrix}
		(c_l x_1)(t) & (\overline{c}_l x_1)(t) \\
		(c_l x_2)(t) & (\overline{c}_l x_2)(t) \\
		1 & 0 \\
		0 & 1 
	\end{bmatrix} \partial_z \begin{bmatrix}
		c_l(t) \\
		\overline{c}_l(t)
\end{bmatrix} \dot{l}, \hspace{0.5 cm} \dot{l} \in \mathbb{R}.
\label{Interface Paper - Operator Bx}
\end{align}
In the following, we want to give some comments on the obtained system description \eqref{Interface Paper - Augmented System wrt Jl} of two systems of conservation laws that are coupled by some moving interface defined on the extended state space $\tilde{X} = L^2([a,b], \mathbb{R}^4)$.
\vspace{0.5 cm}\\
First and foremost, recall that on any interval not containing the interface position, each conservation law stated in \eqref{Interface Paper - Balance Equation of Color Function with Moving Interface}, \eqref{Interface Paper - Conservation Law x1 - Moving Interface}, and \eqref{Interface Paper - Conservation Law x2 - Moving Interface} holds in a classical sense. However, in contrast to the fixed interface scenario discussed in Section \ref{Section Two Port-Hamiltonian Systems Coupled by an Interface}, these conservation laws defined on the composed domain $[a,b]$ contain an additional term consisting of evaluations of the extended state variables at the interface position multiplied by the velocity of the moving interface. Regarding the system \eqref{Interface Paper - Augmented System wrt Jl}, these point evaluations are captured in the entries of the input mapping $B(\tilde{x})$. Thus, one possible approach to deal with this class of systems is to define another state space comprising the following information:
\begin{enumerate}[label = (\roman*)]
	\item the position of the moving interface $l(t)$ and its velocity $\dot{l}(t)$, $t > 0$,
	\item the solutions on the respective subdomains $[a,l(t))$ and $(l(t),b]$, $t >0$, and
	\item the right-hand limits $x_1^+(l(t)^+,t)$, $x_2^+(l(t)^+,t)$ and the left-hand limits $x_1^-(l(t)^-,t)$, $x_2^-(l(t)^-,t)$ of the state variables $x_1$ and $x_2$, $t > 0$.
\end{enumerate}
Nevertheless, for the analysis, we wish to write the system \eqref{Interface Paper - Augmented System wrt Jl} as an abstract control system of the form
\begin{align*}
	\dot{x}(t) = Ax(t) + Bu(t), \hspace{0.5 cm} t > 0,
\end{align*}
as discussed in Section \ref{Section Linear Control Systems}, and check the sufficient conditions to prove well-posedness of the system. Unfortunately, this is not offhandedly possible due to a number of reasons. We have identified the following main issues:
\begin{enumerate}[label =(P\arabic*)]
\item \label{Problem1MovingInterface} From \eqref{Interface Paper - Augmented System wrt Jl} we infer that the operator $A$ is modulated by the state and, in particular, changes over time. So for each time instance $t >0$ and $\tilde{x}(t)$, we need to define the operator $\tilde{A}(\tilde{x}(t)) \colon D(\tilde{A}(\tilde{x}(t))) \subset \tilde{X} \to \tilde{X}$  with
\begin{align}
	\tilde{A}(\tilde{x}(t))\tilde{y} = \tilde{\mathcal{J}}(\tilde{x}(t)) \delta_{\tilde{y}} H(\tilde{y}), \hspace{0.5 cm} \tilde{y} \in D(\tilde{A}(\tilde{x}(t))).
	\label{Interface Paper - Operator tildeA}
\end{align}
This yields a family $(\tilde{A}( \tilde{x}(t)))_{t \geq 0}$ of state-dependent operators. As a consequence, this system does not belong to the class of systems considered in Section \ref{Section Linear Control Systems}. Furthermore, the state $\tilde{x}(t)$ has to be of the form
\begin{align*}
	\tilde{x}(t) = \begin{bmatrix}
		x_1(t) \\
		x_2(t) \\
		c_l(t) \\
		\overline{c}_l(t)
	\end{bmatrix},
\end{align*} 
i.e., it has to be defined with respect to the color functions $c_l$, and $\overline{c}_l$. That is because the operators $\mathbf{d}(t)$ and $\mathbf{d}^{\ast}(t)$ constituting the matrix differential operator $\tilde{\mathcal{J}}(\tilde{x}(t))$ depend (in order do keep track of the interface position) on the color functions, and each conservation law has been defined with respect to $c_l$ and $\overline{c}_l$. With that said, the domain $D(\tilde{A}(\tilde{x}(t)))$ has to be vastly restricted, such that $\tilde{A}(\tilde{x}(t))$ cannot be densely defined, let alone the infinitesimal generator of a $C_0$-semigroup. This is a huge problem, as this is a basic requirement to prove well-posedness of abstract control systems, see Proposition~\ref{Proposition Well-Posedenss of Abstract Control Systems}.

\item \label{Problem2MovingInterface} The input map $B(\tilde{x}(t))$ is modulated by the state $\tilde{x}(t)$ as well. Aside from that, there would be several problems even if $B$ were independent of the extended state $\tilde{x}$. Recall that we claimed that the conservation laws of the constituting state variables derived in Subsection \ref{Subsection Balance Equations of the State Variables - Moving Interface} hold in a distributional sense. Moreover, it is easy to check that for all $t > 0$ and for a reasonable choice of $x_1$ and $x_2$, each entry of $B(\tilde{x})$ given in \eqref{Interface Paper - Operator Bx} lies in $\mathcal{D}'(a,b)$. Following the discussion in Section \ref{Section Linear Control Systems}, we know that the assumption that $B$ is a bounded linear operator from $\mathbb{R}$ to $\tilde{X}$ is too restrictive. In order to apply the concept of admissibility, we need to check whether $B$ is an element of $\mathcal{L}(\mathbb{R}, \tilde{X}_{-1})$, where we may identify $\mathcal{L}(\mathbb{R}, \tilde{X}_{-1})$ with the extrapolation space $\tilde{X}_{-1} = (D(\tilde{A}(\tilde{x})), \| \cdot \|_{-1})$ associated with the operator $\tilde{A}(\tilde{x})$ defined in \eqref{Interface Paper - Operator tildeA} (see Definition \ref{Definition Extrapolation Space}). Assuming that there is a way to show that $A(\tilde{x})$ generates a $C_0$-semigroup (see \ref{Problem1MovingInterface}), for a further treatment we would need to check whether the vector defined in \eqref{Interface Paper - Operator Bx} lies in $\tilde{X}_{-1}$, and, as the case may be, whether $B$ is admissible, see Definition \ref{Definition Admissible Control Operator}. This is the second condition that has to be satisfied in order to show well-posedness, see again Proposition \ref{Proposition Well-Posedenss of Abstract Control Systems}. Since in the present scenario it is unavoidable that the input map is modulated by the state, we need to find another approach. Although there is some literature dealing with such input maps (see \cite{Ball}), the treatment of this problem is once again beyond the scope of this thesis.
	
\item \label{Problem3MovingInterface} There are also some difficulties concerning the port-Hamiltonian formulation of the obtained system \eqref{Interface Paper - Augmented System wrt Jl}. In \cite{Diagne}, the adjoint output map and the underlying Dirac structure of the system have been outlined.  However, according to \ref{Problem1MovingInterface}-\ref{Problem2MovingInterface}, the co-domain of $B(\tilde{x})$ is, at best, the extrapolation space $\tilde{X}_{-1}$, and it is not clear how one shall incorporate the constitutive relation between the flow and effort variables that represents the dynamics of the system \eqref{Interface Paper - Augmented System wrt Jl} into the definition of the Dirac structure. Possibly, one has to introduce the notion of a generalized Dirac structure that preserves the main characteristic of Dirac structures, namely the power-conserving interconnection between the system components through the respective ports. Furthermore, the Dirac structure is not constant anymore, but is modulated by the extended state $\tilde{x}$, as the position of the interface has to be captured at all times.

\end{enumerate}
In conclusion, it does not seem promising to further investigate the quite problematic system description \eqref{Interface Paper - Augmented System wrt Jl}, since the treatment of (some of) these delicate problems is beyond the scope of this thesis.
\vspace{0.5 cm}\\ 
Alternatively, we are going to consider a simplified model that is still closely related to the original one, and seek some fruitful results that might help for further investigating the formulation derived in \eqref{Interface Paper - Augmented System wrt Jl}. A plausible approach is to simply remove the color functions from the state variables again, and to consider the adjusted system description on the state space $X = L^2([a,b], \mathbb{R}^2)$. This is what we are going to do next.

\section{A Simplified Model}
\label{Section A Simplified System}
For the remainder of this thesis, we want to study a simplified version of the systems \eqref{Interface Paper - Augmented System wrt Ja} and \eqref{Interface Paper - Augmented System wrt Jl}, where we assume that the color functions are not part of the state variable anymore. We seek to find useful results that are beneficial for the further analysis of the respective original problem. Our main focus relies on proving that under certain (passivity) conditions, for arbitrary interface positions $l \in (a,b)$, the port-Hamiltonian operator associated with the simplified system generates a contraction semigroup on its corresponding energy space, as this result sets the foundation for further investigations.
\vspace{0.5 cm}\\
We start with the fixed interface position $l = 0$. Let $X = L^2([a,b], \mathbb{R}^2)$. Consider the system
\begin{align}
	\label{Interface - Simplified PH-System}
	\partial_t x(z,t) = \mathcal{J}_0 \left( \mathcal{Q}_0(z) x(z,t) \right), \hspace{0.5 cm} t >0, 
\end{align}
where $\mathcal{Q}_0 = c_0 \mathcal{Q}^- + \overline{c}_0 \mathcal{Q}^+ \in \mathcal{L}(X)$ is a coercive matrix multiplication operator. In particular, there exist constants $0 < m \leq M$ such that $mI \leq \mathcal{Q}^{\pm}(z) \leq MI$ holds for almost all $z \in [a,b]$. The functions $c_0$ and $\overline{c}_0$ are the color functions defined in \eqref{Interface Paper - Characteristic Functions c0 and c0overline}. 
\vspace{0.5 cm}\\
We endow $X$ with $\langle \cdot, \cdot \rangle_{\mathcal{Q}_0} = \frac{1}{2} \langle \cdot, \mathcal{Q}_0 \cdot \rangle_{L^2}$, i.e., 
\begin{align}
	\langle x , y \rangle_{\mathcal{Q}_0} = \frac{1}{2} \int_{a}^{b} y^{\top}(z) \mathcal{Q}_0(z) x(z) \, dz, \hspace{0.5 cm} x, y \in X.
	\label{Interface - Inner Product wrt Q0}
\end{align}
The Hamiltonian $H \colon X \to \mathbb{R}$ on the energy space $(X, \| \cdot \|_{\mathcal{Q}_0})$ is defined as
\begin{align}
	\label{Interface - Simplified PH-System - Hamiltonian}
	H(x) = \frac{1}{2} \int_{a}^{b} x^{\top}(z) \mathcal{Q}_0(z) x(z) \, dz =  \|x\|_{\mathcal{Q}_0}^2, \hspace{0.5 cm} x \in X.
\end{align}
The operator $\mathcal{J}_0 \colon D(\mathcal{J}_0) \subset X \to X$ has already been introduced in Subsection \ref{Subsection Port-Hamiltonian Formulation and Dirac Structure}, and is given by
\begin{align}
	\begin{split}
	D(\mathcal{J}_0) &= \left\{ x \in X \mid x_1 \in D(\mathbf{d}_0^{\ast}), \hspace{0.1 cm} x_2 \in D(\mathbf{d}_0)\right\}, \\
	\mathcal{J}_0 x &= \begin{bmatrix}
		0 & \mathbf{d}_0  \\
		- \mathbf{d}_0^{\ast} & 0 
	\end{bmatrix} \begin{bmatrix}
		x_1 \\
		x_2
	\end{bmatrix}, \hspace{0.5 cm} x \in D(\mathcal{J}_0),
\end{split}
\label{Interface - Operator J0}
\end{align}
where the operator $\mathbf{d}_0$ and its formal adjoint $\mathbf{d}_0^{\ast}$ have been defined in \eqref{Interface Paper - Operator d0} and \eqref{Interface Paper - Operator d_0ast}, respectively: they are given by
\begin{align*}
	D(\mathbf{d}_0) &= H^1([a,b], \mathbb{R}), \\
	\mathbf{d}_0x &= - \left[ \frac{d}{dz} (c_0 x) + \frac{d}{dz} (\overline{c}_0 x ) \right], \hspace{0.5 cm} x \in D(\mathbf{d}_0),
\end{align*}
and
\begin{align*}
	D(\mathbf{d}_0^{\ast}) &= \left\{ y \in L^2([a,b], \mathbb{R}) \mid y_{|(a,0)} \in H^1((a,0), \mathbb{R}), \hspace{0.1 cm} y_{|(0,b)} \in H^1((0,b), \mathbb{R}) \right\}, \\
	\mathbf{d}_0^{\ast}y &= \left( - \mathbf{d}_0 - \left[ \frac{d}{dz} c_0 + \frac{d}{dz} \overline{c}_0 \right] \right) y, \hspace{0.5 cm} y \in D(\mathbf{d}_0^{\ast}).
\end{align*}
Note that on any interval $[a',b'] \subset[a,b]$ not containing the interface position $l = 0$, the operator $\mathcal{J}_0$ simply acts as the matrix differential operator $P_1 \frac{d}{dz}$, where
\begin{align}
	P_1 = \begin{bmatrix}
		0 & -1 \\
		-1 & 0 
	\end{bmatrix} \in \mathbb{R}^{2 \times 2}.
\label{Interface - Matrix P1}
\end{align}
We will proceed as follows: In Subsection \ref{Subsection Dirac Structure - Infinite Dimensional Case} we begin with the definition of the underlying Dirac structure of the system \eqref{Interface - Simplified PH-System}. This can be easily generalized to arbitrary interface positions $l \in (a,b)$. Following this, in Subsection \ref{Subsection Generation of a Contraction Semigroup} we will show that under certain boundary and interface conditions, the port-Hamiltonian operator associated with the system \eqref{Interface - Simplified PH-System} generates a contraction semigroup on the energy space. If we add some more assumptions concerning the boundary conditions and the matrix operators constituting $\mathcal{Q}_0$, we can even guarantee exponential stability of the semigroup, as we will work out in Subsection \ref{Subsection Exponential Stability}. Furthermore, we determine the adjoint operator of the port-Hamiltonian operator in Subsection \ref{Subsection The Adjoint Operator}, and the corresponding resolvent operator in Subsection \ref{Subsection Resolvent}. Lastly, in Subsection \ref{Subsection Stability of the Family of Infinitesimal Generators} we shall define a family of port-Hamiltonian operators that keeps track of the moving interface position, and analyze the resulting time-variant evolution problem associated with this family. We present sufficient conditions for the stability of this family, as discussed in Section \ref{Section Evolution Equations}. This is a key property and our last contribution made in this thesis for the further analysis of the system formulation suggested in \cite{Diagne}.
\subsection{Dirac Structure}
\label{Subsection Dirac Structure - Infinite Dimensional Case}
As usual, to define the underlying Dirac structure of the system \eqref{Interface - Simplified PH-System}, we need to find a suitable power pairing $ \langle \cdot \mid \cdot \rangle \colon \mathcal{E} \times \mathcal{F} \to \mathbb{R}$ which will constitute the plus pairing $\ll \cdot , \cdot \gg \colon \mathcal{B} \times \mathcal{B} \to \mathbb{R}$ on the bond space $\mathcal{B} = \mathcal{F} \times \mathcal{E}$. We will mainly repeat the steps we have seen in Subsection \ref{Subsection Port-Hamiltonian Formulation and Dirac Structure}, and therefore will not go into detail.
\vspace{0.5 cm}\\
Analogously to the verification that the operator $\tilde{\mathcal{J}}_0$ given in \eqref{Interface Paper - Operator Ja} is formally skew-symmetric, see \eqref{Interface Paper - Skew-Symmetry of Ja}, we can show that the operator $\mathcal{J}_0$ defined in \eqref{Interface - Operator J0} is formally skew-symmetric as well. Applying the relation \eqref{Interface Paper - Relation d_0 and d_0ast} between the operators $\mathbf{d}_0$ and $\mathbf{d}_0^{\ast}$, for $e^1, e^2 \in D(\mathcal{J}_0)$ we have
\begin{align}
	\begin{split}
 \langle \mathcal{J}_0 e^1 , e^2 \rangle_{L^2} + \langle e^1 , \mathcal{J}_0 e^2 \rangle_{L^2} &= - \big[ e_1^2(z) e_2^1(z) \big]_{a}^{b} + e_2^1(0) \left[ e_1^2(0^+) - e_1^2(0^-) \right] \\
	&\hspace{0.5 cm} - \big[ e_1^1(z) e_2^2(z) \big]_{a}^{b} + e_2^2(0) \left[ e_1^1(0^+) - e_1^1(0^-) \right] \\
	&= \big[ (e^1)^{\top}(z) P_1 e^2(z) \big]_{a}^{b} \\
	&\hspace{0.5 cm}  + e_2^1(0) \left[ e_1^2(0^+) - e_1^2(0^-) \right]  + e_2^2(0) \left[ e_1^1(0^+) - e_1^1(0^-) \right],
	\end{split} 
\label{Interface - Skew-Symmetry of J0}
\end{align}
with $P_1$ defined in \eqref{Interface - Matrix P1}. This relation will help us finding the appropriate  power pairing later on. Prior to that, we need to augment the system \eqref{Interface - Simplified PH-System} with boundary and interface port variables, which we will introduce next.  
\vspace{0.5 cm}\\
Again, since the interface position resides in the interior of the spatial domain $[a,b]$, the trace operator defined in \eqref{Trace Operator - Chapter 3} can be extended to $\trace_0 \colon D(\mathcal{J}_0) \to \mathbb{R}^4$ with 
\begin{align*}
 e \mapsto \begin{bmatrix}
		e(b) \\
		e(a)
	\end{bmatrix}, \hspace{0.5 cm} e \in D(\mathcal{J}_0).
\end{align*}
Consequently, for all co-energy variables $ e \in D(\mathcal{J}_0)$, the boundary flow $f_{\partial} = f_{\partial, e} \in \mathbb{R}^2$ and the boundary effort $e_{\partial} = e_{\partial, e} \in \mathbb{R}^2$ are defined as
\begin{align}
	\label{Interface - Boundary Flow and Effort}
	\begin{bmatrix}
		f_{\partial} \\
		e_{\partial}
	\end{bmatrix} =  R_{\extern} \trace_0(e) = \begin{bmatrix}  \frac{1}{\sqrt{2}} (P_1e(b) - P_1 e(a)) \\
		\frac{1}{\sqrt{2}} ( e(b) + e(a))
	\end{bmatrix}, 
\end{align} 
where
\begin{align}
	\label{Interface - Matrix Rext}
	R_{\extern} = \frac{1}{\sqrt{2}} \begin{bmatrix}
		P_1 & - P_1 \\
		I & I 
	\end{bmatrix} \in \mathbb{R}^{4 \times 4}. 
\end{align}
Next, we define the interface port variables $e_{I} = e_{I, e}$, $f_I = f_{I, e} \in \mathbb{R}$, which have already been introduced in Section \ref{Section Two Port-Hamiltonian Systems Coupled by an Interface}.  For all effort variables $e \in D(\mathcal{J}_0)$, the interface flow and the interface effort are given as follows:
\begin{align}
	f_I &= e_2 (0^+) = e_2(0^-), \label{Interface - Simplified Continuity Equation} \\
	-e_I &= e_1(0^+) - e_1 (0^-). \label{Interface - Simplified Balance Equation}
\end{align}
It is left to find the power pairing in order to define the Dirac structure associated with the system \eqref{Interface - Simplified PH-System} that is augmented with the previously defined boundary and interface port variables. To this end, we write the right-hand side of \eqref{Interface - Skew-Symmetry of J0} with respect to the boundary and interface port variables for all $ e^1, e^2 \in D(\mathcal{J}_0)$ as follows: 
\begin{align}
	\begin{split}
	&\hspace{0.5 cm} \big[ (e^1)^{\top}(z) P_1 e^2(z) \big]_{a}^{b} + e_2^1(0) \left[ e_1^2(0^+) - e_1^2(0^-) \right]  + e_2^2(0) \left[ e_1^1(0^+) - e_1^1(0^-) \right] \\
	&= \langle e_{\partial, e^2} , f_{\partial, e^1} \rangle_2 + \langle e_{\partial, e^1} , f_{\partial, e^2} \rangle_2 - f_{I,e^1} e_{I, e^2} - f_{I, e^2} e_{I, e^1}. 
	\end{split}
\label{Interface - Auxiliary Equation Power Pairing}
\end{align}
Now, define the space of flows $\mathcal{F}$ and the space of efforts $\mathcal{E}$ for the reduced system  as
\begin{align}
	\mathcal{F} = \mathcal{E} = L^2([a,b], \mathbb{R}^2) \times \mathbb{R} \times \mathbb{R}^2,
	\label{Interface - Flow and Effort Space}
\end{align}
endowed with the canonical inner product $\langle \cdot, \cdot  \rangle_{\mathcal{F}}$. For the right choice of the power pairing $\langle \cdot \mid \cdot \rangle \colon \mathcal{E} \times \mathcal{F} \to \mathbb{R}$ we simply substitute the relation \eqref{Interface - Auxiliary Equation Power Pairing} into equation \eqref{Interface - Skew-Symmetry of J0}. This yields for all $e^1, e^2 \in D(\mathcal{J}_0)$, 
\begin{align}
	\langle \mathcal{J}_0 e^1, e^2 \rangle_{L^2} + \langle e^1, \mathcal{J}_0 e^2 \rangle_{L^2}  = \langle e_{\partial, e^2} , f_{\partial, e^1} \rangle_2 + \langle e_{\partial, e^1} , f_{\partial, e^2} \rangle_2 - f_{I,e^1} e_{I, e^2} - f_{I, e^2} e_{I, e^1}.
	\label{Interface - Skew-Symmetry of J0 - Representation 2}
\end{align}
Thus, we define
\begin{align}
	\left\langle \begin{pmatrix}
		e \\
		e_I \\
		e_{\partial} 
	\end{pmatrix} \Bigg| \begin{pmatrix}
	f \\
	f_I \\
	f_{\partial}
	\end{pmatrix} \right\rangle := \langle e, f \rangle_{L^2} +e_I f_I - \langle e_{\partial}, f_{\partial} \rangle_2, \hspace{0.5 cm} \left( \begin{pmatrix}
	e \\
	e_I \\
	e_{\partial} 
\end{pmatrix} , \begin{pmatrix}
f \\
f_I \\
f_{\partial}
\end{pmatrix} \right) \in \mathcal{E} \times \mathcal{F}, 
\label{Interface - Power Pairing}
\end{align}
and the plus pairing $\ll \cdot , \cdot \gg \colon \mathcal{B} \times \mathcal{B} \to \mathbb{R}$ on the bond space $\mathcal{B} = \mathcal{F} \times \mathcal{E}$ induced by the power pairing \eqref{Interface - Power Pairing} is accordingly defined as in \eqref{Plus Pairing - Distributed-Parameter}. The Dirac structure associated with the formally skew-symmetric operator $\mathcal{J}_0$ defined in \eqref{Interface - Operator J0} is given as follows.

\begin{corollary}
	\label{Interface - Corollary Dirac Structure}
	Consider the formally skew-symmetric operator $\mathcal{J}_0$ given by \eqref{Interface - Operator J0}. Let $\mathcal{B} = \mathcal{F} \times \mathcal{E}$ be the bond space with respect to the flow space $\mathcal{F}$ and the effort space $\mathcal{E}$ defined in \eqref{Interface - Flow and Effort Space}. Then the subspace $\mathcal{D}_{\mathcal{J}_0} \subset \mathcal{B}$ defined as
	\begin{align}
		\mathcal{D}_{\mathcal{J}_0} :=& \left\{ \left( \begin{pmatrix}
			f \\
			f_I \\
			f_{\partial}
		\end{pmatrix} , \begin{pmatrix}
		e \\
		e_I \\
		e_{\partial}
	\end{pmatrix} \right) \in \mathcal{B} \hspace{0.1 cm} \Bigg| \hspace{0.1 cm}  e \in D(\mathcal{J}_0), \, f = \mathcal{J}_0e, \, f_I = e_2(0), \right. \label{Interface - Dirac Structure DJ0} \\
&\hspace{4.22 cm} \left. \, e_I = e_1(0^-) - e_1(0^+), \, \begin{bmatrix}
	f_{\partial} \\
	e_{\partial}
\end{bmatrix} = R_{\extern} \trace_0(e) \right\} \nonumber
	\end{align}
is a Dirac structure with respect to the plus pairing $\ll \cdot , \cdot \gg$ induced by the power pairing \eqref{Interface - Power Pairing}. 
\end{corollary}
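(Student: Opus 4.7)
The strategy is to verify $\mathcal{D}_{\mathcal{J}_0} = \mathcal{D}_{\mathcal{J}_0}^{\perp \!\!\! \perp}$ directly, closely mirroring the argument given for Proposition \ref{Interface Paper - Proposition Dirac Structure DI}. In fact, this corollary can be viewed as a projection of that proposition onto the non-color-function coordinates: since the last two rows of $\tilde{\mathcal{J}}_0$ are zero, any element of $\mathcal{D}_{\tilde{\mathcal{J}}_0}$ has $\tilde f_c = \tilde f_{\overline c} = 0$, while $\tilde e_c$ and $\tilde e_{\overline c}$ are arbitrary $L^2$-functions, so the color coordinates contribute nothing to the plus pairing and the essential constitutive relations are exactly those listed in \eqref{Interface - Dirac Structure DJ0}. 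Nevertheless, the cleanest exposition is a self-contained direct verification.

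For the inclusion $\mathcal{D}_{\mathcal{J}_0} \subset \mathcal{D}_{\mathcal{J}_0}^{\perp \!\!\! \perp}$, pick two elements $(F^i, E^i) \in \mathcal{D}_{\mathcal{J}_0}$, $i=1,2$, with the notation $F^i = (f^i, f_I^i, f_\partial^i)$ and $E^i = (e^i, e_I^i, e_\partial^i)$. Substituting $f^i = \mathcal{J}_0 e^i$ and invoking the skew-symmetry identity \eqref{Interface - Skew-Symmetry of J0 - Representation 2}, the plus pairing evaluates to
\begin{align*}
\ll (F^1, E^1), (F^2, E^2) \gg \; &= \langle e^2, \mathcal{J}_0 e^1\rangle_{L^2} + \langle e^1, \mathcal{J}_0 e^2\rangle_{L^2} + e_I^1 f_I^2 + e_I^2 f_I^1 - \langle e_\partial^1, f_\partial^2\rangle_2 - \langle e_\partial^2, f_\partial^1\rangle_2 \\
&= 0,
\end{align*}
where in the last step the boundary traces assemble into the boundary-port terms and the interface jumps cancel the interface-port terms, exactly as in the proof of Proposition \ref{Interface Paper - Proposition Dirac Structure DI}.

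For the reverse inclusion $\mathcal{D}_{\mathcal{J}_0}^{\perp \!\!\! \perp} \subset \mathcal{D}_{\mathcal{J}_0}$, pick $(F^2, E^2) = ((\phi, \phi_I, \phi_\partial), (\epsilon, \epsilon_I, \epsilon_\partial)) \in \mathcal{B}$ such that $\ll (F^1, E^1), (F^2, E^2) \gg \, = 0$ for every $(F^1, E^1) \in \mathcal{D}_{\mathcal{J}_0}$. I would now replay the six test cases from the proof of Proposition \ref{Interface Paper - Proposition Dirac Structure DI}, restricted to the two components $(e_1^1, e_2^1)$: taking $e_1^1$ with vanishing boundary traces and vanishing interface jump while $e_2^1 = 0$ forces $\epsilon_2 \in D(\mathbf{d}_0)$ with $\phi_1 = \mathbf{d}_0 \epsilon_2$ (after integration by parts via \eqref{Interface Paper - Relation d_0 and d_0ast}); the symmetric choice with $e_1^1 = 0$ and $e_2^1$ vanishing at $a$, $0$, $b$ yields $\epsilon_1 \in D(\mathbf{d}_0^\ast)$ with $\phi_2 = -\mathbf{d}_0^\ast \epsilon_1$. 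Hence $\epsilon \in D(\mathcal{J}_0)$ and $\phi = \mathcal{J}_0 \epsilon$. Allowing then $e_1^1$ with free interface jump (and $e_2^1 = 0$, zero boundary traces) isolates $\phi_I = \epsilon_2(0)$, and symmetrically $\epsilon_I = \epsilon_1(0^-) - \epsilon_1(0^+)$. Finally, imposing continuity of $e_1^1$ at the interface and using \eqref{Interface - Skew-Symmetry of J0 - Representation 2} in combination with the factorization $\mathcal{P}_{\extern} = R_{\extern}^\top \Sigma R_{\extern}$ (Lemma~\ref{Lemma Auxillary Lemma Port-Boundary Variables}) forces $[\phi_\partial^\top, \epsilon_\partial^\top]^\top = R_{\extern} \trace_0(\epsilon)$, since the boundary traces of $e^1$ can be chosen to sweep out all of $\mathbb{R}^4$.

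The main technical obstacle is purely one of bookkeeping: one must make sure that in each of the six test configurations, the prescribed vanishing conditions still leave enough freedom in $D(\mathcal{J}_0)$ to isolate exactly the constitutive relation under examination, and that the integration-by-parts identity \eqref{Interface Paper - Relation d_0 and d_0ast} is applied only to pairs $(x,y) \in D(\mathbf{d}_0) \times D(\mathbf{d}_0^\ast)$. Since $D(\mathcal{J}_0)$ is large enough to realize any prescribed combination of boundary values at $a$, $b$ and interface jumps at $0$ (by standard constructions of $H^1$-functions on the two subintervals), these test configurations are available, and the argument goes through verbatim as in the proof of Proposition \ref{Interface Paper - Proposition Dirac Structure DI}. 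This establishes $\mathcal{D}_{\mathcal{J}_0}^{\perp \!\!\! \perp} \subset \mathcal{D}_{\mathcal{J}_0}$, and completes the proof.
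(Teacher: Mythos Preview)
Your proposal is correct and takes essentially the same approach as the paper: the paper's proof simply reads ``This follows immediately from the proof of Proposition \ref{Interface Paper - Proposition Dirac Structure DI},'' and your write-up is precisely a detailed unpacking of that remark. The only minor slip is the phrase ``six test cases'': since the color-function components are absent here, case (i) of that proof is vacuous, and only cases (ii)--(vi) need to be replayed --- exactly the five you describe.
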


\begin{proof}
	This follows immediately from the proof of Proposition \ref{Interface Paper - Proposition Dirac Structure DI}.
	\end{proof}

As usual, we can geometrically specify the dynamics of the system \eqref{Interface - Simplified PH-System} with respect to its underlying Dirac structure. 

\begin{corollary}
	Consider the system \eqref{Interface - Simplified PH-System} with Hamiltonian \eqref{Interface - Simplified PH-System - Hamiltonian} augmented with the boundary and interface port variables given in \eqref{Interface - Boundary Flow and Effort} and \eqref{Interface - Simplified Continuity Equation}-\eqref{Interface - Simplified Balance Equation}, respectively. The dynamics of the resulting port-Hamiltonian system are geometrically specified by the requirement that for all $t > 0$ it holds that
	\begin{align*}
		\left( \begin{pmatrix}
			\frac{\partial}{\partial t} x(\cdot,t) \\
			f_I(t) \\
			f_{\partial}(t)
		\end{pmatrix} ,  \begin{pmatrix}
		\mathcal{Q}_0x(\cdot,t) \\
		e_I(t) \\
		e_{\partial}(t)
	\end{pmatrix} \right) \in \mathcal{D}_{\mathcal{J}_0},
	\end{align*}
with $f_I(t) = f_{I, \mathcal{Q}_0x(\cdot,t)}$, $e_I(t) = e_{I, \mathcal{Q}_0 x(\cdot,t)}$, $f_{\partial}(t) = f_{\partial, \mathcal{Q}x(\cdot,t)}$, $e_{\partial}(t) = e_{\partial, \mathcal{Q}x(\cdot,t)}$, and $\mathcal{D}_{\mathcal{J}_0}$ the Dirac structure defined in \eqref{Interface - Dirac Structure DJ0}. 
\end{corollary}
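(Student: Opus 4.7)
The statement is essentially a reformulation of the system equation together with the definitions of the port variables, so my plan is to verify the five constitutive relations of $\mathcal{D}_{\mathcal{J}_0}$ one by one for the tuple
\[
\left(\left(\partial_t x(\cdot,t),\, f_I(t),\, f_{\partial}(t)\right),\, \left(\mathcal{Q}_0 x(\cdot,t),\, e_I(t),\, e_{\partial}(t)\right)\right).
\]
Concretely, I would set $e(\cdot,t) := \mathcal{Q}_0 x(\cdot,t)$ as the effort variable and $f(\cdot,t) := \partial_t x(\cdot,t)$ as the flow variable, and then check the five defining conditions of $\mathcal{D}_{\mathcal{J}_0}$ in \eqref{Interface - Dirac Structure DJ0} against these assignments.

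First I would observe that, by the very notion of a classical solution of \eqref{Interface - Simplified PH-System} that we tacitly adopt here, the effort $\mathcal{Q}_0 x(\cdot,t)$ must lie in $D(\mathcal{J}_0)$ for every $t>0$; this amounts to saying that $(\mathcal{Q}_0 x)_1(\cdot,t)$ lies in $D(\mathbf{d}_0^{\ast})$ (i.e.\ is piecewise $H^1$ on $(a,0)$ and $(0,b)$) and $(\mathcal{Q}_0 x)_2(\cdot,t)$ lies in $D(\mathbf{d}_0)=H^1([a,b],\mathbb{R})$ (i.e.\ is continuous across the interface). Granted that regularity, the system equation \eqref{Interface - Simplified PH-System} gives at once $f=\mathcal{J}_0 e$, which is the second constitutive relation.

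Next I would verify the interface relations. The continuity of $(\mathcal{Q}_0 x)_2$ at $z=0$, built into $D(\mathbf{d}_0)$, together with the very definition $f_I(t):=f_{I,\mathcal{Q}_0 x(\cdot,t)}$ from \eqref{Interface - Simplified Continuity Equation}, yields $f_I(t)=(\mathcal{Q}_0 x)_2(0,t)=e_2(0,t)$. Analogously, the admissible jump of $(\mathcal{Q}_0 x)_1$ at the interface together with \eqref{Interface - Simplified Balance Equation} gives $e_I(t) = (\mathcal{Q}_0 x)_1(0^-,t) - (\mathcal{Q}_0 x)_1(0^+,t) = e_1(0^-,t)-e_1(0^+,t)$. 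Finally, the boundary port variables $f_{\partial}(t)$ and $e_{\partial}(t)$ are, by their very definition \eqref{Interface - Boundary Flow and Effort} applied to the effort $e=\mathcal{Q}_0 x(\cdot,t)$, given by $[f_{\partial}(t)^\top, e_{\partial}(t)^\top]^\top = R_{\extern}\,\trace_0(e(\cdot,t))$, which is the fifth constitutive relation.

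This argument is essentially a bookkeeping exercise, so I do not expect any genuine obstacle; the only nontrivial ingredient is the regularity assertion $\mathcal{Q}_0 x(\cdot,t)\in D(\mathcal{J}_0)$. That part is not a theorem one proves here but rather part of the regularity hypothesis that accompanies the geometric specification: the dynamics are \emph{specified} by requiring both the PDE and the membership in $\mathcal{D}_{\mathcal{J}_0}$, so the implicit interface continuity condition on $(\mathcal{Q}_0 x)_2$ is precisely what selects among formal solutions the ones compatible with the Dirac structure. Once this is granted, each constitutive relation of $\mathcal{D}_{\mathcal{J}_0}$ is either a restatement of \eqref{Interface - Simplified PH-System} or a definitional identity, and the corollary follows directly from Corollary \ref{Interface - Corollary Dirac Structure}.
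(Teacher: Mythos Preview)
Your proposal is correct and matches the paper's treatment: the paper states this corollary without proof, treating it as an immediate consequence of the definitions and of Corollary~\ref{Interface - Corollary Dirac Structure}. Your explicit verification of the five constitutive relations of $\mathcal{D}_{\mathcal{J}_0}$ is exactly the unpacking one would do, and you correctly identify that the only substantive point---the regularity $\mathcal{Q}_0 x(\cdot,t)\in D(\mathcal{J}_0)$---is part of the specification rather than something to be proved.
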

Lastly, we want to state a balance equation for classical solutions of the first-order system \eqref{Interface - Simplified PH-System} by means of the boundary and interface port variables.
\begin{corollary}
	\label{Interface - Theorem Balance Equation}
	Let $x \colon [0, \infty) \to X$ be a classical solution of the system \eqref{Interface - Simplified PH-System} with Hamiltonian \eqref{Interface - Simplified PH-System - Hamiltonian}. Then we have for all $t \geq 0$, 
	\begin{align}
		\label{Interface - Power Balance Equation wrt Boundary and Interface Port}
	\frac{d}{dt} \|x(t)\|_{\mathcal{Q}_0}^2 =	\frac{d}{dt}H(x(t)) =  \langle e_{\partial}(t), f_{\partial}(t) \rangle_2 - e_I(t) f_I(t).
	\end{align}
\end{corollary}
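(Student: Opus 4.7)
The plan is to differentiate the Hamiltonian along a classical solution and then apply the key identity \eqref{Interface - Skew-Symmetry of J0 - Representation 2} to express the result through the boundary and interface port variables. Since $\mathcal{Q}_0 \in \mathcal{L}(X)$ is a self-adjoint (in fact coercive) multiplication operator and $x \in \mathcal{C}^1((0,\infty), X)$ by definition of a classical solution, the product rule yields
\begin{align*}
\frac{d}{dt}H(x(t)) &= \frac{1}{2}\frac{d}{dt}\langle x(t), \mathcal{Q}_0 x(t)\rangle_{L^2} \\
&= \frac{1}{2}\langle \partial_t x(t), \mathcal{Q}_0 x(t)\rangle_{L^2} + \frac{1}{2}\langle \mathcal{Q}_0 x(t), \partial_t x(t)\rangle_{L^2}.
\end{align*}

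Next I would substitute the governing equation $\partial_t x(t) = \mathcal{J}_0(\mathcal{Q}_0 x(t))$, which is legitimate since $x(t) \in D(A_{\mathcal{Q}_0})$ means $\mathcal{Q}_0 x(t) \in D(\mathcal{J}_0)$ for all $t > 0$. This transforms the expression into
\begin{align*}
\frac{d}{dt}H(x(t)) = \tfrac{1}{2}\langle \mathcal{J}_0(\mathcal{Q}_0 x(t)), \mathcal{Q}_0 x(t)\rangle_{L^2} + \tfrac{1}{2}\langle \mathcal{Q}_0 x(t), \mathcal{J}_0(\mathcal{Q}_0 x(t))\rangle_{L^2}.
\end{align*}
Setting $e(t) := \mathcal{Q}_0 x(t) \in D(\mathcal{J}_0)$ and applying \eqref{Interface - Skew-Symmetry of J0 - Representation 2} with $e^1 = e^2 = e(t)$ gives
\begin{align*}
\frac{d}{dt}H(x(t)) = \tfrac{1}{2}\bigl( 2\langle e_{\partial,e(t)}, f_{\partial,e(t)}\rangle_2 - 2 f_{I,e(t)}\,e_{I,e(t)}\bigr) = \langle e_\partial(t), f_\partial(t)\rangle_2 - e_I(t) f_I(t),
\end{align*}
where in the last step I use the notational convention $f_\partial(t) = f_{\partial, \mathcal{Q}_0 x(\cdot,t)}$, $e_\partial(t) = e_{\partial, \mathcal{Q}_0 x(\cdot,t)}$ and analogously for the interface variables. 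The identity $\|x(t)\|_{\mathcal{Q}_0}^2 = H(x(t))$ is immediate from the definitions \eqref{Interface - Inner Product wrt Q0} and \eqref{Interface - Simplified PH-System - Hamiltonian}.

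There is essentially no obstacle here: all three ingredients are already available from the preceding development. The only point requiring mild care is the order of operations, specifically that one is allowed to pull $\partial_t$ inside $\langle \cdot, \mathcal{Q}_0 \cdot\rangle_{L^2}$, which is justified by boundedness and self-adjointness of $\mathcal{Q}_0$ together with the $\mathcal{C}^1$-regularity of $x$ in $X$. The whole computation is therefore a short, self-contained corollary of the formal skew-symmetry identity \eqref{Interface - Skew-Symmetry of J0 - Representation 2} packaged in terms of the port variables defined in \eqref{Interface - Boundary Flow and Effort} and \eqref{Interface - Simplified Continuity Equation}--\eqref{Interface - Simplified Balance Equation}.
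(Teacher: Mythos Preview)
Your proof is correct and follows essentially the same approach as the paper: differentiate $H(x(t)) = \tfrac{1}{2}\langle x(t), \mathcal{Q}_0 x(t)\rangle_{L^2}$, substitute the governing equation, and invoke \eqref{Interface - Skew-Symmetry of J0 - Representation 2} with $e^1 = e^2 = \mathcal{Q}_0 x(\cdot,t)$. If anything, you are slightly more explicit about the regularity justifications than the paper, which simply states the computation.
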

\begin{proof}
	By exploiting the fact that $x$ satisfies \eqref{Interface - Simplified PH-System} and by using \eqref{Interface - Skew-Symmetry of J0 - Representation 2}, we get for every $t \geq 0$, 
	\begin{align*}
		\frac{d}{dt}H(x(\cdot,t)) &= \frac{1}{2} \frac{d}{dt} \langle x(\cdot,t), \mathcal{Q}_0x(\cdot,t) \rangle_{L^2} \\
		&= \frac{1}{2} \langle \mathcal{J}_0( \mathcal{Q}_0x(\cdot,t) ), \mathcal{Q}_0 x(\cdot,t) \rangle_{L^2} + \frac{1}{2} \langle \mathcal{Q}_0 x(\cdot,t), \mathcal{J}_0(\mathcal{Q}_0x(\cdot,t)) \rangle_{L^2} \\
		&= \langle e_{\partial}(t), f_{\partial}(t) \rangle_2 - e_I(t) f_I(t).
	\end{align*}
\end{proof}
Equation \eqref{Interface - Power Balance Equation wrt Boundary and Interface Port} states that the change of energy is equal to the power flow both at the boundary and at the interface position. Note that this is the same balance equation as for the system \eqref{Interface Paper - Augmented System wrt Ja}.
\vspace{0.5 cm}\\
Now that we have established the port-Hamiltonian formulation of the simplified system \eqref{Interface - Simplified PH-System}, we want to discuss under which conditions this system is well-defined. 
\subsection{Generation of a Contraction Semigroup}
\label{Subsection Generation of a Contraction Semigroup}

In this section, we want to prove well-posedness of the system \eqref{Interface - Simplified PH-System}. To this end, we investigate the port-Hamiltonian operator associated with the system \eqref{Interface - Simplified PH-System} with Hamiltonian \eqref{Interface - Simplified PH-System - Hamiltonian}. As mentioned at the end of Chapter \ref{Chapter Infinite-dimensional Port-Hamiltonian Systems}, in \cite[Section 4]{LeGorrec} it has been proved that formulating the boundary conditions with respect to the boundary port variables, one can characterize those boundary conditions for which the associated port-Hamiltonian operator generates a contraction semigroup. We will therefore follow this approach. However, we need to include interface conditions as well, which we want to specify by means of the interface port variables. These conditions will be incorporated into the definition of the domain of the port-Hamiltonian operator. We will show that if we impose a passivity condition at the interface, then these operators generate a contraction semigroup and therefore guarantee well-posedness of the system \eqref{Interface - Simplified PH-System}. Furthermore, we will show that, under some more restrictive conditions, these operators even generate a unitary semigroup. At the end of this section, we apply the former result by showing well-posedness of the model of two transmission lines with different physical properties that are coupled through a resistor. 
\vspace{0.5 cm}\\
Let us start with the boundary conditions for the system \eqref{Interface - Simplified PH-System}. We impose boundary conditions of the form 
\begin{align}
	\tilde{W}_B \begin{bmatrix}
		(\mathcal{Q}_0x)(b) \\
		(\mathcal{Q}_0x)(a) 
	\end{bmatrix} = 0
\label{Interface - Boundary Condition}
\end{align}
for some $\tilde{W}_B \in \mathbb{R}^{2 \times 4}$. These are boundary conditions with respect to the co-energy variables of the system. In order to formulate them with respect to the boundary port variables, we just use their definition \eqref{Interface - Boundary Flow and Effort}. As the matrix  $R_{\extern}$ defined in \eqref{Interface - Matrix Rext} is invertible, we may equivalently write \eqref{Interface - Boundary Condition} as
\begin{align}
	W_B \begin{bmatrix}
		f_{\partial} \\
		e_{\partial}
	\end{bmatrix} = 0, 
	\label{Interface - Boundary Condition wrt Boundary Flow and Effort}
\end{align}
where $W_B = \tilde{W}_B R_{\extern}^{-1} \in \mathbb{R}^{2 \times 4}$. Recall that we want to show that the port-Hamiltonian operator associated with the system \eqref{Interface - Simplified PH-System} generates a contraction semigroup. As depicted in Example \ref{Example Contraction Semigroup}, this implies that the energy of the state $x \in X$ is dissipating. By virtue of the balance equation \eqref{Interface - Power Balance Equation wrt Boundary and Interface Port}, we see that we need to choose the matrix $W_B$ in \eqref{Interface - Boundary Condition wrt Boundary Flow and Effort} such that $\langle e_{\partial}, f_{\partial} \rangle_{2} \leq 0$ holds. This is possible, as we will see shortly. 
\vspace{0.5 cm}\\
Next, we need to impose an interface condition constituted by the interface port variables \eqref{Interface - Simplified Continuity Equation}-\eqref{Interface - Simplified Balance Equation}. Again, from the balance equation \eqref{Interface - Power Balance Equation wrt Boundary and Interface Port} we infer that the interface relation
\begin{align}
	\label{Interface - Interface Passivity Relation}
	f_I = r e_I
\end{align}
for some $r \geq 0$ is a good choice. Recall that the interface port variables are precisely given by
\begin{align*}
	f_I &= \left( \mathcal{Q}^+x\right)_2 (0^+) = \left( \mathcal{Q}^- x \right)_2 (0^-), \\
	-e_I &= \left( \mathcal{Q}^+x \right)_1 (0^+) - \left( \mathcal{Q}^- x \right)_1 (0^-).   
\end{align*}
\begin{remark}
Note that, due to the minus sign on the right-hand side of the balance equation \eqref{Interface - Power Balance Equation wrt Boundary and Interface Port}, the relation \eqref{Interface - Interface Passivity Relation} is in fact a passivity relation with passivity constant $r \geq 0$. This does not contradict the discussion from Section \ref{Section Energy-dissipating Elements}: If we consider the power pairing \eqref{Power-Balance with Dissipation} and the balance equation \eqref{Power-Balance Energy-Storing and Energy-Dissipating Elements}, then it is easy to check that passivity is a matter of sign convention. The important thing is that the inner product (including the sign) of the resistive port variables appearing in the balance equation is less than or equal to $0$, indicating energy dissipation. Thus, in contrast to the resistive structure \eqref{Resistive Relation Finite-Dimensional Case} in the finite-dimensional case, the set of resistive relations $\mathcal{R}_r$ concerning the interface port variables is now given by
\begin{align*}
	\mathcal{R}_r = \left\{ (f_I,e_I) \in \mathbb{R}^2 \mid f_I = re_I\right\}.
\end{align*}
\end{remark}
Now we are able to define the port-Hamiltonian operator  $A_{\mathcal{Q}_0} \colon D(A_{\mathcal{Q}_0}) \subset X \to X$ associated with the system \eqref{Interface - Simplified PH-System} with Hamiltonian \eqref{Interface - Simplified PH-System - Hamiltonian}. It is given by
\begin{align}
	\begin{split}
		D(A_{\mathcal{Q}_0}) &= \left\{ x \in X \, \big| \, \mathcal{Q}_0x \in D(\mathcal{J}_0), \hspace{0.1 cm} f_{I, \mathcal{Q}_0x} = r e_{I, \mathcal{Q}_0x}, \hspace{0.1 cm} W_B \begin{bmatrix}
			f_{\partial, \mathcal{Q}_0 x} \\
			e_{\partial, \mathcal{Q}_0 x}
		\end{bmatrix} = 0 \right\}, \\
		A_{\mathcal{Q}_0}x &= \mathcal{J}_0 (\mathcal{Q}_0x), \hspace{0.5 cm}  x \in D(A_{\mathcal{Q}_0}).
	\end{split}
	\label{Interface - Simplified PH System - Operator A}
\end{align}
We wish to prove that $A_{\mathcal{Q}_0}$ generates a contraction semigroup $(T(t))_{t \geq 0}$ on the energy space $(X, \| \cdot \|_{\mathcal{Q}_0})$. Before we prove the statement, we need some auxiliary lemmas. The proofs of those lemmas can be found in Section 7.2 and Section 7.3 in \cite{JacobZwart}.
\begin{lemma}[Lemma 7.2.3 in \cite{JacobZwart}]
	\label{Interface - Auxiliary Lemma 1}
	Let $Z$ be a Hilbert space with inner product $\langle \cdot,  \cdot \rangle$, and let $\mathcal{P} \in \mathcal{L}(Z)$ be a coercive operator on $Z$. We define $Z_{\mathcal{P}}$ as the Hilbert space $Z$ endowed with $\langle \cdot , \cdot \rangle_{\mathcal{P}} := \langle \cdot , \mathcal{P} \cdot \rangle$. Then the operator $A \colon D(A) \subset Z \to Z$ generates a contraction semigroup on $Z$ if and only if 
	\begin{align*}
		A\mathcal{P} \colon D(A\mathcal{P}) = \left\{ x \in Z \mid \mathcal{P}z \in D(A) \right\} \subset Z_{\mathcal{P}} \to Z_{\mathcal{P}}
	\end{align*}
generates a contraction semigroup on $Z_{\mathcal{P}}$. 
\end{lemma}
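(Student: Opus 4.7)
The natural approach is to verify both implications via the Lumer--Phillips theorem (Theorem \ref{Theorem Lumer-Phillips}). Since $\mathcal{P}$ is coercive in the sense used in this paper, it is self-adjoint with $mI \leq \mathcal{P} \leq MI$ for some $0 < m \leq M$; consequently $\mathcal{P}$ is invertible in $\mathcal{L}(Z)$, the norms $\|\cdot\|$ and $\|\cdot\|_{\mathcal{P}}$ are equivalent, and the map $x \mapsto \mathcal{P}x$ is a bijection of $D(A\mathcal{P}) = \mathcal{P}^{-1}(D(A))$ onto $D(A)$. From these observations the density of $D(A\mathcal{P})$ in $Z_{\mathcal{P}}$ and the closedness of $A\mathcal{P}$ reduce immediately to the corresponding properties of $A$ on $Z$, since $\mathcal{P}$ is a topological isomorphism of both Hilbert spaces with themselves.

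The dissipativity criterion then transfers by a one-line algebraic step. Writing $y := \mathcal{P}x$ for $x \in D(A\mathcal{P})$ and using the definition of $\langle \cdot, \cdot\rangle_{\mathcal{P}}$, one computes
\begin{align*}
	\realpart\langle A\mathcal{P}x, x\rangle_{\mathcal{P}} = \realpart\langle A\mathcal{P}x, \mathcal{P}x\rangle_Z = \realpart\langle Ay, y\rangle_Z,
\end{align*}
and as $y$ sweeps through all of $D(A)$, dissipativity of $A\mathcal{P}$ on $Z_{\mathcal{P}}$ becomes equivalent to dissipativity of $A$ on $Z$.

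The main obstacle is the range condition $\ran(\lambda I - A\mathcal{P}) = Z$ for some $\lambda > 0$. Dissipativity combined with Lemma \ref{Lemma Auxiliary Lemma Lumer Phillips} already yields that $\ran(\lambda I - A\mathcal{P})$ is closed in $Z_{\mathcal{P}}$, so it suffices to show its orthogonal complement is trivial. Assuming $A$ generates a contraction semigroup, I would pick $w$ in this orthogonal complement and expand, using self-adjointness of $\mathcal{P}$,
\begin{align*}
	0 = \langle (\lambda I - A\mathcal{P})x, w\rangle_{\mathcal{P}} = \lambda\langle y, w\rangle_Z - \langle Ay, \mathcal{P}w\rangle_Z, \qquad y = \mathcal{P}x \in D(A).
\end{align*}
The boundedness of $y \mapsto \langle Ay, \mathcal{P}w\rangle_Z = \lambda\langle y, w\rangle_Z$ forces $\mathcal{P}w \in D(A^{\ast})$ with $A^{\ast}\mathcal{P}w = \lambda w$. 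Since $A$ generates a contraction semigroup on the Hilbert space $Z$, the resolvent estimates (via Theorem \ref{Theorem Hille-Yosida - Contraction Semigroup} and Proposition \ref{Proposition 2.8.4 in Tucsnak}) transfer to $A^{\ast}$, so $A^{\ast}$ is dissipative as well. Testing the identity $A^{\ast}\mathcal{P}w = \lambda w$ against $\mathcal{P}w$ in $Z$ then gives $0 \geq \realpart\langle A^{\ast}\mathcal{P}w, \mathcal{P}w\rangle_Z = \lambda\|w\|_{\mathcal{P}}^2 \geq 0$, hence $w = 0$. Density plus closed range yields $\ran(\lambda I - A\mathcal{P}) = Z$, and Lumer--Phillips closes the forward implication.

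For the converse, I would apply the forward direction just proved symmetrically, taking the operator $B := A\mathcal{P}$ on $Z_{\mathcal{P}}$ and the coercive multiplication operator $\mathcal{P}^{-1}$ on $Z_{\mathcal{P}}$; coercivity of $\mathcal{P}^{-1}$ in $Z_{\mathcal{P}}$ follows from $\langle \mathcal{P}^{-1}u, u\rangle_{\mathcal{P}} = \|u\|_Z^2 \geq (1/M)\|u\|_{\mathcal{P}}^2$. The induced Hilbert space $(Z_{\mathcal{P}})_{\mathcal{P}^{-1}}$ coincides with $Z$, since $\langle u, \mathcal{P}^{-1}v\rangle_{\mathcal{P}} = \langle u, v\rangle_Z$, and a short computation shows $B\mathcal{P}^{-1} = A$ with $D(B\mathcal{P}^{-1}) = D(A)$. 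Hence $A\mathcal{P}$ generating a contraction semigroup on $Z_{\mathcal{P}}$ forces $A$ to generate one on $Z$, completing the equivalence.
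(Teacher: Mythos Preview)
The paper does not give its own proof of this lemma; it merely cites Jacob--Zwart and states that ``the proofs of those lemmas can be found in Section~7.2 and Section~7.3 in \cite{JacobZwart}.'' So there is nothing in the paper to compare against directly.

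Your argument is correct. The dissipativity transfer via $\realpart\langle A\mathcal{P}x, x\rangle_{\mathcal{P}} = \realpart\langle Ay, y\rangle_Z$ with $y = \mathcal{P}x$ is the standard step, and your handling of the range condition through the adjoint is sound: from $\langle Ay, \mathcal{P}w\rangle_Z = \lambda\langle y, w\rangle_Z$ for all $y \in D(A)$ you correctly extract $\mathcal{P}w \in D(A^{\ast})$ with $A^{\ast}\mathcal{P}w = \lambda w$, and then dissipativity of $A^{\ast}$ (which does follow, since on a Hilbert space $A^{\ast}$ generates the adjoint contraction semigroup $(T(t)^{\ast})_{t \geq 0}$) forces $w = 0$. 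The duality trick for the converse---applying the forward direction to $B = A\mathcal{P}$ with coercive operator $\mathcal{P}^{-1}$ on $Z_{\mathcal{P}}$, and checking $(Z_{\mathcal{P}})_{\mathcal{P}^{-1}} = Z$---is clean and avoids repeating the work.

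Two minor points worth making explicit. First, when you invoke Lemma~\ref{Lemma Auxiliary Lemma Lumer Phillips} to get closedness of $\ran(\lambda I - A\mathcal{P})$, you need $A\mathcal{P}$ closed; this does follow from closedness of $A$ (Corollary~\ref{Corollary - Generators Are Closed and Densely Defined}) and boundedness of $\mathcal{P}$, but in the forward direction you are tacitly using the hypothesis that $A$ is a generator to get this. Second, your justification of the dissipativity of $A^{\ast}$ via Hille--Yosida resolvent bounds and Proposition~\ref{Proposition 2.8.4 in Tucsnak} works but is a detour; the cleaner route is that $A^{\ast}$ generates the contraction semigroup $(T(t)^{\ast})_{t\geq 0}$ and hence is dissipative by Lumer--Phillips.
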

Moreover, we need two technical lemmas concerning matrix representations. 
\begin{lemma}[Lemma 7.3.1 in \cite{JacobZwart}]
	\label{Interface - Auxiliary Lemma 2}
	For a matrix $W \in \mathbb{R}^{n \times 2n}$, the following are equivalent:
	\begin{enumerate}
		\item[(i)] $\rank(W) = n$ and $W\Sigma W^{\top} \geq 0$, with $\Sigma \in \mathbb{R}^{4 \times 4}$ defined in \eqref{Matrix Sigma}. 
		\item[(ii)] There exist an invertible matrix $S \in \mathbb{R}^{n \times n}$ and a matrix $V \in \mathbb{R}^{n \times n}$ with $V V^{\top} \leq I$ such that
		\begin{align}
			\label{Interface - Auxiliary Lemma 2 - Matrix Representation}
			W = S \begin{bmatrix}
				I + V & I - V
			\end{bmatrix}.
		\end{align}
	\end{enumerate} 
\end{lemma}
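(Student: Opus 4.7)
My plan is to prove the two implications separately. The direction $(ii) \Rightarrow (i)$ should be a direct calculation: write $W_1 := S(I+V)$ and $W_2 := S(I-V)$ so that $W = [W_1 \mid W_2]$, observe that $W_1 + W_2 = 2S$ is invertible (so in particular the rows of $W$ are linearly independent, yielding $\rank(W) = n$), and expand
\begin{align*}
W \Sigma W^{\top} &= W_1 W_2^{\top} + W_2 W_1^{\top} \\
&= S\bigl[(I+V)(I-V^{\top}) + (I-V)(I+V^{\top})\bigr] S^{\top} \\
&= 2 S(I - VV^{\top}) S^{\top},
\end{align*}
which is positive semidefinite exactly because $VV^{\top} \leq I$.

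The harder direction $(i) \Rightarrow (ii)$ requires me to construct $S$ and $V$ from $W$. The natural ansatz is to split $W = [W_1 \mid W_2]$ with $W_1, W_2 \in \mathbb{R}^{n \times n}$ and guess $S := \tfrac{1}{2}(W_1 + W_2)$ and $V := S^{-1} \cdot \tfrac{1}{2}(W_1 - W_2)$, since these are precisely the formulas that recover $W_1 = S(I+V)$ and $W_2 = S(I-V)$ from the desired factorization. The key algebraic identities I would exploit are
\begin{align*}
(W_1 + W_2)(W_1 + W_2)^{\top} &= W W^{\top} + W \Sigma W^{\top}, \\
(W_1 - W_2)(W_1 - W_2)^{\top} &= W W^{\top} - W \Sigma W^{\top},
\end{align*}
both verifiable by expanding and using $\Sigma = \begin{bmatrix} 0 & I \\ I & 0 \end{bmatrix}$.

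The main obstacle is showing that $S$ is invertible, and this is where the full-rank hypothesis must be used. Since $\rank(W) = n$, the Gram matrix $WW^{\top}$ is strictly positive definite; adding $W \Sigma W^{\top} \geq 0$ preserves positive definiteness, so by the first identity $(W_1 + W_2)(W_1 + W_2)^{\top}$ is positive definite and hence $W_1 + W_2$ is invertible. With $S$ in hand, the factorization $W = S[I+V, \, I-V]$ holds by construction, and the bound $VV^{\top} \leq I$ follows from the second identity: subtracting the two gives
\begin{equation*}
4 S(I - VV^{\top}) S^{\top} = (W_1+W_2)(W_1+W_2)^{\top} - (W_1-W_2)(W_1-W_2)^{\top} = 2 W \Sigma W^{\top} \geq 0,
\end{equation*}
so $I - VV^{\top} \geq 0$ as required. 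I expect no further subtlety beyond this invertibility step; once the two identities above are in place, everything else is bookkeeping.
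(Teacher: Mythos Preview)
Your proof is correct and complete. Both directions are handled cleanly: the $(ii)\Rightarrow(i)$ computation is straightforward, and for $(i)\Rightarrow(ii)$ the two Gram-type identities
\[
(W_1\pm W_2)(W_1\pm W_2)^{\top}=WW^{\top}\pm W\Sigma W^{\top}
\]
are exactly the right tool, yielding both the invertibility of $S=\tfrac{1}{2}(W_1+W_2)$ (from $WW^{\top}>0$ and $W\Sigma W^{\top}\geq 0$) and the bound $VV^{\top}\leq I$.

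Note that the paper itself does not prove this lemma; it is simply quoted from \cite{JacobZwart} (Lemma~7.3.1) as an auxiliary result, so there is no proof in the paper to compare against. Your argument is in fact the standard one found in that reference.
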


\begin{lemma}[Lemma 7.3.2 in \cite{JacobZwart}]
	\label{Interface - Auxiliary Lemma 3}
	Suppose that a matrix $W \in \mathbb{R}^{n \times 2n}$ can be written as in \eqref{Interface - Auxiliary Lemma 2 - Matrix Representation} for some matrices $S, V \in \mathbb{R}^{n \times n}$, where $S$ is invertible. Then it holds that
	 \begin{align*}
	 	\ker (W) = \ran \left( \begin{bmatrix}
	 		I - V \\
	 		-I - V
	 	\end{bmatrix}\right).
	 \end{align*}
\end{lemma}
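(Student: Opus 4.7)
The plan is to prove the equality of the two subspaces by first establishing the inclusion $\supseteq$ by a direct computation, and then matching dimensions to upgrade this to equality. Since $W = S[I+V \mid I-V]$ with $S$ invertible, an arbitrary element of the right-hand side has the form $\begin{bmatrix}(I-V)w \\ -(I+V)w\end{bmatrix}$ for some $w \in \mathbb{R}^n$, and I would apply $W$ to it to obtain
\begin{align*}
S\bigl[(I+V)(I-V) - (I-V)(I+V)\bigr]w.
\end{align*}
Since $I+V$ and $I-V$ both commute (both are polynomials in $V$), the bracketed expression equals $(I - V^2) - (I - V^2) = 0$, so the image lies in $\ker(W)$. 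This gives the inclusion $\supseteq$ essentially for free.

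Next I would compute the dimension of $\ker(W)$. Because $S$ is invertible, $\ker(W) = \ker([I+V \mid I-V])$, so I need the rank of the $n \times 2n$ block matrix $[I+V \mid I-V]$. The identity $\tfrac{1}{2}(I+V) + \tfrac{1}{2}(I-V) = I$ shows that every $w \in \mathbb{R}^n$ lies in the column space, hence the rank equals $n$ and $\dim \ker(W) = 2n - n = n$. (Alternatively, this can be read off directly from Lemma \ref{Interface - Auxiliary Lemma 2}, which asserts that $W$ in this form has rank $n$.)

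It remains to compute the dimension of the right-hand side, i.e., to show that the linear map
\begin{align*}
T \colon \mathbb{R}^n \to \mathbb{R}^{2n}, \qquad w \mapsto \begin{bmatrix} (I-V)w \\ -(I+V)w \end{bmatrix}
\end{align*}
is injective. If $Tw = 0$, then both $(I-V)w = 0$ and $(I+V)w = 0$; adding these gives $2w = 0$, hence $w = 0$. Therefore $\ran(T)$ has dimension $n$, so it coincides with $\ker(W)$ by the inclusion and the equality of dimensions. No step here seems particularly delicate: the only thing to watch out for is that $I \pm V$ commute so that the cancellation $(I+V)(I-V) = (I-V)(I+V) = I - V^2$ in the first step actually happens, but this is immediate since both are polynomials in $V$. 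The whole argument is a short combination of a direct verification with a dimension count, and I would expect the write-up to be quite compact.
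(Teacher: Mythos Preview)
Your proof is correct. The paper does not actually prove this lemma itself; it merely cites \cite{JacobZwart} (Lemma~7.3.2) and defers the proof there, so there is no in-paper argument to compare against. Your approach --- verify the inclusion $\supseteq$ by the commutation identity $(I+V)(I-V) = (I-V)(I+V)$, then match dimensions via a rank computation on one side and an injectivity check on the other --- is the standard elementary argument and goes through without any gaps.
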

Now we are able to prove that the port-Hamiltonian operator $A_{\mathcal{Q}_0}$ defined in \eqref{Interface - Simplified PH System - Operator A} generates a contraction semigroup. The idea is based on the proof of Theorem 7.2.4 in \cite{JacobZwart}. 

\begin{theorem}
	\label{Interface - Theorem A Generates a Contraction Semigroup}
	Consider the port-Hamiltonian operator $A_{\mathcal{Q}_0} \colon D(A_{\mathcal{Q}_0}) \subset X \to X$ defined in \eqref{Interface - Simplified PH System - Operator A} with passivity constant $r \geq 0$ and with a full rank matrix $W_B \in \mathbb{R}^{2 \times 4}$ satisfying $W_B \Sigma W_B^{\top} \geq 0$. Then $A_{\mathcal{Q}_0}$ is the infinitesimal generator of a contraction semigroup on the energy space $(X, \|\cdot\|_{\mathcal{Q}_0})$. In particular, the abstract Cauchy problem
	\begin{align}
		\begin{split}
		\dot{x}(t) &= A_{\mathcal{Q}_0}x(t), \hspace{0.5 cm} t > 0, \\
		x(0) &= x_0 \in X,
	\end{split}
\label{Interface - Theorem A Generates a Contraction Semigroup - ACP}
	\end{align}
is well-defined.
\end{theorem}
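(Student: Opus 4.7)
The strategy is to apply the Lumer--Phillips theorem (Theorem~\ref{Theorem Lumer-Phillips}) on the energy space $(X,\langle\cdot,\cdot\rangle_{\mathcal{Q}_0})$. Density of $D(A_{\mathcal{Q}_0})$ is immediate, since it contains every $x$ such that $\mathcal{Q}_0 x$ is smooth and compactly supported in $(a,0)\cup(0,b)$, and such functions are $L^2$-dense. Well-posedness of the Cauchy problem \eqref{Interface - Theorem A Generates a Contraction Semigroup - ACP} will then follow from Theorem~\ref{Theorem Characterization of Well-Posedness}. What remains is dissipativity with respect to $\langle\cdot,\cdot\rangle_{\mathcal{Q}_0}$ and the range condition $\ran(I-A_{\mathcal{Q}_0})=X$.

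For dissipativity, I would set $e^1=e^2=\mathcal{Q}_0 x$ in the formal skew-symmetry identity \eqref{Interface - Skew-Symmetry of J0 - Representation 2} and exploit $\langle\cdot,\cdot\rangle_{\mathcal{Q}_0}=\tfrac12\langle\cdot,\mathcal{Q}_0\cdot\rangle_{L^2}$ to obtain
\[
2\,\realpart\langle A_{\mathcal{Q}_0}x,x\rangle_{\mathcal{Q}_0}=\tfrac12\bigl(\langle\mathcal{J}_0\mathcal{Q}_0 x,\mathcal{Q}_0 x\rangle_{L^2}+\langle\mathcal{Q}_0 x,\mathcal{J}_0\mathcal{Q}_0 x\rangle_{L^2}\bigr)=\langle e_\partial,f_\partial\rangle_2-f_I e_I.
\]
The interface clause $f_I=re_I$ with $r\geq0$ turns the interface contribution into $-r\,e_I^2\leq 0$. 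For the boundary contribution I invoke Lemmas~\ref{Interface - Auxiliary Lemma 2} and \ref{Interface - Auxiliary Lemma 3}: the hypotheses on $W_B$ yield a factorisation $W_B=S\,[I+V\ \ I-V]$ with $VV^\top\leq I$, and $\ker W_B$ is the column span of $\begin{bmatrix}I-V\\-I-V\end{bmatrix}$. Writing $(f_\partial,e_\partial)^\top=\begin{bmatrix}I-V\\-I-V\end{bmatrix}\ell$ for some $\ell\in\mathbb{R}^2$ and symmetrising gives $\langle e_\partial,f_\partial\rangle_2=\tfrac12\ell^\top(V^\top V+VV^\top-2I)\ell\leq 0$, because $VV^\top\leq I$ also forces $V^\top V\leq I$ (identical singular values). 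Both contributions are non-positive, so $A_{\mathcal{Q}_0}$ is dissipative.

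For the range condition, fix $f\in X$ and seek $x\in D(A_{\mathcal{Q}_0})$ with $x-\mathcal{J}_0\mathcal{Q}_0 x=f$. Away from $z=0$ the color functions are locally constant, so $\mathcal{J}_0$ collapses to $P_1\tfrac{d}{dz}$ on each subinterval; setting $v=\mathcal{Q}_0 x$ and using $P_1^{-1}=P_1$, the equation becomes the linear ODE $v'=P_1(\mathcal{Q}^\pm)^{-1}v-P_1 f$ on $(a,0)$ and on $(0,b)$. Its general solution, obtained by variation of constants from the fundamental matrices $\Phi^\pm$, depends on the four free parameters $\xi=(v^-(a),v^+(0^+))\in\mathbb{R}^4$. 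The boundary condition $W_B[f_\partial,e_\partial]^\top=0$ contributes two linear equations in $\xi$, and the two interface requirements--continuity $v_2(0^-)=v_2(0^+)$ built into $D(\mathbf{d}_0)$, and the passivity relation $v_2(0)=r(v_1(0^-)-v_1(0^+))$--contribute two more. Altogether the problem reduces to an inhomogeneous linear system $M\xi=b(f)$ with $M\in\mathbb{R}^{4\times 4}$.

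The main obstacle is showing $M$ is invertible, and here I would recycle the dissipativity just established: Lemma~\ref{Lemma Characterization Dissipativity} yields $\|(I-A_{\mathcal{Q}_0})x\|_{\mathcal{Q}_0}\geq\|x\|_{\mathcal{Q}_0}$ on $D(A_{\mathcal{Q}_0})$, so $I-A_{\mathcal{Q}_0}$ is injective. Translated through the parametrisation, the homogeneous problem $M\xi=0$ corresponds precisely to $(I-A_{\mathcal{Q}_0})x=0$ and therefore forces $\xi=0$, making $M$ invertible. One preimage $x$ is then produced for every $f\in X$, and by construction $\mathcal{Q}_0 x\in D(\mathcal{J}_0)$ with the prescribed boundary and interface conditions, so $x\in D(A_{\mathcal{Q}_0})$. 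Both hypotheses of Lumer--Phillips are in place, hence $A_{\mathcal{Q}_0}$ generates a contraction semigroup on the energy space.
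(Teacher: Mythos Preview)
Your proposal is correct and follows essentially the same strategy as the paper's proof: Lumer--Phillips via dissipativity (using the kernel description of $W_B$ from Lemmas~\ref{Interface - Auxiliary Lemma 2}--\ref{Interface - Auxiliary Lemma 3}) plus the range condition established by solving the ODE on each subinterval and ruling out a nontrivial kernel through the dissipativity estimate. The paper first reduces to $\mathcal{Q}_0\equiv I$ via Lemma~\ref{Interface - Auxiliary Lemma 1} and splits the range argument into the cases $r>0$ and $r=0$ (each yielding a $2\times 2$ system), whereas you work directly with $v=\mathcal{Q}_0 x$ and a single $4\times 4$ system; these are purely organisational differences.
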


\begin{proof}
	For the proof, we need to show two properties:
\begin{enumerate}
	\item[(i)] $A_{\mathcal{Q}_0}$ is dissipative (see Definition \ref{Definition Dissipative Operator}).
	\item[(ii)] ran$(I - A_{\mathcal{Q}_0}) = X$. 
\end{enumerate}
Applying Theorem \ref{Theorem Lumer-Phillips} then yields that $A_{\mathcal{Q}_0}$ generates a contraction semigroup. The proof of these properties is split into three steps.
\vspace{0.5 cm}\\
\textit{Step 1.} Recall that the energy space $X$ is endowed with the inner product $\langle \cdot, \cdot \rangle_{\mathcal{Q}_0} = \frac{1}{2} \langle \cdot , \mathcal{Q}_0 \cdot \rangle_{L^2}$. Since $\mathcal{Q}_0 \in \mathcal{L}(X)$ is a coercive operator, by Lemma \ref{Interface - Auxiliary Lemma 1} we may assume for the rest of the proof that $\mathcal{Q}_0 \equiv I$. In particular, $X$ is endowed with the inner product $\frac{1}{2} \langle \cdot , \cdot \rangle_{L^2}$. In the following, we will write $A$ instead of $A_I$. 
\vspace{0.5 cm}\\
\textit{Step 2.} \label{Step 2} We begin by proving that $A$ is dissipative. From \eqref{Interface - Skew-Symmetry of J0 - Representation 2} one can readily see that for all $x \in D(A)$ it holds that
\begin{align*}
	\langle Ax, x \rangle_{L^2} + \langle x, Ax \rangle_{L^2} &=   \langle \mathcal{J}_0x , x \rangle_{L^2} + \langle x, \mathcal{J}_0 x \rangle_{L^2} \\
	&= 2 \langle e_{\partial}, f_{\partial} \rangle_2 - 2 e_If_I.
\end{align*}
We want to estimate the right-hand side. Note that by assumption, for all $x \in D(A)$ we have $\begin{bmatrix}
	f_{\partial} \\
	e_{\partial}
\end{bmatrix} \in \ker(W_B)$. Since $\rank(W_B) = 2$ and since we assume that $W_B \Sigma W_B^{\top} \geq 0$, Lemma~\ref{Interface - Auxiliary Lemma 2} implies that $W_B$ may be written as
\begin{align*}
	W_B = S \begin{bmatrix}
		I+ V & I - V
	\end{bmatrix},
\end{align*}
with $S \in \mathbb{R}^{2 \times 2}$ invertible and $V \in \mathbb{R}^{2 \times 2}$ such that $VV^{\top} \leq I$, or equivalently, $V^{\top}V \leq I$. Thus, by Lemma \ref{Interface - Auxiliary Lemma 3}, for all $x \in D(A)$ there exists some $\lambda_x \in \mathbb{R}^2$ such that
\begin{align}
	\label{Interface - Theorem Contraction Semigroup - Boundary Flow and Effort Representation}
	\begin{bmatrix}
		f_{\partial} \\
		e_{\partial}
	\end{bmatrix} = \begin{bmatrix}
		I-V \\
		-I - V
	\end{bmatrix} \lambda_x.
\end{align}
Using the passivity relation \eqref{Interface - Interface Passivity Relation}  and \eqref{Interface - Theorem Contraction Semigroup - Boundary Flow and Effort Representation}, we compute for all $x \in D(A)$,
\begin{align*}
	 \langle Ax, x \rangle_{L^2}  &=  \langle e_{\partial}, f_{\partial} \rangle_2 -  e_If_I \\
	&=  \lambda_x^{\top} (-I-V)^{\top}(I -V) \lambda_x  - e_I f_I \\
	&= \lambda_x^{\top} (-I + V^{\top}V) \lambda_x - e_I f_I \\
	&\leq - e_I f_I \\
	&= - r e_I^2 \\
	&\leq 0.
\end{align*}
Thus, the operator $A$ is dissipative.
\vspace{0.5 cm}\\
\textit{Step 3.} It is left to show that ran$(I - A) = X$. Let $y = c_0y^- + \overline{c}_0 y^+ \in X$ and consider the equation $(I - A)x = y$, i.e.,
\begin{align}
	\label{Interface - ODE of (I-A)x equals y}
	\begin{bmatrix}
		x_1 \\
		x_2
	\end{bmatrix} - \underbrace{\begin{bmatrix}
		0 & \mathbf{d}_0 \\
		- \mathbf{d}_0^{\ast} & 0 
	\end{bmatrix}}_{=\mathcal{J}_0} \begin{bmatrix}
		x_1 \\
		x_2
	\end{bmatrix} = \begin{bmatrix}
		y_1 \\
		y_2
	\end{bmatrix}.
\end{align}
We need to show that the solution of this differential equation is an element of $D(A)$. As the operator $\mathcal{J}_0$ acts as the differential operator $P_1 \frac{d}{dz}$ on any subdomain not containing the interface position, with $P_1$ given in \eqref{Interface - Matrix P1}, this can be equivalently written as
\begin{align*}
	x^- - P_1 \frac{d}{dz} x^- &= y^- \hspace{0.3 cm} \text{on } [a,0), \\
	x^+ - P_1 \frac{d}{dz} x^+ &= y^+ \hspace{0.3 cm} \text{on } (0,b]
\end{align*}
with suitable initial conditions at $z =a$ and $z = 0^+$, respectively. We will shortly discuss this in more detail. Noticing that $P_1^{-1} = P_1$ and writing
\begin{align*}
	\frac{d}{dz} x^{\pm} = P_1 (x^{\pm} - y^{\pm}),
\end{align*}
the variation of constants formula yields the solutions
\begin{align}
	\varphi_y^-(z ; x(a)) &= e^{P_1(z-a)}x(a) - \int_{a}^{z} e^{P_1(z-s)} P_1 y^-(s) \, ds, \hspace{0.5 cm} z \in [a,0), \label{Interface - Solution of ODE left} \\
	\varphi_y^+(z; x(0^+)) &= e^{P_1z} x(0^+) - \int_{0}^{z} e^{P_1(z-s)} P_1 y^+(s) \, ds, \hspace{0.5 cm } z \in (0, b]. \label{Interface - Solution of ODE right}
\end{align}
In particular, we have
\begin{align*}
	\varphi_y^-(0^-; x(a)) = e^{-P_1a}x(a) - \int_{a}^{0} e^{-P_1 s} P_1 y^-(s) \, ds.
\end{align*}
In the following, we will consider two different instances concerning the passivity constant $r \geq 0$. 
\vspace{0.5 cm}\\
\underline{Case 1: $r > 0$}

Due to the continuity condition \eqref{Interface - Simplified Continuity Equation}, we have to choose \begin{align}
	x_2(0) = f_I = \left( \varphi_y^-(0^-; x(a))\right)_2.
	\label{Interface - ODE Interface Continuity Equation}
\end{align}
Furthermore, the balance equation \eqref{Interface - Simplified Balance Equation} is given by
\begin{align}
	\label{Interface - ODE Interface Balance Equation}
	e_I = - \left( x_1(0^+) - \left( \varphi_y^-(0^-; x(a)) \right)_1 \right).
\end{align}
Due to the  passivity relation \eqref{Interface - Interface Passivity Relation}, we have to choose $x_1(0^+)$ such that
\begin{align*}
	f_I = r e_I.
\end{align*}
Thus, the initial condition $x(0^+) =(x_1(0^+), x_2(0))$ of $\varphi_y^+(\cdot; x(0^+))$ is uniquely determined through the interface relations and conditions, and depends on the choice of $x(a)$. The solution of \eqref{Interface - ODE of (I-A)x equals y} is therefore given by
\begin{align}
	\label{Interface - Solution of ODE complete}
	\varphi_y(z; x(a), x(0^+)) = c_0(z) \varphi_y^-(z; x(a)) + \overline{c}_0(z) \varphi_y^+(z; x(0^+)), \hspace{0.5 cm} z \in [a,b].
\end{align}
It is left to show that $\varphi_y(\cdot; x(a),x(0^+))$ is indeed an element of $D(A)$ for some initial value $x(a) \in \mathbb{R}^2$. To that end, we need to verify the last constitutive relation of $D(A)$ in \eqref{Interface - Simplified PH System - Operator A}. By using \eqref{Interface - Boundary Flow and Effort} and \eqref{Interface - Boundary Condition wrt Boundary Flow and Effort}, the solution $\varphi_y(\cdot; x(a), x(0^+))$ is an element of $D(A)$ if and only if
\begin{align}
	\label{Interface - D(A) condition - representation 1}
	0 = W_B \begin{bmatrix}
		f_{\partial} \\
		e_{\partial}
	\end{bmatrix} = W_B R_{\extern} \begin{bmatrix}
		\varphi_y(b; x(a), x(0^+)) \\
		\varphi_y(a; x(a), x(0^+))
	\end{bmatrix}. 
\end{align} 
Obviously, $\varphi(a;x(a), x(0^+)) = x(a)$. We wish to write $\varphi(b; x(a), x(0^+))$ as $E x(a) + q$ for some $E \in \mathbb{R}^{2 \times 2}$, $q \in \mathbb{R}^2$. Note that
\begin{align*}
	\varphi_y(b; x(a), x(0^+)) = \varphi_y^+(b; x(0^+)) = e^{P_1 b}x(0^+) \underbrace{- \int_{0}^{b} e^{P_1(b-s)} P_1 y^+(s) \, ds}_{=: q_1}.
\end{align*}
Using the interface conditions \eqref{Interface - ODE Interface Continuity Equation}-\eqref{Interface - ODE Interface Balance Equation}, we get
\begin{align*}
	f_I &= r e_I \\
	\Leftrightarrow \hspace{0.3 cm} \left(\varphi_y^-(0^-; x(a)) \right)_2 &= r \left[ - \left( x_1(0^+) - \left( \varphi_y^-(0^+; x(a)) \right)_1 \right) \right] \\
	\Leftrightarrow \hspace{1.955 cm} x_1(0^+) &= - \frac{1}{r} \left( \varphi_y^-(0^-; x(a)) \right)_2 + \left( \varphi_y^-(0^-; x(a)) \right)_1.
\end{align*}
Hence, we may write
\begin{align*}
	&\hspace{0.5 cm} e^{P_1b}x(0^+) \\
	&= e^{P_1b}\begin{bmatrix}
		- \frac{1}{r} \left( e^{-P_1a} x(a) - \int_{a}^{0} e^{-P_1 s} P_1 y^-(s) \, ds \right)_2 + \left( e^{-P_1a}x(a) - \int_{a}^{0} e^{-P_1s}P_1 y^-(s) \, ds \right)_1 \\
		\left(  e^{-P_1a} x(a) - \int_{a}^{0} e^{-P_1 s} P_1 y^-(s) \, ds \right)_2  
	\end{bmatrix} \\
	&= \underbrace{ e^{P_1b} \begin{bmatrix}
			- \frac{1}{r}  \left( e^{-P_1a}x(a) \right)_2 + \left( e^{-P_1a} x(a) \right)_1  \\
			\left( e^{-P_1a}x(a) \right)_2
	\end{bmatrix}}_{=: Ex(a)} \\
	&\hspace{0.5 cm}  \underbrace{+ e^{P_1b} \begin{bmatrix}
			- \frac{1}{r} \left( - \int_{a}^{0} e^{-P_1s} P_1 y^-(s) \, ds \right)_2 + \left( - \int_{a}^{0} e^{-P_1s}P_1 y^-(s) \, ds \right)_1  \\
			- \left( \int_{a}^{0} e^{-P_1s}P_1 y^-(s) \, ds \right)_2
	\end{bmatrix}}_{=: q_2}. 
\end{align*}
Letting $q := q_1 + q_2$, we finally have
\begin{align*}
	\varphi_y(b; x(a),x(0^+)) = Ex(a) + q, 
\end{align*}
with only $q$ depending on $y \in X$. Now we can come back to the boundary condition \eqref{Interface - D(A) condition - representation 1}. Equation \eqref{Interface - D(A) condition - representation 1} is equivalent to 
\begin{align}
	\label{Interface - D(A) condition - representation 2}
	0 = W_B R_{\extern} \begin{bmatrix}
		E x(a) + q \\
		x(a) 
	\end{bmatrix} \hspace{0.3 cm} \Leftrightarrow \hspace{0.3 cm} W_B R_{\extern} \begin{bmatrix}
		E \\
		I
	\end{bmatrix} x(a) = - W_B R_{\extern} \begin{bmatrix}
		q \\
		0
	\end{bmatrix}.
\end{align}
Therefore, we want to show that the matrix $W_B R_{\extern} \begin{bmatrix}
	E \\
	I
\end{bmatrix} \in \mathbb{R}^{2 \times 2}$ is invertible, since this implies that there exists a unique initial value $x(a) \in \mathbb{R}^2$ such that $\varphi_y(\cdot; x(a), x(0^+)) \in D(A)$. As $y \in X$ was chosen arbitrarily, ran$(I - A) = X$, and we may apply Theorem~\ref{Theorem Lumer-Phillips}.  
\vspace{0.5cm}\\
Assume now that there exists $0 \neq r_0 \in \ker\left( W_B R_{\extern} \begin{bmatrix}
	E \\
	I
\end{bmatrix} \right)$, and let $y = 0$. Consider the solution of equation \eqref{Interface - ODE of (I-A)x equals y} given by
\begin{align*}
	\varphi_0(z; r_0, x(0^+)) = c_0(z) e^{P_1(z-a)}r_0 + \overline{c}_0(z) e^{P_1z} \begin{bmatrix}
		x_1(0^+) \\
		\left( \varphi^-(0^-; r_0) \right)_2
	\end{bmatrix},
\end{align*}
with $x_1(0^+) \in \mathbb{R}$ accordingly chosen. Note that $\varphi_0(\cdot; r_0, x(0^+)) \in D(\mathcal{J}_0)$, and, since $y = 0$, that $q = 0$. Consequently, $\varphi_0(\cdot; r_0, x(0^+)) \in D(A)$, since by assumption and \eqref{Interface - D(A) condition - representation 2}, we have
\begin{align*}
	W_B R_{\extern} \begin{bmatrix}
		E \\
		I
	\end{bmatrix} r_0 = 0. 
\end{align*}
As $\varphi_0(\cdot; r_0, x(0^+))$ is a non-zero solution of $(I - A)x = 0$, it is, in particular, an eigenfunction of $A$ with eigenvalue $\lambda = 1$. This contradicts the fact that $A$ is dissipative, and therefore $W_B R_{\extern} \begin{bmatrix}
	E \\
	I
\end{bmatrix}$ is in fact invertible.
\vspace{0.5 cm}\\
\underline{Case 2: $r = 0$}

The passivity relation \eqref{Interface - Interface Passivity Relation} together with the continuity condition \eqref{Interface - Simplified Continuity Equation} yield that we have to choose
\begin{align*}
	0 = f_I = x_2(0) = \left( \varphi_y^-(0^-; x(a)) \right)_2. 
\end{align*}
Note that for the matrix exponential $e^{P_1 \cdot}$, where $P_1$ is given in \eqref{Interface - Matrix P1}, we have
\begin{align*}
	e^{P_1z} = \frac{1}{2 e^z} \begin{bmatrix}
		e^{2z} +1 & -e^{2z} +1 \\
		-e^{2z} +1 & e^{2z} +1
	\end{bmatrix}, \hspace{0.5 cm} z \in [a,b]. 
\end{align*}
Hence, it holds that
\begin{align*}
	0 &= \left( \varphi_y^-(0^-; x(a)) \right)_2 \\
	&=  \frac{-e^{-2a} +1}{2e^{-a}} x_1(a) + \frac{e^{-2a} +1}{2e^{-a}} x_2(a) - \left( \int_{a}^{0} e^{-P_1s} P_1 y^-(s) \, ds \right)_2  \\
	\Leftrightarrow \hspace{0.5 cm} x_2(a) &= \frac{2 e^{-a}}{e^{-2a} +1} \left( \int_{a}^{0} e^{-P_1 s} P_1 y^-(s) \, ds \right)_2 - \frac{- e^{-2a}+1}{e^{-2a} +1} x_1(a). 
\end{align*}
In conclusion, the interface conditions specify the values $x_2(0)$ and $x_2(a)$. Furthermore, $\varphi_y$ is an element of $D(\mathcal{J}_0)$ and it satisfies the interface relation. Therefore, it is left to find the values of $x_1(a)$ and $x_1(0^+)$ such that the solution $\varphi_y$ satisfies the last constitutive relation of $D(A)$, namely
\begin{align}
	\label{Interface - Theorem A Generates a Unitary Semigroup - Boundary Condition in Proof}
	W_B R_{\extern} \begin{bmatrix}
		\varphi_y^+(b; x(0^+)) \\
		\varphi_y^-(a; x(a)) 
	\end{bmatrix} = 0,  
\end{align}
where $R_{\extern}$ is defined in \eqref{Interface - Matrix Rext}. We have $\varphi_y^-(a;x(a)) = x(a)$ and
\begin{align*}
	\varphi_y^+(b;x(0^+)) = e^{P_1b}x(0^+) - \int_{0}^{b} e^{P_1(b-s)} P_1 y^+(s) \, ds. 
\end{align*}
Using the definition of $R_{\extern}$, $e^{P_1 \cdot}$, and the values $x_2(a)$ and $x_2(0)$, we compute
\begin{align*}
	&\hspace{0.5 cm} \sqrt{2} R_{\extern} \begin{bmatrix}
		\varphi_y^+(b; x(0^+)) \\
		\varphi_y^-(a; x(a)) 
	\end{bmatrix}  \\
	&= \begin{bmatrix}
		0 & -1 & 0 & 1 \\
		-1 & 0 & 1 & 0 \\
		1 & 0 & 1 & 0 \\
		0 & 1 & 0 & 1
	\end{bmatrix} \begin{bmatrix}
		e^{P_1b}x(0^+) - \int_{0}^{b} e^{P_1(b-s)} P_1 y^+(s) \, ds \\
		x(a)
	\end{bmatrix} \\
	&= \begin{bmatrix}
		- \frac{- e^{2b} + 1}{2e^{b}} x_1(0^+) - \frac{-e^{-2a} +1}{e^{-2a} +1} x_1(a) + \frac{2e^{-a}}{e^{-2a} +1} \left( \int_{a}^{0} e^{-P_1s}P_1 y^-(s) \, ds \right)_2 + \left( \int_{0}^{b} e^{P_1(b-s)} P_1 y^+(s) \, ds \right)_2 \\
		- \frac{ e^{2b} + 1}{2e^{b}} x_1(0^+) + x_1(a) + \left( \int_{0}^{b} e^{P_1(b-s)} P_1 y^+(s) \, ds \right)_1 \\
		\frac{e^{2b} +1}{2e^{b}} x_1(0^+) + x_1(a) - \left( \int_{0}^{b} e^{P_1(b-s)} P_1 y^+(s) \, ds \right)_1 \\
		\frac{- e^{2b} +1}{2e^{b}} x_1(0^+) - \frac{-e^{-2a} +1}{e^{-2a} +1} x_1(a) + \frac{2e^{-a}}{e^{-2a} +1} \left( \int_{a}^{0} e^{-P_1s}P_1 y^-(s) \, ds \right)_2   - \left( \int_{0}^{b} e^{P_1(b-s)} P_1 y^+(s) \, ds \right)_2
	\end{bmatrix}.
\end{align*}
Now, let the matrix $M \in \mathbb{R}^{2 \times 2}$ be defined such that
\begin{align*}
	 W_B  \begin{bmatrix}
		- \frac{- e^{2b} + 1}{2e^{b}} x_1(0^+) - \frac{-e^{-2a} +1}{e^{-2a} +1} x_1(a)  \\
		- \frac{ e^{2b} + 1}{2e^{b}} x_1(0^+) + x_1(a)  \\
		\frac{e^{2b} +1}{2e^{b}} x_1(0^+) + x_1(a)  \\
		\frac{- e^{2b} +1}{2e^{b}} x_1(0^+) - \frac{-e^{-2a} +1}{e^{-2a} +1} x_1(a) 
	\end{bmatrix} = M \begin{bmatrix}
	x_1(a) \\
	x_1(0^+)
\end{bmatrix}.
\end{align*}
Furthermore, define $N \in \mathbb{R}^2$ by
\begin{align*}
	N = - W_B \begin{bmatrix}
		\frac{2e^{-a}}{e^{-2a} +1} \left( \int_{a}^{0} e^{-P_1s}P_1 y^-(s) \, ds \right)_2 + \left( \int_{0}^{b} e^{P_1(b-s)} P_1 y^+(s) \, ds \right)_2  \\
		\left( \int_{0}^{b} e^{P_1(b-s)} P_1 y^+(s) \, ds \right)_1  \\
		- \left( \int_{0}^{b} e^{P_1(b-s)} P_1 y^+(s) \, ds \right)_1 \\
		\frac{2e^{-a}}{e^{-2a} +1} \left( \int_{a}^{0} e^{-P_1s}P_1 y^-(s) \, ds \right)_2   - \left( \int_{0}^{b} e^{P_1(b-s)} P_1 y^+(s) \, ds \right)_2
	\end{bmatrix}.
\end{align*}
Then the boundary condition \eqref{Interface - Theorem A Generates a Unitary Semigroup - Boundary Condition in Proof} is satisfied if and only if
\begin{align}
	M \begin{bmatrix}
		x_1(a) \\
		x_1(0^+)
	\end{bmatrix} = N. 
	\label{Interface - Theorem A Generates a Unitary Semigroup - Linear System of Equations in Proof}
\end{align}
%where the entries of the matrix $M \in \mathbb{R}^{2 \times 2}$ are given by
%\begin{align*}
%	m_{i1} &=	(s_{i1} - s_{i2})e^{b} - (s_{i1}v_{11} + s_{i1}v_{12} +  s_{i2} v_{21} + s_{i2} v_{22}) e^{-b}, \hspace{0.5 cm} i = 1,2, \\
%	m_{i2} &= (s_{i1} + s_{i2}) \frac{2e^{-2a}}{e^{-2a}+1} + ( - s_{i1}v_{11} + s_{i1} v_{12} - s_{i2} v_{21} + s_{i2}v_{22}) \frac{2}{e^{-2a}+1}, \hspace{0.5 cm} i=1,2. 
%\end{align*}
We shall verify that the matrix $M$ is invertible. If so, then for every $y \in X$ the initial values are uniquely determined and can be calculated through \eqref{Interface - Theorem A Generates a Unitary Semigroup - Linear System of Equations in Proof}. Moreover, the corresponding solution $\varphi_y(\cdot; x(a), x(0^+))$ of $(I-A)x = y$ is an element of $D(A)$, and so $\ran(I - A) = X$. The Lumer-Phillips Theorem \ref{Theorem Lumer-Phillips} then yields the claim. 
\vspace{0.5 cm}\\
As in the first case, assume that there is some $0 \neq r_0 \in \ker(M)$, and let $y = 0$. Consider the solution of equation \eqref{Interface - ODE of (I-A)x equals y} given by
\begin{align*}
	\varphi_0(z; x(a), x(0^+)) = c_0(z) e^{P_1(z-a)}x(a) + \overline{c}_0(z) e^{P_1z} \begin{bmatrix}
		x_1(0^+) \\
		0
	\end{bmatrix}, 
\end{align*}
with $r_0 = \begin{bmatrix}
	x_1(a) \\
	x_1(0^+)
\end{bmatrix}$, and $x_2(a) = - \frac{- e^{-2a}+1}{e^{-2a}+1} x_1(a)$. Then $\varphi_0(\cdot; x(a), x(0^+)) \in D(\mathcal{J}_0)$, and with this choice of the initial values, the interface relation \eqref{Interface - Interface Passivity Relation} is satisfied as well. Note that $N = 0$, since $y = 0$. Furthermore, \eqref{Interface - Theorem A Generates a Unitary Semigroup - Linear System of Equations in Proof} is satisfied, since by assumption we have
\begin{align*}
	M r_0 = 0.
\end{align*}
As explained in the first case, this contradicts the dissipativity of $A$, which is why the matrix $M$ is indeed invertible.
\vspace{0.5 cm}\\ 
In conclusion, the operator $A_{\mathcal{Q}_0}$ defined in \eqref{Interface - Simplified PH System - Operator A} is the infinitesimal generator of a contraction semigroup on the energy space $(X, \| \cdot \|_{\mathcal{Q}_0})$. The fact that for all $x_0 \in D(A_{\mathcal{Q}_0})$, the abstract Cauchy problem \eqref{Interface - Theorem A Generates a Contraction Semigroup - ACP} has a unique classical solution $x \colon [0, \infty) \to X$, depending continuously on the initial data, follows immediately from Theorem \ref{Theorem Characterization of Well-Posedness}.
\end{proof}
Theorem \ref{Interface - Theorem A Generates a Contraction Semigroup} shows that, under certain boundary and interface conditions, the port-Hamil- tonian model of two systems of conservation laws coupled by a rigid interface is well-defined.  If we compare the system \eqref{Interface Paper - Augmented System wrt Ja} defined on the extended state space $\tilde{X}$ in Subsection \ref{Subsection Port-Hamiltonian Formulation and Dirac Structure} with the simplified system \eqref{Interface - Simplified PH-System} treated in this section, we notice that no valuable information gets lost if we take the color functions $c_0$ and $\overline{c}_0$ defined in \eqref{Interface Paper - Characteristic Functions c0 and c0overline} not to be part of the state variables. For the moving interface scenario, this is not true, as the corresponding color functions \eqref{Interface Paper - Color Functions Depending on Moving Interface} do not satisfy a trivial conservation law, but the transport equation in a distributional sense, see Subsection \ref{Subsection Balance Equations of the State Variables - Moving Interface}. We will come back to the moving interface scenario in Subsection \ref{Subsection Stability of the Family of Infinitesimal Generators}.
\vspace{0.5 cm}\\
Based on Theorem \ref{Interface - Theorem A Generates a Contraction Semigroup}, we may find conditions which ensure that the internally stored energy of the system is conserved. 
\begin{corollary}
	\label{Interface - Corollary A Generates a Unitary Semigroup}
		Consider the port-Hamiltonian operator $A_{\mathcal{Q}_0} \colon D(A_{\mathcal{Q}_0}) \subset X \to X$ defined in \eqref{Interface - Simplified PH System - Operator A} with passivity constant $r=0$ and with a full rank matrix $W_B \in \mathbb{R}^{2 \times 4}$ satisfying $W_B \Sigma W_B^{\top} = 0$. Then $A_{\mathcal{Q}_0}$ is the infinitesimal generator of a unitary semigroup on the energy space $(X, \|\cdot\|_{\mathcal{Q}_0})$.
\end{corollary}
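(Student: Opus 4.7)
The plan is to recognize that the hypotheses here are strictly stronger than those of Theorem \ref{Interface - Theorem A Generates a Contraction Semigroup}, so that existence of a contraction semigroup $(T(t))_{t \geq 0}$ generated by $A_{\mathcal{Q}_0}$ is already in hand; what remains is to upgrade ``contraction'' to ``isometry''. Specifically, the condition $W_B \Sigma W_B^{\top} = 0$ implies $W_B \Sigma W_B^{\top} \geq 0$, so Theorem \ref{Interface - Theorem A Generates a Contraction Semigroup} applies verbatim. For any $x_0 \in D(A_{\mathcal{Q}_0})$, the classical solution $x(t) = T(t)x_0$ satisfies the balance equation from Corollary \ref{Interface - Theorem Balance Equation},
\begin{align*}
\frac{d}{dt}\|x(t)\|_{\mathcal{Q}_0}^2 = \langle e_{\partial}(t), f_{\partial}(t) \rangle_2 - e_I(t) f_I(t),
\end{align*}
so the task reduces to showing that both terms on the right vanish.

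First I would dispose of the interface term: the constitutive relation $f_I = r e_I$ in $D(A_{\mathcal{Q}_0})$ with $r = 0$ forces $f_I(t) \equiv 0$, hence $e_I(t) f_I(t) = 0$. Second I would refine the argument from Step~2 in the proof of Theorem \ref{Interface - Theorem A Generates a Contraction Semigroup} for the boundary term. The identity
\begin{align*}
W_B \Sigma W_B^{\top} = 2 S (I - V V^{\top}) S^{\top},
\end{align*}
implicit in the proof of Lemma \ref{Interface - Auxiliary Lemma 2} (with $S \in \mathbb{R}^{2\times 2}$ invertible), shows that the equality $W_B \Sigma W_B^{\top} = 0$ sharpens $V V^{\top} \leq I$ to $V V^{\top} = I$, and since $V$ is square also $V^{\top} V = I$. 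Using the parametrization of $\ker(W_B)$ from Lemma \ref{Interface - Auxiliary Lemma 3}, there exists $\lambda_x \in \mathbb{R}^2$ with $\begin{bmatrix} f_{\partial} \\ e_{\partial} \end{bmatrix} = \begin{bmatrix} I-V \\ -I-V \end{bmatrix} \lambda_x$, and a direct computation (using the antisymmetric-part cancellation $\lambda_x^{\top}(V - V^{\top})\lambda_x = 0$, which is immediate for a real scalar) yields
\begin{align*}
\langle e_{\partial}, f_{\partial} \rangle_2 = \lambda_x^{\top}(-I-V)^{\top}(I-V)\lambda_x = \lambda_x^{\top}(-I + V^{\top} V)\lambda_x = 0.
\end{align*}

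Combining these two observations gives $\frac{d}{dt}\|x(t)\|_{\mathcal{Q}_0}^2 = 0$ along every classical solution, so $\|T(t)x_0\|_{\mathcal{Q}_0} = \|x_0\|_{\mathcal{Q}_0}$ for all $x_0 \in D(A_{\mathcal{Q}_0})$ and $t \geq 0$. Density of $D(A_{\mathcal{Q}_0})$ in $X$ (Corollary \ref{Corollary - Generators Are Closed and Densely Defined}) combined with the boundedness of $T(t) \in \mathcal{L}(X)$ extends the isometry to every $x_0 \in X$, which is precisely the unitary condition of Definition \ref{Definition Contraction Semigroup}. I do not foresee a genuine obstacle here; the only care required is the sharp form of Lemma \ref{Interface - Auxiliary Lemma 2} under equality and the sign bookkeeping in the quadratic form $\lambda_x^{\top}(-I + V^{\top} V)\lambda_x$.
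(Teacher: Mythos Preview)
Your proposal is correct and follows essentially the same route as the paper's proof: invoke Theorem \ref{Interface - Theorem A Generates a Contraction Semigroup} for the contraction semigroup, use $W_B \Sigma W_B^{\top} = 2S(I-VV^{\top})S^{\top} = 0$ to upgrade $VV^{\top} \leq I$ to $V^{\top}V = I$, conclude $\langle e_{\partial}, f_{\partial}\rangle_2 - e_I f_I = 0$ via the kernel parametrization and $r=0$, and extend the resulting isometry from $D(A_{\mathcal{Q}_0})$ to $X$ by density. The only cosmetic difference is that you invoke the balance equation of Corollary \ref{Interface - Theorem Balance Equation} explicitly, whereas the paper phrases the same computation as $2\langle A_{\mathcal{Q}_0}x, x\rangle_{\mathcal{Q}_0} = 0$ and then differentiates $\|T(t)x_0\|_{\mathcal{Q}_0}^2$ directly.
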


\begin{proof}
	Theorem \ref{Interface - Theorem A Generates a Contraction Semigroup} yields that $A_{\mathcal{Q}_0}$ generates a contraction semigroup $(T(t))_{t \geq 0}$ on the energy space $(X,  \|\cdot\|_{\mathcal{Q}_0})$. So, we only need to show that $(T(t))_{t \geq 0}$ is a unitary semigroup.
	\vspace{0.5 cm}\\
	By Lemma \ref{Interface - Auxiliary Lemma 2}, the matrix $W_B$ is of the form \eqref{Interface - Auxiliary Lemma 2 - Matrix Representation}, with an invertible matrix $S \in \mathbb{R}^{2 \times 2}$ and with $V \in \mathbb{R}^{2 \times 2}$ satisfying $VV^{\top} \leq I$. Since we have
		\begin{align*}
		0 = 	W_B \Sigma W_B^{\top} = S \begin{bmatrix}
				I + V & I - V
			\end{bmatrix} \begin{bmatrix}
			0 & I \\
			I & 0 
		\end{bmatrix} \begin{bmatrix}
		I + V^{\top} \\
		I- V^{\top}
	\end{bmatrix} S^{\top} = 2S (I- VV^{\top}) S^{\top}, 
		\end{align*}
	we deduce that $VV^{\top} = V^{\top}V = I$. Thus, by repeating Step 2 of the proof of Theorem \ref{Interface - Theorem A Generates a Contraction Semigroup}, we get for all $x \in D(A_{\mathcal{Q}_0})$, 
	\begin{align}
		2 \langle A_{\mathcal{Q}_0}x, x \rangle_{\mathcal{Q}_0} = \langle e_{\partial}, f_{\partial} \rangle_2 - e_I f_I = \lambda_x^{\top} (-I + V^{\top}V) \lambda_x = 0. 
		\label{Interface - Theorem A Generates a Unitary Semigroup - Dissipativity Property in Proof}
	\end{align}
Now, for $x_0 \in D(A_{\mathcal{Q}_0})$, consider the mapping $t \mapsto \| T(t)x_0\|_{\mathcal{Q}_0}^2$, $t \geq 0$. This mapping describes the energy of the solution of the abstract Cauchy problem \eqref{Interface - Theorem A Generates a Contraction Semigroup - ACP}. Recall from Theorem~\ref{Theorem Properties of C0-Semigroups}~\ref{C0SemigroupProp3} that $T(t)x_0 \in D(A_{\mathcal{Q}_0})$ for all $t\geq 0$ and for all $x_0 \in  D(A_{\mathcal{Q}_0})$. By taking the derivative and by exploiting the dissipativity \eqref{Interface - Theorem A Generates a Unitary Semigroup - Dissipativity Property in Proof} of $A_{\mathcal{Q}_0}$, we obtain for all $t > 0$,
\begin{align*}
	\frac{d}{dt} \| T(t)x_0\|_{\mathcal{Q}_0}^2 &=  \frac{d}{dt} \langle T(t)x_0 , T(t)x_0 \rangle_{\mathcal{Q}_0} \\
	&= \langle A_{\mathcal{Q}_0}T(t)x_0, T(t)x_0 \rangle_{\mathcal{Q}_0} + \langle T(t)x_0, A_{\mathcal{Q}_0}T(t)x_0 \rangle_{\mathcal{Q}_0}  \\
	&=  2 \langle A_{\mathcal{Q}_0}T(t)x_0, T(t)x_0 \rangle_{\mathcal{Q}_0} \\
	&= 0. 
\end{align*}
Thus, the mapping $t \mapsto \| T(t)x_0\|_{\mathcal{Q}_0}^2$ is constant for all $x_0 \in D(A_{\mathcal{Q}_0})$. Since $\|T(0) x_0 \|_{\mathcal{Q}_0} = \| x_0\|_{\mathcal{Q}_0}$ and since $A_{\mathcal{Q}_0}$ is densely defined, we have 
\begin{align*}
	\|T(t) x\|_{\mathcal{Q}_0} = \|x\|_{\mathcal{Q}_0}, \hspace{0.5 cm} t \geq 0, \, x \in X. 
\end{align*}
Therefore, $(T(t))_{t \geq 0}$ is a unitary semigroup. This completes the proof. 
\end{proof}

To finish this section, we model two transmission lines that are coupled through a resistor.  
\begin{example}[Coupled Transmission Lines]
	\label{Interface - Example Coupled Transmission Lines}
	Consider two transmission lines defined on the spatial intervals $[a,0)$ and $(0,b]$, respectively. We assume that the distributed capacity and the distributed inductance of the transmission lines are different. Recall from Subsection~\ref{Subsection The Lossless Transmission Line} that the conservation laws are given by
	\begin{align*}
		\frac{\partial}{\partial t} \begin{bmatrix}
			Q^- \\
			\varphi^-
		\end{bmatrix} &= \begin{bmatrix}
		0 & -\frac{\partial}{\partial z} \\
		- \frac{\partial}{\partial z} & 0 
	\end{bmatrix} \begin{bmatrix}
	\frac{Q^-}{C^-} \\
	\frac{\varphi^-}{L^-}
\end{bmatrix}, \hspace{0.5 cm} z \in [a,0), \, t > 0, \\
	\frac{\partial}{\partial t} \begin{bmatrix}
	Q^+ \\
	\varphi^+
\end{bmatrix} &= \begin{bmatrix}
	0 & -\frac{\partial}{\partial z} \\
	- \frac{\partial}{\partial z} & 0 
\end{bmatrix} \begin{bmatrix}
	\frac{Q^+}{C^+} \\
	\frac{\varphi^+}{L^+}
\end{bmatrix}, \hspace{0.5 cm} z \in (0,b], \, t > 0.
		\end{align*}
	Letting $x^{\pm} = (Q^{\pm}, \varphi^{\pm})$, the Hamiltonian functionals are given by 
	\begin{align*}
		H^-(x^-) &= \int_{a}^{0} \mathcal{H}^-(x^-(z)) \, dz = \frac{1}{2} \langle x^-, \mathcal{Q}^- x^- \rangle_{L^2(a,0)}, \hspace{0.5 cm} x^- \in X^- = L^2((a,0), \mathbb{R}^2), \\
			H^+(x^+) &= \int_{a}^{0} \mathcal{H}^+(x^+(z)) \, dz = \frac{1}{2} \langle x^+, \mathcal{Q}^+ x^+ \rangle_{L^2(0,b)}, \hspace{0.5 cm} x^+ \in X^+ = L^2((0,b), \mathbb{R}^2), 
	\end{align*}
where $\mathcal{Q}^{\pm} \in \mathcal{L}(X^{\pm})$ are the coercive multiplication operators given by
\begin{align*}
	(\mathcal{Q}^{\pm}x^{\pm})(\cdot) = \begin{bmatrix}
		\frac{1}{C^{\pm}(\cdot)} & 0 \\
		0 & \frac{1}{L^{\pm}(\cdot)} \end{bmatrix} \begin{bmatrix}
			Q^{\pm}(\cdot) \\
			\varphi^{\pm}(\cdot)
		\end{bmatrix}, \hspace{0.5 cm} x ^{\pm} \in X^{\pm},
\end{align*}
respectively. Recall that the co-energy variables are given by the
voltages
\begin{align*}
	V^{\pm} = \frac{Q^{\pm}}{C^{\pm}} = \frac{\delta H^{\pm}}{\delta Q^{\pm}} = \left( \mathcal{Q}^{\pm} x^{\pm} \right)_1
\end{align*} 
and the currents 
\begin{align*}
I^{\pm} = \frac{\varphi^{\pm}}{L^{\pm}} = \frac{\delta H^{\pm}}{\delta Q^{\pm}} = \left( \mathcal{Q}^{\pm} x^{\pm} \right)_2.
\end{align*}
We want to couple these two transmission lines at $z = 0$, and assume that a resistor with resistance $R_I >0$ is placed at the interface. Furthermore, we set the voltage at $z = a$ to zero, and we put another resistor with resistance $R_b > 0$ at $z = b$. We want to show by means of Theorem \ref{Interface - Theorem A Generates a Contraction Semigroup} that the coupled system is well-posed. 
\vspace{0.5 cm}\\
The model of the coupling is precisely given by the system \eqref{Interface - Simplified PH-System} defined on the state space $X = L^2([a,b], \mathbb{R}^2)$, with $[a,b]$ the composed spatial domain. Defining the coercive matrix operator $\mathcal{Q}_0 = c_0 \mathcal{Q}^- + \overline{c}_0 \mathcal{Q}^+ \in \mathcal{L}(X)$, we may endow $X$ with the inner product $\langle \cdot , \cdot \rangle_{\mathcal{Q}_0}$ defined in \eqref{Interface - Inner Product wrt Q0}.
\vspace{0.5 cm}\\
Due to the resistor placed at the interface, we expect a voltage drop, and therefore have a discontinuity of the effort variable $V = c_0 V^- + \overline{c}_0 V^+$, whereas the current $I = c_0 I^- + \overline{c}_0 I^+$ flows continuously through the resistor. Thus, the interface port variables are given by
\begin{align*}
	f_{I, \mathcal{Q}_0 x} &= I(0^+) = I(0^-), \\
	-e_{I, \mathcal{Q}_0 x} &= V(0^+) - V(0^-), 
\end{align*}
and, by virtue of Ohm's law, the interface relation is given by the passivity relation
\begin{align*}
	f_I(t) = \frac{1}{R_I} e_I(t), \hspace{0.5 cm} t \geq 0.
\end{align*}
Furthermore, we have the boundary conditions
\begin{align*}
V(a,t) = 0 \hspace{0.3 cm} \text{and} \hspace{0.3 cm} V(b,t) = R_b I(b,t), \hspace{0.5 cm} t \geq 0.
\end{align*}
Neglecting the time-dependence and using \eqref{Interface - Boundary Flow and Effort}, we may equivalently write
\begin{align*}
	0 = \begin{bmatrix}
		0 & 0 & 1 & 0 \\
		-1 & R_b & 0 & 0 
	\end{bmatrix} \begin{bmatrix}
	V(b) \\
	I(b) \\
	V(a) \\
	I(a)
\end{bmatrix} = \tilde{W}_B \begin{bmatrix}
(\mathcal{Q}_0x)(b) \\
(\mathcal{Q}_0x)(a)
\end{bmatrix} = W_B \begin{bmatrix}
f_{\partial, \mathcal{Q}_0 x} \\
e_{\partial, \mathcal{Q}_0 x}
\end{bmatrix},
\end{align*}
with $W_B = \tilde{W}_B R_{\extern}^{-1} = \frac{1}{\sqrt{2}} \begin{bmatrix}
	0 & 1 & 1 & 0 \\
	-R_b & 1 & -1 & R_b
\end{bmatrix} \in \mathbb{R}^{2 \times 4}$. Note that $\rank(W_B) = 2$. Furthermore, one computes that
\begin{align*}
	W_B \Sigma W_B^{\top} = \begin{bmatrix}
		0 & 0 \\
		0 & 2R_b
	\end{bmatrix} \geq 0.
\end{align*}
Summing up, the constituting relations of the domain of the port-Hamiltonian operator $A_{\mathcal{Q}_0}$ associated with the system of transmission lines coupled through a resistor satisfy the assumptions of Theorem \ref{Interface - Theorem A Generates a Contraction Semigroup}. Hence, $A_{\mathcal{Q}_0}$ generates a contraction semigroup on the energy space $(X, \|\cdot\|_{\mathcal{Q}_0})$, and therefore, this system is well-defined. Its dynamics are geometrically specified by the requirement that for all $t> 0$ it holds that
	\begin{align*}
	\left( \begin{pmatrix}
		\frac{\partial}{\partial t} x(\cdot,t) \\
		f_I(t) \\
		f_{\partial}(t)
	\end{pmatrix} ,  \begin{pmatrix}
		\mathcal{Q}_0x(\cdot,t) \\
		e_I(t) \\
		e_{\partial}(t)
	\end{pmatrix} \right) \in \mathcal{D}_{\mathcal{J}_0}, \hspace{0.3 cm} (f_I(t), e_I(t)) \in \mathcal{R}_{\frac{1}{R_I}},
\end{align*}
where the resistive structure $\mathcal{R}_{\frac{1}{R_I}}$ is given by
\begin{align*}
	\mathcal{R}_{\frac{1}{R_I}} = \left\{ (f_I, e_I) \in \mathbb{R}^2 \mid f_I = \frac{1}{R_I}e_I \right\}.
\end{align*}
\QEDA
\end{example}

\subsection{Exponential Stability}
\label{Subsection Exponential Stability}
In this section, we want to find conditions that guarantee the exponential stability (cf. Definition \ref{Definition Exponential Stability}) of the abstract Cauchy problem \eqref{Interface - Theorem A Generates a Contraction Semigroup - ACP} associated with the port-Hamiltonian operator $A_{\mathcal{Q}_0}$ defined in \eqref{Interface - Simplified PH System - Operator A}. Throughout this section, we impose the following assumptions:
\begin{enumerate}[label = (A\arabic*)]
	\item \label{MatrixQ0Assumption1} The matrix operators $\mathcal{Q}^{\pm}$ defining the coercive operator $\mathcal{Q}_0 \in \mathcal{L}(X)$ satisfy $\mathcal{Q}^{\pm} \in \mathcal{C}^1([a,b], \mathbb{R}^{2 \times 2})$. 
	\item \label{ContractionAssumption} $r \geq 0$, $\rank(W_B) = 2$ and $W_B \Sigma W_B^{\top} \geq 0$, with $\Sigma$ defined in \eqref{Matrix Sigma}. In other words, the port-Hamiltonian operator $A_{\mathcal{Q}_0}$ generates a contraction semigroup (see Theorem~\ref{Interface - Theorem A Generates a Contraction Semigroup}).
\end{enumerate}

We begin by showing an essential property in order to verify the exponential stability of the port-Hamiltonian system \eqref{Interface - Theorem A Generates a Contraction Semigroup - ACP}. The following lemma is a simple extension of \cite[Lemma~9.1.2]{JacobZwart}.
\begin{lemma}
	\label{Interface - Lemma Auxiliary Lemma for Exponential Stability}
	Let the assumptions \ref{MatrixQ0Assumption1}-\ref{ContractionAssumption} hold. Denote by $(T(t))_{t \geq 0}$ the contraction semigroup generated by $A_{\mathcal{Q}_0}$. Then there exist constants $\tau >0$ and $C >0$ such that for every initial value $x_0 \in D(A_{\mathcal{Q}_0})$, the trajectory $x(\cdot) = T(\cdot)x_0$ satisfies
	\begin{align}
		\label{Interface - Lemma Auxiliary Lemma for Exponential Stablity - Inequality}
		\| x(\tau) \|_{\mathcal{Q}_0}^2 &\leq C \int_{0}^{\tau} \| \mathcal{Q}^-(a)x(a,t) \|_2^2 + \| \mathcal{Q}^+(b) x(b,t) \|_2^2 \, dt.
	\end{align} 
\end{lemma}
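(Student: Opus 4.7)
The plan is to extend the multiplier argument from \cite[Lemma~9.1.2]{JacobZwart} to the present interface setting. By density of $D(A_{\mathcal{Q}_0}^2)$ in $D(A_{\mathcal{Q}_0})$ and Theorem~\ref{Theorem Properties of C0-Semigroups}~\ref{C0SemigroupProp3}, it suffices to establish the inequality for $x_0 \in D(A_{\mathcal{Q}_0}^2)$; for such data the trajectory $x(\cdot) = T(\cdot)x_0$ is $\mathcal{C}^1$ in time with $x(t) \in D(A_{\mathcal{Q}_0})$ for every $t \geq 0$, which legitimizes the integration by parts performed below separately on the subintervals $(a, 0)$ and $(0, b)$.

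Next, I would introduce a piecewise $\mathcal{C}^1$ matrix-valued weight $F \colon [a, b] \to \mathbb{R}^{2 \times 2}$ (with a controlled jump at $z = 0$) and consider the Lyapunov-type functional
\begin{align*}
V(t) = \int_a^b x(z, t)^{\top} F(z)\, x(z, t) \, dz.
\end{align*}
Differentiating in time, using the PDE $\partial_t x = \mathcal{J}_0(\mathcal{Q}_0 x)$, and integrating by parts on each subinterval yields an identity of the form
\begin{align*}
\dot V(t) = \mathrm{BT}(a, b; t) + \mathrm{JT}(0; t) - \int_a^b x(z, t)^{\top} G(z)\, x(z, t)\, dz,
\end{align*}
where $G$ is a matrix field depending on $F$, $F'$, and $\mathcal{Q}_0'$ (the latter being uniformly bounded by \ref{MatrixQ0Assumption1}), $\mathrm{BT}$ gathers the boundary contributions at $z = a, b$, and $\mathrm{JT}$ collects the interface jump terms at $z = 0$. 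The weight $F$ is to be chosen so that (i) $G$ is uniformly positive definite, so that $\int_a^b x^{\top} G\, x\, dz \geq c_0 \|x\|_{\mathcal{Q}_0}^2$; (ii) the boundary contribution satisfies $|\mathrm{BT}(a,b;t)| \leq C_1 \bigl( \|\mathcal{Q}^-(a) x(a,t)\|_2^2 + \|\mathcal{Q}^+(b) x(b,t)\|_2^2 \bigr)$, using the representation of $\ker W_B$ furnished by Lemmas~\ref{Interface - Auxiliary Lemma 2}--\ref{Interface - Auxiliary Lemma 3} together with $W_B \Sigma W_B^{\top} \geq 0$; and (iii) $\mathrm{JT}(0; t) \leq 0$, using the continuity $e_2(0^-) = e_2(0^+)$ and the passivity relation $f_I = r e_I$ with $r \geq 0$.

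Integrating $\dot V$ from $0$ to $\tau$ and invoking the contraction property $\|x(\tau)\|_{\mathcal{Q}_0} \leq \|x(t)\|_{\mathcal{Q}_0}$ for $t \in [0, \tau]$ then gives
\begin{align*}
c_0\, \tau\, \|x(\tau)\|_{\mathcal{Q}_0}^2 \leq V(0) - V(\tau) + C_1 \int_0^\tau \left( \|\mathcal{Q}^-(a) x(a,t)\|_2^2 + \|\mathcal{Q}^+(b) x(b,t)\|_2^2 \right) dt.
\end{align*}
By choosing $\tau$ large enough, the residual $V(0) - V(\tau)$ can be absorbed by iterating the estimate on shifted subintervals, using the semigroup property and the contraction property to propagate the bound back to the boundary observations on $[0, \tau]$. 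The main obstacle lies in the construction of $F$ at the interface: the jump $F(0^+) - F(0^-)$ must be coordinated with the jump of $\mathcal{Q}_0$ at $0$ so that $\mathrm{JT}(0; t) \leq 0$ for every admissible $r \geq 0$, while preserving coercivity of $G$ on both subdomains. The lossless case $r = 0$ is particularly delicate, since no interface dissipation is available to absorb residual contributions, forcing the multiplier jump to be aligned exactly with the continuity condition $e_2(0^-) = e_2(0^+)$.
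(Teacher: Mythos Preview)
Your approach is genuinely different from the paper's, and as written it does not close.

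The paper does \emph{not} use a time-differentiated Lyapunov functional $V(t)=\int_a^b x^\top F x\,dz$. Instead it follows the sideways-energy argument of \cite[Lemma~9.1.2]{JacobZwart}: on each subinterval separately one sets
\[
F_1(z)=\int_{\gamma(b-z)}^{\tau-\gamma(b-z)} x^\top(z,t)\,\mathcal{Q}^+(z)\,x(z,t)\,dt\quad (z\in(0,b]),
\qquad
F_2(z)=\int_{\gamma\frac{b}{|a|}(z-a)}^{\tau-\gamma\frac{b}{|a|}(z-a)} x^\top(z,t)\,\mathcal{Q}^-(z)\,x(z,t)\,dt\quad (z\in[a,0)),
\]
and differentiates in $z$. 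Choosing $\gamma$ large makes the endpoint contributions from the shrinking $t$-interval non-negative, yielding differential inequalities $F_1'(z)\ge -\kappa F_1(z)$ and $F_2'(z)\le \kappa F_2(z)$, hence $F_1(z)\le e^{\kappa b}F_1(b)$ and $F_2(z)\le e^{-\kappa a}F_2(a)$. Integrating in $z$, applying Fubini, and using the contraction estimate $\int_{\gamma b}^{\tau-\gamma b}\|x(t)\|_{\mathcal{Q}_0}^2\,dt\ge(\tau-2\gamma b)\|x(\tau)\|_{\mathcal{Q}_0}^2$ gives the claim. The point is that the interface at $z=0$ is \emph{never touched}: each $F_i$ lives entirely on one subinterval and is controlled by the trace at the corresponding outer endpoint $a$ or $b$. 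No interface jump term ever appears, so there is nothing to design or sign there.

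Your route, by contrast, forces you to confront the interface through $\mathrm{JT}(0;t)$ and, more seriously, leaves the residual $V(0)-V(\tau)$ unabsorbed. Integrating your identity gives only
\[
c_0\,\tau\,\|x(\tau)\|_{\mathcal{Q}_0}^2 \;\le\; |V(0)|+|V(\tau)| + C_1\!\int_0^\tau\!\bigl(\|\mathcal{Q}^-(a)x(a,t)\|_2^2+\|\mathcal{Q}^+(b)x(b,t)\|_2^2\bigr)\,dt,
\]
and $|V(0)|\le C_2\|x(0)\|_{\mathcal{Q}_0}^2$ cannot be removed by ``iteration on shifted subintervals'': the recursion $a_{k+1}\le\alpha a_k+\beta b_{k+1}$ with $\alpha<1$ never eliminates $a_0$ in finitely many steps. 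The standard remedy would be to replace $\|x(0)\|_{\mathcal{Q}_0}^2$ via the energy balance \eqref{Interface - Power Balance Equation wrt Boundary and Interface Port}, but that identity reads
\[
\|x(0)\|_{\mathcal{Q}_0}^2-\|x(\tau)\|_{\mathcal{Q}_0}^2=\int_0^\tau\bigl(-\langle e_\partial,f_\partial\rangle_2 + e_I f_I\bigr)\,dt,
\]
and while $-\langle e_\partial,f_\partial\rangle_2$ is controlled by the boundary traces, the interface dissipation $e_I f_I=r\,e_I^2\ge 0$ is \emph{not} a boundary observation. For $r>0$ you would additionally need your multiplier to produce a quantitative bound on $\int_0^\tau e_I f_I\,dt$, which your condition $\mathrm{JT}\le 0$ does not provide. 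Thus both the construction of $F$ at the interface and the absorption step are genuine gaps; the paper's sideways-energy argument avoids them entirely.
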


\begin{proof}
	Since this is clear for $x_0 = 0$, assume that $0 \neq x_0 \in D(A_{\mathcal{Q}_0})$. Define the function $F_1 \colon (0,b] \to (0, \infty)$ by
	\begin{align}
		\label{Interface - Lemma Auxiliary Lemma for Exponential Stability - Function F1}
		F_1(z) := \int_{\gamma_1(b-z)}^{\tau_1 - \gamma_1(b-z)} x^{\top}(z,t) \mathcal{Q}^+(z) x(z,t) \, dt, \hspace{0.5 cm} z \in (0,b], 
	\end{align}
with $\gamma_1 > 0$ and $\tau_1 > 0$ chosen such that $\tau_1 > 2 \gamma_1 b$. Recall that on domains not containing the interface position, the operator $\mathcal{J}_0$ defined in \eqref{Interface - Operator J0} acts as the differential operator $P_1 \frac{d}{dz}$, where $P_1$ is given in \eqref{Interface - Matrix P1}. Following the proof of Lemma 9.1.2 in \cite{JacobZwart}, by exploiting the fact that $x$ satisfies \eqref{Interface - Simplified PH-System} and that the matrices $\mathcal{Q}^+(z)$ and $P_1$ are symmetric, we find
\begin{align*}
	\frac{d}{dz} F_1(z) &= - \int_{\gamma_1(b-z)}^{\tau_1 - \gamma_1(b-z)} x^{\top}(z,t) \frac{d}{dz} \mathcal{Q}^+(z) x(z,t) \, dt \\
	&\hspace{0.5 cm} + x^{\top}(z,\tau_1 - \gamma_1(b-z)) \left( P_1^{-1} + \gamma_1 \mathcal{Q}^+(z) \right) x(z, \tau_1 - \gamma_1(b-z)) \\
	&\hspace{0.5 cm} + x^{\top}(z, \gamma_1(b-z)) \left( -P_1^{-1} + \gamma_1 \mathcal{Q}^+(z) \right) x(z,\gamma_1(b-z)). 
\end{align*}
Since $\frac{d}{dz} \mathcal{Q}^+$ is bounded, we may find a constant $\kappa_1 >0 $ such that
\begin{align*}
	 \frac{d}{dz} \mathcal{Q}^+(z)  \leq  \kappa_1 \mathcal{Q}^+(z), \hspace{0.5 cm} z \in (0,b]. 
\end{align*}
Furthermore, we may choose $\gamma_1$ large enough such that $ P_1^{-1} + \gamma_1 \mathcal{Q}^+(z)$ and $ -P_1^{-1} + \gamma_1 \mathcal{Q}^+(z)$ are positive definite. The constant $\tau_1$ has to be changed accordingly. This yields
\begin{align*}
	\frac{d}{dz} F_1(z) \geq - \kappa_1 \int_{\gamma_1(b-z)}^{\tau_1 - \gamma_1(b-z)} x^{\top}(z,t) \mathcal{Q}^+(z) x(z,t) \, dt \stackrel{\eqref{Interface - Lemma Auxiliary Lemma for Exponential Stability - Function F1} }{=} - \kappa_1 F_1(z). 
\end{align*}
This implies that for all $z_1 \in (0,b]$ it holds that
\begin{align*}
	\int_{z_1}^{b} \frac{\frac{d}{dz}F_1(z)}{F_1(z)} \, dz \geq - \kappa_1 \int_{z_1}^{b} 1 \, dz.
\end{align*}
Simple computations yield that
\begin{align}
	\label{Interface - Lemma Auxiliary Lemma for Exponential Stability - Estimate F1}
	F_1(b)  \geq F_1(z) e^{-\kappa_1(b-z)} \geq  F_1(z) e^{-\kappa_1 b}, \hspace{0.5 cm} z\in (0,b]. 
\end{align}
Next, we define $F_2 \colon [a,0) \to (0,\infty)$ by
\begin{align*}
	F_2 (z) = \int_{\gamma_2 \frac{b}{\vert a \vert}(z-a)}^{\tau_2 - \gamma_2  \frac{b}{\vert a \vert }(z-a)} x^{\top}(z,t) \mathcal{Q}^-(z) x(z,t) \, dt, \hspace{0.5 cm} z  \in [a,0), 
\end{align*}
where $\gamma_2 > 0$ and $\tau_2 > -2\gamma_2a \frac{b}{\vert a \vert} = 2 \gamma_2 b$. With similar arguments we may choose $\gamma_2$, $\tau_2$, and $\kappa_2 > 0 $ large enough such that 
\begin{align}
		\label{Interface - Lemma Auxiliary Lemma for Exponential Stability - Estimate F2}
	F_2(z) \leq F_2(a) e^{\kappa_2 (z-a)} \leq F_2(a) e^{-\kappa_2 a}, \hspace{0.5 cm} z \in [a,0).
\end{align}
Now define $\gamma = \max\{ \gamma_1, \gamma_2\}$, $\tau = \max\{ \tau_1, \tau_2\}$, $\kappa = \max\{ \kappa_1, \kappa_2\}$, and let $F_1$ and $F_2$ be defined with respect to $\gamma$ and $\tau$. 
\vspace{0.5 cm} \\
As $(T(t))_{t \geq 0}$ is of type $C_0(1,0)$, we have $\|x(t_2)\|_{\mathcal{Q}_0} \leq \| x(t_1) \|_{\mathcal{Q}_0}$ for all $t_1 \leq t_2$. Thus, 
\begin{align}
		\label{Interface - Lemma Auxiliary Lemma for Exponential Stability - Estimate T(tau)}
	\int_{\gamma b}^{\tau - \gamma b} \|x(t)\|_{\mathcal{Q}_0}^2 \, dt \geq \|x(\tau - \gamma b )\|_{\mathcal{Q}_0}^2  \int_{\gamma b}^{\tau - \gamma b} 1 \, dt = (\tau - 2 \gamma b) \| x(\tau - \gamma b)  \|_{\mathcal{Q}_0}^2. 
\end{align}
Applying Fubini's theorem, using the estimates \eqref{Interface - Lemma Auxiliary Lemma for Exponential Stability - Estimate F1}-\eqref{Interface - Lemma Auxiliary Lemma for Exponential Stability - Estimate T(tau)}, and since $\mathcal{Q}^+(z), \mathcal{Q}^-(z) \geq mI$ for all $z \in [a,b]$, we infer that  
\begin{align*}
	&\hspace{0.5 cm} 2(\tau - 2 \gamma b)  \|x(\tau)\|_{\mathcal{Q}_0}^2 \\
	&\leq 2 (\tau - 2 \gamma b) \| x(\tau - \gamma b)\|_{\mathcal{Q}_0}^2 \\
	&\leq 2 \int_{\gamma b}^{\tau - \gamma b} \|x(t) \|_{\mathcal{Q}_0}^2 \, dt \\
	&= \int_{\gamma b}^{\tau - \gamma b} \int_{a}^{0} x^{\top}(z,t) \mathcal{Q}^-(z) x(z,t) \, dz + \int_{0}^{b} x^{\top}(z,t) \mathcal{Q}^+(z) x(z,t) \, dz \, dt \\
	&=  \int_{a}^{0} \int_{\gamma (b-0)}^{\tau - \gamma (b-0)} x^{\top}(z,t) \mathcal{Q}^-(z) x(z,t) \, dt \, dz + \int_{0}^{b} \int_{\gamma (b-0)}^{\tau - \gamma (b-0)} x^{\top}(z,t) \mathcal{Q}^+(z) x(z,t) \, dt \, dz \\
	&=  \int_{a}^{0} \int_{\gamma \frac{b}{\vert a \vert}(0- a)}^{\tau - \gamma \frac{b}{\vert a \vert}(0- a)} x^{\top}(z,t) \mathcal{Q}^-(z) x(z,t) \, dt \, dz + \int_{0}^{b} \int_{\gamma (b-0)}^{\tau - \gamma (b-0)} x^{\top}(z,t) \mathcal{Q}^+(z) x(z,t) \, dt \, dz 
\end{align*}
\begin{align*}
	& \leq \int_{a}^{0} \int_{\gamma \frac{b}{\vert a \vert}(z- a)}^{\tau - \gamma \frac{b}{\vert a \vert}(z-a)} x^{\top}(z,t) \mathcal{Q}^-(z) x(z,t) \, dt \, dz + \int_{0}^{b} \int_{\gamma (b-z)}^{\tau - \gamma (b-z)} x^{\top}(z,t) \mathcal{Q}^+(z) x(z,t) \, dt \, dz \\
	&= \int_{a}^{0} F_2(z) \, dz + \int_{0}^{b} F_1(z)  \, dz \\
	&\leq -a F_2(a) e^{- \kappa a} + b F_1(b) e^{\kappa b} \\
	&= -a e^{- \kappa a} \int_{0}^{\tau} x^{\top}(a,t) \mathcal{Q}^-(a) x(a,t) \, dt + b e^{\kappa b} \int_{0}^{\tau} x^{\top}(b,t) \mathcal{Q}^+(b) x(b,t) \, dt \\
	&\leq \frac{-a e^{- \kappa a}}{m} \int_{0}^{\tau} \| \mathcal{Q}^- (a) x(a,t) \|_2^2 \, dt + \frac{b e^{\kappa b}}{m}  \int_{0}^{\tau} \| \mathcal{Q}_0(b) x(b,t) \|_2^2 \, dt.
 \end{align*}
Letting $C = \frac{\max \left\{-a e^{- \kappa a}, b e^{\kappa b} \right\} }{2m (\tau - 2 \gamma b)}$, we conclude that
	\begin{align*}
		\|x(\tau) \|_{\mathcal{Q}_0}^2 \leq C \int_{0}^{\tau}  \| \mathcal{Q}^-(a) x(a,t) \|_2^2 + \| \mathcal{Q}^+(b) x(b,t) \|_2^2 \, dt.
	\end{align*}
This shows \eqref{Interface - Lemma Auxiliary Lemma for Exponential Stablity - Inequality} and, hence, the claim. 
\end{proof}

%\begin{remark}
%	With similar arguments as in the proof of Lemma  \ref{Interface - Lemma Auxiliary Lemma for Exponential Stability}, one can show that for the solution of the abstract Cauchy problem \eqref{Interface - Theorem A Generates a Contraction Semigroup - ACP} the following estimates are valid as well:
%	\begin{align*}
%			\| x(\tau) \|_{\mathcal{Q}_0}^2 &\leq C \int_{0}^{\tau} \| \mathcal{Q}^-(0)x(0^-,t) \|_2^2 + \| \mathcal{Q}^+(b) x(b,t) \|_2^2 \, dt, \\
%			\| x(\tau) \|_{\mathcal{Q}_0}^2 &\leq C \int_{0}^{\tau} \| \mathcal{Q}^-(a)x(a,t) \|_2^2 + \| \mathcal{Q}^+(0) x(0^+,t) \|_2^2 \, dt. 
%	\end{align*}
%\end{remark}
Lemma \ref{Interface - Lemma Auxiliary Lemma for Exponential Stability} shows that there exists some time $\tau >0 $ such that the energy of a state can be bounded by the sum of the accumulated energies at both boundaries during the time interval $[0, \tau]$. It allows us to prove the following sufficient condition for exponential stability.
\begin{theorem}
	\label{Interface - Theorem Exponential Stability}
		Let the assumptions \ref{MatrixQ0Assumption1}-\ref{ContractionAssumption} hold. Assume there exists some $ k > 0$ such that 
	\begin{align}
		\label{Interface - Theorem Exponential Stability - Inequality}
		\langle A_{\mathcal{Q}_0} x_0, x_0 \rangle_{\mathcal{Q}_0} &\leq  - k \left( \| (\mathcal{Q}_0 x_0)(a) \|_2^2 + \| (\mathcal{Q}_0 x_0)(b) \|_2^2 \right)
	\end{align}
holds for all $x_0 \in D(A_{\mathcal{Q}_0})$. Then $A_{\mathcal{Q}_0}$ generates an exponentially stable $C_0$-semigroup. 
\end{theorem}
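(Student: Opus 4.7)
The plan is to combine the hypothesis \eqref{Interface - Theorem Exponential Stability - Inequality} with the auxiliary observability-type estimate from Lemma \ref{Interface - Lemma Auxiliary Lemma for Exponential Stability} in order to show that a single operator $T(\tau)$ has operator norm strictly less than one, from which exponential stability follows by the semigroup property and Definition \ref{Definition Growth Bound}. Let $(T(t))_{t \geq 0}$ denote the contraction semigroup generated by $A_{\mathcal{Q}_0}$ guaranteed by assumption \ref{ContractionAssumption}, and let $\tau>0$ and $C>0$ be the constants from Lemma~\ref{Interface - Lemma Auxiliary Lemma for Exponential Stability}.

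First, I would fix $x_0 \in D(A_{\mathcal{Q}_0})$ and set $x(t) := T(t)x_0$. By Theorem~\ref{Theorem Properties of C0-Semigroups}~\ref{C0SemigroupProp3}, $x(t) \in D(A_{\mathcal{Q}_0})$ for all $t \geq 0$, so that the dissipation identity
\begin{align*}
\frac{d}{dt} \|x(t)\|_{\mathcal{Q}_0}^2 \;=\; 2\,\realpart\langle A_{\mathcal{Q}_0} x(t),\, x(t)\rangle_{\mathcal{Q}_0}
\end{align*}
is valid. Applying the hypothesis \eqref{Interface - Theorem Exponential Stability - Inequality} pointwise in $t$ gives
\begin{align*}
\frac{d}{dt} \|x(t)\|_{\mathcal{Q}_0}^2 \;\leq\; -2k\bigl(\|\mathcal{Q}^-(a)x(a,t)\|_2^2 + \|\mathcal{Q}^+(b)x(b,t)\|_2^2\bigr),
\end{align*}
and integration on $[0,\tau]$ yields
\begin{align*}
\int_{0}^{\tau} \|\mathcal{Q}^-(a)x(a,t)\|_2^2 + \|\mathcal{Q}^+(b)x(b,t)\|_2^2 \, dt \;\leq\; \frac{1}{2k}\bigl(\|x_0\|_{\mathcal{Q}_0}^2 - \|x(\tau)\|_{\mathcal{Q}_0}^2\bigr).
\end{align*}

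Next, I would combine this with Lemma~\ref{Interface - Lemma Auxiliary Lemma for Exponential Stability} to obtain
\begin{align*}
\|x(\tau)\|_{\mathcal{Q}_0}^2 \;\leq\; C \int_{0}^{\tau} \|\mathcal{Q}^-(a)x(a,t)\|_2^2 + \|\mathcal{Q}^+(b)x(b,t)\|_2^2 \, dt \;\leq\; \frac{C}{2k}\bigl(\|x_0\|_{\mathcal{Q}_0}^2 - \|x(\tau)\|_{\mathcal{Q}_0}^2\bigr).
\end{align*}
Rearranging, one finds $\|T(\tau)x_0\|_{\mathcal{Q}_0}^2 \leq \alpha \|x_0\|_{\mathcal{Q}_0}^2$ with $\alpha := \tfrac{C}{2k + C} \in (0,1)$, and by density of $D(A_{\mathcal{Q}_0})$ in $X$ (Corollary~\ref{Corollary - Generators Are Closed and Densely Defined}) together with the boundedness of $T(\tau) \in \mathcal{L}(X)$, the estimate extends to all $x_0 \in X$. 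Hence $\|T(\tau)\|_{\mathcal{L}(X)} \leq \sqrt{\alpha} < 1$. Writing any $t \geq 0$ as $t = n\tau + s$ with $n \in \mathbb{N}_0$ and $s \in [0,\tau)$ and invoking the contraction property on the remainder, the semigroup property yields $\|T(t)\|_{\mathcal{L}(X)} \leq \alpha^{n/2} \leq M e^{-\omega t}$ for suitable $M \geq 1$ and $\omega := -\tfrac{\ln \alpha}{2\tau} > 0$, which is exponential stability in the sense of Definition~\ref{Definition Exponential Stability}.

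The only mildly delicate point is the first step, namely verifying that $\frac{d}{dt}\|x(t)\|_{\mathcal{Q}_0}^2 = 2\realpart\langle A_{\mathcal{Q}_0} x(t), x(t)\rangle_{\mathcal{Q}_0}$ holds for the classical solution on $[0,\infty)$; this is routine given Theorem~\ref{Theorem Properties of C0-Semigroups}~\ref{C0SemigroupProp3} and the symmetry of $\mathcal{Q}_0$. The real content of the proof lies in Lemma~\ref{Interface - Lemma Auxiliary Lemma for Exponential Stability}, which has already been established, so the argument above is essentially a bookkeeping combination of observability at the boundary with boundary dissipation.
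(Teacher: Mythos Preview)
Your argument is correct and is precisely the standard route the paper intends: the paper's proof consists solely of the sentence ``This follows immediately from Lemma~\ref{Interface - Lemma Auxiliary Lemma for Exponential Stability} and the proof of Theorem~9.1.3 in \cite{JacobZwart},'' and what you have written is exactly the content of that referenced proof---integrate the dissipation hypothesis, feed it into the observability lemma to get $\|T(\tau)\|<1$, and conclude via the semigroup property.
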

\begin{proof}
	This follows immediately from Lemma \ref{Interface - Lemma Auxiliary Lemma for Exponential Stability} and the proof of Theorem 9.1.3 in \cite{JacobZwart}. 
\end{proof}
Lastly, we want to present another sufficient condition for the exponential stability of the port-Hamiltonian system \eqref{Interface - Simplified PH-System}. Once again, we follow the exposition in \cite[Chapter 9]{JacobZwart}.
\begin{theorem}
	\label{Interface - Theorem 2 Exponential Stability}
		Let the assumptions \ref{MatrixQ0Assumption1}-\ref{ContractionAssumption} hold. If the matrix $W_B$ satisfies $W_B \Sigma W_B^{\top} > 0$, then $A_{\mathcal{Q}_0}$ generates an exponentially stable $C_0$-semigroup. 
\end{theorem}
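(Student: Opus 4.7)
The plan is to reduce to the previous exponential stability criterion (Theorem \ref{Interface - Theorem Exponential Stability}) by showing that the strict positivity $W_B \Sigma W_B^\top > 0$ produces a quantitative strict dissipativity estimate on the boundary data. Concretely, I want to prove that there exists $k > 0$ such that
\begin{align*}
\langle A_{\mathcal{Q}_0} x_0, x_0 \rangle_{\mathcal{Q}_0} \leq -k\bigl(\|(\mathcal{Q}_0 x_0)(a)\|_2^2 + \|(\mathcal{Q}_0 x_0)(b)\|_2^2\bigr)
\end{align*}
for every $x_0 \in D(A_{\mathcal{Q}_0})$. The theorem then follows immediately.

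The starting point is the computation from Step 2 of the proof of Theorem \ref{Interface - Theorem A Generates a Contraction Semigroup}: using Lemma \ref{Interface - Auxiliary Lemma 2}, I write $W_B = S\begin{bmatrix} I + V & I - V\end{bmatrix}$ with $S$ invertible and $VV^\top \leq I$, and invoke Lemma \ref{Interface - Auxiliary Lemma 3} to obtain, for each $x_0 \in D(A_{\mathcal{Q}_0})$, a vector $\lambda_{x_0} \in \mathbb{R}^2$ with $\begin{bmatrix}f_\partial \\ e_\partial\end{bmatrix} = \begin{bmatrix}I - V \\ -I - V\end{bmatrix}\lambda_{x_0}$. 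Combining this with the power-balance identity \eqref{Interface - Skew-Symmetry of J0 - Representation 2} applied to $e = \mathcal{Q}_0 x_0$ yields
\begin{align*}
2\langle A_{\mathcal{Q}_0} x_0, x_0 \rangle_{\mathcal{Q}_0} = \langle e_\partial, f_\partial\rangle_2 - e_I f_I = \lambda_{x_0}^\top(-I + V^\top V)\lambda_{x_0} - r e_I^2.
\end{align*}
Now the crucial new input: the same calculation as in Corollary \ref{Interface - Corollary A Generates a Unitary Semigroup} shows $W_B \Sigma W_B^\top = 2S(I - VV^\top)S^\top$, so the hypothesis $W_B \Sigma W_B^\top > 0$ is equivalent to $VV^\top < I$. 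Because $VV^\top$ and $V^\top V$ have the same eigenvalues, this also gives $V^\top V < I$, and therefore there exists $c > 0$ with $\lambda^\top(I - V^\top V)\lambda \geq c \|\lambda\|_2^2$ for all $\lambda \in \mathbb{R}^2$. Using $r \geq 0$, I obtain
\begin{align*}
\langle A_{\mathcal{Q}_0} x_0, x_0 \rangle_{\mathcal{Q}_0} \leq -\tfrac{c}{2}\|\lambda_{x_0}\|_2^2.
\end{align*}

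It remains to dominate the boundary norm $\|(\mathcal{Q}_0 x_0)(a)\|_2^2 + \|(\mathcal{Q}_0 x_0)(b)\|_2^2$ by $\|\lambda_{x_0}\|_2^2$. Since $R_{\extern}$ is invertible, $\trace_0(\mathcal{Q}_0 x_0) = R_{\extern}^{-1}(f_\partial, e_\partial)^\top$, so
\begin{align*}
\|(\mathcal{Q}_0 x_0)(a)\|_2^2 + \|(\mathcal{Q}_0 x_0)(b)\|_2^2 \leq \|R_{\extern}^{-1}\|_{\mathcal{L}(\mathbb{R}^4)}^2 \bigl(\|f_\partial\|_2^2 + \|e_\partial\|_2^2\bigr),
\end{align*}
and a direct expansion gives $\|f_\partial\|_2^2 + \|e_\partial\|_2^2 = \lambda_{x_0}^\top(2I + 2V^\top V)\lambda_{x_0} \leq 4\|\lambda_{x_0}\|_2^2$, where the last bound uses $V^\top V \leq I$. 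Chaining these estimates produces a constant $k > 0$ with the desired inequality, and Theorem \ref{Interface - Theorem Exponential Stability} concludes the proof.

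The computation is essentially routine; the only subtle point is correctly identifying that $W_B \Sigma W_B^\top > 0$ translates, via the Livšic-type parametrization in Lemma \ref{Interface - Auxiliary Lemma 2}, into strict contractivity of $V$ in the sense $V^\top V < I$, which is precisely what is needed to upgrade the dissipativity estimate of Theorem \ref{Interface - Theorem A Generates a Contraction Semigroup} to a strictly negative lower bound controlled by $\|\lambda_{x_0}\|_2^2$. The interface dissipation term $-r e_I^2$ plays no role here and is simply discarded.
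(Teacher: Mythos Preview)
Your proof is correct and follows essentially the same strategy as the paper: both reduce to Theorem \ref{Interface - Theorem Exponential Stability} by extracting a strict dissipativity estimate from the parametrization $\begin{bmatrix} f_\partial \\ e_\partial \end{bmatrix} = \begin{bmatrix} I-V \\ -I-V \end{bmatrix}\lambda_x$ of Lemma \ref{Interface - Auxiliary Lemma 3}, using that $W_B\Sigma W_B^\top > 0$ forces $V^\top V < I$.

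The only difference is in the bridging step from $\lambda_x$ back to the boundary trace $\trace_0(\mathcal{Q}_0 x)$. The paper introduces an auxiliary row matrix $W_C = \begin{bmatrix} I+V^\top & -I+V^\top \end{bmatrix}$, sets $y_x = W_C\begin{bmatrix} f_\partial \\ e_\partial \end{bmatrix} = 2(I-V^\top V)\lambda_x$, rewrites the dissipation as $\tfrac{1}{8}y_x^\top(-I+V^\top V)^{-1}y_x$, and then uses invertibility of the full matrix $\begin{bmatrix} W_B \\ W_C \end{bmatrix} R_{\extern}$ to bound $\|\trace_0(\mathcal{Q}_0 x)\|_2$ by $\|y_x\|_2$. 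You bypass this detour: you bound the dissipation directly by $-\tfrac{c}{2}\|\lambda_x\|_2^2$, and then bound $\|\trace_0(\mathcal{Q}_0 x)\|_2$ via $\|R_{\extern}^{-1}\|$ together with the explicit identity $\|f_\partial\|_2^2 + \|e_\partial\|_2^2 = 2\lambda_x^\top(I + V^\top V)\lambda_x \leq 4\|\lambda_x\|_2^2$. Your route is slightly more elementary, avoiding the auxiliary output map $W_C$; the paper's version has the advantage of fitting into a broader boundary-control framework where $y_x$ plays the role of a collocated output, but for the bare stability statement your argument is cleaner.
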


\begin{proof}
	By Lemma \ref{Interface - Auxiliary Lemma 2}, we may write
	\begin{align*}
		W_B = S \begin{bmatrix}
			I + V & I - V
		\end{bmatrix}, 
	\end{align*}
where $S \in \mathbb{R}^{2 \times 2}$ is invertible and $V \in \mathbb{R}^{2 \times 2}$ satisfies $VV^{\top} < I$. Moreover, define the matrix
\begin{align*}
	W_C := \begin{bmatrix}
		I + V^{\top} & - I + V^{\top}
	\end{bmatrix} \in \mathbb{R}^{2 \times 4}.
\end{align*}
As shown in the proof of Lemma 9.1.4 in \cite{JacobZwart}, the matrix $W_1 := \begin{bmatrix}
	W_B \\
	W_C
\end{bmatrix}$ is invertible. 
\vspace{0.5 cm}\\
Now, let $ x \in D(A_{\mathcal{Q}_0})$. By Lemma \ref{Interface - Auxiliary Lemma 3}, we have
\begin{align*}
	\begin{bmatrix}
		f_{\partial} \\
		e_{\partial}
	\end{bmatrix} = \begin{bmatrix}
	I - V \\
	-I - V
\end{bmatrix} \lambda_x
\end{align*}
for some $\lambda_x \in \mathbb{R}^2$. By virtue of \eqref{Interface - Skew-Symmetry of J0 - Representation 2}, we compute that
\begin{align*}
	\langle A_{\mathcal{Q}_0}x, x \rangle_{\mathcal{Q}_0} + \langle x , A_{\mathcal{Q}_0}x \rangle_{\mathcal{Q}_0} &= \frac{1}{2} \left( \langle \mathcal{J}_0\mathcal{Q}_0 x, \mathcal{Q}_0 x \rangle_{L^2} + \langle \mathcal{Q}_0 x , \mathcal{J}_0 \mathcal{Q}_0 x \rangle_{L^2} \right) \\
	&= \langle e_{\partial}, f_{\partial} \rangle_2 - e_If_I \\
	&= \lambda_x^{\top}(-I + V^{\top}V) \lambda_x - e_I f_I. 
\end{align*}
Furthermore, it holds that
\begin{align*}
	y_x :=  W_C \begin{bmatrix}
		f_{\partial} \\
		e_{\partial}
	\end{bmatrix} = \begin{bmatrix}
	I + V^{\top} & -I + V^{\top}
\end{bmatrix} \begin{bmatrix}
I - V \\
-I - V
\end{bmatrix} \lambda_x = 2(I- V^{\top}V) \lambda_x . 
\end{align*}
Since $V^{\top} V - I <0$, the matrix $-I + V^{\top} V$ is invertible, and $(-I + V^{\top}V)^{-1} \leq -\hat{m}_1 I$ for some $\hat{m}_1 > 0$. Combining the preceding equations and using the passivity condition \eqref{Interface - Interface Passivity Relation} of the interface port variables, we obtain the following estimate:
\begin{align}
	\langle A_{\mathcal{Q}_0}x, x \rangle_{\mathcal{Q}_0} = \frac{1}{8} y_x^{\top} (-I + V^{\top}V)^{-1}y_x  - \frac{1}{2}e_If_I \leq -m_1 \|y_x\|_2^2,
	\label{Interface - Lemma Exponential Stability Proof - Estimate 1}
\end{align}
with $m_1 = \frac{\hat{m}_1}{8}$. Using the definition of $y_x$ and the boundary port variables \eqref{Interface - Boundary Flow and Effort}, we infer
\begin{align}
	\begin{bmatrix}
		0 \\
		y_x
	\end{bmatrix} = \begin{bmatrix}
	W_B \\
	W_C
\end{bmatrix} \begin{bmatrix}
f_{\partial} \\
e_{\partial}
\end{bmatrix} = W_1 R_{\extern}  \begin{bmatrix}
	(\mathcal{Q}_0x)(b) \\
	(\mathcal{Q}_0x)(a)
\end{bmatrix} := W  \begin{bmatrix}
(\mathcal{Q}_0x)(b) \\
(\mathcal{Q}_0x)(a)
\end{bmatrix}. 
\label{Interface - Lemma Exponential Stability Proof - Definition W}
\end{align}
As both $R_{\extern}$ and $W_1$ are invertible, the matrix $W = W_1 R_{\extern} \in \mathbb{R}^{4 \times 4}$ is invertible as well, and there exists some $m_2 > 0$ such that $\|W w \|_2^2 \geq m_2 \|w\|_2^2$ holds for all $w \in \mathbb{R}^4$. Consequently, taking the norm on both sides of \eqref{Interface - Lemma Exponential Stability Proof - Definition W} yields
\begin{align}
	\|y_x\|_2^2  = \left\|  W  \begin{bmatrix}
		(\mathcal{Q}_0x)(b) \\
		(\mathcal{Q}_0x)(a)
	\end{bmatrix} \right\|_2^2 \geq m_2 \left\|  \begin{bmatrix}
	(\mathcal{Q}_0x)(b) \\
	(\mathcal{Q}_0x)(a)
\end{bmatrix} \right\|_2^2 = m_2 \left(  \| (\mathcal{Q}_0x)(a) \|_2^2 + \| (\mathcal{Q}_0x)(b) \|_2^2 \right). 
\label{Interface - Lemma Exponential Stability Proof - Estimate y}
\end{align}
Finally, inserting \eqref{Interface - Lemma Exponential Stability Proof - Estimate y} into \eqref{Interface - Lemma Exponential Stability Proof - Estimate 1}, we obtain for all $x \in D(A_{\mathcal{Q}_0})$, 
\begin{align*}
		\langle A_{\mathcal{Q}_0}x, x \rangle_{\mathcal{Q}_0} \leq -m_1 \|y_x\|_2^2 \leq - m_1 m_2 \left(  \| (\mathcal{Q}_0x)(a) \|_2^2 + \| (\mathcal{Q}_0x)(b) \|_2^2 \right). 
\end{align*}
This is exactly inequality \eqref{Interface - Theorem Exponential Stability - Inequality}, and so Lemma \ref{Interface - Theorem Exponential Stability} yields the exponential stability of the contraction semigroup generated by $A_{\mathcal{Q}_0}$.
\end{proof}
In this section, we have shown that if we impose certain boundary conditions, then we can ensure that the semigroup $(T(t))_{t \geq 0}$ generated by the port-Hamiltonian operator $A_{\mathcal{Q}_0}$ is of type $C_0(1, -\alpha)$ for some $\alpha > 0$, i.e.,
\begin{align*}
	\|T(t)\|_{\mathcal{L}(X, \|\cdot\|_{\mathcal{Q}_0})} \leq e^{-\alpha}, \hspace{0.5 cm} t \geq 0. 
\end{align*}
As opposed to the boundary conditions, the interface condition plays a minor role in the proof of Theorem \ref{Interface - Theorem 2 Exponential Stability}. One only has to ensure that there is no power inflow at the interface position.

\subsection{The Adjoint Operator}
\label{Subsection The Adjoint Operator}
This section aims to specify the Hilbert space adjoint $A_{\mathcal{Q}_0}^{\ast}$ of the port-Hamiltonian operator $A_{\mathcal{Q}_0}$ defined in \eqref{Interface - Simplified PH System - Operator A}. To this end, we need an auxiliary lemma. For the proof, we follow the idea of the second assertion in \cite[Lemma~7.2.1]{JacobZwart}.
\begin{lemma}
	\label{Interface - Auxiliary Lemma Adjoint Operator}
		Consider the port-Hamiltonian operator $A_{\mathcal{Q}_0}$ defined in \eqref{Interface - Simplified PH System - Operator A}. Then for every vector
		\begin{align*}
			\begin{bmatrix}
				f \\
				e
			\end{bmatrix} \in \ker(W_B)
		\end{align*}
	there exists an element $x \in D(A_{\mathcal{Q}_0})$ such that
	\begin{align}
		\begin{bmatrix}
			f_{\partial, \mathcal{Q}_0 x} \\
			e_{\partial, \mathcal{Q}_0 x}
		\end{bmatrix} = 	\begin{bmatrix}
		f \\
		e
	\end{bmatrix}.
\label{Interface - Auxiliary Lemma Adjoint Operator - Target Equation}
	\end{align}
\end{lemma}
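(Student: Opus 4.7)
The plan is to construct the required $x \in D(A_{\mathcal{Q}_0})$ explicitly by first building its image $y := \mathcal{Q}_0 x \in D(\mathcal{J}_0)$ with the correct boundary traces, and then invoking invertibility of $\mathcal{Q}_0$. Since $R_{\extern}$ is invertible, set
\begin{align*}
\begin{bmatrix} \beta \\ \alpha \end{bmatrix} := R_{\extern}^{-1} \begin{bmatrix} f \\ e \end{bmatrix} \in \mathbb{R}^{2} \times \mathbb{R}^{2},
\end{align*}
so that any $y \in D(\mathcal{J}_0)$ with $y(b) = \beta$ and $y(a) = \alpha$ automatically satisfies $R_{\extern} \trace_0(y) = [f;e]$, i.e., $(f_{\partial, y}, e_{\partial, y}) = (f, e)$. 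Because $(f,e) \in \ker(W_B)$ by hypothesis, the boundary condition defining $D(A_{\mathcal{Q}_0})$ will then be fulfilled automatically.

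Next I would pick interior interface values $\gamma^-, \gamma^+ \in \mathbb{R}^2$ that respect the structural constraints of $D(\mathcal{J}_0)$ and the passivity relation at the interface. Explicitly, I need $\gamma^-_2 = \gamma^+_2 =: v$ (continuity of the second component, encoded in $y_2 \in D(\mathbf{d}_0)$) together with $v = r(\gamma^-_1 - \gamma^+_1)$ (the passivity condition $f_{I,y} = r\, e_{I,y}$). These two scalar conditions on four unknowns are always solvable: for instance, choose any $w \in \mathbb{R}$, set $v := rw$, and take $\gamma^-_1 - \gamma^+_1 = w$ with the individual values of $\gamma^\pm_1$ arbitrary. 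The case $r=0$ is included simply by taking $v = 0$.

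Then I would take $y^-$ on $[a,0]$ to be the affine interpolation between $\alpha$ at $z=a$ and $\gamma^-$ at $z=0^-$, and $y^+$ on $[0,b]$ the affine interpolation between $\gamma^+$ at $z=0^+$ and $\beta$ at $z=b$, and define $y := c_0 y^- + \overline{c}_0 y^+$. By construction $y_1 \in D(\mathbf{d}_0^\ast)$ and $y_2 \in D(\mathbf{d}_0)$ (since $y_2$ is continuous at $0$ by the choice $\gamma^-_2 = \gamma^+_2$ and $H^1$ on each subinterval), so $y \in D(\mathcal{J}_0)$. The interface relation $f_{I,y} = r\,e_{I,y}$ and the prescribed boundary traces hold by construction.

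Finally, I would set $x := \mathcal{Q}_0^{-1} y$, which is a well-defined element of $X$ because $\mathcal{Q}_0 \in \mathcal{L}(X)$ is coercive and hence boundedly invertible. Then $\mathcal{Q}_0 x = y \in D(\mathcal{J}_0)$ with the required interface condition, $W_B [f_{\partial,\mathcal{Q}_0 x}; e_{\partial,\mathcal{Q}_0 x}]^\top = W_B [f;e]^\top = 0$, and the matching of the boundary port variables, so $x \in D(A_{\mathcal{Q}_0})$ and \eqref{Interface - Auxiliary Lemma Adjoint Operator - Target Equation} holds. There is no genuine obstacle in the argument; the only point that requires a moment's care is checking that the interface constraints—continuity of the second component and the passivity equation coupling the jump in the first component—can always be solved simultaneously for any prescribed boundary data, which the free parameter $w$ above handles uniformly in $r \geq 0$.
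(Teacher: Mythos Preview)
Your proof is correct and follows essentially the same strategy as the paper: invert $R_{\extern}$ to get the required boundary traces of $\mathcal{Q}_0 x$, build an explicit $y=\mathcal{Q}_0 x \in D(\mathcal{J}_0)$ with those traces and with the interface condition satisfied, then set $x=\mathcal{Q}_0^{-1}y$. The only difference is in the concrete choice of $y$: the paper takes the single quadratic polynomial
\[
(\mathcal{Q}_0 x_0)(z)=\frac{(b-z)z}{(b-a)a}\,\alpha+\frac{(z-a)z}{(b-a)b}\,\beta,
\]
which vanishes at $z=0$, so $f_I=e_I=0$ and the passivity relation $f_I=r e_I$ holds trivially for every $r\ge 0$; you instead interpolate piecewise affinely with free interface values $\gamma^\pm$ and then solve the two scalar constraints. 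Both work, but the paper's construction avoids the case analysis on $\gamma^\pm$ altogether.
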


\begin{proof}
Let $\begin{bmatrix}
	f \\
	e
\end{bmatrix} \in \ker(W_B)$, and define
\begin{align*}
	\begin{bmatrix}
		u \\
		y
	\end{bmatrix} = R_{\extern}^{-1} \begin{bmatrix}
		f \\
		e
	\end{bmatrix},
\end{align*}
with $R_{\extern}$ given in \eqref{Interface - Matrix Rext}. Now, consider $x_0 \in X$ defined as
\begin{align*}
	x_0(z) = \mathcal{Q}_0^{-1}(z) \left( \frac{(b-z)z}{(b-a)a} y  + \frac{(z-a)z}{(b-a)b} u \right), \hspace{0.5 cm} z \in [a,b]. 
\end{align*}
One can readily see that $\mathcal{Q}_0x_0 \in H^1([a,b], \mathbb{R}^2) \subset D(\mathcal{J}_0)$. Furthermore, we compute
\begin{align*}
	(\mathcal{Q}_0x_0)(a) &= y, \\
	(\mathcal{Q}_0x_0)(0) &= 0, \\
	(\mathcal{Q}_0x_0)(b) &= u.
\end{align*}
Thus, $x_0$ satisfies the interface condition \eqref{Interface - Interface Passivity Relation}, and by definition of the boundary port variables \eqref{Interface - Boundary Flow and Effort} we have
\begin{align*}
\begin{bmatrix}
	f_{\partial, \mathcal{Q}_0x_0} \\
	e_{\partial, \mathcal{Q}_0x_0}
\end{bmatrix} =	R_{\extern}	\trace_0 (\mathcal{Q}_0x_0) = R_{\extern} \begin{bmatrix}
		(\mathcal{Q}_0x_0)(b) \\
		(\mathcal{Q}_0x_0)(a)
	\end{bmatrix} = \begin{bmatrix}
	f \\
	e
\end{bmatrix} \in \ker(W_B).
\end{align*}
Altogether, $x_0$ is an element of $D(A_{\mathcal{Q}_0})$ and it satisfies \eqref{Interface - Auxiliary Lemma Adjoint Operator - Target Equation}. 
\end{proof}

 The proof of the following theorem is based on the proof of Theorem 2.24 in \cite{Villegas}. 
\begin{theorem}
	\label{Interface - Theorem Adjoint Operator}
	Consider the port-Hamiltonian operator $A_{\mathcal{Q}_0}$ defined in \eqref{Interface - Simplified PH System - Operator A}. Assume that the matrix $W_B$ may be written as \eqref{Interface - Auxiliary Lemma 2 - Matrix Representation}, with $S \in \mathbb{R}^{2 \times 2}$ invertible and $V \in \mathbb{R}^{2 \times 2}$ satisfying $V^{\top}V \leq I$. On the energy space $(X, \|\cdot\|_{\mathcal{Q}_0})$, the Hilbert space adjoint $A_{\mathcal{Q}_0}^{\ast} \colon D(A_{\mathcal{Q}_0}^{\ast}) \subset X \to X$ of the operator $A_{\mathcal{Q}_0}$ is given by
	\begin{align}
		\begin{split}
		D(A_{\mathcal{Q}_0}^{\ast}) &= \left\{ y \in X \hspace{0.1 cm} \Big| \hspace{0.1 cm} \mathcal{Q}_0y \in D(\mathcal{J}_0), \hspace{0.1 cm} f_{I, \mathcal{Q}_0y} = -r e_{I, \mathcal{Q}_0y}, \hspace{0.1 cm}  \begin{bmatrix}
			-(I+V^{\top}) & I + V^{\top}
		\end{bmatrix}  \begin{bmatrix}
		f_{\partial, \mathcal{Q}_0y} \\
		e_{\partial, \mathcal{Q}_0 y}
	\end{bmatrix} = 0 \right\}, \\
		A_{\mathcal{Q}_0}^{\ast} y &= - \mathcal{J}_0 (\mathcal{Q}_0 y), \hspace{0.5 cm} y \in D(A_{\mathcal{Q}_0}^{\ast}). 
	\end{split}
\label{Interface - Adjoint Operator}
	\end{align}
\end{theorem}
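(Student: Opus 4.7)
The plan is to reduce the adjoint problem to the formal skew-symmetry identity (\ref{Interface - Skew-Symmetry of J0 - Representation 2}). Exploiting that $\mathcal{Q}_0$ is a symmetric multiplication operator, for any $x\in D(A_{\mathcal{Q}_0})$ and any $y\in X$ with $\mathcal{Q}_0 y\in D(\mathcal{J}_0)$ I first compute
\[
\langle A_{\mathcal{Q}_0} x, y\rangle_{\mathcal{Q}_0} \;=\; \tfrac{1}{2}\langle \mathcal{J}_0(\mathcal{Q}_0 x),\mathcal{Q}_0 y\rangle_{L^2},
\]
and then apply (\ref{Interface - Skew-Symmetry of J0 - Representation 2}) with $e^1=\mathcal{Q}_0 x$, $e^2=\mathcal{Q}_0 y$ to obtain
\[
\langle A_{\mathcal{Q}_0} x,y\rangle_{\mathcal{Q}_0} \;=\; \langle x, -\mathcal{J}_0(\mathcal{Q}_0 y)\rangle_{\mathcal{Q}_0} \;+\; \tfrac{1}{2}\,\mathrm{BT}(x,y),
\]
where $\mathrm{BT}(x,y)$ denotes the boundary and interface pairing
\[
\mathrm{BT}(x,y) \;=\; \langle e_{\partial,\mathcal{Q}_0 y},f_{\partial,\mathcal{Q}_0 x}\rangle_2+\langle e_{\partial,\mathcal{Q}_0 x},f_{\partial,\mathcal{Q}_0 y}\rangle_2-f_{I,\mathcal{Q}_0 x}e_{I,\mathcal{Q}_0 y}-f_{I,\mathcal{Q}_0 y}e_{I,\mathcal{Q}_0 x}.
\]
This identifies the formal action of the adjoint as $A_{\mathcal{Q}_0}^{\ast}y=-\mathcal{J}_0(\mathcal{Q}_0 y)$, and reduces the theorem to characterising those $y$ for which $\mathrm{BT}(x,y)=0$ for every admissible $x$.

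For the inclusion of the proposed domain into $D(A_{\mathcal{Q}_0}^{\ast})$, I would fix $y$ in the candidate set and verify the two halves of $\mathrm{BT}$ separately. On the interface side, the passivity relation $f_{I,\mathcal{Q}_0 x}=re_{I,\mathcal{Q}_0 x}$ collapses the interface contribution to $-e_{I,\mathcal{Q}_0 x}\bigl(re_{I,\mathcal{Q}_0 y}+f_{I,\mathcal{Q}_0 y}\bigr)$, which vanishes precisely under $f_{I,\mathcal{Q}_0 y}=-re_{I,\mathcal{Q}_0 y}$. On the boundary side, the hypothesis $W_B=S[I+V,\,I-V]$ combined with Lemma~\ref{Interface - Auxiliary Lemma 3} gives $[f_{\partial,\mathcal{Q}_0 x};e_{\partial,\mathcal{Q}_0 x}]^{\top}=[I-V;-(I+V)]\lambda_x$ for some $\lambda_x\in\mathbb{R}^2$, and a direct computation writes the boundary portion of $\mathrm{BT}$ as $\lambda_x^{\top}\bigl[(I-V^{\top})e_{\partial,\mathcal{Q}_0 y}-(I+V^{\top})f_{\partial,\mathcal{Q}_0 y}\bigr]$. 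Arbitrariness of $\lambda_x$ then forces the stated boundary condition on $\mathcal{Q}_0 y$.

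The reverse inclusion is the technically delicate part. I would proceed in two stages: first a regularity step, then a decoupling step. For regularity, I restrict the defining identity of the adjoint to test elements $x$ whose support (of $\mathcal{Q}_0 x$) avoids both the interface and the boundary, so that $\mathrm{BT}(x,y)=0$ automatically. This matches $A_{\mathcal{Q}_0}^{\ast}y$ against $-\mathcal{J}_0(\mathcal{Q}_0 y)$ in $\mathcal{D}'((a,0)\cup(0,b))$, which together with the intrinsic jump behaviour encoded in $\mathbf{d}_0$ and $\mathbf{d}_0^{\ast}$ yields $\mathcal{Q}_0 y\in D(\mathcal{J}_0)$. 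Once this regularity is established, the Green's identity used above is fully available, and the boundary/interface conditions on $y$ are extracted by choosing $x$ so as to probe $\mathrm{BT}$ in prescribed directions. Lemma~\ref{Interface - Auxiliary Lemma Adjoint Operator} is the essential tool here: it realises every element of $\ker(W_B)$ as the boundary port data of some $x\in D(A_{\mathcal{Q}_0})$, thereby forcing the boundary condition on $y$. An analogous realisation at the interface, which can be obtained by an explicit construction modelled on the proof of Lemma~\ref{Interface - Auxiliary Lemma Adjoint Operator} but with the bump localised near $z=0$ and the passivity condition imposed, decouples the interface contribution and forces $f_{I,\mathcal{Q}_0 y}=-re_{I,\mathcal{Q}_0 y}$.

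The main obstacle I expect is the reverse inclusion, specifically the combination of (i) the regularity argument around the interface, where the jump structure of $\mathbf{d}_0^{\ast}$ must be handled carefully so that the distributional identity produces genuine $H^1$-regularity on each subinterval, and (ii) the construction of enough test elements in $D(A_{\mathcal{Q}_0})$ to decouple the interface condition on $y$ from the boundary condition. The forward inclusion and the identification of the formal action are essentially algebraic consequences of the Green's identity and Lemma~\ref{Interface - Auxiliary Lemma 3}, whereas the surjectivity onto the relevant kernels, and the consistency of this with the passivity-constrained interface data, is where the bulk of the work lies.
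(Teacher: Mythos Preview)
Your proposal is correct and follows essentially the same route as the paper: both arguments hinge on the Green-type identity \eqref{Interface - Skew-Symmetry of J0 - Representation 2}, obtain the regularity $\mathcal{Q}_0 y\in D(\mathcal{J}_0)$ by testing against elements vanishing at boundary and interface, parameterise the admissible boundary ports via Lemma~\ref{Interface - Auxiliary Lemma 3}, invoke Lemma~\ref{Interface - Auxiliary Lemma Adjoint Operator} to realise every element of $\ker(W_B)$, and read off the interface condition from the passivity relation. Your two-inclusion layout is in fact more explicit than the paper, which treats only the necessity direction in detail.

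One remark: your boundary computation produces $\lambda_x^{\top}\bigl[(I-V^{\top})e_{\partial,\mathcal{Q}_0 y}-(I+V^{\top})f_{\partial,\mathcal{Q}_0 y}\bigr]$, hence the condition $\begin{bmatrix}-(I+V^{\top}) & I-V^{\top}\end{bmatrix}\begin{bmatrix}f_{\partial}\\ e_{\partial}\end{bmatrix}=0$, whereas the statement has $I+V^{\top}$ in the second block. This is a typo in the paper: in its proof the kernel representation is written as $\begin{bmatrix}I+V\\-(I+V)\end{bmatrix}\lambda_x$ instead of the correct $\begin{bmatrix}I-V\\-(I+V)\end{bmatrix}\lambda_x$ from Lemma~\ref{Interface - Auxiliary Lemma 3} (compare \eqref{Interface - Theorem Contraction Semigroup - Boundary Flow and Effort Representation}). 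Your version is the right one.
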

\begin{proof}
	Recall from Section \ref{Section Operators} that the domain of the Hilbert space adjoint can be written as \eqref{Adjoint Operator Domain wrt Frechet Riesz}. Let $x \in \mathbf{H}_0^1(a,b) \times H_0^1([a,b], \mathbb{R}) \subset D(A_{\mathcal{Q}_0})$, with $\mathbf{H}_0^1(a,b)$ the space of $H^1$-functions vanishing both at the boundary and the interface defined in \eqref{Interface Paper - Subspace H(a,b)}. If $y \in D(A_{\mathcal{Q}_0}^{\ast})$, then the following holds:
	\begin{align*}
	 \frac{1}{2} \langle \mathcal{J}_0 (\mathcal{Q}_0 x) , \mathcal{Q}_0 y \rangle_{L^2} =	\langle A_{\mathcal{Q}_0} x , y \rangle_{\mathcal{Q}_0}  = \langle x, A_{\mathcal{Q}_0}^{\ast} y  \rangle_{\mathcal{Q}_0} = \frac{1}{2} \langle \mathcal{Q}_0 x , A_{\mathcal{Q}_0}^{\ast}y  \rangle_{L^2}. 
	\end{align*}
From this equation we infer that $\mathcal{Q}_0 y$ has to be an element of $D(\mathcal{J}_0)$ for all $y \in D(A_{\mathcal{Q}_0}^{\ast})$. This is the first constitutive relation of $D(A_{\mathcal{Q}_0}^{\ast})$, and it allows us to define boundary and interface port variables with respect to $\mathcal{Q}_0y$. By virtue of equation \eqref{Interface - Skew-Symmetry of J0 - Representation 2}, we get for all $x \in D(A_{\mathcal{Q}_0})$, $y \in D(A_{\mathcal{Q}_0}^{\ast})$,
\begin{align}
	\begin{split}
	2 \langle A_{\mathcal{Q}_0} x, y \rangle_{\mathcal{Q}_0} &=  \langle \mathcal{J}_0 (\mathcal{Q}_0 x), \mathcal{Q}_0 y \rangle_{L^2} \\
	&= -  \langle \mathcal{Q}_0 x, \mathcal{J}_0 (\mathcal{Q}_0 y) \rangle_{L^2} + \langle e_{\partial, \mathcal{Q}_0 y} , f_{\partial, \mathcal{Q}_0 x} \rangle_2 + \langle e_{\partial, \mathcal{Q}_0 x}, f_{\partial, \mathcal{Q}_0 y} \rangle_2 \\
	&\hspace{0.5 cm} - f_{I, \mathcal{Q}_0 x} e_{I, \mathcal{Q}_0 y} - f_{I, \mathcal{Q}_0 y} e_{I, \mathcal{Q}_0 x}. 
	\end{split} 
\label{Interface - Theorem Adjoint Operator - Relation in Proof}
\end{align}
We need to get rid of the boundary and interface terms. With $\Sigma$ defined in \eqref{Matrix Sigma}, we may write
\begin{align*}
	\langle e_{\partial, \mathcal{Q}_0 y} , f_{\partial, \mathcal{Q}_0 x} \rangle_2 + \langle e_{\partial, \mathcal{Q}_0 x}, f_{\partial, \mathcal{Q}_0 y} \rangle_2 =   \begin{bmatrix}
		f_{\partial, \mathcal{Q}_0x}^{\top} &
		e_{\partial, \mathcal{Q}_0y}^{\top} 
	\end{bmatrix} \Sigma \begin{bmatrix}
		f_{\partial, \mathcal{Q}_0y} \\
		e_{\partial, \mathcal{Q}_0y}
	\end{bmatrix}.
\end{align*}
By assumption, $\begin{bmatrix} 
	f_{\partial, \mathcal{Q}_0x} \\
	e_{\partial, \mathcal{Q}_0x} 
\end{bmatrix} \in \ker (W_B)$ for all $x \in D(A_{\mathcal{Q}_0})$. By virtue of Lemma \ref{Interface - Auxiliary Lemma 3},  we have
\begin{align*}
	\begin{bmatrix} 
		f_{\partial, \mathcal{Q}_0x} \\
		e_{\partial, \mathcal{Q}_0x} 
	\end{bmatrix} = \begin{bmatrix}
		I + V \\
		- (I + V) 
	\end{bmatrix} \lambda_x
\end{align*}
for some $\lambda_x \in \mathbb{R}^2$. Lemma \ref{Interface - Auxiliary Lemma Adjoint Operator} yields that for every $\begin{bmatrix}
	f \\
	e
\end{bmatrix} \in \ker (W_B)$ there exists some $x \in D(A_{\mathcal{Q}_0})$ such that $\begin{bmatrix} 
	f_{\partial, \mathcal{Q}_0x} \\
	e_{\partial, \mathcal{Q}_0x} 
\end{bmatrix} = \begin{bmatrix}
	f \\
	e
\end{bmatrix}$. Now, since $\dim(\ker(W_B)) = 2$, the equality
\begin{align*}
	\lambda_x^{\top} \begin{bmatrix}
		I + V^{\top} & - (I+V^{\top}) 
	\end{bmatrix} \Sigma \begin{bmatrix}
		f_{\partial, \mathcal{Q}_0y } \\
		e_{\partial, \mathcal{Q}_0y}
	\end{bmatrix} = 0
\end{align*}
has to hold for every $\lambda_x \in \mathbb{R}^2$. Hence, the second constitutive condition of $D(A_{\mathcal{Q}_0}^{\ast})$ is given by
\begin{align*}
	\begin{bmatrix} 
		f_{\partial, \mathcal{Q}_0y} \\
		e_{\partial, \mathcal{Q}_0y} 
	\end{bmatrix} \in \ker \left( \begin{bmatrix}
		-(I + V^{\top}) &  I + V^{\top}
	\end{bmatrix}  \right).
\end{align*}

Lastly, we need to get rid of the interface terms. Due to the interface relation \eqref{Interface - Interface Passivity Relation}, we can write

\begin{align*}
 f_{I, \mathcal{Q}_0x} e_{I, \mathcal{Q}_0y} + f_{I, \mathcal{Q}_0y} e_{I, \mathcal{Q}_0x} =
	 r e_{I, \mathcal{Q}_0x} e_{I,\mathcal{Q}_0y} + f_{I, \mathcal{Q}_0y} e_{I, \mathcal{Q}_0x} \stackrel{!}{=} 0.
\end{align*}
Hence, the last constitutive condition of $D(A_{\mathcal{Q}_0}^{\ast})$ is given by
\begin{align*}
	f_{I, \mathcal{Q}_0y} = - r e_{I, \mathcal{Q}_0y}.
\end{align*}
Altogether, the domain of the adjoint operator $A_{\mathcal{Q}_0}^{\ast}$ is precisely given as in \eqref{Interface - Adjoint Operator}. For all $x \in D(A_{\mathcal{Q}_0})$, $y \in D(A_{\mathcal{Q}_0}^{\ast})$, equation \eqref{Interface - Theorem Adjoint Operator - Relation in Proof} becomes
\begin{align*}
	\langle A_{\mathcal{Q}_0} x , y \rangle_{\mathcal{Q}_0} = \frac{1}{2} \langle \mathcal{J}_0 (\mathcal{Q}_0 x), \mathcal{Q}_0 y \rangle_{L^2} = - \frac{1}{2} \langle \mathcal{Q}_0 x, \mathcal{J}_0 (\mathcal{Q}_0 y) \rangle_{L^2} = \langle x , -\mathcal{J}_0 (\mathcal{Q}_0 y) \rangle_{\mathcal{Q}_0}.  
\end{align*}
Thus, it holds that
\begin{align*}
	A_{\mathcal{Q}_0}^{\ast} y = - \mathcal{J}_0 (\mathcal{Q}_0 y), \hspace{0.5 cm} y \in D(A_{\mathcal{Q}_0}^{\ast}).
\end{align*}
This completes the proof. 
\end{proof}

\subsection{Resolvent associated with $\mathbf{A_{\mathcal{Q}_0}}$}
\label{Subsection Resolvent}
In this section, we want to determine the resolvent operator $R(\lambda, A_{\mathcal{Q}_0})$ of the port-Hamiltonian operator $A_{\mathcal{Q}_0}$ defined in \eqref{Interface - Simplified PH System - Operator A} for $\lambda \in (0, \infty) \subset \rho(A)$. For that purpose, we will basically replicate Step 3 in the proof of Theorem \ref{Interface - Theorem A Generates a Contraction Semigroup}. Recall that for $\mathcal{Q}_0 \equiv I$, the solution of $(I-A_I)x = y$ for some $y \in X$ is given by $\varphi_y(\cdot; x(a), x(0^+)) \in D(A_I)$ defined in \eqref{Interface - Solution of ODE left}, \eqref{Interface - Solution of ODE right}, and \eqref{Interface - Solution of ODE complete}. The initial value $x(a)$ can be calculated through \eqref{Interface - D(A) condition - representation 2}, and $x(0^+)$ is given by
\begin{align*}
	x(0^+) = \begin{bmatrix}
		x_1(0^+) \\
		x_2(0) 
	\end{bmatrix} = \begin{bmatrix}
		- \frac{1}{r} \left( \varphi^-(0^-; x(a)) \right)_2 + \left( \varphi^-(0^-; x(a)) \right)_1 \\
		\left( \varphi^-(0^-; x(a)) \right)_2 
	\end{bmatrix}. 
\end{align*}
This yields
\begin{align*}
	\| (I-A)^{-1} y \|_{L^2}^2 &= \|\varphi_y(\cdot; x(a), x(0^+)) \|_{L^2}^2 \\
	&= \int_{a}^{b} \|\varphi_y(z; x(a), x(0^+)) \|_2^2 \, dz \\
	&= \int_{a}^{b} \left[ \left( c_0(z) \varphi^-(z; x(a)) + \overline{c}_0(z)\varphi^+(z; x(0^+)) \right)_1 \right]^2 \\
	&\hspace{0.5 cm} + \left[  \left( c_0(z) \varphi^-(z; x(a)) + \overline{c}_0(z)\varphi^+(z; x(0^+)) \right)_2 \right]^2 \, dz \\
	&= \int_{a}^{0} \| \varphi^-(z;x(a)) \|_2^2 \, dz + \int_{0}^{b} \| \varphi^+(z;x(0^+))\|_2^2 \, dz.
\end{align*}
Thus, the solution of the equation $(\lambda I - A_{\mathcal{Q}_0})x = y$ for some $y \in X$, $\lambda >0$ coincides with $R(\lambda, A_{\mathcal{Q}_0})y$. Subsequently, we will exploit this fact in order to compute the resolvent operator of $A_{\mathcal{Q}_0}$.
\vspace{0.5 cm}\\
To this end, consider again the energy space $(X, \| \cdot\|_{\mathcal{Q}_0})$ with $\mathcal{Q}_0= c_0 \mathcal{Q}^- + \overline{c}_0 \mathcal{Q}^+ \in \mathcal{L}(X)$ a coercive matrix operator, and let the assumptions \ref{MatrixQ0Assumption1}-\ref{ContractionAssumption} hold, where $r > 0$. Let $y \in X$ and $\lambda > 0$ be arbitary. Recall that on the complementary domains $[a,0)$ and $(0,b]$, the operator $\mathcal{J}_0$ given in \eqref{Interface - Operator J0} acts as the differential operator $P_1 \frac{d}{dz}$, with $P_1$ defined in \eqref{Interface - Matrix P1}. Thus, the equation $(\lambda I-A_{\mathcal{Q}_0})x = y$ can be equivalently written as
\begin{align*}
	\lambda x^- - P_1 \frac{d}{dz} (\mathcal{Q}^- x^-) &= y^- \hspace{0.5 cm} \text{on } [a,0), \\
	\lambda x^+ - P_1 \frac{d}{dz}(\mathcal{Q}^+ x^+) &= y^+  \hspace{0.5 cm} \text{on } (0,b].
\end{align*}
Since $\mathcal{Q}^{\pm} \in \mathcal{C}^1([a,b], \mathbb{R}^{2 \times 2})$, observe that we may write
\begin{align*}
	\frac{d}{dz} x^{\pm}(z) = (\mathcal{Q}^{\pm})^{-1} (z) \left( \lambda P_1 - \frac{d}{dz} \mathcal{Q}^{\pm}(z) \right) x^{\pm}(z) - (\mathcal{Q}^{\pm})^{-1}(z) P_1 y^{\pm}(z)
	%\label{Interface - Equivalent ODE of (I-A)x equals y in General Case}
\end{align*}
on the respective subdomain. 
The variation of constants formula yields
\begin{align*}
	\varphi_{\lambda, y}^-(z; x(a)) = \Lambda_{\lambda}^-(z,a) x(a) - \int_{a}^{z} \Lambda_{\lambda}^-(z,s)(\mathcal{Q}^-)^{-1}(s) P_1 y^-(s) \, ds,  \hspace{0.5 cm}  z \in [a,0),
\end{align*}
and
\begin{align*}
	\varphi_{\lambda, y}^+(z; x(0^+)) = \Lambda_{\lambda}^+(z,0) x(0^+) - \int_{0}^{z} \Lambda_{\lambda}^+(z,s) (\mathcal{Q}^+)^{-1}(s) P_1 y^+(s) \, ds, \hspace{0.5 cm} z \in (0,b],
\end{align*}
with $\Lambda_{\lambda}^-(\cdot,a)$ and $\Lambda_{\lambda}^+(\cdot,0^+)$ the respective transition matrices defined on the respective subdomain. 
The composed solution of $(I-A)x = y$ is therefore given by
\begin{align*}
	\varphi_{\lambda, y}(z; x(a), x(0^+)) = c_0(z) \varphi_{\lambda, y}^-(z; x(a)) + \overline{c}_0(z) \varphi_{\lambda, y}^+(z; x(0^+)), \hspace{0.5 cm} z \in [a,b]. 
\end{align*}
For the remainder of this section, we will omit indicating the dependence on $\lambda$ and $y$. Analogously to the proof of Theorem \ref{Interface - Theorem A Generates a Contraction Semigroup}, the initial value $x(a)$ is chosen such that the boundary condition
\begin{align*}
	W_B R_0 \begin{bmatrix}
		\mathcal{Q}(b) \varphi(b; x(a)) \\
		\mathcal{Q}(a) \varphi(a; x(a))
	\end{bmatrix} = 0
\end{align*}
is satisfied, and the value $x(0^+)$ is determined by means of the interface relations. In the following, we will compute those values.
\vspace{0.5 cm}\\
According to \eqref{Interface - Simplified Continuity Equation}-\eqref{Interface - Simplified Balance Equation}, the interface port variables are given as follows:
	\begin{align*}
		f_I &= \left( \mathcal{Q}^-(0) \varphi^-(0^-; x(a)) \right)_2 = \left( \mathcal{Q}^+(0) x(0^+) \right)_2 \\
		- e_I &=  \left( \mathcal{Q}^+(0) x(0^+) \right)_1 - \left( \mathcal{Q}^-(0) \varphi^-(0^-; x(a)) \right)_1. 
	\end{align*}
	More precisely, 
	\begin{align}
		\begin{split}
		f_I &= Q_{12}^-(0) \left( \varphi^-(0^-; x(a)) \right)_1 + Q_{22}^- (0) \left( \varphi^-(0^-; x(a)) \right)_2 \\
		&= Q_{12}^+(0) x_1(0^+) + Q_{22}^+(0) x_2(0^+). 
		\end{split}
	\label{Interface - Calculation x(0+) - fI}
	\end{align}
	The exact value of $f_I$ is defined through the first equation.  Furthermore, 
	\begin{align*}
		- e_I &=  Q_{11}^+(0) x_1(0^+) + Q_{12}^+(0) x_2(0^+) \\
		&\hspace{0.5 cm} - \left( Q_{11}^-(0) \left( \varphi^-(0^-; x(a)) \right)_1 + Q_{12}^-(0) \left( \varphi^-(0^-; x(a)) \right)_2 \right). 
	\end{align*}
	The entries of the initial value $x(0^+)$ can be determined by exploiting the passivity relation \eqref{Interface - Interface Passivity Relation}: we have 
	\begin{align}
		&f_I = r e_I  \nonumber \\
		\Leftrightarrow \hspace{0.5 cm} &Q_{11}^+(0) x_1(0^+) + Q_{12}^+(0) x_2(0^+) = - \frac{1}{r} f_I + \left( \mathcal{Q}^-(0) \varphi^-(0^-; x(a)) \right)_1 \nonumber \\
		\Leftrightarrow \hspace{0.5 cm} &x_1(0^+) = - \frac{1}{r Q_{11}^+(0)} f_I + \frac{1}{Q_{11}^+(0)} \left( \mathcal{Q}^-(0) \varphi^-(0^-; x(a))  \right)_1 - \frac{Q_{12}^+(0)}{Q_{11}^+(0)} x_2(0^+). \label{Interface - Calculation x(0+) - x1(0+)}
	\end{align}
	Plugging this into the second equation of \eqref{Interface - Calculation x(0+) - fI} yields
	\begin{align*}
		f_I &= \frac{Q_{12}^+(0)}{Q_{11}^+(0)} \left( - \frac{1}{r} f_I  + \left( \mathcal{Q}^-(0) \varphi^-(0^-; x(a)) \right)_1 \right) \\
		&\hspace{0.5 cm} + \left( - \frac{(Q_{12}^+(0))^2}{Q_{11}^+(0)} + Q_{22}^+(0) \right) x_2(0^+), 
	\end{align*}
	whence
	\begin{align*}
		x_2(0^+) = \frac{Q_{11}^+(0)}{(Q_{12}^+(0))^2 + Q_{11}^+(0) Q_{22}^+(0)} \left( f_I - \frac{Q_{12}^+(0)}{Q_{11}^+(0)} \left(  - \frac{1}{r} f_I + \left( \mathcal{Q}^-(0) \varphi^-(0^-; x(a)) \right)_1 \right) \right). 
	\end{align*}
	Thus, the initial value $x(0^+)$ is uniquely defined and depends on the initial value $x(a)$. Letting $\tilde{c}_1 = \frac{Q_{11}^+(0)}{(Q_{12}^+(0))^2 + Q_{11}^+(0) Q_{22}^+(0)} \left( 1 + \frac{Q_{12}^+(0)}{r Q_{11}^+(0)} \right)$, $\tilde{c}_2 = - \frac{Q_{12}^+(0)}{(Q_{12}^+(0))^2 + Q_{11}^+(0) Q_{22}^+(0)}$, we may write
	\begin{align*}
		x_2(0^+) = \tilde{c}_1 f_I + \tilde{c}_2 \left( \mathcal{Q}^-(0) \varphi^-(0^-; x(a)) \right)_1.
	\end{align*}
	Furthermore, letting $\tilde{c}_3 = - \frac{1 + r Q_{12}^+(0) \tilde{c}_1}{r Q_{11}^+(0)}$, $\tilde{c}_4 = \frac{1 - Q_{12}^+(0) \tilde{c}_2}{Q_{11}^+(0)}$, and inserting $x_2(0^+)$ in \eqref{Interface - Calculation x(0+) - x1(0+)}, we get
	\begin{align*}
		x_1(0^+) = \tilde{c}_3 f_I + \tilde{c}_4 \left( \mathcal{Q}^-(0) \varphi^-(0^-; x(a)) \right)_1. 
	\end{align*}
	Finally, the choices
	\begin{align*}
		c_1 &= \tilde{c}_3 Q_{12}^-(0) + \tilde{c}_4 Q_{11}^-(0), \hspace{0.3 cm } c_2 = \tilde{c}_3 Q_{22}^-(0) + \tilde{c}_4 Q_{12}^-(0), \\
		c_3 &= \tilde{c}_1 Q_{12}^-(0) + \tilde{c}_2 Q_{11}^-(0), \hspace{0.3 cm} c_4 = \tilde{c}_1 Q_{22}^-(0) + \tilde{c}_2 Q_{12}^-(0)
	\end{align*}
	yield
	\begin{align}
		x_1(0^+) &= c_1 \left( \varphi^-(0^-; x(a)) \right)_1 + c_2 \left( \varphi^-(0^-; x(a)) \right)_2 \label{Interface - x1(0+)} \\
		x_2(0^+) &= c_3 \left( \varphi^-(0^-; x(a)) \right)_1 + c_4 \left( \varphi^-(0^-; x(a)) \right)_2.
		\label{Interface - x2(0+)}
	\end{align}
Defining $C_0 = \begin{bmatrix}
	c_1 & c_2 \\
	c_3 & c_4
\end{bmatrix} \in \mathbb{R}^{2 \times 2}$, we may write
\begin{align*}
	x(0^+) = C_0 \varphi^-(0^-; x(a)). 
\end{align*}
We continue with the determination of the initial value $x(a) \in \mathbb{R}^2$. It has to be chosen such that
	\begin{align}
		\label{Interface - Calculation x(a) Reference}
		0 = W_B \begin{bmatrix}
			f_{\partial} \\
			e_{\partial}
		\end{bmatrix} = W_B R_{\extern} \begin{bmatrix}
			\mathcal{Q}_0(b)\varphi^+(b; x(0^+)) \\
			\mathcal{Q}_0(a) \varphi^-(a; x(a))
		\end{bmatrix} = W_B R_{\extern} \begin{bmatrix}
			\mathcal{Q}^+(b) & 0 \\
			0 & \mathcal{Q}^-(a) 
			\end{bmatrix}
		\begin{bmatrix}
			\varphi^+(b; x(0^+)) \\
			 x(a)
		\end{bmatrix}.
	\end{align}
	We again wish to write $\varphi^+(b; x(0^+))$ as $E x(a) + q$ for some $E \in \mathbb{R}^{2 \times 2}$ and $q \in \mathbb{R}^2$. Similarly to the proof of Theorem \ref{Interface - Theorem A Generates a Contraction Semigroup}, we compute that
	\begin{align*}
		E x(a)&= \Lambda^+(b,0) \begin{bmatrix}
			c_1 \left( \Lambda^-(0^-,a) x(a) \right)_1 + c_2 \left( \Lambda^-(0^-,a) x(a) \right)_2 \\
			c_3 \left( \Lambda^-(0^-,a) x(a) \right)_1 + c_4 \left( \Lambda^-(0^-,a) x(a) \right)_2
		\end{bmatrix} \\
	&= \left( \Lambda^+(b,0)C_0\Lambda^-(0^-,a) \right) x(a)	\end{align*}
	and 
	\begin{align*}
		q &= \Lambda^+(b,0) \begin{bmatrix} c_1 \left( \varphi_{\text{inhom}}^-(0^-, x(a)) \right)_1  + c_2 \left( \varphi_{\text{inhom}}^-(0^-, x(a)) \right)_2 \\
			c_3 \left( \varphi_{\text{inhom}}^-(0^-, x(a)) \right)_1 + c_4 \left( \varphi_{\text{inhom}}^-(0^-, x(a)) \right)_2
		\end{bmatrix} \\
		&\hspace{0.5 cm} - \int_{0}^{b} \Lambda^+(b,s) (\mathcal{Q}^+)^{-1}(s) P_1 y^+(s) \, ds \\
		&= \Lambda^+(b,0)C_0\varphi_{\text{inhom}}^-(0^-;x(a)) - \int_{0}^{b} \Lambda^+(b,s) (\mathcal{Q}^+)^{-1}(s) P_1 y^+(s) \, ds, 
	\end{align*}
where
\begin{align*}
	\varphi_{\text{inhom}}^-(0^-, x(a)) = - \int_{a}^{0} \Lambda^-(0^-,s) (\mathcal{Q}^-)^{-1}(s) P_1 y^-(s) \, ds.
\end{align*}
	Following the proof, the initial value $x(a)$ can be calculated from \eqref{Interface - Calculation x(a) Reference} and is given by
	\begin{align}
		\label{Interface - x(a)}
		x(a) = - \left( W_B R_{\extern} \begin{bmatrix}
			\mathcal{Q}^+(b) & 0 \\
			0 & \mathcal{Q}^-(a)
		\end{bmatrix} \begin{bmatrix}
			E \\
			I
		\end{bmatrix} \right)^{-1} W_B R_{\extern} \begin{bmatrix}
			\mathcal{Q}^+(b) & 0 \\
			0 & \mathcal{Q}^-(a)
		\end{bmatrix} \begin{bmatrix}
			q \\
			0
		\end{bmatrix}.
	\end{align}
Finally, we can specify the resolvent operator of $A_{\mathcal{Q}_0}$. For every $\lambda > 0$ and for every $y \in X$ we have
\begin{align*}
	&\hspace{0.5 cm} \|R(\lambda, A_{\mathcal{Q}_0})y \|_{\mathcal{Q}_0}^2 \\
	&= \| \varphi_{\lambda, y}(\cdot; x(a), x(0^+)) \|_{\mathcal{Q}_0}^2 \\
	&= \frac{1}{2} \int_{a}^{b} \left( c_0(z) \varphi^-(z;x(a)) + \overline{c}_0(z) \varphi^+(z; x(0^+)) \right)^{\top} \mathcal{Q}_0(z) \left( c_0(z) \varphi^-(z;x(a)) + \overline{c}_0(z) \varphi^+(z; x(0^+)) \right) \, dz \\
	&= \frac{1}{2} \left(  \int_{a}^{0}  \left( \varphi^-(z; x(a)) \right)^{\top} \mathcal{Q}^-(z) \varphi^-(z;x(a)) \, dz + \int_{0}^{b} \left( \varphi^+(z; x(0^+)) \right)^{\top} \mathcal{Q}^+(z) \varphi^+(z; x(0^+)) \, dz \right),
\end{align*}
with the initial values $x(a)$ and $x(0^+)$ given by \eqref{Interface - x(a)} and \eqref{Interface - x1(0+)}-\eqref{Interface - x2(0+)}, respectively. 
\vspace{0.5 cm}\\
We skip specifying the resolvent in the case that the passivity constant is given by $r =0$. To this end, one would basically need to replicate Case 2 in Step 3 of the proof of Theorem \ref{Interface - Theorem A Generates a Contraction Semigroup}. Instead, we will come back to the scenario where the interface position is moving over time, and prove a crucial stability property of the resulting system. 

\subsection{Stability of the Family of Infinitesimal Generators}
\label{Subsection Stability of the Family of Infinitesimal Generators}
In the last segment of this chapter we shall analyze the simplified port-Hamiltonian system \eqref{Interface - Simplified PH-System} in case of a moving interface. We want to define a family of port-Hamiltonian operators that encompasses the position of the interface, and we want to associate this family with an evolution problem of the form
\begin{align*}
	\dot{x}(t) &= A(t)x(t), \hspace{0.5 cm} 0 \leq s < t \leq \tau, \\
	x(s) &= x_0 \in X.
\end{align*}
We shall impose assumptions such that this family is a stable family in the sense of Definition~\ref{Definition Stable Family of Generators}.
\vspace{0.5 cm}\\
Everything we have proved in Section \ref{Section A Simplified System} can be analogously shown for arbitrary interface positions $l \in (a,b)$. We have already come across the operators $\mathbf{d}_l$ and its formal adjoint $\mathbf{d}_l^{\ast}$ in Subsection \ref{Subsection Balance Equations of the State Variables - Moving Interface}. They are defined by 
\begin{align*}
	D(\mathbf{d}_l) &= H^1([a,b], \mathbb{R}), \\
	\mathbf{d}_lx &= - \left[ \frac{d}{dz} (c_l x) + \frac{d}{dz} (\overline{c}_l x ) \right], \hspace{0.5 cm} x \in D(\mathbf{d}_l),
\end{align*}
and
\begin{align*}
	D(\mathbf{d}_l^{\ast}) &= \left\{ y \in L^2([a,b], \mathbb{R}) \mid y_{|(a,l)} \in H^1((a,l), \mathbb{R}), \hspace{0.1 cm} y_{|(l,b)} \in H^1((l,b), \mathbb{R}) \right\}, \\
	\mathbf{d}_l^{\ast}y &= \left( - \mathbf{d}_l - \left[ \frac{d}{dz} c_l + \frac{d}{dz} \overline{c}_l \right] \right) y, \hspace{0.5 cm} y \in D(\mathbf{d}_l^{\ast}),
\end{align*}
respectively, where $c_l$ and $\overline{c}_l$ are the color functions with a jump discontinuity at the interface position $l$. This allows us to define the formally skew-symmetric matrix differential operator $\mathcal{J}_l \colon D(\mathcal{J}_l) \subset X \to X$ as follows:
\begin{align*}
	D(\mathcal{J}_l) &= \left\{ x \in X \mid x_1 \in D(\mathbf{d}_l^{\ast}), \hspace{0.1 cm} x_2 \in D(\mathbf{d}_l)\right\}, \\
	\mathcal{J}_l x &= \begin{bmatrix}
		0 & \mathbf{d}_l  \\
		- \mathbf{d}_l^{\ast} & 0 
	\end{bmatrix} \begin{bmatrix}
		x_1 \\
		x_2
	\end{bmatrix}, \hspace{0.5 cm} x \in D(\mathcal{J}_l).
\end{align*}
Then the port-Hamiltonian operator $A_{\mathcal{Q}_l} \colon D(A_{\mathcal{Q}_l}) \subset X \to X$ given by
\begin{align*}
	%\begin{split}
		D(A_{\mathcal{Q}_l}) &= \left\{ x \in X \mid \mathcal{Q}_lx \in D(\mathcal{J}_l), \hspace{0.1 cm} f_{\mathcal{Q}_l x} = r e_{\mathcal{Q}_l x}, \hspace{0.1 cm} W_B \begin{bmatrix}
			f_{\partial, \mathcal{Q}_lx} \\
			e_{\partial, \mathcal{Q}_l x}
		\end{bmatrix} = 0 \right\}, \\
		A_{\mathcal{Q}_l}x &= \mathcal{J}_l (\mathcal{Q}_lx), \hspace{0.5 cm}  x \in D(A_{\mathcal{Q}_l}),
%	\end{split}
%	\label{Interface - Simplified PH System - Operator A_l}
\end{align*}
associated with the abstract Cauchy problem
\begin{align*}
	\dot{x}(t) &= A_{\mathcal{Q}_l}x(t), \hspace{0.5 cm} t > 0, \\
	x(0) &= x_0 \in X,
\end{align*}
generates a contraction semigroup on the energy space $(X, \| \cdot \|_{\mathcal{Q}_l})$, where
\begin{align*}
	\mathcal{Q}_l = c_l \mathcal{Q}^- + \overline{c}_l \mathcal{Q}^+ \in \mathcal{L}(X)
\end{align*}
is coercive, if the assumptions of Theorem \ref{Interface - Theorem A Generates a Contraction Semigroup} are satisfied. More precisely, it generates a contraction semigroup $(T_l(t))_{t \geq 0}$ provided that $W_B \Sigma W_B^{\top} \geq 0$, $\rank(W_B) =2$, and $r \geq 0$. Let now the assumptions \ref{MatrixQ0Assumption1}-\ref{ContractionAssumption} hold (see page \pageref{MatrixQ0Assumption1}). If $W_B \Sigma W_B^{\top} > 0$, then  the semigroup $(T_l(t))_{t \geq 0}$ is even of type $C_0(1,- \alpha)$ for some $\alpha >0$, see Theorem~\ref{Interface - Theorem 2 Exponential Stability}. Its Hilbert space adjoint $A_{\mathcal{Q}_l}^{\ast} \colon D(A_{\mathcal{Q}_l}) \subset X \to X$ is accordingly defined as in \eqref{Interface - Adjoint Operator}. Furthermore, the underlying Dirac structure is given by
\begin{align*}
	\mathcal{D}_{\mathcal{J}_l} :=& \left\{ \left( \begin{pmatrix}
		f \\
		f_I \\
		f_{\partial}
	\end{pmatrix} , \begin{pmatrix}
		e \\
		e_I \\
		e_{\partial}
	\end{pmatrix} \right) \in \mathcal{B} \hspace{0.1 cm} \Bigg| \hspace{0.1 cm}  e \in D(\mathcal{J}_l), \, f = \mathcal{J}_le, \, f_I = e_2(0), \right. \\ %\label{Interface - Dirac Structure DJl} \\
	&\hspace{4.22 cm} \left. \, e_I = e_1(0^-) - e_1(0^+), \, \begin{bmatrix}
		f_{\partial} \\
		e_{\partial}
	\end{bmatrix} = R_{\extern} \trace_l(e) \right\}, %\nonumber
\end{align*}
where
\begin{align*}
	\trace_l \colon D(\mathcal{J}_l) \to \mathbb{R}^4, \hspace{0.3 cm} e \mapsto \begin{bmatrix}
		e(b) \\
		e(a)
	\end{bmatrix}.
\end{align*}
Together with the resistive structure
\begin{align*}
	\mathcal{R}_r = \{ (f_I,e_I) \in \mathbb{R}^2 \mid f_I = re_I \},
\end{align*}
the dynamics are geometrically specified by the requirement that for all $t> 0$ it holds that
\begin{align*}
	\left( \begin{pmatrix}
		\frac{\partial}{\partial t} x(\cdot,t) \\
		f_I(t) \\
		f_{\partial}(t)
	\end{pmatrix} ,  \begin{pmatrix}
		\mathcal{Q}_lx(\cdot,t) \\
		e_I(t) \\
		e_{\partial}(t)
	\end{pmatrix} \right) \in \mathcal{D}_{\mathcal{J}_l}, \hspace{0.3 cm} (f_I(t), e_I(t)) \in \mathcal{R}_{r}.
\end{align*}
Lastly, if $r >0$, then for the resolvent it holds that
\begin{align*}
	&\hspace{0.5 cm} \|R(\lambda, A_{\mathcal{Q}_l})y \|_{\mathcal{Q}_l}^2 \\
	&= \frac{1}{2} \left(  \int_{a}^{l}  \left( \varphi^-(z; x(a)) \right)^{\top} \mathcal{Q}^-(z) \varphi^-(z;x(a)) \, dz + \int_{l}^{b} \left( \varphi^+(z; x(l^+)) \right)^{\top} \mathcal{Q}^+(z) \varphi^+(z; x(l^+)) \, dz \right),
\end{align*}
with $y \in X$, $\lambda >0$, and the solutions $\varphi^-(\cdot; x(a))$, $\varphi^+(\cdot; x(l^+))$ as well as the values $x(a)$ and $x(l^+)$ accordingly defined, see Subsection \ref{Subsection Resolvent}.
\vspace{0.5 cm}\\
Unfortunately, the state space $(X, \|\cdot\|_{\mathcal{Q}_l})$ has to be changed for different interface positions $l \in (a,b)$ to guarantee that the aforementioned statements hold true. However, we have a norm equivalence of the family $(\|\cdot\|_{\mathcal{Q}_l})_{l \in (a,b)}$ to the norm $\| \cdot \|_{\mathcal{Q}_0}$: Since $mI \leq \mathcal{Q}^{\pm}(z) \leq MI$ for all $z \in [a,b]$, we get for all $l \in (a,b)$,
\begin{align*}
	\frac{m}{2} \| x\|_{L^2}^2 \leq \| x \|_{\mathcal{Q}_l}^2 \leq \frac{M}{2} \| x \|_{L^2}^2, \hspace{0.5 cm} x \in X.
\end{align*}
In particular, we have
\begin{align*}
	\frac{2}{M} \| x \|_{\mathcal{Q}_0}^2 \leq \| x \|_{L^2}^2 \leq \frac{2}{m} \| x \|_{\mathcal{Q}_0}^2, \hspace{0.5 cm} x \in X. 
\end{align*}
The norm equivalence between $\| \cdot \|_{\mathcal{Q}_0}$ and $\| \cdot \|_{\mathcal{Q}_l}$ for some $l \in (a,b)$ follows immediately from the preceding inequalities:
\begin{align}
	\label{Interface - Norm Equivalence}
	\frac{m}{M} \| x \|_{\mathcal{Q}_0}^2 \leq \frac{m}{2} \| x \|_{L^2}^2 \leq \| x\|_{\mathcal{Q}_l}^2 \leq \frac{M}{2} \| x \|_{L^2}^2 \leq \frac{M}{m} \|x \|_{\mathcal{Q}_0}^2, \hspace{0.5 cm} x \in X.
\end{align}
Note that the constants $\underline{M} = \sqrt{\frac{m}{M}}$ and $\overline{M}= \sqrt{\frac{M}{m}}$ are independent of the interface position $l \in (a,b)$.
\vspace{0.5 cm}\\
We now want to show the following: Let  $0 < \tau < \infty$, and let $l \colon [0, \tau] \to (a,b)$ be continuously differentiable with $\dot{l} \in \mathcal{C}([0,\tau], \mathbb{R})$. Let $(A(t))_{t \in [0, \tau]}$ be a family of port-Hamiltonian operators, where  $A(t) = A_{\mathcal{Q}_{l(t)}}\colon D(A(t))\subset X \to X$ is given by
\begin{align}
	\begin{split}
	D(A(t)) &= \left\{ x \in X \mid \mathcal{Q}_{l(t)} x \in D(\mathcal{J}_{l(t)}), \hspace{0.1 cm} f_I(t) = re_I(t), \hspace{0.1 cm} W_B \begin{bmatrix}
		f_{\partial}\\
		e_{\partial}
		\end{bmatrix} = 0 \right\}, \\
	A(t)x &= \mathcal{J}_{l(t)} (\mathcal{Q}_{l(t)} x)  \\
	&= \begin{bmatrix}
		0 & \mathbf{d}_{l(t)} \\
		- \mathbf{d}_{l(t)}^{\ast} & 0 
	\end{bmatrix} (\mathcal{Q}_{l(t)} x), \hspace{0.5 cm} x \in D(A(t)),
\end{split}
\label{Interface - Operator Family A(t)}
\end{align} 
with $e_I(t) = e_{I, \mathcal{Q}_{l(t)}x}$, $f_I(t) =  f_{I, \mathcal{Q}_{l(t)}x}$. The operators $\mathbf{d}_{l(t)}$ and $\mathbf{d}_{l(t)}^{\ast}$ constituting the matrix differential operator $\mathcal{J}_{l(t)} \colon D(\mathcal{J}_{l(t)}) \subset X \to X$ given by
\begin{align}
	\begin{split}
		D(\mathcal{J}_{l(t)}) &= D(\mathbf{d}_{l(t)}^{\ast}) \times D(\mathbf{d}_{l(t)}), \\
		\mathcal{J}_{l(t)}x &= \begin{bmatrix}
			0 & \mathbf{d}_{l(t)} \\
			- \mathbf{d}_{l(t)}^{\ast} & 0 
		\end{bmatrix} x, \hspace{0.5 cm} x \in D(\mathcal{J}_{l(t)}), 
	\end{split}
\label{Interface - Operator Jl(t)}
\end{align} 
are precisely defined as the operators $\mathbf{d}(t)$ and $\mathbf{d}^{\ast}(t)$ given in \eqref{Interface Paper - Operator d(t)} and \eqref{Interface Paper - Operator d(t)ast}, respectively. Furthermore, the coercive operator $\mathcal{Q}_{l(t)} \in \mathcal{L}(X)$, $t \in [0, \tau]$, is given by 
\begin{align}
	\mathcal{Q}_{l(t)} = c_{l}(\cdot,t) \mathcal{Q}^- + \overline{c}_{l}(\cdot,t) \mathcal{Q}^+.
	\label{Interface - Operator Ql(t)}
\end{align}
We claim that, under certain conditions, this is a stable family on the state space $(X, \| \cdot \|_{\mathcal{Q}_0})$.
\begin{remark}
	We changed the notation of the operators $\mathbf{d}(t)$ and $\mathbf{d}^{\ast}(t)$ to $\mathbf{d}_{l(t)}$ and $\mathbf{d}_{l(t)}^{\ast}$, respectively, to emphasize that the operators explicitly depend on the moving interface position $l(t)$, $t >0$. Recall that the operators $\mathbf{d}(t)$ and $\mathbf{d}^{\ast}(t)$ introduced in Subsection \ref{Subsection Balance Equations of the State Variables - Moving Interface} depend implicitly on the interface position through their definition with respect to the color functions $c_l$ and $\overline{c}_l$ that we took to be part of the extended state variable $\tilde{x} \in \tilde{X} = L^2([a,b], \mathbb{R}^4)$. Since this is not the case for the simplified system discussed throughout Section \ref{Section A Simplified System}, the new notation helps to distinguish the different scenarios.
\end{remark}
If the assumptions \ref{MatrixQ0Assumption1}-\ref{ContractionAssumption} hold, then, due to the norm equivalence  \eqref{Interface - Norm Equivalence} of the norms of the respective energy spaces $(X, \|\cdot\|_{\mathcal{Q}_{l(t)}})$, we can at least guarantee that the family $(A(t))_{t \in [0,\tau]}$ is a family of infinitesimal generators of $C_0$-semigroups on $(X, \| \cdot \|_{\mathcal{Q}_0})$ (or any other state space $(X, \|\cdot\|_{\mathcal{Q}_{\hat{l}}})$ with $\hat{l} \in (a,b)$), which are of type $C_0(\overline{M},0)$. However, as we will see, this is not sufficient to prove the stability of this family. Thus, we need to impose more restrictive assumptions. These are given as follows:
\begin{enumerate}[label =($\tilde{\mathrm{A}}$\arabic*)]
	\item \label{AssumptionP1MatricesQ} The matrix operators $\mathcal{Q}^{\pm} \in \mathcal{C}^1([a,b], \mathbb{R}^{2 \times 2})$ defining the coercive operator $\mathcal{Q}_{l(t)} \in \mathcal{L}(X)$ are diagonal matrix operators, i.e., 
	\begin{align*}
		\mathcal{Q}^{\pm}(z) = \begin{bmatrix}
			Q_{11}^{\pm}(z) & 0 \\
			0 & Q_{22}^{\pm}(z)
		\end{bmatrix}, \hspace{0.5 cm} z \in [a,b],
	\end{align*} 
and they satisfy
\begin{align*}
	\frac{Q_{11}^+}{Q_{11}^-}(0) = 1 \hspace{0.3 cm} \text{and} \hspace{0.3 cm} \frac{Q_{11}^+}{Q_{11}^-}(z) = \frac{Q_{22}^+}{Q_{22}^-}(z), \hspace{0.5 cm} z \in [a,b].
\end{align*}
	\item \label{AssumptionP2ConservationAtInterface} $r =0 $, $\rank(W_B) = 2$, and $W_B \Sigma W_B^{\top} \geq 0$.
\end{enumerate}
Assumption \ref{AssumptionP1MatricesQ} implies that the ratios of the respective physical properties of the subsystems are identical on the spatial domain $[a,b]$, and that the physical properties even coincide in $z = 0$. Further, assumption \ref{AssumptionP2ConservationAtInterface} implies that $A(t)$ generates a contraction semigroup on the respective energy space $(X, \|\cdot\|_{\mathcal{Q}_{l(t)}})$, where there is no power flow at the interface position, see Theorem~\ref{Interface - Theorem A Generates a Contraction Semigroup}.
\vspace{0.5 cm}\\
Since we can prove the stability neither directly nor by means of Theorem \ref{Theorem 5.2.2 in Pazy}, we make use of an extension of a Lyapunov theorem \cite{Datko}.

\begin{theorem}
	\label{Interface - Theorem Stability of A(t)}
	Let $0 < \tau < \infty$, and let $l \colon [0, \tau] \to (a,b)$ be continuously differentiable with $\dot{l} \in \mathcal{C}([0,\tau], \mathbb{R})$. Let the assumptions \ref{AssumptionP1MatricesQ}-\ref{AssumptionP2ConservationAtInterface} hold. Consider the family $(A(t))_{t \in[0,\tau]}$ of port-Hamiltonian operators given by \eqref{Interface - Operator Family A(t)}-\eqref{Interface - Operator Ql(t)} defined on $(X, \|\cdot\|_{\mathcal{Q}_0})$. Then there exists some $\omega > 0$ such that for every $t \in [0, \tau]$ and for every $x \in D(A(t))$,
	\begin{align}
		\langle A(t)x, x \rangle_{\mathcal{Q}_0} \leq \omega \| x\|_{\mathcal{Q}_0}^2.
		\label{Interface - Necessary Condition Stablity of Family}
	\end{align} 
\end{theorem}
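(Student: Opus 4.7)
The plan is to exploit the dissipativity of each $A(t)$ on its own energy space and to control the discrepancy caused by measuring things in the fixed inner product $\langle\cdot,\cdot\rangle_{\mathcal{Q}_0}$. First, since the assumptions \ref{AssumptionP1MatricesQ}--\ref{AssumptionP2ConservationAtInterface} are precisely those of Theorem \ref{Interface - Theorem A Generates a Contraction Semigroup} applied with interface at $l(t)$, we have $\langle A(t)x, x\rangle_{\mathcal{Q}_{l(t)}} \leq 0$ for every $t \in [0,\tau]$ and every $x \in D(A(t))$. Writing $\mathcal{M}(t) := \mathcal{Q}_0 - \mathcal{Q}_{l(t)}$, this yields
\[
2\langle A(t)x, x\rangle_{\mathcal{Q}_0} = 2\langle A(t)x, x\rangle_{\mathcal{Q}_{l(t)}} + \langle A(t)x, \mathcal{M}(t)x\rangle_{L^2} \leq \langle A(t)x, \mathcal{M}(t)x\rangle_{L^2},
\]
so it suffices to bound $\langle A(t)x, \mathcal{M}(t)x\rangle_{L^2}$ by $2\omega \|x\|_{\mathcal{Q}_0}^2$ uniformly in $t$ and $x$.

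Next, I would unpack $\mathcal{M}(t)$. Assumption \ref{AssumptionP1MatricesQ} lets me introduce the common ratio $\rho \in \mathcal{C}^1([a,b],\mathbb{R})$ defined by $\rho(z) = Q_{jj}^+(z)/Q_{jj}^-(z)$ (the same value for $j=1,2$), whence $\mathcal{Q}_l = (c_l + \overline{c}_l \rho)\mathcal{Q}^-$ for every interface position $l$. Consequently $\mathcal{M}(t)$ is a diagonal multiplication operator supported on $J(t) := [\min\{0,l(t)\}, \max\{0,l(t)\}]$, equal there to $\pm(1-\rho)\mathcal{Q}^-$ with the sign determined by the sign of $l(t)$. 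The crucial observation is that $\rho(0)=1$ (again by \ref{AssumptionP1MatricesQ}), so $\mathcal{M}(t)$ vanishes at $z=0$, while $\|\mathcal{M}(t)\|_{\infty}$ is bounded uniformly in $t$.

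On $J(t)$ the operator $\mathcal{J}_{l(t)}$ acts classically as $P_1 \frac{d}{dz}$ applied to $\mathcal{Q}_{l(t)}x$, and a direct computation exploiting the diagonal structure combines the integrand into a total derivative: for $l(t)>0$, on $(0,l(t))$ one finds
\[
(A(t)x)^{\top}\mathcal{M}(t)x = -(\rho-1)\frac{d}{dz}\phi, \qquad \phi := Q_{11}^- Q_{22}^- x_1 x_2,
\]
and an analogous identity holds for $l(t)<0$. Integrating by parts against $\rho-1$ produces three contributions: a boundary term at $z=0$, a boundary term at $z=l(t)$, and a remainder of the form $\int \rho' \phi\,dz$. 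The $z=0$ contribution vanishes because $\rho(0)-1=0$, and the remainder is bounded by $\|\rho'\|_{\infty}\|Q_{11}^-\|_{\infty}\|Q_{22}^-\|_{\infty}\|x\|_{L^2}^2$, which by the norm equivalence \eqref{Interface - Norm Equivalence} is controlled by a constant multiple of $\|x\|_{\mathcal{Q}_0}^2$ independent of $t$.

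The main obstacle, and the step where assumption \ref{AssumptionP2ConservationAtInterface} is indispensable, is the vanishing of the boundary term at $z=l(t)$. Since $r=0$, the interface condition $f_I(t) = r e_I(t) = 0$ together with the continuity of $(\mathcal{Q}_{l(t)}x)_2$ across $l(t)$ forces $Q_{22}^{\pm}(l(t))\,x_2(l(t)^{\pm}) = 0$, hence $x_2(l(t)^{\pm})=0$ and therefore $\phi(l(t)^{\pm})=0$. With all boundary contributions eliminated, one arrives at $|\langle A(t)x, \mathcal{M}(t)x\rangle_{L^2}| \leq 2\omega \|x\|_{\mathcal{Q}_0}^2$ for some $\omega>0$ independent of $t$ and $x$, which proves the required inequality \eqref{Interface - Necessary Condition Stablity of Family}.
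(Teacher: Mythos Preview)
Your argument is correct and follows essentially the same route as the paper: you exploit dissipativity of $A(t)$ in the $\langle\cdot,\cdot\rangle_{\mathcal{Q}_{l(t)}}$ inner product, isolate the correction term supported on the interval between $0$ and $l(t)$, integrate by parts, and kill both boundary contributions using $\rho(0)=1$ (from \ref{AssumptionP1MatricesQ}) and $x_2(l(t)^\pm)=0$ (from $r=0$ in \ref{AssumptionP2ConservationAtInterface}). The only cosmetic difference is that the paper works with the matrix $\tilde{Q}=(\mathcal{Q}^- -\mathcal{Q}^+)P_1\mathcal{Q}^+$ and splits off a separate $\frac{d}{dz}\mathcal{Q}^+$ term before integrating by parts, whereas your introduction of the scalar ratio $\rho$ yields the total-derivative identity $(A(t)x)^\top\mathcal{M}(t)x=-(\rho-1)\frac{d}{dz}\phi$ in one step; this is a slightly cleaner bookkeeping of the same computation.
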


\begin{proof}
Let $t \in [0, \tau]$ such that for the interface position we have $a < l(t) < 0$. Recall that in this case, the operator $\mathcal{J}_{l(t)}$ defined in \eqref{Interface - Operator Jl(t)} acts as the operator $P_1 \frac{d}{dz}$ on the subdomains $[a, l(t))$ and $(l(t),b]$, with $P_1$ defined in \eqref{Interface - Matrix P1}. Furthermore, as $A(t)$ generates a contraction semigroup on $(X, \|\cdot\|_{\mathcal{Q}_{l(t)}})$, it is dissipative, see Theorem \ref{Theorem Lumer-Phillips}. More precisely, for all $x \in D(A(t))$ it holds that
\begin{align*}
&\hspace{0.5 cm} \langle A(t)x, x \rangle_{\mathcal{Q}_{l(t)}} \\
&= \frac{1}{2} \int_{a}^{b} x^{\top}(z) \mathcal{Q}_{l(t)}(z) A(t) x(z) \, dz \\ 
	&= \frac{1}{2} \int_{a}^{l(t)} x^{\top}(z)  \mathcal{Q}^-(z) P_1 \frac{d}{dz} (\mathcal{Q}^-(z) x(z)) \, dz + \frac{1}{2} \int_{l(t)}^{0} x^{\top}(z)  \mathcal{Q}^+(z) P_1 \frac{d}{dz} (\mathcal{Q}^+(z) x(z)) \, dz \\
	&\hspace{0.5 cm} + \frac{1}{2} \int_{0}^{b} x^{\top}(z)  \mathcal{Q}^+(z) P_1 \frac{d}{dz} (\mathcal{Q}^+(z) x(z)) \, dz \\
	&\leq 0.
\end{align*}
Thus, we have
\begin{align*}
	&\hspace{0.5 cm} \langle A(t)x, x \rangle_{\mathcal{Q}_0} \\
	&= \frac{1}{2} \int_{a}^{l(t)} x^{\top}(z)  \mathcal{Q}^-(z) P_1 \frac{d}{dz} (\mathcal{Q}^-(z) x(z)) \, dz + \frac{1}{2} \int_{l(t)}^{0} x^{\top}(z)  \mathcal{Q}^-(z) P_1 \frac{d}{dz} (\mathcal{Q}^+(z) x(z)) \, dz \\
	&\hspace{0.5 cm} + \frac{1}{2} \int_{0}^{b} x^{\top}(z)  \mathcal{Q}^+(z) P_1 \frac{d}{dz} (\mathcal{Q}^+(z) x(z)) \, dz \\
	&= \langle A(t)x,x \rangle_{\mathcal{Q}_{l(t)}} \\
	&\hspace{0.5 cm } + \frac{1}{2} \int_{l(t)}^{0} (\mathcal{Q}^-x)^{\top}(z) P_1 \frac{d}{dz} (\mathcal{Q}^+x)(z) - (\mathcal{Q}^+x)^{\top}(z) P_1 \frac{d}{dz} (\mathcal{Q}^+x)(z) \, dz \\
	&\leq \frac{1}{2} \int_{l(t)}^{0} x^{\top}(z) (\mathcal{Q}^-(z) - \mathcal{Q}^+(z)) P_1 \frac{d}{dz} (\mathcal{Q}^+x)(z) \, dz \\
	&= \frac{1}{2} \int_{l(t)}^{0} x^{\top}(z)\left[ (\mathcal{Q}^-(z) - \mathcal{Q}^+(z))P_1 \frac{d}{dz}\mathcal{Q}^+(z)\right] x(z) \, dz \\
	&\hspace{0.5 cm} + \frac{1}{2} \int_{l(t)}^{0}  x^{\top}(z) \left[ (\mathcal{Q}^-(z) - \mathcal{Q}^+(z))P_1 \mathcal{Q}^+(z) \right] \frac{d}{dz} x(z) \, dz.
\end{align*}
We may choose $\omega_1^- > 0$ large enough such that (in the sense of the symmetric part)
\begin{align*}
 (\mathcal{Q}^-(z) - \mathcal{Q}^+(z))P_1 \frac{d}{dz}\mathcal{Q}^+(z) \leq \omega_1^- \mathcal{Q}^-(z), \hspace{0.5 cm} z \in [a,b]. 
\end{align*}
Now, let us define the matrix operator $\tilde{Q} \colon [a,b] \to \mathbb{R}^{2 \times 2}$ by
\begin{align}
	\label{Interface - Theorem Stability of A(t) - Matrix TildeQ}
	\tilde{Q}(z) =  (\mathcal{Q}^-(z) - \mathcal{Q}^+(z))P_1 \mathcal{Q}^+(z) = - \begin{bmatrix}
		0 & Q_{22}^+ (Q_{11}^- - Q_{11}^+) \\
		Q_{11}^+(Q_{22}^- - Q_{22}^+)  & 0 
	\end{bmatrix}(z), \hspace{0.5 cm} z \in [a,b]. 
\end{align}
By assumption \ref{AssumptionP1MatricesQ}, we have $\mathcal{Q}^+, \mathcal{Q}^- \in \mathcal{C}^1([a,b], \mathbb{R}^{2 \times 2}$), and so the matrix operator $\tilde{Q}$ is continuously differentiable as well. Moreover, by \ref{AssumptionP1MatricesQ}, it is easy to check that $\tilde{Q}(z)$ is symmetric for all $z \in [a,b]$. Thus, integration by parts yields that
\begin{align*}
	&\hspace{0.5 cm} \int_{l(t)}^{0} x^{\top}(z) \tilde{Q}(z) \frac{d}{dz} x(z) \, dz  \\
	&=  \frac{1}{2} \big[x^{\top}(z)  \tilde{Q}(z) x(z) \big]_{l(t)}^{0} - \frac{1}{2} \int_{l(t)}^{0} x^{\top}(z) \frac{d}{dz} \tilde{Q}(z) x(z) \, dz.
\end{align*}
Once again, we may choose $\omega_2^- > 0$ large enough such that
\begin{align*}
	- \frac{d}{dz} \tilde{Q}(z) \leq \omega_2^- \mathcal{Q}^-(z), \hspace{0.5 cm} z \in [a,b]. 
\end{align*}
Altogether, we obtain for all $x \in D(A(t))$, 
\begin{align}
	\begin{split}	
	\langle A(t)x, x \rangle_{\mathcal{Q}_0}	&\leq  \frac{\omega_1^-}{2} \int_{l(t)}^{0} x^{\top}(z) \mathcal{Q}^-(z) x(z) \, dz \\
	&\hspace{0.5 cm} +  \frac{1}{4} \big[x^{\top}(z)  \tilde{Q}(z) x(z) \big]_{l(t)}^{0} + \frac{\omega_2^-}{4} \int_{l(t)}^{0}  x^{\top}(z)\mathcal{Q}^-(z)  x(z) \, dz. 
\end{split}
\label{Interface - Theorem Auxiliary - A(t) is Stable - Proof Estimate 1}
\end{align}
The interface relation \eqref{Interface - Interface Passivity Relation} with $r = 0$ together with the assumption that $\mathcal{Q}^{\pm}$ are diagonal matrix operators imply that $x_2(l(t)^+) = x_2(l(t)^-) = 0$. Furthermore, by assumption \ref{AssumptionP1MatricesQ}, $\mathcal{Q}^{\pm}$ coincide in $z = 0$. 
Consequently, it holds that
\begin{align*}
	  \big[x^{\top}(z)  \tilde{Q}(z) x(z) \big]_{l(t)}^{0} &= -  x_1(0)x_2(0) \left[  Q_{22}^+ (Q_{11}^- - Q_{11}^+) + 	Q_{11}^+(Q_{22}^- - Q_{22}^+) \right] (0) \\
	&\hspace{0.5 cm} + x_1(l(t)^+) x_2(l(t)^+) \left[ Q_{22}^+ (Q_{11}^- - Q_{11}^+) + 	Q_{11}^+(Q_{22}^- - Q_{22}^+) \right] (l(t)) \\
	&= 0.
\end{align*}
With the choice $\omega^- = \max\{\omega_1^-, \frac{\omega_2^-}{2}\}$ we deduce from \eqref{Interface - Theorem Auxiliary - A(t) is Stable - Proof Estimate 1} that for all $x \in D(A(t))$, where $t \in [0, \tau]$ is chosen such that $a < l(t) < 0$, it holds that
\begin{align*}
	\langle A(t)x, x \rangle_{\mathcal{Q}_0} \leq \omega^- \|x\|_{\mathcal{Q}_0}^2. 
\end{align*}
Now, let $t \in [0, \tau]$ such that for the interface position we have $0 < l(t) < b$. Analogously to the previous case, we have for all $x \in D(A(t))$, 
\begin{align*}
	\langle A(t)x, x \rangle_{\mathcal{Q}_0} &\leq \frac{1}{2} \int_{0}^{l(t)} x^{\top}(z) \left[ (\mathcal{Q}^+(z) - \mathcal{Q}^-(z))P_1 \frac{d}{dz} \mathcal{Q}^-(z) \right] x(z) \, dz \\
	&\hspace{0.5 cm} + \frac{1}{2} \int_{0}^{l(t)} x^{\top}(z) \left[ (\mathcal{Q}^+(z) - \mathcal{Q}^-(z))P_1 \mathcal{Q}^-(z)  \right] \frac{d}{dz} x(z) \, dz. 
\end{align*}
One can readily see that the same arguments as in the former case apply, and so there exists some $\omega^+ > 0$ such that 
\begin{align*}
	\langle A(t)x, x \rangle_{\mathcal{Q}_0} \leq \omega^+ \|x\|_{\mathcal{Q}_0}^2
\end{align*}
for all $x \in D(A(t))$ with $0 < l(t) < b$. If $l(t) = 0$ for some $t \in [0,\tau]$, then $A(t)$ is dissipative on $(X, \|\cdot\|_{\mathcal{Q}_0})$. Altogether, the choice $\omega = \max\{\omega^-, \omega^+\}$ yields the estimate \eqref{Interface - Necessary Condition Stablity of Family} for all $t \in [0, \tau]$ and for all $x \in D(A(t))$. This proves the claim. 
\end{proof}

\begin{remark}
	We want to emphasize that the choice of the state space $(X, \|\cdot\|_{\mathcal{Q}_0})$ the family $(A(t))_{t \in [0, \tau]}$ is defined on was arbitrary. In fact, we may endow $X$ with any norm of the form $\|\cdot\|_{\mathcal{Q}_{\hat{l}}}$, where $\hat{l} \in (a,b)$, and change the assumption \ref{AssumptionP1MatricesQ} as follows:
	\begin{enumerate}[label = ($\tilde{\mathrm{A}}$1)]
		\item The matrix operators $\mathcal{Q}^{\pm} \in \mathcal{C}^1([a,b], \mathbb{R}^{2 \times 2})$ defining the coercive operator $\mathcal{Q}_{l(t)} \in \mathcal{L}(X)$ are diagonal matrix operators and they satisfy
		\begin{align*}
			\frac{Q_{11}^+}{Q_{11}^-}(\hat{l}) = 1 \hspace{0.3 cm} \text{and} \hspace{0.3 cm} \frac{Q_{11}^+}{Q_{11}^-}(z) = \frac{Q_{22}^+}{Q_{22}^-}(z), \hspace{0.5 cm} z \in [a,b].
		\end{align*}
	\end{enumerate}
In particular, if there is some $\hat{l} \in (a,b)$ such that the matrices $\mathcal{Q}^-(\hat{l})$ and $\mathcal{Q}^+(\hat{l})$ coincide, then it is advisable to consider the family $(A(t))_{t \in [0, \tau]}$ defined on the space $(X, \|\cdot\|_{\mathcal{Q}_{\hat{l}}})$. 
\end{remark}
Note that the inequality \eqref{Interface - Necessary Condition Stablity of Family} implies that for all $t \in [0, \tau]$, the $C_0$-semigroup $(S_t(s))_{s \geq 0}$ on $(X, \|\cdot\|_{\mathcal{Q}_0})$ generated by $A(t)$ is of type $C_0(1, \omega)$: By virtue of \eqref{Interface - Necessary Condition Stablity of Family}, for all $x \in D(A(t))$ it holds that
\begin{align}
	\begin{split}
	 \frac{d}{ds} \| S_t(s)x \|_{\mathcal{Q}_0}^2 &= \frac{d}{ds}  \langle S_t(s)x, S_t(s)x \rangle_{\mathcal{Q}_0} \\
	 &= 2 \langle AS_t(s)x, S_t(s)x \rangle_{\mathcal{Q}_0} \\
	 &\leq 2\omega \|S_t(s)x\|_{\mathcal{Q}_0}^2.
	\end{split}
\label{Interface - Semigroup Estimate}
\end{align} 
Since $s \mapsto e^{2 \omega s}\|x\|_{\mathcal{Q}_0}^2$ is the unique solution of the initial value problem
\begin{align*}
	\frac{d}{ds} y(s) &= 2 \omega y(s), \hspace{0.5 cm} s > 0, \\
	y(0) & =  \|x\|_{\mathcal{Q}_0}^2,
\end{align*}
we infer from \eqref{Interface - Semigroup Estimate} that
\begin{align*}
	\frac{d}{ds} \| S_t(s)x \|_{\mathcal{Q}_0}^2 \leq \frac{d}{ds} \left( e^{\omega s} \|x\|_{\mathcal{Q}_0} \right)^2
\end{align*}
holds for all $x \in D(A(t))$. As $D(A(t))$ is a dense subset of $X$, we conclude that
\begin{align*}
	\|S_t(s)\|_{\mathcal{L}(X, \|\cdot\|_{\mathcal{Q}_0})} \leq e^{\omega s}, \hspace{0.5 cm} s \geq 0. 
\end{align*}
 Recall from Remark \ref{Remark Stable Familiy of Generators} that this yields that the family $(A(t))_{t \in [0, \tau]}$ is stable in the sense of Definition \ref{Definition Stable Family of Generators} with stability constants $M=1$ and $\omega > 0$. Indeed, by the Hille-Yosida Theorem \ref{Theorem Hille-Yosida - C0 Semigroup} we have $(\omega, \infty) \subset \rho(A(t))$ for all $t \in [0, \tau]$. Furthermore, for every finite sequence $0 \leq t_1 \leq t_2 \leq \ldots \leq t_k \leq \tau$, $k \in \mathbb{N}$, and for all $\lambda > \omega$ it holds that
\begin{align*}
	\left\| \prod_{j = 1}^{k} R(\lambda, A(t_j)) \right\|_{ \mathcal{L}(X, \|\cdot\|_{\mathcal{Q}_0})} &\leq \prod_{j=1}^{k} \left\| R(\lambda, A(t_j)) \right\|_{ \mathcal{L}(X, \|\cdot\|_{\mathcal{Q}_0})} \\
	&\leq \prod_{j=1}^{k} \frac{1}{\lambda - \omega} \\
	&= \frac{1}{(\lambda - \omega)^k}.
\end{align*}
Together with Theorem \ref{Interface - Theorem Stability of A(t)} we can summarize these facts as follows:
\begin{corollary}
	\label{Interface - Corollary Stability of A(t)}
	Let $0 < \tau < \infty$, and let $l \colon [0, \tau] \to (a,b)$ be continuously differentiable with $\dot{l} \in \mathcal{C}([0,\tau], \mathbb{R})$. Let the assumptions \ref{AssumptionP1MatricesQ}-\ref{AssumptionP2ConservationAtInterface} hold. Then the family $(A(t))_{t \in[0,\tau]}$ of port-Hamiltonian operators given by \eqref{Interface - Operator Family A(t)}-\eqref{Interface - Operator Ql(t)} is stable on $(X, \|\cdot\|_{\mathcal{Q}_0})$.
	\QEDB
\end{corollary}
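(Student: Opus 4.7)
The plan is to extract the corollary from Theorem \ref{Interface - Theorem Stability of A(t)} together with a standard Hille--Yosida packaging. Since the dissipativity-like inequality $\langle A(t)x, x\rangle_{\mathcal{Q}_0} \leq \omega \|x\|_{\mathcal{Q}_0}^2$ already holds uniformly in $t \in [0,\tau]$, the only work left is to propagate this generator bound to the resolvent bound demanded by Definition \ref{Definition Stable Family of Generators}.

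First I would fix $t \in [0,\tau]$. By assumption \ref{AssumptionP2ConservationAtInterface} and Theorem \ref{Interface - Theorem A Generates a Contraction Semigroup} (applied at the interface position $l(t)$), $A(t)$ generates a contraction semigroup $(S_t(s))_{s\geq 0}$ on the energy space $(X,\|\cdot\|_{\mathcal{Q}_{l(t)}})$. Because of the norm equivalence \eqref{Interface - Norm Equivalence}, whose constants do not depend on $l$, this is also a $C_0$-semigroup on $(X,\|\cdot\|_{\mathcal{Q}_0})$ with the same infinitesimal generator.

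Second, I would upgrade the pointwise inequality supplied by Theorem \ref{Interface - Theorem Stability of A(t)} to a uniform type bound. For $x_0 \in D(A(t))$, Theorem \ref{Theorem Properties of C0-Semigroups} ensures $S_t(s)x_0 \in D(A(t))$ for every $s \geq 0$, so that
\begin{align*}
\frac{d}{ds}\|S_t(s)x_0\|_{\mathcal{Q}_0}^2
= 2\langle A(t) S_t(s)x_0, S_t(s)x_0\rangle_{\mathcal{Q}_0}
\leq 2\omega\|S_t(s)x_0\|_{\mathcal{Q}_0}^2.
\end{align*}
A standard Gr\"onwall-type comparison then yields $\|S_t(s)x_0\|_{\mathcal{Q}_0} \leq e^{\omega s}\|x_0\|_{\mathcal{Q}_0}$, and by density of $D(A(t))$ the same bound extends to every $x_0 \in X$. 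Hence $(S_t(s))_{s\geq 0}$ is of type $C_0(1,\omega)$ on $(X,\|\cdot\|_{\mathcal{Q}_0})$, with $M = 1$ and $\omega$ independent of $t$, which is precisely the situation covered by Remark \ref{Remark Stable Familiy of Generators}.

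Third, I would conclude stability directly from Definition \ref{Definition Stable Family of Generators}. The Hille--Yosida Theorem \ref{Theorem Hille-Yosida - C0 Semigroup} gives $(\omega,\infty)\subset \rho(A(t))$ together with $\|R(\lambda,A(t))\|_{\mathcal{L}(X,\|\cdot\|_{\mathcal{Q}_0})} \leq (\lambda-\omega)^{-1}$ for every $\lambda > \omega$, and submultiplicativity of the operator norm then yields
\begin{align*}
\Bigl\|\prod_{j=1}^{k} R(\lambda,A(t_j))\Bigr\|_{\mathcal{L}(X,\|\cdot\|_{\mathcal{Q}_0})}
\leq \prod_{j=1}^{k}\|R(\lambda,A(t_j))\|_{\mathcal{L}(X,\|\cdot\|_{\mathcal{Q}_0})}
\leq \frac{1}{(\lambda-\omega)^{k}}
\end{align*}
for every finite sequence $0\leq t_1 \leq \ldots \leq t_k \leq \tau$ and every $\lambda > \omega$, verifying Definition \ref{Definition Stable Family of Generators} with stability constants $M=1$ and $\omega$.

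The genuinely delicate part of the argument has already been discharged in Theorem \ref{Interface - Theorem Stability of A(t)}, namely the need to absorb the interface boundary terms generated by the mismatch between $\mathcal{Q}^-$ and $\mathcal{Q}^+$ when testing in the non-adapted norm $\|\cdot\|_{\mathcal{Q}_0}$; this is where the diagonal, ratio-matching hypothesis \ref{AssumptionP1MatricesQ} and the conservative interface condition $r=0$ of \ref{AssumptionP2ConservationAtInterface} are used. What the corollary adds is only the translation of this bound into the resolvent language, the sole subtle point being the ambient norm switch handled via \eqref{Interface - Norm Equivalence}.
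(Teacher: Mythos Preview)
Your proposal is correct and follows essentially the same route as the paper: the text immediately preceding the corollary carries out exactly your three steps, deriving the $C_0(1,\omega)$ bound on $(X,\|\cdot\|_{\mathcal{Q}_0})$ from Theorem~\ref{Interface - Theorem Stability of A(t)} via the derivative-and-Gr\"onwall argument, invoking Remark~\ref{Remark Stable Familiy of Generators} and Hille--Yosida, and then estimating the resolvent product. Your explicit mention of the norm equivalence \eqref{Interface - Norm Equivalence} to justify that each $A(t)$ generates a $C_0$-semigroup on $(X,\|\cdot\|_{\mathcal{Q}_0})$ matches the paper's earlier discussion of this point.
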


Lastly, we wish to prove the same statement as in Corollary \ref{Interface - Corollary Stability of A(t)} for the system where we allow for a power flow at the moving interface. We impose the following assumptions:
\begin{enumerate}[label =($\tilde{\mathrm{B}}$\arabic*)]
	\item \label{AssumptionB1MatricesQ} The matrix operators $\mathcal{Q}^{\pm} \in \mathcal{C}^1([a,b], \mathbb{R}^{2 \times 2})$ defining the coercive operator $\mathcal{Q}_{l(t)} \in \mathcal{L}(X)$ are diagonal matrix operators and they satisfy
	\begin{align*}
		\frac{Q_{11}^+}{Q_{11}^-}(0) = 1 \hspace{0.3 cm} \text{and} \hspace{0.3 cm} \frac{Q_{11}^+}{Q_{11}^-}(z) = \frac{Q_{22}^+}{Q_{22}^-}(z), \hspace{0.5 cm} z \in [a,b].
	\end{align*}
	\item \label{AssumptionB2ExpStability} $r >0 $, $\rank(W_B) = 2$, and $W_B \Sigma W_B^{\top} > 0$.
\end{enumerate}
Assumption \ref{AssumptionB2ExpStability} implies that $A(t)$ generates an exponentially stable contraction semigroup on the respective energy space $(X, \|\cdot\|_{\mathcal{Q}_{l(t)}})$, see Theorem \ref{Interface - Theorem 2 Exponential Stability}.
\vspace{0.5 cm}\\
Let $t \in [0, \tau]$ such that for the interface position we have $a < l(t) < 0$. From assumption \ref{AssumptionB2ExpStability} and from the proof of Theorem \ref{Interface - Theorem 2 Exponential Stability} we deduce that for all $x \in D(A(t))$ it holds that
\begin{align*}
	\langle A(t)x, x \rangle_{\mathcal{Q}_{l(t)}} &=\frac{1}{2} \langle e_{\partial}, f_{\partial} \rangle_2 - \frac{1}{2} e_I(t)f_I(t) \\
	&\leq - k \|\trace_{l(t)}(\mathcal{Q}_{l(t)}x)\|_{2}^2 - \frac{1}{2} e_I(t)f_I(t)
\end{align*}
for some $k > 0$ small enough. With the same arguments as in the proof of Theorem \ref{Interface - Theorem Stability of A(t)}, the following estimate holds for all $x \in D(A(t))$:
\begin{align}
	\begin{split}	
		\langle A(t)x, x \rangle_{\mathcal{Q}_0}	&\leq  - k \|\trace_{l(t)}(\mathcal{Q}_{l(t)}x)\|_{2}^2  - \frac{1}{2} e_I(t)f_I(t) + \frac{\omega_1^-}{2} \int_{l(t)}^{0} x^{\top}(z) \mathcal{Q}^-(z) x(z) \, dz \\
		&\hspace{0.5 cm} +  \frac{1}{4} \big[x^{\top}(z)  \tilde{Q}(z) x(z) \big]_{l(t)}^{0} + \frac{\omega_2^-}{4} \int_{l(t)}^{0}  x^{\top}(z) \mathcal{Q}^-(z)  x(z) \, dz, 
	\end{split}
	\label{Interface - A(t) is Stable - Proof Estimate 1}
\end{align}
with $\omega_1^- >0$ and $\omega_2^- >0$ large enough, and with $\tilde{Q}$ defined in \eqref{Interface - Theorem Stability of A(t) - Matrix TildeQ}. In the following, we estimate the expression
\begin{align*}
	- f_I(t) e_I(t) + \frac{1}{2}  \big[x^{\top}(z)  \tilde{Q}(z) x(z) \big]_{l(t)}^{0}.
\end{align*}
By assumption \ref{AssumptionB1MatricesQ}, it holds that
\begin{align*}
	\frac{1}{2}  \big[x^{\top}(z)  \tilde{Q}(z) x(z) \big]_{l(t)}^{0} =  \frac{1}{2} x_1(l(t)^+) x_2(l(t)^+) \left[ Q_{22}^+ (Q_{11}^- - Q_{11}^+) + 	Q_{11}^+(Q_{22}^- - Q_{22}^+) \right] (l(t)).
\end{align*}
By definition of the interface port variables \eqref{Interface - Simplified Continuity Equation}-\eqref{Interface - Simplified Balance Equation}, we have 
\begin{align*}
	&\hspace{0.5 cm} - f_I(t)e_I(t) + \frac{1}{2} x_1(l(t)^+) x_2(l(t)^+) \left[ Q_{22}^+ (Q_{11}^- - Q_{11}^+) + 	Q_{11}^+(Q_{22}^- - Q_{22}^+) \right] (l(t)) \color{white}\text{abcdefghii} \\
	&=  f_I(t) \left[ Q_{11}^+(l(t)) x_1(l(t)^+) - Q_{11}^-(l(t)) x_1(l(t)^-) \right]  \\
	&\hspace{0.5 cm} + \frac{1}{2}  x_1(l(t)^+) x_2(l(t)^+) \left[ (Q_{11}^-Q_{22}^+)(l(t))  -2 (Q_{11}^+ Q_{22}^+)(l(t)) + 	(Q_{11}^+Q_{22}^-)(l(t))  \right] 
\end{align*}
\begin{align*}
	&=  f_I(t) \left[ Q_{11}^+(l(t)) x_1(l(t)^+) - Q_{11}^-(l(t)) x_1(l(t)^-) \right] - Q_{11}^+(l(t)) x_1(l(t)^+) \underbrace{Q_{22}^+(l(t)) x_2(l(t)^+)}_{= f_I(t)} \\
	&\hspace{0.5 cm}  + \frac{1}{2}  x_1(l(t)^+) x_2(l(t)^+) \left[ (Q_{11}^+Q_{22}^-)(l(t)) + 	(Q_{11}^-Q_{22}^+)(l(t)) \right] \\
	&= - f_I(t) Q_{11}^-(l(t)) x_1(l(t)^-) + \frac{1}{2}  x_1(l(t)^+) x_2(l(t)^+) \left[ (Q_{11}^-Q_{22}^+)(l(t)) + 	(Q_{11}^+Q_{22}^-)(l(t)) \right] \\
	&= - f_I(t) Q_{11}^-(l(t)) x_1(l(t)^-) + \frac{1}{2} f_I(t) Q_{11}^-(l(t)) x_1(l(t)^+) + \frac{1}{2} f_I(t) \frac{Q_{22}^-}{Q_{22}^+}(l(t)) Q_{11}^+(l(t) x_1(l(t)^+) \\
	&= f_I(t) \left[   \frac{1}{2} \left( Q_{11}^-(l(t)) + \frac{Q_{22}^-}{Q_{22}^+}(l(t)) Q_{11}^+(l(t)) \right) x_1(l(t)^+) - Q_{11}^-(l(t)) x_1(l(t)^-) \right].
\end{align*}
Let us define 
\begin{align}
	\label{Interface - Eta-}
	\eta^-(t) := \frac{1}{2} \left( \frac{Q_{11}^-}{Q_{11}^+}(l(t)) + \frac{Q_{22}^-}{Q_{22}^+}(l(t)) \right) \in \left[ \frac{m}{M}, \frac{M}{m} \right]. 
\end{align}
%By assumption (A3) ($\mathcal{Q}^- \geq \mathcal{Q}^+$ auf $[a,0]$ und umgekehrt auf $[0,b]$, ref TODO), we have $\eta^-(t) \geq 1$. 
Now, by virtue of the passivity relation \eqref{Interface - Interface Passivity Relation}, we compute
\begin{align*}
	&\hspace{0.5 cm} f_I(t) \left[   \frac{1}{2} \left( Q_{11}^-(l(t)) + \frac{Q_{22}^-}{Q_{22}^+}(l(t)) Q_{11}^+(l(t)) \right) x_1(l(t)^+) - Q_{11}^-(l(t)) x_1(l(t)^-) \right] \\
	&= r e_I(t) \left[ \eta^-(t) Q_{11}^+(l(t)) x_1(l(t)^+) - Q_{11}^-(l(t)) x_1(l(t)^-) \right] \\
	&= - r \left( Q_{11}^+(l(t)) x_1(l(t)^+) - Q_{11}^-(l(t)) x_1(l(t)^-) \right)  \left[ \eta^-(t) Q_{11}^+(l(t)) x_1(l(t)^+) - Q_{11}^-(l(t)) x_1(l(t)^-) \right] \\
	&= -r \left[ \eta^-(t) (Q_{11}^+(l(t)) x_1(l(t)^+))^2 - (\eta^-(t) +  1) Q_{11}^+(l(t)) x_1(l(t)^+) Q_{11}^-(l(t)) x_1(l(t)^-)  \right. \\
	&\hspace{1.2 cm} \left. + (Q_{11}^-(l(t)) x_1(l(t)^-))^2 \right]. 
\end{align*}
Put $e_1^+(z) = Q_{11}^+(z) x_1(z)$, $z \in (l(t),b]$, and $e_1^-(z) = Q_{11}^-(z) x_1(z)$, $z \in [a,l(t))$. By virtue of the inequality $xy \leq \frac{1}{2}(x^2 + y^2)$ for all $x,y \in \mathbb{R}$, we find
\begin{align*}
	&\hspace{0.5 cm} 	r \left[ - \eta^-(t) e_1^+(l(t))^2 - e_1^-(l(t))^2 + (\eta^-(t) +1) e_1^+(l(t)) e_1^-(l(t)) \right] \\
	&\leq r \left[  - \eta^-(t) e_1^+(l(t))^2 - e_1^-(l(t))^2 + \frac{\eta^-(t) +1}{2}e_1^+(l(t))^2 + \frac{\eta^-(t) +1}{2}e_1^-(l(t))^2 \right] \\
	&= r \left[ \frac{1 - \eta^-(t)}{2} e_1^+(l(t))^2 + \frac{\eta^-(t) -1}{2}e_1^-(l(t))^2 \right].
\end{align*}
Altogehter, we have
\begin{align*}
-f_I(t) e_I(t) + \frac{1}{2} \big[ x^{\top}(z) \tilde{Q}(z) x(z) \big]_{l(t)}^{0} = r \left[ \frac{1 - \eta^-(t)}{2} e_1^+(l(t))^2 + \frac{\eta^-(t) -1}{2}e_1^-(l(t))^2 \right].
\end{align*}
We wish to find some $\omega_3^- >0$ large enough that guarantees that (cf. \eqref{Interface - A(t) is Stable - Proof Estimate 1})
\begin{align*}
	- k \|\trace_{l(t)}(\mathcal{Q}_{l(t)}x)\|_{2}^2 - \frac{1}{2} f_I(t) e_I(t) + \frac{1}{4}  \big[x^{\top}(z)  \tilde{Q}(z) x(z) \big]_{l(t)}^{0}  \stackrel{!}{\leq} \omega_3^- \| x|_{\mathcal{Q}_0}^2.
\end{align*}

Define 
\begin{align*}
	e_2^-(z) &= Q_{22}^-(z)x_2(z), \hspace{0.5 cm} z \in [a,l(t)], \\
	e_2^+(z) &= Q_{22}^+(z) x_2(z), \hspace{0.5 cm} z \in [l(t),b]. 
\end{align*}
Using the definition of $e_1^{\pm}$ and $e_2^{\pm}$, we have
\begin{align*}
2 \|x\|_{\mathcal{Q}_0}^2 &= \int_{a}^{l(t)} Q_{11}^-(z) x_1(z)^2 + Q_{22}^-(z) x_2(z)^2 \, dz +  \int_{l(t)}^{0} Q_{11}^-(z) x_1(z)^2 + Q_{22}^-(z) x_2(z)^2 \, dz \\
	&\hspace{0.5 cm} + \int_{0}^{b} Q_{11}^+(z) x_1(z)^2 + Q_{22}^+(z) x_2(z)^2 \, dz \\
	&= \int_{a}^{l(t)} \frac{1}{Q_{11}^-}(z) e_1^-(z)^2 + \frac{1}{Q_{22}^-}(z) e_2^-(z)^2 \, dz + \int_{l(t)}^{0} \frac{Q_{11}^-}{(Q_{11}^+)^2}(z) e_1^+(z)^2 + \frac{Q_{22}^-}{(Q_{22}^+)^2}(z) e_2^+(z)^2 \, dz \\
	&\hspace{0.5 cm} + \int_{0}^{b} \frac{1}{Q_{11}^+}(z) e_1^+(z)^2 + \frac{1}{Q_{22}^+}(z) e_2^+(z)(z)^2 \, dz \\
	&\geq \delta \left( \int_{a}^{l(t)}  e_1^-(z)^2 +  e_2^-(z)^2 \, dz + \int_{l(t)}^{b} e_1^+(z)^2 + e_2^+(z)^2 \, dz \right)
\end{align*}
for some $\delta >0$ small enough. Hence, it amounts to show that there is some $\tilde{\omega}_3^- > 0$ such that for all $x \in D(A(t))$, 
\begin{align}
	\begin{split}
		&\hspace{0.5 cm} - k \|\trace_{l(t)}(\mathcal{Q}_{l(t)}x)\|_{2}^2 + \frac{r}{4} \left[	(1 - \eta^-(t)) e_1^+(l(t))^2 + (\eta^-(t) - 1) e_1^-(l(t))^2 \right] \\ &\stackrel{!}{\leq} \tilde{\omega}_3^- \left( \int_{a}^{l(t)}  e_1^-(z)^2 +  e_2^-(z)^2 \, dz + \int_{l(t)}^{b} e_1^+(z)^2 + e_2^+(z)^2 \, dz \right).
	\end{split}
	\label{Interface - A(t) is Stable - Proof Estimate 2}
\end{align}
Note that the continuity condition \eqref{Interface - Simplified Continuity Equation} implies that
\begin{align*}
	&\hspace{0.5 cm} 	(1 - \eta^-(t)) e_1^+(l(t))^2 + (\eta^-(t) - 1) e_1^-(l(t))^2 \\
	&= 	(1 - \eta^-(t)) [e_1^+(l(t))^2 + e_2^+(l(t))^2] + (\eta^-(t) - 1) [e_1^-(l(t))^2 + e_2^-(l(t))^2].
\end{align*}
Thus, by using the definition of $e_1^{\pm}$ and $e_2^{\pm}$ again, \eqref{Interface - A(t) is Stable - Proof Estimate 2} is equivalent to
\begin{align*}
	&\hspace{0.5 cm} - k \left( \| (\mathcal{Q}^- x)(a) \|_2^2 + \| (\mathcal{Q}^+ x)(b) \|_2^2 \right) \\
	&\hspace{0.5 cm} + \frac{r}{4} \left[ (\eta^-(t) -1) \| \mathcal{Q}^-(l(t)) x(l(t)^-) \|_2^2 + (1 - \eta^-(t)) \|\mathcal{Q}^+(l(t)) x(l(t)^+) \|_2^2 \right] \\
	&\stackrel{!}{\leq} \tilde{\omega}_3^- \left( \int_{a}^{l(t)}  \|(\mathcal{Q}^-x)(z)\|_2^2  \, dz + \int_{l(t)}^{b} \|(\mathcal{Q}^+x)(z)\|_2^2 \, dz \right) \\
	&= \tilde{\omega}_3^- \|\mathcal{Q}_{l(t)}x\|_{L^2}^2.
\end{align*}
Similarly, for $t \in [0, \tau]$ such that $0 < l(t) < b$, the following estimate has to hold for all $ x \in D(A(t))$:
\begin{align*}
	&\hspace{0.5 cm} - k \left( \| (\mathcal{Q}^- x)(a) \|_2^2 + \| (\mathcal{Q}^+ x)(b) \|_2^2 \right)  \\
	&\hspace{0.5 cm} + \frac{r}{4} \left[ (1- \eta^+(t)) \| \mathcal{Q}^-(l(t)) x(l(t)^-) \|_2^2 + (\eta^+(t) -1) \|\mathcal{Q}^+(l(t)) x(l(t)^+) \|_2^2 \right] \\
	&\stackrel{!}{\leq} \tilde{\omega}_3^+ \|\mathcal{Q}_{l(t)}x\|_{L^2}^2,
\end{align*}
where 
\begin{align}
	\label{Interface - Eta+}
	\eta^+(t) = \frac{1}{2} \left( \frac{Q_{11}^+}{Q_{11}^-}(l(t)) + \frac{Q_{22}^+}{Q_{22}^-}(l(t)) \right) \in \left[\frac{m}{M}, \frac{M}{m} \right]. 
\end{align}
This can be summarized as follows:
\begin{corollary}
		\label{Interface - Corollary 2 Stability of A(t)}
	Let $0 < \tau < \infty$, and let $l \colon [0, \tau] \to (a,b)$ be continuously differentiable with $\dot{l} \in \mathcal{C}([0,\tau], \mathbb{R})$. Let the assumptions \ref{AssumptionB1MatricesQ}-\ref{AssumptionB2ExpStability} hold. Let $k > 0$ be chosen such that \eqref{Interface - Theorem Exponential Stability - Inequality} is satisfied. Then the family $(A(t))_{t \in[0,\tau]}$ of port-Hamiltonian operators given by \eqref{Interface - Operator Family A(t)}-\eqref{Interface - Operator Ql(t)} is stable on $(X, \|\cdot\|_{\mathcal{Q}_0})$, if the following assertions are true:
	\begin{enumerate}[label = (\roman*)]
		\item There exists some $\tilde{\omega}^-  >0$ such that for all $t \in [0, \tau]$ with $a < l(t) < 0$ and for all $x \in D(A(t))$ we have
		\begin{align*}
			&\hspace{0.5 cm} - k \left( \| (\mathcal{Q}^- x)(a) \|_2^2 + \| (\mathcal{Q}^+ x)(b) \|_2^2 \right) \\
			&\hspace{0.5 cm} + \frac{r}{4} \left[ (\eta^-(t) -1) \| \mathcal{Q}^-(l(t)) x(l(t)^-) \|_2^2 + (1 - \eta^-(t)) \|\mathcal{Q}^+(l(t)) x(l(t)^+) \|_2^2 \right] \\
			&\leq \tilde{\omega}^- \|\mathcal{Q}_{l(t)}x\|_{L^2}^2,
		\end{align*}
	where $\eta^-(t)$ is defined in \eqref{Interface - Eta-}.
	\item  There exists some $\tilde{\omega}^+  >0$ such that for all $t \in [0, \tau]$ with $0 < l(t) < b$ and for all $x \in D(A(t))$ we have
	\begin{align*}
		&\hspace{0.5 cm} - k \left( \| (\mathcal{Q}^- x)(a) \|_2^2 + \| (\mathcal{Q}^+ x)(b) \|_2^2 \right) \\
		&\hspace{0.5 cm} + \frac{r}{4} \left[ (1 - \eta^+(t)) \| \mathcal{Q}^-(l(t)) x(l(t)^-) \|_2^2 + (\eta^+(t) -1) \|\mathcal{Q}^+(l(t)) x(l(t)^+) \|_2^2 \right] \\
		&\leq \tilde{\omega}^+ \|\mathcal{Q}_{l(t)}x\|_{L^2}^2,
	\end{align*}
where $\eta^+(t)$ is defined in \eqref{Interface - Eta+}.
	\end{enumerate}
	\QEDB
\end{corollary}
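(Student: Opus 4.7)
The strategy mirrors the proof of Theorem \ref{Interface - Theorem Stability of A(t)}: I will establish the differential inequality
\begin{align*}
	\langle A(t)x, x \rangle_{\mathcal{Q}_0} \leq \omega \| x \|_{\mathcal{Q}_0}^2, \hspace{0.5 cm} t \in [0,\tau], \, x \in D(A(t)),
\end{align*}
for some $\omega > 0$, and then invoke the argument used directly after Theorem \ref{Interface - Theorem Stability of A(t)} to conclude that each $A(t)$ generates a semigroup of type $C_0(1,\omega)$ on $(X, \| \cdot \|_{\mathcal{Q}_0})$, which via Hille--Yosida (Theorem \ref{Theorem Hille-Yosida - C0 Semigroup}) yields the resolvent bound needed to meet condition (ii) of Definition \ref{Definition Stable Family of Generators} with constants $M=1$ and $\omega$.

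To derive the differential inequality, I would first split into the cases $a < l(t) < 0$, $l(t) = 0$, and $0 < l(t) < b$. The case $l(t) = 0$ is immediate since then $\mathcal{Q}_{l(t)} = \mathcal{Q}_0$ and the exponential stability bound from Theorem \ref{Interface - Theorem Exponential Stability} together with the passivity condition $f_I(t) = r e_I(t)$ gives $\langle A(t)x, x \rangle_{\mathcal{Q}_0} \leq 0$. For $a < l(t) < 0$, I would rewrite $\langle A(t)x,x\rangle_{\mathcal{Q}_0}$ as $\langle A(t)x,x\rangle_{\mathcal{Q}_{l(t)}}$ plus an integral correction over $[l(t),0]$, apply the exponential stability estimate
\begin{align*}
	\langle A(t)x, x \rangle_{\mathcal{Q}_{l(t)}} \leq -k \| \trace_{l(t)}(\mathcal{Q}_{l(t)}x)\|_2^2 - \tfrac{1}{2} e_I(t) f_I(t),
\end{align*}
integrate by parts on the correction as in the proof of Theorem \ref{Interface - Theorem Stability of A(t)}, and absorb the bulk terms $\tfrac{\omega_1^-}{2}\int_{l(t)}^0 x^\top \mathcal{Q}^- x\, dz + \tfrac{\omega_2^-}{4}\int_{l(t)}^0 x^\top \mathcal{Q}^- x\, dz$ into $\omega^- \|x\|_{\mathcal{Q}_0}^2$. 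The boundary term $\tfrac{1}{4}[x^\top \tilde Q x]_{l(t)}^0$ vanishes at $z=0$ thanks to assumption \ref{AssumptionB1MatricesQ}, so only the contribution at $z=l(t)$ remains.

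The key computation is to combine the surviving boundary contribution at $z=l(t)$ with the interface term $-\tfrac{1}{2}e_I(t)f_I(t)$. The text preceding the corollary carries out exactly this calculation: using $f_I = re_I$ together with the definitions of $\eta^-(t)$ and $e_1^\pm(l(t))$, and applying Young's inequality $xy \leq \tfrac{1}{2}(x^2+y^2)$, one obtains
\begin{align*}
	-\tfrac{1}{2}f_I(t) e_I(t) + \tfrac{1}{4}\big[x^\top(z) \tilde Q(z) x(z)\big]_{l(t)}^0 \leq \tfrac{r}{4}\left[ (1-\eta^-(t)) e_1^+(l(t))^2 + (\eta^-(t)-1) e_1^-(l(t))^2 \right].
\end{align*}
Combining this with the negative boundary term $-k \|\trace_{l(t)}(\mathcal{Q}_{l(t)}x)\|_2^2$ and invoking hypothesis (i) of the corollary, the full surplus is bounded by $\tilde\omega^- \|\mathcal{Q}_{l(t)}x\|_{L^2}^2$, which by the norm equivalence \eqref{Interface - Norm Equivalence} is itself bounded by a constant multiple of $\|x\|_{\mathcal{Q}_0}^2$. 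The case $0 < l(t) < b$ is analogous, using hypothesis (ii) and $\eta^+(t)$. Taking $\omega$ to be the maximum of the constants arising in the two cases completes the derivation.

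The main obstacle is purely bookkeeping: the various boundary and interface terms must be combined in exactly the right way so that the indefinite-sign contribution produced by $-f_I e_I$ (which, unlike in Theorem \ref{Interface - Theorem Stability of A(t)} where $r=0$ killed this term outright, now survives) is absorbed by the strictly negative term $-k\|\trace_{l(t)}(\mathcal{Q}_{l(t)}x)\|_2^2$ coming from $W_B \Sigma W_B^\top > 0$. This is precisely why the corollary must promote hypotheses (i) and (ii) — the quadratic expression at $z=l(t)$ is not automatically dominated by $k$-times the boundary norm for arbitrary $\mathcal{Q}^\pm$, so the bulk $L^2$-norm of $\mathcal{Q}_{l(t)}x$ is needed as a reservoir. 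Once the inequality is in hand, stability follows exactly as in the discussion immediately following Theorem \ref{Interface - Theorem Stability of A(t)}, so no further work is required.
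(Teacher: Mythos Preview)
Your proposal is correct and follows essentially the same approach as the paper. The corollary is stated in the paper as a summary of the computation immediately preceding it (hence the bare $\blacksquare$ with no proof environment), and you have accurately reconstructed that computation: write $\langle A(t)x,x\rangle_{\mathcal{Q}_0}$ as $\langle A(t)x,x\rangle_{\mathcal{Q}_{l(t)}}$ plus a correction integral, use the exponential-stability estimate $\langle A(t)x,x\rangle_{\mathcal{Q}_{l(t)}} \le -k\|\trace_{l(t)}(\mathcal{Q}_{l(t)}x)\|_2^2 - \tfrac{1}{2}e_I f_I$, integrate by parts on the correction, combine the surviving boundary term at $z=l(t)$ with $-\tfrac{1}{2}e_I f_I$ via Young's inequality to produce the $\eta^\pm$-expression, invoke hypotheses (i)--(ii) to control that by $\tilde\omega^\pm\|\mathcal{Q}_{l(t)}x\|_{L^2}^2$, pass to $\|x\|_{\mathcal{Q}_0}^2$ by coercivity/boundedness of $\mathcal{Q}^\pm$, and then conclude stability with $M=1$ exactly as after Theorem~\ref{Interface - Theorem Stability of A(t)}.
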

As we have discussed in Section \ref{Section Evolution Equations}, the stability of the family $(A(t))_{t \in [0, \tau]}$ is not sufficient for well-posedness of the corresponding evolution problem. However, we will not further analyze this system description and conclude this segment with the preceding stability results.
\vspace{0.5 cm}\\
In this section, we have studied a simplified model closely related to the one formulated in \cite{Diagne}. We have defined its underlying Dirac structure, and we have worked out passivity conditions that guarantee that the associated port-Hamiltonian operator generates a contraction semigroup on the respective energy space. Those conditions have been defined with respect to the interface and boundary port variables. Moreover, we have presented criteria for exponential stability of the $C_0$-semigroup generated by the port-Hamiltonian operator, and we have specified the associated Hilbert space adjoint as well as the resolvent. All these results hold in case of a fixed interface. At the end of this section, we have defined a family of port-Hamiltonian operators that keeps track of the position of the moving interface. We have shown under which assumptions this family is stable, and  we have learned that the power flow at the position of the moving interface is a critical factor for the stability.

\chapter{Conclusions and Future Work}
\label{Chapter Conclusion}
In this thesis, we have provided a functional-analytic foundation for the port-Hamiltonian formulation of two systems of two conservation laws that are coupled by a moving interface worked out in \cite{Diagne}. Furthermore, we have analyzed a simplified version of the proposed model by means of the powerful framework of strongly continuous semigroups. The results presented in this work can be considered as a firm basis for the further analysis of this quite new system class.
\vspace{0.5 cm}\\ 
In Chapter \ref{Chapter Finite-dimensional Port-Hamiltonian Systems} we have introduced the concept of port-based modeling and the necessary terminology related to this framework by reference to finite-dimensional systems. We have studied some basic properties of the underlying Dirac structure, i.e., the power-conserving interconnection structure, as well as the constituting port variables of a multi-physics system, and depicted how the dynamics of a port-Hamiltonian system can be defined with respect to the Dirac structure. Subsequently, in Chapter \ref{Chapter Some Background} we have briefly recalled the definition of the Hilbert space adjoint of a linear operator $A \colon D(A) \subset X \to X$ defined on some Hilbert space $X$, and we have highlighted the difference between the Hilbert space adjoint and the formal adjoint of a matrix differential operator. This allowed us to define formally skew-symmetric operators, which play a fundamental role in the framework of infinite-dimensional port-Hamiltonian systems. Afterwards, we have reviewed the basics of infinite-dimensional systems theory. For the analysis we have decided for the theory of semigroups of linear operators, so we gave a concise overview of this framework in Section \ref{Section Strongly Continuous Semigroups}. Our emphasis was to recall sufficient conditions for well-posedness of abstract control systems (see Section \ref{Section Linear Control Systems}) and of evolution problems (see Section \ref{Section Evolution Equations}). In Chapter \ref{Chapter Infinite-dimensional Port-Hamiltonian Systems} we have extended the port-Hamiltonian framework to infinite-dimensional systems, where we have restricted ourselves to systems with a 1-dimensional spatial domain. We have investigated port-Hamiltonian systems that are associated with formally skew-symmetric matrix differential operators, and we have highlighted the main differences compared to its finite-dimensional counterpart. The most important difference is that one has to take the energy flow at the spatial boundary into account. This is accomplished by introducing boundary port variables, which has allowed us to define the infinite-dimensional power-conserving interconnection structure, called the Stokes-Dirac structure, associated with formally skew-symmetric operators. 
\vspace{0.5 cm}\\
The preceding chapters have set the foundation for the main part of this thesis, namely the port-Hamiltonian formulation and the analysis of two first-order port-Hamiltonian systems that are coupled by a moving interface, which we have dealt with in Chapter \ref{Chapter Boundary Port-Hamiltonian Systems with a Moving Interface}. In Section \ref{Section Two Port-Hamiltonian Systems Coupled by an Interface} we have started by formulating the aforementioned systems that are coupled by an interface fixed at $z = 0$ as a single boundary port-Hamiltonian system, and we have imposed interface conditions on the co-energy variables. Furthermore, we have introduced so-called color functions, which are characteristic functions of the complementary subdomains that keep track of the position of the interface. We have taken them to constitute the extended state space $\tilde{X} = L^2([a,b], \mathbb{R}^4)$. Having regard to the interface relations, we have defined balance equations of the state variables on the composed domain, and we have defined a generalized Hamiltonian system of off these equations. This has allowed us to define a Dirac structure for the fixed interface scenario with respect to a formally skew-symmetric differential operator, see Proposition \ref{Interface Paper - Proposition Dirac Structure DI}. 
\vspace{0.5 cm}\\
In Section \ref{Section Port-Hamiltonian Systems Coupled through a Moving Interface} we have repeated the exposition from Section \ref{Section Two Port-Hamiltonian Systems Coupled by an Interface} in case of a moving interface position. As opposed to the fixed interface case, the proper formulation as a port-Hamiltonian system is fraught with problems. We have stated the main issues in \ref{Problem1MovingInterface}-\ref{Problem3MovingInterface}. Instead of tackling these problems directly, we have studied a simplified model that is closely related to the original system in Section \ref{Section A Simplified System}, where we have merely assumed that the color functions are not part of the state variables anymore. Furthermore, we have assumed that the variational derivative of the Hamiltonian functional is linear. For the analysis, we have started again by assuming that the interface is fixed. After specifying the Dirac structure of this system (see Corollary \ref{Interface - Corollary Dirac Structure}), we have analyzed the port-Hamiltonian operator associated with this system. We have shown that if we impose passivity conditions at the boundary and at the interface, then the port-Hamiltonian operator is the infinitesimal generator of a contraction semigroup. In particular, the coupling of two first-order port-Hamiltonian systems coupled by a fixed interface is well-defined, see Theorem \ref{Interface - Theorem A Generates a Contraction Semigroup}. We have illustrated this result by reference to the coupling of two transmission lines with distinct physical properties, see Example \ref{Interface - Example Coupled Transmission Lines}. Furthermore, we have been able to show that under more restrictive assumptions regarding the boundary conditions, the port-Hamiltonian operator generates an exponentially stable contraction semigroup, see Theorem \ref{Interface - Theorem 2 Exponential Stability}. After specifying the Hilbert space adjoint as well as the resolvent of the port-Hamiltonian operator in Subsection \ref{Subsection The Adjoint Operator} and Subsection \ref{Subsection Resolvent}, respectively, we have studied a family of port-Hamiltonian operators that encompasses the position of the moving interface. We have provided criteria that ensure that this family is stable (see Subsection \ref{Subsection Stability of the Family of Infinitesimal Generators}), which is an essential property for the proof of well-posedness of the evolution problem associated with this family.
\vspace{0.5 cm}\\ 
One of these criteria is that we have assumed that there is no power flow at the interface (see Corollary~\ref{Interface - Corollary Stability of A(t)}), which is quite restrictive. If we allow for a power flow at the interface, there is room for improvement concerning the stability result stated in this thesis (see Corollary~\ref{Interface - Corollary 2 Stability of A(t)}). Furthermore, there are several problems remaining open. For instance, one needs to check for the sufficient conditions for well-posedness apart from stability. Moreover, we have not discussed the input operator associated with the simplified system in case of a moving interface at all. Hence, plenty of control related problems have to be clarified in the future. Additionally, for a complete port-Hamiltonian formulation, we need to find the output port variable conjugated to the velocity of the interface (the input). This is the major topic for further research. After these problems have been successfully tackled for the simplified system, one may eventually deal with the original system \eqref{Interface Paper - Augmented System wrt Jl} accompanied by the delicate problems  \ref{Problem1MovingInterface}-\ref{Problem3MovingInterface} again (see pages \pageref{Interface Paper - Augmented System wrt Jl}-\pageref{Problem3MovingInterface}). 
\vspace{0.5 cm}\\
Besides the demanding task of proving well-posedness of the boundary port-Hamiltonian interface control system, as one may call the model proposed in \cite{Diagne}, there are countless possibilities to generalize the (simplified) system description, e.g., admitting a broader class of color functions, deploying a passive network at the interface position, etc. In conclusion, the port-Hamiltonian model of two systems coupled by a moving interface offers a vast field of potential research.

\newpage

% Since $r >0$, this expression vanishes or is smaller than $0$ if and only if 
%\begin{align*}
%	\eta^-(t) q(l(t))^2 - (\eta^-(t) +1) q(l(t)) p(l(t)) + p(l(t))^2 \geq 0.
%\end{align*}
%Completing the square yields
%\begin{align*}
%	&\hspace{0.5 cm} \eta^-(t) q(l(t))^2 - (\eta^-(t) +1) q(l(t)) p(l(t)) + p(l(t))^2 \\
%	&= \eta^-(t) \left( q(l(t)) - \frac{\eta^-(t) +1}{2 \eta^-(t)} p(l(t))\right)^2 + \left( 1 - \frac{( \eta^-(t) +1)^2}{4\eta^-(t)} \right) p(l(t))^2 \\
%	&=  \eta^-(t) \left( q(l(t)) - \frac{\eta^-(t) +1}{2 \eta^-(t)} p(l(t)) \right)^2 - \frac{(\eta^-(t) - 1)^2}{4 \eta^-(t)} p(l(t))^2. 
%\end{align*}
%Note that $- \frac{1}{r} f_I(t) = - e_I(t) = q(l(t)) - p(l(t))$ and that we may choose $f_I(t) \in \mathbb{R}$ arbitrarily. Hence, it is not possible to show that
%\begin{align*}
%	- f_I(t) e_I(t) + \frac{1}{2}  \big[x^{\top}(z)  \tilde{Q}(z) x(z) \big]_{l(t)}^{0} \leq 0
%\end{align*}
%for all $x \in D(A(t))$, unless $\eta^-(t) = 1$.% However, this would imply that $\mathcal{Q}^+ = \mathcal{Q}^-$ on $[a,0]$, that is, the physical properties are identical. 
%\vspace{0.5 cm}\\
%So, from now on, assume that $\eta^-(t) \neq 1$.

\bibliography{references}

\end{document}